\numberwithin{equation}{section}
\newtheorem{theorem}{Theorem}[section]
\newtheorem{remark}[theorem]{Remark}
\newtheorem{lemma}[theorem]{Lemma}
\newtheorem{proposition}[theorem]{Proposition}
\newcommand{\A}{\mathbf{A}}
\newcommand{\C}{\mathbf{C}}
\newcommand{\D}{\mathbf{D}}
\newcommand{\E}{\mathbf{E}}
\newcommand{\h}{\mathbf{H}}
\newcommand{\N}{\mathbf{N}}
\newcommand{\Z}{\mathbf{Z}}
\newcommand{\p}{\mathbf{P}}
\newcommand{\R}{\mathbf{R}}
\newcommand{\s}{\mathbf{S}}
\newcommand{\Fh}{\mathfrak {h}}
\newcommand{\CA}{\mathcal {A}}
\newcommand{\CB}{\mathcal {B}}
\newcommand{\CC}{\mathcal {C}}
\newcommand{\CE}{\mathcal {E}}
\newcommand{\CF}{\mathcal {F}}
\newcommand{\CI}{\mathcal {I}}
\newcommand{\CL}{\mathcal {L}}
\newcommand{\CT}{\mathcal {T}}
\newcommand{\CW}{\mathcal {W}}
\newcommand{\CZ}{\mathcal {Z}}
\newcommand{\CG}{\mathcal {G}}
\newcommand{\SLE}{{\rm SLE}}
\newcommand{\dist}{\mathrm{dist}}
\newcommand{\diam}{\mathrm{diam}}
\newcommand{\var}{\mathrm{var}}
\newcommand{\im}{\mathrm{Im}}
\newcommand{\re}{\mathrm{Re}}
\newcommand{\cov}{\mathrm{cov}}
\newcommand{\loc}{\mathrm{loc}}
\newcommand{\supp}{\mathrm{supp }}
\newcommand{\distqh}{\mathrm{dist}_{\mathrm{qh}}}
\newcommand{\hcap}{\mathrm{hcap}}
\newcommand{\confrad}{{\rm CR}}
\newcommand{\one}{{\bf 1}}
\newcommand{\wt}{\widetilde}
\newcommand{\ol}{\overline}
\newcommand{\giv}{\,|\,}
\newcommand{\BES}{\mathrm{BES}}
\newcommand{\strip} {\mathscr{S}}
\newcommand{\cyl}{\mathscr{C}}
\newcommand{\qhd}{\dist_{\mathrm{qh}}}
\newcommand{\shadow}{\mathrm{SH}}
\renewcommand{\a}{\alpha}
\newcommand{\sol}[1]{{}}
\newcommand{\wh}{\widehat}
\newcommand{\qwedgeW}[2]{\mathsf{QWedge}_{\bm{\gamma}=#1}^{\mathbf{W}=#2}}
\newcommand{\qconeW}[2]{\mathsf{QCone}_{\bm{\gamma}=#1}^{\mathbf{W}=#2}}
\newcommand{\qwedgeA}[2]{\mathsf{QWedge}_{\bm{\gamma}=#1}^{\bm{\alpha}=#2}}
\newcommand{\qconeA}[2]{\mathsf{QCone}_{\bm{\gamma}=#1}^{\bm{\alpha}=#2}}
\newcommand{\greenD}[1]{G_{#1}^{\mathrm{D}}}
\newcommand{\greenN}[1]{G_{#1}^{\mathrm{N}}}
\begin{document}

\title{Regularity of the $\SLE_4$ uniformizing map and the $\SLE_8$ trace}

\author{Konstantinos Kavvadias, Jason Miller, and Lukas Schoug}

\begin{abstract}
We show that the modulus of continuity of the $\SLE_4$ uniformizing map is given by $(\log \delta^{-1})^{-1/3+o(1)}$ as $\delta \to 0$.  As a consequence of our analysis, we show that the Jones-Smirnov conditions for conformal removability (with quasihyperbolic geodesics) do not hold for $\SLE_4$.  We also show that the modulus of continuity for $\SLE_8$ with the capacity time parameterization is given by $(\log \delta^{-1})^{-1/4+o(1)}$ as $\delta \to 0$, proving a conjecture of Alvisio and Lawler.
\end{abstract}

\date{\today}
\maketitle

\setcounter{tocdepth}{1}

\tableofcontents

\parindent 0 pt
\setlength{\parskip}{0.20cm plus1mm minus1mm}

\section{Introduction}

\subsection{Overview}

The Schramm-Loewner evolution ($\SLE$) was introduced by Schramm in 1999 \cite{schramm2000sle} as a candidate to describe the scaling limits of the interfaces in statistical mechanics models on two-dimensional lattices at criticality.  It has since been proved to arise as a scaling limit in several cases \cite{smirnov2001cardy,lsw2004lerw,ss2009dgff,smirnov2010ising}.  $\SLE$ has also been the subject of intensive study as it has deep connections to the Gaussian free field (GFF) \cite{ss2013continuum,dub2009partition,ms2016imag1} and Liouville quantum gravity (LQG) \cite{she2016zipper,dms2014mating}.

Recall that (chordal) $\SLE$ is a family of probability measures indexed by a parameter $\kappa > 0$ ($\SLE_\kappa)$ on curves which connect two boundary points $x,y$ in a simply connected domain $D$.  In the case that $D = \h$, $x = 0$, $y = \infty$, $\SLE_\kappa$ is defined by considering the random family of conformal maps $(g_t)$ which solve the chordal Loewner equation
\begin{equation}
\label{eqn:chordal_loewner}
\partial_t g_t(z) = \frac{2}{g_t(z) - W_t},\quad g_0(z) = z
\end{equation}
where $W = \sqrt{\kappa} B$ and $B$ is a standard Brownian motion.  For each $t \geq 0$, we let $\h_t$ be the domain of $g_t$.  For $\kappa > 0$ with $\kappa \neq 8$ it was shown by Rohde and Schramm \cite{rs2005basic} that there exists a curve~$\eta$ in~$\h$ from~$0$ to~$\infty$, the so-called $\SLE$ trace, so that $\h_t$ is the unbounded component of $\h \setminus \eta([0,t])$.  The result in the case that $\kappa=8$ was proved by Lawler, Schramm, and Werner in \cite{lsw2004lerw}.  The time parameterization for $\eta$ which is induced by~\eqref{eqn:chordal_loewner} is the so-called half-plane capacity parameterization since the half-plane capacity of the hull $K_t = \h \setminus \h_t$ associated with $\eta([0,t])$ is equal to $2t$ for each $t \geq 0$. $\SLE_\kappa$ in a simply connected domain $D$ connecting boundary points $x,y$ is defined as the image of the $\SLE_\kappa$ in $\h$ from $0$ to $\infty$ under a conformal map $\h \to D$ which takes $0$ to $x$ and $\infty$ to $y$.  The parameter $\kappa > 0$ determines the roughness of the curve.  $\SLE_\kappa$ curves are simple for $\kappa \in (0,4]$, self-intersecting but not space-filling for $\kappa \in (4,8)$, and space-filling for $\kappa \geq 8$ \cite{rs2005basic}. We sometimes omit the word trace and instead refer to the curve as an $\SLE_\kappa$.

A number of works have been focused on the fractal properties of $\SLE_\kappa$.  Let us mention two examples: the regularity of the associated uniformizing conformal map and the regularity of the trace.  Suppose that $D$ is a simply connected domain and $x,y \in \partial D$ are distinct.  Let $\eta$ be an $\SLE_\kappa$ in $D$ from $x$ to $y$.
\begin{itemize}
\item If $D$ is bounded and has smooth boundary and $\kappa \neq 4$ then the components of $D \setminus \eta$ are H\"older domains \cite{rs2005basic}.  This means that a conformal map from $\D$ to any component of $D \setminus \eta$ is H\"older continuous up to $\partial \D$.  Moreover, the optimal H\"older exponent was determined in \cite{gms2018multifractal}.  When $\kappa = 4$, a conformal map from $\D$ to a component of $D \setminus \eta$ is \emph{not} H\"older continuous and there was previously no bound on its modulus of continuity.  As we will explain in more detail just below, this is important for various applications.
\item The continuity of $t \mapsto \eta(t)$ was proved in \cite{rs2005basic} for $\kappa \neq 8$ and for $\kappa = 8$ in \cite{lsw2004lerw}.  If $D = \h$, $x = 0$, $y = \infty$, and $\eta$ is given the standard (half-plane capacity) time parameterization, then $t \mapsto \eta(t)$ is locally H\"older continuous if $\kappa \neq 8$ and is not locally H\"older continuous when $\kappa = 8$.  Moreover, the optimal H\"older exponent was derived in \cite{lind2008holder,vl2011holder} and the related tip multifractal spectrum in \cite{vl2012tip}.  The exact modulus of continuity of $\SLE_8$ with the half-plane capacity parameterization was previously unknown, though it was conjectured by Alvisio and Lawler \cite{al2014sle8} that there should exist $\beta > 0$ so that on any compact time interval there a.s.\ exists a constant $c > 0$ so that it is at most $c (\log \delta^{-1})^{-\beta}$ for $\delta > 0$ sufficiently small.
\end{itemize}

As mentioned above, the value $\kappa=4$ is special because it is the critical value at or below which $\SLE_\kappa$ curves are simple and above which they are not.  In particular, $\SLE_4$ curves are \emph{almost} self-intersecting in the sense that the harmonic measure of a ball of radius $\epsilon$ centered at a point on the curve can decay to $0$ as $\epsilon \to 0$ faster than any power of $\epsilon$.  This is what leads to the uniformizing map not being H\"older continuous.  Also, the value $\kappa=8$ is special because it is the critical value at or above which $\SLE_\kappa$ is space-filling while for $\kappa < 8$ it is not.  This is reflected by the fact that the left and right sides of the outer boundary of $\eta([0,t])$ (i.e., the parts of $\partial \h_t$ to the left and right of $\eta(t)$) are \emph{almost} intersecting in the sense that the harmonic measure of a ball of radius $\epsilon$ centered at $\eta(t)$ can decay to $0$ as $\epsilon \to 0$ faster than any power of $\epsilon$.  Our main results on the modulus of continuity of the $\SLE_4$ uniformizing map and the $\SLE_8$ trace with the capacity time parameterization will in a sense amount to estimating precisely the decay of the harmonic measure as $\epsilon \to 0$ in these two cases.  In particular, we will show in the former case that the harmonic measure in the complement of $\eta$ can decay as quickly as $\exp(-\epsilon^{-3+o(1)})$ and in the latter case can decay as quickly as $\exp(-\epsilon^{-4+o(1)})$ in $\h_t$ as $\epsilon \to 0$.  Although we will not carry this out here, we more generally believe that the techniques of this paper can be used to show that the following are true.
\begin{itemize}
\item The dimension of the set of points $z$ on an $\SLE_4$ curve $\eta$ with the property that the harmonic measure of $B(z,\epsilon)$ in the complement of $\eta$ decays like $\exp(-\epsilon^{-a+o(1)})$ as $\epsilon \to 0$ is $3/2-a/2$ for $a \in [0,3]$.
\item The dimension of the set of points $z \in \h$ so that the harmonic measure of $B(z,\epsilon)$ in $\h_{\tau_z}$, where $\tau_z$ the first time that $\eta \sim \SLE_8$ hits $z$, decays like $\exp(-\epsilon^{-a+o(1)})$ as $\epsilon \to 0$ is $2-a/2$ for $a \in [0,4]$.
\end{itemize}

The strategy taken in the works \cite{rs2005basic,lind2008holder, vl2011holder,vl2012tip,gms2018multifractal,bs2009harmonic,s2020boundaryspectrum,abv2016boundary} is based on estimating the derivatives of either the solution to~\eqref{eqn:chordal_loewner} or its time-reversal.  The main results we will establish in this article will be based on a completely different method, in particular making use of the relationship between certain types of $\SLE_\kappa$ curves and LQG \cite{she2016zipper,dms2014mating}.

\subsection{Main results}

Our first main result is focused on the modulus of continuity for the uniformizing map for $\SLE_4$.

\begin{theorem}
\label{thm:sle4_thm}
Suppose that $\eta$ is an $\SLE_4$ in $\D$ from $-i$ to $i$ and let $\D_L$ be the component of $\D \setminus \eta$ which is to the left of $\eta$.  Let $\varphi$ be the unique conformal transformation from $\D$ to $\D_L$ which fixes $-i$, $-1$, and $i$.  For every $\zeta > 0$ and $\xi>0$ there a.s.\ exists a constant $c > 0$ so that
\begin{align}\label{eq:mod4}
	|\varphi(z)- \varphi(w)| \leq c \!\left(\log\left( 1 + \frac{1}{|z-w|} \right) \right)^{-1/3+\zeta}
\end{align}
for all $z,w \in \D \setminus (B(-i,\xi) \cup B(i,\xi))$. Moreover, for every $\zeta > 0$ we a.s.\ have that
\[ \sup\!\left\{ |\varphi(z) - \varphi(w)| \!\left(\log \frac{1}{|z-w|} \right)^{1/3+\zeta} : z,w \in \D,\ z \neq w \right\}  = \infty.\]
\end{theorem}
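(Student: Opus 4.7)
The plan is to convert both halves of Theorem~\ref{thm:sle4_thm} into a geometric statement about the ``fjords'' of $\eta$ (regions where the $\SLE_4$ trace nearly self-intersects) and analyze these using the coupling of $\SLE_4$ with the GFF via imaginary geometry / level lines. The guiding heuristic, already sketched in the introduction, is that harmonic measure in $\D_L$ at a pinch point can decay as fast as $\exp(-\epsilon^{-3+o(1)})$, and this decay exponent $3$ is precisely dual to the exponent $1/3$ appearing in \eqref{eq:mod4}.

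\emph{Reduction to harmonic measure.} By Koebe's theorem and conformal invariance of harmonic measure, for $z_1, z_2 \in \D$ with $|z_1 - z_2| = \rho$ one has $|\varphi(z_1) - \varphi(z_2)| \leq C\, r_\ast(\rho)$ where $r_\ast(\rho) = \sup_{w \in \partial \D_L} \inf\{ r > 0 : \omega(z_0, B(w,r) \cap \partial \D_L, \D_L) \geq \rho\}$ and $z_0$ is a fixed interior reference point. Hence \eqref{eq:mod4} reduces to proving the uniform lower bound $\omega(z_0, B(w,r) \cap \partial \D_L, \D_L) \geq \exp(-r^{-3-\zeta})$ for every small $r > 0$ and every $w \in \eta$. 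The harmonic measure at $w$ is in turn governed by the shape of the fjord of $\D_L$ at $w$: a fjord of length $L$ and width $\delta$ contributes an extremal-length bottleneck factor $\exp(-\pi L/\delta)$, so the target lower bound is equivalent to the geometric estimate $L \lesssim \delta^{-2-\zeta}$ for every fjord of width $\delta$ along $\eta$.

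\emph{Upper and lower bounds via the GFF level line.} Using the Schramm--Sheffield coupling of $\eta$ as the zero level line of a GFF with boundary data $\pm \pi/2$, the event ``$\eta$ contains a fjord of width $\delta$ and length $L$ at $w$'' translates to a tightly constrained height profile of the GFF along a macroscopic path emanating from $w$. Sharp probability estimates for such atypical GFF events, together with a Borel--Cantelli argument over dyadic widths $\delta_n = 2^{-n}$ and a $\delta_n$-net cover of $\eta$, would give the uniform upper bound $L \leq \delta^{-2-\zeta}$ and hence \eqref{eq:mod4}. For the second half of the theorem I would run the same framework in reverse: a first/second-moment calculation on the GFF event ``fjord at $w$ of length $L \geq \delta^{-2+\zeta}$'' shows that such fjords occur at arbitrarily small dyadic scales almost surely, and transporting them through $\varphi$ yields explicit witnesses $z_1, z_2 \in \D$ with $|\varphi(z_1) - \varphi(z_2)| \geq (\log 1/|z_1 - z_2|)^{-1/3-\zeta}$.

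\emph{Main obstacle.} The principal difficulty is the sharp probability decay for the event that $\eta$ forms an unusually long thin fjord. At the critical value $\kappa = 4$ --- the boundary between simple and non-simple $\SLE$ --- such estimates sit at the edge of the regime where the imaginary-geometry and LQG techniques apply directly, and pinning down the exponent $2$ in the scaling $L \sim \delta^{-2}$ demands tight control on the conditional law of $\SLE_4$ given a near self-intersection. A secondary difficulty is the upgrade from pointwise to uniform harmonic-measure bounds, which will require a careful chaining / covering argument along $\eta$ simultaneously at all relevant scales.
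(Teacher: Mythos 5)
Your reduction to escape probabilities for Brownian motion is correct and matches the paper's starting point (the discussion at the start of Section~8 makes precisely this reduction), and the Borel--Cantelli covering scheme for the upper bound and a witness-construction plus a zero-one upgrade for the lower bound is also the right scaffolding. But there is a substantive gap between this scaffolding and a proof, and it is concentrated exactly where you flag ``the principal difficulty'': you have not given a route to the quantitative estimate $\p[\text{escape probability at scale }\epsilon \text{ is }\leq \exp(-\epsilon^{-\sigma})] = O(\epsilon^{\sigma/2+o(1)})$, which is the entire content of the paper's Lemma~\ref{lem:mainlemma4} (and its lower-bound companion Lemma~\ref{lem:main_lemma_lbd}). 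Your proposal replaces this with a heuristic ``fjord of width $\delta$ and length $L$'' picture and an extremal-length bottleneck factor $\exp(-\pi L/\delta)$, but this picture is not what the paper proves and it is not clear it can be made rigorous: the boundary of $\D_L$ at a pinch point is not a single slit-type fjord but a self-similar accumulation of near-self-touches at all scales, so the escape probability is a product over dyadic annuli rather than a single bottleneck, and turning ``the GFF level line has an atypical height profile'' into a sharp probability with the correct exponent is not a known computation. The paper circumvents this entirely by drawing $\eta$ on top of an independent $\gamma=2$ quantum cone, welding (Theorem~\ref{thm:critical_cone_welding}, which must first be extended to $\kappa=4$ --- Section~\ref{sec:critical_welding} is devoted to this and is itself nontrivial), and reading off the escape probability from the explicit Bessel-process description of the quantum wedge in the strip parameterization; the exponent $\sigma/2$ then comes from the first-passage density of Brownian motion, not from any extremal-length geometry of $\eta$ itself.

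A second, smaller gap: your lower-bound plan (``a first/second-moment calculation ... shows such fjords occur at arbitrarily small dyadic scales almost surely'') will not close as stated. The fjord events at nearby locations are strongly positively correlated, so Paley--Zygmund alone gives only positive probability, not probability one, and a naive second-moment bound does not control the variance. The paper handles this by first decoupling via an auxiliary pair of level lines of small angle that seal off a pocket around each candidate point (making the escape event conditionally independent of the rest of $\eta$, see the definition of $E_k^2$ and Lemma~\ref{lem:sle4_estimate}), applying Paley--Zygmund to these decoupled events, and then upgrading from positive probability to probability one by a separate annulus-pocket argument exploiting the conformal Markov property. You would need some analogue of this decoupling step; it is not a technicality.

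Finally, your uniform-in-$z,w$ claim ``$|\varphi(z_1)-\varphi(z_2)| \leq C\, r_\ast(\rho)$'' needs the interior and intermediate cases, not just the boundary case: the paper spends Step~2 of the upper-bound proof (Steps~2a--2c) transferring the boundary modulus to the interior via harmonicity of $\re\varphi$, $\im\varphi$ and Koebe, and this is where the restriction $z,w \notin B(\pm i,\xi)$ enters. Your proposal does not address this transfer.
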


One important application for estimating the modulus of continuity of the $\SLE_4$ uniformizing map is whether existing methods can be used to determine if $\SLE_4$ is conformally removable.  Recall that a compact set $K \subseteq \C$ is \emph{conformally removable} if every homeomorphism $\varphi \colon \C \to \C$ which is conformal on $\C \setminus K$ is conformal on $\C$.  The article \cite{js2000remove} by Jones and Smirnov gives various sufficient conditions for a set to be conformally removable (collectively referred to as the Jones-Smirnov conditions).  One commonly used condition from \cite{js2000remove} is that if $K$ is the boundary of a H\"older domain then it is conformally removable.  This condition can be used to show that $\SLE_\kappa$ curves for $\kappa < 4$ are conformally removable as it was shown in \cite{rs2005basic} that they arise as boundaries of H\"older domains.  \cite[Corollary~4]{js2000remove} shows that in fact a modulus of continuity of $\exp(-\sqrt{(\log \delta^{-1}) (\log \log \delta^{-1})}/o(1))$ as $\delta \to 0$ for the uniformizing map suffices for the boundary to be conformally removable (see also the refinement \cite{kn2005remove}), which is a much weaker condition than H\"older continuity.  Theorem~\ref{thm:sle4_thm}, however, shows that the modulus of continuity of the $\SLE_4$ uniformizing map decays to $0$ as $\delta \to 0$ much more slowly.

All of the conditions for conformal removability for boundaries of domains given in \cite{js2000remove} are a consequence of a general condition which we will briefly describe.  Suppose that $D \subseteq \C$ is a domain and $z_0 \in D$.  Let $\Gamma$ be a family of paths in $D$ from $z_0$ to $\partial D$ whose accumulation sets cover $\partial D$.  Let $\CW$ be a Whitney cube decomposition of $D$.  For each $Q \in \CW$, we call the set $\shadow(Q)$ of those points in $\partial D$ which are the endpoint of a path in $\Gamma$ which passes through $Q$ the \emph{shadow} of $Q$.  We let $s(Q) = \diam(\shadow(Q))$.  Then the sufficient condition given in \cite{js2000remove} is
\begin{align}
\label{eqn:js_condition}
	\sum_{Q \in \CW} s(Q)^2 < \infty.
\end{align}
In practice, one takes $\Gamma$ to be the family of quasihyperbolic geodesics from $z_0$ to $\partial D$.  Recall that these are geodesics with respect to the length metric where the length of a path $\gamma \colon [0,1] \to D$ is given by $\int_0^1 \dist(\gamma(t),\partial D)^{-1} |\gamma'(t)| dt$.  Let $\qhd(\cdot,z_0)$ be the quasihyperbolic distance to $z_0$.  It is further explained in \cite{js2000remove} that a necessary condition for~\eqref{eqn:js_condition} to hold (with quasihyperbolic geodesics) is that $\qhd(\cdot,z_0)$ is an $L^1$ function.  In particular, if $\int_D \qhd(w,z_0) dw = \infty$ (where $dw$ denotes Lebesgue measure on $D$) then~\eqref{eqn:js_condition} does not hold.

\begin{theorem}
\label{thm:jones_smirnov_sle4}
Suppose that $\eta$ is an $\SLE_4$ in $\D$ from $-i$ to $i$ and let $\D_L$ be the component of $\D \setminus \eta$ which is to the left of $\eta$.  Almost surely, for any fixed $z_0 \in \D_L$ we have that
\[ \int_{\D_L} \qhd(w ,z_0) dw = \infty.\]
In particular,~\eqref{eqn:js_condition} (with quasihyperbolic geodesics) does not hold for $\SLE_4$.
\end{theorem}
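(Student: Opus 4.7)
The plan is to convert the integral into one on $\D$ via the uniformizing map $\varphi$, and then extract a lower bound on the ``shell integrals'' of $|\varphi'|^2$ from the lower bound on the modulus of continuity in Theorem~\ref{thm:sle4_thm}. With $z_0' := \varphi^{-1}(z_0)$, the Gehring--Osgood comparison of the quasihyperbolic and hyperbolic metrics on simply connected domains, the conformal invariance of the hyperbolic metric, and the asymptotic $d_\D^{\mathrm{hyp}}(z, z_0') \asymp \log \frac{1}{1-|z|}$ as $|z| \to 1$, together with the change of variables $dw = |\varphi'(z)|^2\, dz$, yield
\[
\int_{\D_L} \qhd(w, z_0)\, dw \asymp \int_\D \log \frac{1}{1-|z|}\, |\varphi'(z)|^2\, dz + O(1).
\]
Partitioning $\D$ into the dyadic annuli $A_n := \{z : 2^{-n-1} < 1-|z| \leq 2^{-n}\}$ and setting $M_n := \int_{A_n} |\varphi'|^2\, dz$, the right hand side is comparable to $\sum_n n M_n$. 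Since $\sum_n M_n = \mathrm{area}(\D_L) < \infty$, the goal reduces to producing a sparse sequence $n_k \to \infty$ along which $M_{n_k}$ decays no faster than $n_k^{-2/3 - o(1)}$.

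To produce such a sequence, I would use the second assertion of Theorem~\ref{thm:sle4_thm}: a.s.\ there is a sequence $(z_k, w_k) \in \D^2$ with $\delta_k := |z_k - w_k| \to 0$ and $|\varphi(z_k) - \varphi(w_k)| \geq (\log \delta_k^{-1})^{-1/3 - \zeta}$. Writing $n_k := \lfloor \log_2 \delta_k^{-1} \rfloor$, the mean value inequality along $[z_k, w_k]$ yields $\xi_k$ with $|\varphi'(\xi_k)| \gtrsim n_k^{-1/3-\zeta}\, 2^{n_k}$, and the Koebe estimate $|\varphi'(\xi)|(1-|\xi|) \leq 4\, \mathrm{dist}(\varphi(\xi), \partial \D_L) = O(1)$ forces $1-|\xi_k| \lesssim n_k^{1/3+\zeta}\, 2^{-n_k}$, placing $\xi_k$ in $A_{m_k}$ with $m_k \asymp n_k$. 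On the Koebe ball $B_k := B(\xi_k, (1-|\xi_k|)/2)$, the distortion theorem gives
\[
\int_{B_k} |\varphi'(z)|^2\, dz \asymp \bigl[(1-|\xi_k|)\, |\varphi'(\xi_k)|\bigr]^2 \asymp \mathrm{dist}(\varphi(\xi_k), \partial \D_L)^2.
\]
Provided $\mathrm{dist}(\varphi(\xi_k), \partial \D_L) \gtrsim n_k^{-1/3 - O(\zeta)}$, one can pass to a subsequence with distinct annular indices and disjoint $B_k$'s to obtain $\sum_n n M_n \gtrsim \sum_k n_k^{1/3 - O(\zeta)} = \infty$, which completes the argument once $\zeta$ is chosen small.

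The main obstacle is the lower bound $\mathrm{dist}(\varphi(\xi_k), \partial \D_L) \gtrsim n_k^{-1/3 - O(\zeta)}$: the bare supremum statement of Theorem~\ref{thm:sle4_thm}(ii) as written does not on its own exclude the bad pair $(z_k, w_k)$ being located inside a very thin fiord of $\D_L$, in which case $|\varphi(z_k) - \varphi(w_k)|$ would measure arc length along the fiord rather than Euclidean distance across it, making the Koebe ball image much smaller than $n_k^{-1/3}$. To circumvent this I would appeal to the quantitative form of the lower bound produced during the proof of Theorem~\ref{thm:sle4_thm} (typically identifying the bad points with $\alpha$-thick points of the critical LQG boundary measure associated with $\SLE_4$), which should allow one to pick bad pairs whose images sit at macroscopic distance from $\partial \D_L$. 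Failing that, one can instead estimate the contribution to $\int_{\D_L} \qhd(w, z_0)\, dw$ of fiords of depth $D$ directly (this contribution is $\asymp D^2$ by a short geometric computation with the fiord width) and use LQG moment estimates to show that the depths $D_i$ of the approximate self-intersections of $\eta$ satisfy $\sum_i D_i^2 = \infty$.
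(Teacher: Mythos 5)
Your pullback reformulation is correct as far as it goes: with $z_0' = \varphi^{-1}(z_0)$ and the Koebe/Schwarz comparison between the quasihyperbolic and hyperbolic metrics on a simply connected domain, conformal invariance of the hyperbolic metric and the change of variables indeed give
\[
\int_{\D_L} \qhd(w, z_0)\, dw \asymp \int_\D \log\frac{1}{1-|z|}\, |\varphi'(z)|^2\, dz + O(1),
\]
and the dyadic shell reduction $\int \asymp \sum_n n M_n$ is sound. The mean-value plus Koebe step from a pair $(z_k,w_k)$ witnessing the second part of Theorem~\ref{thm:sle4_thm} also yields a point $\xi_k$ at the right depth $m_k \asymp n_k$ with $\int_{B_k} |\varphi'|^2 \asymp \dist(\varphi(\xi_k),\partial\D_L)^2$.

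The gap you flag is genuine and, as stated, fatal to a proof that relies only on the \emph{statement} of Theorem~\ref{thm:sle4_thm}(ii). That statement controls $|\varphi(z_k)-\varphi(w_k)|$, not $\dist(\varphi(\xi_k),\partial\D_L)$, and the two can differ by an arbitrary factor if $\varphi([z_k,w_k])$ snakes through a narrow channel of $\D_L$. Without the lower bound $\dist(\varphi(\xi_k),\partial\D_L) \gtrsim n_k^{-1/3-O(\zeta)}$ you cannot conclude $M_{n_k} \gtrsim n_k^{-2/3-o(1)}$, and the Koebe inequality only supplies the \emph{upper} bound $(1-|\xi_k|)|\varphi'(\xi_k)| = O(1)$, not a lower bound on the image distance. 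Your first proposed fix (invoking the quantitative content of the lower-bound proof) is exactly the missing ingredient, but your guess about ``$\alpha$-thick points of the critical LQG boundary measure'' is not what the paper does, and your second alternative (summing $D_i^2$ over fiord depths $D_i$) is not worked out either, so the proposal cannot be called complete.

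The paper's actual proof sidesteps the pullback entirely and works directly in $\D_L$. It fixes a Whitney square decomposition $(Q_j)$ of $\D_L$, observes that the graph distance $d_G$ between Whitney squares is comparable to $\distqh$, and that $d_G(x_j,x_k) \gtrsim -\log \p^{w}[\,B \text{ hits } Q_k \text{ before exiting } \D_L\,]$ for $w$ in the half-size square $\wh Q_j$. The quantitative construction from the lower bound of Theorem~\ref{thm:sle4_thm} (proved via Lemma~\ref{lem:main_lemma_lbd} and the two-sided radial $\SLE_4$ estimate Lemma~\ref{lem:two_sided_radial_lemma51}) then produces, for every small $\epsilon$, a point $z_\ell \in \D_L$ with $\dist(z_\ell,\partial\D_L) \asymp \epsilon$ from which the escape probability is $\leq \exp(-\epsilon^{-3})$. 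Hence $\distqh(w, x_k) \gtrsim \epsilon^{-3+o(1)}$ for all $w \in B(z_\ell, \epsilon/2)$, and integrating over this ball already gives a contribution $\gtrsim \epsilon^{-1+o(1)} \to \infty$, proving divergence with positive probability; the a.s.\ statement follows by the same pocket argument used to upgrade the lower bound of Theorem~\ref{thm:sle4_thm}. Note that once you have this special-point construction, your shell computation and the paper's Whitney computation are essentially the same arithmetic (a set of area $\epsilon^2$ on which $\distqh \gtrsim \epsilon^{-3}$ contributes $\gtrsim \epsilon^{-1}$, whether written in $\D$ or $\D_L$); the pullback does not genuinely shorten the argument, and the indispensable input in both routes is that the lower bound for $\varphi$ is attained at a point whose image is at distance $\asymp \epsilon$ from $\partial\D_L$, not merely that a bad pair exists.
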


We now provide some additional context related to the importance of the question of whether $\SLE_4$ is conformally removable.  Sheffield proved in \cite{she2016zipper} that the $\SLE_\kappa$ curves for $\kappa < 4$ arise as conformal weldings of certain types of LQG surfaces which have boundary.  Recall that if $\D_1$, $\D_2$ are two copies of the unit disk and $\phi \colon \partial \D_1 \to \partial \D_2$ is a homeomorphism, then a conformal welding with welding homeomorphism $\phi$ consists of a simple curve $\eta$ on $\s^2$ and a pair of conformal transformations $\psi_i$, $i \in \{1,2\}$, from $\D_i$ to the two components of $\s^2 \setminus \eta$ so that $\phi = \psi_2^{-1} \circ \psi_1$.  If $\eta$ is removable, then it is the \emph{only} curve which can arise from the welding homeomorphism $\phi$.  Since the domains which form the complement of an $\SLE_\kappa$ curve with $\kappa < 4$ are H\"older domains \cite{rs2005basic}, it follows from \cite{js2000remove} that one has uniqueness of the welding when the interface is an $\SLE_\kappa$ curve.  This is not known, however, for $\kappa=4$ and Theorem~\ref{thm:jones_smirnov_sle4} implies that one cannot use the results of \cite{js2000remove} (with quasihyperbolic geodesics) to establish this uniqueness.  (See for a weaker form of uniqueness of the welding \cite{mmq2018uniqueness} in the case $\kappa=4$.)

\textbf{Update:} In \cite{kms2022sle4} the present authors proved that $\SLE_4$ is, in fact, conformally removable. Moreover, in \cite{kms2023nonsimple} we further proved that non-simple $\SLE_\kappa$ curves, where $\kappa$ is chosen so that the adjacency graph of $\SLE_\kappa$ bubbles is almost surely connected (which holds at least for $\kappa \in (4,8)$ sufficiently close to $4$, see \cite{gp2020connectivity}), are conformally removable as well.

Our next main result is focused on the modulus of continuity for $\SLE_8$ with the capacity parameterization.

\begin{theorem}
\label{thm:sle8_thm}
Suppose that $\eta$ is an $\SLE_8$ in $\h$ from $0$ to $\infty$ parameterized by half-plane capacity.  For every $\zeta > 0$ and $T>0$ there a.s.\ exists a constant $c > 0$ so that
\begin{align*}
	|\eta(s) - \eta(t)| \leq c \!\left(\log\left(1+ \frac{1}{|s-t|}\right) \right)^{-1/4+\zeta} \quad\text{for all}\quad 0 \leq s,t \leq T.
\end{align*}
Moreover, for every $\zeta > 0$ and $T > 0$ we a.s.\ have that
\begin{align*}
	\sup\!\left\{ |\eta(s) - \eta(t)| \!\left(\log \frac{1}{|s-t|} \right)^{1/4+\zeta}  : 0 \leq s < t \leq T\right\} = \infty.
\end{align*}
\end{theorem}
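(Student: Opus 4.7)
The plan is to translate the modulus-of-continuity statement into a tail estimate on the Euclidean displacement of a short $\SLE_8$ segment, and then to control that tail using the mating-of-trees coupling of $\SLE_8$ with a $\sqrt{2}$-Liouville quantum wedge. Fix $T>0$. For each dyadic scale $\delta_k=2^{-k}$ I would take the grid of times $t_j=j\delta_k$ for $0\leq j\leq T/\delta_k$ and apply the domain Markov property of $\SLE$: conditional on $\CF_{t_j}$, the curve $s\mapsto g_{t_j}(\eta(t_j+s))-W_{t_j}$ is an independent $\SLE_8$ $\wt\eta^{(j,k)}$ in $\h$ from $0$ with the capacity parameterization. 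A standard chaining/Borel--Cantelli argument reduces the upper bound in the theorem to (a) a priori control on the modulus of continuity of the conformal map $g_{t_j}^{-1}$ near $W_{t_j}$, valid for most $t_j$ simultaneously, and (b) the fresh-segment tail estimate
\begin{align*}
\p\big[\diam(\wt\eta^{(j,k)}([0,\delta_k]))\geq (\log 1/\delta_k)^{-1/4+\zeta}\big]\leq \delta_k^{1+c\zeta}.
\end{align*}

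For (b), the key is to pass from capacity to harmonic measure via the identity
\begin{align*}
\hcap(K)=\int_K \im(z)\,\mu_K(dz),
\end{align*}
where $\mu_K$ is the harmonic measure from infinity of a hull $K$ attached to $\R$. If $\diam(\wt\eta([0,\delta]))\geq r$ and $\tau_r$ denotes the first exit time of $\wt\eta$ from $B(0,r)$, then $\tau_r\leq\delta$ and since $\im(z)\leq\diam(K)\leq r$, the total harmonic measure from infinity of $\wt\eta([0,\tau_r])$ must be at most $2\delta/r$. Thus it suffices to establish a small-harmonic-measure estimate of the form $\p[\omega(\wt\eta([0,\tau_r]))\leq\epsilon]\leq \exp(-c(\log(r/\epsilon))^{1+\alpha})$ for some $\alpha>0$, and then specialize to $\epsilon=2\delta/r$. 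To obtain this I would couple $\wt\eta$ to an independent $\sqrt{2}$-Liouville quantum wedge in $\h$ via the mating-of-trees framework of Duplantier--Miller--Sheffield, so that $\wt\eta$ is realized as the space-filling curve of the wedge. The Euclidean harmonic measure of $\wt\eta([0,\tau_r])$ is then comparable, up to a KPZ-type factor, to a quantum radius built from the GFF circle averages at the tip, and the event $\{\omega\leq\epsilon\}$ forces an atypically large positive fluctuation of these circle averages. Their Gaussian tails then give the required bound, with the rate governed by the $(\gamma,\kappa')=(\sqrt{2},8)$ matching and hence the exponent $4$ appearing in the informal assertion from the introduction that the tip harmonic measure in $\h_t$ can decay as quickly as $\exp(-\epsilon^{-4+o(1)})$.

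For the lower bound I would work symmetrically. The plan is to exhibit, along a sparse subsequence $\delta_k\to 0$ and via a first- and second-moment (Paley--Zygmund) argument, times $t\in[0,T]$ at which the GFF has a prescribed large positive fluctuation at scale $r_k=(\log 1/\delta_k)^{-1/4-\zeta}$ near $\eta(t)$. By the same quantum/Euclidean dictionary used for the upper bound, each such time forces the $\SLE_8$ to traverse Euclidean distance at least $r_k$ in capacity time at most $\delta_k$, producing the desired pairs $(s,t)$. A Borel--Cantelli argument along $\delta_k$ together with a monotonicity step then rules out any modulus of continuity better than $(\log 1/\delta)^{-1/4-\zeta}$.

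The main obstacle is the sharp small-harmonic-measure estimate with the correct exponent $4$ in both directions: the upper bound needs the rate $\exp(-\epsilon^{-4+o(1)})$ to be essentially the worst case almost surely at all tips, while the lower bound needs this rate to be actually attained on some time subinterval of $[0,T]$ of length comparable to $\delta_k$ with non-negligible probability. Both rely on a careful calibration of GFF circle averages near the $\SLE_8$ tip in the mating-of-trees coupling, supplemented by uniform-in-$t$ conformal distortion bounds that transport the $\h$-estimates through $g_t^{-1}$ to $\h_t$.
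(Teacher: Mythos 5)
Your general template --- couple $\SLE_8$ to a weight-$1$ $\sqrt{2}$-quantum wedge via mating of trees, phrase the modulus of continuity as a harmonic-measure decay near the tip, and extract the exponent $4$ from Gaussian tails of GFF circle averages --- is in the spirit of the paper's proof (Sections~\ref{sec:main_lemma}, \ref{sec:density}, \ref{sec:sle8}). But there are two gaps that would cause the argument to fail as written.

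First, the harmonic-measure estimate you set up applies to the wrong object, and with the inequality in the wrong direction. You propose to bound $\p[\omega(\wt\eta([0,\tau_r]))\leq 2\delta/r]$ for a \emph{fresh} chordal $\SLE_8$ $\wt\eta$ from a boundary point. Any $\h$-hull $K$ of diameter $r$ attached to $\R$ has harmonic measure from infinity $\mu_K(K)\asymp r$; it is never exponentially small. Moreover $\hcap(K)=\int\im(z)\,\mu_K(dz)$ together with $\im\leq r$ on $K$ yields $\mu_K(K)\geq\hcap(K)/r$, a \emph{lower} bound, not the upper bound $\mu_K(K)\leq 2\delta/r$ you assert; for $2\delta/r\ll r$ the latter event is vacuous. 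The exponentially small harmonic measure $\exp(-\epsilon^{-4+o(1)})$ that drives the theorem does not live on the fresh segment. It lives in the distortion of $g_{t_j}^{-1}$ near $0$ when the hull $\eta([0,t_j])$ is pinched near the tip $\eta(t_j)$, i.e.\ when the two sides of $\partial\h_{t_j}$ come within distance $\epsilon$ of each other while the Brownian escape probability from $B(\eta(t_j),\epsilon)$ drops to $\exp(-\epsilon^{-\sigma})$. You label this part (a) and defer it as ``a priori control,'' but it is precisely the content of the paper's key estimate Lemma~\ref{lem:mainlemma8}, and the GFF-circle-average analysis belongs there, not on $\wt\eta$.

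Second, even granting the right single-tip estimate, Lemma~\ref{lem:mainlemma8} is proved at a fixed \emph{quantum-area} time $t$, not a fixed capacity time nor a Lebesgue point of the range. Your chaining unions over a capacity-time grid, where the estimate does not directly apply: the law of the configuration near a quantum-typical tip and near a Lebesgue-typical tip differ by a size-biasing factor of $e^{\gamma h}$. The paper bridges this via a Fubini argument in quantum time together with the density lower bound $\E[\one_E\,\mu_h(dz)]\gtrsim\im(z)^{\beta-\gamma^2/2}|z|^{\gamma^2/8-\beta}$ of Proposition~\ref{prop:density_bound}, which is a substantial, nontrivial component of the argument and is not subsumed by a union bound over capacity times. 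Without it the upper-bound chaining does not close. (For the lower bound, your Paley--Zygmund sketch is at least directionally plausible, but the paper instead runs a conditional-independence argument via absolute continuity with a whole-plane GFF, combined with Lemma~\ref{lem:main_lemma_lbd}, which itself requires real work to rule out degenerate behavior away from the tip under the conditioning.)
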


As we mentioned above, Theorem~\ref{thm:sle8_thm} proves a more precise form of \cite[Conjecture~1]{al2014sle8}.

We remark that the half-plane capacity parameterization is only one of a number of ways of parameterizing $\SLE_\kappa$.  Another possibility is the so-called \emph{natural parameterization} \cite{ls2011natural,lr2015minkowski}, which is the parameterization which conjecturally describes the scaling limit of discrete lattice models where the interfaces are parameterized according to the number of edges they traverse (see \cite{lv2018natural} for an example of this).  For $\kappa < 8$, it is shown in \cite{zhan2019optimal} that the optimal H\"older exponent in this case is equal to $1/d_\kappa$ where $d_\kappa=1+\kappa/8$ is the dimension of $\SLE_\kappa$ \cite{rs2005basic,bef2008dimension}.  For $\kappa \geq 8$, the natural parameterization is equivalent to parameterizing the curve according to Lebesgue measure and it is known \cite{ghm2020kpz} that the optimal H\"older exponent in this case is $1/2$.

\subsection{Outline and proof strategy}

\begin{figure}[ht!]
\begin{center}
\includegraphics[scale=0.85]{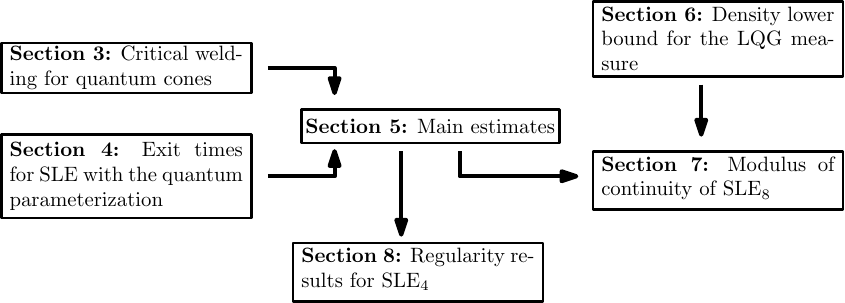}	
\end{center}
\caption{\label{fig:paper_scheme} Schematic illustration of how the sections of the paper fit together.}
\end{figure}

The remainder of this article is structured as follows.  We will collect a number of preliminaries in Section~\ref{sec:preliminaries}.  In Section~\ref{sec:critical_welding} we establish a version of one of the welding results from \cite{dms2014mating} in the critical case that $\kappa = 4$ and in Section~\ref{sec:exit_times} we prove bounds for exit times for $\SLE$ when it is parameterized by quantum length. We will prove the main estimates which will be used to prove our results in Section~\ref{sec:main_lemma}.  In Section~\ref{sec:density}, we will establish a lower bound for the mean density of the LQG area measure in a certain setting.   We then deduce our main result on $\SLE_8$ (Theorem~\ref{thm:sle8_thm}) in Section~\ref{sec:sle8}. Finally, we will prove our results on $\SLE_4$ (Theorems~\ref{thm:sle4_thm} and~\ref{thm:jones_smirnov_sle4}) in Section~\ref{sec:sle4}.

We will now give an overview of the strategy to prove our main theorems.  The description which follows assumes the reader has some basic familiarity with the welding results of \cite{she2016zipper,dms2014mating}; we describe these results in more detail in Section~\ref{sec:preliminaries}.

\begin{figure}[ht!]
\begin{center}
\includegraphics[scale=0.85]{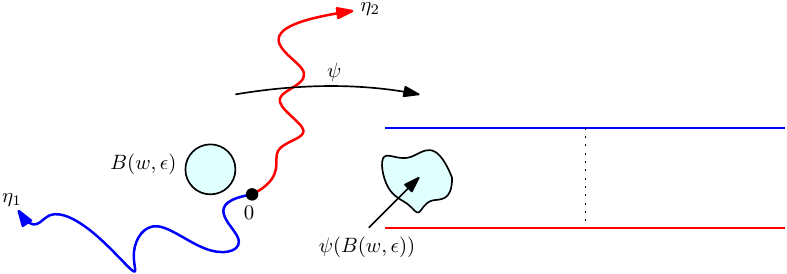}	
\end{center}
\caption{\label{fig:sle4_setup} Illustration of the setup to prove Theorems~\ref{thm:sle4_thm} and~\ref{thm:jones_smirnov_sle4}.}
\end{figure}

We begin with the case $\kappa=4$.  Suppose that $\eta$ is an $\SLE_4$ in $\D$ from $-i$ to $i$ and $\D_L$ is the component of $\D \setminus \eta$ which is to the left of $\eta$.  Fix $\epsilon > 0$.  Suppose that $\eta$ passes through a point $z$ and $w \in \D_L$ has distance of order $\epsilon$ to $z$.  Let $\phi$ be the conformal transformation $\D_L \to \D$ which fixes $-i$, $-1$, and $i$.  Then
\begin{itemize}
\item the derivative of $\phi$ at $w$ is up to constants given by $\epsilon^{-1} \diam(\phi(B(w,\epsilon)))$ and
\item $\diam(\phi(B(w,\epsilon)))$ is proportional to the probability that a Brownian motion starting from $\phi(w)$ first exits $\partial \D$ in the clockwise arc from $-i$ to $i$.
\end{itemize}
By the conformal invariance of Brownian motion, this probability is equal to the probability that a Brownian motion starting from $w$ first exits $\D_L$ in the clockwise arc of $\partial \D_L$ from $-i$ to $i$.  The local behavior of $\eta$ near $z$ is described by a two-sided whole-plane $\SLE_4$.  This is a pair of paths $\eta_1$, $\eta_2$ in $\C$ where the marginal law of $\eta_1$ is a whole-plane $\SLE_4(2)$ from $0$ to $\infty$ and the conditional law of $\eta_2$ given $\eta_1$ is an $\SLE_4$ in $\C \setminus \eta_1$ from $0$ to $\infty$.  It therefore suffices to bound (for each $a  >0$) the conditional probability given $\eta_1$, $\eta_2$ of the event that
\begin{enumerate}
\item[($A$)] There exists $w$ in the component of $\C \setminus (\eta_1 \cup \eta_2)$ which is to the left of $\eta_2$ with distance proportional to $\epsilon$ to $0$ and $\eta_1 \cup \eta_2$, the harmonic measure of each of $\eta_1$ and $\eta_2$ is at least $1/4$, and so that the probability that a Brownian motion starting from $w$ makes it macroscopically far away from $\eta_1$, $\eta_2$ before hitting $\eta_1 \cup \eta_2$ is at most $\exp(-\epsilon^{-a})$.
\end{enumerate}
We will show that $\p[A] = O(\epsilon^{a/2+o(1)})$ as $\epsilon \to 0$.  As the dimension of $\SLE_4$ is $3/2$ \cite{bef2008dimension,rs2005basic}, it takes $O(\epsilon^{-3/2+o(1)})$ balls of radius $\epsilon$ to cover it, which will lead to the choice $a=3$.  (We will also prove a lower bound for the probability of a variant of $A$ with some additional conditions.)

We will determine the asymptotic behavior for $\p[A]$ using the relation between $\SLE$ and LQG.  See Figure~\ref{fig:sle4_setup} for an illustration of the setup.  Namely, if one draws the pair of paths $(\eta_1,\eta_2)$ on top of an independent LQG surface called a weight-$4$ quantum cone, then the quantum surfaces parameterized by the two components of $\C \setminus (\eta_1 \cup \eta_2)$ are independent quantum wedges of weight $2$.  This result was proved for $\kappa < 4$ previously in \cite{dms2014mating}; the purpose of Section~\ref{sec:critical_welding} is to establish the case $\kappa=4$.  If we let $\psi$ be a conformal transformation which takes the component of $\C \setminus (\eta_1 \cup \eta_2)$ which is to the left of $\eta_2$ to the strip $\strip = \R \times (0,\pi)$ sending $0$ to $-\infty$, $\infty$ to $+\infty$, then there is an explicit description for the field which describes the surface parameterized by $\strip$.  Moment estimates for the LQG area measure will imply that $B(w,\epsilon)$ must contain a certain amount of LQG mass which, combined with the explicit description of the field on $\strip$, allows us to bound the probability that $\re(\psi(B(w,\epsilon)))$ is too small.  The event $E$ is roughly equivalent to the event that a Brownian motion starting from $\psi(w)$ first exits $\strip$ to the right of the $y$-axis.

 \begin{figure}[ht!]
\begin{center}
\includegraphics[scale=0.85]{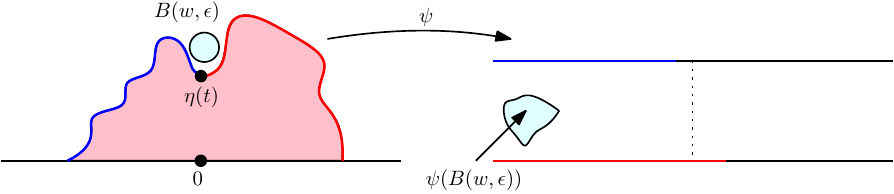}	
\end{center}
\caption{\label{fig:sle8_setup} Illustration of the setup to prove Theorem~\ref{thm:sle8_thm}.}
\end{figure}
 
We now describe the strategy in the case $\kappa=8$.  Suppose that $\eta'$ is an $\SLE_8$ in $\h$ from $0$ to $\infty$ and let $(g_t)$ be its associated Loewner flow.  Let also $f_t = g_t - W_t$ be the centered Loewner flow.  Fix $t \in [0,1]$, $\zeta > 1$, and $\epsilon > 0$.  It follows from \cite{ghm2020kpz} that if we let $\tau_\epsilon = \inf\{s \geq t : |\eta'(s) - \eta'(t)| = \epsilon\}$ then the event that there exists $w$ with $B(w,\epsilon^\zeta) \subseteq \eta'([t,\tau_\epsilon])$ occurs with overwhelming probability as $\epsilon \to 0$ (with $\zeta > 1$ fixed).  To prove the upper bound in Theorem~\ref{thm:sle8_thm}, we want to get a lower bound on $\diam(f_t(B(w,\epsilon^\zeta)))$ as the half-plane capacity of $f_t(\eta'([t,\tau_\epsilon]))$ (roughly speaking) behaves like $\diam(f_t(\eta'([t,\tau_\epsilon)))^2 \geq \diam(f_t(B(w,\epsilon^\zeta)))^2$.  If $t$ is before $\eta'$ first leaves $\D_+ = \D \cap \h$ then giving a lower bound to $\diam(f_t(\eta'([t,\tau_\epsilon])))$ is equivalent to bounding the probability that a Brownian motion starting from $f_t(w)$ first exits $\h$ in $f_t(\partial \h \setminus [-2,2])$.  This probability is comparable to the probability that a Brownian motion starting from $f_t(w)$ first exits $\h$ in $\partial \h \setminus I$ for some fixed compact interval~$I$.  This, in turn, is equivalent to bounding the probability that a Brownian motion starting from $w$ first exits $\h \setminus \eta'([0,t])$ in $\partial \h \setminus [-2,2]$.  We will estimate this probability in a manner similar to in the argument for $\SLE_4$ described above.  See Figure~\ref{fig:sle8_setup} for an illustration of the setup.  Namely, suppose that $\eta'$ is drawn on top of a certain type of LQG surface called a weight-$1$ quantum wedge and that we have reparameterized $\eta'$ by quantum area.  Then for each time $t$, the quantum surface parameterized by $\h \setminus \eta'([0,t])$ has the law of a weight-$1$ quantum wedge.  Fix $\xi > 1$.  We will then estimate (for each $a > 0$) the probability of the event that
 \begin{enumerate}
\item[($A$)] There exists $w$ so that $B(w,\epsilon^\xi)$ is contained in the unbounded component of $\h \setminus \eta'([0,t])$ with distance proportional to $\epsilon$ to $\eta'(t)$ so that the probability that a Brownian motion starting from $w$ exits $\h \setminus \eta'([0,t])$ in $\partial \h \setminus [-2,2]$ is at most $\exp(-\epsilon^{-a})$.
\end{enumerate}
Using the explicit description of the field which describes the surface parameterized by $\h \setminus \eta'([0,t])$ (after mapping to $\strip$), we will show that $\p[A] = O(\epsilon^{a/2+o(1)})$ as $\epsilon \to 0$ (this will be proved simultaneously with the statement we described for $\SLE_4$ above).  Since $\SLE_8$ is space-filling, a segment of it is covered by of order $\epsilon^{-2}$ balls of radius $\epsilon$, which leads us to consider $a=4$. (We will also prove a lower bound for the probability of a variant of $A$ with some additional conditions.)

As the event $A$ is phrased in terms of $\SLE_8$ when parameterized by quantum area and the quantum measure behaves very differently from Lebesgue measure, it takes some care to deduce Theorem~\ref{thm:sle8_thm} from the bound for $\p[A]$ mentioned just above.  We will now explain how this works in more detail.  In order to emphasize the dependence on $t$, we now write $A_t$ for $A$ as above and make the choice $a = 4+2\beta$ for $\beta > 0$ fixed.  Let $\eta_c'$ be $\eta'$ parameterized by capacity, for $z \in \h$ we let $\tau_z = \inf\{t \geq 0 : \eta_c'(t) = z\}$, and let $B_z$ be the event defined in the same way as $A_t$ but with $\eta_c'|_{[0,\tau_z]}$ in place of $\eta'|_{[0,t]}$. Fix $\delta \in (0,1)$ small and $M > 0$ large.  On the event $E= E_1 \cap E_2$ where $E_1 = \{ \eta_c'([0,\delta]) \subseteq e^{-1}\D_+\}$ and $E_2 = \{\mu_h(\D_+) \leq M\}$ (where $h$ is the field which describes the quantum surface on which $\eta_c'$ is drawn and $\mu_h$ is the associated LQG measure) we have that
\[ \int_0^M \one_{A_t} dt \geq \int_{e^{-1}\D_+} \one_{B_z}\one_{z \in \eta_c'([0,\delta])} d\mu_h(z).\]
By Fubini's theorem, the expectation of the left hand side is $O(\epsilon^{2+\beta})$ and therefore so is
\[ \E\!\left[ \int_{e^{-1}\D_+} \one_{B_z} \one_E \one_{z \in \eta_c'([0,\delta])} d\mu_h(z) \right] = \int_{e^{-1}\D_+} \p[B_z,\ E_1,\ z \in \eta_c'([0,\delta])] f(z) dz\]
where $f(z) = \E[ \one_{E_2} \mu_h(dz)]$ (i.e., the density with respect to Lebesgue measure of the measure $X \mapsto \E[ \one_{E_2}\mu_h(X)]$) and $dz$ denotes Lebesgue measure.  We will prove in Section~\ref{sec:density} that $f(z)$ is bounded from below by a constant $c > 0$ in $e^{-1}\D_+$.  (The argument in Section~\ref{sec:density} is somewhat involved because although it is not difficult to control $\E[\mu_h(dz)]$, lower bounding $\E[ \one_{E_2} \mu_h(dz)]$ is much less straightforward.)  In the end, we obtain that the above expectation is at least $c$ times
\[ \E\!\left[ \int_{e^{-1}\D_+} \one_{B_z} \one_{E_1} \one_{z \in \eta_c([0,\delta])} dz \right].\]
It was shown in \cite{ghm2020kpz} that for $\zeta > 1$ fixed in each interval of time that an $\SLE_8$ travels distance $\epsilon$ it with overwhelming probability fills in a ball of radius $\epsilon^\zeta$.  Therefore if $B_z$ occurs \emph{for some} $z \in \D_+$ then the Lebesgue measure for $z \in \D_+$ for which $B_z$ occurs is very likely to be at least $\epsilon^{2 \zeta}$.  Combining the various bounds gives Theorem~\ref{thm:sle8_thm}.

We remark that the same strategy can be used for $\eta \sim \SLE_\kappa$ with $\kappa > 8$, but it does not give the optimal regularity, as in this case, $\eta$ is more regular.

\subsection{Notation}

Let $a,b$ be some quantities. We write $a \lesssim b$ if there is a constant, $C$, independent of any parameters of interest, such that $a \leq Cb$ and $a \gtrsim b$ if $b \lesssim a$. Moreover, we write $a \asymp b$ if $a \lesssim b$ and $a \gtrsim b$. Often we will write explicitly what the implicit constants may depend on. Furthermore, if $a$ and $b$ depend on some parameter $x$, then we write $a = o_x(b)$ if $a/b \rightarrow 0$, as $x \rightarrow 0$ or $\infty$ (which will be clear from context).  Lastly, we write $a = o_x^\infty(b)$ if $a = o_x(b^t)$ for each $t>0$ as $x \rightarrow 0$ (or $t<0$ as $x \rightarrow \infty$).

We will also allow the constants to vary between different occasions. More precisely, we may denote two different constants in the same way, between lines, or even between inequalities on the same line. We shall often write the dependence of the parameter as a subscript, i.e., we often write, say, $C_p$ to emphasize that the constant depends on $p$. We allow the constants to depend on $\gamma$ without explicitly writing it out. In Section~\ref{sec:critical_welding}, however, we will be explicit with the dependence on $\gamma$ as well, as we will consider limits in the parameter $\gamma$.

We let $\Z$ denote the set of integers, $\N$ the set of positive integers, $\R$ the real numbers, $\C$ the complex plane, $\h$ the upper half-plane, $\D$ the unit disk, $\D_+ = \D \cap \h$, $\s^1 = \partial \D$, $\N_0 = \N \cup \{0\}$ and $\strip = \R \times (0,\pi)$. Moreover, we let $\cyl$ denote the infinite cylinder $\R \times [0,2\pi]$, where for each $t \in \R$, we identify the points $\{t \} \times \{0\}$ and $\{ t \} \times \{ 2\pi \}$. For a set of points $A$ in the complex plane, we denote by $A+b$, sometimes $(A+b)$ for clarity, the set $\{z+b: z \in A\}$. In the same way we denote by $bA$ the set of points $\{bz: z \in A \}$.

When we write $\p^z$, we mean the probability measure given by the probability measure $\p$ together with a Brownian motion, independent of everything else, started at $z$.  The exact definition of $\p$ will be clear from the context.

\subsection*{Acknowledgements} K.K.\ was supported by the EPSRC grant EP/L016516/1 for the University of Cambridge CDT (CCA).  J.M.\ and L.S.\ were supported by ERC starting grant 804166 (SPRS). We also thank two anonymous referees for very good comments that helped improve this paper.

\section{Preliminaries}\label{sec:preliminaries}

\subsection{Bessel processes}

Recall that a $d$-dimensional Bessel process, denoted by $\BES^d$, is a solution to the SDE
\begin{align}
\label{eq:Bessel}
	dX_t = \frac{a}{X_t} dt + dB_t,\quad X_0 = x \geq 0,
\end{align}
where $a = (d-1)/2$ and $B$ is a standard Brownian motion  If $d \in (0,2)$, then $X_t$ will a.s.\ hit and be instantaneously reflected at $0$; if $d = 2$, then $X_t$ will a.s.\ not hit $0$, but $\inf_{t \geq 0} X_t^x = 0$ for all $x \geq 0$ (where the superscript $x$ denotes the starting value); and if $d>2$, then a.s.\ $\lim_{t \to \infty} X_t = \infty$.  Moreover, $X_t$ satisfies Brownian scaling: if $(X_t)$ is a $\BES^d$ then so is the process $(r^{-1} X_{r^2 t})$.

Fix $d$, let $a = (d-1)/2$, and let $X$ be a $\BES^d$.  Then the law of $X$ can be obtained as follows.  Start with a Brownian motion $B$ and denote by $N_t$ the local martingale
\begin{align}
\label{eq:BESmtg}
	N_t = \!\left( \frac{B_t}{B_0} \right)^a \exp\!\left( -\frac{a(a-1)}{2} \int_0^t \frac{ds}{B_s^2} \right).
\end{align}

Let $(\CF_t)$ be the $\sigma$-algebra generated by $(B_t)$ and for $\epsilon > 0$ we let $\tau_\epsilon$ be the first time $t$ that $B_t$ hits $\epsilon$. We define the probability measure $\wh{\p}_{a,\epsilon,t}$ on $\CF_{t \wedge \tau_\epsilon}$ by 
\begin{align*}
\frac{d\p_{a,\epsilon,t}}{d\p} = N_{t \wedge \tau_\epsilon}.
\end{align*}
Note that $(N_t)$ solves the SDE
\begin{align*}
dN_{t \wedge \tau_\epsilon} = \frac{a}{B_{t \wedge \tau_\epsilon}} \one_{\{t < \tau_\epsilon \}}N_{t \wedge \tau_\epsilon}dB_t.
\end{align*}
We can easily see that if $s < t$, then $\wh{\p}_{a,\epsilon,t}$ restricted to $\CF_{s \wedge \tau_\epsilon}$ is $\wh{\p}_{a,\epsilon,s}$. Hence we can write $\wh{\p}_{a,\epsilon}$ and so Girsanov's theorem implies that under the measure $\wh{\p}_{a,\epsilon}$, $(B_t)$ solves \eqref{eq:Bessel} up until time $\tau_\epsilon$. Note that this equation does not depend on $\epsilon$ except in the specification of the values of $t$. Therefore we can write $\wh{\p}_{a}$, and let $\epsilon \to 0$ and state that $(B_t)$ solves \eqref{eq:Bessel} up until time $\tau_0$ where $\tau_0 = \lim_{\epsilon \to 0}\tau_\epsilon$ is the first time $t$ that $B_t$ hits $0$.

Another related process is the radial Bessel process with parameter $a \in \R$, which is defined to be the solution $(Y_t)_{t \geq 0}$ to the SDE
\begin{align}
\label{eq:radBES}
	dY_t = a \cot(Y_t) + dB_t,\quad Y_0 = y \in (0,\pi).
\end{align}
In the case that $a > -1/2$, this process has an invariant measure with density $\psi_a(y) = c_a \sin^{2a}(y)$ with respect to Lebesgue measure on $(0,\pi)$, where $c_a$ is a normalizing constant.  We note that $(\pi - Y_t)$ is also a radial Bessel process with parameter $a$ and locally, around $0$, both $(Y_t)$ and $(\pi-Y_t)$ behave like a $\BES^d$ (where $d = 2a+1$).  We shall make this a bit more precise.

For a Brownian motion $B$, we define
\begin{align}\label{eq:radBESmtg}
	M_t = \!\left( \frac{\sin B_t}{\sin B_0} \right)^a \exp\!\left( -\frac{a(a-1)}{2} \int_0^t \frac{ds}{\sin^2 (B_s)} + \frac{a^2}{2} t \right).
\end{align}

Set $\tau_\epsilon = \inf\{t : B_t \leq \epsilon \, \text{or} \, B_t \geq \pi - \epsilon\}$. Then $M_t$ is a local martingale for $t < \tau_\epsilon$ satisfying
\begin{align*}
dM_t = a \cot(B_t) M_t dB_t,\quad M_0 = 1.
\end{align*}
We define a measure $\wh{\p}_\epsilon$ such that if $V$ is a random variable depending only on $B_s, 0\leq s \leq t \wedge \tau_\epsilon$, then 
\begin{align*}
\E_{\wh{\p}_\epsilon}[ V ] = \E[ M_{t \wedge \tau_\epsilon} V ].
\end{align*}
Then, the Girsanov theorem implies that
\begin{align*}
dB_t = a \cot(B_t)dt + d\wt{B}_t,\quad t < \tau_\epsilon,
\end{align*}
where $\wt{B}_t$ is a standard Brownian motion with respect to $\wh{\p}_\epsilon$. We also note that if $Y_t$ solves \eqref{eq:radBES}, then $Y_t$ hits $\{0,\pi\}$ a.s.\ when $a \in (-1/2,1/2)$ while it does not hit $\{0,\pi\}$ a.s.\ when $a \geq 1/2$.

Finally, we note the following about the relationship between ordinary and radial Bessel processes. Let $(X)$ and $(Y)$ be solutions to~\eqref{eq:Bessel} and~\eqref{eq:radBES}, respectively, with $X_0 = Y_0 =x \in (0,\frac{\pi}{2})$. Let $\wt{\mu}_{1,x}$ (resp.\ $\wt{\mu}_{2,x}$) denote the law of $(X_t)_{0 \leq t \leq 1 \wedge T_1}$ (resp.\ $(Y_t)_{0 \leq t \leq 1 \wedge T_2}$), where $T_1 = \inf\{ t \geq 0: X_t = \frac{\pi}{2}\}$ and $T_2 = \inf\{ t \geq 0: Y_t = \frac{\pi}{2}\}$. Then, letting $B$ be a Brownian motion and $T = \inf\{ t \geq 0: B_t = \frac{\pi}{2}\}$, there is a constant $c \geq 1$, depending only on $a$, such that $c^{-1} \leq M_t/N_t \leq c$ for $0 \leq t \leq 1 \wedge T$ and consequently, $\wt{\mu}_{1,x}$ and $\wt{\mu}_{2,x}$ are mutually absolutely continuous.

\subsection{Chordal $\SLE$ and $\SLE_\kappa(\underline{\rho})$ processes}
Chordal $\SLE_\kappa$ processes, or more generally, $\SLE_\kappa(\underline{\rho})$ processes are random fractal curves growing in a simply connected domain, between two marked boundary points.  We recall their definitions. Fix some $\kappa > 0$, let $\underline{x}_L = (x_{l,L}, \dots, x_{1,L})$ and $\underline{x}_R = (x_{1,R},\dots, x_{r,R})$, where $x_{l,L}<\dots<x_{1,L} \leq 0 \leq x_{1,R}<\dots<x_{r,R}$, and let $\underline{\rho}_L = (\rho_{1,L},\dots,\rho_{l,L})$ and $\underline{\rho}_R = (\rho_{1,R},\dots,\rho_{r,R})$, where $\rho_{j,q} \in \R$ for $j \in \N$, $q \in \{L,R\}$. Let $(g_t)$ denote the solution to~\eqref{eqn:chordal_loewner} where $W$ solves the following system of SDEs,
\begin{align}\label{eq:driving_function}
    dW_t &= \sum_{j=1}^l \frac{\rho_{j,L}}{W_t - V_t^{j,L}} dt + \sum_{j=1}^r \frac{\rho_{j,R}}{W_t-V_t^{j,R}} dt + \sqrt{\kappa}dB_t, \\
    dV_t^{j,q} &= \frac{2}{V_t^{j,q}-W_t} dt, \quad V_0^{j,q} = x_{j,q}, \quad j =1,\dots,N_q, \quad q \in \{L,R\}, \nonumber
\end{align}
where $B$ is a Brownian motion and $N_L = l$ and $N_R = r$. For each $t \geq 0$,  $g_t$ defines a conformal map from the simply connected domain $\h_t = \h \setminus K_t$ onto $\h$, where $K_t = \{z \in \h: T_z \leq t\}$ and $T_z = \inf\{ t \geq 0: g_t(z) - W_t = 0 \}$. The solution to~\eqref{eq:driving_function} exists until the continuation threshold, that is, the first time $t>0$ such that $\sum_{j \leq k} \rho_{j,q} \leq -2$ for some $k > 0$, $q \in \{L,R\}$. Geometrically this means that the solution exists until the first time $t > 0$ such that the hull $K_t$ has swallowed force points whose total weight is at most $-2$. Almost surely, there is a continuous curve $\eta: [0,\infty) \rightarrow \overline{\h}$ such that $\h_t$ is the unbounded connected of $\h \setminus \eta([0,t])$ \cite{rs2005basic,ms2016imag1} and the curve $\eta$ is an $\SLE_\kappa(\underline{\rho}_L;\underline{\rho}_R)$ process with force points $(\underline{x}_L;\underline{x}_R)$.  The number $\rho_{j,q}$ is called the weight of the force point $x_{j,q}$ and it is often convenient to write $\underline{\rho} = (\underline{\rho}_L;\underline{\rho}_R)$. The family of conformal maps $(g_t)$ is called the $\SLE_\kappa(\underline{\rho})$ Loewner chain and $(K_t)$ its hulls. An $\SLE_\kappa(\underline{\rho})$ process in a general simply connected domain $D \subsetneq \C$ is defined as the conformal image of an $\SLE_\kappa(\underline{\rho})$ taking $\h$ to $D$ and the start and endpoints, as well as the force points, to their corresponding points in $\partial D$. An $\SLE_\kappa$ process is an $\SLE_\kappa(\underline{0})$ process (where $\underline{0}$ is the zero vector), that is, the curve generating $K_t$ when $W_t = \sqrt{\kappa} B_t$. The laws of $\SLE_\kappa$ and $\SLE_\kappa(\underline{\rho})$ processes are mutually absolutely continuous away from the boundary of the domain.

Let $\eta \sim \SLE_\kappa$.  We now recall the phases of $\SLE_\kappa$ \cite{rs2005basic}.  If $0 < \kappa \leq 4$, then $\eta$ is a.s.\ a simple curve which does not intersect the boundary away from its start and endpoints; if $4 < \kappa < 8$, then $\eta$ a.s.\ intersects; but never traverses, itself as well as the boundary and if $\kappa \geq 8$, then $\eta$ is a.s.\ space-filling. Moreover, the Hausdorff dimension of $\eta$ is $d_\kappa = \min(2,1+\kappa/8)$ \cite{rs2005basic,bef2008dimension}, and the $d_\kappa$-dimensional Minkowski content exists \cite{lr2015minkowski}.

\subsection{Whole-plane $\SLE_\kappa(\rho)$}

Whole-plane $\SLE_{\kappa}(\rho)$ is a variant of $\SLE$ which describes a random growth process $K_t$ where, for each $t \in \R$, $K_t \subseteq \C$ is compact with $\C_t = \C \setminus K_t$ simply connected (viewed as a subset of the Riemann sphere). For each $t$, we let $g_t: \C_t \mapsto \C \setminus \ol{\D}$ be the unique conformal transformation with $g_t(\infty) = \infty$ and $g_t'(\infty) > 0$. Then $g_t$ solves the whole-plane Loewner equation 
\begin{align}\label{eq:wp_Loewner_equation}
    \partial_t g_t(z) = g_t(z)\frac{W_t + g_t(z)}{W_t - g_t(z)},\quad g_0(z) = z
\end{align}
where the pair $(O_t,W_t)_{t \in \R}$ is the unique stationary solution taking values in $\s^1 \times \s^1$ which solves the system of equations 
\begin{align}\label{eq:wpSLEdriv}
	\begin{split}
	dW_t &= -\frac{\kappa}{2}W_t dt + i\sqrt{\kappa}W_t dB_t + \frac{\rho}{2}\Psi(O_t,W_t)dt \\
	dO_t &= \Psi(W_t,O_t)dt
	\end{split}
\end{align}
where $\Psi(w,z) = -z\frac{z+w}{z-w}$ and 
 $B_t$ is a two-sided standard Brownian motion with $B_{0} = 0$. We will be interested in the case when $\rho = 2$ and $\kappa \in (0,4]$.  The following proposition is a special case of Proposition~2.1 of \cite{ms2017imag4} and guarantees that whole-plane $\SLE_\kappa(\rho)$ processes can be constructed.
\begin{proposition}\label{prop:stationary_uniqueness}
For all $\kappa > 0$ and $\rho > -2$, there exists a unique stationary solution to~\eqref{eq:wpSLEdriv} indexed by $\R$.
\end{proposition}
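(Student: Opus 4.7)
The plan is to reduce the two-dimensional system~\eqref{eq:wpSLEdriv} on the unit circle to a one-dimensional radial-Bessel-type SDE for the angular gap between $W_t$ and $O_t$, establish existence and uniqueness of an invariant measure for that one-dimensional process, and then lift back to $(W_t,O_t)$ using the rotational symmetry of the system.

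First I would check that the SDE preserves $|W_t|=|O_t|=1$ and write $W_t=e^{i\theta_t}$, $O_t=e^{i\phi_t}$. Using $\Psi(w,z)=-z(z+w)/(z-w)$ and the identity $(e^{i\phi}+e^{i\theta})/(e^{i\phi}-e^{i\theta})=-i\cot((\phi-\theta)/2)$, one finds $d\phi_t = -\cot(\psi_t/2)\,dt$ and, via It\^o and the fact that $\widetilde\Psi(O_t,W_t)=\Psi(O_t,W_t)$ on the unit circle, $d\theta_t=\sqrt{\kappa}\,dB_t+(\rho/2)\cot(\psi_t/2)\,dt$. Here $\psi_t:=\theta_t-\phi_t\in(0,2\pi)$ is the angular difference. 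Subtracting gives the key one-dimensional SDE
\begin{equation*}
  d\psi_t \;=\; \frac{\rho+2}{2}\cot(\psi_t/2)\,dt \;+\; \sqrt{\kappa}\,dB_t.
\end{equation*}
A time/space change turns this into a radial Bessel process of the type~\eqref{eq:radBES} recalled in the preliminaries, with parameter $a$ determined by $\kappa$ and $\rho$, and the condition $\rho>-2$ is exactly what makes the drift attractive from both endpoints so that $\psi_t$ does not reach $\{0,2\pi\}$ from the interior.

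Next I would establish that $\psi_t$ has a unique invariant probability measure on $(0,2\pi)$. The scale function and speed measure can be written explicitly; a direct computation shows that the invariant density is proportional to $\sin(\psi/2)^{2(\rho+2)/\kappa}$, which is integrable on $(0,2\pi)$ precisely when $\rho>-2$. Uniqueness of the invariant measure follows from the irreducibility of the diffusion on $(0,2\pi)$ together with the Feller property; one then obtains exponential ergodicity (or at least total-variation convergence to stationarity) via a standard Lyapunov argument using the function $-\log\sin(\psi/2)-\log\sin((2\pi-\psi)/2)$. From ergodicity one builds, by Kolmogorov extension from consistent finite-dimensional distributions under the invariant law, a two-sided stationary process $(\psi_t)_{t\in\R}$.

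For existence of the stationary solution to the full system, I would use the rotational invariance: the transformation $(W,O)\mapsto(e^{i\alpha}W,e^{i\alpha}O)$ preserves~\eqref{eq:wpSLEdriv}. Given the two-sided stationary $\psi_t$, sample $\phi_0$ uniformly on $[0,2\pi)$ independently, and solve the deterministic ODE $d\phi_t=-\cot(\psi_t/2)\,dt$ both forward and backward from time $0$; set $\theta_t=\phi_t+\psi_t$. The pair $(W_t,O_t)=(e^{i\theta_t},e^{i\phi_t})$ then satisfies~\eqref{eq:wpSLEdriv}, and combining stationarity of $\psi_t$ with the rotation-invariant marginal of $\phi_0$ gives stationarity of $(W_t,O_t)$. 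For uniqueness, any stationary solution projects to a stationary solution of the $\psi$-SDE, which by the previous step must coincide in law with the one built from the unique invariant density; rotational symmetry then forces the joint law of $(W_0,O_0)$ to be the rotationally-symmetrized version just constructed, determining all finite-dimensional marginals.

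The main obstacle I anticipate is the boundary analysis for $\psi_t$: one has to verify rigorously that the stationary process stays in the open interval $(0,2\pi)$ for all $t\in\R$ and that no stationary solution can assign mass to the boundary (corresponding to $W_t=O_t$), as the $\cot$ singularity would otherwise render the lift of $(W_t,O_t)$ ill-defined. This is handled via the absolute continuity with respect to a Bessel process described around~\eqref{eq:radBESmtg}, combined with the fact that a Bessel process of dimension $d=1+2(\rho+2)/\kappa>1$ almost surely avoids $0$ when started from the interior. A secondary technical point is justifying the two-sided extension: this uses that the invariant measure is a reversible (or at least ergodic) stationary distribution of a strong Markov process, so the finite-dimensional distributions are consistent and Kolmogorov extension applies.
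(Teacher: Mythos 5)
The paper does not prove this proposition; it is cited verbatim as a special case of Proposition~2.1 of \cite{ms2017imag4}. So I am reviewing your proposal as a self-contained argument.

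Your reduction to the one-dimensional SDE is correct: writing $W=e^{i\theta}$, $O=e^{i\phi}$, $\psi=\theta-\phi$, the computation giving $d\phi_t=-\cot(\psi_t/2)\,dt$ and $d\theta_t=\sqrt{\kappa}\,dB_t+\tfrac{\rho}{2}\cot(\psi_t/2)\,dt$ is right (using $\widetilde\Psi(O,W)=\Psi(O,W)$ on $\s^1$ and the identity for $(e^{i\phi}+e^{i\theta})/(e^{i\phi}-e^{i\theta})$), and the induced SDE $d\psi_t=\tfrac{\rho+2}{2}\cot(\psi_t/2)\,dt+\sqrt{\kappa}\,dB_t$ is exactly~\eqref{eq:thetaSDE} with $\rho=2$ and its generalization for other $\rho$. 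The time/space change to~\eqref{eq:radBES} with $a=(\rho+2)/\kappa$, the invariant density $\propto\sin^{2a}(\psi/2)$, and the rotational-symmetry argument to lift a stationary $\psi_t$ to a stationary pair $(W_t,O_t)$ are also the right moves.

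The genuine gap is in your boundary analysis. You assert that a Bessel process of dimension $d=1+2(\rho+2)/\kappa>1$ avoids $0$, and hence that for $\rho>-2$ the stationary $\psi_t$ never reaches $\{0,2\pi\}$. This is false: a $\BES^d$ with $d\in(1,2)$ hits $0$ almost surely (it is instantaneously reflected there), as the paper itself recalls just after~\eqref{eq:Bessel}; avoidance requires $d\geq 2$, i.e.\ $a\geq 1/2$, i.e.\ $\rho\geq\kappa/2-2$. For $\rho\in(-2,\kappa/2-2)$ (a nonempty range for every $\kappa>0$), the process $\psi_t$ does reach the boundary, $W_t=O_t$ occurs for a Lebesgue-null set of times, and the $\cot$ drift in the SDE for $\psi$ (and the singular term $\widetilde\Psi$ in~\eqref{eq:wpSLEdriv}) must be interpreted in integrated form with reflection at the endpoints. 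Your proof as written does not cover this regime, and the Lyapunov function $-\log\sin(\psi/2)-\log\sin((2\pi-\psi)/2)$ you propose blows up along the stationary trajectory there. A correct treatment has to build the reflected process, show its invariant density is still $\propto\sin^{2a}(\psi/2)$, and check that this reflection extends consistently to the original system on $\s^1\times\s^1$; this is precisely the delicate part of the argument in \cite{ms2017imag4}. Relatedly, your claim that the invariant density is integrable ``precisely when $\rho>-2$'' is off: $\sin^{2a}(\psi/2)\sim(\psi/2)^{2a}$ near $0$ is integrable iff $a>-1/2$, i.e.\ $\rho>-2-\kappa/2$; the threshold $\rho>-2$ instead corresponds to $a>0$, i.e.\ the drift being attractive rather than repulsive at the endpoints.
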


\subsection{Radial $\SLE_\kappa(\rho)$}
A radial $\SLE_\kappa(\rho)$ in $\D$, targeted at $0$ is a random growth process which locally looks like an $\SLE_\kappa$ and grows from some point on  $\partial \D$ towards $0$. We denote the hull of the process at time $t$ by $K_t$ and let $g_t: \D \setminus K_t \to \D$ be the unique conformal map fixing $0$ with $g_t'(0) > 0$.  If we parameterize time so that $\log g_t'(0) = t$ for all $t \geq 0$, then $(g_t)$ solves the SDE~\eqref{eq:wp_Loewner_equation} with $W$ given by the solution to~\eqref{eq:wpSLEdriv}. Whole-plane $\SLE_\kappa(\rho)$ can be seen as a bi-infinite time version of the radial $\SLE_\kappa(\rho)$. 

\subsection{Gaussian free fields}\label{sec:GFF}
Let $D \subseteq \C$ be a simply connected domain with harmonically non-trivial boundary, let $C_0^\infty(D)$ denote the set of smooth functions, compactly supported in $D$, and let $H_0(D)$ denote the Hilbert space closure of $C_0^\infty(D)$ with respect to the Dirichlet inner product $(f,g)_\nabla = \frac{1}{2\pi} \int_D \nabla f(z) \cdot \nabla g(z) dz$. The zero-boundary Gaussian free field (GFF) $h$ is the random distribution defined by $h = \sum_{n \geq 1} \alpha_n \phi_n$ where $(\phi_n)_{n \geq 1}$ is a $(\cdot,\cdot)_\nabla$-orthogonal basis of $H_0(D)$ and $(\alpha_n)_{n \geq 1}$ is a sequence of independent $N(0,1)$-distributed random variables.  Equivalently, one can define the zero-boundary GFF as the centered Gaussian process $h: H_0(D) \rightarrow L^2(\p)$ with covariance kernel given by the Green's function $\greenD{D}$ with Dirichlet boundary conditions on $D$, i.e., $\E[ (h,f) (h,g) ] = \int_{D \times D} f(z) g(w) \greenD{D}(z,w) dz dw$. By the conformal invariance of the Green's function, it is clear that $h$ is conformally invariant as well.

An important property of the GFF is the domain Markov property, that is, if $h$ is a zero-boundary GFF and $U \subseteq D$ is open, then the law of $h$ restricted to $U$, given the values of $h$ at $\partial U$ is that of a zero-boundary GFF $h_U$ on $U$ plus the harmonic extension of its values on $\partial U$ to $U$. Moreover, the zero-boundary part and the harmonic part are independent.  In the same way, a GFF with boundary data $f$ is defined as the sum of a zero-boundary GFF on $D$ and the harmonic extension of $f$ to $D$.

A GFF on $D$ is not a function but rather a random variable in the space of distributions $H^{-1}(D)$, the dual space of $H_0(D)$. Recall that there exists an orthonormal basis $(f_n)_{n \geq 1}$ of $L^2(D)$ consisting of eigenfunctions of the operator $-\tfrac{1}{2\pi} \Delta$ with Dirichlet boundary conditions, which we can (and choose) to order so that the eigenvalues $(\lambda_n)_{n \geq 1}$ form a nondecreasing sequence of positive numbers. Then, $H^{-1}$ is the space of distributions $h$ on $D$ such that $\sum_{n \geq 1} \lambda_n^{-1} (h,f_n)^2$ is finite. Moreover, the functions $(\phi_n)_{n \geq 1}$, defined by $\phi_n = \lambda_n^{-1/2} f_n$ form an orthonormal basis of $H_0(D)$.

A free boundary GFF is defined in the same way, replacing $H_0(D)$ by the closure $H(D)$ with respect to $(\cdot,\cdot)_\nabla$ of the space of functions $f \in C^\infty(D)$ such that $\int_D f dz = 0$. As the free boundary GFF is only defined on test functions with mean zero, it is not canonically defined in a space of distributions, but rather in a space of distributions modulo additive constant. This can be remedied by fixing the value of the field acting on a specific test function hence fixing the value of the additive constant. Equivalently one can define the free boundary GFF in terms of its correlations.  We denote by $\greenN{D}$ the Green's function with Neumann boundary data on $D$ and note that as $\greenN{\h}(z,w) = -\log|z-w| - \log|z-\overline{w}|$. Then the free boundary GFF in $\h$ is the centered Gaussian process with covariance kernel given by $\greenN{\h}$. We define the free boundary GFF in any other simply connected domain $D$ as the conformal image of the one in $\h$.

\begin{remark}[Radial/lateral decomposition of a free boundary GFF]
Let $H_1(\h)$ (resp.\ $H_2(\h)$) be the subspace of functions in $H(\h)$ which are constant (resp.\ have mean zero) on each semicircle centered at zero. Then $H(\h) = H_1(\h) \oplus H_2(\h)$. The projection of a free boundary GFF $h$ onto $H_1(\h)$ is given by the function $h_1(z) = h_{|z|}(0)$ which takes the average value of $h$ on the semicircle of radius $|z|$, centered at zero, and is called the radial part of $h$. The function $h_2 = h-h_1 = h - h_{|\cdot|}(0)$ is the projection of $h$ onto $H_2(\h)$ and is called the lateral part of $h$. It holds that $h_1$ and $h_2$ are independent and while $h_1$ is only defined modulo additive constant, $h_2$ has well-defined values. 
\end{remark}

\begin{remark}\label{rmk:stripdecomposition}
It will often be convenient to consider a distribution on the strip $\strip = \{z: 0 < \im(z) < \pi\}$. In this case, we let $H_1(\strip)$ (resp.\ $H_2(\strip)$) denote the subspace of functions in $H(\strip)$ which are constant (resp.\ have zero average) on each vertical line $\{x \} \times (0,\pi)$. Then $H(\strip) = H_1(\strip) \oplus H_2(\strip)$. Note also that if $f \in H_j(\h)$, then $\wt f(z) = f(e^z)$ belongs to $H_j(\strip)$, $j=1,2$.
\end{remark}

Finally, we introduce the whole-plane GFF.  Let $H_0(\C)$ denote the Hilbert space closure with respect to $(\cdot,\cdot)_\nabla$ of the set of $f \in C_0^\infty(\C)$ satisfying $\int f dz = 0$.  The whole-plane GFF is defined in the same way as the zero-boundary or the free boundary GFF, but with the orthonormal basis in the sum being that of $H_0(\C)$. This defines a random distribution on $\C$ and as in the case of the free boundary GFF, it is only defined modulo additive constant, until we fix a normalization.

\begin{remark}[Radial/lateral decomposition of a whole-plane GFF]
Just like in the case of a free boundary GFF, we can naturally decompose a whole-plane GFF $h$ into its radial and lateral parts. Let $H_1(\C)$ (resp.\ $H_2(\C)$) be the subspace of functions in $H_0(\C)$ which are constant (resp.\ have mean zero) on circles centered at zero. Then $H_0(\C) = H_1(\C) \oplus H_2(\C)$ and the projection of $h$ on $H_1(\C)$ is given by the function $h_1(z) = h_{|z|}(0)$ which takes the average value of $h$ on the circle of radius $|z|$ centered at zero. The projection $h_2 = h-h_{|\cdot|}(0)$ of $h$ onto $H_2(\C)$ is independent of $h_{|\cdot|}(0)$. As in the free boundary case, $h_1$ and $h_2$ are called the radial and lateral parts of $h$, respectively, and while $h_1$ is only defined modulo additive constant, $h_2$ is well-defined.
\end{remark}

If $h$ is a free boundary or whole-plane GFF and $U$ is an open subset of its domain of definition, then the restriction of $h$ to $U$,  conditional on the values of $h$ on $\partial U$, can be decomposed as the sum of a zero-boundary GFF on $U$ and the harmonic extension to $U$ of the values of $h$ on $\partial U$. This is the domain Markov property of the free boundary and whole-plane GFFs. Consequently, we have that in the interior of the domain of definition, the laws of zero-boundary, free boundary and whole-plane GFF are absolutely continuous with respect to each other.

\subsection{Liouville quantum gravity}
\label{sec:LQG}

We now introduce Liouville quantum gravity (LQG) surfaces and their related measures. For more details, we refer the reader to \cite{ds2011lqgkpz,rv2010gmcrevisit,dms2014mating,hp2018critical}. Fix $\gamma \in (0,2]$. A $\gamma$-LQG surface is an equivalence class of pairs $(D,h)$, consisting of a domain $D \subseteq \C$ and a distribution $h \in H_{\loc}^{-1}(D)$\footnote{Recall that $h \in H_{\loc}^{-1}(D)$ if and only if $h|_U \in H^{-1}(U)$ for each $U \subset \subset D$ and that $h_n \to h$ in $H_{\loc}^{-1}(D)$ if and only if $h_n|_U \to h|_U$ in $H^{-1}(D)$ for each $U \subset \subset D$.} on $D$, where two pairs $(D,h)$ and $(\wt{D},\wt{h})$ are equivalent if there is a conformal map $\psi: \wt{D} \rightarrow D$ such that
\begin{align}\label{eq:LQG_coordinate_change}
	\wt{h} = h \circ \psi + Q \log |\psi'|, 
\end{align}
where $Q = Q_\gamma = \gamma/2 + 2/\gamma$.

One can also consider LQG surfaces with marked points $(D,h,x_1,\dots,x_n)$, $(\wt{D},\wt{h},\wt{x}_1,\dots,\wt{x_n})$ and consider them equivalent if they satisfy~\eqref{eq:LQG_coordinate_change} and $\psi(x_j) = \wt{x}_j$ for all $j = 1,\dots,n$.

The surfaces of interest are those where the distribution $h$ locally looks like a GFF. Consider $\gamma \in (0,2)$. For $z \in D$ and $0 < \epsilon < \dist(z,\partial D)$ we denote by $h_\epsilon(z)$ the average value of $h$ on the circle of radius $\epsilon$. The $\gamma$-LQG area (or quantum area) measure with respect to $h$ on $D$ is defined as the weak limit
\begin{align}\label{eq:quantum_area}
	\mu_h(dz) = \lim_{\epsilon \rightarrow 0} \epsilon^{\gamma^2/2} e^{\gamma h_\epsilon(z)} dz,
\end{align}
where $dz$ denotes the two-dimensional Lebesgue measure. Similarly, if $x \in \partial D$ and $h_\epsilon(x)$ denotes the average value of $h$ on $\partial B(x,\epsilon) \cap D$, then on a linear segment of $\partial D$ we define the $\gamma$-LQG length (or quantum length) measure with respect to $h$ as the weak limit
\begin{align}\label{eq:quantum_length}
	\nu_h(dx) = \lim_{\epsilon \rightarrow 0} \epsilon^{\gamma^2/4} e^{\frac{\gamma}{2} h_\epsilon(x)} dx,
\end{align}
where $dx$ denotes Lebesgue measure on $\partial D$.  (One can also associate with $\gamma$-LQG a canonical metric \cite{ms2020metric,ms2016qle2,dddf2020tightness,gm2021metric}, but we will not need this in the present paper.)

If $(D,h)$ and $(\wt{D},\wt{h})$ are related by $\psi$ as in~\eqref{eq:LQG_coordinate_change}, then for all $A \subseteq \wt{D}$, $\mu_{\wt{h}}(A) = \mu_h(\psi(A))$, that is, $\mu_h$ is the push-forward $\mu_{\wt{h}}$ by $\psi$. Similarly, $\nu_h$ is the push-forward of $\nu_{\wt{h}}$ by $\psi$. This justifies the definition of a quantum surface as an equivalence class. Note that this gives a way of measuring the quantum length of boundary segments which are not linear by mapping the domain to, say, $\h$ and measuring the quantum length of the image set. More generally, one can define the boundary length of a curve in $D$ by mapping the complement of the curve to, say, $\h$ and measuring the quantum length of the image of the curve.  It turns out that chordal $\SLE_\kappa(\underline{\rho})$ with $\kappa = \gamma^2$ curves have well-defined quantum length \cite{she2016zipper} and by absolute continuity, so do $\SLE_\kappa$-type curves started in the interior of the domain, such as whole-plane $\SLE_\kappa$ and interior flow lines of Gaussian free fields.

The case $\gamma = 2$ is critical, in the sense that the limiting measures produced from the above renormalization procedure are trivial.  Instead, one can for example, define the critical LQG area measure $\mu^2$ as the limit $(2-\gamma)^{-1} \mu^\gamma$ where $(\mu^\gamma)$ are the subcritical LQG measures; as the derivative of a certain martingale, subject to a limit; or as the limit of the exponential of field approximations with the additional factor $\sqrt{\log(1/\epsilon)}$. Each of the above examples give the same measure, modulo multiplicative constant (see \cite{drsv2014criticalconvergence,drsv2014criticalgmckpz,js2017unique,hrv2018lqgdisk,powell2018critical,aps2019critical}). Analogously to the last example of critical LQG measure, we define the critical LQG boundary measure as the weak limit (see \cite{hp2018critical})
\begin{align}
	\nu_h^2(dx) = \lim_{\epsilon \rightarrow 0} \epsilon\!\left(-\frac{h_\epsilon(x)}{2} + \log(1/\epsilon) \right) e^{h_\epsilon(x)} dx.
\end{align}
As in the case of area measures, we have that $(2-\gamma)^{-1} \nu_h^\gamma \rightarrow 2 \nu_h^2$ weakly in probability, for suitable~$h$.

\begin{remark}\label{rmk:different_fields}
Throughout the paper we will estimate moments of $\mu_h(A)$ for various fields $h$ and sets $A$ and we will repeatedly use \cite[Propositions~3.5,~3.6 and~3.7]{rv2010gmcrevisit} when $h$ is a zero-boundary GFF. However, these results concern fields with covariance kernel of the form $K(z,w) = f(z-w)$ for some positive definite function $f$, which is not the case for the zero-boundary GFF. A zero-boundary GFF in a domain $D$ has covariance kernel given by $G_D(z,w) = - \log|z-w| + g_D(z,w)$, where $g_D(\cdot,w)$ is the harmonic extension of the function $\zeta \mapsto \log| \zeta - w|$ from $\partial D$ to $D$. This presents no issue, as each set which will be considered when using the mentioned results will be such that its distance to the boundary is at least a constant times its diameter, so that $g_D$ is bounded from above and below on said set. Then, by the Kahane convexity inequality (see \cite[Lemma~1]{kahane1985chaos} or \cite[Proposition~6.1]{aru2020gmclens}), the analogous moment bounds follow.
\end{remark}

Next, we define two classes of random surfaces which will be crucial in our analysis. Here we consider $\gamma \in (0,2]$.

\textbf{Quantum cones:} Fix some $\alpha \in (-\infty,Q)$ and define the process $A_t: \R \rightarrow \R$ as $A_t = B_t + \alpha t$, where $B_t$ is a standard two-sided Brownian motion, conditioned so that $B_t - (Q-\alpha)t > 0$ for all $t<0$ and unconditioned for $t>0$ (so that $B|_{[0,\infty)}$ is a standard Brownian motion). An $\alpha$-quantum cone is a doubly marked $\gamma$-LQG surface $(\C,h,0,\infty)$ such that if $h_s(0)$ denotes the average value of $h$ on $\partial B(0,s)$, then the radial part, i.e., the process $t \mapsto h_{e^{-t}}(0)$ has the same law as $A_t$ and the lateral part, $h_2 = h - h_{|\cdot|}(0)$, has the law of the lateral part of a whole-plane GFF. We denote the law of an $\alpha$-quantum cone by $\qconeA{\gamma}{\alpha}$.  We note that if the marked points for a quantum cone parameterized by $\C$ are taken to be $0$ and $\infty$ then the law of the field which defines the surface is specified up to a global rescaling.  The particular embedding that we have just defined is the so-called circle-average embedding.

Sometimes it is convenient to parameterize a quantum cone by the infinite cylinder $\cyl$. An $\alpha$-quantum cone $h$ with $\alpha < Q$ can be defined such that if $X_t$ denotes the average value of $h$ on the vertical line $\{t+iy:y \in (0,2\pi) \}$, then $(X_{-t})_{t \geq 0}$ has the law of $(\wt{B}_{t}-(Q-\alpha)t)_{t \geq 0}$ conditional on $\wt{B}_{t}-(Q-\alpha)t < 0$ for all $t > 0$ for some standard  Brownian motion $\wt{B}$, $(X_t)_{t \geq 0}$ has the law of $(B_{t}+(Q-\alpha)t)_{t \geq 0}$ for some standard Brownian motion $B$ independent of $\wt{B}$, and the law of $h_2 = h - X_{\re(\cdot)}$ is that of the projection of a free boundary GFF onto the space of functions in $H(\cyl)$ which have mean zero on vertical lines.  When we parameterize a quantum cone by $\cyl$ with the marked points at $\mp \infty$, the law of the field which defines the surface is specified up to a horizontal translation of $\cyl$.  The particular choice of horizontal translation we have just described is called the first exit parameterization for a quantum cone.

One might also embed the $\alpha$-quantum cone so that the last time its projection onto $H_1(\cyl)$ hits the value $0$ is at $t = 0$. This is the so-called circle-average embedding or last exit parameterization. One obtains it by performing the coordinate change $z \mapsto \log z$ to an $\alpha$-quantum cone parameterized by $\C$ and it can be sampled as follows. As above, let $X_t$ denote the average value of the field on the line $\{ t+iy: y \in [0,2\pi] \}$. Then for $t>0$, $(X_t)$ has the law of $(B_t + (Q-\alpha)t)_{t \geq 0}$ where $B$ is a standard Brownian motion with $B_0 = 0$, conditioned so that $B_t + (Q-\alpha)t > 0$ for all $t>0$. Moreover, $(X_{-t})_{t \geq 0}$ has the law of $(\wh B_t-(Q-\alpha)t)_{t \geq 0}$ where $\wh B$ is a standard Brownian motion, independent of $B$, with $\wh B_0 = 0$. Furthermore, the projection of $h$ onto $H_2(\cyl)$ is sampled independently of $X$ from the law of the projection of a free boundary GFF on $H_2(\cyl)$. The additive constant is then chosen so that the average on $[0,2\pi i]$ is $0$.

\textbf{Quantum wedges:} Fix $\alpha \in (-\infty,Q)$, and let $A_t$ be as above, but with $B_t$ replaced by $B_{2t}$. An $\alpha$-quantum wedge is the doubly marked $\gamma$-LQG surface $(\h,h,0,\infty)$ such that if $h_s(0)$ denotes the average value of $h$ on $\partial B(0,s) \cap \h$, then the radial part $t \mapsto h_{e^{-t}}(0)$ has the same law as $A_t$ and the lateral part $h_2 = h-h_{|\cdot|}(0)$ has the same law as the lateral part of a free boundary GFF on $\h$. We denote the law of an $\alpha$-quantum wedge by $\qwedgeA{\gamma}{\alpha}$.  We note that if the marked points for a quantum wedge parameterized by $\h$ are taken to be $0$ and $\infty$ then the law of the field which defines the surface is specified up to a global rescaling.  The particular embedding that we have just defined is the so-called circle-average embedding.

Again, an $\alpha$-quantum wedge $h$ with $\alpha < Q$ can be defined on the strip $\strip$ such that if $X_t$ denotes the average of $h$ on the vertical line $\{t+iy:y \in (0,\pi)\}$, then $X_t$ is defined as in the case of quantum cones but with $B_t$ and $\wt{B}_t$ replaced by $B_{2t}$ and $\wt{B}_{2t}$, and $h_2 = h - X_{\re(\cdot)}$ has the law of the projection of a free boundary GFF onto the space of functions in $H(\strip)$ which have mean zero on vertical lines. When we parameterize a quantum wedge by $\strip$ with the marked points at $\pm \infty$, the law of the field which defines the surface is specified up to a horizontal translation of $\strip$.  The particular choice of horizontal translation we have just described is called the first exit parameterization for a quantum wedge.

Just as in the case of a quantum cone, we may parameterize the quantum wedge so that its average on vertical lines process hits the value $0$ for the last time at time $t = 0$.  In this case, the projection onto $H_1(\strip)$ is defined as in the case of the quantum cone, but with $B_t$ and $\wh B_t$ replaced by $B_{2t}$ and $\wh B_{2t}$ and the projection onto $H_2(\strip)$. Again, this is called the circle-average embedding or the last exit parameterization of the quantum wedge.

We note that it is also possible to define $\alpha$-quantum wedges for $\alpha \in [Q,Q+\gamma/2)$.  The case $\alpha \in (Q,Q+\gamma/2)$ was explained in \cite[Definition~4.15]{dms2014mating}. The case $\alpha = Q$ is a bit special and will be used extensively and it will be convenient to parameterize it by the strip $\strip = \{ z: 0 < \im(z) < \pi \}$.

A $Q$-quantum wedge is the doubly marked $\gamma$-LQG surface $(\strip,h,-\infty,+\infty)$, the first exit parameterization of which is such that if $X_t$ denotes the average value of $h$ on the vertical line $\{t+iy: y \in (0,\pi)\}$, then $(-X_{-t/2})_{t \geq 0}$ has the law of a $\BES^3$ with $X_0 = 0$, $(X_{t/2})_{t \geq 0}$ is a standard Brownian motion, and the law of $h_2 = h-X_{\re(\cdot)}$ is that of the projection of a free boundary GFF onto the space of functions in $H(\strip)$ which have mean zero on vertical lines. We remark that in~\cite{gp2019dla}, the above parameterization of a $Q$-quantum wedge is called the circle-average embedding.

Another way to parameterize the space of quantum wedges or cones is by a number called its weight. The weight of an $\alpha$-quantum wedge is defined as $W = \gamma(Q+\gamma/2-\alpha)$ and the weight of an $\alpha$-quantum cone is defined as $W = 2\gamma(Q-\alpha)$. This becomes convenient when cutting or gluing quantum surfaces. The following was proven in \cite{dms2014mating} and states that one can cut a quantum wedge $\CW$ with an independent $\SLE_\kappa(\rho_L;\rho_R)$ process into two quantum wedges $\CW_L$ and $\CW_R$ which are independent and such that the sum of their weights is that of the weight of $\CW$.  The law of a quantum cone (resp.\ wedge) with weight $W$ is denoted by $\qconeW{\gamma}{W}$ (resp.\ $\qwedgeW{\gamma}{W}$). In particular, we note that $\qwedgeA{\gamma}{Q} = \qwedgeW{\gamma}{\gamma^2/2}$.

\begin{theorem}[Theorems~1.2 and~1.4 of \cite{dms2014mating}]
\label{thm:wedge_welding}
Let $\gamma \in (0,2)$. Fix $\rho_L,\rho_R > -2$ and write $W_q = 2+\rho_q$, $q \in \{L,R\}$, and $W = W_L + W_R$. Assume that $W \geq \gamma^2/2$ and let $(\h,h,0,\infty) \sim \qwedgeW{\gamma}{W}$ have the circle-average embedding. Let $\eta$ be an $\SLE_\kappa(\rho_L;\rho_R)$ process from $0$ to $\infty$ in $\h$ with force points at $0^-$, $0^+$ and where $\kappa = \gamma^2$, sampled independently of $h$. Let $\h_L$ (resp.\ $\h_R$) denote the union of the components of $\h \setminus \eta$ which lie to the left (resp.\ right) of $\eta$ and let $\CW_q$ be the quantum surface defined by $(\h_q, h|_{\h_q},0,\infty)$ for $q \in \{L,R\}$. Then $\CW_L \sim \qwedgeW{\gamma}{W_L}$, $\CW_R \sim \qwedgeW{\gamma}{W_R}$, they are independent of each other, and for each $t>0$, the quantum length of the left side of $\eta([0,t])$ coincides with that of the right side. Moreover, the pair $(h,\eta)$ is a.s.\ determined by $\CW_L$ and $\CW_R$.
\end{theorem}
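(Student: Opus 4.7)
The plan is to bootstrap from Sheffield's quantum zipper theorem \cite{she2016zipper}, which handles the special case of an $\SLE_\kappa(\rho;-2-\rho)$ cutting a $(\gamma^2/2)$-weight wedge, to the general case via local absolute continuity of $\SLE_\kappa(\underline{\rho})$ laws and the Markov property of quantum wedges. An alternative I would try in parallel is the welding direction: sample two independent wedges $\CW_L,\CW_R$ of the prescribed weights, conformally weld them along their boundaries by quantum length, and check that the joint law of the resulting $(h,\eta)$ agrees with the one produced by cutting.

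First, quantum length matching along $\eta$ is a local statement: on any open set $U \subseteq \h$ disjoint from $\{0,\infty\}$, the joint law of $(h|_U,\eta \cap U)$ is absolutely continuous with respect to Sheffield's zipper setup restricted to $U$, so his matching applies locally and a covering argument globalizes it. To identify the marginal law on $\h_L$, I would conformally map $\h_L$ to $\strip$ sending $(0,\infty) \mapsto (+\infty,-\infty)$, apply the coordinate change~\eqref{eq:LQG_coordinate_change}, and track the average of the field on vertical lines. The Schwarz--Christoffel-type asymptotics of the uniformizing map at the two images of the marked points, combined with the $\rho_L$-weighted logarithmic behavior of the field at $0$ coming from the force-point structure of $\SLE_\kappa(\rho_L;\rho_R)$, produce precisely the radial profile of a weight-$W_L = 2+\rho_L$ wedge; symmetrically on the right. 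The Markov property of quantum wedges under SLE exploration (the unbounded component of $\h \setminus \eta([0,t])$ is itself a weight-$W$ wedge, using that the wedge law is characterized by its radial decomposition) then pins down the marginals.

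The main obstacle is independence of $\CW_L$ and $\CW_R$. The strategy I would try is a peeling argument: grow $\eta$ by small quantum-length increments, use Sheffield's zipper at each step (restricted to a neighborhood of the growing tip, where the absolute continuity from Step~1 applies) to decouple the freshly created boundary-length increments on the two sides, and iterate. Equivalently, one can parameterize both sides by their boundary quantum length processes and try to deduce independence from the SDE governing the driving Brownian motion of $\SLE_\kappa(\rho_L;\rho_R)$ together with the force-point evolutions in~\eqref{eq:driving_function}. The delicate point is that the weights $\rho_L,\rho_R$ appear coupled in the driving function, so showing that the resulting length processes decouple requires a genuine computation, not merely a Markov-property argument. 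Once marginals and independence are in hand, the determinism of $(h,\eta)$ from $(\CW_L,\CW_R)$ reduces to conformal removability of $\eta$: for $\kappa<4$ this follows from the Jones--Smirnov criterion \cite{js2000remove} applied to the H\"older domains of \cite{rs2005basic}.
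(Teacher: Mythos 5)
This is a cited result (Theorems~1.2 and~1.4 of \cite{dms2014mating}); the paper presents it without proof, so there is no argument in the paper to compare against. Taking your sketch on its own terms, it has several gaps that I do not think you could close along the lines you describe.

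First, your characterization of Sheffield's quantum zipper is not accurate. The zipper theorem in \cite{she2016zipper} concerns a single chordal $\SLE_\kappa$ (no $\rho$'s) cutting a specific wedge, the weight-$4$ (equivalently $\alpha = \gamma - 2/\gamma$) wedge, into two independent weight-$2$ wedges; it is proved via a reverse Loewner flow stability argument. It is not a result about $\SLE_\kappa(\rho;-2-\rho)$ cutting a weight-$\gamma^2/2$ wedge, and note that $W_L + W_R = (2+\rho) + (-\rho) = 2$, so the weights in your parenthetical do not even sum consistently. More substantively, local absolute continuity of $\SLE_\kappa(\underline{\rho})$ laws gets you quantum-length matching along $\eta$ away from the marked points, but it cannot identify the marginal law of $\CW_L$: all wedge laws are locally mutually absolutely continuous away from $0$ and $\infty$, and the weight $W_L$ is encoded precisely in the behavior near the marked points, which is the part where absolute continuity degenerates.

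Second, the direct computation you propose for the marginal --- map $\h_L$ to $\strip$, apply~\eqref{eq:LQG_coordinate_change}, and track vertical-line averages --- does not go through as stated. The domain $\h_L$ is random and coupled to $h$, so the uniformizing map is random and the pushed-forward field on $\strip$ is not the restriction of $h$ tested against a fixed family; there is also no Schwarz--Christoffel structure since $\h_L$ is not polygonal, and the asymptotics of the map at the two prime ends are random and depend on the entire path. The argument in \cite{dms2014mating} does not proceed by such a computation; it characterizes quantum wedges abstractly (via scaling together with a Markov/resampling fixed-point property) and shows the cut surfaces satisfy that characterization, combining this with the welding direction you mention only in passing. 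That welding direction is actually much closer to a workable plan than the cutting-side computation.

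Third, and most seriously, the independence of $\CW_L$ and $\CW_R$ is the real content of the theorem, and the peeling sketch does not close. Decoupling boundary-length increments step by step does not yield independence of the full surfaces: at each stage the force-point structure of $\SLE_\kappa(\rho_L;\rho_R)$ retains information about both sides, and the freshly cut boundary increment sits inside a domain whose geometry depends on the entire past of $\eta$. You correctly flag that this "requires a genuine computation," but an SDE-level decoupling of the driving process is not how \cite{dms2014mating} resolves it; their independence proof goes through the fixed-point characterization of wedge measures under resampling and a careful reweighting argument, which is qualitatively different from what you propose. The final step, that $(h,\eta)$ is determined by $(\CW_L,\CW_R)$ via conformal removability of $\eta$ using \cite{js2000remove} and the H\"older-domain result of \cite{rs2005basic}, is correct for $\kappa < 4$ and is indeed the standard argument.
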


Similarly, the following theorem on cutting a quantum cone with a whole-plane $\SLE_\kappa(\rho)$ was proved in \cite{dms2014mating}.

\begin{theorem}[Theorem~1.5 of \cite{dms2014mating}]
\label{thm:cone_welding}
Let $\gamma \in (0,2)$. Fix $\rho>-2$ and write $W = 2+\rho$. Let $(\C,h,0,\infty) \sim \qconeW{\gamma}{W}$ have the circle-average embedding and let $\eta$ be a whole-plane $\SLE_\kappa(\rho)$ process from $0$ to $\infty$ in $\C$, with $\kappa = \gamma^2$, sampled independently of $h$. Then the quantum surface $\CW = ( \C \setminus \eta, h|_{\C \setminus \eta},0,\infty) \sim \qwedgeW{\gamma}{W}$ and for each $t>0$, the quantum lengths of the two sides of $\eta([0,t])$ coincide. Moreover, $\CW$ a.s.\ determines $(h,\eta)$, modulo rotation about $0$.
\end{theorem}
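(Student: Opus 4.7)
The plan is to reduce Theorem~\ref{thm:cone_welding} to its chordal counterpart Theorem~\ref{thm:wedge_welding} by combining (a) a local identification of the welded surface near the origin with (b) the stationarity of the cone and of the whole-plane Loewner chain under the flow, so that the local identification propagates globally.

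For the local step (a), I would observe that whole-plane $\SLE_\kappa(\rho)$ run up to a short capacity time is mutually absolutely continuous with respect to a radial $\SLE_\kappa(\rho)$ in $\D$ targeted at $0$, and that a radial $\SLE_\kappa(\rho)$ restricted to a small neighborhood of its starting point is mutually absolutely continuous with respect to a chordal $\SLE_\kappa(\rho)$ in $\h$ with a single force point. In parallel, the restriction of the circle-average embedded $W$-quantum cone to a small disk $B(0,\epsilon)$, once its logarithmic singularity at $0$ is removed, agrees in law with the corresponding restriction of a $W$-quantum wedge. Combining these two absolute continuity statements with Theorem~\ref{thm:wedge_welding} identifies $\CW = (\C\setminus\eta, h|_{\C\setminus\eta},0,\infty)$ locally near $0$ as a $W$-wedge and matches the quantum lengths of the two sides of $\eta$.

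For the global step (b), I would parameterize the quantum cone via the first-exit embedding on $\cyl$, so that its radial part is a two-sided Brownian motion with drift (appropriately conditioned) and the lateral part is an independent free-boundary projection. For each capacity time $t$, let $g_t$ be the Loewner map uniformizing $\C\setminus K_t$ to $\C\setminus\ol{\D}$; the push-forward field $h\circ g_t^{-1} + Q\log|(g_t^{-1})'|$, transferred to $\cyl$ via $z\mapsto\log z$, has, by the two-sided Brownian scaling of the cone and the stationarity of the solution to~\eqref{eq:wpSLEdriv} guaranteed by Proposition~\ref{prop:stationary_uniqueness}, the same law as the original cone field shifted horizontally by $t$. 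Iterating the local identification of (a) along $t$ then gives that $\CW$ is globally a $W$-wedge. The a.s.\ determination of $(h,\eta)$ from $\CW$ modulo rotation follows from conformal removability of the $\SLE_\kappa$ trace, via Jones--Smirnov for $\kappa<4$ and via Section~\ref{sec:critical_welding} for $\kappa=4$; the rotation ambiguity is the residual freedom in embedding a cone marked at $0$ and $\infty$.

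The main obstacle I expect is the bookkeeping of the $Q\log|g_t'|$ coordinate change in step (b): under $g_t$ the radial Brownian motion defining the cone and the independent lateral GFF transform simultaneously, and one must verify that the push-forward is \emph{exactly} a wedge, not a wedge perturbed by a residual non-trivial harmonic function. A secondary delicacy is the regime $\rho \to -2$, where the whole-plane $\SLE_\kappa(\rho)$ approaches its continuation threshold and $(W_t,O_t)$ can behave pathologically, and the critical regime $\kappa=4$, which requires the welding analysis of Section~\ref{sec:critical_welding} in place of the subcritical measures used in Theorem~\ref{thm:wedge_welding}.
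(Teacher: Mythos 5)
Theorem~\ref{thm:cone_welding} is quoted verbatim from \cite{dms2014mating} as a background result; the paper gives no proof of it (the critical analogue in Section~\ref{sec:critical_welding}, Theorem~\ref{thm:critical_cone_welding}, is the only welding statement actually proved here, and its proof relies on the subcritical version of Theorem~\ref{thm:cone_welding}). So there is no ``paper's proof'' to compare your argument against, and what follows is an assessment of your proposal on its own terms.

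Your local step~(a) does not work as stated. The radial part of a circle-average embedded $W$-quantum cone has the law of a Brownian motion with a particular drift and conditioning (Section~\ref{sec:LQG}), while the radial part of a circle-average embedded $W$-quantum wedge has the same structure but with the Brownian motion run at \emph{twice the speed} (and the weight-to-$\alpha$ dictionaries are different: $W = 2\gamma(Q-\alpha)$ for a cone versus $W = \gamma(Q + \gamma/2 - \alpha)$ for a wedge). Two Brownian motions of different diffusivities are mutually singular on any nondegenerate time interval, so the restrictions of $h^{\mathrm{cone}}$ and $h^{\mathrm{wedge}}$ to any annulus around the origin are not mutually absolutely continuous, even after subtracting the log singularity. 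A local absolute-continuity reduction to Theorem~\ref{thm:wedge_welding} therefore cannot get off the ground. The actual proof in \cite{dms2014mating} proceeds globally, via the reverse Loewner/zipper construction and the stationary structure of $(O_t,W_t)$, rather than by patching together local identifications.

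The determination clause is also mishandled. Theorem~\ref{thm:cone_welding} is a $\gamma<2$ statement (it is Theorem~1.5 of \cite{dms2014mating}), so $\kappa=4$ is out of scope; but more importantly, your suggestion to obtain the $\kappa=4$ determination ``via Section~\ref{sec:critical_welding}'' is both circular and contrary to the content of this paper. Section~\ref{sec:critical_welding} proves Theorem~\ref{thm:critical_cone_welding} \emph{using} Theorem~\ref{thm:cone_welding} for $\kappa<4$ and taking $\kappa\uparrow 4$, and the resulting critical statement deliberately \emph{omits} any determination claim. Indeed, one of the headline results of this paper (Theorem~\ref{thm:jones_smirnov_sle4}) is that the Jones--Smirnov criterion with quasihyperbolic geodesics fails for $\SLE_4$, so the conformal removability of $\SLE_4$ --- and hence uniqueness of the critical welding --- is precisely what is \emph{not} known. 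Invoking removability of $\SLE_4$ as an ingredient is not available here.
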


Next, we mention a result on cutting a critical LQG surface with an $\SLE_4$ process.

\begin{theorem}[Theorem~1.2 of \cite{hp2018critical}]
\label{thm:critical_wedge_welding}
Let $(\h,h,0,\infty)\sim \qwedgeW{2}{4}$ and let $\eta$ be an $\SLE_4$ process from $0$ to $\infty$ in $\h$ and independent of $h$. Let $\h_L$ (resp.\ $\h_R$) denote the component of $\h \setminus \eta$ to the left (resp.\ right) of $\eta$ and let $\CW_q$ be the quantum surface defined by $(\h,h|_{\h_q},0,\infty)$ for $q \in \{L,R\}$. Then $\CW_L$ and $\CW_R$ are independent, have the law $\qwedgeW{2}{2}$ and their boundary lengths along $\eta$ agree.
\end{theorem}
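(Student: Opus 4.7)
My plan is to deduce this critical statement as a limit of the subcritical welding theorem (Theorem~\ref{thm:wedge_welding}) as $\gamma \nearrow 2$, with $W_L = W_R = 2$ and $W = W_L + W_R = 4$ held fixed across the limit. For each $\gamma \in (0,2)$, let $h^\gamma$ be the circle-average embedding of a weight-$4$ $\gamma$-quantum wedge on $\h$ and let $\eta^\gamma \sim \SLE_{\gamma^2}$ from $0$ to $\infty$ be independent of $h^\gamma$. Using the Bessel/Brownian description of the radial part of the field and the lateral part as a free-boundary GFF piece, one can couple the fields $(h^\gamma)_{\gamma < 2}$ with a critical field $h$ (the weight-$4$ wedge at $\gamma=2$, i.e.\ an $\alpha=1$-wedge since $W = 2(Q+\gamma/2-\alpha)$ gives $\alpha=1$ at $\gamma=2$) so that $h^\gamma \to h$ in $H^{-1}_{\mathrm{loc}}(\h)$ a.s.; simultaneously couple the driving Brownian motions so that $\eta^\gamma \to \eta \sim \SLE_4$ uniformly on compacts, which follows from continuity of the Loewner map with respect to the driving function.

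Next, for each $\gamma < 2$, Theorem~\ref{thm:wedge_welding} gives that the two sides $(\h_L^\gamma, h^\gamma|_{\h_L^\gamma})$ and $(\h_R^\gamma, h^\gamma|_{\h_R^\gamma})$ are independent weight-$2$ $\gamma$-quantum wedges and that the quantum lengths along $\eta^\gamma$ from the two sides agree for every $t>0$. I would then check convergence of these surfaces as $\gamma \to 2$: on the level of quantum surfaces (equivalence classes), it suffices to show that after normalizing by the circle-average embedding, the fields on each side converge in $H^{-1}_{\mathrm{loc}}$ to those of a weight-$2$ critical wedge, equivalently a $Q$-wedge as described in Section~\ref{sec:LQG}. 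The radial parts are essentially the Brownian motions/Bessel processes appearing in the wedge definition, and convergence of the embedding parameters (the ``first-exit'' time for the radial part) follows from the stability of exit times of Bessel-type processes as the dimension parameter varies.

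The main obstacle is the boundary-length matching, because for $\gamma = 2$ the length measure requires the extra $\epsilon\bigl(-h_\epsilon(x)/2 + \log \epsilon^{-1}\bigr)$ renormalization factor. To handle this, I would use the convergence $(2-\gamma)^{-1}\nu_{h^\gamma}^\gamma \to 2\nu_h^2$ in probability (on the relevant boundary arcs, which in our setting include the two sides of $\eta^\gamma$ after welding; one can work in a bounded reference domain and then lift back via conformal coordinate changes). Applying this convergence to the left and right sides of $\eta^\gamma$ and using that the matching $\nu_{h^\gamma}^\gamma(\eta^\gamma([0,t])_L) = \nu_{h^\gamma}^\gamma(\eta^\gamma([0,t])_R)$ holds for every $\gamma<2$, one passes to the $\gamma \to 2$ limit to obtain $\nu_h^2$-length matching along $\eta$. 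Independence of the two sides is preserved because weak limits of product measures are product measures, provided the two projections converge; here this is immediate from the joint coupling above.

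Finally, a few technical points to watch: one needs to argue that the (almost-sure) determination of $(h,\eta)$ by the pair of surfaces, which in the subcritical case is part of Theorem~\ref{thm:wedge_welding} but is not asserted in the critical statement above, is not required; it suffices to identify the joint law. One also must ensure that the convergence $h^\gamma \to h$ respects the circle-average embedding uniformly in $\gamma$ near $2$, which is a matter of checking that the ``first/last-exit'' normalizing times for the radial process are continuous functionals of the Bessel dimension. Granting these technicalities, the independence, the identification of the marginals as $\qwedgeW{2}{2}$, and the matching of critical boundary lengths all follow by a single passage to the limit.
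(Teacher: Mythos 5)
The paper does not prove this statement; it is cited as Theorem~1.2 of \cite{hp2018critical} and used as an input elsewhere (in particular, in the proof of Theorem~\ref{thm:critical_cone_welding}). Your proposal — passing to the $\gamma \uparrow 2$ limit in the subcritical welding Theorem~\ref{thm:wedge_welding} — is essentially the route taken in \cite{hp2018critical}, and it parallels the strategy the present paper carries out in Section~\ref{sec:critical_welding} for the whole-plane/cone analogue (Theorem~\ref{thm:critical_cone_welding} and Proposition~\ref{prop:cutcone}). So as an outline the idea is the right one.

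The difficulty is that the steps you defer as "technicalities" are, in fact, nearly all of the work. Tightness of the horizontal translation that places the limiting field into its canonical (first-exit or circle-average) embedding is \emph{not} "stability of exit times of Bessel processes as the dimension varies"; it requires comparing $(h^\gamma(\cdot+x),\phi)$ for a fixed test function $\phi$ against $\alpha_\gamma x$ uniformly in $\gamma$ and ruling out escape of the translation to $\pm\infty$, which in the cone case occupies Lemmas~\ref{lem:varbound}--\ref{lem:coneconv}. Carathéodory convergence of the uniformizing maps $\h_q^\gamma \to \strip$ for the \emph{whole} curve — not just the curve run up to a fixed capacity time — does not follow from locally uniform convergence of the driving function and is the whole point of Proposition~\ref{prop:Cara} in the cone case; the chordal setting is somewhat friendlier because the curve is transient toward a boundary point, but you still need to control the map at $\infty$. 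Finally, the convergence $(2-\gamma)^{-1}\nu^\gamma_{h^\gamma} \to 2\nu^2_h$ that you invoke is stated for a fixed linear boundary arc, whereas you need it along the two sides of the moving fractal curve $\eta^\gamma$ after coordinate change; this forces you to carry the (random, $\gamma$-dependent) conformal maps $\h_q^\gamma \to \strip$ along with the limit and verify that the convergences of the curve, the field, and the boundary-length measure all commute. Your sketch flags these points but does not engage with them, and they are where the proof lives.
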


Finally, we record the following result on exploring a certain quantum wedge by an $\SLE_8$ process.

\begin{theorem}[Theorems~1.5,~1.9 and~1.11 of \cite{dms2014mating}]
\label{thm:sle8thm}
Let $(\h,h,0,\infty) \sim \qwedgeW{\sqrt{2}}{1}$ and let $\eta'$ be an $\SLE_8$ process from $0$ to $\infty$ in $\h$ sampled independently of $h$ and then reparameterized by quantum area (so that $\mu_h(\eta'([0,t])) = t$ for all $t \geq 0$).  For each $t \geq 0$, the quantum surface parameterized by $\h \setminus \eta'([0,t])$ and marked by $\eta'(t)$ and $\infty$ has law $\qwedgeW{\sqrt{2}}{1}$.  Moreover, if we let $L_t$ (resp.\ $R_t$) denote the difference of the quantum length of the part of $\eta'([0,t]) \cap \h$ to the left (resp.\ right) of $\eta'(t)$ and the part of $\eta'([0,t]) \cap \partial \h$ to the left (resp.\ right) of $0$, then $L$ and $R$ are independent standard Brownian motions.  Finally, $(L,R)$ a.s.\ determine $(h, \eta')$.
\end{theorem}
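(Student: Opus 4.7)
The plan is to derive this result by combining the conformal Markov property of $\SLE_8$ with the cutting/welding theorems for subcritical $\SLE$ (Theorems~\ref{thm:wedge_welding} and~\ref{thm:cone_welding}) via the imaginary geometry description of space-filling $\SLE_{8}$. Specifically, one realizes $\eta'$ as the ``mating interface'' whose left and right outer boundaries at each time are $\SLE_2$-type flow lines of an auxiliary GFF defined on the weight-$1$ wedge, so that a ``radial'' exploration view makes available the welding machinery designed for $\kappa < 4$.

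First I would establish the stationarity statement. Fix $t > 0$ and condition on $\eta'([0,t])$. By the space-filling/imaginary geometry construction, the left and right boundaries of $\eta'([0,t])$ inside $\h$ are $\SLE_2(\underline{\rho})$-type curves relative to $h$, and applying the $\kappa=2$ case of Theorem~\ref{thm:wedge_welding} along each of them decomposes the weight-$1$ wedge into quantum wedges whose weights sum to $1$; the unexplored region, parameterized by $\h\setminus\eta'([0,t])$ and marked by $\eta'(t)$ and $\infty$, aggregates these so as to have total weight $1$ and is hence a $\qwedgeW{\sqrt{2}}{1}$ surface. The subtlety here is that $\eta'$ is space-filling, so $\eta'([0,t])$ is not a simple curve and the cutting must be performed using the branching/radial exploration in which the state is tracked by the boundary length differences $L_t, R_t$ rather than by a single simple interface.

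Next I would deduce that $L$ and $R$ are independent standard Brownian motions. Stationarity from Step 1 gives that $(L_{t+s}-L_t, R_{t+s}-R_t)$ is equal in law to $(L_s, R_s)$ and independent of $\mathcal{F}_t$, so $L,R$ have stationary independent increments. Continuity of the $\sqrt{2}$-LQG boundary measure defined in \eqref{eq:quantum_length} gives continuous sample paths, hence $(L,R)$ is a two-dimensional Brownian motion with some covariance. Left/right symmetry of $\SLE_8$ together with the symmetric embedding of the weight-$1$ wedge forces $L$ and $R$ to have the same variance, and the quantum area parameterization $\mu_h(\eta'([0,t])) = t$ fixes a common scaling constant which normalizes the variance to $1$. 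Independence of $L$ and $R$ is the most delicate point: it follows from the conditional independence, given $h$, of the left and right imaginary geometry flow lines that generate $\partial^L\eta'([0,t])$ and $\partial^R\eta'([0,t])$, together with the fact that disjoint boundary arcs of a quantum wedge carry conditionally independent length measures once one conditions on the separating flow line structure.

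Finally, for the mating/determination statement, $(L,R)$ encodes two continuum random trees via the usual contour construction, and quotienting $[0,\infty)$ by the equivalence relations associated with these trees yields a topological half-plane marked by two boundary points. The Riemann mapping theorem then endows this quotient with a canonical conformal structure, which together with the pushforwards of Lebesgue measure (giving $\mu_h$) and of the tree length structure (giving $\nu_h$ and the $\SLE_8$ trace) recovers both $h$ and $\eta'$ almost surely. The main obstacle I anticipate is Step 1: one must legitimately reduce the space-filling welding to the simple-curve welding of Theorem~\ref{thm:wedge_welding}, which requires a careful pathwise description of $\eta'$ as a concatenation of $\SLE_2$-type boundary arcs and an argument that the weights, given by $W_L + W_R = 1$, add correctly through the branching exploration.
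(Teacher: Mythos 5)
This statement is not proved in the paper; it is imported verbatim as a citation to Theorems~1.5, 1.9, and~1.11 of \cite{dms2014mating}, so there is no in-paper argument for your sketch to be measured against. Your proposal reconstructs the high-level strategy of \cite{dms2014mating}, and the broad outline is in the right direction: the stationarity and the cutting are indeed run through the imaginary-geometry description of space-filling $\SLE_8$ and the $\kappa=2$ welding theorems, and the Brownian-motion statement does come from stationary independent increments plus the independence of the left and right wedges produced by cutting along $\pm\pi/2$-angle flow lines.

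However, several of your steps compress away the genuine difficulties. In Step~1, the claim that the unexplored region ``aggregates to weight $1$'' is the \emph{conclusion} of a nontrivial argument, not a bookkeeping identity: one has to show that removing the past of the space-filling curve (which touches the boundary in a complicated way and whose complement at time $t$ is a union of bubbles, not a simple cut domain) leaves a surface that is again distributed as $\qwedgeW{\sqrt{2}}{1}$, and this requires the branching exploration and a careful re-welding argument, not merely additivity of wedge weights. In Step~2, asserting that $(L,R)$ is two-dimensional Brownian because it has stationary, independent, continuous increments presupposes that the increments are actually independent across time --- this is exactly what the stationarity statement must deliver, and it needs to be established for increments of the boundary-length \emph{difference} processes, not just for the surface. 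Most importantly, Step~3 conceals nearly all of the work. Producing a topological sphere (or half-plane) from the pair of continuum random trees is elementary, but concluding that the pushed-forward data \emph{determine} the conformal structure, the field $h$, and the trace $\eta'$ is the hardest part of \cite{dms2014mating}; it does not follow from ``the Riemann mapping theorem then endows this quotient with a canonical conformal structure.'' The Riemann mapping theorem gives no uniqueness statement about a measure-preserving homeomorphism from a mated-CRT sphere to a random surface, and the actual argument runs through a delicate conditional-independence/invariance scheme. As written, Step~3 is a restatement of the goal rather than a proof of it. So the proposal is a reasonable summary of the strategy in the cited reference, but it is not itself a proof, and if this were a result the paper needed to establish rather than cite, each of the three steps would require substantial further argument.
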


\subsection{Imaginary geometry}
We will often use the so-called imaginary geometry coupling of $\SLE$ and a GFF. In doing this, we consider the formal vector field $e^{i h/\chi}$, where $h$ is some Gaussian free field with deterministic boundary data and $\chi = 2/\sqrt{\kappa} - \sqrt{\kappa}/2$ and view $\SLE_\kappa(\underline{\rho})$ processes, for $\kappa \in (0,4)$, as its flow lines, where the weights and locations of the force points depend on the boundary data of $h$. In this coupling, the flow lines are a.s.\ determined by $h$ and they satisfy some convenient rules of interaction. 

Similarly, one can couple $\SLE_4(\underline{\rho})$ curves with $h$, in which case we call them level lines. They follow similar interaction rules, but this case is easier to handle, since there is no winding term (since $\chi \rightarrow 0$ as $\kappa \rightarrow 4$).

The imaginary geometry coupling was developed mainly in \cite{she2016zipper, ms2016imag1, ms2017imag4} and the level line coupling in \cite{ss2013continuum} as well as in \cite{ww2017levellines}. We will not review the imaginary geometry or level line coupling in any detail, but rather refer to \cite[Section~2.2]{ms2016imag2} or \cite[Section~2.2]{mw2017slepaths} for an introduction.

We will often refer to $\SLE_\kappa(\underline{\rho})$ processes, $\kappa \in (0,4)$ (resp.\ $\kappa =4$) as flow lines (resp.\ level lines) and define flow lines (resp.\ level lines) of other angles (resp.\ heights) with the previous ones in mind, without explicitly stating the boundary data, as it will be clear from the context.

\section{Critical welding for quantum cones}
\label{sec:critical_welding}

In this section we prove the following theorem.  In order to read the remainder of the article, one only needs the statement of the theorem so the proof can be skipped on a first reading.

\begin{theorem}\label{thm:critical_cone_welding}
Suppose that $(\cyl,h,-\infty,+\infty) \sim \qconeW{2}{4}$ has the first exit parameterization.  Let $\eta_1,\eta_2$ be a pair of curves, independent of $h$, such that $\eta_1$ is a whole-plane $\SLE_{4}(2)$ process in $\cyl$, from $-\infty$ to $+\infty$, and the conditional law of~$\eta_2$ given~$\eta_1$ is that of an $\SLE_4$ process in $\cyl \setminus \eta_1$, from $-\infty$ to $+\infty$. Furthermore, let $D_1,D_2$ denote the two connected components of $\cyl \setminus (\eta_1 \cup \eta_2)$. Then the quantum surfaces $(D_1,h|_{D_1},-\infty,+\infty)$ and $(D_2,h|_{D_2},-\infty,+\infty)$ are independent and have law $\qwedgeW{2}{2}$.
\end{theorem}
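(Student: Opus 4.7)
The strategy I would follow is a two-step reduction to the critical wedge welding (Theorem~\ref{thm:critical_wedge_welding}) via a limit from the subcritical regime. First, it suffices to establish the critical cone welding statement: if $(\cyl, h, -\infty, +\infty) \sim \qconeW{2}{4}$ is cut by an independent whole-plane $\SLE_4(2)$ curve $\eta_1$, then the complement, marked at $-\infty$ and $+\infty$, is a weight-$4$ critical quantum wedge with matching quantum boundary lengths along $\eta_1$. Given this, applying Theorem~\ref{thm:critical_wedge_welding} to the resulting weight-$4$ wedge with the conditionally independent $\SLE_4$ curve $\eta_2$ produces the two independent $\qwedgeW{2}{2}$ surfaces, and conformal welding determinism combined with the conditional independence structure yields the theorem.

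To obtain the critical cone welding, I would take a sequence $\gamma_n \uparrow 2$ and set $\kappa_n = \gamma_n^2$. Consider weight-$4$ $\gamma_n$-quantum cones $(\cyl, h_n, -\infty, +\infty) \sim \qconeW{\gamma_n}{4}$, each paired with an independent whole-plane $\SLE_{\kappa_n}(2)$ curve $\eta_1^n$ (the first exit and circle-average embeddings differ by an explicit horizontal translation, so one can pass between Theorem~\ref{thm:cone_welding} and the present setting). By Theorem~\ref{thm:cone_welding}, the quantum surface $(\cyl \setminus \eta_1^n, h_n|_{\cyl \setminus \eta_1^n}, -\infty, +\infty)$ is a weight-$4$ $\gamma_n$-quantum wedge, with boundary lengths matching on the two sides of $\eta_1^n$. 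The field and the curve admit explicit descriptions in terms of SDEs whose coefficients depend continuously on $\gamma$: the radial part of the cone is a two-sided Brownian motion with drift $Q_{\gamma_n} - \alpha_n \to 1$ (conditioned on the negative time side), and the driving pair $(O, W)$ for whole-plane $\SLE_{\kappa_n}(2)$ in~\eqref{eq:wpSLEdriv} depends continuously on $\kappa$ with a unique stationary solution guaranteed by Proposition~\ref{prop:stationary_uniqueness}. Coupling the driving Brownian motions, I would obtain $h_n \to h$ in $H^{-1}_{\textup{loc}}(\cyl)$ and $\eta_1^n \to \eta_1$ almost surely in a suitable topology on curves, where $\eta_1$ is a whole-plane $\SLE_4(2)$ independent of $h$.

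The main obstacle is to transport the quantum wedge structure through the limit, and this has two pieces. First, one must show that the quantum surface $(\cyl \setminus \eta_1^n, h_n|_{\cyl \setminus \eta_1^n}, -\infty, +\infty)$ converges to the critical weight-$4$ wedge. This is not immediate from field convergence since the conformal maps uniformizing the complements change with $n$; I would instead work with the first exit parameterization on the wedge side and track the averages-on-vertical-lines processes, extract a subsequential limit using uniform-in-$n$ Bessel-type tail bounds for tightness, and identify the limit by comparison with the explicit Brownian-motion-with-drift description of $\qwedgeW{2}{4}$. Second, one must show that the renormalized subcritical boundary length measures $(2-\gamma_n)^{-1} \nu_{h_n}^{\gamma_n}$ along $\eta_1^n$ converge to the critical boundary length measure $\nu_h^2$ along $\eta_1$, so that the boundary length matching on the two sides of $\eta_1^n$ survives in the limit. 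This should follow from the definition of $\nu_h^2$ as a limit of renormalized subcritical measures recalled in Section~\ref{sec:LQG}, combined with uniform moment bounds on the quantum length of compact arcs on SLE-type curves, which I expect to obtain by standard Gaussian multiplicative chaos arguments together with the Girsanov changes of measure described in Section~\ref{sec:preliminaries}.
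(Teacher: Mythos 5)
Your proposal follows essentially the same route as the paper: reduce Theorem~\ref{thm:critical_cone_welding} to the critical cone-cutting statement (Proposition~\ref{prop:cutcone}) plus Theorem~\ref{thm:critical_wedge_welding}, and establish Proposition~\ref{prop:cutcone} by taking $\gamma_n \uparrow 2$, using Theorem~\ref{thm:cone_welding} at each subcritical level, showing the quantum cone fields and the whole-plane $\SLE_{\kappa_n}(2)$ uniformizing maps converge (Lemma~\ref{lem:coneconv} and Proposition~\ref{prop:Cara}), and controlling tightness of the horizontal translations in the first exit parameterization so that the limiting embedding does not degenerate (Lemma~\ref{lem:translbound}). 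You correctly identify the second of these as the main technical obstacle; this is exactly where the paper spends its effort in Subsection~\ref{subsec:quantum_conv}. One small over-complication: you devote a piece of your argument to showing that the renormalized subcritical boundary length measures $(2-\gamma_n)^{-1}\nu_{h_n}^{\gamma_n}$ converge to the critical length measure along $\eta_1$. This is not needed for Theorem~\ref{thm:critical_cone_welding} as stated: Proposition~\ref{prop:cutcone} only asserts the law of the complementary surface (no length-matching claim is required to invoke Theorem~\ref{thm:critical_wedge_welding}, which is a standalone result already proved at $\gamma = 2$ in \cite{hp2018critical} and carries its own boundary-length matching). Likewise, welding determinism is not used to deduce the theorem, only the law of the two pieces. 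These extras do not introduce errors, but they are not part of the minimal argument.
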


We are going to deduce Theorem~\ref{thm:critical_cone_welding} from Theorem~\ref{thm:cone_welding} by taking a limit as $\kappa \uparrow 4$.  There are two sources of subtlety involved in taking this limit.  First, while it is obvious that one has the Carath\'{e}odory convergence of a whole-plane $\SLE_\kappa(2)$ run up to a \emph{fixed} and finite time to a whole-plane $\SLE_4(2)$ as $\kappa \uparrow 4$ due to the local uniform convergence of the corresponding Loewner driving function, there is some work involved in ruling out pathological behavior of the curve near $\infty$ in order to take a limit when the curve has been run for the full amount of time.  Addressing this issue is the main focus of Section~\ref{subsec:cara}.  Second, while it is not difficult to see that the field which describes the first exit parameterization using $\strip$ of the two quantum surfaces to the left and right of $\eta_1 \cup \eta_2$ converges to that of $\qwedgeW{2}{2}$, one has to rule out degenerate behavior for the horizontal translation which could come as one takes a limit of the conformal maps as above.  Addressing this issue is the main focus of Section~\ref{subsec:quantum_conv}.
\subsection{Carath\'{e}odory Convergence}
\label{subsec:cara}

This subsection is dedicated to proving the following proposition.
\begin{proposition}\label{prop:Cara}
For $\kappa \in (0,4]$, let $\eta^\kappa$ be a whole-plane $\SLE_\kappa(2)$ process in $\C$ from $0$ to $\infty$. Let $\phi^\kappa$ be the unique conformal transformation mapping $\C \setminus \eta^\kappa$ to $\h$ fixing the origin and $\infty$ and such that $\im(\phi^\kappa(i)) = 1$. Then the law of $(\phi^\kappa)^{-1}$ converges weakly to the law of $(\phi^4)^{-1}$ with respect to the topology of local uniform convergence of conformal maps on $\h$ as $\kappa \uparrow 4$.
\end{proposition}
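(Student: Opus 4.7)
My plan is to deduce Proposition~\ref{prop:Cara} from convergence of the driving SDEs, then pass from convergence of initial Loewner segments to a global Carath\'eodory convergence of the complements $D^\kappa := \C \setminus \eta^\kappa$, and finally identify the limit.

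First I would establish pathwise convergence of the driving processes. The coefficients of the SDE~\eqref{eq:wpSLEdriv} depend continuously on $\kappa$ (through the factors $\kappa/2$ and $\sqrt{\kappa}$), and Proposition~\ref{prop:stationary_uniqueness} provides a unique stationary solution for each $\kappa \in (0,4]$. Standard stability arguments for SDEs with pathwise unique solutions then yield $(W^\kappa, O^\kappa) \to (W^4, O^4)$ in law on each compact time interval as $\kappa \uparrow 4$; passing to a Skorokhod coupling I may assume the convergence is almost sure and locally uniform in $t$. Classical stability of the whole-plane Loewner equation under perturbations of the driving function then gives, for each fixed $T > 0$, local uniform convergence $g^\kappa_T \to g^4_T$ and $(g^\kappa_T)^{-1} \to (g^4_T)^{-1}$ on the corresponding domains, and hence convergence of the initial segments $K^\kappa_T \to K^4_T$ in the Hausdorff metric.

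The main obstacle is to upgrade this to convergence of the \emph{full} curve, i.e., to control $\eta^\kappa$ near its endpoint at $\infty$ uniformly in $\kappa$. For this I would combine the scale invariance of whole-plane $\SLE_\kappa(2)$ from $0$ to $\infty$ (its law, viewed as a subset of $\C$, is invariant under $z \mapsto \lambda z$ for any $\lambda > 0$) with the reversibility of whole-plane $\SLE_\kappa(2)$ established in~\cite{ms2017imag4}: reversing the curve and applying the inversion $z \mapsto 1/z$ converts the ``infinity'' end of $\eta^\kappa$ into the ``zero'' end of another whole-plane $\SLE_\kappa(2)$, where the driving-function convergence of the previous paragraph applies. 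This allows me to obtain uniform tightness away from the two endpoints: for every $\epsilon > 0$ and $R > 0$, there exists $r > 0$ such that with probability at least $1 - \epsilon$ and simultaneously for all $\kappa$ sufficiently close to $4$, the portion of $\eta^\kappa$ outside $B(0, R)$ is at distance at least $r$ from the origin. Together with the Hausdorff convergence on bounded sets, this yields $D^\kappa \to D^4$ in the Carath\'eodory kernel sense from any fixed reference point in $D^4$.

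To identify the limiting conformal map, I would work with the three-point normalization $(\phi^\kappa)^{-1}(0) = 0$, $(\phi^\kappa)^{-1}(\infty) = \infty$, $\im \phi^\kappa(i) = 1$. The boundary fixings together with the Carath\'eodory kernel convergence of $D^\kappa$ yield a uniform lower bound on $|((\phi^\kappa)^{-1})'(z_0)|$ at a suitable reference point $z_0 \in \h$, and standard distortion estimates (Koebe after transporting to $\D$) then give equicontinuity of the family $\{(\phi^\kappa)^{-1}\}$ on compact subsets of $\h$. By the Arzel\`a--Ascoli theorem this family is tight in the local uniform topology; any subsequential limit is conformal by Hurwitz's theorem, maps $\h$ onto $D^4$ by the Carath\'eodory convergence of domains, and respects the three normalization conditions by continuity. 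It must therefore coincide with $(\phi^4)^{-1}$, which proves the proposition.
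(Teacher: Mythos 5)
Your plan captures the right intuition (convergence of the driving pair plus some control near $\infty$), but the two places where you outsource the hard work are exactly where the argument has gaps, and your approach is genuinely different from the paper's.

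First, ``standard stability arguments for SDEs with pathwise unique solutions'' is too quick: the drift in~\eqref{eq:wpSLEdriv} is singular where $W_t=O_t$, and you also need the $\kappa$-dependent stationary law (density $\propto\sin^{8/\kappa}$) to converge, not just the dynamics from a fixed initial condition. The paper handles both via the Girsanov martingale comparison (Lemma~\ref{lem:radBESkappato4}) and then bootstraps to Proposition~\ref{prop:statconv}; this is where your ``standard'' claim hides actual work. Second, and more seriously, the tightness statement you write is vacuous as stated: ``the portion of $\eta^\kappa$ outside $B(0,R)$ is at distance at least $r$ from the origin'' holds trivially with $r=R$. What is actually needed is that the curve, after exiting a large ball (equivalently, at large capacity time), never returns close to a fixed compact set — otherwise Hausdorff convergence of $K^\kappa_T$ at each finite $T$ does not yield Hausdorff convergence of $\eta^\kappa$ on bounded spatial regions, and the Carath\'eodory kernel can degenerate. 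Reversibility plus inversion is a natural thing to reach for, but it does not by itself deliver this: after the inversion and time-reversal, the ``does the curve come back?'' question reappears on the other side, so reversibility merely relabels the problem rather than solving it. The paper instead \emph{finitizes} the problem: a radial $\SLE_\kappa(2)$ run to time infinity equals a chordal $\SLE_\kappa(\kappa-8)$ with interior force point run to a \emph{finite} stopping time, which equals a reverse $\SLE_\kappa(\kappa)$ (Lemma~\ref{lem:SLEcomp}); all the analysis (Lemmas~\ref{lem:thetainint} and~\ref{lem:sle_converges}) is then about finite-time objects, so there is no $t\to\infty$ limit to control at all. Finally, ``respects the three normalization conditions by continuity'' does not hold as stated: $\phi^\kappa(0)=0$ and $\phi^\kappa(\infty)=\infty$ are prime-end conditions, and local uniform convergence of $(\phi^\kappa)^{-1}$ on compacts of $\h$ does not transfer boundary behavior unless you have separate control of the boundary near $0$ and $\infty$ (which again is the whole difficulty). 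The paper gets this for free because $\phi^\kappa$ is constructed as an explicit composition of finite-time Loewner maps with the map from the reverse-SLE picture, so the three normalizations are tracked through the limit algebraically rather than by a normal-families argument.
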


As we mentioned above, the challenge in proving Proposition~\ref{prop:Cara} is that $\phi^\kappa$ is the uniformizing map for the \emph{whole} curve rather than just the curve drawn up to a finite time.  In particular, it does not immediately follow from the local uniform convergence of the driving function.  In order to circumvent this issue, we will use the fact that a radial $\SLE_\kappa(2)$ curve (which is defined on an infinite time interval using the capacity parameterization) can be realized as a chordal $\SLE_\kappa(\kappa-8)$ curve with an interior force point run up to a finite time which, in turn, can be realized as a reverse $\SLE_\kappa(\kappa)$ with force point located at $0$ (see Lemma~\ref{lem:SLEcomp}).  We emphasize that the reason for doing this is to transfer the problem into a matter analyzing an $\SLE$-type curve at a finite time.

We now recall the definition of a reverse $\SLE_\kappa(\kappa)$ process with force point at $0$. Consider the reverse Loewner differential equation
\begin{align}\label{eq:rLDE}
\partial_t \wh g_t(z) = -\frac{2}{\wh g_t(z)-\wh W_t}, \quad \wh g_0(z) = z.
\end{align}
For each $t \geq 0$, $\wh g_t$ is the unique conformal map from $\h$ to $\h \setminus K_t$, satisfying $\wh g_t(z) - z \to 0$ as $z \to \infty$, for some family of compact $\h$-hulls $(K_t)$. A reverse $\SLE_\kappa(\kappa)$ process with force point at $0$ is the random curve generating the growth process $K_t$ when we solve~\eqref{eq:rLDE} with $(\wh W_t,\wh V_t)$ given by
\begin{align*}
d \wh W_t &= \sqrt{\kappa} dB_t - \re \!\left(\frac{\kappa}{\wh V_t-\wh W_t}\right)\! dt, \\
d \wh V_t &= -\frac{2}{\wh V_t - \wh W_t} dt, \quad \wh V_0 = 0,
\end{align*}
where $B$ is a standard Brownian motion. The centered reverse Loewner flow of the point $z \in \overline{\h}$ is given by $\wh f_t(z) = \wh g_t(z) - \wh W_t$. Much of the following discussion is carried out in the proof of \cite[Proposition~3.8]{dms2014mating}. We let $\wh Q_t = \wh V_t - \wh W_t$ denote the reverse $\SLE_\kappa(\kappa)$ flow of the force point and set $\wh \theta_t = \arg(\wh Q_t)$ and $\wh J_t = \log \im(\wh Q_t)$. Note that $\wh \theta_t$ and $\wh J_t$ determine $\wh Q_t$. Then, making the random time change $ds(t) = |\wh Q_t|^{-2} dt$, we have that $d J_s = d \wh J_{t(s)} = 2 ds$ and $\theta_s = \wh \theta_{t(s)}$ satisfies
\begin{align}\label{eq:SDEX}
	d \theta_s = \sqrt{\kappa} \sin( \theta_s) dB_s + 2 \sin(2 \theta_s) ds.
\end{align}
That is, under this time change, the only randomness of $\wh Q$ which remains is that of $\theta$ which we shall hence study. Moreover, as described in the proof of \cite[Proposition~3.8]{dms2014mating},~\eqref{eq:SDEX} has a reversible, invariant measure $\mu(du) = \wt c_\kappa \sin^{\alpha_\kappa}(u) du$, where $\alpha_\kappa = 8/\kappa - 2 > -1$ and $\wt c_\kappa$ is a normalizing constant. If we define the time change $d\beta(s) = \kappa \sin^2(\theta_s) ds$, then $( \theta_{\beta^{-1}(s)})_{s \geq 0}$ solves~\eqref{eq:radBES} with $a = 4/\kappa$, that is, $(\theta_{\beta^{-1}(s)})_{s \geq 0}$ is a radial Bessel process. Consequently, in order to understand $\wh Q$, it is natural to study the convergence of radial Bessel processes. This is done in Lemma~\ref{lem:radBESkappato4}. Moreover, we shall also relate the driving pair of whole-plane $\SLE_\kappa(2)$ processes to radial Bessel processes, and deduce its convergence as $\kappa \to 4$ from that of the radial Bessel process. This is the content of Proposition~\ref{prop:statconv}.

After proving Lemma~\ref{lem:radBESkappato4} and Proposition~\ref{prop:statconv} we turn to analyzing the stationary solution to~\eqref{eq:SDEX}, in Lemma~\ref{lem:thetainint}. This is then used together with Lemma~\ref{lem:radBESkappato4} to deduce the convergence of reverse $\SLE_\kappa(\kappa)$ as $\kappa \to 4$, in Lemma~\ref{lem:sle_converges}. Next, we recall \cite[Proposition~3.10]{dms2014mating}, which describes the relationship between reverse $\SLE_\kappa(\kappa)$ and $\SLE_\kappa(\kappa-8)$. With this at hand, we describe how to relate the convergence results from reverse $\SLE_\kappa(\kappa)$ (and hence $\SLE_\kappa(\kappa-8)$) to radial $\SLE_\kappa(2)$, before finally proving Proposition~\ref{prop:Cara}.

\begin{lemma}\label{lem:radBESkappato4}
For each $\kappa \in (0,4]$, let $Y^\kappa$ be a solution to~\eqref{eq:radBES} with $a = 4/\kappa$ and denote by $\mu^\kappa$ the law of $Y^\kappa$, when $Y_0^\kappa$ is sampled from the invariant distribution of $Y^\kappa$. Then, as $\kappa \uparrow 4$, $\mu^\kappa$ converges to $\mu^4$ weakly with respect to the topology of local uniform convergence on $C([0,\infty))$.
\end{lemma}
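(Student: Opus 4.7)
The plan is to build all the processes on a single probability space so that $Y^\kappa \to Y^4$ almost surely in the local uniform topology on $C([0,\infty))$; weak convergence then follows. First, the invariant densities $\psi_{4/\kappa}(y) = c_{4/\kappa}\sin^{8/\kappa}(y)$ converge pointwise to $\psi_1(y) = c_1\sin^2(y)$ as $\kappa \uparrow 4$, and since the normalising constants are continuous Beta integrals, dominated convergence gives $\mu^\kappa_0 \Rightarrow \mu^4_0$ on $(0,\pi)$. By the Skorohod representation theorem, couple the initial conditions on a common probability space so that $Y^\kappa_0 \to Y^4_0$ almost surely, and drive every $Y^\kappa$ with one fixed standard Brownian motion $B$ independent of the initial conditions. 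Since $\kappa \leq 4$ gives $a = 4/\kappa \geq 1 > 1/2$, pathwise uniqueness holds and each $Y^\kappa$ exists for all time without hitting $\{0,\pi\}$.

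Next, fix $\varepsilon > 0$ and set $\tau^\kappa_\varepsilon = \inf\{t\geq 0 : Y^\kappa_t \notin [\varepsilon,\pi-\varepsilon]\}$. On the stochastic interval $[0, T \wedge \tau^\kappa_\varepsilon \wedge \tau^4_\varepsilon]$, the difference $Y^\kappa_t - Y^4_t$ evolves deterministically (the Brownian driver cancels) with drift $(4/\kappa)\cot(Y^\kappa_t) - \cot(Y^4_t)$. Since $\cot$ is Lipschitz on $[\varepsilon,\pi-\varepsilon]$, Gronwall yields
\begin{equation*}
\sup_{t \leq T\wedge \tau^\kappa_\varepsilon \wedge \tau^4_\varepsilon} |Y^\kappa_t - Y^4_t| \leq \Big(|Y^\kappa_0 - Y^4_0| + T|4/\kappa - 1|\cdot\sup_{[\varepsilon,\pi-\varepsilon]}|\cot|\Big) e^{C_\varepsilon T},
\end{equation*}
which tends to zero almost surely as $\kappa \uparrow 4$.

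The main obstacle is to upgrade this to uniform convergence on the full interval $[0,T]$; for that we must rule out the event $\{\tau^\kappa_\varepsilon \leq T\}$ uniformly in $\kappa$ close to $4$ as $\varepsilon \downarrow 0$. My approach is to exploit stationarity: since $Y^\kappa_t$ has law $\mu^\kappa_0$ for every $t$,
\begin{equation*}
\E\!\left[\mathrm{Leb}\{t \in [0,T] : Y^\kappa_t < \varepsilon\}\right] = T\mu^\kappa_0\big((0,\varepsilon)\big) = O(\varepsilon^{8/\kappa + 1}),
\end{equation*}
uniformly for $\kappa \in [2,4]$, and similarly near $\pi$. To convert this occupation bound into the hitting-time bound $\p(\tau^\kappa_\varepsilon \leq T) \to 0$, I would invoke the strong Markov property at $\tau^\kappa_\varepsilon$ together with the locally Bessel-like behaviour of $Y^\kappa$ near $0$ (with effective dimension $d = 2a + 1 = 8/\kappa + 1 \geq 3$, exactly as discussed at the end of \S2.1): each down-crossing from $2\varepsilon$ to $\varepsilon$ forces the process to spend a minimum expected amount of time below $2\varepsilon$, with the lower bound uniform in $\kappa \in [2,4]$ by Brownian scaling of the $\BES^d$ approximation. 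Dividing the occupation estimate by this minimum excursion duration gives $\p(\tau^\kappa_\varepsilon \leq T) \leq C_T\varepsilon^\delta$ for some $\delta > 0$ independent of $\kappa$. An alternative, if the excursion-theoretic comparison is awkward, is to work under the Girsanov change of measure~\eqref{eq:radBESmtg}, computing the probability for $B$ under the reference Brownian law and using uniform-in-$\kappa$ moment bounds on $M_t$ to transfer the estimate back.

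Combining the three ingredients, for any $T, \eta > 0$ we first pick $\varepsilon$ small enough that $\sup_{\kappa\in[\kappa_0,4]}\p(\tau^\kappa_\varepsilon \leq T) < \eta$ and $\p(\tau^4_\varepsilon \leq T) < \eta$; on the complementary event the Gronwall estimate from the second paragraph forces $\sup_{t \leq T}|Y^\kappa_t - Y^4_t| \to 0$ almost surely. Thus $\mu^\kappa \Rightarrow \mu^4$ on $C([0,T])$, and since $T$ was arbitrary, we obtain weak convergence in the topology of local uniform convergence on $C([0,\infty))$, as required.
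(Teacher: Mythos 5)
Your argument is correct in outline but takes a genuinely different route from the paper's. The paper also couples via a single Brownian motion $B$, but instead of demanding pathwise closeness of the solutions it works with the Girsanov density $M_T^\kappa$ from~\eqref{eq:radBESmtg}: one observes $M_T^\kappa \to M_T^4$ a.s.\ with $\E[M_T^\kappa]=1$, and then a Fatou/Scheff\'e argument gives $M_T^\kappa \to M_T^4$ in $L^1(\p)$, which immediately upgrades to weak convergence of $\mu_{y,T}^\kappa$. The point of this manoeuvre is precisely that it sidesteps the step you identify as the main obstacle --- the density argument never needs any control of the exit time of $[\varepsilon,\pi-\varepsilon]$, since the $L^1$ convergence of Radon--Nikodym derivatives absorbs the near-boundary behaviour automatically. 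The paper then integrates the fixed-$y$ result against the invariant density $\psi_{4/\kappa}(y)=c_{4/\kappa}\sin^{8/\kappa}(y)$, using the crude bound $c_{4/\kappa}\le c_4$ for $\kappa\in[1,4]$ to apply dominated convergence; your reduction to $\mu_0^\kappa\Rightarrow\mu_0^4$ plus Skorohod coupling of initial conditions does the same thing at essentially the same cost.

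Where your approach pays a price is the step converting the occupation bound $T\mu^\kappa_0((0,\varepsilon)) = O(\varepsilon^{8/\kappa+1})$ into a uniform bound on $\p(\tau^\kappa_\varepsilon \leq T)$. The idea is sound --- by the strong Markov property at $\tau^\kappa_\varepsilon$ and the comparison of $Y^\kappa$ near $0$ with a $\BES^{8/\kappa+1}$ (as set up at the end of \S2.1, with a comparison constant that can be made uniform in $\kappa$ bounded away from $0$), the process started from $\varepsilon$ spends at least time $c\varepsilon^2$ in $(0,2\varepsilon)$ with probability bounded below uniformly in $\kappa\in[\kappa_0,4]$, giving $\p(\tau^\kappa_\varepsilon\le T) = O(T\varepsilon^{8/\kappa-1}) = O(T\varepsilon)$. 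But this is genuinely more work than the paper's two-line Scheff\'e step, and the "alternative" you float at the end (transferring the estimate under the Girsanov change of measure) is, in effect, the paper's route. One small imprecision: because the exit time $\tau^\kappa_\varepsilon$ depends on $\kappa$, the good event does too, so your final conclusion is convergence in probability on $C([0,T])$, not almost sure convergence; this is of course still enough for weak convergence, so the overall conclusion stands.
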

\begin{proof}
Fix $T>0$. We let $\mu_T^\kappa$ (resp.\ $\mu_{y,T}^\kappa$) denote the law of $(Y_t^\kappa)_{0 \leq t \leq T}$, where $Y_0^\kappa$ is sampled from the invariant distribution of $Y^\kappa$ (resp.\ $Y_0^\kappa = y \in (0,\pi)$). We shall begin by proving the weak convergence of the measures $(\mu_{y,T}^\kappa)$ and then deduce the result for the measures $(\mu_T^\kappa)$.

Assume for now that $Y_0^\kappa = y$ for all $\kappa$. We couple the processes $Y^\kappa$ as follows. Let $B$ be a Brownian motion with $B_0 = y$ and let $M^\kappa$ be given by~\eqref{eq:radBESmtg} with $a = 4/\kappa$.  Let also $\tau$ be the first time that $B$ exits the interval $(0,\pi)$. Then, under the measure $\p_\kappa^\dagger$ defined by $\p_\kappa^\dagger[A] = \E[\one_A M_{T \wedge \tau}^\kappa]$, the law of $(B_t)_{0 \leq t \leq T}$ is $\mu_{y,T}^\kappa$.  Note that $M_{T\wedge \tau}^\kappa \rightarrow M_{T\wedge \tau}^4$ a.s.\ as $\kappa \rightarrow 4$ and that $\E[M_{T\wedge \tau}^\kappa] = 1$ for all $\kappa$. By Fatou's lemma, 
\begin{align*}
	2 = \E\!\left[ \liminf_{\kappa \rightarrow 4} M_{T\wedge \tau}^\kappa + M_{T\wedge \tau}^4 - |M_{T\wedge \tau}^\kappa-M_{T\wedge \tau}^4|\right] &\leq \liminf_{\kappa \rightarrow 4} \E[ M_{T\wedge \tau}^\kappa + M_{T\wedge \tau}^4 - |M_{T\wedge \tau}^\kappa-M_{T\wedge \tau}| ] \\
	&= 2 - \limsup_{\kappa \rightarrow 4} \E[|M_{T\wedge \tau}^\kappa-M_{T\wedge \tau}^4|],
\end{align*}
that is, $\limsup_{\kappa \rightarrow 4} \E[|M_{T\wedge \tau}^\kappa - M_{T\wedge \tau}^4|] \leq 0$ and hence $M_{T\wedge \tau}^\kappa \rightarrow M_{T\wedge \tau}^4$ in $L^1(\p)$.

Let $F: C([0,T]) \rightarrow [0,\infty)$ be continuous and bounded. Then,
\begin{align*}
	\int_{C([0,T])} F(f) d\mu_{y,T}^\kappa(f) = \E[F((B_t)_{0 \leq t \leq T}) M_{T\wedge \tau}^\kappa] \rightarrow \E[F((B_t)_{0 \leq t \leq T}) M_{T\wedge \tau}^4] = \int_{C([0,T])} F(f) d\mu_{y,T}^4(f),
\end{align*}
proving the weak convergence of the measures $(\mu_{y,T}^\kappa)$.

Recalling that the invariant density of the process $Y^\kappa$ is given by $\psi_{4/\kappa}(y) = c_{4/\kappa} \sin^{8/\kappa}(y)$, we have
\begin{align*}
	\int_{C([0,T])} F(f) d\mu_T^\kappa(f) = \int_0^\pi \!\left( \int_{C([0,T])} F(f) d\mu_{y,T}^\kappa(f) \right) \psi_{4/\kappa}(y) dy
\end{align*}
and noting that $c_{4/\kappa} \leq c_4$ for all $\kappa \in [1,4]$, we have that
\begin{align*}
\left| \!\left(\int_{C([0,T])} F(f) d\mu_{y,T}^\kappa(f) \right) \psi_{4/\kappa}(y) \right| \leq c_4 \| F \|_\infty,
\end{align*}
uniformly in $\kappa \in [1,4]$.  By the weak convergence of $\mu_{y,T}^\kappa$ to $\mu_{y,T}^4$,  and the dominated convergence theorem, we have that $\mu_T^\kappa$ converges weakly to $\mu_T^4$. Since this holds for each $T>0$, the result follows.
\end{proof}

The next result is concerned with the convergence of the driving pair of a whole-plane $\SLE_\kappa(2)$ as $\kappa \uparrow 4$ and relies on Lemma~\ref{lem:radBESkappato4}. This is one of the main ingredients in the proof of Proposition~\ref{prop:Cara}.

\begin{proposition}\label{prop:statconv}
For $\kappa \in (0,4]$, let $(O^\kappa,W^\kappa)$ be the unique stationary solution to~\eqref{eq:wpSLEdriv} with $\rho = 2$ (recall Proposition~\ref{prop:stationary_uniqueness}). Then as $\kappa \uparrow 4$, we have that the law of  $(O^\kappa,W^\kappa)$ converges to the law of $(O^4,W^4)$ weakly with respect to the topology of local uniform convergence on continuous functions $\R \to \s^1 \times \s^1$.
\end{proposition}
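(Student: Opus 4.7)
The plan is to exploit the fact that the relative angle $\Theta^\kappa_t := \arg(W^\kappa_t / O^\kappa_t)$ is, after a linear time change, a radial Bessel process with parameter $a = 4/\kappa$, and then invoke Lemma~\ref{lem:radBESkappato4}. Writing $O^\kappa_t = e^{i\eta^\kappa_t}$ and $W^\kappa_t = e^{i\xi^\kappa_t}$, and observing that $\wt{\Psi}(O^\kappa_t, W^\kappa_t) = \Psi(O^\kappa_t, W^\kappa_t)$ because $|O^\kappa_t| = 1$, an It\^o calculation using~\eqref{eq:wpSLEdriv} with $\rho = 2$ gives
\begin{align*}
d\xi_t^\kappa = \sqrt{\kappa}\, dB_t + \cot(\Theta_t^\kappa/2)\, dt, \qquad d\eta_t^\kappa = -\cot(\Theta_t^\kappa/2)\, dt.
\end{align*}
Hence $d\Theta_t^\kappa = \sqrt{\kappa}\, dB_t + 2\cot(\Theta_t^\kappa/2)\, dt$, and setting $Y_t^\kappa := \Theta_t^\kappa/2 \in (0,\pi)$ followed by the linear time change $s = (\kappa/4)t$ yields that $\wt Y_s^\kappa := Y_{4s/\kappa}^\kappa$ solves~\eqref{eq:radBES} with parameter $a = 4/\kappa$.

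Next, I will identify the stationary marginal. The system~\eqref{eq:wpSLEdriv} is invariant in law under the joint rotation $(O, W) \mapsto (e^{i\phi} O, e^{i\phi} W)$, so the uniqueness in Proposition~\ref{prop:stationary_uniqueness} forces the stationary law of $(O^\kappa_0, W^\kappa_0)$ to be: $\Theta^\kappa_0$ distributed according to the radial Bessel invariant measure (on $(0, 2\pi)$) and, conditional on $\Theta^\kappa_0$, $O^\kappa_0$ uniform on $\s^1$. In particular, by Lemma~\ref{lem:radBESkappato4} the law of $\Theta^\kappa_0$ converges weakly to the law of $\Theta^4_0$ as $\kappa \uparrow 4$.

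Fix $T > 0$ and a two-sided standard Brownian motion $B$. For each $\kappa$ close to $4$, take initial data $(O^\kappa_{-T}, W^\kappa_{-T})$ in the stationary law: by Lemma~\ref{lem:radBESkappato4} and the Skorokhod representation theorem one may arrange $\Theta^\kappa_{-T} \to \Theta^4_{-T}$ almost surely, and one may take $O^\kappa_{-T}$ to equal a single $\s^1$-uniform random variable $O_{-T}$ independent of everything else. Solving forward from $t = -T$ using the common Brownian motion $B$, classical stability for SDEs with locally Lipschitz coefficients (applied on the random event where $Y^4$ stays in $[\epsilon, \pi-\epsilon]$ on $[-T, T]$, which has full measure as $\epsilon \downarrow 0$) yields $Y^\kappa \to Y^4$ locally uniformly on $[-T, T]$ almost surely. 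The pair $(O^\kappa, W^\kappa)$ is then reconstructed from $\Theta^\kappa$, $B$, and $O^\kappa_{-T}$ via
\begin{align*}
\eta^\kappa_t = \eta^\kappa_{-T} - \int_{-T}^t \cot(\Theta^\kappa_s/2)\, ds, \qquad \xi^\kappa_t = \eta^\kappa_t + \Theta^\kappa_t,
\end{align*}
and by dominated convergence these integrals converge uniformly on $[-T, T]$. This gives $(O^\kappa, W^\kappa) \to (O^4, W^4)$ locally uniformly on $[-T, T]$ almost surely, hence weakly. Since this holds for every $T > 0$, a diagonal argument yields weak convergence on $C(\R, \s^1 \times \s^1)$.

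The main obstacle is the reconstruction step: a priori the integrand $\cot(\Theta^\kappa_s/2)$ could blow up if $\Theta^\kappa$ approaches $\{0, 2\pi\}$. The crucial point is that $a = 4/\kappa \geq 1 > 1/2$ for $\kappa \leq 4$, so the limiting radial Bessel process $Y^4$ almost surely avoids $\{0, \pi\}$ on every compact time interval; combined with the local uniform convergence $Y^\kappa \to Y^4$, this gives an almost sure uniform bound on the integrand for all $\kappa$ sufficiently close to $4$ (depending on $\omega$). The only other subtlety is coupling the initial conditions $\Theta^\kappa_{-T}$ so as to converge, which is precisely what Lemma~\ref{lem:radBESkappato4} together with Skorokhod provides.
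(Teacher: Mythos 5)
Your argument is correct and follows the paper's route: reduce $(O^\kappa,W^\kappa)$ to the relative angle $\vartheta^\kappa = \arg W^\kappa - \arg O^\kappa$ (a time-changed radial Bessel process with $a=4/\kappa$), identify the invariant measure via rotation invariance together with the uniqueness from Proposition~\ref{prop:stationary_uniqueness}, and reconstruct the pair from $\vartheta^\kappa$ and a single marked point. Where the paper invokes the process-level weak convergence of Lemma~\ref{lem:radBESkappato4} together with an implicit continuous-mapping step for the reconstruction, you instead use Skorokhod coupling of the initial angle plus pathwise SDE stability with a common Brownian motion --- equivalent in substance --- and your observation that $a = 4/\kappa \geq 1 > 1/2$ keeps $Y^4$ (and hence $Y^\kappa$ for $\kappa$ near $4$, after the stopping-time/Gr\"onwall bootstrap you allude to) away from $\{0,\pi\}$ is exactly what justifies both the locally Lipschitz stability estimate and the dominated-convergence step in the reconstruction of $\eta^\kappa$.
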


Before proving Proposition~\ref{prop:statconv}, we note the following. Assume that $\kappa \in (0,4]$, $\rho = 2$ and let $ \vartheta_t = \arg(W_t) - \arg(O_t)$, where $(O_t,W_t)$ is the driving pair of a whole-plane $\SLE_\kappa(2)$ process. Then $\vartheta$ solves the SDE
\begin{align}\label{eq:thetaSDE}
	d\vartheta_{t} = \sqrt{\kappa}dB_{t} + 2 \cot \!\left (\frac{\vartheta_{t}}{2} \right )dt,\quad \vartheta_0 \in [0,2\pi).
\end{align}
We note that a solution to~\eqref{eq:thetaSDE} can be obtained by starting with a solution $Y$ to~\eqref{eq:radBES} with $a = 4/\kappa$ and then setting $\vartheta_t = 2Y_{\kappa t/4}$. Consequently, Lemma~\ref{lem:radBESkappato4} implies that if $\vartheta^\kappa$ is a solution to~\eqref{eq:thetaSDE}, started from its invariant distribution (which has density $\psi_{4/\kappa}(y/2)$, $y \in (0,2\pi)$), then as $\kappa \uparrow 4$, the law of $\vartheta^\kappa$ converges to that of $\vartheta^4$ weakly with respect to the topology of local uniform convergence on $C([0,\infty))$.

We are now ready to prove Proposition~\ref{prop:statconv}.
\begin{proof}[Proof of Proposition~\ref{prop:statconv}]
Let $(O^\kappa,W^\kappa)$ be the unique stationary solution to~\eqref{eq:wpSLEdriv} indexed by $\R$ with $\rho = 2$ and $\kappa \in (0,4]$. Let also $\nu_\kappa$ be its unique invariant measure.  One can find $\nu_\kappa$ explicitly as follows.  Let $(O_t,W_t)_{t \geq 0}$ be a solution to~\eqref{eq:wpSLEdriv}, with $(O_0,W_0)$ sampled from $\nu_\kappa$. For any fixed $u \in [0,2\pi)$, the process $(O_t e^{iu},W_t e^{iu})_{t \geq 0}$ is also a stationary solution to~\eqref{eq:wpSLEdriv} and by uniqueness, $(O_0,W_0) \overset{d}{=} (O_0 e^{iu},W_0 e^{iu})$.  In particular, the laws of $W_0$ and $W_0 e^{iu}$ are the same for all $u \in [0,2\pi)$ and consequently $W_0$ is uniformly distributed on the circle $\s^1$. Noting also that $\vartheta_t = \arg(W_t) - \arg(O_t)$ is the unique stationary solution to~\eqref{eq:thetaSDE}, we can sample a pair $(O_0,W_0)$ from $\nu_\kappa$ by first sampling $W_0$ from the uniform distribution on $\partial \D$ and then sampling $\vartheta_0$ independently of $W_0$ from the invariant measure of~\eqref{eq:thetaSDE} and setting $O_0 = W_0 e^{-i\vartheta_0}$.

By the above construction, together with Lemma~\ref{lem:radBESkappato4}, it is clear that $\nu_\kappa$ converges weakly to $\nu_4$ as $\kappa \uparrow 4$. Note that if $(O_t^\kappa,W_t^\kappa)_{t \geq 0}$ is started from $(e^{iy},e^{ix})$ for some fixed $x,y \in \R$ with $x-y \in [0,2\pi) $, then the law of $(O^{\kappa}_{t},W^{\kappa}_{t})_{t \geq 0}$ on $C([0,\infty) \rightarrow \s^1 \times \s^1)$ converges to the law of $(O_t^4,W_t^4)_{t \geq 0}$ as $\kappa \uparrow 4$. This can be seen by combining the weak convergence of the law of $\vartheta_t^\kappa = \arg(W_t^\kappa) - \arg(O_t^\kappa)$ as $\kappa \uparrow 4$ with the way that $(O_t^\kappa,W_t^\kappa)_{t \geq 0}$ can be obtained from $\vartheta^\kappa$ as already explained. Thus the weak convergence of $\nu_\kappa$ to $\nu_4$ as $\kappa \uparrow 4$ implies that $(O_t^\kappa,W_t^\kappa)_{t \geq 0}$ converges weakly to $(O_t^\kappa,W_t^\kappa)_{t \geq 0}$ as $\kappa \uparrow 4$ when $(O_0^\kappa,W_0^\kappa)$ has law  given by $\nu_\kappa$. Since the processes $(O_{-t}^\kappa,W_{-t}^\kappa)_{t \geq 0}$ and $(O_t^\kappa, W_t^\kappa)_{t \geq 0}$ have the same law on $C([0,\infty) \rightarrow \s^1 \times \s^1)$, we obtain that the law of $(O_t^\kappa,W_t^\kappa)_{t \in \R}$ on $C(\R \rightarrow \s^1 \times \s^1)$ converges weakly to the law of $(O_t^4,W_t^4)_{t \in \R}$ as $\kappa \uparrow 4$.
\end{proof}

Next, we analyze the stationary solution to~\eqref{eq:SDEX}.

\begin{lemma}\label{lem:thetainint}
Let $(\wt{\theta}_s)_{s \in \R}$ be a solution to~\eqref{eq:SDEX} for $\kappa \in [1,4]$, started from its invariant distribution. Then there exists a universal constant $c_1 \in (0,1)$, uniform in $\kappa \in [1,4]$, such that for all $c \in (0,1]$ with probability at least $1 - c_1 c$ it holds that 
\begin{align*}
	\wt{\theta}_s \in [ce^{-\frac{3|s|}{2}},\pi - ce^{-\frac{3|s|}{2}}],\quad \text{for all} \quad s \in \R.
\end{align*}
\end{lemma}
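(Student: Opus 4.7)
The plan is to combine stationarity with a single-interval fluctuation estimate and a union bound over unit intervals. The SDE~\eqref{eq:SDEX} is invariant under $\tilde\theta \mapsto \pi - \tilde\theta$ together with $B \mapsto -B$, and the invariant measure $\mu_\kappa$ is reversible, so by symmetry and time reversal it suffices to produce a constant $K$, uniform in $\kappa \in [1,4]$, such that
\begin{equation*}
\p\!\left[\tilde\theta_s < c e^{-3s/2}\text{ for some }s \geq 0\right] \leq Kc,\quad c \in (0,1],
\end{equation*}
and then take $c_1$ to be a suitable multiple of $K$ (truncated to be $<1$). Partitioning $[0,\infty)$ into unit intervals and using stationarity, the probability above is at most
\begin{equation*}
\sum_{n \geq 0} \p\!\left[\inf_{s \in [n,n+1]}\tilde\theta_s < ce^{-3(n+1)/2}\right] = \sum_{n \geq 0} \p\!\left[\inf_{s \in [0,1]}\tilde\theta_s < ce^{-3(n+1)/2}\right].
\end{equation*}

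The main technical step is therefore to show that there is a constant $K$, uniform in $\kappa \in [1,4]$, so that $\p[\inf_{s \in [0,1]}\tilde\theta_s < y] \leq Ky$ for $y \in (0,1/2)$. I would condition on $\tilde\theta_0$, whose law is $\mu_\kappa(du) = \tilde c_\kappa \sin^{\alpha_\kappa}(u)\,du$ with $\tilde c_\kappa$ uniformly bounded for $\kappa \in [1,4]$. The event $\{\tilde\theta_0 \leq 2y\}$ contributes $\lesssim y^{\alpha_\kappa + 1} \leq y$, since $\alpha_\kappa + 1 = 8/\kappa - 1 \in [1,7]$. For $\tilde\theta_0 \in (2y, \pi/4]$, It\^o's formula gives
\begin{equation*}
d\log\sin\tilde\theta_s = \sqrt{\kappa}\cos\tilde\theta_s\, dB_s + (4\cos^2\tilde\theta_s - \kappa/2)\,ds,
\end{equation*}
and on $\{\tilde\theta_s \leq \pi/4\}$ the drift is $\geq 2 - \kappa/2 \geq 0$ for $\kappa \leq 4$. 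Hence $\log\sin\tilde\theta_{s \wedge \sigma} - \log\sin\tilde\theta_0$ dominates a continuous martingale of quadratic variation at most $\kappa$ (with $\sigma$ the first exit of $\tilde\theta$ from $[0,\pi/4]$), and the Gaussian tail for such martingales gives
\begin{equation*}
\p[\inf_{s \in [0,1]}\tilde\theta_s < y \giv \tilde\theta_0 = x] \leq 2\exp\!\left(-\frac{[\log(x/y)]^2}{2\kappa}\right)\! ,\quad x \in (2y,\pi/4].
\end{equation*}
Substituting $u = \log(x/y)$ and using $\sin^{\alpha_\kappa}(x) \leq x^{\alpha_\kappa}$, the integral against $\mu_\kappa$ over $(2y,\pi/4]$ is bounded by $y^{\alpha_\kappa + 1}$ times a finite Gaussian integral, hence again $\lesssim y$. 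The remaining case $\tilde\theta_0 > \pi/4$ is handled by the strong Markov property at the first hitting time of $\pi/4$, reducing to the bound above with $x = \pi/4$, which is super-polynomial in $y$.

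Summing the single-interval bound over $n$ yields
\begin{equation*}
\p[\tilde\theta_s < ce^{-3s/2}\text{ for some }s \geq 0] \leq K c \sum_{n \geq 0} e^{-3(n+1)/2} \leq K'c,
\end{equation*}
and combining with the two symmetries ($s \mapsto -s$ and $\tilde\theta \mapsto \pi - \tilde\theta$) completes the proof. The main obstacle is the single-interval estimate: the linear-in-$y$ bound with constants uniform in $\kappa$ hinges on the joint effect of $\alpha_\kappa + 1 \geq 1$ and the non-negativity of the drift of $\log\sin\tilde\theta$ on $[0,\pi/4]$ (both of which require $\kappa \leq 4$), together with the Gaussian moment bound for the martingale part. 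The exponent $3/2$ plays no role in this argument beyond ensuring the geometric sum converges; any positive exponent would work, so the specific value $3/2$ must be dictated by later applications.
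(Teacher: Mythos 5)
Your proposal takes a genuinely different route from the paper's. The paper compares $\wt\theta$ near the endpoints to a $\BES^{1+8/\kappa}$ process, uses Brownian scaling to set up a multi-scale ``climb out'' argument, and bounds the probability that $\wt\theta$ started at $ce^{-n}$ hits $ce^{-3n/2}$ before $\pi/2$ by $\lesssim e^{-n/2}$; this Bessel hitting estimate is where the exponent $3/2$ actually enters on the paper's side (any exponent strictly greater than $1$ works there, but $1$ does not). You instead combine a union bound over unit time intervals with a direct estimate of $\p[\inf_{[0,1]}\wt\theta_s<y]$ obtained by conditioning on $\wt\theta_0$ and applying It\^o and a Gaussian tail. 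Your framework is more elementary and, as you observe, works for any positive exponent.

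There is, however, a genuine gap in your single-interval estimate. The stopped martingale built from $\log\sin\wt\theta$ only controls the event that $\wt\theta$ reaches $y$ \emph{before} exiting $[0,\pi/4]$; it says nothing about a path that climbs above $\pi/4$ and then dips back to $y$ later in $[0,1]$. Your plan to handle $\wt\theta_0>\pi/4$ (and, implicitly, a path started in $(2y,\pi/4]$ that first exits at the top) by strong Markov at the first hit of $\pi/4$, ``reducing to the bound above with $x=\pi/4$,'' is circular: the quantity you would need, $\p[\inf_{[0,1]}\wt\theta_s<y\giv\wt\theta_0=\pi/4]$, is exactly what the stopped-martingale argument fails to control, since $\sigma$ may be $0$ when started from $\pi/4$. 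A clean fix, staying entirely within your framework, is to use $L=\log\tan(\wt\theta/2)$ in place of $\log\sin\wt\theta$. It\^o's formula gives
\begin{equation*}
dL_s=\sqrt{\kappa}\,dB_s+\Bigl(4-\frac{\kappa}{2}\Bigr)\cos\wt\theta_s\,ds,
\end{equation*}
so the martingale part is literally $\sqrt{\kappa}\,B_s$ and the drift is bounded by $4-\kappa/2\leq 4$ in magnitude, with no stopping required. Then $L_s\geq L_0+\sqrt{\kappa}\,B_s-4$ for $s\in[0,1]$, whence
\begin{equation*}
\p\!\left[\inf_{s\in[0,1]}\wt\theta_s<y\giv\wt\theta_0=x\right]\leq 2\exp\!\left(-\frac{1}{2\kappa}\bigl((L(x)-L(y)-4)_+\bigr)^2\right)
\end{equation*}
uniformly over $x\in(0,\pi]$. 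Since $L(x)-L(y)\geq\log(\pi x/(4y))$ for $x,y\leq\pi/2$, and $L(x)\geq 0$ for $x\geq\pi/2$, integrating against $\mu_\kappa(dx)\leq\wt c_\kappa x^{\alpha_\kappa}\,dx$ recovers your $\lesssim y^{\alpha_\kappa+1}\leq y$ bound with constants uniform in $\kappa\in[1,4]$, and the rest of your argument goes through.
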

\begin{proof}
The strategy of this proof is rather standard: we note that a time-changed version of $\theta$ behaves like a Bessel process of dimension $1+8/\kappa$ when close to the endpoints $0$ and $\pi$ and then deduce the necessary inequalities using the Brownian scaling of the Bessel process.

Fix $c \in (0,1)$. We let $(X_s) = (X_s^\kappa)$ be a $\BES^{1+8/\kappa}$ with $X_0 = 1$ and define the random times $T_1 = \inf \{t \geq 0 : X_t \leq e^{-1} \}$ and $T_2 = \inf \{t \geq 0 : X_t \geq e \}$ and $T = T_1 \wedge T_2$. We recall that $(X_s)$ can be obtained by weighting the law of a Brownian motion started at $1$ with the local martingale $N$, defined in~\eqref{eq:BESmtg} and that this induces a coupling of Bessel processes with different dimensions corresponding to different $\kappa$.  We will omit $\kappa$ in the notation throughout, but keep in mind that it is a central parameter for the processes. For a Brownian motion $B$ with $B_0 = 1$, and analogously defined times $T_1^B$, $T_2^B$ and $T^B$, the event $\{8e^2 \leq T_2^B < T_1^B\}$ has positive probability and since $N_{8e^2 \wedge T^B} \geq e^{-4-48e^4}$, uniformly in $\kappa \in [1,4]$ and hence there is a $0 < p\leq 1$ such that $\p[8e^2 \leq T_2 < T_1] \geq p$ for all $\kappa \in [1,4]$. Since $(X_s)$ and $(r^{-1} X_{r^2 s})$ have the same law,  this implies that if $X$ starts at $c e^{-n}$, then with probability at least $p$, $X$ hits $c e^{-n+1}$ before $c e^{-n-1}$ and after at least $8c^2 e^{-2(n-1)}$ units of time.

Let $\beta$ be the time-change above and let $\wh{\theta}_u = \wt{\theta}_{\beta^{-1}(u)}$.  Fix some $n \in \N$ and let $n_0 = n + \lceil \log(\pi/2c) \rceil$.  By the absolute continuity of $\wh{\theta}$ and $X$, there is a constant $c_2$, uniform in $\kappa \in [1,4]$, such that if $\wh{\theta}$ starts at $c e^{-n+j}$, then with probability at least $c_2 p$, it hits $c e^{-n+j+1}$ before $c e^{-n+j-1}$ and does so after at least $8c^2 e^{2(1+j-n)}$ units of time. Assume that we are working on this event and let $\wt{\theta}_0 = c e^{-n+j}$, $T_j = \inf \{t \geq 0: \wt{\theta}_t \notin [c e^{-n+j-1}, ce^{-n+j+1}] \}$ and $r_j = \beta(T_j)$. Then $r_j \leq T_j \kappa \sin^2(c e^{-n+j+1}) \leq T_j 2 \kappa c^2 e^{2(j+1-n)}$ and $r_j \geq 8c^2 e^{2(j+1-n)}$, which together imply that $T_j \geq 1$. Thus, there is some constant $q > 0$, uniform in $\kappa \in [1,4]$, such that if $\wt{\theta}_0 = c e^{-n+j}$, then with probability at least $q$, $\wt{\theta}$ hits $c e^{-n+j+1}$ before $c e^{-n+j-1}$ and after at least $1$ unit of time and by the Markov property of $\wt{\theta}$, with probability at least $1-(1-q)^n$, there is at least one $j \in \{0,1,\dots,n_0\}$ such that this occurs. Moreover, by absolute continuity, there is a constant $c_3 > 0$, independent of $\kappa$ and $n$, such that $\p[ \wt{\theta} \, \text{hits} \, ce^{-\frac{3n}{2}} \, \text{before it hits} \, \frac{\pi}{2} \,|\, \wt{\theta}_0 = ce^{-n}] \leq c_3 \p[ X \, \text{hits} \, ce^{-\frac{3n}{2}} \, \text{before it hits} \, \frac{\pi}{2} \,|\, X_0 = ce^{-n}] \lesssim e^{-\frac{n}{2}}$, where the implicit constant can be taken independent of $n,\kappa,c$.

From the previous paragraph and Markov property of $\theta$,  if $\sigma$ is a stopping time such that $\wt{\theta}_{\sigma} = e^{-n+j}$ and $\sigma^{\pm} = \inf\{t \geq \sigma : \wt{\theta}_t = e^{-n+j\pm1}\}$,  then $\p[\sigma+1 \leq \sigma^+ < \sigma^- \giv \,\,  \CF_{\sigma}] \geq q$.  Applying this result to $\sigma = \sigma_s = \inf\{t \geq 0 : \wt{\theta}_t \in \{e^{-n+s},e^{-n-s}\}\}$,  $0 \leq s \leq \lfloor \frac{n}{2}\rfloor$,  we obtain that there is some constant $\wt{p} \in (0,1)$, independent of $c$ and $\kappa$, such that for each $n \in \N$,
\begin{align}\label{eq:thetainint}
	\p[ \wt{\theta}_s \in [c e^{-\frac{3n}{2}},\pi-ce^{-\frac{3n}{2}}] \  \text{for all} \ s \in [0,1] \,|\, \wt{\theta}_0 = c e^{-n}] \geq 1-\wt{p}^n.
\end{align}
so by the Markov property and the fact that $(\theta_t)$ and $(\pi-\wt{\theta}_t)$ have the same law, ~\eqref{eq:thetainint} holds uniformly in $\wt{\theta}_0 \in [ce^{-n},\pi-ce^{-n}]$. Comparing with the invariant density of~\eqref{eq:SDEX}, we have for all $s \geq 0$ that $\p[ \wt{\theta}_s \in [ce^{-s},\pi-ce^{-s}]] \geq 1-c_1 ce^{-(\frac{8}{\kappa}-1)s}$, where $c_1$ is independent of $c$ and $\kappa$ and by the above, this implies that the probability that $\wt{\theta}_n \in [ce^{-n},\pi-ce^{-n}]$ and $\wt{\theta}_{n+s} \in [ce^{-\frac{3n}{2}},\pi-e^{-\frac{3n}{2}}]$ for all $s \in [0,1]$ is at least $1-c\wh{p}^n$ for some $\wh{p}\in (0,1)$. Summing over $n$ gives the result for $s \geq 0$. For $s \leq 0$ the claim follows since $(\wt{\theta}_s)_{s \geq 0}$ and $(\wt{\theta}_s)_{s \leq 0}$ have the same law.
\end{proof}

We now turn to proving the convergence of reverse $\SLE_\kappa(\kappa)$.  The discussion in the beginning of the section gives that if $\wt{\theta}_s$ is a stationary solution to~\eqref{eq:SDEX}, then letting $Q_s^{\kappa} = e^{2s}(\cot(\wt{\theta}_s) + i)$ and making the random time change $\wh{s}(t) = \inf\{r \in \R: \int_{-\infty}^r |Q_u^{\kappa}|^2 du \geq t \}$, the process $\wh Q_t^{\kappa} = Q_{\wh s(t)}^{\kappa}$ has the law of the flow of the force point of a centered reverse $\SLE_\kappa(\kappa)$ with force point $0$.  Let also $\wh{W}_t^{\kappa}$ be the driving function of the latter reverse $\SLE_{\kappa}(\kappa)$ process. (Note that on the event that $\wt{\theta}_s \in \!\big[c e^{-\frac{3|s|}{2}},\pi-c e^{-\frac{3|s|}{2}} \!\big]$ for all $s \in \R$, which has probability at least $1-c_1c$, we have that $\int_{-\infty}^0 |Q_u^{\kappa}|^2 du < \infty$ and $\int_{-\infty}^\infty |Q_u^{\kappa}|^2 du = \infty$. Consequently, sending $c$ to $0$, the process $\wh{Q}_t^{\kappa} = Q_{\wh{s}(t)}^{\kappa}$ is well-defined.)

\begin{lemma}\label{lem:sle_converges}
For $\kappa \in (0,4]$, let $\wt{\theta}^\kappa$ be the stationary solution to~\eqref{eq:SDEX}, let $Q^\kappa$, $\wh{Q}^\kappa$ and $\wh{W}^\kappa$ be as above and define $\wh{T}^\kappa = \int_{-\infty}^0 |Q_u^\kappa|^2 du = \inf\{ t \geq 0: \im(\wh{Q}_t^\kappa)=1\}$. Then the law of $(\wh{T}^\kappa,\wh{Q}^\kappa,\wh{W}^\kappa)$ converges to the law of $(\wh{T}^4,\wh{Q}^4,\wh{W}^4)$, weakly on $\R \times C([0,\infty)) \times C([0,\infty))$ as $\kappa \uparrow 4$. 
\end{lemma}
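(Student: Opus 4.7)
The plan is to reduce the joint convergence of $(\wh T^\kappa,\wh Q^\kappa,\wh W^\kappa)$ to the convergence of the stationary solution $\wt\theta^\kappa$ of~\eqref{eq:SDEX}, and then express each quantity as a deterministic, continuous functional of $\wt\theta^\kappa$ (with the uniform integrable domination supplied by Lemma~\ref{lem:thetainint}). Specifically, I would proceed in three stages.

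First, I would establish that, under the stationary law, $\wt\theta^\kappa$ converges weakly to $\wt\theta^4$ on $C(\R \to (0,\pi))$ with the locally uniform topology. On $[0,\infty)$ this follows by applying the time change $d\beta(s) = \kappa\sin^2(\theta_s)\,ds$ discussed in the excerpt, which turns $\wt\theta^\kappa$ into a radial Bessel process with parameter $a = 4/\kappa$, and then invoking Lemma~\ref{lem:radBESkappato4} (the time-change map is continuous and bicontinuous on the good event, so weak convergence transfers back). The extension to two-sided time $s \in \R$ is by the time-reversal invariance of the stationary solution to~\eqref{eq:SDEX} (the process solves a reversible diffusion). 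Having weak convergence, I use the Skorokhod representation theorem to pass to a probability space on which $\wt\theta^\kappa \to \wt\theta^4$ a.s.\ locally uniformly on $\R$.

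Second, I would use Lemma~\ref{lem:thetainint} to obtain uniform tail control. Given $\varepsilon > 0$, choose $c \in (0,1)$ with $c_1 c < \varepsilon$ and set $G_c^\kappa = \{\wt\theta_s^\kappa \in [ce^{-3|s|/2},\pi - ce^{-3|s|/2}]\text{ for all }s \in \R\}$; then $\p[G_c^\kappa] \geq 1-\varepsilon$ uniformly in $\kappa \in [1,4]$. Using $\sin x \geq (2/\pi)\min(x,\pi - x)$, on $G_c^\kappa$ we have
\[
|Q_s^\kappa|^2 \;=\; e^{4s}\csc^2\wt\theta_s^\kappa \;\leq\; C c^{-2}\bigl(e^{s}\one_{\{s \leq 0\}} + e^{7s}\one_{\{s \geq 0\}}\bigr),
\]
with $C$ universal. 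This integrable bound at $-\infty$, together with local uniform convergence of $\wt\theta^\kappa$, lets me invoke dominated convergence to conclude that $\int_{-\infty}^s |Q_u^\kappa|^2\,du \to \int_{-\infty}^s |Q_u^4|^2\,du$ locally uniformly in $s$. In particular $\wh T^\kappa \to \wh T^4$, the inverse time change $\wh s^\kappa(\cdot)$ converges locally uniformly on $[0,\infty)$, and composing with the locally uniform convergence $Q^\kappa \to Q^4$ gives $\wh Q^\kappa \to \wh Q^4$ locally uniformly on $[0,\infty)$.

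Third, I recover the convergence of $\wh W^\kappa$ by writing it as an explicit functional of $\wh Q^\kappa$. Since $\wh W^\kappa_t = \re\wh V^\kappa_t - \re\wh Q^\kappa_t$ and $d\wh V^\kappa_t = -2/\wh Q^\kappa_t\,dt$, one has
\[
\wh W^\kappa_t \;=\; \re\wh V^\kappa_0 \;-\; \re\wh Q^\kappa_t \;-\; \int_0^t \re\!\left(\frac{2}{\wh Q^\kappa_s}\right) ds,
\]
so $\wh W^\kappa$ is a deterministic continuous functional of $\wh Q^\kappa$ (the BM enters only through $\wh Q^\kappa$). To pass to the limit in the last integral I need a uniform (in $\kappa$) lower bound $|\wh Q^\kappa_s| \gtrsim \sqrt{s}$ near $s = 0$; this follows from the explicit behavior $|Q_u^\kappa|^2 \asymp e^{2u}$ as $u \to -\infty$ on $G_c^\kappa$ and the corresponding asymptotic $\wh s^\kappa(t) \sim \log(c^2 t)$ as $t \downarrow 0$, both uniform in $\kappa \in [1,4]$. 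Since $\varepsilon$ was arbitrary, combining the steps yields weak convergence of the triple in $\R \times C([0,\infty)) \times C([0,\infty))$. The main obstacle is the third step: controlling $1/\wh Q^\kappa_s$ near $s=0$ uniformly in $\kappa$, which forces the careful coupling of all estimates to the single good event from Lemma~\ref{lem:thetainint}.
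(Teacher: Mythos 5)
Your Steps 1 and 2 track the paper's strategy closely: convert $\wt\theta^\kappa$ to a radial Bessel process by the time change, invoke Lemma~\ref{lem:radBESkappato4} to get weak convergence of $\wt\theta^\kappa$ on $C(\R)$, pass to a Skorokhod coupling, and then use the good event from Lemma~\ref{lem:thetainint} to dominate $|Q^\kappa_s|^2$ by an integrable envelope at $-\infty$ uniformly in $\kappa$; this is exactly how the paper obtains convergence of $\wh T^\kappa$, $\wh s^\kappa$, and $\wh Q^\kappa$. (The paper phrases the domination by working with $(\wh T^\kappa,\wh s^\kappa, Q^\kappa,\wt\theta^\kappa)$ on $\R \times C([0,\infty)) \times M \times C(\R)$, where $M$ is a weighted space of functions vanishing at $-\infty$, but the content is the same as your dominated-convergence argument.)

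Step 3 contains a genuine gap. You write
\[
\wh W^\kappa_t = \re\wh V^\kappa_0 - \re\wh Q^\kappa_t - \int_0^t \re\!\left(\frac{2}{\wh Q^\kappa_s}\right) ds
\]
and claim that the integral is controlled because $|\wh Q^\kappa_s|\gtrsim \sqrt{s}$ near $s=0$, citing $\wh s^\kappa(t) \sim \log(c^2 t)$. These two statements contradict each other: if $\wh s^\kappa(t) \sim \log(c^2 t)$ then $\im\wh Q^\kappa_t = e^{2\wh s^\kappa(t)} \asymp t^2$, and since on $G_c^\kappa$ one also has $|Q^\kappa_u|^2 \in [e^{4u},\,Cc^{-2}e^u]$ (for $u\leq 0$), the only lower bound on $|\wh Q^\kappa_t|$ that the good event delivers is $|\wh Q^\kappa_t| \geq \im\wh Q^\kappa_t \gtrsim t^2$, not $\sqrt t$. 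With only $|\wh Q^\kappa_s|\gtrsim s^2$, the naive estimate $\int_0^t 2/|\wh Q^\kappa_s|\,ds$ diverges, so the argument as written does not close. The cleaner route (and the one the paper's proof is implicitly taking by keeping $Q^\kappa$ and $\wt\theta^\kappa$ in the picture rather than collapsing to $\wh Q^\kappa$) is to change variables back to $u$:
\[
\int_0^t \re\!\left(\frac{2}{\wh Q^\kappa_s}\right) ds \;=\; \int_{-\infty}^{\wh s^\kappa(t)} 2\,\re Q^\kappa_u\,du \;=\; \int_{-\infty}^{\wh s^\kappa(t)} 2e^{2u}\cot\wt\theta^\kappa_u\,du,
\]
and on $G_c^\kappa$ the integrand satisfies $|2e^{2u}\cot\wt\theta^\kappa_u| \leq 2e^{2u}/\sin\wt\theta^\kappa_u \lesssim c^{-1}e^{u/2}$ for $u\leq 0$, which is integrable uniformly in $\kappa$. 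With that uniform domination, $\wh W^\kappa_t$ is a jointly continuous functional of $(\wh s^\kappa, Q^\kappa,\wt\theta^\kappa)$ and the limit passes through. So the fix is not a refinement of the $\sqrt s$ bound (which is not available from Lemma~\ref{lem:thetainint}) but a switch to the $u$-variable where the good-event control is effective.
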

\begin{proof}
Recall that under the time-change $\beta^\kappa(r) = \kappa \int_0^r \sin^2(\wt{\theta}_u) du$, $(\wt{\theta}_{(\beta^\kappa)^{-1}(r)})_{r \geq 0}$ solves~\eqref{eq:radBES} and is started from the invariant distribution of ~\eqref{eq:SDEX}.  By the argument at the end of the proof of Lemma~\ref{lem:radBESkappato4}, we obtain that the law of $\wt{\theta}_{(\beta^\kappa)^{-1}(r)}^\kappa$ converges to the law of $\wt{\theta}_{(\beta^4)^{-1}(r)}^4$ weakly with respect to the topology of local uniform convergence as $\kappa \uparrow 4$ and by the form of $\beta^\kappa$,  the law of $\wt{\theta}^\kappa$ converges to the law of $\wt{\theta}^4$ in the same way.

For $n \in \N$, $\kappa \in [1,4]$ we let $F_{n,\kappa}$ be the event that $\wt\theta_{-n}^\kappa \in [e^{-n},\pi - e^{-n}]$ and that $\wt\theta_{-n-t}^\kappa \in [e^{-\frac{3n}{2}},\pi - e^{-\frac{3n}{2}}]$, for all $t \in [0,1]$. The proof of Lemma~\ref{lem:thetainint} implies that there exists a universal constant $p \in (0,1)$ independent of $n$ and $\kappa$ such that $\p\!\left[ F_{n,\kappa}^c \right] \leq p^n$ and so $\p\!\left[ \cup_{n=N}^{\infty}F_{n,\kappa}^c \right] \leq \frac{p^N}{1-p}$, for all $N \in \N$. We observe that there exists a finite universal constant $A$, independent of $N$ and $\kappa$, such that if $\cap_{n=N}^{\infty}F_{n,\kappa}$ holds then we have that $\int_{-\infty}^{-N}|Q_s^\kappa|^2 ds \leq Ae^{-N}$ and $|Q_s^\kappa|^2 \leq Ae^s$,  for all $s\leq -N$. Together with the weak convergence of the laws of $\wt{\theta}^{\kappa}$, we obtain that the law of  $(\wh{T}^\kappa,\wh{s}^\kappa,Q^\kappa,\wt{\theta}^\kappa)$ on $\R \times C([0,\infty)) \times M \times C(\R)$ converges weakly to the law of $(\wh{T}^4,\wh{s}^4,Q^4,\wt{\theta}^4)$ as $\kappa \uparrow 4$, where $M$ is the space of functions $f \in C(\R)$ such that $\lim_{x \to -\infty}f(x) = 0$, endowed with the norm $||f||_{M} = \sup_{t \leq 0}|f(t)| + \sum_{n=0}^{\infty} 2^{-n} \min(1, \sup_{t \in [n,n+1]}|f(t)| )$, making $M$ a complete metric space. By combining the above observations, we obtain that the law of  $(\wh{T}^\kappa,\wh{Q}^\kappa,\wh{W}^\kappa)$ on $\R \times C([0,\infty)) \times C([0,\infty))$ converges weakly to the law of $(\wh{T}^4,\wh{Q}^4,\wh{W}^4)$ as $\kappa \uparrow 4$. 
\end{proof}

The next lemma  was proven in \cite{dms2014mating} and gives us a way to compare $\SLE_\kappa(\kappa-8)$ and reverse $\SLE_\kappa(\kappa)$ processes.

\begin{lemma}[Proposition~3.10 of \cite{dms2014mating}]\label{lem:SLEcomp}
Fix $\kappa \in (0,4]$, $r > 0$ and sample $z_r = x + ir$ so that $\arg(z_r)$ is given by the stationary solution to~\eqref{eq:SDEX} and then sample a forward $\SLE_\kappa(\kappa - 8)$ process with force point located at $z_r$ and let $(f_t)$ denote its centered Loewner chain. Then the evolution of $f_t(z_r)$ considered in the time-interval from $0$ to $\inf \{t \geq 0 : \im(f_t(z_r)) = 0 \}$ has the same law as $\wh{Q}_{\wh{T}_r - t}$ for $t \in [0,\wh{T}_r]$, where $\wh{Q}_t$ is the evolution of the force point of a reverse $\SLE_\kappa(\kappa)$ with force point located at $0$ and $\wh{T}_r = \inf \{t \geq 0 : \im(\wh{Q}_t) = r \}$. 
\end{lemma}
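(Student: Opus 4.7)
The plan is to deduce the identity in law by computing the SDEs for both processes and invoking the reversibility of the angular SDE~(\ref{eq:SDEX}), together with the fact that the height coordinate becomes deterministic once the angular component is fixed.

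First, I would derive the SDE for the centered forward Loewner flow $f_t(z_r) = X_t + iY_t$ of the forward $\SLE_\kappa(\kappa-8)$ with interior force point $z_r$. From $dW_t = \sqrt\kappa\,dB_t - (\kappa-8)\re(1/f_t(z_r))\,dt$ and $df_t(z_r) = (2/f_t(z_r))\,dt - dW_t$, a standard Itô calculation yields
\begin{align*}
    dX_t = \frac{(\kappa-6)X_t}{|f_t(z_r)|^2}\,dt - \sqrt\kappa\,dB_t, \qquad dY_t = -\frac{2Y_t}{|f_t(z_r)|^2}\,dt.
\end{align*}
In parallel, the already-derived SDE for the reverse flow $\wh Q_t = U_t + iV_t$ reads
\begin{align*}
    dU_t = \frac{(\kappa-2)U_t}{|\wh Q_t|^2}\,dt - \sqrt\kappa\,d\wh B_t, \qquad dV_t = \frac{2V_t}{|\wh Q_t|^2}\,dt.
\end{align*}
Both are two-dimensional diffusions in $\h$ whose height coordinate is a deterministic function of the angular dynamics (once an angular clock is fixed), with $V$ strictly increasing and $Y$ strictly decreasing.

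Next, I would change time in both cases by the common angular clock $d\sigma = dt/|\cdot|^2$. In both processes the angle $\theta_\sigma$ then satisfies~(\ref{eq:SDEX}), while the log-heights become linear functions of $\sigma$ with opposite slopes $\pm 2$. Since $\arg(z_r)$ is sampled from the invariant measure $\mu$ of~(\ref{eq:SDEX}), the angular process of the reverse flow is stationary. Letting $\wh S_r$ denote the angular time corresponding to $\wh T_r$, the reversibility of~(\ref{eq:SDEX}) under $\mu$ gives that $(\theta_{\wh S_r - \sigma})_{0 \leq \sigma \leq \wh S_r}$ has the same law as $(\theta_\sigma)_{0 \leq \sigma \leq \wh S_r}$ started from $\mu$, which is exactly the angular distribution of the forward flow $f_t(z_r)$ under its own natural clock.

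Finally, I would transfer this equality back to Cartesian coordinates and physical $t$-time. The imaginary-part equation inherits the correct sign directly because $\tilde V_t := V_{\wh T_r - t}$ decreases in $t$. For the real part, Itô's formula applied to $\tilde U_t = \tilde V_t \cot\tilde\theta_t$ using the $t$-time version of~(\ref{eq:SDEX}) produces, after a short calculation, a drift equal to $(\kappa-6)\tilde U_t/|\tilde Q_t|^2$, matching the forward SDE for $X_t$ exactly. The main obstacle is this Cartesian bookkeeping: although~(\ref{eq:SDEX}) is reversible in one dimension, the full 2D diffusion is not stationary (heights tend to $+\infty$), so one cannot directly invoke 2D reversibility. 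Instead one must separate the angular and radial dynamics, apply reversibility only for the angular component, and verify that the $(\kappa-2)$ coefficient of the reverse flow is converted into the $(\kappa-6)$ coefficient of the forward flow through a combination of Itô cross-terms from the quadratic variation of $\tilde\theta$ (contributing $\kappa$) and the drift $2\sin(2\tilde\theta)$ of~(\ref{eq:SDEX}) (contributing $-4$ after using the identity $\csc^2\theta \sin(2\theta) = 2\cot\theta$).
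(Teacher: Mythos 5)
The paper does not prove this lemma; it is imported verbatim as Proposition~3.10 of \cite{dms2014mating}, so there is no internal proof to compare against. That said, your strategy---separating the angular and log-height coordinates, observing that both processes satisfy~\eqref{eq:SDEX} in the angular clock $d\sigma = dt/|\cdot|^2$ with $J$ evolving linearly in $\sigma$ at slope $\pm 2$, and invoking reversibility of~\eqref{eq:SDEX} with respect to its invariant measure---is exactly the argument used in the cited reference. Your SDE derivations for $X_t+iY_t = f_t(z_r)$ and $U_t+iV_t = \wh Q_t$ are correct. You are also right that the Cartesian process and the time change back to $t$ are both deterministic functions of $(\theta_\sigma, J_\sigma)$, so once the distributional identity in $(\theta,J,\sigma)$-coordinates is established, the conclusion in Cartesian $t$-coordinates follows automatically; the final It\^o computation is a consistency check rather than a logically necessary step.

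However, your It\^o accounting in that final check is off by $2$. You claim the $(\kappa-2)$ coefficient of the reverse flow is converted into $(\kappa-6)$ through contributions $\kappa$ (quadratic variation) and $-4$ (from the drift $2\sin(2\tilde\theta)$), but $\kappa - 4 \neq \kappa - 6$. Tracing through It\^o's formula for $\tilde U_t = e^{\tilde J}\cot\tilde\theta$ gives \emph{three} contributions to the drift coefficient in angular time: $-2$ from $de^{\tilde J} = -2 e^{\tilde J}\,d\sigma$, $+\kappa$ from the second-order term $\csc^2\tilde\theta\cot\tilde\theta\,d\langle\tilde\theta\rangle_\sigma$, and $-4$ from the angular drift via $\csc^2\theta\sin(2\theta)=2\cot\theta$, summing to $\kappa - 6$. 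Equivalently: the $+\kappa$ and $-4$ contributions are identical before and after time reversal because~\eqref{eq:SDEX} is preserved under reversal; the only thing that changes is the $\pm 2$ from $dJ$, and replacing $+2$ by $-2$ is a net shift of $-4$, which indeed takes $\kappa - 2 = 2 + \kappa - 4$ to $\kappa - 6 = -2 + \kappa - 4$. Your breakdown simply omits the height-coordinate term, even though you correctly noted that $\tilde V_t$ decreases.
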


Now, fixing $\kappa \in (0,4]$, Lemma~\ref{lem:SLEcomp} gives us another way of sampling a quadruple $(z_\kappa,T^\kappa,Z^\kappa,W^\kappa)$, consisting of a point $z_\kappa=\cot(\theta^\kappa) + i$, where $\theta^\kappa$ is sampled from the invariant measure of~\eqref{eq:SDEX}, the centered flow $Z^\kappa$ of the force point of an $\SLE_\kappa(\kappa-8)$ with force point $z_\kappa$, the driving function of said $\SLE_\kappa(\kappa-8)$ and the time $T^\kappa$ at which the flow of force point hits the real line.  The new way to do it is by sampling a pair $(\wh{Q}^\kappa,\wh{W}^\kappa)$ consisting of the evolution of $0$ and the driving function of a reverse $\SLE_\kappa(\kappa)$ with force point at $0$, letting $\wh{T}^\kappa = \inf\{t \geq 0: \im(\wh{Q}_t^\kappa) = 1 \}$ and then writing $\wh{Q}_t^{\kappa,\circ} = \wh{Q}_{\wh{T}^\kappa-t}^\kappa$, $\wh{W}_t^{\kappa,\circ} = \wh{W}_{\wh{T}^\kappa-t}^\kappa - \wh{W}_{\wh{T}^\kappa}^\kappa$ for $0 \leq t \leq \wh{T}^\kappa$.  Then $(\wh{Q}_0^{\kappa,\circ},\wh{T}^\kappa,\wh{Q}^{\kappa,\circ},\wh{W}^{\kappa,\circ})$ has the same law as $(z_\kappa,T^\kappa,Z^\kappa,W^\kappa)$, and since the former converges weakly to $(\wh{Q}_0^{4,\circ},\wh{T}^4,\wh{Q}^{4,\circ},\wh{W}^{4,\circ})$, as $\kappa \uparrow 4$, we have that the latter converges weakly to $(z_4,T^4,Z^4,W^4)$.

We now turn to the convergence of radial $\SLE_\kappa(2)$. Suppose that we have the above setup and fix a sequence $(\kappa_n)_{n \in \N}$ in $(0,4)$ such that $\kappa_n \uparrow 4$ as $n \to \infty$. By the Skorokhod representation theorem, we can find a coupling such that $(z_{\kappa_n},T^{\kappa_n},Z^{\kappa_n},W^{\kappa_n})$ converges to $(z_4,T^4,Z^4,W^4)$ a.s. Let $(K_t^{\kappa_n})_{0 \leq t \leq T^{\kappa_n}}$ and $(g_t^{\kappa_n})_{0 \leq t \leq T^{\kappa_n}}$ be the family of compact $\h$-hulls and the Loewner chain, respectively, corresponding to the driving function $W_t^{\kappa_n}$. We also consider the conformal transformation $\psi^{\kappa_n} : \h \rightarrow \D$ such that $\psi^{\kappa_n}(0) = -i$ and $\psi^{\kappa_n}(z_{\kappa_n}) = 0$. In particular, we have that $\psi^{\kappa_n}(z) = \frac{c_{\kappa_n}z +1 }{d_{\kappa_n}z + i}$ where $c_{\kappa_n} = i\sin^{2}(\theta^{\kappa_n}) - \sin(\theta^{\kappa_n})\cos(\theta^{\kappa_n})$ and $d_{\kappa_n} = \sin^{2}(\theta^{\kappa_n}) - i\sin(\theta^{\kappa_n})\cos(\theta^{\kappa_n})$. For $0 \leq t \leq T^{\kappa_n}$, we let $\phi_t^{\kappa_n} : \h \rightarrow \D$ be the conformal transformation such that $F_t^{\kappa_n} = \phi_t^{\kappa_n} \circ g_t^{\kappa_n} \circ (\psi^{\kappa_n})^{-1} : \D \setminus \psi^{\kappa_n}(K_t^{\kappa_n}) \rightarrow \D$ satisfies $(F_t^{\kappa_n})'(0) > 0$ and $F_t^{\kappa_n}(0) = 0$. Set $z_t^{\kappa_n} = g_t^{\kappa_n}(z_{\kappa_n})$, $y_t^{\kappa_n} = \im(z_t^{\kappa_n})$, and note that there exists a unique $\lambda_t^{\kappa_n} \in \partial \D$ such that $\phi_t^{\kappa_n}(z) = \lambda_t^{\kappa_n}\frac{z-z_t^{\kappa_n}}{z-\bar{z}_t^{\kappa_n}}$. We consider the time-change, $t^{\kappa_n}$, as the solution to
\begin{align*}
	s = \int_{0}^{t^{\kappa_n}(s)}\frac{4(y^{\kappa_n}_{s})^2}{|z_s^{\kappa_n} - W_s^{\kappa_n}|^{4}}ds,
\end{align*}
and we set $\wt{g}_s^{\kappa_n} = F_{t^{\kappa_n}(s)}^{\kappa_n}$, $\wt{W}_s^{\kappa_n} = \phi_{t^{\kappa_n}(s)}^{\kappa_n} \!\left (W_{t^{\kappa_n}(s)}^{\kappa_n}\right )$ and $\wt{K}_s^{\kappa_n} = \psi^{\kappa_n}\!\left (K_{t^{\kappa_n}(s)}^{\kappa_n}\right )$. Then by \cite{sw2005coordinate} we know that $(\wt K_s^{\kappa_n})_{s \geq 0}$ has the law of the hulls of a radial $\SLE_{\kappa_n}(2)$ process, $\wt{\eta}^{\kappa_n}$, in $\D$ starting from $-i$ and targeted at $0$ with the force point located at $\psi^{\kappa_n}(\infty)$ and we parameterize $\wt{\eta}^{\kappa_n}$ by $\log$ of the conformal radius as seen from $0$. The driving pair is given by $(\wt{\lambda}_s^{\kappa_n},\wt{W}_s^{\kappa_n})_{s \geq 0}$ where $\wt{\lambda}_s^{\kappa_n} = \lambda_{t^{\kappa_n}(s)}^{\kappa_n}$.

With probability one, $W^{\kappa_n}$ converges to $W^4$ locally uniformly as $n \to \infty$, and thus, by the argument used to prove  \cite[Proposition~4.47]{law2005slebook}, we obtain that a.s.,
\begin{align*}
	\sup_{0 \leq s \leq t,\ z \in K}\!\left | (g_s^{\kappa_n})^{-1}(z) - (g_s^4)^{-1}(z) \right | \rightarrow 0,
\end{align*}
as $n \to \infty$, for all $0 \leq t < T^4$ and every compact set $K \subseteq \h$. By the choice of the coupling and the above transformation formulas, we also a.s.\  have that
\begin{align*}
	\sup_{0 \leq s \leq t,\,\,z \in K} \!\left |(\wt{g}_s^{\kappa_n})^{-1}(z) - (\wt{g}_s^4)^{-1}(z) \right | \to 0,
\end{align*}
as $n \to \infty$ for each compact subset $K \subseteq \D$  and all $t \geq 0$. For all $s \geq 0$, let $\wt{f}_s^{\kappa_n} : \D \setminus \wt{\eta}^{\kappa_n}([0,s]) \rightarrow \D$ be the conformal transformation such that $\wt{f}_s^{\kappa_n}(0) = 0$ and $\wt{f}_s^{\kappa_n}(\wt{\eta}^{\kappa_n}(s)) = -i$. We fix $e^{i\vartheta}\ \in \partial{\D} \setminus \{-i\}$ and set
\begin{align*}
	\wt{S}^{\kappa_n} = 1 \wedge \inf \{ s \geq 0 :   \wt{\lambda}_s^{\kappa_n}  (\wt{W}_s^{\kappa_n} )^{-1} = e^{i(\vartheta+\pi/2)} \}.
\end{align*}
Note that $\p[\wt{S}^{\kappa_n} < 1] > 0$, for each $n \in \N$ and that conditional on the event $\{ \wt{S}^{\kappa_n} < 1 \}$, $\wh{\eta}^{\kappa_n} = \wt{f}_{\wt{S}^{\kappa_n}}^{\kappa_n}\!\left ( \wt{\eta}^{\kappa_n}|_{[\wt{S}^{\kappa_n},\infty)} \right)$ has the law of a radial $\SLE_{\kappa_n}(2)$ in $\D$ starting from $-i$ and targeted at $0$ with the force point located at $e^{i\vartheta}$.  Combining with the above convergences,  we obtain that for each compact set $K \subseteq \D$, it a.s.\ holds that
\begin{align*}
	\sup_{z \in K}\!\left |(\wt{f}_{\wt{S}^{\kappa_n}}^{\kappa_n})^{-1}(z) - (\wt{f}_{\wt{S}^4}^4)^{-1}(z)\right | \to 0 \quad \text{as} \quad n \rightarrow \infty.
\end{align*}

For $ \kappa \in (0,4]$, let
$\alpha^\kappa$ be the square root in $\h$ of $-i ({\lambda^\kappa})^{-1} \in \partial \D$ with $\lambda^\kappa = \psi^\kappa(\infty)$ and consider the conformal transformations $B^\kappa : \h \rightarrow \D$ with $B^\kappa(z) = \lambda^\kappa \frac{z - \alpha^\kappa}{z - \Bar{\alpha}^\kappa}$ and $\phi^\kappa = B^\kappa \circ f_{T^\kappa}^\kappa \circ (\psi^\kappa)^{-1}$,  where $f_{T^\kappa}^\kappa(z) = g_{T^\kappa}^\kappa(z) - W_{T^\kappa}^\kappa$. Note that $\phi^\kappa$ maps $\D \setminus \wt{\eta}^\kappa$ bijectively onto $\D$ with $\phi^\kappa(0) = -i$, $\phi^\kappa(\psi^\kappa(\infty)) = \psi^\kappa(\infty)$. We also consider the conformal transformation $F^\kappa : \D \setminus \wh{\eta}^\kappa \rightarrow \D$ with $F^\kappa = \phi^\kappa \circ (\wt{f}_{\wt{S}^\kappa}^\kappa)^{-1}$.  Then $F^\kappa(0) = -i$ and $F^\kappa(e^{i\vartheta}) = \psi^\kappa(\infty)$ on the event $\{\wt{S}^{\kappa} < 1\}$. The above also implies that $(F^{\kappa_n})^{-1}$ converges locally uniformly to $(F^4)^{-1}$ as $n \rightarrow \infty$ a.s.

With the above in mind, we prove Proposition~\ref{prop:Cara}.

\begin{proof}[Proof of Proposition~\ref{prop:Cara}]
Suppose that we have the above setup and let $(\kappa_n)_{n \in \N}$ be a sequence as above. Fix $t \in \R$, let $(O^\kappa,W^\kappa)$ be the driving pair and $(g^\kappa)$ be the Loewner chain corresponding to $\eta^\kappa$. We construct a coupling of the curves $\eta^{\kappa_n}$ as follows.  By Proposition~\ref{prop:statconv} and the Skorokhod representation theorem, we can find a coupling such that a.s.\ $(O^{\kappa_n},W^{\kappa_n})$ converges to $(O^4,W^4)$ uniformly on compact subsets of $(-\infty,t]$.

Let $\eta^{\kappa_n}|_{(-\infty,t]}$ be the curves with driving pair $(O^{\kappa_n}|_{(-\infty,t]},W^{\kappa_n}|_{(-\infty,t]})$. Let $\wt{S}^\kappa$ be defined as above, but with $e^{i\vartheta}$ replaced by $-i(O_t^\kappa)^{-1}W_t^\kappa$. Note that since $(O_t^{\kappa_n},W_t^{\kappa_n}) \rightarrow (O_t^4,W_t^4)$ as $n \to \infty$ a.s., each convergence above remains valid in the case when restricting to $(-\infty,t]$. Then we set $\eta^{\kappa_n}(t + s) = (g_t^{\kappa_n})^{-1}\!\left( -\frac{iW_t^{\kappa_n}}{\wh{\eta}^{\kappa_n}(s)} \right)$ for $s \geq 0$ where $\wh{\eta}^{\kappa_n}$ is independent of $\eta^{\kappa_n}|_{(-\infty,t]}$. The conformal Markov property of whole-plane $\SLE_{\kappa}(2)$ implies that $\eta^{\kappa_n}$, conditioned on the event $\{ \wt{S}^{\kappa_n} < 1\}$, has the law of a whole-plane  $\SLE_{\kappa_n}(2)$ from $0$ to $\infty$ conditioned on the event $\{\wt{S}^{\kappa_n} < 1 \}$.

Let $G_t^\kappa : \D \rightarrow \D$ be the unique conformal transformation such that $G_t^\kappa(\psi^\kappa(\infty)) = -iW_t^\kappa(O_t^\kappa)^{-1}$, $G_t^\kappa(i) = i$ and $G_t^\kappa(-i) = -i$. The above observations imply that $(F^{\kappa_n})^{-1} \circ (G_t^{\kappa_n})^{-1}$ converges locally uniformly to $(F^{4})^{-1} \circ (G^{4}_{t})^{-1}$ as $n \to \infty$ a.s. We also consider the conformal transformation $\psi_t^\kappa : \C \setminus ( \overline{B(0,e^t)} \cup f_t^\kappa(\eta^{\kappa}|_{[t,\infty)} ) ) \rightarrow \C \setminus \overline{B(0,e^t)}$, where 
\begin{align*}
	\psi_t^\kappa(z) = \frac{-ie^t W_t^\kappa}{(G_t^\kappa \circ F^\kappa)\!\left( -\frac{ie^t W_t^\kappa}{z}\right)},
\end{align*}
and $f_t^\kappa: \C \setminus \eta^\kappa((-\infty,t]) \rightarrow \C \setminus \overline{B(0,e^t)}$ is the conformal transformation given by $f_t^\kappa(z) = e^t g_t^\kappa(z)$. Moreover we set $\theta^\kappa = -\arg(O_t^\kappa(W_t^\kappa)^{-1})/2$ and we consider the conformal map $\omega_t^\kappa : \C \setminus \overline{B(0,e^t)} \rightarrow \h$ given by 
\begin{align*}
	\omega_t^\kappa(z) = \frac{\frac{e^t e^{-i\theta_t^\kappa}}{z} - \frac{e^{i\theta_t^\kappa}}{W_t^\kappa}}{\frac{e^t}{z} - \frac{1}{W_t^\kappa}}.
\end{align*}
Since $(O^{\kappa_n},W^{\kappa_n})$ converges locally uniformly on $(-\infty,t]$ to $(O^4,W^4)$ as $n \to \infty$ a.s., deterministic estimates for the whole-plane Loewner equation imply that $(f_t^{\kappa_n})^{-1}$ converges to $(f_t^4)^{-1}$ locally uniformly as $n \to \infty$. Therefore we obtain that the inverse of $\omega_t^{\kappa_n} \circ \psi_t^{\kappa_n} \circ f_t^{\kappa_n}$ converges to the inverse of $\omega_t^4 \circ \psi_t^4 \circ f_t^4$ locally uniformly as $n \to \infty$ a.s. 

Since $\eta^\kappa((-\infty,t])$ has conformal radius $e^t$, by applying \cite[Proposition~9.11]{dms2014mating}, we obtain that $\eta^\kappa((-\infty,t]) \subseteq B(0,4e^t)$. Also, $\eta^{\kappa}|_{[t,\infty)}$ has the law of a radial $\SLE_\kappa(2)$ in $\C \setminus \eta^\kappa((-\infty,t])$ and since radial $\SLE_\kappa(2)$ does not hit fixed points for $\kappa \in (0,4]$ , we obtain that $i \notin (\bigcup_{n \in \N} \eta^{\kappa_n}((-\infty,t])) \cup \eta^4((-\infty,t])$, a.s.\ for all $t$ sufficiently small. Therefore the maps $\phi^\kappa$ are well-defined and $\im((\omega_t^{\kappa_n}\circ \psi_t^{\kappa_n}\circ f_t^{\kappa_n})(i)) \rightarrow \im((\omega_t^4\circ \psi_t^4 \circ f_t^4)(i))$ as $n \to \infty$. Finally, we observe that 
\begin{align*}
	\phi^\kappa(z) = \frac{(\omega_t^\kappa \circ \psi_t^\kappa \circ f_t^\kappa)(z)}{\im((\omega_t^\kappa\circ \psi_t^\kappa \circ f_t^\kappa)(i))}
\end{align*}
and so $(\phi^{\kappa_n})^{-1}$ converges to $(\phi^4)^{-1}$ locally uniformly as $n \to \infty$ a.s. Since the sequence $(\kappa_n)$ was arbitrary, the proof is done.
\end{proof}

\subsection{Convergence of quantum surfaces}
\label{subsec:quantum_conv}
Most of this subsection is dedicated to proving the following.
\begin{proposition}\label{prop:cutcone}
Suppose that $(\cyl,h,-\infty,+\infty) \sim \qconeW{2}{4}$ has the first exit parameterization and let $\eta$ be an independent whole-plane $\SLE_4(2)$ in $\cyl$, from $-\infty$ to $+\infty$. Then the quantum surface parameterized by $\cyl \setminus \eta$ has law $\qwedgeW{2}{4}$.
\end{proposition}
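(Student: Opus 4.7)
The plan is to obtain Proposition~\ref{prop:cutcone} by taking a $\gamma \uparrow 2$ limit of Theorem~\ref{thm:cone_welding} applied with $\rho = 2$ and $W = 4$. Fix a sequence $\gamma_n \uparrow 2$. For each $n$, let $h_{\gamma_n}$ be the first exit parameterization on $\cyl$ of a $\gamma_n$-weight-$4$ quantum cone and let $\eta_{\gamma_n}$, independent of $h_{\gamma_n}$, be a whole-plane $\SLE_{\gamma_n^2}(2)$ in $\cyl$ from $-\infty$ to $+\infty$. Theorem~\ref{thm:cone_welding} (transported from $\C$ to $\cyl$ via $z \mapsto e^z$) then gives that, as a quantum surface, $(\cyl \setminus \eta_{\gamma_n}, h_{\gamma_n}|_{\cyl \setminus \eta_{\gamma_n}}, -\infty, +\infty)$ has law $\qwedgeW{\gamma_n}{4}$, and the goal becomes to pass to the limit $n \to \infty$.

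The first step is to set up a good coupling. Transferring Proposition~\ref{prop:Cara} from $\C$ to $\cyl$ gives weak convergence of the laws of $\eta_{\gamma_n}$ to $\eta_2$ in the sense of Carath\'eodory convergence of the uniformizing maps onto $\strip$. The first exit parameterization of a $\gamma$-weight-$4$ cone on $\cyl$ has a lateral part which is independent of $\gamma$ and a radial part whose law---a two-sided Brownian motion with drift $2/\gamma$, suitably conditioned for $t<0$---depends continuously on $\gamma$, so $h_{\gamma_n} \to h_2$ weakly as distributions on $\cyl$. By the Skorokhod representation theorem, I upgrade these to an almost sure coupling in which the two convergences hold simultaneously and the curves and fields remain independent for each $n$.

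The remaining task is to identify the limiting quantum surface, and the main obstacle is the horizontal translation ambiguity of the $\strip$ embedding. To handle this, pick any fixed interior point $z_0 \in \cyl$ (e.g.\ $z_0 = i\pi$); since whole-plane $\SLE_\kappa(2)$ with $\kappa \in (0,4]$ almost surely does not hit fixed interior points, $z_0 \notin \eta_{\gamma}$ a.s.\ for each $\gamma \in (0,2]$ under consideration. Let $\psi_{\gamma_n}\colon \cyl \setminus \eta_{\gamma_n} \to \strip$ be the unique conformal map sending $\pm\infty \to \pm\infty$ and $z_0 \to i\pi/2$, and set $\wt h_{\gamma_n} = h_{\gamma_n} \circ \psi_{\gamma_n}^{-1} + Q_{\gamma_n} \log|(\psi_{\gamma_n}^{-1})'|$. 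Carath\'eodory convergence gives $\psi_{\gamma_n}^{-1} \to \psi_2^{-1}$ locally uniformly on $\strip$, hence $\wt h_{\gamma_n} \to \wt h_2$ a.s.\ as distributions on $\strip$. On the other hand, Theorem~\ref{thm:cone_welding} forces $\wt h_{\gamma_n}$ to be a first exit parameterized $\gamma_n$-weight-$4$ wedge shifted by some random horizontal translation $T_n$ compensating for the mismatch between the first exit normalization of the wedge and our $z_0$-based normalization. The subcritical wedge laws converge (as $\gamma_n \uparrow 2$ the drift $Q_{\gamma_n} - \alpha_{\gamma_n} = 4/\gamma_n - \gamma_n/2 \to 1$ and the lateral part is $\gamma$-independent) and form a tight family, while the a.s.\ convergence of $\wt h_{\gamma_n}$ to the non-trivial distributional limit $\wt h_2$ forces tightness of the translations $T_n$ (otherwise the field averages on compact vertical segments would blow up). Extracting a subsequential limit $T_n \to T$, the limit $\wt h_2$ is a horizontal translate of a first exit parameterized $\gamma = 2$ weight-$4$ wedge, which as a quantum surface is $\qwedgeW{2}{4}$, giving the claim.
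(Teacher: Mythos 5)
Your strategy is the same as the paper's: take a $\gamma_n \uparrow 2$ limit of the subcritical welding result Theorem~\ref{thm:cone_welding}, using Proposition~\ref{prop:Cara} for Carath\'eodory convergence of the curves and a coupling of the cone fields, and control the horizontal translation ambiguity of the $\strip$-embedding. However, there are two genuine gaps.

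The main gap is the assertion that ``the a.s.\ convergence of $\wt h_{\gamma_n}$ to the non-trivial distributional limit $\wt h_2$ forces tightness of the translations $T_n$ (otherwise the field averages on compact vertical segments would blow up).'' This is precisely the hard part, and it is not an automatic consequence of a.s.\ convergence. What the argument actually requires is a quantitative statement, \emph{uniform in} $\gamma \in [1,2]$, of the form: for the first-exit-parameterized weight-$4$ wedge $\wh h_\gamma$, the set of horizontal translates $x$ for which $|(\wh h_\gamma(\cdot+x),\phi)| \le c_0$ is, with probability $\ge 1-\epsilon$, contained in a fixed bounded interval $[-c_1,c_1]$ with $c_1$ independent of $\gamma$. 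Only with such a uniform bound (this is Lemma~\ref{lem:translbound} in the paper) can you combine tightness of $(\wt h_{\gamma_n},\phi)$ with the identity $(\wt h_{\gamma_n},\phi) = (\wh h_{\gamma_n},\phi(\cdot+T_n))$ to conclude $T_n$ is tight. Since $T_n$ is a deterministic functional of $\wh h_{\gamma_n}$ and therefore heavily correlated with the field, the ``blow-up'' heuristic does not substitute for this. The uniformity itself is nontrivial (it uses variance control for the lateral part and growth control for the radial part; Lemmas~\ref{lem:varbound}--\ref{lem:wedgebound}) because the wedge drift $\alpha_\gamma = 4/\gamma-\gamma/2$ and the conditioning of the radial Brownian motion both vary with $\gamma$. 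You also do not address the remaining step of identifying the subsequential limit as a first-exit-parameterized weight-$4$ wedge: even after $T_n \to T$, one must show that $\wt h_{\gamma_n}(\cdot + T_n) \to \wt h_2(\cdot+T)$ in $H^{-1}_{\textup{loc}}(\strip)$, which requires a separate compactness argument handling simultaneous translation and distributional convergence (Steps~4--5 of the paper's proof).

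A smaller issue: the normalization ``$\psi_{\gamma_n}(\pm\infty) = \pm\infty$ and $\psi_{\gamma_n}(z_0) = i\pi/2$'' is over-determined. Once $\pm\infty$ are fixed, only a one-real-parameter family of conformal maps $\cyl \setminus \eta_{\gamma_n} \to \strip$ remains (horizontal translation), so you cannot prescribe the image of $z_0$ exactly; you can only fix, say, $\re(\psi_{\gamma_n}(z_0)) = 0$, which is what the paper does.
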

Proposition~\ref{prop:cutcone} is the key result needed in proving Theorem~\ref{thm:critical_cone_welding} and before proving it, we show how to use it to deduce Theorem~\ref{thm:critical_cone_welding}.
\begin{proof}[Proof of Theorem~\ref{thm:critical_cone_welding}]
Let $\phi_*: \cyl \setminus \eta_1 \rightarrow \strip$ be a conformal map with $\phi_*(-\infty) = -\infty$, $\phi_*(+\infty) = +\infty$ and set $h^* = h \circ \phi_*^{-1} + 2 \log |(\phi_*^{-1})'|$ and $T^* = \inf\{ t \in \R: h_1^*(t) = 0 \}$, where $h_1^*$ is the projection of $h^*$ on $H_1(\strip)$. Then, writing $\wh{h}^* = h^*(\cdot + T^*)$, Proposition~\ref{prop:cutcone} implies that $(\strip,\wh{h}^*,-\infty,+\infty) \sim \qwedgeW{2}{4}$. We note that conditional on $\eta_1$,  $\wh{\eta} = \phi_*(\eta_2)$ has the law of an $\SLE_4$ from $-\infty$ to $+\infty$ in $\strip$. Moreover, by the scale invariance of $\SLE_4$, the law of $\wh{\eta} + T^*$ is that of an $\SLE_4$ process as well.

For $j=1,2$, we set $\wh{D}_j = \phi_*(D_j)+T^*$ and let $\psi_j: \wh{D}_j \rightarrow \strip$ be a conformal transformation such that $\psi_j(-\infty) = -\infty$ and $\psi_j(+\infty) = +\infty$. By Theorem~\ref{thm:critical_wedge_welding}, $(\wh{D}_1, \wh h^*|_{\wh D_1},-\infty,+\infty)$ and $(\wh{D}_2, \wh h^*|_{\wh D_2},-\infty,+\infty)$ are independent quantum surfaces, and hence the same is true for $(D_1,h|_{D_1},-\infty,+\infty)$ and $(D_2,h|_{D_2},-\infty,+\infty)$ as well.  Furthermore, if we for $j=1,2$ parameterize $\wh{h}^* \circ \psi_j^{-1} + 2\log|(\psi_j^{-1})'|$ to have the first exit parameterization, then $(\strip,\wh{h}^* \circ \psi_j^{-1} + 2\log|(\psi_j^{-1})'|,-\infty,+\infty) \sim \qwedgeW{2}{2}$.  Finally, letting $\varphi_j:D_j \rightarrow \strip$ be the conformal map defined by $\varphi_j(z) = \psi_j(\phi_*(z) - T^*)$, we have that $\wh{h}^* \circ \psi_j^{-1} + 2\log|(\psi_j^{-1})'| = h \circ \varphi_j^{-1} + 2\log|( \varphi_j^{-1})'|$. Thus, the proof is complete.
\end{proof}

We now turn to the proof of Proposition~\ref{prop:cutcone}.  For $\gamma \in (0,2]$, we let $(\cyl,h^\gamma,-\infty,+\infty) \sim \qconeW{\gamma}{4}$ and $\eta^\gamma$ be a whole-plane $\SLE_\kappa(2)$ from $-\infty$ to $+\infty$ in $\cyl$, where $\kappa = \gamma^2$.  For each $\gamma \in (0,2)$, we know that the surface parameterized by $\cyl \setminus \eta^\gamma$ has law $\qwedgeW{\gamma}{4}$. In proving Proposition~\ref{prop:cutcone}, we shall take a limit as $\gamma \to 2$ and show that the limiting object has law $\qwedgeW{\gamma}{4}$.
The main hurdle in proving it is to make sure that the conformal maps embedding the quantum surfaces into $\strip$ do not degenerate as $\gamma \to 2$. We shall be more precise.  Let $\phi_\gamma$ be the unique conformal map from $\cyl \setminus \eta^\gamma$ to $\strip$, fixing $\pm \infty$ and such that $\re(\phi_\gamma(\frac{i\pi}{2}))=0$.  Then, by the Skorokhod representation theorem and Proposition~\ref{prop:Cara} (as well as composing and precomposing with $\log z$), one can find a coupling in which $\phi_\gamma \to \phi_2$ locally uniformly as $\gamma \to 2$. However, the maps $\phi_\gamma$ do not specify a certain embedding of the resulting quantum surfaces. So if we want each of them to have the first exit parameterization, we need to translate them properly. Here is where the problem can arise: we must prove that the first hitting time of $0$ for the average on vertical lines process does not go to infinity as $\gamma \to 2$. 

Throughout this section we fix some $\phi \in C_0^\infty(\strip)$ with $\int_{\strip}\phi(z) dz = 1$.  One of the key ingredients in the proof of Proposition~\ref{prop:cutcone} is that for fixed $c_0$, we can find $c_1>0$ such that with sufficiently high probability, $\{x \in \R: |(h^\gamma(\cdot+x),\phi)| \in [-c_0,c_0]\} \subseteq [-c_1,c_1]$, see Lemma~\ref{lem:translbound}. This gives the tightness of the law of the first hitting time of $0$ for the average on vertical lines processes corresponding to the surfaces embedded into $\strip$ by the maps $\phi_\gamma$ above and ensures that the conformal maps embedding the surfaces according to the first exit parameterization do not degenerate.

We begin by proving a variance bound for a free boundary GFF $h$ (Lemma~\ref{lem:varbound}), which we will use to prove that $|(h_2,\phi(\cdot-x))|$ and $|(h_1^\gamma,\phi(\cdot-x))-\alpha_\gamma x|$ (where $\alpha_\gamma = 4/\gamma-\gamma/2$) do not grow too quickly in $x$ (Lemmas~\ref{lem:latbound} and~\ref{lem:wedgebound}). This immediately gives Lemma~\ref{lem:translbound}.
\begin{lemma}\label{lem:varbound}
Let $h$ be a free boundary GFF on the strip $\strip$ with the additive constant fixed so that its average over $\{0\}  \times [0,\pi]$ is equal to $0$. Then there exists a finite constant $C_\phi > 0$, depending only on $\phi$, such that 
\begin{align*}
	\var[(h,\phi(\cdot + x)) - (h,\phi(\cdot + y))] \leq C_\phi|x-y|, \quad \text{for all} \quad x,y \in \R.
\end{align*}
\end{lemma}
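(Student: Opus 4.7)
The plan is to decompose $h = h_1 + h_2$ into its radial (vertical-line average) and lateral parts on $\strip$, as described in Remark~\ref{rmk:stripdecomposition}, and to bound the two variance contributions separately. Here $h_1(z) = X_{\re(z)}$, where $X$ is a two-sided Brownian motion with $X_0 = 0$ (transporting the classical circle-average statement for the free boundary GFF on $\h$ via $z \mapsto \log z$), and $h_1,h_2$ are independent. Moreover, $h_2$ is horizontally translation invariant in law, since horizontal translation preserves the law of the free boundary GFF on $\strip$ modulo additive constant and the lateral projection kills this constant.

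For the radial contribution, I would write
\begin{align*}
(h_1, \phi(\cdot + x)) = \int_\R X_t\,\wt\phi(t+x)\,dt,
\end{align*}
where $\wt\phi(t) := \int_0^\pi \phi(t+is)\,ds$ is smooth and compactly supported with $\int\wt\phi = 1$. Setting $\Psi(u) := \int_{-\infty}^u \wt\phi(r)\,dr$, the function $\Xi(t) := \Psi(t+x) - \Psi(t+y)$ is compactly supported, since $\Psi \equiv 0$ near $-\infty$ and $\Psi \equiv 1$ near $+\infty$. Integration by parts gives
\begin{align*}
(h_1, \phi(\cdot+x)) - (h_1, \phi(\cdot+y)) = -\int_\R \Xi(t)\,dX_t,
\end{align*}
and the It\^o isometry bounds the variance by a constant multiple of $\int_\R \Xi(t)^2\,dt$. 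With $\supp\wt\phi \subseteq [-M,M]$, the support of $\Xi$ lies in an interval of length at most $2M+|x-y|$, and $\Xi$ satisfies both $|\Xi|\leq 1$ and $|\Xi(t)| \leq \|\wt\phi\|_\infty |x-y|$. Using the pointwise Lipschitz bound for $|x-y|\leq 1$ and the support bound for $|x-y|>1$ shows $\int_\R \Xi^2\,dt \leq C|x-y|$ uniformly.

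For the lateral contribution, horizontal translation invariance yields $\var[(h_2,\phi(\cdot+x))] = \var[(h_2,\phi)] =: \sigma_0^2$, so Cauchy--Schwarz on the covariance gives the uniform bound $\var[(h_2, \phi(\cdot+x) - \phi(\cdot+y))] \leq 4\sigma_0^2$. For a quadratic bound I would write
\begin{align*}
(h_2, \phi(\cdot+x) - \phi(\cdot+y)) = \int_y^x (h_2, \partial_1\phi(\cdot+r))\,dr,
\end{align*}
with $\partial_1\phi$ the horizontal partial derivative; applying Cauchy--Schwarz to the resulting double covariance integral together with translation invariance of $\var[(h_2,\partial_1\phi(\cdot+r))]$ gives $\var \leq |x-y|^2\,\var[(h_2,\partial_1\phi)]$. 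Taking the minimum of the uniform and quadratic bounds yields $\var[(h_2, \phi(\cdot+x) - \phi(\cdot+y))] \leq C|x-y|$ uniformly in $x,y \in \R$.

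The main technical step is the It\^o isometry for the radial part, which requires $\Xi$ to be compactly supported (forced by $\int\wt\phi = 1$) and uses sublinear growth of Brownian motion to kill boundary terms in the integration by parts. Independence of $h_1$ and $h_2$ then combines the two estimates into the claimed bound $\var \leq C|x-y|$.
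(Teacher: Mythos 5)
Your proof is correct, but it takes a genuinely different route from the paper's. The paper does a case analysis on $|x-y|\geq r$ versus $|x-y|\leq r$ where $r$ is a fixed constant depending on the support of $\phi$. In the large case it compares $(h,\phi(\cdot+x))$ to $h_1(x)$ via the triangle inequality for variances, and in the small case it expands the variance directly through the Neumann Green's function $\greenN{\h}(e^z,e^w)$ on $\strip$ and extracts the linear-in-$|x-y|$ term by hand. You instead decompose $h=h_1+h_2$ once and bound the two independent contributions uniformly in $x,y$: the radial contribution via the Wiener-integral identity $(h_1,\phi(\cdot+x))-(h_1,\phi(\cdot+y))=-\int_\R\Xi\,dX$ (the It\^o isometry then reduces the estimate to $\int\Xi^2$, where compact support of $\Xi$ is exactly what $\int\wt\phi=1$ buys you), and the lateral contribution via translation invariance combined with the fundamental-theorem-of-calculus trick $\phi(\cdot+x)-\phi(\cdot+y)=\int_y^x\partial_1\phi(\cdot+r)\,dr$. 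Your argument is somewhat cleaner in that it never touches the Green's function and never does a case split on $|x-y|$, but for small $|x-y|$ you still interpolate between a uniform bound and a quadratic bound, which is morally the same thing the paper's case split accomplishes. Two very minor imprecisions: $(h_1(u/2))_{u\in\R}$, not $(h_1(u))_{u\in\R}$, is a standard two-sided Brownian motion, so $h_1$ runs at twice the usual speed (this only changes the constant); and the claim $|\Xi|\leq 1$ implicitly assumes $\phi\geq 0$, which the paper does not impose — but the correct bound $|\Xi|\leq 2\|\wt\phi\|_{L^1}$ serves just as well.
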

\begin{proof}
Set $K = \supp(\phi)$, $r = \dist(K,\partial\strip)/2 > 0$ and $K_1 = \{z \in \strip: \dist(z,K) \leq r \}$. Fix $x,y \in \R$ and suppose first that $|x-y|\geq r$. Let $h_1$ (resp.\ $h_2$) be the projection of $h$ to $H_1(\strip)$ (resp.\ $H_2(\strip)$) and note that $\var[(h,\phi)] < \infty$. Note also that $(h_{1}(\frac{u}{2}))_{u \in \R}$ has the law of a two-sided standard Brownian motion with $h_1(0) = 0$. Then we have that 
$\var[(h,\phi(\cdot + x )) - (h,\phi(\cdot + y )) ]$ is at most 
\begin{align*}
	3(\var [(h,\phi(\cdot + x)) - h_1(x)] + \var[h_1(x) - h_1(y)] + \var[(h,\phi(\cdot + y )) - h_1(y)])
\end{align*}
Note that, since $h_2$ is translation invariant, the first and third terms of the above sum are both equal to $\var[(h,\phi)]$. Moreover, $\var[h_1(x) - h_1(y)] = 2|x-y|$ and thus, setting $C = 2+ 2\var[(h,\phi)]/r$, we have 
\begin{align*}
	\var[(h,\phi(\cdot + x )) - (h,\phi(\cdot + y ))] \leq C|x-y|.
\end{align*}
Now suppose that $|x-y| \leq r$. Then, by doing a change of variables and using that for $u \in \R$, $\greenN{\h}(e^{z-u},e^{w-u}) = 2u + \greenN{\h}(e^z,e^w)$, we have that
\begin{align*}
	&\var[(h,\phi(\cdot + x )) - (h,\phi(\cdot + y ))] \\
	&=\int_{\strip}\int_{\strip}\greenN{\h}(e^z,e^w)(\phi(z+x) - \phi(z+y))(\phi(w+x) - \phi(w+y))dzdw \\
	&= 2(x-y)\int_{K_{1}}\int_{K_{1}}\phi(z)(\phi(w) - \phi(w+y-x))dzdw \\
	&+\int_{K_{1}}\int_{K_{1}}\greenN{\h}(e^{z},e^{w})\phi(z)(2\phi(w) - \phi(w+y-x) - \phi(w+x-y))dzdw
\end{align*}
Then, the result follows by noting that $\int_{K_{1}}\int_{K_{1}}|\greenN{\h}(e^{z},e^{w})|dzdw < \infty$ and that $|\phi(z) - \phi(w)|\leq \|\phi'\|_\infty |z-w|$ for any $z,w \in \strip$.
\end{proof}

We now prove the bound on the growth rate of $|(h_2,\phi(\cdot+x))|$.
\begin{lemma}\label{lem:latbound}
Let $h$ be a free boundary GFF on $\strip$ with additive constant fixed as in the previous lemma and let $h = h_1 + h_2$ be its decomposition as above. Fix $\delta > 0$. Then there a.s.\ exists a (random) constant $C > 0$ such that 
\begin{align*}
	|(h_2,\phi(\cdot + x))| \leq C(|x|^{\frac{1}{2}+\delta}+1), \quad \text{for all} \quad x \in \R.
	\end{align*}
\end{lemma}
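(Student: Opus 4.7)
The plan is to exploit the fact that $X(x) := (h_2, \phi(\cdot+x))$ is a \emph{stationary centered Gaussian process} whose increments have variance bounded by $C|x-y|$, and then use Gaussian concentration on unit intervals together with Borel--Cantelli to get the desired growth bound. In fact we will obtain a much stronger polylogarithmic bound, which trivially implies the stated one.

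First, I would verify stationarity. Since the subspace $H_1(\strip)$ of functions in $H(\strip)$ which are constant on vertical lines is invariant under horizontal translations, so is its orthogonal complement $H_2(\strip)$, and hence the law of the projection $h_2$ is invariant under horizontal translations. Consequently $X(x) = (h_2, \phi(\cdot + x))$ is a stationary centered Gaussian process, so in particular $\sigma^2 := \var[X(0)]$ is a finite constant. Next, using that $h_1$ and $h_2$ are independent and Lemma~\ref{lem:varbound},
\[
\var[X(x) - X(y)] \;\leq\; \var[(h,\phi(\cdot+x)) - (h,\phi(\cdot+y))] \;\leq\; C|x-y|,
\]
which in particular controls the $L^2$ pseudometric of $X$ by $\sqrt{C|x-y|}$.

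The main step is a tail bound on $\sup_{x \in [n,n+1]} |X(x)|$. By Dudley's entropy bound applied with the pseudometric $\sqrt{C|x-y|}$ on $[n,n+1]$, together with stationarity, there is a finite constant $M$ (independent of $n$) with $\E[\sup_{x \in [n,n+1]} |X(x)|] \leq M$. Borell--TIS then gives
\[
\p\!\left[ \sup_{x \in [n,n+1]} |X(x)| \;\geq\; M + t \right] \;\leq\; 2 \exp\!\left(-\tfrac{t^2}{2\sigma^2}\right) \qquad \text{for all } t > 0,
\]
again with constants not depending on $n$. Setting $t = t_n := A\sqrt{\log(2+|n|)}$ for $A$ sufficiently large makes the right-hand side summable in $n \in \Z$.

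The Borel--Cantelli lemma then yields an almost surely finite random constant $C'$ such that $\sup_{x \in [n,n+1]} |X(x)| \leq C'(1 + \sqrt{\log(2+|n|)})$ for every $n \in \Z$. Since $\sqrt{\log(2+|x|)} \leq C_\delta(|x|^{1/2+\delta}+1)$, the lemma follows. The only step I see as potentially delicate is the verification that the Dudley entropy bound gives a constant independent of $n$; this is immediate from stationarity but is worth noting explicitly.
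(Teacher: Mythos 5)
Your proof is correct but takes a genuinely different route from the paper, and in fact gives a strictly stronger conclusion.

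The paper works with the \emph{full} field: it sets $Y_x = (h,\phi(\cdot+x))$, uses Lemma~\ref{lem:varbound} together with Sudakov--Fernique to bound $\E[\sup_{0\le t\le T} Y_t]$ by a constant times $\sqrt{T}$, applies Markov's inequality to obtain a tail bound of order $T^{-\delta}$, runs Borel--Cantelli along dyadic scales, and finally deduces the bound on $|(h_2,\phi(\cdot+x))|$ via the triangle inequality $|(h_2,\cdot)|\le |(h,\cdot)|+|(h_1,\cdot)|$, invoking the law of the iterated logarithm to control the radial part $h_1$. You instead work \emph{directly} with $X(x) = (h_2,\phi(\cdot+x))$, observing (correctly) that $h_2$ is stationary under horizontal translations because $H_2(\strip)$ is a translation-invariant subspace and constants project to zero, so the choice of additive constant for $h$ is irrelevant. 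Your variance comparison $\var[X(x)-X(y)]\le \var[(h,\phi(\cdot+x))-(h,\phi(\cdot+y))]$ is valid by the independence of $h_1$ and $h_2$. Combining stationarity with Dudley's entropy bound gives a uniform-in-$n$ bound on $\E[\sup_{[n,n+1]}|X|]$, and Borell--TIS then yields a Gaussian tail rather than the merely polynomial tail the paper obtains via Markov. This is why you get the polylogarithmic growth bound $\sup_{[n,n+1]}|X|\lesssim 1+\sqrt{\log(2+|n|)}$, which is strictly stronger than the $|x|^{1/2+\delta}$ bound in the statement. The trade-offs are: your argument invokes heavier concentration machinery (Dudley and Borell--TIS) but bypasses the LIL step and the detour through the full field entirely, and it exploits stationarity of $h_2$ — which the paper's argument does not use — to obtain the sharper exponent.
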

\begin{proof}
We begin by noting that if we let $Y_x = (h,\phi(\cdot + x))$, then by Lemma~\ref{lem:varbound} and the Sudakov-Fernique inequality,
\begin{align*}
	\E\!\left[ \sup_{0 \leq t \leq T} Y_t \right] \leq C \E\!\left[ \sup_{0 \leq t \leq T} B_t \right] \leq \wt C \sqrt{T},
\end{align*}
where $B_t$ is a Brownian motion, where the last inequality follows by Brownian scaling and since $\E[\sup_{0 \leq t \leq 1} B_t]$ is finite.  By applying the same argument to the Gaussian process $(-Y_t)_{0 \leq t \leq T}$,  we obtain the same upper bound but with $Y$ replaced by $-Y$.
 Consequently, by Markov's inequality,
\begin{align}\label{eq:sup_bound}
	\p\!\left[ \sup_{0 \leq t \leq T} |Y_t| > T^{\frac{1}{2} + \delta} \right] \leq \wt C T^{-\delta}.
\end{align}
By the Borel-Cantelli lemma, applied to the sequence of events $\{ \sup_{0 \leq t \leq 2^k} |Y_t| > 2^{k(\frac{1}{2} + \delta}) \}$ (since~\eqref{eq:sup_bound} implies that the sum of the probabilities of those events is finite), we have that there a.s.\ exists a random $n_0 \in \N$ such that for all $n \geq n_0$, $\sup_{0 \leq t \leq 2^n} |Y_t |\leq 2^{n(\frac{1}{2}+\delta)}$. Thus, 
we see that there exists some random, a.s.\ finite constant $\wh C > 0$ such that
\begin{align*}
	|Y_t|\leq \wh C(|t|^{\frac{1}{2}+\delta}+1),\quad \text{for all} \quad t \in \R.
\end{align*}
Finally the law of the iterated logarithm implies  that there exists a finite random constant $C_1 > 0$ such that 
\begin{align*}
	|h_1(t)| \leq C_1 (|t|^{\frac{1}{2}+\delta}+1),\quad \text{for all} \quad t \in \R,
\end{align*}
and since $|(h_2,\phi(\cdot+x))|\leq |(h,\phi(\cdot+x))|+|(h_1,\phi(\cdot+x))|$ for each $x\in \R$, the claim of the lemma follows.
\end{proof}
Next is the corresponding bound for $h_1^\gamma$ rather than $h_2^\gamma$.
\begin{lemma}\label{lem:wedgebound}
Fix $\gamma \in [1,2]$ and let $(\strip,h^\gamma,-\infty,+\infty) \sim \qwedgeW{\gamma}{4}$ have the first exit parameterization and set  $\alpha_\gamma = \frac{4}{\gamma} - \frac{\gamma}{2}$. Let $h_1^\gamma$ be the projection of $h^\gamma$ to $H_1(\strip)$. Then for all $\epsilon \in (0,1)$ and $\delta \in (0,1)$, there exists a finite deterministic constant $C > 0$ independent of $\gamma$ such that with probability at least $1 - \epsilon$ we have that 
\begin{align*}
|(h_1^\gamma,\phi(\cdot-x)) - \alpha_\gamma x| \leq C(|x|^{\frac{1}{2}+\delta}+1),\quad \text{for all} \quad x \in \R.
\end{align*}
\end{lemma}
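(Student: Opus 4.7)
The plan is to reduce the bound to a one-dimensional estimate on the vertical-line average process of $h^\gamma$ and then apply a Borel-Cantelli argument parallel to Lemma~\ref{lem:latbound}. Let $X_t$ denote the average of $h^\gamma$ on the vertical line $\{t+iy : y \in (0,\pi)\}$, set $Y_t = X_t - \alpha_\gamma t$, and let $\Phi(s) = \int_0^\pi \phi(s+iy)\,dy$, which is smooth, compactly supported in some $[-R,R]$, and satisfies $\int_\R \Phi = 1$ since $\int_\strip \phi = 1$. Because $h_1^\gamma$ is the projection of $h^\gamma$ onto functions constant on vertical lines, we have $(h_1^\gamma,\phi(\cdot-x)) = \int_\R X_t\,\Phi(t-x)\,dt$, and a direct computation using $X_t = Y_t + \alpha_\gamma t$ and the change of variables $s = t-x$ gives
\begin{align*}
    (h_1^\gamma,\phi(\cdot-x)) - \alpha_\gamma x = \int_\R Y_t\,\Phi(t-x)\,dt + \alpha_\gamma\int_\R s\,\Phi(s)\,ds,
\end{align*}
where the last term is a $\gamma$-independent constant. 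Since $\Phi$ is compactly supported, $|\int_\R Y_t\,\Phi(t-x)\,dt| \lesssim \sup_{|t-x|\leq R}|Y_t|$, so it suffices to prove that with probability at least $1-\epsilon$ we have $|Y_t| \leq C(|t|^{1/2+\delta}+1)$ for all $t \in \R$, with $C$ uniform in $\gamma \in [1,2]$.

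By the first exit parameterization of the weight-$4$ wedge (for which $Q - \alpha = \alpha_\gamma$), $(Y_t)_{t\geq 0}$ has the law of $(B_{2t})_{t\geq 0}$ for a standard Brownian motion $B$ and is independent of $(Y_{-s})_{s \geq 0}$, which is a Bessel-type process arising as the Doob $h$-transform of a Brownian motion conditioned (in the appropriate limiting sense) on $\{\wt B_{2r} < \alpha_\gamma r \text{ for all } r > 0\}$. For the forward part, the proof of Lemma~\ref{lem:latbound} applies almost verbatim: Sudakov-Fernique plus Brownian scaling yields $\E[\sup_{0 \leq t \leq T} Y_t] \lesssim \sqrt T$, Markov's inequality gives $\p[\sup_{0 \leq t \leq T} Y_t > T^{1/2+\delta}] \lesssim T^{-\delta}$, and Borel-Cantelli over dyadic scales $T = 2^n$ produces $|Y_t| \leq C(|t|^{1/2+\delta}+1)$ for $t \geq 0$ with probability at least $1-\epsilon/2$ and with $C$ depending only on $\epsilon$ and $\delta$.

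For the backward part, I would identify $V_s := Y_{-s}$ via its Doob $h$-transform representation. Since $\alpha_\gamma$ lies in the compact range $[1, 7/2]$ as $\gamma$ varies in $[1,2]$, the relevant conditioning event has probability bounded below uniformly in $\gamma$, and $V_s$ is absolutely continuous on each finite interval $[0, T]$ with respect to a standard Brownian motion with Radon-Nikodym derivative bounded uniformly in $\gamma \in [1, 2]$. This transports the Brownian supremum estimate $\E[\sup_{0 \leq s \leq T} |V_s|] \lesssim \sqrt T$ uniformly in $\gamma$, and the Markov-plus-Borel-Cantelli argument as in the forward part yields $|V_s| \leq C(|s|^{1/2+\delta}+1)$ for $s \geq 0$ with probability at least $1-\epsilon/2$. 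Combining the forward and backward bounds completes the proof.

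The main obstacle is the uniform-in-$\gamma$ control of the backward Doob $h$-transform; the key input is that $\alpha_\gamma$ is bounded away from $0$ and $\infty$ on $\gamma \in [1,2]$, which ensures that the conditioned process has uniformly nondegenerate behavior. Some care is also needed to make the decomposition $(h_1^\gamma, \phi(\cdot-x)) = \int X_t\,\Phi(t-x)\,dt$ rigorous, since $h^\gamma$ is a distribution; this can be done by interpreting $X_t$ distributionally via pairing against a vertical-line average test function and verifying the identity by approximation. Once these points are handled, both halves of the argument reduce to the standard Gaussian estimates of Lemma~\ref{lem:latbound}.
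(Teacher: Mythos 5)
Your reduction to bounding $Y_t = X_t - \alpha_\gamma t$ via the compactly supported kernel $\Phi$ is correct, and the forward half ($t \geq 0$, where $Y_t$ is an honest Brownian motion) is fine: the dyadic Borel--Cantelli argument of Lemma~\ref{lem:latbound} applies directly.

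The backward half has a genuine gap. The conditioning event $\{\wt B_{2r} < \alpha_\gamma r \text{ for all } r > 0\}$ has probability \emph{zero}, not ``bounded below uniformly in $\gamma$'': a Brownian motion started at $0$ crosses the line $\alpha_\gamma r$ immediately, so the conditioning is degenerate and must be made sense of as a Doob $h$-transform with $h(0)=0$. As a consequence, the conditioned process $V_s$ is \emph{not} absolutely continuous on $[0,T]$ with respect to a standard Brownian motion with a bounded Radon--Nikodym derivative --- near $s=0$ the conditioned law carries a $\mathrm{BES}^3$-type repulsion from the barrier which no version of a BM started at $0$ exhibits, so the density blows up there. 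This is precisely the step where the transfer of the Brownian supremum estimate fails. (Comparing $V_s$ to a $\mathrm{BES}^3$ rather than to a BM is closer to the truth, but even there the Radon--Nikodym derivative carries drift terms over $[0,T]$ that you would still need to control, and as written you have not done so.)

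The paper sidesteps the $h$-transform entirely by using the Williams-type path decomposition: a Brownian motion with negative drift conditioned to stay negative has the law of the unconditioned path shifted to start at its last zero. Concretely, with $T^\gamma = \sup\{t \geq 0 : \wt B_{2t} - \alpha_\gamma t = 0\}$ one can realize
\[
V_s = \wt B_{2(T^\gamma + s)} - \alpha_\gamma T^\gamma, \qquad s \geq 0,
\]
so that the law of the iterated logarithm for $\wt B$ and the a.s.\ finiteness of $T^\gamma$ immediately give $|V_s| \leq C(s^{1/2+\delta}+1)$. Uniformity over $\gamma$ then comes for free from the monotonicity $T^1 \leq T^\gamma \leq T^2$ (since $\alpha_\gamma$ is decreasing in $\gamma$) together with $\alpha_\gamma T^\gamma \leq 4T^2$, and the paper bakes this into a single coupling of all $(h_1^\gamma)_{\gamma\in[1,2]}$ built from two fixed Brownian motions $B,\wt B$. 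If you replace your absolute-continuity step with this time-shift realization, your Borel--Cantelli structure goes through and your argument closes.
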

\begin{proof}
Note that $h_1^\gamma$ can be sampled as follows: Let $B,\wt{B}$ be two independent standard Brownian motions starting from $0$ and set
\begin{align*}
	T^\gamma = \sup\{ t \geq 0 : \wt{B}_{2t} - \alpha_\gamma t = 0 \}.
\end{align*}
$T^{\gamma}$ is well-defined since $\alpha_\gamma > 0$ and hence $\wt{B}_{2t} - \alpha_\gamma t \rightarrow -\infty$ as $t \rightarrow +\infty$, a.s. We consider the process $Y_t^\gamma = \wt{B}_{2(T^\gamma+t)}-\alpha_\gamma (t+T^\gamma)$ for $t \geq 0$. Then we set $h_1^\gamma(t)$ to be equal to $B_{2t} + \alpha_\gamma t$ for $t \geq 0$ and equal to $Y_{-t}^\gamma$ for $t \leq 0$. We also let $h_2$ be given by the projection to $H_2(\strip)$ of a free boundary GFF on $\strip$ which is independent of $B$ and $\wt{B}$.

We have thus constructed a coupling of $(h^\gamma)_{\gamma \in [1,2]}$ by setting $h^\gamma = h_1^\gamma + h_2$. We claim that under this coupling, a.s.\ there exists a random finite constant $C > 0$ such that 
\begin{align*}
\sup_{\gamma \in [1,2]}|h_1^\gamma (u) - \alpha_\gamma u| \leq C(|u|^{\frac{1}{2} + \delta}+1),\quad \text{for all} \quad u \in \R.
\end{align*}
Indeed, we note that the law of the iterated logarithm implies that a.s.\ we can find a random finite constant $C_1 > 0$ such that 
\begin{align*}
        \max(|B_{2t}|,|\wt{B}_{2t}|) \leq C_1(|t|^{\frac{1}{2}+\delta}+1),\quad \text{for all} \quad t \geq 0.
\end{align*}
Note also that $1 = \alpha_{2} \leq \alpha_\gamma \leq \alpha_1 = 7/2$ for all $\gamma \in [1,2]$ and so $T^{2}\geq T^{\gamma} \geq T^1$. By enlarging $C_1$ if necessary, we can assume that $\alpha_\gamma T^\gamma < 4 T^2 \leq C_1$, for all $\gamma \in [1,2]$ and since $h_1^\gamma(u) - \alpha_\gamma u = B_{2u}$ for $u \geq 0$ and $h_1^\gamma(u) - \alpha_\gamma u = \wt{B}_{2(T^\gamma-u)}-\alpha_\gamma T^\gamma$ for $u \leq 0$, there exists a finite random constant $C_2 > 0$ such that 
\begin{align*}
	|h_1^\gamma(u) - \alpha_\gamma u|\leq C_2 (|u|^{\frac{1}{2}+\delta}+1),\quad \text{for all} \quad u \in \R.
\end{align*}
Since $\alpha_\gamma u = \int_{\strip} \alpha_\gamma u\phi(z-u)dz$ for $u \in \R$, we obtain that we can find $C>0$ independent of $\gamma \in [1,2]$ such that with probability at least $1 - \epsilon$ we have that 
\begin{align*}
	\sup_{\gamma \in [1,2]}|(h^{\gamma}_{1},\phi(\cdot-u))-\alpha_{\gamma}u| \leq C(|u|^{\frac{1}{2}+\delta}+1),\quad \text{for all} \quad u \in \R,
\end{align*}
under the coupling we have constructed. Thus, the proof is done.
\end{proof}

We now deduce the key to the tightness needed in the proof of Proposition~\ref{prop:cutcone}.
\begin{lemma}\label{lem:translbound}
Fix $\epsilon \in (0,1)$ and $c_0 >0$. Then there exists a constant $c_1 >0$ such that the following is true for all $\gamma \in [1,2]$. Suppose that $(\strip,h^\gamma,-\infty,+\infty) \sim \qwedgeW{\gamma}{4}$ has the first exit parameterization, where $\alpha_\gamma$ is as in Lemma~\ref{lem:wedgebound}. Then, 
\begin{align*}
	\p[\{x \in \R: (h^\gamma(\cdot+x),\phi) \in [-c_0,c_0] \} \subseteq [-c_1,c_1]] \geq 1-\epsilon.
\end{align*}
\end{lemma}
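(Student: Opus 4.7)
The plan is to combine the approximately linear growth of $x \mapsto (h^\gamma(\cdot+x),\phi)$ with slope $\alpha_\gamma \geq 1$ (coming from the radial estimate of Lemma~\ref{lem:wedgebound}) with the sublinear fluctuation provided by Lemma~\ref{lem:latbound}, so that the preimage of any bounded interval is forced to be bounded. The first step is to rewrite
\[ (h^\gamma(\cdot+x),\phi) = (h^\gamma, \phi(\cdot-x)) = (h_1^\gamma, \phi(\cdot-x)) + (h_2^\gamma, \phi(\cdot-x)), \]
where $h_j^\gamma$ is the projection of $h^\gamma$ onto $H_j(\strip)$. A key observation is that in the first exit parameterization of $\qwedgeW{\gamma}{4}$, the lateral part $h_2^\gamma$ is the projection of a free boundary GFF onto $H_2(\strip)$; in particular its law does not depend on $\gamma$, which will be essential for the $\gamma$-uniformity in the conclusion.

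Next, fix $\delta \in (0,1/2)$. Lemma~\ref{lem:wedgebound} furnishes a deterministic constant $C_1$, independent of $\gamma \in [1,2]$, such that with probability at least $1-\epsilon/2$,
\[ |(h_1^\gamma, \phi(\cdot-x)) - \alpha_\gamma x| \leq C_1(|x|^{1/2+\delta}+1) \quad \text{for all } x \in \R. \]
For the lateral part, Lemma~\ref{lem:latbound} gives an a.s.\ finite random constant controlling $|(h_2^\gamma, \phi(\cdot-x))|/(|x|^{1/2+\delta}+1)$ uniformly in $x$. Since the distribution of $h_2^\gamma$ does not depend on $\gamma$, I can pass to a quantile of this random constant and select a single deterministic $C_2$ so that, with probability at least $1-\epsilon/2$,
\[ |(h_2^\gamma, \phi(\cdot-x))| \leq C_2(|x|^{1/2+\delta}+1) \quad \text{for all } x \in \R. \]

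On the intersection of these two events, which has probability at least $1-\epsilon$, any $x$ with $(h^\gamma(\cdot+x),\phi) \in [-c_0,c_0]$ satisfies
\[ \alpha_\gamma |x| \leq c_0 + (C_1+C_2)(|x|^{1/2+\delta}+1). \]
Using $\alpha_\gamma \geq \alpha_2 = 1$ for $\gamma \in [1,2]$ and $1/2 + \delta < 1$, this inequality forces $|x| \leq c_1$ for some deterministic $c_1 = c_1(c_0,C_1,C_2,\delta)$, which is the claim. The only slightly delicate point is promoting the a.s.\ finite constant of Lemma~\ref{lem:latbound} to a deterministic one valid simultaneously for every $\gamma \in [1,2]$; this works precisely because the law of the lateral part $h_2^\gamma$ is $\gamma$-independent. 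Everything else is straightforward bookkeeping from the two preceding lemmas.
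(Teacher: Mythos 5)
Your proof is correct and follows essentially the same route as the paper: decompose into radial and lateral parts, apply Lemma~\ref{lem:wedgebound} and Lemma~\ref{lem:latbound} respectively, and use $\alpha_\gamma \geq 1$ together with the sublinear $|x|^{1/2+\delta}$ bound to force boundedness of the preimage. Your explicit remark that the $\gamma$-independence of the law of $h_2^\gamma$ is what allows the a.s.\ random constant of Lemma~\ref{lem:latbound} to be upgraded to a single deterministic quantile uniformly in $\gamma$ is a point the paper leaves implicit, and is worth having spelled out.
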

\begin{proof}
Fix $\epsilon \in (0,1)$ and $\delta \in (0,\frac{1}{2})$. Then by Lemma~\ref{lem:latbound} and Lemma~\ref{lem:wedgebound}, we obtain that there exists a finite constant $C_1 > 0$ independent of $\gamma \in [1,2]$ such that with probability at least $1 - \epsilon$,
\begin{align*}
	|(h^\gamma(\cdot+u),\phi) - \alpha_\gamma u|\leq |(h_1^\gamma(\cdot+u),\phi) - \alpha_\gamma u| + |(h_2^\gamma(\cdot+u),\phi)|\leq C_1 (|u|^{\frac{1}{2}+\delta}+1).
\end{align*}
Hence, by the reverse triangle inequality
\begin{align*}
	|u|^{\frac{1}{2}-\delta}\leq 4C_1 + \frac{|(h^{\gamma}(\cdot+u),\phi)|}{|u|^{\frac{1}{2}+\delta}} 	\leq 4C_1 + |(h^{\gamma}(\cdot+u),\phi)|,\quad \text{for all} \quad |u|\geq 1
\end{align*}
By setting $c_1 = \max \!\Big\{1,(4C_1+c_0)^{\frac{1}{\frac{1}{2}-\delta}} \!\Big\}$, the result follows.
\end{proof}
Before finally proving Proposition~\ref{prop:cutcone}, we prove that the distributions of the corresponding quantum surfaces converge weakly in law.
\begin{lemma}\label{lem:coneconv}
Let $\cyl$ be the infinite cylinder and for $\gamma \in (0,2]$ let $(\cyl,h^\gamma,-\infty,+\infty) \sim \qconeW{\gamma}{4}$ have the first exit parameterization. Then the law of $h^\gamma$ on $H_{\textup{loc}}^{-1}(\cyl)$ converges weakly as $\gamma \uparrow 2$ to the law of $h^2$. The same holds if we instead consider $(\strip,h^\gamma,-\infty,+\infty) \sim \qwedgeW{\gamma}{4}$ with the first exit parameterization.
\end{lemma}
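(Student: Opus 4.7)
By Skorokhod's representation theorem, it suffices to construct a coupling of $(h^\gamma)_{\gamma \in (0,2]}$ on a common probability space such that $h^\gamma \to h^2$ a.s.\ in $H^{-1}_\loc$ as $\gamma \uparrow 2$. I describe the coupling for the wedge on $\strip$; the cone on $\cyl$ is analogous after replacing $B_{2t}$ by $B_t$ and $\alpha_\gamma$ by $Q-\alpha = 2/\gamma$. Decompose $h^\gamma = h_1^\gamma + h_2^\gamma$ into its radial and lateral parts (Remark~\ref{rmk:stripdecomposition}). The lateral part $h_2^\gamma$ has the law of the projection onto $H_2(\strip)$ of a free boundary GFF, which does not depend on $\gamma$; I therefore set $h_2^\gamma = h_2$ for a single fixed sample from this distribution. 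For the radial part, following the construction in the proof of Lemma~\ref{lem:wedgebound}, let $B$ and $\wt B$ be independent standard Brownian motions (independent of $h_2$), set $T^\gamma = \sup\{t \geq 0 : \wt B_{2t} - \alpha_\gamma t = 0\}$ (a.s.\ finite since $\alpha_\gamma > 0$), and define $h_1^\gamma(t) = B_{2t} + \alpha_\gamma t$ for $t \geq 0$ and $h_1^\gamma(t) = \wt B_{2(T^\gamma - t)} - \alpha_\gamma(T^\gamma - t)$ for $t \leq 0$. One verifies exactly as in Lemma~\ref{lem:wedgebound} that $h^\gamma := h_1^\gamma + h_2$ is a realization of $\qwedgeW{\gamma}{4}$ in the first exit parameterization.

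The problem thus reduces to proving that $T^\gamma \to T^2$ a.s.\ as $\gamma \uparrow 2$: once this is established, the a.s.\ continuity of $B$ and $\wt B$, combined with $\alpha_\gamma \to \alpha_2$, gives $h_1^\gamma \to h_1^2$ a.s.\ uniformly on each compact subset of $\R$, whence $(h^\gamma, \phi) \to (h^2, \phi)$ a.s.\ for every test function $\phi$ and therefore $h^\gamma \to h^2$ in $H^{-1}_\loc(\strip)$.

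To show $T^\gamma \to T^2$ a.s., I combine a pathwise monotonicity argument with a Brownian scaling identity. Since $\alpha_\gamma = 4/\gamma - \gamma/2$ is strictly decreasing in $\gamma$, for $\gamma_1 < \gamma_2 \leq 2$ and every $t > 0$ we have $\wt B_{2t} - \alpha_{\gamma_1} t < \wt B_{2t} - \alpha_{\gamma_2} t$; if $t > T^{\gamma_2}$ then the right-hand side is negative (the $\gamma_2$-process has passed its last zero and tends to $-\infty$), forcing $\wt B_{2t} - \alpha_{\gamma_1} t < 0$ as well. Hence $T^{\gamma_1} \leq T^{\gamma_2}$, and so $\gamma \mapsto T^\gamma$ is nondecreasing and converges a.s.\ to some $T^\ast \leq T^2$ as $\gamma \uparrow 2$. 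Separately, Brownian scaling gives that, writing $S_c := \sup\{u \geq 0 : \wt B_u = cu\}$, one has $S_c \stackrel{d}{=} S_1/c^2$; substituting $u = 2t$ shows $T^\gamma = \tfrac12 S_{\alpha_\gamma/2} \stackrel{d}{=} 2 S_1 / \alpha_\gamma^2$. Since $\alpha_\gamma \to \alpha_2 = 1$ continuously, this gives $\mathcal L(T^\gamma) \to \mathcal L(2 S_1) = \mathcal L(T^2)$ weakly, and combined with $T^\gamma \to T^\ast$ a.s.\ we conclude $T^\ast \stackrel{d}{=} T^2$. Because $T^\ast \leq T^2$ a.s.\ and $T^\ast \stackrel{d}{=} T^2$, the standard fact that two a.s.\ ordered random variables with the same distribution must be a.s.\ equal yields $T^\ast = T^2$ almost surely, as required.

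The principal difficulty is precisely this a.s.\ continuity at $\gamma = 2$ of the last-zero functional $T^\gamma$: in general the supremum of the zero set of a continuous random process is not a pathwise-continuous functional of the path, and a direct pathwise argument cannot succeed. The strategy above circumvents this by combining pathwise monotonicity (intrinsic to the joint construction via $(\wt B, \alpha_\gamma)$) with the distributional Brownian scaling identity for $T^\gamma$, which together force the monotone a.s.\ limit to agree in distribution with $T^2$ and hence to equal it.
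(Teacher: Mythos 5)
Your proof is correct and follows essentially the same route as the paper's: couple all the surfaces by fixing a single lateral part $h_2$ (whose law does not depend on $\gamma$) and a single pair of independent Brownian motions $(B,\wt B)$, so that only the drift $\alpha_\gamma$ and the last-zero time $T^\gamma$ vary with $\gamma$; the convergence of $h^\gamma$ then reduces to the a.s.\ convergence $T^\gamma \to T^2$. The one genuine difference is that the paper simply asserts $T^\gamma \to T^2$ a.s.\ in this coupling without further comment, whereas you spell out why this is true. This is worth doing: the last-zero functional $t \mapsto \sup\{s : f(s)=0\}$ is \emph{not} pathwise continuous in the input path, so some probabilistic input is genuinely required, and your argument supplies it cleanly. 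The monotonicity step ($\alpha_\gamma$ decreasing $\Rightarrow$ $T^\gamma$ nondecreasing, so it converges to some $T^\ast \le T^2$) is a correct pathwise observation, and the Brownian scaling identity $T^\gamma \stackrel{d}{=} 2 S_1/\alpha_\gamma^2$ (valid since $S_1$ has a continuous distribution) identifies the law of $T^\ast$ with that of $T^2$; the standard fact that a.s.\ ordered equidistributed variables coincide a.s.\ then closes the argument. In short: same coupling strategy and same decomposition as the paper, but you fill in the key step the paper leaves implicit, and the filling is correct.
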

\begin{proof}
We prove the claim only in the case of quantum cones, the proof of the other case is similar. We will construct a coupling of $(h^{\gamma})_{\gamma \in (0,2]}$ such that $h^\gamma \rightarrow h^2$ as $\gamma \uparrow 2$ in $H_{\textup{loc}}^{-1}(\cyl)$ a.s.\ and that will complete the proof of the lemma.

Let $h_2$ be the projection of a free boundary GFF on $\cyl$ (with the additive constant fixed so that its average over $\{0\} \times [0,2\pi]$ is equal to zero) to the space of distributions which have mean zero on vertical lines. Let also $B,\wt{B}$ be two independent standard Brownian motions with $B_0 = \wt{B}_0 = 0$ and such that they are both independent of $h_2$. Set $\alpha_\gamma = 2 / \gamma$ and $T^\gamma = \sup\{ t \geq 0 : \wt{B}_t - \alpha_\gamma t = 0\} < \infty$ and consider the process $Y_u^\gamma = \wt{B}_{T^\gamma +u} - \alpha_\gamma (u+T^\gamma)$ for $u \geq 0$. We also consider the process $X^\gamma$ indexed by $\R$ given by $X_u^\gamma = B_u + \alpha_\gamma u$ for $u \geq 0$ and $X_u^\gamma = Y_{-u}^\gamma$ for $u \leq 0$.

We sample $h^\gamma$ by setting its projection to $H_{1}(\cyl)$ to be equal to $X^\gamma$ and its projection to $H_2(\cyl)$ to be equal to $h_2$. Note that in the coupling we have constructed, $T^\gamma \rightarrow T^2$ a.s., as $\gamma \uparrow 2$ and thus a.s.\ $X^\gamma \to X^2$ uniformly on compact subsets of $\R$, as $\gamma \uparrow 2$. Therefore we obtain that a.s., $h^\gamma \rightarrow h^2$ in $H_{\textup{loc}}^{-1}(\cyl)$ as $\gamma \uparrow 2$.
\end{proof}

Finally, we can prove Proposition~\ref{prop:cutcone}, which will be used to deduce Theorem~\ref{thm:critical_cone_welding}. Here we will need Proposition~\ref{prop:Cara}.

\begin{proof}[Proof of Proposition~\ref{prop:cutcone}]
\noindent{\it Step 1. Setup.} For $\gamma \in (0,2]$, we let $(\cyl,h^\gamma,-\infty,+\infty) \sim \qconeW{\gamma}{4}$ have the first exit parameterization and let $\eta^\gamma$ be a whole-plane $\SLE_\kappa(2)$ from $-\infty$ to $+\infty$ where $\kappa = \gamma^2 \in (0,4]$. Let $\phi_\gamma$ be the unique conformal map from $\cyl\setminus \eta^{\gamma}$ to the strip $\strip$ fixing $-\infty$ and $+\infty$ and such that $\re(\phi_\gamma(i\pi/2)) = 0$. We set 
\begin{align*}
	\wt{h}^\gamma = h^\gamma \circ \phi_\gamma^{-1} + Q_\gamma \log |(\phi_\gamma^{-1})'|,
\end{align*}
and we let $\wt{h}_1^\gamma$ (resp.\ $\wt{h}_2^\gamma$) be the projection of $\wt{h}^{\gamma}$ to $H_1(\strip)$ (resp.\ $H_2(\strip)$). We also set 
\begin{align*}
	\wt{X}^\gamma = \inf \{ x \in \R : (\wt{h}^\gamma(\cdot+x),\phi) = 0\}, \quad \wt{Y}^\gamma = \inf \{ y \in \R : \wt{h}_1^\gamma(y) = 0 \}.
\end{align*}
We note that $(\eta^\gamma,\phi_\gamma)$ can be sampled as follows. Let $\wt{\eta}^\gamma$ be a whole-plane $\SLE_\kappa(2)$ in $\C$ from $0$ to $\infty$ and let $\wt{\phi}_\gamma$ be the unique conformal map from $\C \setminus \wt{\eta}^\gamma$ to $\h$ which fixes $0$ and $\infty$ and satisfies $\im(\wt{\phi}_\gamma(i)) = 1$. We also consider the conformal map $\psi (z) = \log z$ from $\C$ to the cylinder $\cyl$. Then $\eta^\gamma = \psi(\wt{\eta}^\gamma)$ has the law of a whole-plane $\SLE_\kappa(2)$ in $\cyl$ from $-\infty$ to $+\infty$ and 
\begin{align*}
	\phi_\gamma(z) = (\psi \circ \wt{\phi}_\gamma \circ \psi^{-1})(z) - \re((\psi \circ \wt{\phi}_\gamma \circ \psi^{-1})(i\pi/2))
\end{align*}
is the unique conformal map from $\cyl \setminus \eta^\gamma$ to $\strip$ fixing $-\infty$ and $+\infty$ and such that $\re(\phi_\gamma(i\pi/2)) = 0$.

\noindent{\it Step 2. Convergence of $h^\gamma$ and $\phi_\gamma$.} Fix a sequence $(\gamma_n)$ in $(0,2)$ such that $\gamma_n \to 2$ as $n \to \infty$. Then the Skorokhod representation theorem combined with Proposition~\ref{prop:Cara} implies that we can find a coupling of $(\wt{\eta}^{\gamma_n}) $ such that $\wt{\phi}_{\gamma_n} \to \wt{\phi}_2$ as $n \to \infty$ locally uniformly a.s. This implies that $\psi \circ \wt{\phi}_{\gamma_n} \circ \psi^{-1} \to \psi \circ \wt{\phi}_2 \circ \psi^{-1}$ locally uniformly and $\wt{\phi}_{\gamma_n}(i) \to \wt{\phi}_2(i)$ as $n \to \infty$ a.s. Hence $\phi_{\gamma_n} \to \phi_2$ locally uniformly as $n \to \infty$ a.s.\ under this coupling. By Lemma~\ref{lem:coneconv} we can find a coupling of $(h^{\gamma_n})$ and $h^2$ such that $h^{\gamma_n} \to h^2$ as $n \to \infty$ in $H_{\textup{loc}}^{-1}(\cyl)$ a.s. By the Skorokhod representation theorem, we can find a coupling of $(h^{\gamma_n},\eta^{\gamma_n})$ and $(h^2,\eta^2)$ such that a.s., $h^{\gamma_n} \to h^2$ in $H_{\textup{loc}}^{-1}(\cyl)$ and $\phi_{\gamma_n} \to \phi_2$ locally uniformly as $n \to \infty$.

\noindent{\it Step 3. Tightness of $(\wt X^{\gamma_n})$ and $(\wt Y^{\gamma_n})$.} Next we claim that $(\wt{X}^{\gamma_n})$ and $(\wt{Y}^{\gamma_n})$ are tight sequences. Indeed, we first observe that under the above coupling, we have that $\wt{h}^{\gamma_n} \to \wt{h}^2$ as distributions as $n \to \infty$, i.e., $(\wt{h}^{\gamma_n},g) \to (\wt{h}^2,g)$ as $n \to \infty$ for all $g \in C_0^\infty(\strip)$ a.s. Consider the random distribution $\wh{h}^\gamma = \wt{h}^\gamma(\cdot+\wt{Y}^\gamma)$ with $\gamma \in (0,2)$. By Theorem~\ref{thm:cone_welding}, $(\strip,\wh{h}^\gamma,-\infty,+\infty) \sim \qwedgeW{\gamma}{4}$ with the first exit parameterization. 

For $M \in (0,\infty)$, $\gamma \in (0,2)$ we set 
\begin{align*}
	d_{\gamma,M} = \diam \{x \in \R : (\wt{h}^\gamma(\cdot+x),\phi) \in [-M,M] \} = \diam \{x \in \R : (\wh{h}^\gamma(\cdot+x),\phi) \in [-M,M] \}.
\end{align*}
Fix $\epsilon \in (0,1)$. The a.s.\ convergence of $(\wt{h}^{\gamma_n},\phi)$ as $n \to \infty$ under the specific coupling implies that its law is tight and so there exists $M \in (0,\infty)$ such that $\p[(\wt{h}^{\gamma_n},\phi) \in [-M,M]] \geq 1 - \epsilon$ for all $n \in \N$. Lemma~\ref{lem:translbound} implies that there exists $N \in (0,\infty)$ such that 
\begin{align*}
	\p[\{ x \in \R : (\wh{h}^{\gamma_n}(\cdot+x),\phi) \in [-M,M] \} \subseteq{[-N,N]} ] \geq 1 - \epsilon,
\end{align*}
for all $n \in \N$. Note that if the above event holds, we have that $d_{\gamma_n,M} \leq 2N$ and so $\p[d_{\gamma_n,M} \leq 2N] \geq 1 - \epsilon$, for all $n \in \N$. Moreover, we observe that if $d_{\gamma_n,M} \leq 2N$ and $(\wt{h}^{\gamma_n},\phi) \in [-M,M]$, then $\wt{X}^{\gamma_n} \in [-2N,2N]$ since $(\wt{h}^{\gamma_n}(\cdot+\wt{X}^{\gamma_n}),\phi) = 0$. Hence $\wt{X}^{\gamma_n} \in [-2N,2N]$ with probability at least $1 - 2\epsilon$ for all $n \in \N$. This shows that $(\wt{X}^{\gamma_n})$ is tight. Also, if $\wh{X}^\gamma = \inf \{ x \in \R : (\wh{h}^\gamma(\cdot+x),\phi) = 0 \}$ and $\{ x \in \R : (\wh{h}^\gamma(\cdot+x),\phi) \in [-M,M] \} \subseteq{[-N,N]}$ then $\wh{X}^\gamma \in [-N,N]$, so $\p[\wh{X}^{\gamma_n} \in [-N,N]] \geq 1 - \epsilon$, for all $n \in \N$. Since $\wh{X}^\gamma = \wt{X}^\gamma - \wt{Y}^\gamma$, we have that $(\wt{X}^{\gamma_n}-\wt{Y}^{\gamma_n})$ is tight and so $(\wt{Y}^{\gamma_n})$ is also tight.

\noindent{\it Step 4. Tightness of $(\wt h^{\gamma_n})$.} Next we show that $(\wt{h}^{\gamma_n})$ is tight in $H_{\textup{loc}}^{-1}(\strip)$. Suppose that we have the above coupling. Lemma~\ref{lem:coneconv} and the fact that $H_{\textup{loc}}^{-1}(\strip)$ is separable and complete imply that $(\wh{h}^{\gamma_n})$ is tight and so for fixed $\epsilon \in (0,1)$ we can find a compact subset $K$ of $H_{\textup{loc}}^{-1}(\strip)$ such that $\p[\wh{h}^{\gamma_n} \in K] \geq 1 - \epsilon$, for all $n \in \N$. We can also find $M > 0$ such that $\p[\wt{Y}^{\gamma_n} \in [-M,M]] \geq 1 - \epsilon$. We consider $\wt{K}$ to be the set of $h \in H_{\textup{loc}}^{-1}(\strip)$ such that $h = \psi(\cdot+x)$ for some $\psi \in K$ and $x \in [-M,M]$. We show that $\wt{K}$ is sequentially compact and hence a compact set. Fix $(h_n)$ in $\wt{K}$ with $h_n = \psi_n(\cdot+x_n)$ for $\psi_n \in K$, $x_n \in [-M,M]$. By passing to subsequences if necessary, we can assume that $\psi_n \to \psi$ in $H_{\textup{loc}}^{-1}(\strip)$ and $x_n \to x$ as $n \to \infty$ for some $\psi \in K$ and $x \in [-M,M]$. Let $D$ be a simply connected set which is compactly contained in $\strip$. Let also $(f_n)$ and $(\lambda_n)$ be the eigenfunctions and eigenvalues of the Laplace operator on $D$ as in Section~\ref{sec:GFF},  and let $g_n = \lambda_n^{-1/2} f_n$ be the orthonormal basis of $H_0(D)$.  Then,  it is easy to see that $\| h_n-h \|_{H^{-1}(D)} \leq \| \psi_n-\psi \|_{H^{-1}(D)} + \| \psi(\cdot+x_n)-\psi(\cdot+x) \|_{H^{-1}(D)}$,  where $h = \psi(\cdot+x)$.  Note that we have that $\psi = (g,\cdot)_{\nabla}$,  where $g = \sum_{m \geq 1} (h,g_m) g_m \in H_0(D)$ and the convergence is taken with respect to $H_0(D)$.  This follows from the fact that $\sum_{m \geq 1} (h,f_m)^2 \lambda_m^{-1} < \infty$.  Hence,  we have that $\psi(\cdot+x_n) = (g(\cdot+x_n),\cdot)_{\nabla}$ and $\psi(\cdot+x) = (g(\cdot+x),\cdot)_{\nabla}$ for all $n$.  It follows that $\| \psi(\cdot+x_n) - \psi(\cdot+x) \|_{\nabla} = \| g(\cdot+x_n)-g(\cdot+x) \|_{\nabla}$ for all $n$.  It is not hard to see that $\| g(\cdot+x_n)-g(\cdot+x) \|_{\nabla}$ converges to $0$ as $n \to \infty$ by the continuity of the Dirichlet inner product with respect to translations.

Hence we obtain that $h_n \to h$ in $H_{\textup{loc}}^{-1}(\strip)$ as $n \to \infty$ and clearly $h \in \wt{K}$, so $\wt{K}$ is compact. We also observe that if $\wh{h}^{\gamma_n} \in K$ and $\wt{Y}^{\gamma_n} \in [-M,M]$, then $\wt{h}^{\gamma_n} \in \wt{K}$ and so this proves the claim of tightness. 

\noindent{\it Step 5. Convergence of quantum wedges.} The above imply that $(\wt{h}^{\gamma_n},\wt{X}^{\gamma_n},\wt{Y}^{\gamma_n})$ is tight, so by passing to a subsequence if necessary, we can assume that we can find a coupling such that $(\wt{h}^{\gamma_n},\wt{X}^{\gamma_n},\wt{Y}^{\gamma_n}) \to (\wt{h},\wt{X},\wt{Y})$ a.s.\ in $H_{\textup{loc}}^{-1}(\strip) \times \R \times \R$. By applying similar arguments we obtain that $\wt{h}^{\gamma_n}(\cdot+\wt{Y}^{\gamma_n}) \to \wt{h}(\cdot+\wt{Y})$ in $H_{\textup{loc}}^{-1}(\strip)$ as $n \to \infty$ a.s.\ and so $(\strip,\wt{h}(\cdot+\wt{Y}),-\infty,+\infty) \sim \qwedgeW{2}{4}$ with the first exit parameterization. Note also that $\wt{Y} = \inf \{y \in \R : \wt{h}_1(y) = 0 \}$, where $\wt{h}_1$ is the projection of $\wt{h}$ on $H_1(\strip)$. It is also easy to see that $\wt{h}$ has the same law as $\wt{h}^2$ in $H_{\textup{loc}}^{-1}(\strip)$. Therefore by combining everything, we obtain that $(\wt{h}^{\gamma_n})$ converges in law to $\wt{h}^2$ as $n \to \infty$ and that $(\strip,\wt{h}^2(\cdot+\wt{Y}^2),-\infty,+\infty) \sim \qwedgeW{2}{4}$ with the first exit parameterization. This completes the proof of the proposition.
\end{proof}

\section{Exit times for $\SLE$ with the quantum parameterization}
\label{sec:exit_times}

This section is dedicated to proving bounds on exit times for $\SLE$ curves parameterized by quantum length. They will be important for comparing different normalizations of quantum cones and will be used in the proof of the main estimate in Section~\ref{sec:main_lemma} that we need to prove the regularity results.

Before moving on to proving the first bound we recall the following about hitting times of Brownian motion with drift. Fix $b,\alpha > 0$, let $(B_t)$ be a standard Brownian motion with $B_0 = 0$ and set $\tau_\alpha^b = \sup \{t \geq 0: B_t + \alpha t = b\}$. Then the probability density function of $\tau_\alpha^b$ is given by
\begin{align*}
    f_{\tau_\alpha^b}(t) = \frac{\alpha}{\sqrt{2\pi t}}e^{-\frac{(b-\alpha t)^2}{2 t}}\quad\text{for}\quad t>0,
\end{align*}
see \cite[IV.31]{bs2002bmbook}.
Moreover, if $X_t = B_{2t} + \alpha t$ is conditioned so that $B_{2t} + \alpha t > 0$ for all $t > 0$ then a sample from the law of $X_t$ can be produced by first sampling a standard Brownian motion $B_t$ and then setting $X_t = B_{2(t + \tau)} + \alpha (t + \tau)$ where $\tau = \sup\{t \geq 0 : B_{2t} + \alpha t = 0\}$.

We begin by providing an upper bound on the exit times from $\D$ and $B(0,\epsilon)$ for a whole-plane $\SLE_\kappa(\rho)$.
\begin{proposition}\label{prop:UBexittime}
Fix $\gamma \in (0,2]$, $W > \gamma^2/2$, and let $\CC = (\C,h,0,\infty) \sim \qconeW{\gamma}{W}$ have the circle-average embedding. Let $\eta$ be a whole-plane $\SLE_\kappa(W - 2)$ in $\C$ from $0$ to $\infty$ sampled independently of $h$ and then parameterized by quantum length with respect to $h$. If we set $T = \inf\{t \geq 0 : \eta(t) \in \partial\D\}$, then there exists some $p > 0$ such that
\begin{align}\label{eq:conetimeLB}
	\p[T \geq R] = O(R^{-p}) \quad\text{as}\quad R \to \infty.
\end{align}
Moreover, if we let $T_\epsilon = \inf \{ t \geq 0: \eta(t) \in \partial B(0,\epsilon) \}$ for $\epsilon \in (0,1)$, then there exist constants $\zeta \in (0,1),c > 0$ such that
\begin{align}\label{eq:conetimeLBeps}
	\p[ T_\epsilon \geq \epsilon^\zeta] = O(\epsilon^c) \quad\text{as}\quad \epsilon \rightarrow 0.
\end{align}
\end{proposition}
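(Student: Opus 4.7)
My plan is to use the cone welding theorem (Theorem~\ref{thm:cone_welding}) to translate both estimates into statements about boundary quantum lengths on a weight-$W$ wedge, prove the first bound via Gaussian multiplicative chaos moment estimates, and then deduce the second from the first via the scale invariance of $\qconeW{\gamma}{W}$. By Theorem~\ref{thm:cone_welding}, the quantum surface $(\C \setminus \eta, h|_{\C \setminus \eta}, 0, \infty)$ has the law of a weight-$W$ quantum wedge $\CW$. Parameterizing $\CW$ by the strip $\strip$ with its last-exit parameterization and letting $\psi_* : \strip \to \C \setminus \eta$ denote the associated welding map with $\psi_*(\pm \infty) = 0, \infty$, the welding identity gives $T = \nu_h(\eta([0,T])) = \nu_{h^*}((-\infty, y_T])$, where $h^* = h \circ \psi_*^{-1} + Q \log |(\psi_*^{-1})'|$ and $y_T := \psi_*^{-1}(\eta(T))$. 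The description of $h^*$ from Section~\ref{sec:LQG} gives its $H_1(\strip)$-projection as a drifted Brownian motion (with drift $Q - \alpha > 0$ since $W > \gamma^2/2$) whose last zero is at $0$, and its $H_2(\strip)$-projection as an independent free-boundary GFF projection.

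I would then combine two ingredients. First, for fixed $M > 0$, the boundary length $L_M := \nu_{h^*}((-\infty, M] \cap \partial \strip)$ has a finite positive $q$-th moment for some $q > 0$; this follows from standard Gaussian multiplicative chaos estimates together with the explicit description of $h^*$, crucially using $W > \gamma^2/2$ (equivalently, the wedge parameter satisfies $\alpha < Q$). Second, the preimage $y_T$ of the exit point satisfies $\p[y_T > M] = O(M^{-s})$ for some $s > 0$, which I would obtain from Koebe distortion estimates for $\psi_*$ combined with the fact that the canonical circle-average embeddings of both the cone and the wedge pin the scale of the welding up to bounded random fluctuations. An appropriate choice of $M = M(R)$ in Markov's inequality $\p[T \geq R] \leq \p[y_T > M] + \p[L_M \geq R]$ then gives~\eqref{eq:conetimeLB}.

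To deduce~\eqref{eq:conetimeLBeps} from~\eqref{eq:conetimeLB}, I would use that $\qconeW{\gamma}{W}$ is invariant, as an equivalence class of quantum surfaces, under the scaling $z \mapsto z/\epsilon$ combined with the associated field shift $h \mapsto h(\epsilon \cdot) + Q \log \epsilon$, followed by a random rescaling that restores the circle-average embedding. The random rescaling factor is controlled by the last-exit time of the drifted Brownian motion defining the radial part of $h$ and therefore has Gaussian tails. Under this scaling identity, $T_\epsilon$ corresponds to the exit time from $\D$ of a cone with the same law, so combining it with~\eqref{eq:conetimeLB} and choosing $\zeta \in (0,1)$ sufficiently small yields~\eqref{eq:conetimeLBeps}. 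The hardest step is the second ingredient above: the welding map $\psi_*$ is defined only implicitly through the conformal welding, so controlling the preimage of $\partial \D$ under $\psi_*^{-1}$ requires carefully combining conformal distortion with both canonical embeddings, which together pin the scale.
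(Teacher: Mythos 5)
Your setup matches the paper's: cutting the cone with $\eta$ via Theorem~\ref{thm:cone_welding}, parameterizing the complementary weight-$W$ wedge on $\strip$ with the circle-average embedding, reading off the drifted-Brownian radial part with positive drift (using $W > \gamma^2/2$), and using GMC moment estimates for the boundary length (cf.\ Lemma~\ref{lem:pthmoment}). The union bound $\p[T \geq R] \leq \p[y_T > M] + \p[L_M \geq R]$ is also a valid outer framework, and the first ingredient (finite $q$-th moments of $L_M$ with controlled growth in $M$) is sound.

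The genuine gap is your second ingredient, $\p[y_T > M] = O(M^{-s})$, which you flag as the hardest step. The mechanism you propose---Koebe distortion for $\psi_*$ together with ``the circle-average embeddings pin the scale of the welding up to bounded random fluctuations''---is not a proof. Koebe distortion is an interior estimate; it does not control the boundary image $y_T$ of the tip $\eta(T)$ under a random uniformizing map, and no direct quantitative formulation of ``pinning the scale'' is available off the shelf. Trying to make that phrase precise to the level needed here is, in effect, the content of the proposition.

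The paper's proof never controls $y_T$ (nor any preimage of $\partial\D$). Instead it takes $S_R$ to be the first time the wedge radial part $X_t$ hits $\beta\log R$ for $\beta\in(1/\gamma,2/\gamma)$, and shows that with probability $1 - O(R^{-q})$: (i) $S_R \leq c\log R$; (ii) $\mu_{h^w}(B(z_R,\pi/4)) \geq R$ where $z_R = S_R + i\pi/2$; and (iii) $\nu_{h^w}((-\infty,S_R]\times\{0,\pi\}) \leq R$. On this good event intersected with $\{T\geq R\}$, the arcs $(-\infty,S_R]\times\{0,\pi\}$ map under $\varphi^{-1}$ into $\eta([0,T]) \subseteq \overline{\D}$, so from any $w \in \varphi^{-1}(B(z_R,\pi/4))$ the harmonic measure of $\eta([0,T])$ in $\C\setminus\eta$ is bounded below by a universal constant $d$. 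Since $\eta([T,\infty))$ joins $\partial\D$ to $\infty$, the Beurling estimate bounds the same harmonic measure above by $C|w|^{-1/2}$, forcing $|w| \leq M := C^2/d^2$. Hence $\mu_h(B(0,M)) \geq \mu_{h^w}(B(z_R,\pi/4)) \geq R$, and Lemma~\ref{lem:LBmassball}---polynomial upper tails for $\mu_h(B(0,M))$ under the circle-average embedded cone---gives $\p[T\geq R] = O(R^{-p})$. That lemma is exactly the sense in which the circle-average embedding ``pins the scale''; the correct distortion tool is the Beurling estimate, not Koebe. Any rigorous attack on your $y_T$-tail will end up producing this same Beurling-plus-mass argument, at which point you have reproduced the paper's proof. (Your plan for deducing~\eqref{eq:conetimeLBeps} from~\eqref{eq:conetimeLB} by scaling is plausible---the paper only says the two proofs are similar---but note that restoring the circle-average embedding after scaling introduces a random factor whose control again comes from the radial Brownian motion, not from a ``scale pinning'' principle.)
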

\begin{proof}
The proofs of~\eqref{eq:conetimeLB} and~\eqref{eq:conetimeLBeps} are similar, so we will only prove the former. 

Let $\varphi : \C \setminus \eta \rightarrow \strip$ be the unique conformal transformation with $\varphi(0) = - \infty$, $\varphi(\infty) = + \infty$ and such that if $h^w = h \circ \varphi^{-1} + Q\log |(\varphi^{-1})'|$, then $(\strip,h^w,-\infty,+\infty) \sim \qwedgeW{\gamma}{W}$ has the circle-average embedding. 

Let $h_1(z) = X_{\re(z)}$ (resp.\ $h_2$) be the projection of $h^w$ to $H_1(\strip)$ (resp.\ $H_2(\strip)$). Then $X_t = B_{2t} + \alpha t$ for $t > 0$, where $a = \frac{W}{\gamma} - \frac{\gamma}{2} > 0$ and $B_t$ is a standard Brownian motion with $B_0 = 0$ and conditioned so that $B_{2t} + a t > 0$, for all $t > 0$. We fix $\beta \in \!\left(1/\gamma,2/\gamma\right)$ and for $R > 2$ we set 
\begin{align*}
    S_R = \inf\{t \geq 0 : X_t = \beta \log R \}.
\end{align*}
Note that $S_R < \infty$ a.s.\ since $a > 0$ and that $\!\left(X_t / \sqrt{2}\right)_{t \geq 0}$ has the law of a standard Brownian motion with drift $\alpha/\sqrt{2} > 0$ (conditioned to stay positive). Moreover, $S_R$ is stochastically dominated from above by $\tau_{a/\sqrt{2}}^b$ (defined above) with $b = \frac{\beta \log R}{\sqrt 2}$. Fix $\delta > 0$. By comparing $S_R$ to $\tau_{\alpha/\sqrt{2}}^b$ we obtain that for $c > 0$ sufficiently large
\begin{align*}
    \p[S_R \geq c\log R ] = O(R^{-\delta}) \quad \text{as} \quad R \to \infty.
\end{align*}
Set $z_R = S_R + i\pi/2$ and note that $(X_{t + S_R})_{t \geq 0}$ has the same law as $(\wt B_{2t} + a t)_{t \geq 0}$, where $\wt B$ is a standard Brownian motion starting from $\beta \log R$ and conditioned on the event $E_R$ that $\wt B_{2t} + a t > 0$ for all $t > 0$. Note also that $\p[E_R] \to 1$ and so $\p[E_R] \geq 1/2$ for all $R$ sufficiently large. We also have that 
\begin{align*}
	\exp\!\left( \gamma \min_{S_R  \leq t \leq S_R + \pi/4} X_t \right)\mu_{h_2}(B^+(z_R,\pi/4)) \leq \mu_{h^w}(B^+(z_R,\pi/4)).
\end{align*}
 where $B^+(w,\pi/4) = \{z \in B(w,\pi/4):\re(z) \geq \re(w) \}$. Let $q > 0$ be such that $q(\beta \gamma - 1) = \delta$. Then we have that 
\begin{align*}
	\p[\mu_{h^w}(B^+(z_R,\pi/4))\leq R] \leq 2R^{-\delta}\E \!\left[\exp\!\left(-\gamma q \min_{0 \leq t \leq \pi /4}B_{2t}\right)\right] \E \!\left[\mu_{h_2}(B^+(i\pi/2,\pi/4))^{-q}\right] \lesssim R^{-\delta},
\end{align*}
since $X$ and $h_2$ are independent, $h_2$ is translation invariant and the expectations are finite. 

Next we set 
\begin{align*}
	\CT_R = \inf\{t \in \R : \nu_{h^w}((-\infty,t]\times \{0\}) = R\ \textup{or} \ \nu_{h^w}((-\infty,t]\times \{\pi\}) = R \}
\end{align*}
and we note that there exists $p \in (0,1)$ such that both of the expectations $\E \!\left[ \nu_{h^w}((-\infty,0]\times \{0,\pi\})^p\right]$ and $\E \!\left[ \nu_{h_2}([-1,0]\times \{0,\pi\})^p\right]$ are finite. We also observe that if $S_R \leq c\log R$ and $\CT_R \leq S_R$, then 
\begin{align*}
    R^p \leq \nu_{h^w}((-\infty,S_R]\times \{0,\pi\})^p \leq \nu_{h^w}((-\infty,0]\times \{0,\pi\})^p + R^{\frac{\beta \gamma p}{2}}\sum_{n=0}^{c\log R}\nu_{h_2}([n,n+1]\times \{0,\pi\})^p
\end{align*}
and since $1 - \beta \gamma /2 > 0$ we obtain that for some $q > 0$ we have that 
\begin{align*}
    \p[\CT_R \leq S_R,\,S_R \leq c\log R] = O(R^{-q}) \quad \text{as} \quad R \to \infty.
\end{align*}
Therefore we have that $S_R \leq c\log R$, $\CT_R \geq S_R$ and $\mu_{h^w}(B(z_R,\pi/4)) \geq \mu_{h^w}(B^+(z_R,\pi/4)) \geq R$ off an event with probability $O(R^{-q})$ as $R \to \infty$ for some fixed $q > 0$. Let $A_R$ be the event such that the above occur. From now on, we assume that we work on that event.

We note that there exists a universal constant $d > 0$ such that for every $z \in B(z_R,\pi/4)$, the probability that a Brownian motion starting from $z$ exits $\strip$ on $(-\infty,S_R]\times \{0,\pi\}$ is at least $d$. We fix $M > 0$ sufficiently large to be chosen later and suppose that there exists $w \in \varphi^{-1}(B(z_R,\pi/4)) \setminus B(0,M)$. Since $\eta([0,T]) \subseteq{\D}$, the Beurling estimate implies that the probability that a Brownian motion starting from $w$ exits $\C \setminus \eta$ on $\eta([0,T])$ is at most $CM^{-\frac{1}{2}}$ for some finite universal constant $C > 0$. But if $T \geq R$ then $\varphi^{-1}((-\infty,S_R]\times \{0,\pi\}) \subseteq{\eta([0,T])}$ and so the latter probability is at least $d$, hence $M \leq C^2 / d^2$. 

Thus for $M > C^2 / d^2$, we have that $\varphi^{-1}(B(z_R,\pi/4))\subseteq{B(0,M)}$ and so on $A_R \cap \{T \geq R\}$ we have that 
\begin{align*}
	\mu_h(B(0,M)) \geq \mu_{h}(\varphi^{-1}(B(z_R,\pi/4))) \geq R.
\end{align*}
By Lemma~\ref{lem:LBmassball}, we obtain that the latter occurs with probability $O(R^{-p})$ for some fixed $p > 0$ and so this completes the proof of the proposition.
\end{proof}

The next lemma is the analogue of Proposition~\ref{prop:UBexittime}, replacing the first exit time of $B(0,\epsilon)$ with the last exit time. More precisely, we prove that it is highly unlikely that the last exit time of $B(0,\epsilon)$ is greater than a certain positive power of $\epsilon$.
\begin{lemma}\label{lem:qtimeUB}
Fix $\gamma \in (0,2]$ and let $\CC = (\C,h,0,\infty) \sim \qconeW{\gamma}{\gamma^2}$ have the circle-average embedding. Let $\eta$ be a whole-plane $\SLE_\kappa(\kappa - 2)$ with $\kappa = \gamma^2$ from $0$ to $\infty$ sampled independently of $h$ and then parameterized by quantum length with respect to $h$ and set $S_\epsilon = \sup \{t \geq 0 : \eta(t) \in B(0,\epsilon)\}$. Then there exist $\zeta \in (0,1)$ and $b > 0$ such that
\begin{align*}
    \p[S_{\epsilon} \geq \epsilon^{\zeta}]= O(\epsilon^b) \quad \text{as} \quad \epsilon \rightarrow 0.
\end{align*}
\end{lemma}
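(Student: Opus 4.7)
The plan is to mirror the strategy of Proposition~\ref{prop:UBexittime}, using the welding result (Theorem~\ref{thm:cone_welding} with $W=\gamma^2$, or Theorem~\ref{thm:critical_cone_welding} when $\gamma=2$) to pass to the wedge side. Let $\phi : \C \setminus \eta \to \strip$ be the conformal map sending $0 \mapsto -\infty$ and $\infty \mapsto +\infty$, set $h^w = h \circ \phi^{-1} + Q\log|(\phi^{-1})'|$, and after a horizontal translation of $\strip$ arrange that $(\strip, h^w, -\infty, +\infty) \sim \qwedgeW{\gamma}{\gamma^2}$ in the first-exit parameterization. Because the two sides of $\eta$ carry the same quantum length under the welding, the quantum-length parameterization of $\eta$ matches the cumulative boundary measure $L(x) := \nu_{h^w}((-\infty, x] \times \{0\})$; if $(x_\epsilon^-, 0) := \phi(\eta(S_\epsilon))$, then $S_\epsilon = L(x_\epsilon^-)$.

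First I would bound $x_\epsilon^-$ via conformal asymptotics. The map $\phi^{-1}$ satisfies $\phi^{-1}(w) \sim C_\infty e^{2w}$ as $\re(w) \to +\infty$, where the exponent $2$ comes from the $2\pi$ cone angle at $\infty$ in the slit plane $\C \setminus \eta$ and $C_\infty > 0$ is a random constant. Applying the Koebe distortion theorem to $\phi^{-1}$ on Euclidean disks in $\strip$ of bounded radius upgrades this to a uniform lower bound $|\phi^{-1}(x + iy)| \geq c|C_\infty| e^{2x}$ for all $x \geq x_0$ and $y \in (0, \pi)$, where $x_0$ is a random threshold. To control $|C_\infty|$ and $x_0$ I would use Proposition~\ref{prop:UBexittime} at an intermediate scale (since the first exit of $B(0, 1)$ takes bounded quantum time with high probability, $\eta$ is confined to a macroscopic region up to a macroscopic quantum time), which together with standard distortion bounds yields polynomial tails on $\log(1/|C_\infty|)$ and on $x_0$. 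This gives $\p[x_\epsilon^- \geq (1/2 - c_0)\log\epsilon] = O(\epsilon^{b_1})$ for some small $c_0 \in (0, 1/4)$ and $b_1 > 0$.

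Next I would bound $L(x)$ for $x \to -\infty$. For the weight-$\gamma^2$ wedge in first-exit parameterization, the vertical-line-average process $X_t$ for $t < 0$ has the law of $\wt B_{-2t} + (\gamma/2) t$ conditioned to remain negative, so $X_t \sim (\gamma/2) t$ as $t \to -\infty$. Since the boundary quantum length measure scales as $e^{(\gamma/2)(X_t + h^w_2)}\,dt$, combining Gaussian tails for the drifted, conditioned-negative Brownian motion with moment bounds for the lateral field $h^w_2$ (the projection of $h^w$ onto $H_2(\strip)$) integrated against boundary test functions yields, for every $\delta > 0$, constants $c, C > 0$ with
\[
	\p\!\left[ L(x) \geq C e^{(\gamma^2/4 - \delta) x} \right] \leq e^{-c|x|} \quad \text{for all } x \leq -1.
\]
Setting $x = (1/2 - c_0)\log\epsilon$ and combining with the first step gives $S_\epsilon = L(x_\epsilon^-) \leq C\epsilon^{(\gamma^2/4 - \delta)(1/2 - c_0)}$ off an event of probability $O(\epsilon^b)$ for some $b > 0$, and taking $\zeta$ slightly below $(\gamma^2/4)(1/2 - c_0) \leq 1/2$ gives the lemma.

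The hardest part will be the first step, converting the asymptotic $\phi^{-1}(w) \sim C_\infty e^{2w}$ into a quantitative bound with polynomial tails on $|C_\infty|$ and $x_0$. This amounts to controlling the random geometry of $\eta$ on macroscopic scales, for which Proposition~\ref{prop:UBexittime} combined with the Koebe distortion theorem provides the necessary inputs, though some care is needed in the bookkeeping.
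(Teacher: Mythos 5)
Your route is genuinely different from the paper's, which never passes to the strip: instead it decomposes
\[
\{S_\epsilon \geq \epsilon^\zeta\} \subseteq \{T_{\epsilon^\alpha} \geq \epsilon^\zeta\} \cup \bigl( A(\epsilon) \cap \{T_{\epsilon^\alpha} < \epsilon^\zeta\} \bigr),
\]
where $T_{\epsilon^\alpha}$ is the first exit time of $B(0,\epsilon^\alpha)$ and $A(\epsilon)$ is the event that $\eta$ revisits $B(0,\epsilon)$ after leaving $B(0,\epsilon^\alpha)$. The first event is bounded by~\eqref{eq:conetimeLBeps}; the second, by scale invariance, equals $A^*(\epsilon^{1-\alpha})$ (revisiting $B(0,\epsilon^{1-\alpha})$ after leaving $\D$), which is bounded by Lemma~\ref{lem:Breturn_time} --- a purely SLE/imaginary-geometry estimate.

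There is a genuine gap in your Step~1, and it is exactly where Lemma~\ref{lem:Breturn_time} would need to enter. First, a direction slip: since $\phi(0) = -\infty$ and $\eta(S_\epsilon)$ is at distance $\epsilon$ from $0$, the relevant asymptotic is $\phi^{-1}(w) \sim C_0 e^{2w}$ as $\re(w) \to -\infty$, not the behavior at $+\infty$ that you invoke. More importantly, even after fixing the sign, the estimate $\p[x_\epsilon^- \geq (1/2-c_0)\log\epsilon] = O(\epsilon^{b_1})$ does not follow from Proposition~\ref{prop:UBexittime} plus Koebe distortion. Proposition~\ref{prop:UBexittime} controls only the \emph{first} exit time of a ball and says nothing about whether $\eta$ later returns. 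The threshold $x_0$ past which $\phi^{-1}(x+iy) \approx C_0 e^{2(x+iy)}$ holds is governed by the conformal geometry of $\C\setminus\eta$ near $0$: if $\eta$ leaves $B(0,\epsilon^\alpha)$ and then makes a macroscopic excursion back to $\partial B(0,\epsilon)$, then $\eta(S_\epsilon)$ lies on the far end of that excursion, the harmonic measure of the boundary arc $\eta([0,S_\epsilon])$ from any fixed interior reference point is macroscopic, and hence $x_\epsilon^-$ is $O(1)$ rather than $\approx \tfrac12\log\epsilon$. Ruling out this scenario (with polynomial-in-$\epsilon$ failure probability) is the crux, and it requires a dedicated SLE return-probability bound; Koebe distortion, being an interior estimate, cannot supply it, and Proposition~\ref{prop:UBexittime} does not either. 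Once that input is added your Step~2 (the tail bound on $L(x)$, which is in the spirit of Lemma~\ref{lem:pthmoment} plus Gaussian fluctuations of the drifted radial part) would close the argument, but at that point you have recovered the ingredients of the paper's shorter proof, and the detour through the welding and the conformal asymptotics is unnecessary.
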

\begin{proof}
Fix $a \in (0,1)$ and note that by~\eqref{eq:conetimeLBeps} there exist $\zeta \in (0,a)$ and $c > 0$ such that $\p[T_{\epsilon^a}\geq \epsilon^\zeta] = O(\epsilon^c)$ as $\epsilon \rightarrow 0$. Let $A(\epsilon)$ be the event that $\eta$ intersects $B(0,\epsilon)$ after it has left $B(0,\epsilon^{a})$ for the first time and $A^*(\epsilon)$ be the event that $\eta$ intersects $B(0,\epsilon)$ after it has left $\D$ for the first time. The scale invariance of the law of $\eta$ implies that $\p[A(\epsilon)] = \p[A^*(\epsilon^{1-a})]$. Finally,  noting that $\p[A^*(\epsilon)] = O(\epsilon^d)$ for some $d > 0$ by Lemma~\ref{lem:Breturn_time}, we have that
\begin{align*}
    \p[S_\epsilon \geq \epsilon^\zeta] \leq \p[T_{\epsilon^a} \geq \epsilon^{\zeta}] + \p[A(\epsilon) \cap \{T_{\epsilon^a} < \epsilon^\zeta\}] = O(\epsilon^b),
\end{align*}
where $b = \min\{c,d(1-a )\} > 0$.
\end{proof}

Finally we conclude this section with a lower bound on the quantum mass of small ball with respect to a quantum cone with the circle-average embedding.

\begin{lemma}\label{lem:LBcone}
Fix $\gamma \in (0,2]$. Let $\CC = (\C,h,0,\infty) \sim \qconeW{\gamma}{\gamma^2}$ have the circle-average embedding and fix $\sigma > 0$. There exists a constant $a>0$ such that for each $w \in B(0,1/2)$,
\begin{align*}
    \p[\mu_h(B(w,\epsilon)) \geq \epsilon^a] = 1 - O(\epsilon^{\sigma/2}).
\end{align*}
\end{lemma}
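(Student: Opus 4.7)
The strategy combines the LQG scaling identity for balls with a Gaussian tail bound for the circle average and a negative--moment bound for the unit--scale Gaussian multiplicative chaos. Throughout I fix $w \in B(0,1/2)$ and $\epsilon \in (0,1/4)$.

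First I would set up the scaling identity. The map $\psi(z) = w + \epsilon z$ is conformal from $\D$ onto $B(w,\epsilon)$, so by the LQG coordinate change
\[
\wt h(z) := h(w + \epsilon z) + Q \log \epsilon \quad \text{satisfies} \quad \mu_{\wt h}(\D) = \mu_h(B(w,\epsilon)).
\]
The $1$-circle average of $\wt h$ around $0$ equals $h_\epsilon(w) + Q\log \epsilon$, so writing $\wt h = \wt h^\circ + h_\epsilon(w) + Q\log \epsilon$ with $\wt h^\circ$ having zero $1$-circle average around $0$, and using $\gamma Q = \gamma^2/2 + 2$,
\[
\mu_h(B(w,\epsilon)) = \epsilon^{\gamma^2/2+2}\, e^{\gamma h_\epsilon(w)}\, \mu_{\wt h^\circ}(\D).
\]
The two nondeterministic factors on the right can then be analyzed separately.

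Next I would estimate the tail of the circle average. For a $\gamma^2$-weight quantum cone with the circle--average embedding the radial/lateral decomposition of $h$, together with the mean-value identity $(\log|\cdot|)_\epsilon(w) = \log\max(|w|,\epsilon)$, implies that $h_\epsilon(w)$ is Gaussian with mean $\alpha \log(1/\max(|w|,\epsilon)) \geq 0$ (where $\alpha = 2/\gamma$) and variance $\log(1/\epsilon) + O(1)$, uniformly in $w \in B(0,1/2)$ and $\epsilon \in (0,1/4)$. A standard Gaussian tail bound then yields
\[
\p[\, h_\epsilon(w) \leq -M \log(1/\epsilon)\,] = O(\epsilon^{cM^2})
\]
for some universal $c>0$ and any $M > 0$.

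For the other factor, the distribution of $\wt h^\circ$ is that of a whole--plane GFF (normalized to have zero $1$-circle average around $0$) restricted to $\D$, plus a harmonic function arising from the $\alpha \log(1/|\cdot|)$ drift of the cone; this function is bounded on $\D$ once $w$ is fixed, with a bound depending only on $|w|$ and $\epsilon$. Applying standard negative--moment bounds for GMC (via Kahane's convexity inequality, cf.\ \cite{rv2010gmcrevisit}) gives $\E[\mu_{\wt h^\circ}(\D)^{-s}] \leq C_s$ for all $s \in (0,s_0)$, uniformly in $w$ and $\epsilon$. Markov's inequality then yields $\p[\mu_{\wt h^\circ}(\D) < \epsilon^\beta] \leq C_{s_0} \epsilon^{s_0 \beta}$ for any $\beta > 0$.

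Finally, setting $a = \gamma^2/2 + 2 + \gamma M + \beta$ and choosing $M$ so that $cM^2 \geq \sigma/2$ and $\beta$ so that $s_0 \beta \geq \sigma/2$, on the complement of the two bad events above the scaling identity gives $\mu_h(B(w,\epsilon)) \geq \epsilon^a$, and a union bound yields $\p[\mu_h(B(w,\epsilon)) < \epsilon^a] = O(\epsilon^{\sigma/2})$. The main technical obstacle is making the negative--moment bound of the third paragraph uniform as $|w| \to 0$, where the $\alpha \log(1/|\cdot|)$ correction in $\wt h^\circ$ is no longer bounded uniformly; the cleanest resolution is a Girsanov/shift that absorbs the log drift into the Gaussian mean. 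Since the drift is positive for $w\in B(0,1/2)$ it only increases $\mu_h(B(w,\epsilon))$, and the shift alters only the deterministic prefactor, which improves rather than degrades the final bound.
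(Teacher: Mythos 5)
Your proposal is correct and, while it leads to the same conclusion via the same core ingredients (negative GMC moments, a Gaussian tail bound, and monotonicity between the cone and the whole-plane GFF), it organizes the argument around a different decomposition than the paper does. The paper works directly with $\mu_{h^0_\D}(B(w,\epsilon))$, using the Markov decomposition $h^w = h^0_\D + \Fh_\D$ of the whole-plane GFF into zero-boundary plus harmonic pieces; the $\epsilon$-scaling is imported wholesale from \cite[Proposition~3.7]{rv2010gmcrevisit}, the harmonic part is controlled through the $M$-good event machinery of \cite{mq2020geodesics}, and the two are combined by H\"older. You instead rescale $B(w,\epsilon)$ to $\D$ explicitly and split off the circle average $h_\epsilon(w)$, reducing the problem to a Gaussian tail bound for $h_\epsilon(w)$ and a unit-scale negative moment bound for the recentered field — structurally cleaner, since the $\epsilon$-dependence becomes fully transparent in the single scaling identity. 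One caveat worth flagging in your third paragraph and final remarks: the recentred drift $\alpha[\log(1/|w+\epsilon z|) - \log(1/\max(|w|,\epsilon))]$ inside $\wt h^\circ$ is not everywhere nonnegative (it can be as small as $-\alpha\log 2$, and it blows up positively but not negatively when $|w|\lesssim\epsilon$); the cleanest fix is exactly the one the paper uses — pass to the whole-plane GFF $h^w$ before rescaling, using $\mu_h \geq \mu_{h^w}$ on $\D$ since the cone adds the nonnegative function $\alpha\log(1/|\cdot|)$ — rather than the ``Girsanov shift'' language, which muddles what is actually a simple pointwise monotonicity. Once that substitution is made, $\wt h^\circ$ genuinely has the law of the GFF on $\D$ with zero circle average on $\partial\D$, so the uniform negative-moment bound holds without further fuss, and your union bound closes the argument.
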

\begin{proof}
Consider first a zero-boundary GFF $h_{\D}^0$ on $\D$. By Markov's inequality and \cite[Proposition~3.7]{rv2010gmcrevisit} (recall Remark~\ref{rmk:different_fields}),
\begin{align*}
    \p[ \mu_{h_{\D}^0}(B(w,\epsilon)) \leq \epsilon^a] = \p[\mu_{h_{\D}^0}(B(w,\epsilon))^{-1} \geq \epsilon^{-a}] \leq \epsilon^a \E[\mu_{h_{\D}^0}(B(w,\epsilon))^{-1}] \lesssim \epsilon^{a-2-\gamma^2}.
\end{align*}
Next, we shall transfer this into the result for the quantum cone. In order to do so, we consider a whole-plane GFF $h^w$. We choose the normalization so that the average of $h^w$ on $\partial \D$ is $0$. Then we may write $h^w = h_{\D}^0 + \Fh_{\D}$ where conditionally on $h_{|\partial\D}$, $\Fh_{\D}$ is the harmonic extension of the values of $h^w$ on $\partial \D$ to $\D$. Borrowing the terminology of \cite{mq2020geodesics}, we say that $\D$ is $M$-good if $\sup_{z \in B(0,15/16)} |\Fh_{\D}(z)| \leq M$. Denote the event that $\D$ is $M$-good by $E_M$. By \cite[Lemma~4.4]{mq2020geodesics}, there exist constants $c_1,c_2>0$ such that for each $M>0$,
\begin{align*}
	\p[E_M^c] \leq c_1 e^{-c_2 M^2}.
\end{align*}
Hence, there exists a constant $c_0 > 0$ such that if we let $M^2 = -c_0 \log \epsilon$, then $\p[E_M] = 1-O(\epsilon^{\sigma/2})$. Henceforth, assume that $M$ is such. By \cite[Remark~4.2 and equation~(4.3)]{mq2020geodesics} we have that for $p,q>1$ such that $\frac{1}{p} + \frac{1}{q} = 1$,
\begin{align}
	\p[\mu_{h^w}(B(w,\epsilon)) \leq \epsilon^a] \leq \p[E_M^c]+ e^{(p-1)\wh{c}M^2} \p[\mu_{h_{\D}^0}(B(w,\epsilon)) \leq \epsilon^a]^{1/q} \lesssim \epsilon^{\sigma/2} + \epsilon^{(p-1)(\frac{a-2-\gamma^2}{p}-\wt{c})},
\end{align}
for some constants $\wh c,\wt c > 0$. Thus choosing $a>0$ large enough we have that $\p[\mu_{h^w}(B(w,\epsilon)) \geq \epsilon^a] = 1 - O(\epsilon^{\sigma/2})$.  Since the field which describes the quantum cone restricted to $\D$ is obtained by adding a positive function to the field $h^w$, it follows that
\begin{align*}
	\p[\mu_h(B(w,\epsilon))\geq \epsilon^a] \geq \p[\mu_{h^w}(B(w,\epsilon)) \geq \epsilon^a] = 1 - O(\epsilon^{\sigma/2}).
\end{align*}
\end{proof}

\section{Main estimates}
\label{sec:main_lemma}
This section is dedicated to the proofs of the main estimates that we will need in order to prove Theorems~\ref{thm:sle4_thm},~\ref{thm:jones_smirnov_sle4} and~\ref{thm:sle8_thm}. They are contained in Lemmas~\ref{lem:main_lemma_ubd}, \ref{lem:main_lemma_lbd}, \ref{lem:mainlemma8}, and~\ref{lem:mainlemma4}. Lemmas~\ref{lem:main_lemma_ubd} and ~\ref{lem:main_lemma_lbd} contain the estimate needed to prove the second part of Theorem~\ref{thm:sle8_thm}, as well as an estimate related to whole-plane $\SLE$ which serves as the model on which we base the main estimates, Lemmas~\ref{lem:mainlemma8} and~\ref{lem:mainlemma4}, which will give us the modulus of continuity for $\SLE_8$ and the uniformizing map of $\SLE_4$, respectively. How these are used is explained in Sections~\ref{sec:sle8} and~\ref{sec:sle4}.

\subsection{Upper bound}
\label{subsec:upper_bound}

\begin{lemma}
\label{lem:main_lemma_ubd}
Fix $\gamma \in (0,2)$ and let $\kappa = \gamma^2$. Let $\CC = (\C,h,0,\infty) \sim \qconeW{\gamma}{\gamma^2}$ have the circle-average embedding. Let $\eta_1,\eta_2$ be a pair of paths independent of $h$ such that $\eta_1$ is a whole-plane $\SLE_\kappa(\kappa-2)$ process in $\C$ from $0$ to $\infty$ and the conditional law of $\eta_2$ given $\eta_1$ is that of a chordal $\SLE_\kappa(\frac{\kappa}{2}-2;\frac{\kappa}{2}-2)$ from $0$ to $\infty$ in $\C \setminus \eta_1$.  We then parameterize $\eta_1,\eta_2$ by quantum length.  
 Let $U_1$ (resp.\ $U_2$) be the component of $\C \setminus (\eta_1 \cup \eta_2)$ that lies to the left (resp.\ right) of $\eta_1$. For each $R \geq 0$ we let $A_R = \eta_1([R,\infty)) \cup \eta_2([R,\infty))$ (so that $A_0 = \eta_1 \cup \eta_2$).  Then for fixed $R, \sigma > 0$
\begin{align}
\label{eqn:mainlemma1}
    \p\!\left[ \sup_{z \in B(0,\epsilon) \cap U_1} \p^z[B \text{ first hits } A_0 \text{ in } A_R \,|\, \eta_1, \eta_2] \leq \exp(-\epsilon^{-\sigma}) \right] = O(\epsilon^{\sigma/2 + o(1)}),
\end{align}
where $B$ is a Brownian motion.
\end{lemma}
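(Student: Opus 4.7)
The strategy is to translate the harmonic-measure event into a quantum-mass estimate in the strip via the welding theorems. Combining Theorem~\ref{thm:cone_welding} (with $\rho = \kappa - 2$) and Theorem~\ref{thm:wedge_welding} (with $\rho_L = \rho_R = \kappa/2 - 2$), we identify $(U_1, h|_{U_1}, 0, \infty)$ as a weight-$\gamma^2/2$ quantum wedge, i.e., a $Q$-quantum wedge, with the boundary quantum lengths matching on both sides of each of $\eta_1$ and $\eta_2$. Let $\varphi \colon U_1 \to \strip$ be the conformal map taking $0 \mapsto -\infty$, $\infty \mapsto +\infty$, chosen so that $h^w := h \circ \varphi^{-1} + Q \log |(\varphi^{-1})'|$ realizes the first-exit parameterization of the $Q$-wedge; its vertical-average process $X_t$ satisfies $(-X_{-t/2})_{t\geq 0} \sim \BES^3$ from $0$ and $(X_{t/2})_{t\geq 0}$ is standard Brownian motion. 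Assume $\varphi$ sends $\eta_1, \eta_2$ to the lower and upper boundary components of $\strip$; then $\varphi(\eta_j([R,\infty)))$ is the boundary arc starting at a real-valued position $T_j$.

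Set $F_\epsilon := \{\sup_{z \in B(0,\epsilon) \cap U_1} \p^z[B \text{ first hits } A_0 \text{ in } A_R \mid \eta_1, \eta_2] \leq \exp(-\epsilon^{-\sigma})\}$ and $M := \max(T_1, T_2)$. The explicit Poisson kernel on $\strip$ gives, for $w = x + iy$ with $x \leq M - 1$,
\[
\p^w[B \text{ first hits } \partial\strip \text{ in } \varphi(A_R)] \;\geq\; c \sin(y)\, e^{x - M},
\]
for a universal $c > 0$; combined with conformal invariance of Brownian motion, this implies that on $F_\epsilon \cap \{M \leq M_0\}$ the image $\varphi(B(0,\epsilon) \cap U_1)$ lies in $G_\epsilon^1 \cup G_\epsilon^2$, where $G_\epsilon^1 := \{w \in \strip : \re w \leq -\epsilon^{-\sigma}/3\}$ and $G_\epsilon^2 := \{w \in \strip : \sin(\im w) \leq \exp(-\epsilon^{-\sigma}/3)\}$. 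By Lemma~\ref{lem:qtimeUB} applied to $\eta_1$ on the original cone and then to $\eta_2$ on the intermediate weight-$\gamma^2$ wedge, $\p[M > M_0] = O(\epsilon^{\sigma/2})$ for suitable $M_0 = M_0(R)$.

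The heart of the argument is showing that on a high-probability event $\CG_\epsilon$, the quantum mass $\mu_{h^w}(G_\epsilon^1 \cup G_\epsilon^2)$ is smaller than any fixed power of $\epsilon$. Typically $X_a = -\BES^3_{-2a}$ for $a$ very negative is of order $-\sqrt{-2a}$, so $e^{\gamma X_a}$ is super-polynomially small in $\epsilon^{-1}$; the polynomial expectation $\E[e^{\gamma X_a}] = O(|a|^{-3/2})$ (from a direct computation with the $\BES^3$ density) is driven by atypically small values of $\BES^3_{-2a}$. Using the small-value tail $\p[\BES^3_t \leq r] \asymp r^3 t^{-3/2}$ for $r \leq \sqrt{t}$ and a union bound over integer $|a|$,
\[
\p\!\left[\exists\, a \leq -\epsilon^{-\sigma}/3 : X_a > -c_1 \log |a|\right] \;\leq\; \sum_{k \geq \epsilon^{-\sigma}/3} (c_1 \log k)^3 k^{-3/2} \;=\; O\!\left((\log \epsilon^{-1})^3 \epsilon^{\sigma/2}\right).
\]
Let $\CG_\epsilon$ be the intersection of the complement of this event with a standard Gaussian-concentration event for the lateral part $h_2^w$. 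On $\CG_\epsilon$, $e^{\gamma X_a} \leq |a|^{-\gamma c_1}$ uniformly, so
\[
\mu_{h^w}(G_\epsilon^1) \;\lesssim\; \int_{-\infty}^{-\epsilon^{-\sigma}/3} |a|^{-\gamma c_1}\, da \;=\; O\!\left(\epsilon^{\sigma(\gamma c_1 - 1)}\right),
\]
which is smaller than any fixed power of $\epsilon$ for $c_1 = c_1(\gamma, \sigma)$ chosen large enough. An analogous but simpler estimate, exploiting Gaussian tails of $h_2^w$ near $\partial \strip$, bounds $\mu_{h^w}(G_\epsilon^2)$ super-polynomially.

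To conclude, on $F_\epsilon \cap \CG_\epsilon \cap \{M \leq M_0\}$ the coordinate change gives
\[
\mu_h(B(0, \epsilon) \cap U_1) \;=\; \mu_{h^w}(\varphi(B(0, \epsilon) \cap U_1)) \;\leq\; \mu_{h^w}(G_\epsilon^1 \cup G_\epsilon^2),
\]
and the right-hand side is super-polynomially small. On the other hand, a polynomial lower bound $\mu_h(B(0, \epsilon) \cap U_1) \geq \epsilon^a$ with probability $1 - O(\epsilon^{\sigma/2})$ for some fixed $a = a(\gamma) > 0$ follows by applying Lemma~\ref{lem:LBcone} at a point $w$ inside $U_1$ at distance of order $\epsilon$ from $0$, using the $U_1 \leftrightarrow U_2$ symmetry to ensure positive probability that $w \in U_1$. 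These two bounds are incompatible for $\epsilon$ small, whence $\p[F_\epsilon] = O(\epsilon^{\sigma/2 + o(1)})$. The main technical obstacle is the $\BES^3$ tail union bound: the threshold $c_1 \log|a|$ must be chosen large enough to make $\mu_{h^w}(G_\epsilon^1)$ beat any polynomial in $\epsilon$, while keeping the probability cost at $\epsilon^{\sigma/2 + o(1)}$; the simultaneous control of the lateral part $h_2^w$ on the super-polynomially thin boundary layer $G_\epsilon^2$ is also delicate and requires standard estimates for the free-boundary GFF.
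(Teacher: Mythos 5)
Your overall architecture is essentially the contrapositive reorganization of the paper's argument: the paper shows a small ball in $U_1 \cap B(0,\epsilon)$ cannot be mapped entirely into the far-left or near-boundary regions of $\strip$ (because those regions have too little quantum mass compared to the ball), then deduces the escape probability is not too small via gambler's ruin; you instead assume the escape probability is small, infer via the Poisson kernel that the image lies in a small-mass region, and derive a contradiction with a lower bound on $\mu_h(B(0,\epsilon) \cap U_1)$. The key ingredients — the welding identification of $U_1$ as a $Q$-wedge, the $\BES^3$ decay, and the mass of the near-boundary layer — match the paper's. However, there are two genuine gaps.

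First, your polynomial lower bound on $\mu_h(B(0,\epsilon) \cap U_1)$ does not follow from Lemma~\ref{lem:LBcone} alone. That lemma controls $\mu_h(B(w,\epsilon))$ for a fixed center $w$, but such a ball need not be contained in $U_1$. Saying ``apply Lemma~\ref{lem:LBcone} at a point $w$ inside $U_1$ at distance of order $\epsilon$ from $0$, using the $U_1 \leftrightarrow U_2$ symmetry'' does not resolve this: you need (a) a \emph{ball}, not merely a point, contained in $U_1 \cap B(0,\epsilon)$, and (b) such a ball must exist with probability $1 - O(\epsilon^{\sigma/2 + o(1)})$, not merely with positive probability, because your contradiction argument must fail on at most a set of probability $O(\epsilon^{\sigma/2+o(1)})$. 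Moreover, a ball of radius comparable to $\epsilon$ cannot fit inside $B(0,\epsilon) \cap U_1$; you must take radius $\epsilon^\xi$ for fixed $\xi > 1$. Establishing that such a ball exists with probability $1 - o_\epsilon^\infty(\epsilon)$ is precisely the content of Lemma~\ref{lem:fitball}, which the paper invokes for exactly this purpose. Without it (or an equivalent flow-line pocket argument), your lower bound step is incomplete.

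Second, your control of $M = \max(T_1,T_2)$ is not quite right. Lemma~\ref{lem:qtimeUB} bounds the last exit time of $B(0,\epsilon)$ for $\eta$ parameterized by quantum length; it says nothing directly about the real part $T_j = \re(\varphi(\eta_j(R)))$ of the image of $\eta_j(R)$ in the strip, which is determined by the $\nu_{h^w}$-boundary length rather than by the Euclidean geometry of the curve. The paper handles this in Step 3 of its proof: it fixes $T_{\epsilon,d}$ as the first time the average process $X^1$ hits $-d\log\epsilon$, shows via negative moments of $\nu_{h^1}([T_{\epsilon,d},T_{\epsilon,d}+1]\times\{0\})$ that the quantum boundary length to the left of $T_{\epsilon,d}+1$ exceeds $R$ with probability $1-O(\epsilon^{\sigma/2})$, and separately that $T_{\epsilon,d} \leq \epsilon^{-\sigma}/2$ with the same probability. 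Note in particular that $M_0$ must be allowed to depend on $\epsilon$ (here $M_0 \approx \epsilon^{-\sigma}/2$); as you wrote it, with $M_0 = M_0(R)$ independent of $\epsilon$, the claim $\p[M > M_0] = O(\epsilon^{\sigma/2})$ cannot hold since the left side has no $\epsilon$-dependence. Your Poisson kernel estimate and the resulting constraint $\re w \lesssim M - \epsilon^{-\sigma}/3$ remain valid with the $\epsilon$-dependent $M_0$, so the idea is salvageable, but the lemma citation and the statement need to be corrected.

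As a minor remark: your $\BES^3$ union bound over integer $a$ needs a continuity/oscillation argument to pass to all real $a \leq -\epsilon^{-\sigma}/3$, and the control of the lateral part over super-polynomially thin boundary layers should cite Lemma~\ref{lem:mass_close_to_boundary} rather than a vague ``standard Gaussian concentration'' — these are routine fixes.
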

The main idea in the proof of Lemma~\ref{lem:main_lemma_ubd} is to prove that we can find a small ball contained in $U_1 \cap B(0,\epsilon)$, which when mapped to $\strip$ (with a conformal map which embeds the quantum wedge $\CW_1$ parameterized by $U_1$ in a certain way) is not too close to $\partial \strip$ and has real part comparable to where the vertical line average process $X_t^1$ for the embedding of $\CW_1$ into $\strip$ first hits $b \log \epsilon$ for some $b > 0$.  Controlling the location of the image of the ball is accomplished by lower bounding its quantum mass (using Lemma~\ref{lem:LBcone}) and upper bounding the quantum mass of the region of $\strip$ which is very far to the left (with Lemma~\ref{lem:pthmoment} and a comparison with $X_t^1$) to see that the image of the ball can not be contained in that region.  By the conformal invariance of Brownian motion, the probability that $B$ first exits $A_0$ in $A_R$ is then comparable to the probability that a Brownian motion in $\strip$ starting from where $X_t^1$ first hits $b \log \epsilon$ first exits $\strip$ to the right of where $X_t^1$ first hits $0$.  

\noindent{\it General setup for the proof of Lemma~\ref{lem:main_lemma_ubd}.}  We begin by fixing some notation.  Let $\CC = (\C,h,0,\infty)$ have law $\qconeW{\gamma}{\gamma^2}$ with the circle-average embedding and we let $\eta_1$, $\eta_2$ be as in the statement of Lemma~\ref{lem:main_lemma_ubd}.  We note that by \cite{ms2017imag4} we can view $\eta_1$, $\eta_2$ as arising as the flow lines of a whole-plane GFF independent of $h$ with angle difference $\pi\kappa/(4-\kappa)$.  The pair $\eta_1$, $\eta_{2}$ divides $\CC$ into two independent wedges $\CW_1$ and $\CW_2$ with law $\qwedgeW{\gamma}{\gamma^2/2}$ (Theorems~\ref{thm:wedge_welding} and~\ref{thm:cone_welding}) and respectively parameterized by $U_1$ and $U_2$. We assume that $\eta_1$ and $\eta_2$ are parameterized by quantum length. Next, for $j = 1,2$, we let $(\strip,h^j,-\infty,+\infty)$ be the first exit parameterization of $\CW_j$ by $\strip$, and let $X_t^j$ denote the average on vertical lines process and $h_2^j$ the projection of $h^j$ on $H_2(\strip)$. Moreover, we let $\varphi_j:U_j \rightarrow \strip$ be the conformal map embedding $\CW_j$ into $\strip$ as above and define the random time $u_{\alpha,\epsilon}^j = \inf\{ t \in \R: X_t^j = \alpha \log \epsilon \}$.

Fix $\xi > 1$.  We emphasize that $\CC$ has the circle-average embedding. Let $w$ be the leftmost point in $U_{1,\epsilon} = U_1 \cap B(0,\epsilon)$ such that $B(w,\epsilon^\xi) \subseteq U_{1,\epsilon}$ breaking ties by taking the point with the smallest imaginary part.  If there is no such point, we take $w = 0$.  By Lemma~\ref{lem:fitball} we have that $w \neq 0$ with probability $1-o_\epsilon^\infty(\epsilon)$.

\begin{proof}[Proof of Lemma~\ref{lem:main_lemma_ubd}]
\noindent{\it Step 1. $\varphi_1(B(w,\epsilon^\xi))$ is not too far to the left.}  We will prove that with probability $1-O(\epsilon^{\sigma/2})$ at least part of $\varphi_1(B(w,\epsilon^\xi))$ lies to the right of the line $\{ \re(z) = -\epsilon^{-\sigma} \}$ but not too far to the right and not too close to the boundary, then prove that this is sufficient for a Brownian motion started from some point in $\varphi_1(B(w,\epsilon^\xi))$ to hit $\varphi_1(A_R)$ with sufficiently high probability. The result then follows by the conformal invariance of Brownian motion.

We note that $w$ is independent of $\CC$ and therefore we can apply Lemma~\ref{lem:LBcone} to $w$. Note that the event $B(w,\epsilon^\xi) \subseteq U_{1,\epsilon}$ is equivalent to the event $w  \neq 0$.  Thus by Lemma~\ref{lem:LBcone} there exists an $a>1$ such that
\begin{align}\label{eq:mass_ball_close_to_origin}
	\p[\mu_h(B(w,\epsilon^{\xi})) \geq \epsilon^{a\xi} \,|\, B(w,\epsilon^\xi) \subseteq U_{1,\epsilon} ] = 1-O(\epsilon^{\sigma/2}).
\end{align}
Moreover, by Lemma~\ref{lem:pthmoment},
\begin{align*}
	\p[\mu_{h^1}(\strip_-+u_{\alpha,\epsilon}^1) \geq \epsilon^{a \xi}] \leq \epsilon^{-a \xi p} \E[\mu_{h^1}(\strip_-+u_{\alpha,\epsilon}^1)^p] = c_p \epsilon^{p(\alpha \gamma - a\xi)}
\end{align*}
and hence if $\alpha \geq (\sigma/(2p) + a\xi)/\gamma$ we have that
\begin{align}\label{eq:image_not_left}
	\p[\varphi_1(B(w,\epsilon^\xi)) \not\subseteq (\strip_- + u_{\alpha,\epsilon}^1)\,|\, B(w,\epsilon^\xi) \subseteq U_{1,\epsilon}] = 1 - O(\epsilon^{\sigma/2}).
\end{align}
Thus, outside of an event of probability $O(\epsilon^{\sigma/2})$, $\varphi_1(B(w,\epsilon^\xi))$ will be no farther to the left than the line $\{ \re(z) = u_{\alpha,\epsilon}^1 \}$. Next we shall argue that the line can not be too far to the left.

Recall that the process $(-X_{-t/2}^1)_{t \geq 0}$ is a $\BES^3$ with $X_0^1 = 0$.  Therefore if $-s > 0$ is the last time that $X_{-t}^1$ hits some fixed negative value then the process $(X_{t+s}^1)_{t \geq 0}$ has the law of a one-dimensional Brownian motion run at twice the speed \cite[Chapter~VI, Proposition~3.10 and Theorem~3.11]{ry1999book}.  In particular, $(X_{t+u_{\alpha,\epsilon}^1}^1)_{t \geq 0}$ has the law of a Brownian motion started from $X_{u_{\alpha,\epsilon}^1}^1=\alpha\log\epsilon$ run at twice the speed.  We know that $t=0$ is the first hitting time of $0$ for $X^1$ and hence to bound the probability that $u_{\alpha,\epsilon}^1$ lies far to the left we just have to bound the probability that the first-passage time of $0$ of a Brownian motion started at $\alpha\log\epsilon$ is large. Recall that the first-passage time $\tau$ of level $0$ of a Brownian motion started at the point $-b$ ($b>0$) has density \cite[Section~3.6]{bass2011stochastic},
\begin{align*}
	f_\tau(t) = \frac{b}{\sqrt{2\pi t^3}} \exp\!\left(-\frac{b^2}{2t}\right).
\end{align*}
Thus if $b = -\alpha\log\epsilon$ we have that
\begin{align*}
	\p[\tau > \epsilon^{-q}] = \int_{\epsilon^{-q}}^\infty \frac{\alpha\log(1/\epsilon)}{\sqrt{2\pi} t^{3/2}} \exp\!\left( -\frac{\alpha^2 (\log \epsilon)^2}{2t} \right) dt = \epsilon^{\frac{q}{2} + o(1)}.
\end{align*}
Hence, $\p[ u_{\alpha,\epsilon}^1 < -\epsilon^{-q}] = \p[\tau > 2 \epsilon^{-q}] =  O(\epsilon^{\frac{q}{2} + o(1)})$. Consequently, by~\eqref{eq:image_not_left} with $q = \sigma$, we have that
\begin{align}\label{eq:totheright}
	\p[\varphi_1(B(w,\epsilon^\xi)) \not\subseteq (\strip_- - \epsilon^{-\sigma})\,|\,B(w,\epsilon^\xi) \subseteq U_{1,\epsilon}] = 1 - O(\epsilon^{\sigma/2 + o(1)}).
\end{align}

\noindent{\it Step 2. $\varphi_1(B(w,\epsilon^\xi))$ is not too close to the boundary.} By Lemma~\ref{lem:mass_close_to_boundary}, we have that
\begin{align*}
	&\p[ \mu_{h^1}(\{ z \in \strip_-: \re(z) \geq -\epsilon^{-\sigma}, \dist(z,\partial \strip) < \epsilon^c\}) > \epsilon^{a \xi}] \\
	&\leq \epsilon^{-p a \xi} \E[\mu_{h^1}(\{ z \in \strip_-: \re(z) \geq -\epsilon^{-\sigma}, \dist(z,\partial \strip)<\epsilon^c \})^p] \\
	&\lesssim \epsilon^{M c - \sigma - p a \xi}.
\end{align*}
Thus, choosing $c$ sufficiently large, we have it follows from~\eqref{eq:mass_ball_close_to_origin} that with probability at least $1-O(\epsilon^{\sigma/2})$, $\varphi_1(B(w,\epsilon^\xi)) \not\subseteq (\strip_- - \epsilon^{-\sigma}) \cup \{ z \in \strip_-: \re(z) \geq -\epsilon^{-\sigma}, \dist(z,\partial \strip) < \epsilon^c\}$.

\noindent{\it Step 3. The images of $\eta_j(R)$ under $\varphi_1$ are not too far to the right.}
Now we prove that with high probability, the images of $\eta_j(R)$ under $\varphi_1$ are not mapped too far to the right. Set $T_{\epsilon,d} = \inf\{t \geq 0 : X_t^1 = -d\log \epsilon\}$ for some large fixed $d > 0$ to be chosen later and recall that $(X_t^1)_{t \geq 0}$ has the law of a one-dimensional Brownian motion $B$ run at twice the speed, with $B_0 = 0$. We note that 
\begin{align*}
	\nu_{h^1}([T_{\epsilon,d},T_{\epsilon,d}+1] \times \{0\}) \geq \epsilon^{-\frac{\gamma d}{2}} \exp\left( \frac{\gamma}{2}\min_{T_{\epsilon,d}\leq t \leq T_{\epsilon,d}+1}B_{2t}-B_{2T_{\epsilon,d}} \right) \nu_{h_2^1}([T_{\epsilon,d},T_{\epsilon,d}+1] \times \{0\}),
\end{align*}
and that $\min_{t \in [T_{\epsilon,d},T_{\epsilon,d}+1]} B_{2t} - B_{2 T_{\epsilon,d}}$ has the same law as $\min_{t \in [0,1]} B_{2t}$,  which has finite exponential moments. Moreover, since $h_2^1$ is translation invariant and $\nu_{h_2^1}([0,1] \times \{ 0 \})$ has finite negative moments, the above together with Markov's inequality imply that off an event with probability $O(\epsilon^{\sigma / 2})$, we have that $ \nu_{h^1}((-\infty,T_{\epsilon,d}+1] \times \{0\}) \geq R$, and similarly $\nu_{h^1}((-\infty,T_{\epsilon,d}+1] \times \{\pi\}) \geq R$, by picking $d>0$ large enough. In particular, off an event with probability $O(\epsilon^{\sigma /2 })$ we have that $\re(\varphi_1(\eta_1(R))),\re(\varphi_1(\eta_2(R))) \in (-\infty,T_{\epsilon,d}+1]$. Also for $k \in \Z$ we set $A_{\epsilon,k} = [k-1,k] \times ([0,\epsilon^c] \cup [\pi - \epsilon^c,\pi])$ and $A_{\epsilon} = [0,\epsilon^{-\sigma}] \times ([0,\epsilon^c] \cup [\pi - \epsilon^c,\pi])$. Then if $p \in (0,1\wedge \tfrac{2}{\gamma^2})$,
\begin{align*}
	\E[ \mu_{h_2^1}(A_{\epsilon})^p ] &\leq \sum_{n=1}^{\lceil\epsilon^{-\sigma}\rceil}\E[ \mu_{h_2^1}(A_{\epsilon,n})^p ] = \lceil\epsilon^{-\sigma}\rceil\E[ \mu_{h_2^1}(A_{\epsilon,1})^p]
\end{align*}
by the invariance of $h_2^1$ under horizontal translations. From the proof of Lemma~\ref{lem:mass_close_to_boundary}, we have that there is a constant $M=M_p>0$ such that 
\begin{align}\label{eq:moment_bound_close_to_boundary}
	\E[ \mu_{h_2^1}(A_{\epsilon,1})^p ] \lesssim \epsilon^{M c},
\end{align}
where the implicit constant is independent of $\epsilon$ and $c$, and thus 
\begin{align}\label{eq:upper_bound1}
\E[ \mu_{h_2^1}(A_{\epsilon})^p ] \lesssim \epsilon^{Mc-\sigma}.
\end{align}
Also we have that 
\begin{align}\label{eq:upper_bound2}
\mu_{h^1}(\wt{A}_{\epsilon}) \leq \epsilon^{-\gamma d}\mu_{h_2^1}(\wt{A}_{\epsilon}) \quad\text{where}\quad \wt{A}_{\epsilon} = [0,T_{\epsilon,d}] \times ([0,\epsilon^c] \cup [\pi - \epsilon^c,\pi]).
\end{align}
Moreover, since $\mu_{h^1}([T_{\epsilon,d},T_{\epsilon,d}+2]) \times [0,\epsilon^c] \cup [\pi-\epsilon^c,\pi]))$ has the same law as $\epsilon^{-\gamma d} \mu_{h^1}([0,2]) \times [0,\epsilon^c] \cup [\pi-\epsilon^c,\pi]))$ and the exponential moments of $\sup_{t \in [0,4]} B_t$ are finite, it follows from \eqref{eq:moment_bound_close_to_boundary} that for $p \in (0, 1 \wedge \tfrac{2}{\gamma^2})$,
\begin{align}\label{eq:moment_bound_last_part}
	\E[ \mu_{h^1}([T_{\epsilon,d},T_{\epsilon,d}+2]) \times [0,\epsilon^c] \cup [\pi-\epsilon^c,\pi]))^p ] \leq \epsilon^{Mc - p\gamma d}.
\end{align}

By combining~\eqref{eq:upper_bound1},~\eqref{eq:upper_bound2},~\eqref{eq:moment_bound_last_part} and Markov's inequality applied to $\mu_{h_2^{1}}$ together with the fact that $\p[ T_{\epsilon,d} \geq \epsilon^{-\sigma} / 2] = O(\epsilon^{\sigma / 2})$, we obtain that the quantum mass of $[0,T_{\epsilon,d}+2] \times ([0,\epsilon^c] \cup [\pi -\epsilon^c,\pi])$ is at most $\epsilon^{\alpha \xi}$ with probability $1 - O(\epsilon^{\sigma / 2})$ for $c > 0$ sufficiently large. Therefore off an event with probability $O(\epsilon^{\sigma / 2})$ we have that $\re(\varphi_1(\eta_j(R))) \in (-\infty,T_{\epsilon,d}+1]$ for $j=1,2$, $\varphi_1(B(w,\epsilon^{\xi})) \not\subseteq (\strip_- - \epsilon^{-\sigma}) \cup \{z \in \strip : -\epsilon^{-\sigma} \leq \re(z) \leq T_{\epsilon,d}+2, \dist(z,\partial \strip) < \epsilon^c \}$ and $T_{\epsilon,d} \leq \epsilon^{-\sigma} / 2$.

\noindent{\it Step 4.  Conclusion of the proof.}
Suppose now that we work on the above event. If $\varphi_1(B(w,\epsilon^\xi)) \subseteq{\strip_+ + T_{\epsilon,d}+2}$, then there exists a universal constant $\delta \in (0,1)$ such that with probability at least $\delta$ a Brownian motion starting from $z \in \varphi_1(B(w,\epsilon^\xi))$ exits $\strip$ on $[T_{\epsilon,d}+1,\infty) \times \{0,\pi\}$. If $\varphi_1(B(w,\epsilon^{\xi})) \not\subseteq \strip_+ + T_{\epsilon,d}+2$, then there exists $z \in \varphi_1(B(w,\epsilon^{\xi}))$ such that $-\epsilon^{-\sigma} \leq \re(z) \leq 2+\epsilon^{-\sigma}/2$ and $\im(z) \in [\epsilon^c,\pi-\epsilon^c]$. We also note that if $\wt{B}$ is  a planar Brownian motion, started from a point $z_0 \in \{ z \in \strip: -\epsilon^{-\sigma} \leq \re(z) \leq  2+ \epsilon^{-\sigma}/2, \dist(z,\partial \strip) \geq \epsilon^c  \}$, then  by Gambler's ruin, we know that
\begin{align}\label{eq:gambler}
	\p[ \wt{B} \, \text{hits} \, \{z: \re(z) \geq -\epsilon^{-\sigma}, \, \im(z) =\pi/2 \} \, \text{before} \, \partial \strip] \geq \frac{\epsilon^c}{\pi}.
\end{align}
By the Markov property of Brownian motion, the probability that $\wt B$ hits $\{ \re(z) = \epsilon^{-\sigma} \}$ before exiting $\strip$ is bounded from below by the probability that $\wt B$ hits $\{z: \re(z) \geq -\epsilon^{-\sigma}, \, \im(z) =\pi/2 \}$ before exiting $\strip$ times the probability that a Brownian motion started from $-\epsilon^{-\sigma} + i \pi/2$ hits $\{ \re(z) = \epsilon^{-\sigma} \}$ before exiting $\strip$. Thus, \eqref{eq:gambler} and \cite[Lemma~IV.5.1]{gm05harmonic} imply that
\begin{align}
	\p[ \wt{B} \, \text{hits} \, \{ \re(z) = \epsilon^{-\sigma} \} \, \text{before} \, \partial \strip ] \geq \frac{\epsilon^c}{2\pi} e^{-2\epsilon^{-\sigma}} \geq e^{-3\epsilon^{-\sigma}},
\end{align}
for sufficiently small $\epsilon$. Hence with probability at least $ e^{-3\epsilon^{-\sigma}}$ a Brownian motion starting from $z$ exits $\strip$ in $[\epsilon^{-\sigma},\infty) \times \{0,\pi\}$. The result is deduced by observing that $[T_{\epsilon,d}+2,\infty) \times \{0,\pi\} \subseteq \varphi_1(A_R)$. 
\end{proof}

\subsection{Lower bound}

\begin{lemma}
\label{lem:main_lemma_lbd}
Fix $\kappa \in (0,4]$.  Let $\eta_1,\eta_2$ be a pair of paths where $\eta_1$ is a whole-plane $\SLE_\kappa(\kappa-2)$ process in $\C$ from $0$ to $\infty$ and the conditional law of $\eta_2$ given $\eta_1$ is that of a chordal $\SLE_\kappa(\frac{\kappa}{2}-2;\frac{\kappa}{2}-2)$ from $0$ to $\infty$ in $\C \setminus \eta_1$.  Fix $\zeta \in [0,1)$ and $\epsilon > 0$. Let $\tau_i = \inf\{t \geq 0 : \eta_i(t) \notin B(0,\epsilon^{\zeta})\}$, $\wt{\tau}_i =\inf\{t \geq 0 : \eta_i(t) \notin B(0,\epsilon^{\zeta} / 2)\}$. Fix $C>1$ large and $D>0$ small. Let $E$ be the event that
\begin{enumerate}[(i)]
\item\label{it:lbd_cond1} $\sup_{z \in B(0,\epsilon) \cap U_1}\p^z[ B\, \text{hits}\, \partial B(0,\epsilon^{\zeta})\, \text{before} \,\cup_{i=1}^2 \eta_i([0,\tau_i])\,|\,\eta_{1}|_{[0,\tau_1]},\eta_{2}|_{[0,\tau_2]}] \leq \exp(-\epsilon^{-\sigma})$
\item\label{it:lbd_cond2} $\eta_1$ and $\eta_2$ do not return to $B(0,C^{n-1}\epsilon^{\zeta})$ after leaving $B(0,C^{n} \epsilon^{\zeta})$ for $n = 1,2,3$.
\item\label{it:lbd_cond3} Let $\tau_{i}^C = \inf\{ t \geq 0 : \eta_i(t) \notin B(0,C^{3} \epsilon^{\zeta}\}$. Then $\dist(\eta_1([\wt{\tau}_1, \tau_{1}^C]), \eta_2([\wt{\tau}_2,\tau_{2}^C])) \geq D \epsilon^{\zeta}$.
\end{enumerate}
Then we can find $C > 1$ and $D > 0$ such that 
\begin{align}
\label{eqn:mainlemma3}
\p[ E ] \geq \epsilon^{\sigma / 2 + o(1)} \quad \text{as} \quad \epsilon \to 0
\end{align}	
\end{lemma}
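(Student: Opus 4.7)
The strategy mirrors that of Lemma~\ref{lem:main_lemma_ubd} in reverse: rather than bounding the probability of an atypical configuration from above, we exhibit an explicit favourable event of probability $\epsilon^{\sigma/2+o(1)}$ on which all of (i)--(iii) hold. Couple $(\eta_1,\eta_2)$ with an independent quantum cone $\CC = (\C,h,0,\infty) \sim \qconeW{\gamma}{\gamma^2}$ (invoking Theorem~\ref{thm:cone_welding} when $\kappa < 4$ and Theorem~\ref{thm:critical_cone_welding} when $\kappa = 4$), so that $\CW_1, \CW_2 \sim \qwedgeW{\gamma}{\gamma^2/2}$ are independent wedges on $U_1, U_2$. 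Let $\varphi_1 : U_1 \to \strip$ be the first-exit embedding of $\CW_1$, with vertical-line average $X_t^1$, lateral part $h_2^1$, and $u^1_{\alpha,\epsilon} = \inf\{t : X_t^1 = \alpha \log \epsilon\}$ for a parameter $\alpha > 0$ to be chosen.

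\textbf{The core atypical event.} As in the proof of Lemma~\ref{lem:main_lemma_ubd}, $-u^1_{\alpha,\epsilon}$ is the first passage time to $0$ for Brownian motion (run at twice the speed) starting from $\alpha \log \epsilon$. Its hitting density gives
\begin{align*}
\p\!\left[u^1_{\alpha,\epsilon} \leq -2\epsilon^{-\sigma}\right] = \int_{\epsilon^{-\sigma}}^\infty \frac{\alpha\log(1/\epsilon)}{\sqrt{2\pi}\, t^{3/2}} \exp\!\left(-\frac{\alpha^2 \log^2(1/\epsilon)}{2t}\right) dt = \epsilon^{\sigma/2+o(1)}.
\end{align*}
On this event, $X^1$ forces a long tube in $\strip$ extending at least $2\epsilon^{-\sigma}$ units to the left of the typical scale. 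Intersecting with (a) a high-probability upper bound on $\mu_h(B(0,\epsilon) \cap U_1)$ (a standard moment estimate for $\qconeW{\gamma}{\gamma^2}$ at its marked point), (b) the boundary bound of Lemma~\ref{lem:mass_close_to_boundary} for $\mu_{h^1}$ near $\partial\strip$, and (c) a lateral-regularity event for $h_2^1$ in a neighborhood of $u^1_{\alpha,\epsilon}$, we obtain an event still of probability $\epsilon^{\sigma/2+o(1)}$ on which $\varphi_1(B(0,\epsilon) \cap U_1) \subseteq \{z \in \strip : \re(z) \leq -2\epsilon^{-\sigma},\ \dist(z,\partial\strip) \geq \epsilon^c\}$ for some $c > 0$. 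Meanwhile, on conditions (ii)--(iii), the argument of Lemma~\ref{lem:main_lemma_ubd} applied at the mesoscopic scale $\epsilon^\zeta$ shows that $\varphi_1(\partial B(0, \epsilon^\zeta) \cap U_1)$ lies at real-part $\geq -\epsilon^{-\sigma}$ with high probability, since the atypical behavior of $X^1$ (concentrated at the much deeper level $\alpha \log \epsilon$) does not disturb the scale governing the image of $\partial B(0,\epsilon^\zeta)$. Gambler's ruin in $\strip$ combined with the conformal invariance of Brownian motion (as in Step~4 of the proof of Lemma~\ref{lem:main_lemma_ubd}) then yields condition (i) with the target bound $\exp(-\epsilon^{-\sigma})$.

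\textbf{Conditions (ii)--(iii) and the main obstacle.} Conditions (ii) and (iii) depend only on $(\eta_1,\eta_2)$, which is sampled independently of $\CC$. Condition (ii) follows from the scale-invariance of whole-plane $\SLE_\kappa(\kappa-2)$ combined with the non-return estimate of Lemma~\ref{lem:Breturn_time}, applied at each of the three dyadic scales for $C$ sufficiently large, yielding probability bounded below by a positive constant. Condition (iii) follows from the conformal Markov property of $\SLE_\kappa(\underline{\rho})$, the scale-invariance of $(\eta_1,\eta_2)$, and the fact that conditional on $\eta_1$ the curve $\eta_2$ is a chordal $\SLE_\kappa(\kappa/2-2;\kappa/2-2)$ in $\C \setminus \eta_1$ and hence macroscopically separates from $\eta_1$ in a fixed annulus with positive probability. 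The main obstacle is twofold: first, the containment of $\varphi_1(B(0,\epsilon) \cap U_1)$ in the narrow tube requires control of the conformal distortion of $\varphi_1$ near the double point $0$ beyond mere mass comparison, which is supplied by a Koebe-type distortion estimate combined with the asymptotic sectorial structure of $U_1$ near $0$; second, the wedge embedding $\varphi_1$ (hence $X^1$) depends jointly on $h$ and $(\eta_1,\eta_2)$ through the welding, while $\CC$ and $(\eta_1,\eta_2)$ are themselves independent. The latter is resolved by conditioning on $(\eta_1, \eta_2)$ first, under which $h$ still has law $\qconeW{\gamma}{\gamma^2}$, and then executing the wedge argument above using the conditional embedding. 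Multiplying the probabilities of the (asymptotically) independent pieces yields~\eqref{eqn:mainlemma3}.
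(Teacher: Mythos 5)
Your overall strategy is the same as the paper's: couple $(\eta_1,\eta_2)$ with an independent quantum cone, exhibit the atypical event $\{u^1_{\alpha,\epsilon} \leq -2\epsilon^{-\sigma}\}$ of probability $\epsilon^{\sigma/2+o(1)}$, and argue that on this event the geometry of $(\eta_1,\eta_2)$ near $0$ satisfies (i)--(iii). However, there is a genuine gap precisely where you wave your hands: the phrase ``multiplying the probabilities of the (asymptotically) independent pieces.''

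The events you want to intersect with $E_{\epsilon,\delta} = \{u^1_{\alpha,\epsilon} \leq -2\epsilon^{-\sigma}\}$ --- the mass estimates, the lateral-regularity event, the positive-probability geometric configuration ensuring (ii) and (iii), and the statement that $\varphi_1(\partial B(0,\epsilon^\zeta)\cap U_1)$ is not pushed too far left --- are not independent of $E_{\epsilon,\delta}$, nor even asymptotically so. Conditioning on $E_{\epsilon,\delta}$ conditions the Bessel process governing $X^1_t$ to take an atypically long excursion; this changes the conditional law of the quantum lengths $(A,B)$ of the left half of $\partial\strip$ under $h^1$, which are precisely the quantities that, under the welding, determine the embedding of $\CW_2$ relative to $\CW_1$ and hence the geometry of $(\eta_1,\eta_2)$ at all scales, including the mesoscopic scale $\epsilon^\zeta$. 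Your claim that ``the atypical behavior of $X^1$ (concentrated at the much deeper level $\alpha \log\epsilon$) does not disturb the scale governing the image of $\partial B(0,\epsilon^\zeta)$'' is exactly the statement that needs to be proven, and it does not follow from independence considerations. The paper devotes Lemmas~\ref{lem:unique_welding}--\ref{lem:lbd} to this: it shows $(\CA,\wt\eta_1,\wt\eta_2)$ is determined by $(A,B,h_R^1,h^2)$; proves tightness of $(A,B)$ given $E_{\epsilon,\delta}$ (Lemma~\ref{lem:length_bound}); controls the Radon--Nikodym derivative of the conditional law of $(A,B,h_R^1)$ given $E_{\epsilon,\delta}$ against its unconditional law via a lower bound on the conditional density of $(A,B)$ (Lemmas~\ref{lem:ab_density}--\ref{lem:q_prob_from_below}); and only then concludes that the ``good'' geometric event has conditional probability bounded below uniformly in $\epsilon$. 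Without this machinery, your proposal does not establish~\eqref{eqn:mainlemma3}: conditioning on $E_{\epsilon,\delta}$ could in principle force $(\eta_1,\eta_2)$ into a degenerate configuration at scale $\epsilon^\zeta$ that violates (ii) or (iii), or that makes the escape probability in (i) far larger than $\exp(-\epsilon^{-\sigma})$ by collapsing the real-part range of $\varphi_1$.

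A secondary issue: your sketch of conditions (ii) and (iii) treats them as separate positive-probability events established by scale-invariance and Lemma~\ref{lem:Breturn_time}. But the point is that they must hold \emph{simultaneously} with (i), on the conditional law given $E_{\epsilon,\delta}$. The paper builds (ii) and (iii) into the definition of the $(r,s,u,v,C,D)$-stable event $G$ for $\CA$, and bounds $\p[G \,|\, E_{\epsilon,\delta}]$ from below; it then multiplies by a final lower bound via Lemma~\ref{lem:lbd} relating the atypical Bessel excursion back to the escape probability. You need an analogous joint argument; separate scale-invariance statements do not combine into the joint bound you assert.
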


Conditions~\eqref{it:lbd_cond2}, \eqref{it:lbd_cond3} are technical conditions in the definition of $E$ that we will need for the proof of the lower bound in Theorem~\ref{thm:sle4_thm} as well as in Theorem~\ref{thm:jones_smirnov_sle4}.  Condition~\eqref{it:lbd_cond1} determines the exponent in~\eqref{eqn:mainlemma3}.  The proof of Lemma~\ref{lem:main_lemma_lbd} is much more involved than that of Lemma~\ref{lem:main_lemma_ubd}.  The reason for this is that it is a statement purely about $\eta_1, \eta_2$ and does not involve LQG.  In particular, we have to argue that conditioning the process which gives the average on vertical lines for the quantum wedge parameterized by $U_1$ (in the context of Lemma~\ref{lem:main_lemma_ubd}) taking a very long time to hit $0$ after first hitting $b \log \epsilon$ (an event with polynomial probability in $\epsilon$ as $\epsilon \to 0$) does not lead to degenerate behavior in $\eta_1$, $\eta_2$ away from $0$.

We note that Lemma~\ref{lem:main_lemma_ubd} does not hold for $\gamma = 2$ because in this case the amount of mass in $\strip_-$ which is close to $\partial \strip$ does not decay to $0$ as $\epsilon \to 0$ as a power of $\epsilon$.  This property for $\gamma < 2$ is important for ruling out the possibility that the aforementioned small ball is mapped too close to $\partial \strip$.  On the other hand, we emphasize that Lemma~\ref{lem:main_lemma_lbd} does hold for $\gamma = 2$.

Recall the notation in the proof of Lemma~\ref{lem:main_lemma_ubd}. In order to prove Lemma~\ref{lem:main_lemma_lbd}, we introduce a different embedding  of $\CC$ which is similar to the circle-average embedding but is more amenable to ``cutting'' and ``gluing'' operations.  We let $\phi$ be a radially symmetric $C_0^\infty$ function which is supported in $B(0,1) \setminus B(0,99/100)$ such that $\phi \geq 0$ and $\int \phi(x) dx = 1$.  Consider the process $R_s = (h(\cdot s) + Q \log s, \phi)$.  We say that $h$ scaled so that $\sup\{s \geq 0 : R_s = 0\} = 1$ is a smooth canonical description of the surface $\CC$.  We note that the embedding associated with the smooth canonical description is defined a way which is analogous to the circle-average embedding except instead of integrating the field against the uniform measure on the boundary of a circle centered at the origin we are considering the field integrated a radially symmetric smooth bump function.

Now, let $\CC$ be embedded as the smooth canonical description and (recalling the notation in Section~\ref{subsec:upper_bound}) let $\varphi_j : U_j \to \strip$ be the conformal maps which for which $h^j$ are embedded with the first exit parameterization. Let $h_R^1$ be the restriction of $h^1$ to $[1,\infty) \times (0,\pi)$, $A$ be the quantum length of $(-\infty,1] \times \{0\}$ and  $B$ be the quantum length of $(-\infty,1] \times \{\pi\}$, both with respect to $h^1$.  Let $x_2$ (resp.\ $y_2$) be the point on $\R$ (resp.\ $\R + i \pi$) such that the quantum length of $(-\infty,x_2]$ (resp.\ $(-\infty,y_2]$) is equal to $B$ (resp.\ $A$).  Let $\CA$ be the surface which is formed by gluing according to quantum length the quantum surface described by $h_R^1$ with the quantum surface described by the restriction of $h^2$ to the part of $\strip$ which is to the right of the line $L$ which connects $x_2$ to $y_2$.  The gluing is defined by looking at the part of $\CC$ which corresponds to the parts of $\CW_1$, $\CW_2$ described just above.  In particular, an embedding of $\CA$ is given by $(\A,h|_{\A})$, where $\A = \C \setminus \wt{A}$ and $\wt{A}$ is the domain bounded by $\eta_1([0,A])$, $\eta_2([0,B])$, $\varphi_1^{-1}(\{1\} \times (0,\pi)$ and $\varphi_2^{-1}(L)$.  This embedding will be considered when working with $\CA$ parameterized by $\A$. We note that $\A$ is homeomorphic to $\C \setminus \ol{\D}$.

Denote by~$\wt \eta_1$ and~$\wt \eta_2$ the curves formed when gluing the boundaries of the part of~$\CW_1$ corresponding to~$h_R^1$ and the part of~$\CW_2$ to the right of the straight line from~$x_2$ to~$y_2$ when parameterized by $\strip$ as above, together according to quantum length. We assume that $\wt \eta_1$ and $\wt \eta_2$ are parameterized by quantum length with the normalization that $\wt \eta_1(0)$ (resp.\ $\wt \eta_2(0)$) corresponds to the image of $(1,0)$ (resp.\ $(1,\pi)$) under the welding homeomorphism, where they are considered as boundary points of $\CW_1$.

Fix $0 < r < s$, $0<u<v$, and some large constant $C>1$ and small constant $D > 0$.  We say that $\CA$ is $(r,s,u,v,C,D)$-stable if the following are true.
\begin{enumerate}[(i)]
\item\label{it:surface_good} $\C \setminus \A$ intersects $\C \setminus B(0,r)$ and is contained in $B(0,s / 2)$.
\item\label{it:curves_good} After time $C$, neither $\wt \eta_1$ nor $\wt \eta_2$ enters $B(0,u)$ and moreover, $\wt \eta_1$ and $\wt \eta_2$ do not enter $\C \setminus B(0,v)$ until after time $C$.
\item\label{it:not_return} $\wt{\eta}_1$ and $\wt{\eta}_2$ do not return to $B(0,sC^{n-1})$ after leaving $B(0,sC^{n})$ for $n = 1,2,3$.
\item\label{it:positive_distance} Let $\sigma_{i}^C = \inf\{ t \geq 0 : \wt{\eta}_i(t) \notin B(0,sC^{3})\}$ and $\wt{\sigma}_i = \inf\{ t \geq 0 : \wt{\eta}_i(t) \notin B(0,s/2)\}$ for $i=1,2$. Then $\dist(\wt{\eta}_1([\wt{\sigma}_1,\sigma_{1}^C]),\wt{\eta}_2([\wt{\sigma}_2,\sigma_{2}^C])) \geq Ds$.
\item\label{it:cut_glue_good} The above items remain true if we consider the following situation.  Let $K$ be a compact hull scaled so that if we let $\psi$ be the unique conformal map from $\A$ to $\C \setminus K$ which fixes and has positive real derivative at $\infty$ then the field $h \circ \psi^{-1} + Q \log | (\psi^{-1})'|$ is a smooth canonical description.
\end{enumerate}
Let $G$ be the event that $\CA$ is $(r,s,u,v,C,D)$-stable.  We expand a bit on what we mean by~\eqref{it:cut_glue_good}. Let $K$ be a compact hull, $\psi$ the conformal map as in~\eqref{it:cut_glue_good}, $\wh{h} = h \circ \psi^{-1} + Q \log|(\psi')^{-1}|$ and we write $\wh{R}_s = (\wh{h}(\cdot s)+Q \log s,\phi)$, where $\phi$ is the radially symmetric text function in the definition of the smooth canonical embedding above. Then,  $K \subseteq B(0,99/100)$ and $\sup \{ s \geq 0: \wh{R}_s = 0 \} = 1$ and condition~\eqref{it:surface_good} is satisfied with $\C \setminus K$ in place of $\C \setminus \A$. Moreover, setting $\wh{\eta}_j = \psi(\wt{\eta}_j)$, conditions~\eqref{it:curves_good}-\eqref{it:positive_distance} are satisfied with $\wh{\eta}_1$ and $\wh{\eta}_2$ in place of $\wt{\eta}_1$ and $\wt{\eta}_2$. We note that $G$ is measurable with respect to~$(A,B,h_R^1)$ and~$h^2$, as $(A,B,h_R^1)$ and $h^2$ determine $\CA$ (see Lemma~\ref{lem:unique_welding} below), which is the reason that we have defined the event in this way. 

\begin{lemma}
\label{lem:good}
For each $\delta \in (0,1)$ there exists $c_1, c_2 > 0$ so that $\p[G] \geq 1-\delta$ provided $r,u ,D\in (0,c_1)$ and $s,v ,C\in (c_2,\infty)$.
\end{lemma}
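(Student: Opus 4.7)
The plan is to establish each of the five conditions in the definition of $(r,s,u,v,C,D)$-stability with probability at least $1-\delta/5$ for an appropriate choice of the parameters, then apply the union bound. The key tools are the exit-time estimates from Proposition~\ref{prop:UBexittime} and Lemma~\ref{lem:qtimeUB}, the transience and scale invariance of whole-plane $\SLE_\kappa(\kappa-2)$ and chordal $\SLE_\kappa(\frac{\kappa}{2}-2;\frac{\kappa}{2}-2)$, and the imaginary geometry description of $\eta_1,\eta_2$ as flow lines of a common GFF with positive angle difference $\pi\kappa/(4-\kappa)$.

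For condition~(i), Theorems~\ref{thm:wedge_welding} and~\ref{thm:cone_welding} imply that $\CC \setminus \CA$ is the region bounded by $\eta_1([0,A]) \cup \eta_2([0,B])$, where $A$ and $B$ are the a.s.\ finite quantum lengths of the boundary arcs of $\CW_1$ mapped to $(-\infty,1] \times \{0,\pi\}$ under the strip embedding. The inclusion $\CC \setminus \CA \subseteq B(0,s/2)$ holds for large $s$ by Proposition~\ref{prop:UBexittime}, which after scaling bounds the probability that $\eta_i$ exits $B(0,s/2)$ before quantum time $A \vee B$; the intersection with $\C \setminus B(0,r)$ for small $r$ holds because $A,B>0$ and the $\SLE$ curves leave every neighborhood of $0$ in positive quantum time.

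For (ii), the non-return to $B(0,u)$ after time $C$ follows from Lemma~\ref{lem:qtimeUB} applied to each of $\wt\eta_1,\wt\eta_2$, giving a polynomial tail on the last hitting time of $B(0,u)$; the confinement in $B(0,v)$ before time $C$ follows from Proposition~\ref{prop:UBexittime} together with scaling. Condition~(iii) is of a similar flavor: the probability that a curve re-enters $B(0,sC^{n-1})$ after leaving $B(0,sC^{n})$ is, by scale invariance, comparable to the probability that a curve started from $\partial\D$ re-enters $B(0,C^{-1})$, which decays polynomially in $C$; summing over $n=1,2,3$ still gives an arbitrarily small probability for $C$ large. For (iv), by the imaginary geometry coupling $\eta_1$ and $\eta_2$ are distinct flow lines with positive angle difference, hence a.s.\ disjoint in $\C \setminus \{0,\infty\}$; the infimum of the distance between $\wt\eta_1([\wt\sigma_1,\sigma_1^C])$ and $\wt\eta_2([\wt\sigma_2,\sigma_2^C])$ is therefore a positive random variable, and for $D$ small enough the event holds with high probability.

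Condition~(v) is the main obstacle. We must verify that (i)--(iv) persist after re-embedding $\CA$ as $\C \setminus K$ via the conformal map $\psi$ normalized by the smooth canonical description. The strategy is to first establish a strengthened form of (i)--(iv) with radii $r/L$, $Ls$, $u/L$, $Lv$ for a large deterministic constant $L$. Since $\CA$ contains $\CC \setminus B(0,s/2)$, the map $\psi$ is a normalized exterior conformal map whose domain contains a definite annulus near $\infty$, and by Koebe's distortion theorem $\psi$ has bounded multiplicative distortion at scales $\gtrsim s$. The delicate point is to control the overall scaling imposed by the smooth canonical description (which is fixed by requiring the averaged-field process $R_s$ to last hit $0$ at $s=1$): using that the cut-off region $\CC \setminus \CA$ lies in $B(0,s/2)$ and contributes bounded quantum mass, one shows that this scaling factor is bounded away from $0$ and $\infty$ with high probability. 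Combining the distortion control with the strengthened form of (i)--(iv) yields (v), and the union bound over the five conditions completes the proof.
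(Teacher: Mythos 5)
Your treatment of conditions~\eqref{it:surface_good}--\eqref{it:positive_distance} is correct but considerably heavier than necessary. The paper simply observes that these events can be made to occur with probability arbitrarily close to~$1$ because $\CC \setminus \CA$ is almost surely a bounded set whose interior is homeomorphic to~$\D$, and $\wt\eta_1,\wt\eta_2$ are simple, transient curves with positive distance from~$0$; no scaling estimates or quantitative tail bounds from Proposition~\ref{prop:UBexittime} or Lemma~\ref{lem:qtimeUB} are invoked. Your route via those propositions would produce explicit polynomial rates in the parameters, which is more than the lemma asks for, but it is not wrong.

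The genuine gap is in your argument for condition~\eqref{it:cut_glue_good}. You propose to control the overall scaling imposed by the smooth canonical description by using that $\CC \setminus \CA \subseteq B(0,s/2)$ ``contributes bounded quantum mass.'' This does not close the argument: the smooth canonical description is pinned down by the last hitting time of~$0$ by the field-average process $R_s = (h(\cdot s) + Q\log s, \phi)$, which is a functional of $h$ integrated against rescaled test functions, not a functional of the quantum area measure. Bounding $\mu_h(\CC\setminus\CA)$ gives you no a priori control over $(h\circ\psi^{-1}(\cdot s) + Q\log s + Q\log|(\psi^{-1})'(\cdot s)|, \phi)$, which is what governs the normalization after re-embedding. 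The correct mechanism (and what the paper invokes, following \cite[Propositions~9.18--9.20]{dms2014mating}) is a compactness argument: the family of test functions obtained by pulling back the smooth bump $\phi$ through the maps $\psi$ that can arise is precompact in the space of test functions, which makes the re-embedded field averages uniformly comparable to the original ones and hence keeps the new normalizing scale bounded away from $0$ and $\infty$ with high probability. Your Koebe-distortion observation for $\psi$ at scales $\gtrsim s$ is a correct ingredient of this compactness claim, but the step from ``bounded quantum mass'' to ``bounded scaling factor'' is a non sequitur as written and needs to be replaced by the test-function compactness argument.
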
 

The proof of Lemma~\ref{lem:good} is essentially the same as that of \cite[Proposition~9.17]{dms2014mating}.  We will thus not give all of the details but instead describe the main ideas together with precise references to \cite{dms2014mating}.  We begin by noting that it is clear that we can make the part of~$G$ which corresponds to~\eqref{it:surface_good},~\eqref{it:curves_good}, ~\eqref{it:not_return} and ~\eqref{it:positive_distance} occur with probability as close to~$1$ as we want by adjusting the parameters in the definition of~$G$. This is because $\C \setminus \A$ is a bounded set whose interior is homeomorphic to $\D$ and $\wt \eta_1$, $\wt \eta_2$ are simple curves which both tend to $\infty$ as $t \to \infty$ and have positive distance from~$0$. It should be noted that the event~\eqref{it:not_return} is not increasing in $C$ on its own, but rather that the conditional probability of~\eqref{it:not_return}, given the event~\eqref{it:surface_good}, is increasing in $C$. The reason being that the corresponding event is increasing if we replace $\wt{\eta}_1$, $\wt{\eta}_2$ with $\eta_1$, $\eta_2$ and that on the event~\eqref{it:surface_good}, $\wt{\eta}_1$, $\wt{\eta}_2$ consist only of the parts of $\eta_1$, $\eta_2$ which are concerned with the event~\eqref{it:not_return}. The non-trivial part of Lemma~\ref{lem:good} is showing that we can make part~\eqref{it:cut_glue_good} occur with probability as close to~$1$ as we want by adjusting the parameters in the definition, and this part follows from the same argument used to prove \cite[Proposition~9.20]{dms2014mating}.  The essential point is that one can consider the family of test functions which can arise to construct the smooth canonical description after applying the function $\psi$ as in~\eqref{it:cut_glue_good} and see that it is a compact subset of the space of test functions.  This, in particular, implies that the value of~$h$ integrated against any such test function is comparable to the integral of~$h$ against the test function which defines the smooth canonical description for the original surface $h$ (see \cite[Proposition~9.18, 9.19]{dms2014mating} and the discussion immediately after).

Fix $\alpha > 0$.  We now want to show that the statement of Lemma~\ref{lem:good} occurs even if we condition on the event that the average process takes a long time to hit $0$ for the first time after first hitting $\alpha \log \delta$.  Let
\[ Z = \{ \p[ G \giv A, B, h_R^1] \geq 1/4 \}.\]
Then Lemma~\ref{lem:good} implies that by adjusting the parameters in the definition of $G$ we can make $\p[Z]$ as close to $1$ as we want.  Fix $\sigma > 0$.  For each $\delta, \epsilon > 0$ we let $E_{\epsilon,\delta}$ be the event that the average process for $h^1$ takes time at least $\epsilon^{-\sigma}$ to first hit $0$ after first hitting $\alpha \log \delta$.  Our aim is to obtain a lower bound on $\p[Z \giv E_{\epsilon,\delta}]$ which is uniform in $\delta \in (0,1)$ and  $\epsilon$ as $\epsilon \to 0$.  The basic idea of the proof is to obtain uniform (in $\epsilon, \delta$) control on the Radon-Nikodym derivative between the joint law of $(A,B,h_R^1)$ and the joint law of $(A,B,h_R^1)$ conditioned on $E_{\epsilon,\delta}$.  This is carried out in Lemmas~\ref{lem:length_bound}--\ref{lem:q_prob_from_below}.  In particular, in Lemma~\ref{lem:length_bound} we will show that the law of $(A,B)$ given $E_{\epsilon,\delta}$ is tight uniformly in $\delta \in (0,1)$ as $\epsilon \to 0$, in Lemma~\ref{lem:ab_density} we will obtain a lower bound on the conditional density of $(A,B)$ given $h_R^1$, and in Lemma~\ref{lem:q_prob_from_below} we will combine everything to obtain the desired lower bound on $\p[Z \giv E_{\epsilon,\delta}]$.

We begin by proving that $(\CA,\wt \eta_1,\wt \eta_2)$ is a.s.\ determined by $A$, $B$, $h_R^1$ and $h^2$.

\begin{lemma}\label{lem:unique_welding}
The surface $\CA$ decorated by $\wt{\eta}_1$ and $\wt{\eta}_2$, is a.s.\ determined by $A,B,h_R^1$ and $h^2$.
\end{lemma}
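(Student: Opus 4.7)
The plan is to reconstruct $\CA$ together with $\wt\eta_1$ and $\wt\eta_2$ from $(A,B,h_R^1,h^2)$ in two steps: first recover the two quantum sub-surfaces being welded, then invoke uniqueness of the conformal welding.

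First, from $(A,B,h^2)$ one recovers the marked boundary points $x_2,y_2$ on $\partial\strip$. Since $(\strip,h^2,-\infty,+\infty)\sim\qwedgeW{\gamma}{\gamma^2/2}$, the quantum length measure $\nu_{h^2}$ is a.s.\ a non-atomic Radon measure on each of the two boundary rays, with total mass $+\infty$ (going toward $+\infty$) and total mass $0$ (accumulating toward $-\infty$, i.e.\ the tip at $-\infty$ of the wedge). Hence
\[
  x_2 \;=\; \inf\{x\in\R:\nu_{h^2}((-\infty,x]\times\{0\})\geq A\}
\]
and the analogous $y_2$ are a.s.\ unique and measurable in $(A,h^2)$ and $(B,h^2)$. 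The restriction $h_R^2$ of $h^2$ to the region of $\strip$ lying to the right of the segment joining $x_2$ and $y_2$ is then determined. Thus $(A,B,h_R^1,h^2)$ determines the pair of quantum sub-surfaces $\CS_1=([1,\infty)\times(0,\pi),h_R^1)$ and $\CS_2$ described by $h_R^2$, each equipped with its boundary parameterization by quantum length.

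Second, by construction $(\CA,\wt\eta_1,\wt\eta_2)$ is a conformal welding of $\CS_1$ and $\CS_2$, obtained by identifying the bottom boundary $[1,\infty)\times\{0\}$ of $\CS_1$ with the bottom boundary $[x_2,\infty)\times\{0\}$ of $\CS_2$ according to quantum length (producing $\wt\eta_1$), and the top boundaries similarly (producing $\wt\eta_2$). Existence is immediate, since the relevant sub-region of the original configuration $(\CC,\eta_1,\eta_2)$ already realizes such a welding. The main point is a.s.\ \emph{uniqueness} of the welding. The welding interfaces $\wt\eta_1,\wt\eta_2$ are initial segments of $\eta_1$ and $\eta_2$, which are $\SLE_\kappa$-type curves (locally absolutely continuous with respect to chordal $\SLE_\kappa$ in the interior of the domain). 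For $\gamma\in(0,2)$, their complementary components are H\"older domains by \cite{rs2005basic}, and hence the curves are conformally removable by \cite{js2000remove}. A standard removability argument then identifies any two candidate welded surfaces via a conformal map defined first off the welding interfaces (using that both are weldings of $\CS_1,\CS_2$ with the same length matching) and then extended conformally across the interfaces by removability, so they agree as quantum surfaces with marked curves. This gives a.s.\ uniqueness and hence the required measurability.

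The main obstacle, should one wish to extend the statement to $\gamma=2$, is precisely the removability input, which is known to fail in the form used here by Theorem~\ref{thm:jones_smirnov_sle4}; however, in the subcritical range $\gamma\in(0,2)$ relevant for the proof of Lemma~\ref{lem:main_lemma_ubd}, the argument above applies directly, and if needed the critical case can be reached by approximation as in Section~\ref{sec:critical_welding}.
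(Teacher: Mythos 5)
Your overall strategy is close in spirit to the paper's proof: both reduce to uniqueness of the conformal welding of the two sub-surfaces (and your observation that $(A,B,h^2)$ determines $x_2,y_2$ is correct). However, there is a genuine gap in the choice of removability input.

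Your argument invokes removability of $\SLE_\kappa$ via H\"older domains and \cite{js2000remove}, which is available only for $\kappa = \gamma^2 < 4$. But Lemma~\ref{lem:unique_welding} is used to define the event $G$ in the setup preceding Lemma~\ref{lem:main_lemma_lbd}, which is stated for $\kappa \in (0,4]$ and is applied at $\kappa = 4$ (via Lemma~\ref{lem:two_sided_radial_lemma51} and Lemma~\ref{lem:sle4_estimate}) in the proof of the lower bound of Theorem~\ref{thm:sle4_thm}. You mis-attribute the need for the lemma to Lemma~\ref{lem:main_lemma_ubd} (where indeed $\gamma \in (0,2)$), but the result must hold at $\gamma = 2$. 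At $\gamma = 2$ full conformal removability of $\SLE_4$ is not known --- indeed Theorem~\ref{thm:jones_smirnov_sle4} shows the standard Jones--Smirnov sufficient condition fails --- so your removability step breaks down in precisely the case the lemma is used. Your suggested fallback ``by approximation as in Section~\ref{sec:critical_welding}'' does not fill the gap: Section~\ref{sec:critical_welding} establishes weak convergence of laws of welded surfaces as $\kappa \uparrow 4$, which is a distributional statement, whereas Lemma~\ref{lem:unique_welding} is a measurability/determination statement and does not pass to distributional limits in any standard way.

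The paper circumvents this by using the \emph{weaker} uniqueness result of \cite[Theorems~1.1 and~1.2]{mmq2018uniqueness}, which is formulated for homeomorphisms conformal off an $\SLE$-type curve satisfying the explicit geometric conditions labeled (\ref{cond1}) and (\ref{cond2}) in the paper's proof; those conditions hold for chordal $\SLE_\kappa$ for all $\kappa \in (0,8)$, and hence for the relevant segments of $\eta_1, \eta_2$ at $\kappa=4$ by absolute continuity. The paper also packages the argument via a conditional resampling of $h_L^1$ and a zero--conditional--variance conclusion; that framing is a technical convenience rather than an essential difference, but the substitution of \cite{mmq2018uniqueness} for removability is essential. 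A secondary (and more minor) issue with your write-up is that the welded surface $\CA$ is homeomorphic to $\C \setminus \overline{\D}$ and the interfaces are non-compact and run to $\infty$, so applying removability of a compact set requires a reduction (the paper does this by conformally mapping $\CA \setminus \wt\eta_2$ to $\h$ and arguing there); your outline glosses over this step.
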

\begin{proof}
 Let $h_L^1$ be the part of $h^1$ which is to the left of the vertical line with real part $1$. Then $h_L^1,h_R^1$ together determine $h^1$ so it follows that $h_L^1,h_R^1,A,B$ and $h^2$ determine $\CA$ and $\wt{\eta}_1,\wt{\eta}_2$. In particular, the results of \cite{dms2014mating} imply that there exists a measurable function $F$ defined on the product space corresponding to $(h_L^1,h_R^1,A,B,h^2)$ such that $F(h_L^1,h_R^1,A,B,h^2) = (\CA,\wt{\eta}_1,\wt{\eta}_2)$.
 
Let $\wh{h}^1$ be sampled from the conditional law of $h^1$ given $(h_R^1,A,B)$ and let $(\wh{\CA},\wh{\eta}_1,\wh{\eta}_2)$ be the path decorated surface which corresponds to $\wh{h}_R^1$ (part of $\wh{h}^1$ to the right of the vertical line with real part $1$) and $h^2$. Note that the marginal law of $\wh{h}^1$ is also a $\gamma$-quantum wedge of weight $\frac{\gamma^2}{2}$ and so $(\wh{\CA},\wh{\eta}_1,\wh{\eta}_2)$ can be defined by $(\wh{\CA},\wh{\eta}_1,\wh{\eta}_2) = F(\wh{h}_L^1,\wh{h}_R^1,A,B,h^2)$. Let $U_1$ (resp.\ $U_2$) be the part of $\CA$ which corresponds to $h_R^1$ (resp.\ the part of $h^2$ to the right of the line from $x_2$ to $y_2$) and define $\wh{U}_1$ (resp.\ $\wh{U}_2$) in the same way but in terms of $\wh{\CA}$, $\wh{h}^1$ and $h^2$. Let also $\varphi_j : U_j \rightarrow \strip$ be the conformal transformation such that $\varphi_j(\infty) = +\infty$, $\varphi_j(\wt{\eta}_1(0)) = (1,0)$ and $\varphi_j(\wt{\eta}_2(0)) = (1,\pi)$, for $j=1,2$. We define $\wh{\varphi}_j : \wh{U}_j \rightarrow \strip$ analogously. Set $\psi_j : U_j \rightarrow \wh{U}_j$ with $\psi_j = \wh{\varphi}_j^{-1} \circ \varphi_j$ for $j = 1,2$ and note that $\psi_j(\infty) = \infty$, $\psi(\wt{\eta}_1(0)) = \wh{\eta}_1(0)$ and $\psi_j(\wt{\eta}_2(0)) = \wh{\eta}_2(0)$. Moreover we have that $\nu_{h^1}((-\infty,\varphi_1(x)] \times \{0\}) = \nu_{h^2}((-\infty,\varphi_2(x)] \times \{0\})$ for each $x \in \wt{\eta}_1$ and $\nu_{h^1}((-\infty,\varphi_1(y)] \times \{\pi\}) = \nu_{h^2}((-\infty,\varphi_2(y)] \times \{\pi\})$ for each $y \in \wt{\eta}_2$. Similarly, $\nu_{\wh{h}^1}((-\infty,\wh{\varphi}_1(x)] \times \{0\}) = \nu_{h^2}((-\infty,\wh{\varphi}_2(x)] \times \{0\})$ for each $x \in \wh{\eta}_1$ and $\nu_{\wh{h}^1}((-\infty,\wh{\varphi}_1(y)] \times \{\pi\}) = \nu_{h^2}((-\infty,\wh{\varphi}_2(y)] \times \{\pi\})$ for each $y \in \wh{\eta}_2$. Hence the mapping $\psi : \CA \rightarrow \wh{\CA}$ defined by $\psi(z) = \psi_1(z)$ for $z \in \overline{U}_1 \setminus (\wt{\eta}_1 \cup \wt{\eta}_2)$, $\psi(z) = \psi_2(z)$ for $z \in \overline{U}_2 \setminus (\wt{\eta}_1 \cup \wt{\eta}_2)$ and $\psi(z) = \wh{\varphi}_j^{-1} \circ \varphi_j(z)$ for $z \in \wt{\eta}_1 \cup \wt{\eta}_2$, is a well-defined homeomorphism mapping $\CA$ onto $\wh{\CA}$ which is conformal in $\CA \setminus (\wt{\eta}_1 \cup \wt{\eta}_2)$. We claim that $\psi$ is conformal in $\CA$. Indeed, note that $\eta_1$ has the law of a chordal $\SLE_{\kappa} (\tfrac{\kappa}{2};\tfrac{\kappa}{2})$ curve in $\C \setminus \eta_2$ conditioned on $\eta_2$. Similarly, $\eta_2$ has the law of a chordal $\SLE_{\kappa}(\tfrac{\kappa}{2} ;\tfrac{\kappa}{2})$ curve in $\C \setminus \eta_1$ conditioned on $\eta_1$. By \cite[Lemma~4.1,~Proposition~4.2 and~Lemma~4.3]{mmq2018uniqueness} we have that a chordal $\SLE_{\kappa}$ $\eta$ satisfies the following two conditions if $\kappa \in (0,8)$.
\begin{enumerate}[(I)]
	\item For any compact rectangle $K \subseteq \h$ and any $\beta \in (0,1)$, there exist $M>0$ and $\epsilon_0 > 0$ such that for all $\epsilon \in (0,\epsilon_0)$ and for all $z \in K$, the number of excursions of $\eta$ between $\partial B(z,\epsilon^\beta)$ and $\partial B(z,\epsilon)$ is at most $M$. \label{cond1}
	\item For any compact rectangle $K \subseteq \h$ and any $\alpha > \xi > 1$, there exists $\delta_0 > 0$ such that for any $\delta \in (0,\delta_0)$, for any $t > 0$ such that $\eta(t) \in K$, one can find a point $y$ such that \label{cond2}
	\begin{enumerate}[(a)]
		\item $B(y,\delta^\alpha) \subseteq B(\eta(t),\delta) \setminus \eta$ and $B(y,2\delta^\alpha) \cap \eta \neq \emptyset$.
		\item Let $O$ be the connected component of $B(\eta(t),\delta) \setminus \eta$ that contains $y$ and denote by $\eta(t;\delta)$ the excursion of $\eta$ in $B(\eta(t),\delta)$ which contains $\eta(t)$. For any point $a \in \partial O \setminus \eta(t;\delta)$, any path contained in $O \cup \{a\}$ which connects $y$ to $a$ must exit the ball $B(y,\delta^\xi)$.
	\end{enumerate}
\end{enumerate}
Moreover, chordal $\SLE_{\kappa}(\tfrac{\kappa}{2};\tfrac{\kappa}{2})$ restricted to any interval of time $[s,t]$ for $0 < s < t < \infty$ is absolutely continuous with respect to a chordal $\SLE_{\kappa}$ on the same interval. Furthermore, such a process is a.s.\ transient, so it is easy to see that~\eqref{cond1} and~\eqref{cond2} are satisfied for $\eta_1$ given $\eta_2$ and vice-versa. Thus both of $\eta_1$ and $\eta_2$ satisfy~\eqref{cond1} and~\eqref{cond2}. Let $\phi : \CA \setminus \wt{\eta}_2  \rightarrow \h$ be the conformal transformation mapping $\CA \setminus \wt{\eta}_2$ onto $\h$ and such that $\phi(\wt{\eta}_1(0)) = 0$, $\phi(\infty) = \infty$. Similarly, let $\wh{\phi} : \wh{\CA} \setminus \wh{\eta}_2 \rightarrow \h$ be the conformal transformation mapping $\wh{\CA} \setminus \wh{\eta}_2$ onto $\h$ and such that $\wh{\phi}(\wh{\eta}_1(0)) =0$, $\wh{\phi}(\infty) = \infty$. Then $g = \wh{\phi} \circ \psi \circ \phi^{-1}$ is a homeomorphism mapping $\h$ onto $\h$ which is conformal in $\h \setminus \phi(\wt{\eta}_1)$ and fixes $0$ and $\infty$. Clearly, both of $\wt{\eta}_1$ and $\wh{\eta}_1$  satisfy~\eqref{cond1} and~\eqref{cond2} when viewed as paths in $\CA \setminus \wt{\eta}_2$ and $\wh{\CA} \setminus \wh{\eta}_2$ respectively, since they are both parts of chordal $\SLE_{\kappa}(\tfrac{\kappa}{2};\tfrac{\kappa}{2})$ in certain domains. Since the property of satisfying~\eqref{cond1} and~\eqref{cond2} is preserved under conformal transformations, we obtain that they are both satisfied by $\phi(\wt{\eta}_1)$ and $g(\phi(\wt{\eta}_1)) = \wh{\phi}(\wh{\eta}_1)$. Therefore \cite[Theorem~1.2]{mmq2018uniqueness} implies that $g$ is conformal in $\h$. Since $\psi = \wh{\phi}^{-1} \circ g \circ \phi$ when restricted to $\CA \setminus \wt{\eta}_2$, it follows that $\psi$ is conformal in $\CA \setminus \wt{\eta}_2$ and a similar argument shows that $\psi$ is conformal in $\CA$. Therefore, $\CA$ and $\wh{\CA}$ are equivalent as path-decorated quantum surfaces. In particular, we have $F(h_L^1,h_R^1,A,B,h^2) = F(\wh{h}_L^1,h_R^1,A,B,h^2)$ a.s.\ and so $f(F(h_L^1,h_R^1,A,B,h^2)) = f(F(\wh{h}_L^1,h_R^1,A,B,h^2))$ for each bounded and measurable function $f$. This implies that the conditional variance of $f(F(h_L^1,h_R^1,A,B,h^2))$ given $(h_R^1,A,B,h^2)$ is zero. Therefore, 
\begin{align*}
f(F(h_L^1,h_R^1,A,B,h^2)) = \E[ f(F(h_L^1,h_R^1,A,B,h^2)) \,|\,h_R^1,A,B,h^2]\quad\text{a.s.}
\end{align*}
Since $f$ was arbitrary, it follows that $F(h_L^1,h_R^1,A,B,h^2)$ does not depend on $h_L^1$. Hence $(\CA,\wt{\eta}_1,\wt{\eta}_2)$ is determined by $(h_R^1,A,B,h^2)$ a.s. This completes the proof.
\end{proof}

We now turn to the tightness of law of $(A,B)$ given $E_{\epsilon,\delta}$.
\begin{lemma}\label{lem:length_bound}
For every $q \in (0,1)$ there exists $c > 0$ such that for every $\delta \in (0,1)$ we have that
\begin{align*}
	\p[ \max( \nu_{h^1}((-\infty,0]\times\{0\}),\nu_{h^1}((-\infty,0]\times\{\pi\}) ) \leq c \, | \, E_{\epsilon,\delta}] \geq 1-q.
\end{align*}
for all $\epsilon$ sufficiently small.
\end{lemma}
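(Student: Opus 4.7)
Plan. The strategy is to show $\E[A \mid E_{\epsilon,\delta}]$ and $\E[B \mid E_{\epsilon,\delta}]$ are bounded uniformly in $\delta \in (0,1)$ and (sufficiently small) $\epsilon$, and then to apply Markov's inequality with a union bound. To do so, decompose $h^1 = X^1 + h_2^1$ as the sum of its projections on $H_1(\strip)$ and $H_2(\strip)$, which are independent; the lateral part $h_2^1$ is horizontally translation invariant so $\E[\nu_{h_2^1}(dx)] = C_\gamma\,dx$ for some $C_\gamma \in (0,\infty)$, and the boundary measure factorizes as $\nu_{h^1}(dx \times \{0\}) = e^{\gamma X^1_x/2} \nu_{h_2^1}(dx)$. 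Since $E_{\epsilon,\delta}$ is $X^1$-measurable, Fubini gives
\[
\E\!\left[A\,\one_{E_{\epsilon,\delta}}\right] = C_\gamma \int_{-\infty}^0 \E\!\left[e^{\gamma X^1_x/2}\,\one_{E_{\epsilon,\delta}}\right] dx.
\]
As $\CW_1 \sim \qwedgeW{\gamma}{\gamma^2/2}$ is a $Q$-quantum wedge in the first-exit parameterization, $X^1_x = -R_{-2x}$ for $x \leq 0$ with $R$ a $\BES^3$ from $0$; setting $L_\delta = |\alpha \log \delta|$ and $T = 2\epsilon^{-\sigma}$, the event $E_{\epsilon,\delta}$ becomes $\{L \geq T\}$, where $L = \sup\{s \geq 0 : R_s = L_\delta\}$ is the (a.s.\ finite) last hitting time of $L_\delta$ by $R$. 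The change of variables $s = -2x$ then yields $\E[A\,\one_{E_{\epsilon,\delta}}] = \tfrac{C_\gamma}{2} \int_0^\infty \E[e^{-\gamma R_s/2}\,\one_{L \geq T}]\, ds$.

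The main step is to show this integral is of order $L_\delta/\sqrt{T}$. The strong Markov property, combined with the fact that $r \mapsto 1/r$ is harmonic for $\BES^3$ (so a $\BES^3$ from $r$ reaches a level $\ell \leq r$ with probability $\ell/r$), gives
\[
\p[L \geq T \mid R_s] = \E[\min(L_\delta/\wt R_{(T-s)_+},\, 1) \mid R_s],
\]
where $\wt R$ is a $\BES^3$ starting from $R_s$ and $(T-s)_+ = \max(T-s,0)$. From the explicit density $\rho_s(r) = \sqrt{2/\pi}\, r^2 s^{-3/2} e^{-r^2/(2s)}$ of $R_s$, straightforward Laplace-type asymptotics produce both $\E[e^{-\gamma R_s/2}] = O(s^{-3/2})$ as $s \to \infty$ and $\E[\min(L_\delta/R_u, 1)] \lesssim L_\delta/\sqrt{u}$ whenever $u \gg L_\delta^2$. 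Splitting the $s$-integration at $T/2$ and $T$, and using $T-s \asymp T$ on $[0, T/2]$ together with the first estimate to handle the tail past $T$, one obtains
\[
\int_0^\infty \E[e^{-\gamma R_s/2}\,\one_{L \geq T}]\, ds \;\lesssim\; L_\delta/\sqrt{T}.
\]

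A parallel computation with $s = 0$ yields $\p[E_{\epsilon,\delta}] = \E[\min(L_\delta/R_T, 1)] \sim \sqrt{2/\pi}\, L_\delta/\sqrt{T}$ as $T/L_\delta^2 \to \infty$. The $L_\delta$ factors cancel, producing $\E[A \mid E_{\epsilon,\delta}] \leq C$ for a constant $C$ independent of $\delta$ and of all $\epsilon$ small enough that $\epsilon^{-\sigma} \gg (\log \delta^{-1})^2$. The identical estimate for $B$ follows from the reflection symmetry of $\CW_1$ about $\{\im z = \pi/2\}$, and Markov's inequality together with a union bound then gives the conclusion for $c$ large enough depending only on $q$. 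The main obstacle is ensuring the sharp cancellation of the $L_\delta$ factor between numerator and denominator; this requires the $\BES^3$ estimates to match at the common order $L_\delta/\sqrt{T}$, reflecting the fact that under the conditioning $\{L \geq T\}$ the Bessel process still behaves essentially unconditionally on most of $[0,T]$, and only its behavior near time $T$ is atypical.
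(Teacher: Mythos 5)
Your overall plan — express the boundary length as an integral over $x$, use independence of the radial and lateral parts to reduce to a $\BES^3$ computation, and match the resulting estimate to $\p[E_{\epsilon,\delta}]$ — is a natural one, but there are two genuine gaps that the paper's argument is specifically designed to avoid.

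The more serious issue is your use of first moments. Writing $\E[\nu_{h_2^1}(dx)] = C_\gamma\,dx$ with $C_\gamma < \infty$ requires the boundary length measure to have a finite first moment. By Lemma~\ref{lem:pthmoment} (and as a general fact about boundary GMC), $\E[\nu_{h^1}([-1,0]\times\{0\})^p] < \infty$ only for $p < 4/\gamma^2$. For $\gamma = 2$ this means only $p < 1$, so the first moment is infinite and $C_\gamma = \infty$. But Lemma~\ref{lem:length_bound} is a sub-lemma of Lemma~\ref{lem:main_lemma_lbd}, which is stated and used for $\kappa \in (0,4]$, and the critical case $\kappa = 4$ ($\gamma = 2$) is precisely what feeds into the lower bound of Theorem~\ref{thm:sle4_thm} and Theorem~\ref{thm:jones_smirnov_sle4}. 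The paper handles this by working exclusively with $p$-th moments for $p < 1$, and by introducing the auxiliary event $A_{\epsilon,\delta} = \{X^1$ hits $-\wt c\,\epsilon^{-\sigma/2}$ on $[u^1_{\alpha,\delta},0)\}$: conditioning on $A_{\epsilon,\delta}$ fixes the geometry of the excursion enough that the quantum length of $(u^1_{\alpha,\delta}, 0]$ becomes stochastically dominated, piece by piece, by unconditioned quantities whose $p$-th moments are known to be finite. Any fix of your argument along these lines would have to replace the first moment by a $p<1$ moment, and since $x \mapsto \nu_{h^1}(dx)$ does not factorize over $x$ under a $p$-th power, this essentially forces a block decomposition and a conditioning device — at which point you are reconstructing the paper's proof.

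The second issue is the tail of your $s$-integral. Your identity $\p[L \geq T \mid R_s] = \E[\min(L_\delta/\wt R_{(T-s)_+},1)\mid R_s]$ is correct only for $s \leq T$: for $s > T$ the event $\{L \geq T\}$ depends on the path on $[T,s]$ as well as on the future, so it is not $\sigma(R_s)$-measurable, and the formula gives $\min(L_\delta/R_s,1) = \p[L \geq s \mid R_s]$, a strict underestimate. Your fallback — bounding $\E[e^{-\gamma R_s/2}\one_{L\geq T}]$ by $\E[e^{-\gamma R_s/2}] = O(s^{-3/2})$ for $s>T$ — loses the $L_\delta$ factor: it gives $\int_T^\infty s^{-3/2}\,ds \asymp T^{-1/2}$, which is of order $(L_\delta)^{-1}$ times the required $L_\delta/\sqrt T$. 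Since $L_\delta = \alpha\log(1/\delta)$ and the lemma is claimed uniformly over $\delta \in (0,1)$, this is not good enough near $\delta = 1$. The needed cancellation on $(T,\infty)$ is genuinely delicate; the paper avoids it entirely by splitting the interval $(-\infty,0]$ at $u^1_{\alpha,\delta}$ and at $T_{\epsilon,\delta}$ and using independence/stochastic domination rather than an explicit Laplace-transform computation.
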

\begin{proof}
For each $t \in \R$ we let $X_t^1$ be the average of $h^1$ on the vertical line $t + (0, i\pi)$.  Let $T_{\epsilon,\delta} = \inf\{ t \geq u_{\alpha,\delta}^1: X_t^1 = - \wt c \epsilon^{-\sigma/2} \}$ and $A_{\epsilon,\delta} = \{ T_{\epsilon,\delta} < 0 \}$, where $\wt c > 0$ is a fixed constant to be determined later and depending only on $q$. Then, by Gambler's ruin, $\p[A_{\epsilon,\delta}] = \alpha \epsilon^{\sigma/2} \log(1/\delta)/ \wt c$. By the form of the density of the first passage time of a Brownian motion, we have that $\p[E_{\epsilon,\delta}] \asymp \alpha \epsilon^{\sigma/2} \log(1/\delta)$ and $\p[E_{\epsilon,\delta} \, | \, A_{\epsilon,\delta}] \asymp 1$ uniformly in $\delta$ and $\epsilon$ small enough and the implicit constants are universal. In particular, there exists $d > 0$ depending only on $\wt c$ such that $\p[ E_{\epsilon,\delta} \,|\, A_{\epsilon,\delta} ] \geq d$ for all $\epsilon$ sufficiently small. Furthermore, we have that
\begin{align*}
\E[ \nu_{h^1}((-\infty,u_{\alpha,\delta}^1] \times \{0\})^p \, | \, A_{\epsilon,\delta} \cap E_{\epsilon,\delta} ] = \E[ \nu_{h^1}((-\infty,u_{\alpha,\delta}^1] \times \{0\})^p]
\end{align*}
since $A_{\epsilon,\delta} \cap E_{\epsilon,\delta}$ is determined by $(X_{t}^1)_{t > u_{\alpha,\delta}^1}$ and independent of $(X_t^1)_{t \leq u_{\alpha,\delta}^1}$ and $h_2^1$, and the latter two determine $\nu_{h^1}((-\infty,u_{\alpha,\delta}^1] \times \{0\})$. Consequently, for $0<p<1$, 
\begin{align*}
	&\E[ \nu_{h^1}((-\infty,0] \times \{0\})^p \, | \, A_{\epsilon,\delta} \cap E_{\epsilon,\delta}] \\
	&\leq \E[ \nu_{h^1}((-\infty,u_{\alpha,\delta}^1] \times \{0\})^p] + \E[ \nu_{h^1}((u_{\alpha,\delta}^1,0] \times \{0\})^p \, | \, A_{\epsilon,\delta} \cap E_{\epsilon,\delta}] \\
		&= \wt c_p \delta^{\alpha p \gamma/2} + \E[ \nu_{h^1}((u_{\alpha,\delta}^1,0] \times \{0\})^p \, | \, A_{\epsilon,\delta} \cap E_{\epsilon,\delta}].
\end{align*}
Moreover, the conditional law of $(-X_t^1)|_{[u_{\alpha,\delta}^1,T_{\epsilon,\delta}]}$ given $A_{\epsilon,\delta}$ is that of a $\BES^3$ process started from the point $\alpha \log(1/\delta)$ run at twice the speed until hitting the value $\wt c \epsilon^{-\sigma/2}$. Thus, the law of $\nu_{h^1}([u_{\alpha,\delta}^1,T_{\epsilon,\delta}] \times \{0\})$ conditioned on $A_{\epsilon,\delta}$ is the same as the (unconditional) law of $\nu_{h^1}([-T_{\epsilon,\delta}^2 , -T_{\delta}^1] \times \{0\})$ where
\begin{align*}
&T_{\delta}^1 = \inf\{ t \geq 0 : X_{-t}^1 = \alpha \log (\delta) \}, \\
&T_{\epsilon,\delta}^2 = \inf\{ t \geq T_{\delta}^1 : X_{-t}^1 = -\wt{c} \epsilon^{-\sigma / 2} \}.
\end{align*}
Furthermore, since $0 < T_\delta^1 < T_{\epsilon,\delta}^2 < \infty$ a.s., it follows that $\nu_{h^1}([-T_{\epsilon,\delta}^2,-T_\delta^1] \times \{0\})$ is stochastically dominated from above by $\nu_{h^1}((-\infty,0] \times \{0\})$. Hence, since $\p[ E_{\epsilon,\delta} \, | \, A_{\epsilon,\delta} ] \asymp 1$, we obtain that
\begin{align*}
\E[ \nu_{h^1}((u_{\alpha,\epsilon}^1 , T_{\epsilon,\delta}] \times \{0\})^p \, | \, A_{\epsilon,\delta} \cap E_{\epsilon,\delta} ] \lesssim \E[ \nu_{h^1}((-\infty,0] \times \{0\})^p]
\end{align*}
where the implicit constant is independent of $\epsilon$ and $\delta$. Thus, we obtain that
\begin{align*}
	 &\E[ \nu_{h^1}((u_{\alpha,\delta}^1,0] \times \{0\})^p \, | \, A_{\epsilon,\delta} \cap E_{\epsilon,\delta}] \\
	 &\leq \E[ \nu_{h^1}((u_{\alpha,\delta}^1,T_{\epsilon,\delta}] \times \{0\})^p \, | \, A_{\epsilon,\delta} \cap E_{\epsilon,\delta}] + \E[ \nu_{h^1}((T_{\epsilon,\delta},0] \times \{0\})^p \, | \, A_{\epsilon,\delta} \cap E_{\epsilon,\delta}] \\
	 &\lesssim \E[ \nu_{h^1}((-\infty,0] \times \{0\})^p] + \E[ \nu_{h^1}((T_{\epsilon,\delta},0] \times \{0\})^p \, | \, A_{\epsilon,\delta} \cap E_{\epsilon,\delta}] \\
	 &\leq \wt c_p + \E[ \nu_{h^1}((T_{\epsilon,\delta},0] \times \{0\})^p \, | \, A_{\epsilon,\delta} \cap E_{\epsilon,\delta}].
\end{align*}
Note that the conditional law of $\nu_{h^1}((T_{\epsilon,\delta},0] \times \{0\})$ given $A_{\epsilon,\delta}$ is the same as that of $\nu_{h^1}((\tau_{\epsilon},0]\times\{0\})$, where $\tau_\epsilon = \inf\{ t \in \R: X_t^1 = -\wt c \epsilon^{-\sigma/2} \}$. Consequently, since $\p[ E_{\epsilon,\delta} \, | \, A_{\epsilon,\delta}] \asymp 1$, we have that
\begin{align*}
	\E[ \nu_{h^1}((T_{\epsilon,\delta},0] \times \{0\})^p \, | \, A_{\epsilon,\delta} \cap E_{\epsilon,\delta}] \lesssim \E[ \nu_{h^1}( (-\infty,0] \times \{0\})^p] \lesssim 1,
\end{align*}
where the implicit constant is uniform in $\epsilon$ small enough. Therefore,
\begin{align*}
	\E[ \nu_{h^1}((-\infty,0] \times \{0\})^p \, | \, A_{\epsilon,\delta} \cap E_{\epsilon,\delta}] \leq \wt C,
\end{align*}
where $\wt C$ is independent of $\epsilon$, $\delta$ and depends only on $\wt c$. The same holds with $\nu_{h^1}((-\infty,0] \times \{0\})$ replaced by $\nu_{h^1}((-\infty,0] \times \{\pi\})$.  Note that
\begin{align*}
&\p[ \max(\nu_{h^1}((-\infty,0] \times \{0\}), \nu_{h^1}((-\infty,0] \times \{\pi\})) \leq c \,|\,E_{\epsilon,\delta} ]\\
& \geq \p[ \max(\nu_{h^1}((-\infty,0] \times \{0\}), \nu_{h^1}((-\infty,0] \times \{\pi\}))\leq c \,|\, E_{\epsilon,\delta} \cap A_{\epsilon,\delta} ] \p[ A_{\epsilon,\delta} \,|\, E_{\epsilon,\delta}]
\end{align*}
and so Markov's inequality implies that
\begin{align}\label{eq:1}
\p[ \max(\nu_{h^1}((-\infty,0] \times \{0\}), \nu_{h^1}((-\infty,0] \times \{\pi\})) \leq c \,|\, E_{\epsilon,\delta} ] \geq (1 - 2\wt C c^{-p}) \p[ A_{\epsilon,\delta} \,|\, E_{\epsilon,\delta}],
\end{align}
for sufficiently small $\epsilon > 0$. Hence we need to find a lower bound on $\p[ A_{\epsilon,\delta} \,|\, E_{\epsilon,\delta}]$. To do this, we start the average process at $u_{\alpha,\delta}^1$ and run it until the first time it hits $0$. Then it is a Brownian motion run at twice the speed and stopped the first time it hits $0$. Let $E_{\epsilon,\delta}^1$ be the event that $X^1$ takes time at least $\wt c^2 \epsilon^{-\sigma}$ to hit $0$ for the first time. Then $\p[ E_{\epsilon,\delta}^1]$ is of order $\wt c ^{-1} \epsilon^{\sigma / 2} \log \delta^{-1}$ and $E_{\epsilon,\delta} \subseteq E_{\epsilon,\delta}^1$ when $\wt c \in (0,1)$. Then $\p[ A_{\epsilon,\delta}^c , E_{\epsilon,\delta} ] = \p[ A_{\epsilon,\delta}^c, E_{\epsilon,\delta} \,|\,E_{\epsilon,\delta}^1 ] \p[E_{\epsilon,\delta}^1]$. Also we note that if we start time at $\wt c^2 \epsilon^{-\sigma}$ (so we again have a Brownian motion), then for $E_{\epsilon,\delta}$ we have another $(1-\wt c^2) \epsilon^{-\sigma}$ units of time to finish. We can break this into of order $\wt c^{-2}$ rounds of length $\wt c^2 \epsilon^{-\sigma}$. In each such round, the Brownian motion has a positive probability of exiting the interval $[-\wt c \epsilon^{-\sigma / 2} , 0]$. Therefore
\begin{align*}
\p[ A_{\epsilon,\delta}^c,E_{\epsilon,\delta} \,|\,E_{\epsilon,\delta}^1] \leq e^{-c_1/ \wt c^2}
\end{align*}
for some constant $c_1 > 0$. So altogether
\begin{align}\label{eq:2}
\p[ A_{\epsilon,\delta}^c \,|\,E_{\epsilon,\delta} ] &= \p[ A_{\epsilon,\delta}^c,E_{\epsilon,\delta}] / \p[ E_{\epsilon,\delta}]\leq e^{-c_2/ \wt c^2}
\end{align}
for some universal constant $c_2 >0$. Therefore by~\eqref{eq:1} and~\eqref{eq:2}, if we first pick $\wt c \in (0,1)$ such that $1 - e^{-c_2/ \wt c^2} \geq \sqrt{1-q}$ and then pick $c>0$ such that $(1-2\wt C c^{-p}) \geq \sqrt{1-q}$, we complete the proof.
\end{proof}

With Lemma~\ref{lem:length_bound} at hand, we can lower bound the conditional density of $(A,B)$ given $E_{\epsilon,\delta}$ and $h_R^1$.
\begin{lemma}
\label{lem:ab_density}
Let $f_{\epsilon,\delta}(a,b \,| \,h_R^1)$ be the conditional density of $(A,B)$ given $E_{\epsilon,\delta}, h_R^1$. Then for each $\xi \in (0,1)$ there exist finite constants $c > 0$ and $0 < a_1 < a_2$, $0 < b_1 < b_2$ depending only on $\xi$ and $\gamma$ such that for all $\delta \in (0,1)$ and all $\epsilon > 0$ sufficiently small, with probability at least $1 - \xi$ we have that
\begin{align*}
f_{\epsilon,\delta}(a,b \,|\, h_{R}^{1}) \geq c \quad \text{for all} \quad (a,b) \in [a_1,a_2] \times [b_1,b_2]
\end{align*}
\end{lemma}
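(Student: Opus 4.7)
The plan is to construct a Cameron--Martin perturbation of $h^1$ that (i) leaves $h_R^1$ unchanged, (ii) leaves the vertical-line-average process $X^1$ (and hence the event $E_{\epsilon,\delta}$) unchanged, and (iii) deforms $(A,B)$ in a smooth, invertible way. The structural observation enabling this is the decomposition $h^1 = X^1_{\re(\cdot)} + h_2^1$ with $X^1$ and $h_2^1$ independent: $E_{\epsilon,\delta}$ is $\sigma(X^1)$-measurable, while $(A,B)$ senses both $X^1$ and the boundary trace of the lateral part $h_2^1$. A perturbation supported in $[-2,-1]\times[0,\pi]$ (including the horizontal boundary arcs) with mean zero on every vertical line therefore only affects $h_2^1$ (keeping $X^1$ and $E_{\epsilon,\delta}$ intact), and being strictly to the left of $\{\re(z)=1\}$ also leaves $h_R^1$ intact.

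Concretely, fix smooth $\chi_1,\chi_2$ supported in $[-2,-1]\times[0,\pi]$ with $\int_0^\pi \chi_j(t,y)\,dy = 0$ for all $t$, with $\chi_1(\cdot,\pi)\equiv 0$ and $\chi_1(\cdot,0)$ non-negative and strictly positive on a subinterval of $[-2,-1]$, and with the mirror property for $\chi_2$. For $s=(s_1,s_2)$ small set $h_s^1 = h^1 + s_1\chi_1 + s_2\chi_2$. A direct computation from the definition of the quantum boundary measure gives $A(h_s^1) = A(h^1) + \int_{-2}^{-1}(e^{\gamma s_1 \chi_1(t,0)/2}-1)\,d\nu_{h^1,\mathrm{bot}}(t)$, and analogously for $B(h_s^1)$. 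On the event that both $\nu_{h^1,\mathrm{bot}}([-2,-1])$ and $\nu_{h^1,\mathrm{top}}([-2,-1])$ exceed some $m>0$, the map $s\mapsto (A(h_s^1),B(h_s^1))$ is a smooth diffeomorphism from a neighborhood of the origin to a neighborhood of $(A(h^1),B(h^1))$ of uniform size, with Jacobian bounded below by a constant depending only on $m$ and the $\chi_j$'s. The Cameron--Martin formula for the free boundary GFF applied to $h_2^1$ shows that the law of $h_s^1$ is mutually absolutely continuous with that of $h^1$, with Radon--Nikodym derivative $\exp((h_2^1,\sum_j s_j\chi_j)_\nabla - \tfrac{1}{2}\|\sum_j s_j\chi_j\|_\nabla^2)$, which is bounded above and below for $s$ in a small compact set on a high-probability event. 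Because $\chi_j\in H_2(\strip)$ and is supported away from $\{\re(z)=1\}$, these absolute continuity relations persist after conditioning on $E_{\epsilon,\delta}$ (by independence of $X^1$ and $h_2^1$) and on $h_R^1$. A standard change-of-variables then produces the conditional density $f_{\epsilon,\delta}(\cdot\mid h_R^1)$ together with the pointwise lower bound $f_{\epsilon,\delta}(a,b\mid h_R^1)\geq c_m$ on a neighborhood of $(A(h^1),B(h^1))$ of uniform size.

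To pass from this pointwise bound to a fixed rectangle, I would invoke Lemma~\ref{lem:length_bound}: for any $\xi\in(0,1)$ there is $c_0=c_0(\xi)$ with $\p[\max(A,B)\leq c_0\mid E_{\epsilon,\delta}]\geq 1-\xi/4$, uniformly in $\delta,\epsilon$. A parallel argument, exploiting that $\nu_{h^1,\mathrm{bot}}([-2,-1])$ and $\nu_{h^1,\mathrm{top}}([-2,-1])$ depend on $h_2^1$ near the horizontal boundary and are independent of $X^1$ (and essentially of the data $h_R^1$ near $\{\re(z)=1\}$, up to a bounded Markov correction), gives $m=m(\xi)>0$ such that $\nu_{h^1,\mathrm{bot}}([-2,-1]),\nu_{h^1,\mathrm{top}}([-2,-1])\geq m$ with conditional probability at least $1-\xi/4$. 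Shrinking the uniform neighborhood of $(A,B)$ from the previous paragraph so that it sits inside a fixed $[a_1,a_2]\times[b_1,b_2]\subset(0,\infty)^2$ then yields the claim with probability at least $1-\xi$ over $h_R^1$. The main technical hurdle I expect is rigorously justifying the Cameron--Martin change-of-variables under the double conditioning on $h_R^1$ and $E_{\epsilon,\delta}$; the two independence/localization properties highlighted above are exactly what makes the Radon--Nikodym derivative behave uniformly in $\delta$ and $\epsilon$.
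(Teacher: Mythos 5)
Your overall architecture matches the paper's: isolate a two-dimensional Gaussian direction in the lateral part $h_2^1$, localized away from $\{\re(z)=1\}$ so that $h_R^1$ is undisturbed and with mean zero on vertical lines so that $X^1$ (hence $E_{\epsilon,\delta}$) is undisturbed, then get a density lower bound for $(A,B)$ from the regularity of the resulting change of variables. The paper phrases this by writing $h^1=\alpha_1\phi_1+\alpha_2\phi_2+\wt h$ with $\alpha_1,\alpha_2$ i.i.d.\ $N(0,1)$ and computing the conditional density of $A$ given $\wt h$ as a pushforward of the standard Gaussian; your Cameron--Martin shift $h^1\mapsto h^1+s_1\chi_1+s_2\chi_2$ is mathematically the same device.

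The genuine gap is the choice of support and the independence claim you use to support it. You place $\chi_1,\chi_2$ in $[-2,-1]\times[0,\pi]$ and then assert that $\nu_{h^1,\mathrm{bot}}([-2,-1])$ and $\nu_{h^1,\mathrm{top}}([-2,-1])$ ``depend on $h_2^1$ near the horizontal boundary and are independent of $X^1$.'' That is false: the boundary length measure of $h^1=X^1_{\re(\cdot)}+h_2^1$ on $[-2,-1]\times\{0\}$ is, loosely, $\int_{-2}^{-1}e^{\gamma X^1_t/2}\,d\nu_{h_2^1}(t)$, and it depends on $X^1|_{[-2,-1]}$ exactly as much as on $h_2^1$. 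Since $[-2,-1]\subset(-\infty,0]$ is precisely the region whose law is tilted by conditioning on $E_{\epsilon,\delta}$ (the conditioning pushes $X^1|_{(-\infty,0]}$ to more negative values), your Jacobian lower bound needs a statement of the form $\p[\nu_{h^1,\mathrm{bot}}([-2,-1])\geq m\mid E_{\epsilon,\delta}]\geq 1-\xi/4$ uniformly in $\delta$ and small $\epsilon$, and this does not follow from the stated independence, nor from Lemma~\ref{lem:length_bound} (which gives only a conditional \emph{upper} bound on boundary lengths). As written you have no control on the conditional law of the perturbation's leverage, so the argument as stated does not close.

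The fix is precisely what the paper does: place the bump functions in $[0,1]\times[0,\pi]$ instead. Then the quantity $F_j'(\alpha)=\int_0^1\tfrac{\gamma}{2}\phi_j\,e^{\alpha\gamma\phi_j/2}\,d\nu_{\wt h}$ that controls the Jacobian depends only on $X^1|_{[0,1]}$ and $h_2^1|_{[0,1]\times[0,\pi]}$, both of which are independent of $X^1|_{(-\infty,0]}$ and hence of $E_{\epsilon,\delta}$. Consequently no conditional lower bound on a boundary length is ever needed; the only conditional input is the upper bound on $\nu_{h^1}((-\infty,0]\times\{0,\pi\})$, which is exactly Lemma~\ref{lem:length_bound}. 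If you move your $\chi_j$ to $[0,1]\times[0,\pi]$, your Cameron--Martin write-up goes through with only the paper's single conditional ingredient.
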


We remark that the conditional law of $h_R^1$ given $E_{\epsilon,\delta}$ is equal to its unconditioned law.  The statement of Lemma~\ref{lem:ab_density} is a statement about the conditional density $f_{\epsilon,\delta}(a,b \,|\, h_R^1)$ of $(A,B)$ given $E_{\epsilon,\delta}$ and $h_R^1$, which is a function of $h_R^1$.  The probability that we have in mind for $h_R^1$ is the unconditioned law of $h_R^1$, however there is actually no distinction.

\begin{proof}[Proof of Lemma~\ref{lem:ab_density}]
Let $\phi_1$, $\phi_2$ be $C^\infty$ with $\| \phi_1 \|_\nabla = \| \phi_2 \|_\nabla = 1$, which have disjoint support contained in $[0,1] \times [0,\pi]$.  We further assume that $\phi_1 \geq 0$ on $[0,1] \times \{0\}$, $\phi_1 > 0$ on $[1/4,3/4] \times \{0\}$, and $\phi_1$ has mean-zero on vertical lines, i.e., $\phi_1 \in H_2(\strip)$.  We assume that the same holds for $\phi_2$ but with $[0,1] \times \{0\}$ and $[1/4,3/4] \times \{0\}$ replaced by $[0,1] \times \{\pi\}$ and $[1/4,3/4] \times \{\pi\}$.  Note that we can write $h^1 = \alpha_1 \phi_1 + \alpha_2 \phi_2 + \wt{h}$ where $\alpha_1,\alpha_2$ are independent $N(0,1)$ (since we have $(\phi_1,\phi_2)_\nabla = 0$) and $\wt{h}$ is independent of $\alpha_1,\alpha_2$.  Moreover, $h_R^1$ and $E_{\epsilon,\delta}$ are determined by $\wt{h}$.  Then we have that
\[ A = \nu_{h^1}((-\infty,1] \times \{0\}) = \nu_{h^1}((-\infty,0] \times \{0\}) + \int_0^1 e^{\alpha_1 \gamma \phi_1(x)/2} d \nu_{\wt{h}}(x).\]
In particular, the conditional law of $A$ given $\wt{h}$ is given by applying the function
\[ F_1(\alpha) = \nu_{h^1}((-\infty,0] \times \{0\}) + \int_0^1 e^{\alpha \gamma \phi_1(x)/2} d \nu_{\wt{h}}(x)\]
to a $N(0,1)$ random variable (independent of $\wt{h}$).  We note that
\[ F_1'(\alpha) = \int_0^1 \frac{\gamma}{2} \phi_1(x) e^{\alpha \gamma \phi_1(x)/2} d\nu_{\wt{h}}(x).\]
The law of $B$ admits a similar expression, say in terms of the function $F_2$.  Note that given $\wt{h}$ we have that $A,B$ are independent with conditional densities given by $\rho_1(\cdot \,|\, \wt h)$ and $\rho_2(\cdot \,|\, \wt h)$ respectively, where $\rho_1(x \,|\, \wt h) = 0$ if $x < \nu_{h^1}((-\infty,0] \times \{0\})$ and
\[ \rho_1(x \,|\, \wt h) = (F_1^{-1})'(x) \frac{\exp(-F_1^{-1}(x)^2 / 2)}{\sqrt{2\pi}} \quad\text{for}\quad x \geq \nu_{h^{1}}((-\infty,0] \times \{0\}).\]
We define $\rho_2(\cdot \,|\, \wt h)$ similarly but with $F_1$ and $\nu_{h^1}((-\infty,0] \times \{0\})$ replaced by $F_2$ and $\nu_{h^1}((-\infty,0] \times \{\pi\})$.  By the conditional independence of $A,B$ given $\wt{h}$, we have that the conditional density of $(A,B)$ given $\wt{h}$ is $\rho(x,y \,|\, \wt h) = \rho_1(x \,|\, \wt h) \rho_2(y \,|\, \wt h)$.

Next we fix $\xi \in (0,1)$ and for $c,d,M > 0$ (to be determined) depending only on $\xi$ we consider the events $V = V_1 \cap V_2$  and $F = \{ \E[ \one_V \,|\,E_{\epsilon,\delta},h_{R}^1 ] \geq 1 - \xi \}$, where 
\begin{align*}
V_1 &= \{ \max(\nu_{h^1}((-\infty,0] \times \{0\}),\ \nu_{h^1}((-\infty,0] \times \{\pi\}) ) \leq d \} \quad\text{and}\\
V_2 &= \{ [d,2d] \subseteq F_j([-M,M]),\ F_j'|_{[-M,M]} \leq c \  \text{for} \  j=1,2\}.
\end{align*}
Note that since $\max(\nu_{h^1}((-\infty,0] \times \{0\}) , \nu_{h^1}((-\infty,0] \times \{\pi\})) < \infty$, $F_j$ is continuous and $\lim_{x \to -\infty}F_1(x) = \nu_{h^1}((-\infty,0] \times \{ 0 \})$, $\lim_{x \to -\infty}F_2(x) = \nu_{h^1}((-\infty,0] \times \{ \pi \})$ and $\lim_{x \to +\infty}F_j(x) = +\infty$ for $j = 1,2$ a.s., we obtain that $\p[V]$ can be made to be sufficiently close to $1$ by choosing $c,d,M$ appropriately. Therefore
\begin{align*}
   \p[ F^c ]
&= \p[ \E[ \one_V \,|\, E_{\epsilon,\delta}, h_R^1] \leq 1-\xi ]
 = \p[ \E[ \one_{V^c} \,|\, E_{\epsilon,\delta}, h_R^1] \geq \xi ]\\
&\leq \frac{\E[\E[\one_{V^c} \,|\, E_{\epsilon,\delta}, h_R^1]]}{\xi}
  = \frac{\p[V^c \, | \, E_{\epsilon,\delta}]}{\xi}.
\end{align*}
In the final equality above, we have used that law of $h_R^1$ is equal to the law of $h_R^1$ conditioned on $E_{\epsilon,\delta}$.  Lemma~\ref{lem:length_bound} then implies that the right hand side can be made to be at most $\xi$ by choosing $c,d,M > 0$ sufficiently large.  Altogether, for this choice we obtain that $\p[ F ] \geq 1 - \xi$ uniformly in $\delta \in (0,1)$ and as $\epsilon \to 0$.  Observe that on $V$ we have that $\rho_j(x \,|\, \wt h) \geq e^{-M^2 / 2} / ( c\sqrt{2\pi} )$ for all $x \in [d,2d]$ and $j=1,2$, hence $\rho(x,y \,|\,\wt h) \geq e^{-M^2} / ( c^2 2\pi )$ for all $x,y \in [d,2d]$. Note also that $f_{\epsilon,\delta}(a,b \,|\, h_{R}^1) = \E[ \rho(a,b \,|\, \wt h)\,|\,E_{\epsilon,\delta},h_{R}^1]$ for all $a,b > 0$ a.s. Thus we obtain that on $F$,
\begin{align*}
f_{\epsilon,\delta}(a,b\,|\, h_{R}^1) \geq \E[ \rho(a,b \,|\,\wt h)\one_V \,|\,E_{\epsilon,\delta},h_{R}^1] \geq \frac{e^{-M^2}}{c^2 2\pi} \E[ \one_V \,|\,E_{\epsilon,\delta},h_R^1] \geq \frac{(1-\xi) e^{-M^2}}{c^2 2\pi}
\end{align*}
for all $a,b \in [d,2d]$. This completes the proof.
\end{proof}

\begin{lemma}
\label{lem:q_prob_from_below}
There exists $p_0 \in (0,1)$ and $0 < r < s$, $0 < u < v$ and $C > 1>D>0$ so that $\p[ Z \giv E_{\epsilon,\delta}] \geq p_0$ for all $\delta \in (0,1)$ and all $\epsilon > 0$ sufficiently small.
\end{lemma}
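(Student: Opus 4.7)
The approach is to combine Lemma~\ref{lem:good} (which makes $\p[G]$ close to $1$ under $\p$) with Lemma~\ref{lem:ab_density} (which lower bounds the conditional density $f_{\epsilon,\delta}(a,b\giv h_R^1)$ of $(A,B)$) by transferring through a Lebesgue-measure bound on the rectangle $R = [a_1,a_2]\times[b_1,b_2]$. The basic structural observation is this: by Lemma~\ref{lem:unique_welding}, $G$ is $\sigma(A,B,h_R^1,h^2)$-measurable, and $h^2$ is independent of $(h^1,E_{\epsilon,\delta})$. Therefore, setting $\psi(a,b,h_R^1):=\p[G\giv A=a,B=b,h_R^1]$, we have $\p[G\giv A,B,h_R^1,E_{\epsilon,\delta}] = \psi(A,B,h_R^1)$ and thus $\p[Q\giv E_{\epsilon,\delta}] = \p[\psi(A,B,h_R^1)\geq 1/4\giv E_{\epsilon,\delta}]$. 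Moreover, $h_R^1$ is measurable with respect to $X^1|_{[1,\infty)}$ (a piece of the Brownian part of the first-exit parameterization on $[0,\infty)$) together with $h_2^1|_{[1,\infty)\times(0,\pi)}$, while $E_{\epsilon,\delta}$ depends only on the independent $\BES^3$ part of $X^1$ on $(-\infty,0]$; hence $h_R^1\perp E_{\epsilon,\delta}$, so the marginal law of $h_R^1$ under $\p[\,\cdot\giv E_{\epsilon,\delta}]$ is unchanged.

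The first key step is to make $\psi$ close to $1$ pointwise with high probability. By Lemma~\ref{lem:good}, for any fixed $\eta>0$ we may pick $r,u,D$ small and $s,v,C$ large so that $\p[G]\geq 1-\eta$, which gives $\E[1-\psi]\leq\eta$. Markov's inequality applied to $\E[1-\psi\giv h_R^1]$ then yields an event $F_0\in\sigma(h_R^1)$ with $\p[F_0]\geq 1-\sqrt{\eta}$ on which $\E[1-\psi\giv h_R^1]\leq\sqrt{\eta}$. Secondly, I need an unconditional analogue of Lemma~\ref{lem:ab_density}: an event $F_1\in\sigma(h_R^1)$ with $\p[F_1]\geq 1-\xi$ on which the unconditional density $f(a,b\giv h_R^1)$ of $(A,B)$ given $h_R^1$ satisfies $f(a,b\giv h_R^1)\geq c'$ on $R$. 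This is obtained by repeating the argument in the proof of Lemma~\ref{lem:ab_density} \emph{without} conditioning on $E_{\epsilon,\delta}$: the event $V=V_1\cap V_2$ from that proof has unconditional probability close to $1$ for appropriate $c,d,M$ (the required tail bound on $\nu_{h^1}((-\infty,0]\times\{0,\pi\})$ is easier than Lemma~\ref{lem:length_bound}, since no conditioning is present), and an application of Markov's inequality gives the required $F_1$.

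Let $F_2\in\sigma(h_R^1)$ denote the event from Lemma~\ref{lem:ab_density} and set $F=F_0\cap F_1\cap F_2$. Choosing $\eta$ and $\xi$ small enough so that $\sqrt{\eta}+2\xi\leq 1/2$, we have $\p[F]\geq 1/2$. On $F$, the bound $\E[1-\psi\giv h_R^1]\leq\sqrt{\eta}$ combined with the unconditional density lower bound gives
\[
\sqrt{\eta}\;\geq\;\E[1-\psi\giv h_R^1]\;=\;\int(1-\psi(a,b,h_R^1))\,f(a,b\giv h_R^1)\,da\,db\;\geq\;c'\int_R(1-\psi(a,b,h_R^1))\,da\,db,
\]
so the Lebesgue measure of $\{(a,b)\in R:\psi(a,b,h_R^1)<1/2\}$ is at most $2\sqrt{\eta}/c'$, which we may arrange to be at most $\tfrac12\,\mathrm{Leb}(R)$. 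Using the conditional density lower bound on $F_2$,
\[
\p[Q\giv E_{\epsilon,\delta},h_R^1]\;\geq\;\p[\psi\geq 1/2\giv E_{\epsilon,\delta},h_R^1]\;\geq\;c\,\mathrm{Leb}\{(a,b)\in R:\psi\geq 1/2\}\;\geq\;\tfrac{c}{2}\,\mathrm{Leb}(R).
\]
Integrating over $h_R^1$ and using $\p[F\giv E_{\epsilon,\delta}]=\p[F]\geq 1/2$ gives $\p[Q\giv E_{\epsilon,\delta}]\geq\tfrac{c}{4}\,\mathrm{Leb}(R)=:p_0$, uniformly in $\delta\in(0,1)$ and all sufficiently small $\epsilon>0$.

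The main obstacle is that $\p[\,\cdot\giv E_{\epsilon,\delta}]$ is not absolutely continuous with respect to $\p$ with bounded Radon--Nikodym derivative (since $\p[E_{\epsilon,\delta}]\asymp\epsilon^{\sigma/2}\log\delta^{-1}\to 0$), so one cannot directly transfer the unconditional bound $\p[G^c]\leq\eta$ through the conditioning. The strategy circumvents this by routing through Lebesgue measure on the fixed rectangle $R$: the unconditional density lower bound lets an average bound under $\p$ become a Lebesgue-measure statement on $R$, and the conditional density lower bound from Lemma~\ref{lem:ab_density} then converts this back into a conditional probability statement under $\p[\,\cdot\giv E_{\epsilon,\delta}]$.
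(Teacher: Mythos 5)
Your proposal is correct, and it follows the same high-level strategy as the paper: combine Lemma~\ref{lem:good}, Lemma~\ref{lem:ab_density}, and the independence $h_R^1 \perp E_{\epsilon,\delta}$, routing everything through Lebesgue measure on the rectangle $R$. The one genuine structural difference is how the \emph{unconditional} conditional density $g(a,b\giv h_R^1)$ of $(A,B)$ given $h_R^1$ is handled. You establish a quantitative lower bound $g \geq c'$ on $R$ on a high-probability $\sigma(h_R^1)$-event $F_1$, via an unconditional variant of Lemma~\ref{lem:ab_density}; this variant is indeed routine, since without the conditioning on $E_{\epsilon,\delta}$ the tail bound on $\nu_{h^1}((-\infty,0]\times\{0,\pi\})$ follows directly from Markov and Lemma~\ref{lem:pthmoment} rather than from Lemma~\ref{lem:length_bound}. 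The paper avoids proving any lower bound on $g$: it writes the Radon--Nikodym derivative $\CZ = f_{\epsilon,\delta}/g$, truncates to $X = \{(a,b)\in R : g(a,b\giv h_R^1)\leq M\}$ (where $\CZ\geq c/M$ and where $\mathrm{Leb}(X)\geq\tfrac12\mathrm{Leb}(R)$ automatically since $\int_R g\leq 1$), and then only needs that the random variable $Y = \int_X g\,da\,db$ is $(\epsilon,\delta)$-independent and a.s.\ positive, so one can pick $q$ with $\p[Y>q]\geq 3/4$ at the very end. The paper's route is a bit more economical, avoiding the extra lemma; yours produces an explicit density constant rather than a qualitatively-chosen $q$. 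Both are valid, and you handle the order-of-quantifiers issue correctly: first fix $\xi$, obtain $R$, $c$, $c'$ from the two density lemmas, then shrink $\eta$ (and hence choose the parameters of $G$) so that $2\sqrt{\eta}/c' \leq \tfrac12\mathrm{Leb}(R)$ and $\sqrt{\eta}+2\xi\leq 1/2$.
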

\begin{proof}
Let $\CZ$ be the Radon-Nikodym derivative between the joint law of $(A,B,h_R^1)$ given $E_{\epsilon,\delta}$ with respect to the unconditioned law of $(A,B,h_R^1)$.  Then we have that
\begin{align*}
      \p[ Z \giv E_{\epsilon,\delta}]
&=  \E[ \one_{\{ \p[ G \giv A,B,h_R^1] \geq 1/4 \}} \CZ(A,B,h_R^1)].
\end{align*}
Moreover,
\begin{align*}
\CZ (A,B,h_{R}^1) = \frac{f_{\epsilon,\delta}(A,B \,|\, h_{R}^1)}{g(A,B \,|\, h_{R}^1)},
\end{align*}
where $f_{\epsilon,\delta}(\cdot \,|\, h_{R}^1)$ is as in Lemma~\ref{lem:ab_density} and $g(\cdot \,|\,h_{R}^1)$ is the conditional density of the law of $(A,B)$ given~$h_{R}^1$. Let $H$ be the event that 
\begin{align*}
f_{\epsilon,\delta}(a,b \,|\, h_{R}^1) \geq c \quad \text{for all} \quad (a,b) \in [a_1,a_2] \times [b_1,b_2]
\end{align*}
where $a_1,a_2,b_1,b_2,c$ are as in Lemma~\ref{lem:ab_density} for $\xi= \frac{1}{2}$. Note that $H$ is determined by $h_{R}^1$. Fix $M > 0$ large such that $M^{-1} \leq (a_2-a_1)(b_2-b_1)/2$ and set 
\begin{align*}
X = \{ (a,b) \in [a_1,a_2] \times [b_1,b_2]: g(a,b \,|\,h_{R}^1) \leq M\}.
\end{align*}
Then
\begin{align*}
\p[ Z \,|\,E_{\epsilon,\delta}] &\geq \frac{1}{M}\E[ \one_Z \one_H \one_{\{(A,B) \in X\}} f_{\epsilon,\delta}(A,B \,|\,  h_{R}^1)] \geq \frac{c}{M} \E[ \one_Z \one_H \one_{\{(A,B) \in X\}} ], 
\end{align*}
so it suffices to give a lower bound on $\E[ \one_Z \one_H \one_{\{(A,B) \in X \}}]$. Fix $\xi \in (0,1)$ independent of $\delta,\epsilon$ (to be chosen later) and assume that we have chosen the parameters for $G$ so that, by Lemma~\ref{lem:good}, $\p[ Z ] \geq 1 - \xi$. Then we have
\begin{align*}
\E[ \one_Z \one_H \one_{\{(A,B) \in X\}}] \geq \E[ \one_H \one_{\{(A,B) \in X \}}] - \xi.
\end{align*}
Since $X$ is determined by $h_R^1$, we have
\begin{align*}
\E[ \one_H \one_{\{(A,B) \in X \}}] = \E\!\left[ \one_H \int_{X} g(a,b \,|\,h_{R}^1) da db \right].
\end{align*}
Note that $Y = \int_{X} g(a,b \,|\,h_{R}^1) da db$ does not depend on $\epsilon$ and $Y > 0$ a.s.\ since $\text{Leb}(X) \geq (a_2-a_1)(b_2-b_1) / 2$ and $g(a,b \, |\, h_R^1) > 0$ for a.e.\ $a,b$ a.s.\ on $H$. Fix $q > 0$ and let $Z_q = \{ Y > q \}$. By making $q > 0$ sufficiently small we can assume that $\p[ Z_q ] \geq 3/4$.  Hence,
\begin{align*}
\E[ Y \one_H ] - \xi &\geq q \p[ Z_q \cap H] - \xi
 \geq q / 4 -\xi \geq q/8
\end{align*}
for $\xi > 0$ sufficiently small. Note that $q > 0$ is independent of $\delta,\epsilon$ and so this completes the proof.
\end{proof}

\begin{remark}
We note that Lemmas~\ref{lem:qtimeUB} and~\ref{lem:LBcone}  hold with the smooth canonical description in place of the circle-average embedding. This will be used in what follows.
\end{remark}

\begin{lemma}
\label{lem:lbd}
For any $C > 1$ (as in the definition of the $(r,s,u,v,C,D)$-stability of the surface $\CA$), there exists $b > 0$ such that the following holds. Fix $\xi \in (0,1)$. Then there exist $\delta_0 \in (0,1)$ and $C_\xi > 0$ depending only on $\xi$ such that the following holds. Let $\wh \CW$ denote the surface parameterized by the part of $h^1$ which is to the left of the line $\{z: \re(z) = u_{b,\delta}^1 \}$ and by $\CW_2$. For $\delta \in (0,\delta_0)$ let $H_{C,C_\xi,b,\delta}$ be the event that with probability at least $\exp(-C_\xi b^2 (\log\delta)^2)$, a Brownian motion starting from $0$ exits $\wh \CW$ in the part of the boundary of $\CW_2$ which has boundary length distance at least $C$ from either $\wt \eta_1(0)$ or $\wt \eta_2(0)$. Then $\p[ H_{C,C_\xi,b,\delta} ] \geq 1 - \xi$.
\end{lemma}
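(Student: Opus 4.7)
The plan is to use conformal invariance of Brownian motion together with the macroscopic control from Lemma~\ref{lem:good} and the $\BES^3$ scaling of the average on vertical lines process $X^1$, and then to conclude with a Gambler's ruin estimate in $\strip$.

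First, I would apply Lemma~\ref{lem:good} to choose parameters $r<s$, $u<v$, $C>1>D>0$ so that the $(r,s,u,v,C,D)$-stability event $G$ has probability at least $1-\xi/3$. On $G$, we have $\CC\setminus \CA \subseteq B(0,s/2)$ and the portion of $\partial \CW_2$ at boundary length distance $\geq C$ from $\wt{\eta}_j(0)$ lies entirely in the ``far'' part of $\CA$. Next, since $(-X^1_{-t/2})_{t\geq 0}$ is a standard $\BES^3$ started from $0$, Brownian scaling and standard tail bounds for $\BES^3$ first-hitting times give a constant $C_1>0$ and a threshold $\delta_0\in(0,1)$ depending only on $\xi$ such that, for $\delta \in(0,\delta_0)$, the event $F = \{|u^1_{b,\delta}|\leq C_1 b^2 (\log\delta)^2\}$ has probability at least $1-\xi/3$. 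An $L^p$-moment estimate in the spirit of Lemma~\ref{lem:pthmoment} applied to the field $h^1$ on $(u^1_{b,\delta},1]\times(0,\pi)$ yields a further event $F'$ of probability at least $1-\xi/3$ on which the conformal contribution of this intermediate region is $O(1)$ compared to $|u^1_{b,\delta}|$.

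On $G\cap F\cap F'$ (probability at least $1-\xi$), I would estimate the exit probability by conformally mapping $\wh{\CW}$ to $\strip$. Let $\psi\colon \wh{\CW}\to \strip$ be a conformal transformation chosen so that the common infinity of the two wedges maps to $+\infty$ and the ``midpoint'' of the cut arc maps to $-\infty$; the cone point $0$ then maps to some interior point of $\strip$. The key claim is that on our good event, the image under $\psi$ of the target boundary (the portion of $\partial \CW_2$ at boundary length distance $\geq C$ from $\wt{\eta}_j(0)$) contains a fixed-sized arc of $\partial \strip$ with real part in some interval $[L_2,L_2+1]$, where $L_2 \leq C_1 b^2 (\log\delta)^2 + O(1)$. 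This is the conformal manifestation of the fact that the ``extra'' region $\CW_1^{\mathrm{inside}}$ attached to $\CC\setminus \CA$ (compared with the standard welded surface $\CA$) has conformal length of order $|u^1_{b,\delta}|$ when viewed from $\CA$, and item~\eqref{it:cut_glue_good} in the definition of stability ensures that this distance is not further distorted by the welding. Meanwhile, $\psi(0)$ lies in a compact subset of $\strip$ that is bounded independently of $\delta$, again by the stability of $\CA$.

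The lower bound then follows from the Gambler's ruin estimate in $\strip$ (see, e.g., \cite[Lemma~IV.5.1]{gm05harmonic}): for a Brownian motion started from a bounded interior point, the probability of first exiting $\strip$ at a point of $\partial \strip$ with real part in $[L_2,L_2+1]$ is at least $c_0 e^{-L_2}$ for some universal $c_0>0$. Substituting $L_2 \leq C_1 b^2 (\log \delta)^2 + O(1)$ yields the claimed $\exp(-C_\xi b^2 (\log \delta)^2)$ lower bound on the exit probability. The main obstacle is verifying the conformal length estimate of the previous paragraph: one must rule out that the welding along the vertical segment $\{u^1_{b,\delta}\}\times(0,\pi)$ creates a conformal bottleneck much longer than $|u^1_{b,\delta}|$, and this is accomplished by combining the stability of $\CA$ (which controls the embedding past the $\re(z)=1$ cut) with direct estimates on the field $h^1$ to the left of $\re(z)=1$ via the structure of $X^1$.
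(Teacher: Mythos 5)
Your proposal captures the right heuristic — the $\mathrm{BES}^3$ first-hitting time of $b\log\delta$ is of order $b^2(\log\delta)^2$, and a gambler's-ruin estimate in $\strip$ then produces the exponent $\exp(-C_\xi b^2(\log\delta)^2)$ — but the central step is left as a claim rather than a proof, and the strategy you propose to establish it does not actually do so.

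The gap is in the ``conformal length estimate.'' You introduce a new uniformizing map $\psi:\wh{\CW}\to\strip$ and assert that on the stability event $G$, (a) $\psi(0)$ lands in a compact set independent of $\delta$ and (b) the image of the target portion of $\partial\CW_2$ contains a unit arc with real part $L_2\lesssim b^2(\log\delta)^2$. You justify both via item~\eqref{it:cut_glue_good} of the $(r,s,u,v,C,D)$-stability of $\CA$. But that item controls the embedding of the \emph{outer} surface $\CA$ under the smooth canonical description after cutting and gluing; it says nothing about how the extra piece glued back in near the cone point $0$ — the part of $\CW_1$ between $\re(z)=u^1_{b,\delta}$ and $\re(z)=1$ — distorts the global conformal map $\psi$. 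That is precisely the region whose modulus needs to be controlled, and the stability of $\CA$ cannot see it. The $L^p$-moment estimate you invoke on the intermediate region bounds quantum mass, not conformal modulus, and by itself gives no control on the image under $\psi$.

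The paper sidesteps the map $\psi$ entirely. It runs the Brownian motion from $0$ \emph{inside $U_2$}: with probability bounded below uniformly in $\delta$ it exits $B(0,\delta^\zeta)$ at a point on a controlled subarc $J_{\delta^\zeta}$ (where the harmonic measure of each of $\eta_1,\eta_2$ is bounded below), then makes a small loop, and thereby hits a point $w\in U_2$ such that $\varphi_2(w)$ has distance $\geq x/2$ from $\partial\strip$. Crucially, $\re(\varphi_2(w))$ is bounded below by $u^2_{\alpha,\delta}$ via a \emph{quantum mass} argument — the loop encircles a ball $B$ with $\mu_h(B)\geq\delta^\beta$, while $\mu_{h^2}(\strip_-+u^2_{\alpha,\delta})<\delta^\beta$ with high probability, so $\varphi_2(B)$ cannot lie entirely to the left of $u^2_{\alpha,\delta}$. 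Gambler's ruin from $\varphi_2(w)$ in $\strip$ via $\varphi_2$ — an already-available map — then yields the bound. This route produces the quantitative estimate you need without ever having to control the geometry of $\psi$, which is the piece your proposal cannot fill with the tools you cite.
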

\begin{proof}
The claim will be deduced in three main steps. In \noindent{\it Step 1,} we show that with high probability a Brownian motion starting from $0$ exits the part contained in $\wh \CW$ of a ball of radius some fixed power of $\delta$ with positive probability which is uniform in $\delta$. In \noindent{\it Step 2,} we show that the ball of \noindent{\it Step 1} is contained in $\wh \CW$ and that its image under $\varphi_j$ is not mapped too far to the right in the strip. In \noindent{\it Step 3,} we conclude the proof by showing that conditioned on the above, the Brownian motion hits a point in $\wh \CW$ with positive probability which is uniform in $\delta$ whose image under $\varphi_2$ is not mapped too far either to the left or right in the strip. Then the claim is deduced since the images of $\wt \eta_1([C,\infty))$ and $\wt \eta_2([C,\infty))$ under $\varphi_2$ are not mapped too far to the right in the strip.

\noindent{\it Step 1.} Fix $\xi \in (0,1)$ and $\alpha > 0$. For $\delta \in (0,1]$ we let $I_\delta$ be the unique arc of $\partial B(0,\delta)$ contained in the boundary of the unbounded connected component of $U_2 \setminus B(0,\delta)$ which separates $0$ from $\infty$ in $U_2$ and let $x_\delta$ be the center of $I_\delta$. Let also $J_\delta$ be the subarc of $I_\delta$ centered at $x_\delta$ and having half of the length of $I_\delta$. Then we can find constants $q \in (0,1),l,d > 0$ such that with probability at least $1-\xi / 3$, we have that $\dist(J_1,\partial U_2) \geq d$, the length of $I_1$ is at least $l$ and for each $z \in J_1$, the harmonic measure of both of $\eta_1$ and $\eta_2$ as seen from $z$ is at least $q$. Hence by scale invariance of the joint law of $(\eta_1,\eta_2)$ we obtain that for each $\delta \in (0,1)$ and for some $c_1,c_2 > 0$, with probability at least $1-\xi / 3$  the following hold.
\begin{itemize}
\item The length of $I_\delta$ is at least $\delta l$.
\item $\dist(J_\delta, \partial U_2) \geq d \delta$.
\item For each $z \in J_\delta$ the harmonic measure of both of $\eta_1$ and $\eta_2$ as seen from $z$ is at least $q$.
\item $\re(\varphi_2(\wt \eta_j(C))) \in [-c_2,c_2]$ for $j = 1,2$.
\item $|u_{\alpha,\delta}^j| \leq c_1 (\log\delta)^2$ for $j = 1,2$.
\end{itemize}
Let $F_1$ be the event that the above hold. 

\noindent{\it Step 2.} We let $\beta > 0$, $\zeta_0 \in (0,1)$ be such that $\beta \in (0,\alpha / \gamma)$ and $\frac{\beta}{\zeta_0} > \wt \alpha$, where $\wt \alpha > 2 $ is such that 
\begin{align}\label{eq:lower_bound_mass}
\p[ \mu_h(B(w,\delta)) \geq \delta^{\wt \alpha} \,\text{for each}\,w \in B(0,1/2)] = 1 - O(\delta^2)\quad \text{as}\quad \delta \to 0,
\end{align}
where the implicit constant depends only on $\gamma$. (The existence of such an $\wt{\alpha}$ follows by applying Lemma~\ref{lem:LBcone} to each point in a grid in $B(0,1/2)$ and taking a union bound.) Fix $0<\zeta_1 < \zeta < \wt{\zeta}  < \zeta_0 < 1$ and set $S_{\delta,j} = \sup \{t \geq 0 : \eta_j(t) \in B(0,\delta) \}$ and $S_{\delta} = S_{\delta,1} \vee S_{\delta,2}$. Then Lemma~\ref{lem:qtimeUB}
 implies that $\p[S_{\delta^{\zeta_1}}\geq \delta^{\zeta_1 p}] = O(\delta^{\zeta_1 q})$ as $\delta \to 0$, for some constants $p,q > 0$. We fix $\zeta_2 \in (0,\zeta_1 p)$. Next we let $\wh T^{\delta,j} = \inf\{t \in \R : \nu_{h^j}((-\infty,t] \times \{0\}) = \delta^{\zeta_2} \ \text{or} \ \nu_{h^j}((-\infty,t] \times \{\pi\}) = \delta^{\zeta_2} \}$ and note that since $\nu_{h^j}((-\infty,0] \times \{0,\pi\})$ has negative moments of all orders, Markov's inequality implies that $\p[\wh T^{\delta,j}\geq u^j_{b_1,\delta}] = O(\delta^{p(\zeta_2 - b_1)})$ for some fixed $b_1 \in (0,\zeta_2)$. Moreover $2(u^j_{b_1/2,\delta}-u^j_{b_1,\delta})$ has the law of the first hitting time of $(b_1/2)\log(1/\delta)$ for a Brownian motion starting from 0. Thus, a short calculation yields that $\p[2(u^j_{b_1/2,\delta}-u^j_{b_1,\delta}) \leq 2 \log(1/\delta)] = O(\delta^c)$ for some fixed $c > 0$. Combining everything we obtain that for some $c > 0$, off an event with probability $O(\delta^c)$, the following hold:
\begin{align*}
        S_{\delta^{\zeta_1}} < \delta^{\zeta_1 p}, \quad
        \wh T^{\delta,j} < u^j_{b_1,\delta}, \quad
        u^j_{b,\delta} - u^j_{b_1,\delta} \geq 2\log(1/\delta),
\end{align*}
with $b = b_1 /2$. Note that on the above event, $\varphi_j(\eta_1([0,S_{\delta^{\zeta_1}}]) \cup \eta_2([0,S_{\delta^{\zeta_1}}]))\subseteq{(-\infty,\wh T^{\delta,j}] \times \{0,\pi\}}$. Suppose that there exists $z \in  \varphi_j(U_j \cap B(0,\delta^{\zeta})) \cap (\strip_++u_{b,\delta}^j)$. Then there exists a universal constant $r > 0$ such that with probability at least $r > 0$ a Brownian motion starting from $z$ exits $\strip$ on $[u^j_{b_1,\delta},\infty) \times \{0,\pi\}$. Hence a Brownian motion starting from $\varphi_j^{-1}(z)$ exits $U_j$ on $\eta_1([S_{\delta^{\zeta_1}},\infty)) \cup \eta_2([S_{\delta^{\zeta_1}},\infty))$ with probability at least $r$. The Beurling estimate implies that the latter occurs with probability at most $O(\delta^{\frac{\zeta - \zeta_1}{2}})$ and so we get a contradiction for $\delta$ sufficiently small. Hence
\begin{align*}
\varphi_j(U_j \cap B(0,\delta^{\zeta})) \subseteq \strip_- + u_{b,\delta}^j.
\end{align*}
Also, we have that $B(0,\delta^{\zeta_1}) \subseteq \wh \CW$. Moreover, by~\eqref{eq:lower_bound_mass},
\begin{align*}
\p[ \mu_h(B(w,\delta^{\wt{\zeta}})) \leq \delta^{\wt{\zeta} \wt{\alpha}}\,\text{for some}\,w \in B(0,1/2)] = O(\delta^{2\wt{\zeta}})\quad \text{as}
\quad \delta \to 0
\end{align*}
and by Proposition~\ref{lem:pthmoment} and Markov's inequality it follows that
\begin{align*}
\p[ \mu_{h^j}((-\infty,u_{\alpha,\delta}^j] \times (0,\pi)) \geq \delta^{\beta}] \lesssim \delta^{p(\alpha \gamma - \beta)}.
\end{align*}
By combining everything, we obtain that we can find constants $b,c > 0$ such that off an event with probability $O(\delta^c)$, the following hold:
\begin{itemize}
\item $\varphi_j(U_j \cap B(0,\delta^{\zeta})) \subseteq \strip _- + u_{b,\delta}^j$,
\item $B(0,\delta^{\zeta_1}) \subseteq \wh \CW$,
\item $\mu_h(B(w,\delta^{\wt{\zeta}})) \geq \delta^{\wt{\zeta} \wt{\alpha}}\,\text{for all}\,w \in B(0,1/2)$,
\item $\mu_{h^j}((-\infty,u_{\alpha,\delta}^j] \times (0,\pi)) \leq \delta^{\beta}$.
\end{itemize}
Let $F_2$ be the event that the above hold and set $F = F_1 \cap F_2$. 

\noindent{\it Step 3.} By choosing $\delta$ sufficiently small we have that $\p[F] \geq 1 - \xi$. Note that if $F$ occurs, then a Brownian motion which is independent of $(h,\eta_1,\eta_2)$ and starts from $0$ has probability at least $p^* > 0$ to exit $B(0,\delta^{\zeta})$ on some point $z \in J_{\delta^\zeta}$ and then make a loop around $B(z,d\delta^\zeta / ( 2 \Lambda ))$ before exiting $B(z,d\delta^\zeta/ \Lambda)$. Here $p^*$ depends only on the implicit constants and $\Lambda >0$ is large but fixed (to be chosen). Since the harmonic measure of both of $\eta_1$ and $\eta_2$ as seen from $z$ is at least $q$, there exists a constant $x \in (0,\pi/2)$ such that $\dist(\varphi_2(z) , \partial \strip) \geq x$. Hence for $\Lambda > 0$ sufficiently large we have $\dist(\varphi_2(B(z,d\delta^{\zeta} / \Lambda)),\partial \strip) \geq x / 2$. Since $\delta^{\wt{\zeta}} \leq d \delta^\zeta / (2\Lambda)$ and $z \in B(0,1/2)$ for sufficiently small $\delta$, we have that 
\begin{align*}
\mu_h(B(z,d\delta^{\zeta} / ( 2 \Lambda ))) \geq \mu_h(B(z,\delta^{\wt{\zeta}})) \geq \delta^{\wt{\zeta}\wt{\alpha}} > \delta^{\beta},
\end{align*}
for sufficiently small $\delta$ (recall that $\beta > \wt{\zeta} \wt{\alpha}$) and together with the fact that $\mu_{h^2}((-\infty,u_{\alpha,\delta}^2] \times (0,\pi)) < \delta^{\beta}$, this implies that $\varphi_2(B(z,d\delta^{\zeta} / ( 2 \Lambda ))) \not \subseteq \strip_- + u_{\alpha,\delta}^2$. Note that by making the loop around $B(z,d\delta^{\zeta} / ( 2 \Lambda ))$ the Brownian motion hits a point whose image under $\varphi_2$ lies in $(\strip_+ + u_{\alpha,\delta}^2) \cap (\strip_- + u_{b,\delta}^2)$ and has distance from $\partial \strip$ at least $x / 2$. Note also that the above point $w$ lies in $[-c_1\log \delta^{-1} , 0) \times (x/2,\pi-x/2)$. Hence there exists a constant $c_3 > 0$ independent of $\delta$ such that with probability at least $e^{-c_3 \alpha^2( \log \delta )^2}$ a Brownian motion starting from $w$ exits $\strip$ on $[c_2,\infty) \times \{0,\pi\}$. Therefore since $B(0,\delta^{\zeta}) \subseteq \wh \CW$ and $\wt \eta_1([C,\infty)) \cup \wt \eta_2([C,\infty)) \subseteq \varphi_2^{-1}([c_2,\infty) \times \{0,\pi\})$, and since $\tfrac{\wt{\alpha} \gamma}{p} \beta < \alpha$, we obtain that there exists a constant $C_{\xi} > 0$ depending only on $\xi$ and $\gamma$ (but which can be taken to be uniform in $\gamma \in [a,2]$ for each $a>0$) such that with probability at least $\exp(-C_{\xi} b^2( \log \delta )^2)$, a Brownian motion starting from $0$ exits $\wh{\CW}$ on the part of $\wt \eta_1 \cup \wt \eta_2$ with boundary length distance at least $C$ from either $\wt \eta_1(0)$ or $\wt \eta_2(0)$. This completes the proof.
\end{proof}

\begin{proof}[Proof of Lemma~\ref{lem:main_lemma_lbd}]
Fix $\sigma > 0,\alpha>0$ and for $r > 0$, $j=1,2$ we set 
\begin{align*}
T_{r}^j = \inf\{t \geq 0 : \eta_{j}(t) \notin B(0,r) \}
\end{align*}
and write $\eta_j^s = \eta_j([0,T_{s }^j])$ for $j=1,2$. Let $p_0, r,s,u,v,C,D$ be as in the proof of Lemma~\ref{lem:q_prob_from_below} such that $\p[ Z \,|\,E_{\epsilon,\delta}] \geq p_0$ for each $\delta \in (0,1)$ and $\epsilon > 0$ sufficiently small.  Then we let $b$ be as in Lemma~\ref{lem:lbd},  and let $E_{\epsilon,\delta}^b$ be the event defined in the same way as $E_{\epsilon,\delta}$ but with $\alpha$ replaced by $b$.  Here, $b$ is chosen with respect to the constant $C > 1$ fixed in accordance with Lemma~\ref{lem:q_prob_from_below}. Note that for all $\delta \in (0,1)$ we have that $\alpha \log(\delta_1) = b \log(\delta)$ with $\delta_1 = \delta^{b/\alpha}$ and so there exists $\delta_0 \in (0,1)$ depending only on $\alpha$ and $b$ such that $\p[Z \giv E_{\epsilon,\delta}^b] \geq p_0$ for all $\delta\in (0,\delta_0)$ and all $\epsilon>0$ sufficiently small. Since $G$ is $(A,B,h_R^1,h^2)$-measurable and $E_{\epsilon,\delta}^b$ is independent of $h^2$, we obtain that
\begin{align*}
\p[ G \,|\,A,B,h_R^1,E_{\epsilon,\delta}^b ] = \p[ G \,|\,A,B,h_R^1]
\end{align*}
and hence
\begin{align*}
	\p[ G \,|\,E_{\epsilon,\delta}^b] &= \E[ \p[ G \,|\,A,B,h_R^1] \,|\,E_{\epsilon,\delta}^b] \geq \frac{1}{4}\p[ \p[ G \,|\,A,B,h_R^1] \geq 1/4 \,|\,E_{\epsilon,\delta}^b] = \frac{1}{4}\p[ Z\,|\,E_{\epsilon,\delta}^b] \geq p_0 / 4.
\end{align*}
Fix $\xi \in (0,p_0 / 8)$. Then by Lemma~\ref{lem:lbd} there exists $\delta \in (0,1)$ such that 
\begin{align*}
\p[ G \cap H_{C,C_\xi,b,\delta} \,|\,E_{\epsilon,\delta}^b] \geq p_0 / 8
\end{align*}
for all $\epsilon > 0$ sufficiently small since $H_{C,C_\xi,b,\delta}$ is also independent of $E_{\epsilon,\delta}^b$. Therefore we obtain that
\begin{align*}
\p[ G \cap H_{C,C_\xi,b,\delta} \cap E_{\epsilon,\delta}^b ] \gtrsim \epsilon^{\sigma/2 + o(1)} \quad \text{as}\quad \epsilon \to 0.
\end{align*}
Suppose now that we work on the event $G \cap H_{C,C_\xi,b,\delta} \cap E_{\epsilon,\delta}^b$. Then the Beurling estimate implies that there exists a constant $d > 0$ depending only on $b,u,\delta, \xi$ such that $\dist(0,\partial \wh{\CW}) \geq d$. Fix $0 < \zeta < \zeta^* < 1$. Let $F$ be the event that
\begin{enumerate}[(i)]
\item $\sup_{z \in B(0,\epsilon^{\zeta^* - \zeta}) \cap U_1} \p^z[ B \, \text{hits}\,\partial B(0,s)\,\text{before}\,\eta_{1}^s \cup \eta_{2}^s \,|\, \eta_{1}^s \cup \eta_{2}^s] \leq \exp(-\epsilon^{-\sigma})$.
\item\label{it:not_return2} $\eta_1$ and $\eta_2$ do not return to $B(0,sC^{n-1})$ after leaving $B(0,sC^n)$ for $n=1,2,3$.
\item\label{it:positive_distance2} $\dist(\eta_1([T_{s / 2}^1, T_{sC^3}^1]), \eta_2([T_{s / 2}^2, T_{sC^3}^2])) \geq Ds$.
\end{enumerate}

Note that the definition of $G$ implies that~\eqref{it:not_return2} and~\eqref{it:positive_distance2} of the definition of $F$ hold. Also $B(0,\epsilon^{\zeta^* -\zeta}) \subseteq B(0,d)$ for all $\epsilon > 0$ sufficiently small and so $\varphi_1(B(0,\epsilon^{\zeta^* -\zeta}) \cap U_1) \subseteq \strip_- + u_{b,\delta}^1$. Moreover $\varphi_1^{-1}(\{z : \re(z) = 0\}) \subseteq B(0,s)$ and $\{z \in \partial \strip : \re(z) \leq 1 \} \subseteq \varphi_1( \eta_1^s \cup \eta_2^s)$ and since $|u_{b,\delta}^1| \geq \epsilon^{-\sigma}$, for each $z \in \strip_- + u_{b,\delta}^1$ the probability that a Brownian motion starting from $z$ hits $\{z : \re(z) = 0\}$ before exiting $\strip$ is at most $e^{-\mu \epsilon^{-\sigma}}$
 for some constant $\mu > 0$. Thus (possibly by varying $\zeta^*$) we obtain that 
\begin{align}\label{eq:4}
\p[ F ]  \geq \epsilon^{\sigma/2 + o(1)} \quad \text{as} \quad \epsilon \to 0.
\end{align}
Since $\epsilon^{\zeta^*} / s > \epsilon$ for all $\epsilon$ sufficiently small, by combining the scale invariance of the joint law of $(\eta_1,\eta_2)$ (by scaling with $\epsilon^{\zeta} / s$) with ~\eqref{eq:4}, the proof of~\eqref{eqn:mainlemma3} is complete.
\end{proof}

\subsection{Other versions of the main estimates}

In this subsection we state and prove versions of Lemma~\ref{lem:main_lemma_ubd} in the case of chordal $\SLE_8$ and two-sided whole-plane $\SLE_4$. The first estimate is the following which is the analog of Lemma~\ref{lem:main_lemma_ubd} but with the pair of paths $\eta_1,\eta_2$ replaced by the left and right sides of the outer boundary of an $\SLE_8$.

\begin{lemma}
\label{lem:mainlemma8}
Fix $\xi > 1$, let $\CW = (\h,h,0,\infty) \sim \qwedgeW{\sqrt{2}}{1}$ have the first exit parameterization and let $\eta'$ be an $\SLE_8$ in $\h$ from $0$ to $\infty$ sampled independently of $h$ and then parameterized by quantum area with respect to $h$. Fix $t \geq 0$ and let $x_t^L$ (resp.\ $x_t^R$) be the point on $\partial \h_t$, to the left (resp.\ right) of $\eta'(t)$ such that the boundary segment from $\eta'(t)$ to $x_t^L$ (resp.\ $x_t^R$) has quantum length $\log(\epsilon^{-1})$ and let $I_t \subseteq \partial \h_t$ denote the boundary arc from $x_t^L$ to $x_t^R$.  Fix $\sigma > 0$. Then we have that
\begin{align*}
&\p\!\left[ \inf_{ B(z,4\epsilon^{\xi}) \subseteq B(\eta'(t),\epsilon) \cap \h_t} \p^z[B \ \text{exits} \ \h_t \ \text{in} \ \partial \h_t \setminus I_t \, | \, \eta'([0,t])] \leq \exp(-\epsilon^{-\sigma}),\ \eta'([0,t]) \subseteq{\D_+} \right]\\
&=  O(\epsilon^{\sigma / 2 +o(1)}).
\end{align*}
\end{lemma}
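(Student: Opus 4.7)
By Theorem~\ref{thm:sle8thm}, the quantum surface $\CW_t := (\h_t, h|_{\h_t}, \eta'(t), \infty)$ has law $\qwedgeW{\sqrt{2}}{1}$, which is a $Q$-quantum wedge for $\gamma = \sqrt{2}$. The plan is to adapt the strategy of Lemma~\ref{lem:main_lemma_ubd}: embed $\CW_t$ into $\strip$ by a conformal map $\varphi : \h_t \to \strip$ realizing the first-exit parameterization (so $\eta'(t) \mapsto -\infty$, $\infty \mapsto +\infty$, and the left and right sides of $\partial \h_t$ map to $\R + i\pi$ and $\R$), transfer the Brownian motion exit event via conformal invariance, and control both the image of admissible balls and the image of the arc $I_t$ inside $\strip$ using moment estimates for the pushforward field $\tilde h^t := h \circ \varphi^{-1} + Q \log|(\varphi^{-1})'|$. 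Write $X^t$ for the vertical-line-average process of $\tilde h^t$, set $u_{\alpha,\epsilon}^t := \inf\{s : X^t_s = \alpha \log \epsilon\}$ for a large parameter $\alpha$, and let $u_t^L, u_t^R$ be the real parts of $\varphi(x_t^L), \varphi(x_t^R)$. The key distinction from Lemma~\ref{lem:main_lemma_ubd} is that we control an infimum rather than a supremum, so the argument must produce a uniform-in-$z$ lower bound on the exit probability rather than exhibiting a single good witness.

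I would establish, simultaneously and off an event of probability $O(\epsilon^{\sigma/2+o(1)})$, the following four estimates: (i) $u_{\alpha,\epsilon}^t \geq -\epsilon^{-\sigma/2}$, via the $\BES^3$/Brownian decomposition of $X^t$ and the explicit first-passage density from $\alpha\log\epsilon$, exactly as in Step~1 of Lemma~\ref{lem:main_lemma_ubd}; (ii) $\mu_{\tilde h^t}(\strip_- + u_{\alpha,\epsilon}^t) < \epsilon^{a\xi}/3$ by Lemma~\ref{lem:pthmoment} and Markov's inequality, choosing $\alpha$ large; (iii) $\mu_{\tilde h^t}(\{w \in \strip : \re(w) \geq -\epsilon^{-\sigma},\ \dist(w,\partial \strip) \leq \epsilon^c\}) < \epsilon^{a\xi}/3$ by Lemma~\ref{lem:mass_close_to_boundary} with $c$ large; and (iv) $\mu_h(B(z,\epsilon^\xi)) \geq \epsilon^{a\xi}$ simultaneously for every $z$ with $B(z,4\epsilon^\xi) \subseteq B(\eta'(t),\epsilon) \cap \h_t$. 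Estimate (iv) is the new uniform-in-$z$ ingredient: on $\{\eta'([0,t]) \subseteq \D_+\}$ every admissible $z$ lies in the compact set $\D_+$, and a wedge analog of Lemma~\ref{lem:LBcone} (which holds by absolute continuity of the $Q$-wedge against a free boundary GFF plus a log singularity at $-\infty$) gives the pointwise bound, with a union bound over an $\epsilon^{2\xi}$-net in $\D_+$ raising the exponent $a$ enough to handle all centers at once. Taken together (ii)--(iv) imply that for every admissible $z$ there exists $z^* \in B(z,\epsilon^\xi)$ with $\varphi(z^*)$ lying in the good region $G_\epsilon := \{w : \re(w) \geq -\epsilon^{-\sigma/2},\ \dist(w,\partial\strip) \geq \epsilon^c\}$.

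For the image of the arc, the boundary-length comparison from Step~3 of Lemma~\ref{lem:main_lemma_ubd} carries over: with $T_{\epsilon,d} := \inf\{s : X^t_s = -d\log\epsilon\}$ for small $d > 0$, one has $\nu_{\tilde h^t}([T_{\epsilon,d}, T_{\epsilon,d}+1] \times \{0\}) \gtrsim \epsilon^{-\gamma d/2} V$ with $V$ of bounded negative moments, which dwarfs the target $\log(\epsilon^{-1})$, so $u_t^L, u_t^R \leq T_{\epsilon,d}+1$ off a small-probability event; combined with $T_{\epsilon,d} \leq \epsilon^{-\sigma}/2$ from first-passage estimates, this gives $\varphi(\partial \h_t \setminus I_t) \supseteq [\epsilon^{-\sigma}/2+2,\infty) \times \{0,\pi\}$. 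Gambler's ruin in $\strip$ between the lines $\{\re = -\epsilon^{-\sigma/2}\}$ and $\{\re = \epsilon^{-\sigma}/2+2\}$, combined with \cite[Lemma~IV.5.1]{gm05harmonic}, then yields $\p^{z^*}[B \text{ exits } \h_t \text{ in } \partial \h_t \setminus I_t] \geq \exp(-c_1 \epsilon^{-\sigma})$ for any $z^*$ produced in the previous paragraph; since $|z - z^*| \leq \epsilon^\xi$ and both points have distance $\gtrsim \epsilon^\xi$ from $\partial \h_t$, Harnack's inequality transfers this to $\p^z[\cdot] \geq C^{-1}\exp(-c_1\epsilon^{-\sigma})$, and running the entire argument with a slightly smaller exponent $\sigma' < \sigma$ absorbs the multiplicative constants to give the desired bound $\exp(-\epsilon^{-\sigma})$. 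The principal technical obstacle is (iv), namely the uniform-in-$z$ promotion of the $Q$-wedge analog of Lemma~\ref{lem:LBcone}: establishing the pointwise bound via absolute continuity of the wedge field against a GFF, then choosing the net spacing and the exponent $a$ carefully enough that the union bound and subsequent Harnack comparison close up to give the target probability $O(\epsilon^{\sigma/2+o(1)})$ rather than degrading it.
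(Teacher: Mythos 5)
Your proposal follows the same route as the paper: after invoking Theorem~\ref{thm:sle8thm} to realize $\h_t$ as a weight-$1$ wedge, you map to $\strip$, bound the quantum mass of $\strip_-+u^t_{\alpha,\epsilon}$ and of the near-boundary slab via Lemmas~\ref{lem:pthmoment} and~\ref{lem:mass_close_to_boundary}, bound $u^t_{\alpha,\epsilon}$ via the $\BES^3$/first-passage estimate, locate $\varphi(I_t)$ using the boundary-length bound, and close with gambler's ruin; and your key uniform-in-$z$ mass lower bound is exactly what the paper's Lemma~\ref{lem:QwedgeLB} provides (the paper proves it via the Pitman reflection coupling $Z_t=2S_t-B_t$ and an $\epsilon$-net union bound, rather than the ``free boundary GFF plus log singularity'' absolute continuity you sketch, since the $Q$-wedge radial part is genuinely a $\BES^3$ and not absolutely continuous with respect to Brownian motion started at $0$). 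One small numerical slip: in your item (i) the threshold should be $u^t_{\alpha,\epsilon}\geq -\epsilon^{-\sigma}$, not $-\epsilon^{-\sigma/2}$, since $\p[u_{\alpha,\epsilon}<-A]\asymp A^{-1/2}\log(1/\epsilon)$ gives failure probability $\epsilon^{\sigma/2+o(1)}$ only for $A=\epsilon^{-\sigma}$; with $A=\epsilon^{-\sigma/2}$ you would only get $O(\epsilon^{\sigma/4})$, which is too large. The subsequent gambler's ruin step already uses the width $\epsilon^{-\sigma}$ (coming from the $T_{\epsilon,d}$ bound), so this change is cost-free and the rest of the argument closes as you describe.
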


The proof of the Lemma~\ref{lem:mainlemma8} is similar to that of Lemma~\ref{lem:main_lemma_ubd}, so we shall be rather brief. First, we will need the following lemma, which is the chordal version of \cite[Lemma~3.6]{ghm2020kpz}. This plays a role similar to Lemma~\ref{lem:fitball} in the proof of Lemma~\ref{lem:main_lemma_ubd} in the sense that with high probability, the $\SLE_8$ process, $\eta$, fills a ball of radius $\epsilon^\xi$ ($\xi > 1$) before traveling distance $\epsilon$, and hence there is a ball of radius $\epsilon^\xi$ contained in $\h_t \cap B(\eta(t),\epsilon)$.

\begin{lemma}\label{lem:SLE8fills}
Let $\eta'$ be an $\SLE_8$ process in $\h$ from $0$ to $\infty$. Let $\tau_z$ be first hitting time of the point $z$ by $\eta'$ and let $\tau_z(\epsilon)$ be the first time after $\tau_z$ that $\eta'$ leaves $B(z,\epsilon)$. Then, there exist constants $b_0,b_1>0$ such that for all $\xi>1$ and all $\epsilon > 0$ small enough,
\begin{align*}
	\p[\eta'([\tau_z,\tau_z(\epsilon)]) \, \text{does not contain a ball of radius at least} \,\epsilon^\xi] \leq b_0 \exp \!\left(-b_1 \epsilon^{(1-\xi)/5} \right).
\end{align*}
\end{lemma}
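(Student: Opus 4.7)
The plan is to adapt the argument used for the whole-plane analog in \cite[Lemma~3.6]{ghm2020kpz}. The key reduction is to use the conformal Markov property of chordal $\SLE_8$: let $f_{\tau_z}\colon \h \setminus \eta'([0,\tau_z]) \to \h$ be the centered Loewner map sending $z$ to $0$ (and $\infty$ to $\infty$). Then $\wt\eta' = f_{\tau_z}(\eta'|_{[\tau_z,\infty)})$ has the law of a chordal $\SLE_8$ in $\h$ from $0$ to $\infty$, independent of $\eta'|_{[0,\tau_z]}$. If one restricts to the ``good'' event that $f_{\tau_z}$ has bounded distortion on a neighborhood of $z$ of radius comparable to $\epsilon$ (which holds with high probability thanks to Beurling-type estimates applied to $\partial \h \setminus \eta'([0,\tau_z])$), then the existence of a ball of radius $\epsilon^\xi$ in $\eta'([\tau_z, \tau_z(\epsilon)])$ is equivalent, up to constants, to the existence of a ball of comparable radius in $\wt\eta'$ before it leaves $B(0,c\epsilon)$ for some fixed $c>0$. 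Thus the claim reduces to its analog for chordal $\SLE_8$ started at $0$, which by Brownian scaling of $\SLE$ can in turn be rescaled to the unit scale with target radius $\epsilon^{\xi-1}$.

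Next, I would establish a single-scale ``positive probability of filling'' statement: there exist universal constants $p_0, c_0 > 0$ such that for a chordal $\SLE_8$ from $0$ to $\infty$ in $\h$, with probability at least $p_0$ the curve $\wt\eta'|_{[0,\sigma]}$ contains a Euclidean ball of radius $c_0$, where $\sigma$ is the first exit time of $\D_+$. This follows from two ingredients: chordal $\SLE_8$ is space-filling and a.s.\ fills a positive fraction of the area of $\D_+$ before $\sigma$; and the event that this filled set is sufficiently ``fat'' to contain a Euclidean ball of a definite size holds with positive probability (by the independence and scaling of $\SLE_8$ one can reduce this to an explicit compactness/continuity statement about the law of the traced set restricted to $\D_+$).

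To upgrade this single-scale statement to the exponential bound, the plan is to iterate along a suitable sequence of dyadic scales. More concretely, I would introduce an intermediate scale $\delta$ with $\epsilon^\xi \ll \delta \ll \epsilon$ (whose precise power will determine the exponent, and is where the $\tfrac{1}{5}$ emerges from standard distortion margins). Along a nested sequence of $N \sim \epsilon^{(1-\xi)/5}$ sub-balls produced by the Markov property applied at the exit times from $B(\wt\eta'(\sigma_k), \delta_k)$ for a decreasing sequence $\delta_k$, one obtains essentially independent trials of the single-scale event; on each, a ball of radius comparable to $\delta_k$ is contained in the traced set with probability at least $p_0$. Consequently the probability that \emph{none} of the $N$ trials succeeds is at most $(1-p_0)^N \leq \exp(-b_1 \epsilon^{(1-\xi)/5})$, which is the stated bound.

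The main technical obstacle is controlling the distortion of the conformal maps used in the iteration: each time one maps from a sub-region back to a half-plane normalization, there is a potential loss in the Euclidean size of the ball being produced. This is handled by working on the intersection of ``good events'' that the previously traced portion of $\wt\eta'$ stays in a specific annular region with positive distance from the currently explored sub-ball; by the Beurling estimate, the complements of these good events contribute only polynomial factors in $\epsilon$, which are absorbed into the constants $b_0, b_1$. Choosing the ratio $\delta_{k+1}/\delta_k$ to be a small fixed power of $\delta_k$ ensures the distortion margin adds up to at most a factor of $\epsilon^\xi$ after $N$ iterations, yielding the stated exponent $(1-\xi)/5$.
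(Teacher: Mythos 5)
Your reduction via the conformal Markov property runs into a serious problem precisely where this lemma is hard: when $z$ is on or near $\R$. You claim that with high probability $f_{\tau_z}$ has bounded distortion on a neighborhood of $z$ of radius comparable to $\epsilon$, citing Beurling estimates. But if $\im(z) \leq \epsilon$ (and certainly if $z \in \R$), then $\dist\bigl(z, \partial(\h \setminus \eta'([0,\tau_z]))\bigr) \leq \im(z) \leq \epsilon$ \emph{deterministically}, since $\R$ is part of the boundary. Koebe distortion then tells you the derivative of $f_{\tau_z}$ can vary by unbounded factors on $B(z, c\epsilon)$ regardless of the realization of the past; the ``good event'' you want to condition on has probability $0$, not high probability. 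Beurling estimates control harmonic measure decay, not derivative distortion of the uniformizing map at a tip with thin access from the domain, so they do not salvage this step. The proposal thus has no mechanism to handle the boundary (or near-boundary) case, which the paper explicitly singles out as the substance of the proof (the interior case it dispatches in one line by absolute continuity with the whole-plane statement of Lemma~3.6 of \cite{ghm2020kpz}).

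The paper's route is structurally different from your iteration. It does not iterate a single-scale ball-filling event across dyadic annuli; instead it fixes an intermediate height $\epsilon^a$ with $1 < a < \xi$, and shows, using the imaginary geometry coupling of $\eta'$ as a counterflow line and iterating the domain Markov property for a linearly spaced family of angle-$(\pi/2)$ flow lines from boundary points near $z$, that $\eta'$ reaches the line $L = \{\im(w) = \epsilon^a\}$ before leaving $B(z,\epsilon/2)$, off an event of probability at most $\exp(-c_0\epsilon^{(1-a)/4})$. Once on $L$, the distance to $\R$ is $\epsilon^a$, which \emph{is} comparable to the remaining target radius $\epsilon^\xi$ up to a fixed power, so the interior whole-plane result of \cite{ghm2020kpz} applies with cost $\exp(-a_1\epsilon^{a-\xi})$. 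The specific exponent $(1-\xi)/5$ then comes from choosing $a = (4\xi+1)/5$ to equalize the two exponents $(1-a)/4$ and $a-\xi$, not from ``distortion margins'' accumulating along a nested iteration. Your plan, as written, has neither the escape-to-a-line step nor a correct accounting leading to the stated exponent.
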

\begin{proof}
If $z$ is in the interior of the domain, at distance at least, say, $d>0$ from the boundary, then the result follows by absolute continuity from the whole-plane case in \cite[Lemma~3.6]{ghm2020kpz}. Indeed, for $z \in \h$ and $r = \im(z)$, we let $h^0$ be a zero-boundary GFF and $\wh{h}_{z,r} = h^w - h_{z,r}^w(0)$ where $h^w$ is a whole-plane GFF and $h_{z,r}^w(0)$ its average on $\partial B(z,r)$. Then, by copying the proof of \cite[Proposition~3.4]{ms2016imag1} and arguing as in the proof of \cite[Lemma~4.4]{mq2020geodesics} for $h$ (when handling its harmonic part) and using scale invariance of $h^w$ and \cite[Lemma~3.11]{ghm2020kpz}, one sees that the Radon--Nikodym derivative of the law of $h|_{B(z,r/2)}$ (resp.\ $\wh{h}_{z,r}|_{B(z,r/2)}$) with respect to the law of $h^0|_{B(z,r/2)}$ has $p$th moment bounded above and below by constants depending only on $p$, for some $p>1$ close to $1$. Hence the lemma holds for $B(z,\epsilon)$ with $\epsilon \in (0,r/2)$. For $\epsilon \in [r/2,r)$, the result follows by applying the method of the boundary case, which we treat below.

We now prove the boundary case. Fix some $a \in (1,\xi)$, and let $L = \{z:\im(z) = \epsilon^a\}$ and $\tau_L = \inf\{ t> \tau_z: \eta'([\tau_z,t]) \cap L \neq \emptyset \}$.  We shall show that there is a constant $c_0 > 0$ such that if $x \in \R$, then $\p[\tau_L \geq \tau_x(\epsilon/2)] \leq \exp(-c_0 \epsilon^{(1-a)/4})$. With this at hand, we conclude the proof as follows. Assume that $x = z \in \R$ and that $\eta'$ does intersect $L$ before exiting $B(z,\epsilon/2)$ and let $w = \eta'(\tau_L)$.  Then, upon hitting $L$, we consider the event that $\eta'$ swallows a ball of radius $\epsilon^\xi$ before exiting $\wt{B} = B(w,\epsilon^a/2)$. Since the size of the ball $\wt{B}$ is comparable to the distance to the boundary, we have that the probability of the event that $\eta'([\tau_{w},\tau_w(\epsilon^a/2)])$ does not contain a ball of radius $\epsilon^\xi$ is comparable to that of the corresponding event when $\eta'$ is replaced by a whole-plane $\SLE_8$, and that the implicit constant is independent of $\epsilon$. Thus, by \cite[Lemma~3.6]{ghm2020kpz} (and the discussion in the previous paragraph), there exist constants $a_0,a_1>0$ such that
\begin{align*}
\p[\eta'([\tau_w,\tau_w(\epsilon^a/2)]) \, \text{does not contain a ball of radius at least} \,\epsilon^\xi] \leq a_0 \exp \!\left(-a_1 \epsilon^{a-\xi} \right).
\end{align*}
Hence, letting $a = (4\xi + 1)/5$, the result follows.

We now turn to proving that the probability that $\eta'$ makes it distance $\epsilon/2$ away from a boundary point $x$ without intersecting $L$ is very small. Let $h$ be the GFF on $\h$ such that $\eta'$ is the counterflow line of $h$ starting from $0$ and targeted at $\infty$. Then $h$ has boundary values given by $\pi / \sqrt{8}$ on $\R_-$ and $-\pi / \sqrt{8}$ on $\R_+$. Assume without loss of generality that $x \in \R_+$. For positive integers $j$, let $x_j = x + (2j-1)\epsilon^{(3+a)/4}$ for $j \leq \epsilon^{(1-a)/4}/4$ and let $\eta_j$ be the flow line of angle $+\pi/2$, started from $x_j$, targeting $0$. Then, the range of $\eta_j$ will be the outer boundary of $\eta'([0,\tau_{x_j}])$. Thus, if any of the flow lines $\eta_j$ hits $L$ before going too far to the left, it follows that $\eta'$ does so without traveling too far.

Consider first $\eta_1$, stopped upon first exiting the square $S_1$ with side length $\epsilon^a$, with $x_1$ as the center of the base and let $\tau^1$ be the first exit time. With positive probability $\eta_1$ exits $S_1$ in the top boundary. Next, let $g^1:\h \setminus \eta_1([0,\tau^1]) \rightarrow \h$ be the mapping out function for $\eta_1([0,\tau^1])$. For simplicity of notation in what follows, we also write $\wt{\eta}_1 = \eta_1$. Let $\wt{\eta}_2$ be the image of $\eta_2$ under $g^1$, that is, the flow line of angle $+\pi/2$, from $g^1(x_2)$ in the field $h^1 = h \circ (g^1)^{-1} - \chi \arg(((g^1)^{-1})')$. We write $x_k^1 = g^1(x_k)$. Next, let $S_2$ be the square with side length $\epsilon^a$ and base with center $x_2^1$. Let $\tau^2$ be the first time that $\wt{\eta}_2$ exits $S_2$ and stop $\wt{\eta}_2$ when doing so. Again, there is a positive probability that $\wt{\eta}_2$ exits $S_2$ in the top boundary. We proceed iteratively, letting $g^j$ be the mapping out function of $\wt{\eta}_j([0,\tau^j])$ and define $h^j = h^{j-1} \circ (g^j)^{-1} - \chi \arg(((g^j)^{-1})')$, as well as letting $x_k^j = g^j(x_k^{j-1})$,  $\wt{\eta}_{j+1}$ be the flow line of angle $+\pi/2$ of the field $h^j$, started from $x_{j+1}^j$, $S^j$ be the square of side length $\epsilon^a$ and base centered at $x_{j+1}^j$ and $\tau^j$ the first exit time of $S_j$ for $\wt{\eta}_j$. Note that the distance between each pair of marked points is always of order $\epsilon^{(3+a)/4}$. Indeed, each map $g^j$ contracts the points $x_k^{j-1}$ and $x_{k+1}^{j-1}$ by a distance of $O(\epsilon^a)$ (since that is the order of the diameter of $\wt{\eta}_j([0,\tau^j])$). Doing this for each $j$ the distance will shrink at most by $O(\epsilon^{(1-a)/4} \cdot \epsilon^a) = O(\epsilon^{(1+3a)/4})$ and hence (since $a > 1$) the distance will always be of order $O(\epsilon^{(3+a)/4})$. Hence, the each square $S_j$ will be separated from $g^j(S_{j-1})$, with distance uniform in $j$.  In particular,  we have that $\dist(S_j,g^j(S_{j-1})) \geq \epsilon^{(3+a)/4}$ for all $\epsilon \in (0,1)$ sufficiently small and all $j$.

Next, we note that there is a deterministic constant $C >0$ such that the boundary data of $h^j$ is in $[-C,C]$ for every $j$.  Let $\CZ_j$ be the Radon-Nikodym derivative between the laws of $h|_{S_j}$ and $h^j|_{S_j}$.  Then,  since $\dist(S_j,g^j(S_{j-1})) \geq \epsilon^{(3+a)/4}$ and $S_j \subseteq B(x_{j+1}^j ,  2\epsilon^a)$,  it follows from the argument used to prove \cite[Lemma~4.15]{ms2017imag4} that there exist $p>1,C_p>0$ depending only on $C$ such that $\E[\CZ_j^p] \leq C_p$.  It then follows that the probability that $\wt{\eta}_j$ exits $S_j$ on the top boundary is uniformly bounded from below in $j$ by a constant $p_0>0$,  and so 
\begin{align*}
\p[\im(\wt{\eta}_j(\tau_j)) \neq \epsilon^a \,\,\text{for all}\,\,1 \leq j \leq \epsilon^{(1-a)/4}/4] \leq (1-p_0)^{\epsilon^{(1-a)/4}/4} = \exp(-c_0 \epsilon^{(1-a)/4}),
\end{align*}
where $c_0 = -\frac{1}{4}\log(1-p_0)$.  Suppose that $x \geq \epsilon/4$.  Then we set $y_j = x - (2j-1)\epsilon^{(3+a)/4}/8$,  for $1 \leq j \leq \epsilon^{(1-a)/4}/8$,  and let $\wh{\eta}_j$ be the flow line of $h$ of angle $\frac{\pi}{2}$ started from $y_j$ and targeted at $0$.  Then,  applying a similar argument as before and possibly taking $p_0 > 0$ to be smaller,  we obtain that
\begin{align*}
\p[\im(\wh{\eta}_j(\wh{\tau}^j)) < \epsilon^a \, \,  \text{for all} \,\,1 \leq j \leq \epsilon^{(1-a)/4}] \leq \exp(-c_0 \epsilon^{(1-a)/4}),
\end{align*}
where $\wh{\tau}^j$ is the first time that $\wh{\eta}_j$ exits the square with side length $\epsilon^a$ and base at $y_j$.  It follows that off an event with probability at most $2 \exp(-c_0 \epsilon^{(1-a)/4})$,  there exist $1 \leq j_1 \leq \epsilon^{(1-a)/4},  1 \leq j_2 \leq \epsilon^{(1-a)/4}/8$ such that $\im(\eta_{j_1}(\tau^{j_1})) > \epsilon^a$ and $\im(\wh{\eta}_{j_2}(\wh{\tau}^{j_2})) > \epsilon^a$.  But if the above occur,  we have that $\eta'$ hits $L$ before hitting $x$ for the first time,  and so $\eta'([\tau_x,\tau_x(\epsilon/2)])$ has to exit the square $[x-\epsilon/2,x+\epsilon/2] \times [0,\epsilon^a]$ either to its right side or to $L$.  Since $\im(\eta_{j_1}(\tau^{j_1})) > \epsilon^a$,  we obtain that it has to exit it via $L$.  Thus,  we deduce that
\begin{align*}
\p[\eta'([\tau_x,\tau_x(\epsilon/2)]) \cap L = \emptyset] \leq 2 \exp(-c_0 \epsilon^{(1-a)/4})
\end{align*}
if $x \geq \epsilon/4$.  Suppose that $x < \epsilon/4$.  Then we set $\wh{y}_j = -(2j-1) \epsilon^{(3+a)/4}$ for $1 \leq j \leq \epsilon^{(1-a)/4}/4$ and let $\wh{\eta}^j$ be the flow line of $h$ of angle $-\frac{\pi}{2}$ starting from $\wh{y}_j$ and targeted at $0$.  Then,  arguing as before and possibly taking $p_0>0$ to be smaller,  we obtain that off an event with probability at most $2 \exp(-c_0 \epsilon^{(1-a)/4})$,  we have that there exist $1 \leq j_1,j_2 \leq \epsilon^{(1-a)/4}$ such that $\im(\eta_{j_1}(\tau^{j_1})) > \epsilon^a$ and $\im(\wh{\eta}^{j_2}(\wh{\tau}_{j_2})) > \epsilon^a$,  where $\wh{\tau}_{j_2}$ is the first time that $\wh{\eta}^{j_2}$ exits the square with side length $\epsilon^a$ and base with center $\wh{y}_{j_2}$.  Similarly,  since $\wh{\eta}^{j_2}$ is the outer boundary of the range of $\eta'([0,\wh{\tau}^{j_2}])$,  we obtain that $\eta'([\tau_x,\tau_x(\epsilon/2)]) \cap L \neq \emptyset$ if the above occur.  Therefore,  we have that
\begin{align*}
\p[\eta'([\tau_x,\tau_x(\epsilon/2)]) \cap L = \emptyset] \leq 2\exp(-c_0 \epsilon^{(1-a)/4})
\end{align*}
in every case.  This completes the proof.

\end{proof}

We are now ready to prove Lemma~\ref{lem:mainlemma8}.

\begin{proof}[Proof of Lemma~\ref{lem:mainlemma8}]
Fix $t \geq 0$ and observe that the restriction  $\CW_t$ of $\CW$ to $\h_t$ has the law of a weight $\frac{\gamma^2}{2} = 1$ quantum wedge (Theorem~\ref{thm:sle8thm}). Let $\varphi : \h_t \rightarrow \strip$ be the conformal transformation such that $\varphi(\eta'(t)) = -\infty$, $\varphi(\infty)  = +\infty$ and $\wh h = h \circ \varphi^{-1} + Q\log(|(\varphi^{-1})'|)$ has the first exit parameterization on $\strip$. Note that Lemma~\ref{lem:QwedgeLB} implies that we can find a finite constant $\alpha > 0$ such that with probability $1 - O(\epsilon^{\sigma / 2})$ we have that $\mu_h(B(z,\epsilon^{\xi})) \geq \epsilon^{\alpha \xi}$ for every ball $B(z,4\epsilon^{\xi}) \subseteq{\h \cap B(0,2)}$, where $\xi > 1$ is fixed. Arguing as in the proof of Lemma~\ref{lem:main_lemma_ubd} but with Lemma~\ref{lem:LBcone} replaced by Lemma~\ref{lem:QwedgeLB},we can find constants $c,d>0$ independent of $t$ such that with probability $1 - O(\epsilon^{\sigma / 2})$ we have that 
\begin{align*}
\varphi(B(z,\epsilon^{\xi})) \not \subseteq (\strip_- - \epsilon^{-\sigma}) \cup \{z \in \strip : -\epsilon^{-\sigma} \leq \re(z) \leq T_{\epsilon,d}+2,\,\,\dist(z,\partial \strip) < \epsilon^c \}
\end{align*}
for every ball $B(z,4\epsilon^{\xi}) \subseteq \h_t\cap B(0,2)$ and on that event there exists $w \in \varphi(B(z,\epsilon^{\xi}))$ such that with probability at least $e^{-\epsilon^{-\sigma}}$ a Brownian motion starting from $w$ exits $\strip$ on $\partial \strip_+ + T_{\epsilon,d} + 1$. Also we note that Lemma~\ref{lem:SLE8fills} implies that with probability $1 - o_{\epsilon}^{\infty}(\epsilon)$, $\eta'([0,t])$ contains a ball of radius $\epsilon^\xi$ whenever it travels distance $\epsilon$, on the event $\eta'([0,t]) \subseteq \D_+$. Therefore in order to complete the proof, it suffices to prove that with probability $1 - O(\epsilon^{\sigma / 2})$, $\varphi(I_t) \subseteq \partial\strip_- + T_{\epsilon,d} + 1$. To prove this, we note that as in the proof of Lemma~\ref{lem:main_lemma_ubd} we have that 
\begin{align*}
\E[ \nu_{\wh{h}}([T_{\epsilon,d},T_{\epsilon,d}+1] \times \{0\})^{-1} ] \lesssim \epsilon^{\gamma d / 2}
\end{align*}
and so Markov's inequality implies that
\begin{align*}
&\p[ \nu_{\wh{h}}([T_{\epsilon,d},T_{\epsilon,d}+1] \times \{0\}) \leq \log(\epsilon^{-1})] \lesssim \log(\epsilon^{-1})\epsilon^{\gamma d / 2} \lesssim \epsilon^{\sigma / 2}
\end{align*}
for $d>0$ sufficiently large and similarly for $\nu_{\wh{h}}([T_{\epsilon,d},T_{\epsilon,d}+1] \times \{\pi\})$. Hence with probability $1 - O(\epsilon^{\sigma / 2})$, 
\begin{align*}
\min(\nu_{\wh{h}}((-\infty,T_{\epsilon,d}+1] \times \{0\}), \nu_{\wh{h}}((-\infty,T_{\epsilon,d}+1] \times \{\pi\})) \geq \log(\epsilon^{-1})
\end{align*}
and so $\varphi(I_t) \subseteq \partial \strip_- + T_{\epsilon,d} + 1$. This completes the proof.
\end{proof}

Our final result of this section will be the version of Lemma~\ref{lem:main_lemma_ubd} which will be used to deduce the continuity results related to $\SLE_4$, that is, Theorems~\ref{thm:sle4_thm} and~\ref{thm:jones_smirnov_sle4}. We shall be rather brief when proving it, as the ideas are the same as in the proof of Lemma~\ref{lem:main_lemma_ubd}, the only difference being that we will not consider the quantum length of the curves $\eta_1$ and $\eta_2$ as its tail behavior is different (since this is the critical case, $\gamma = 2$).

\begin{lemma}
\label{lem:mainlemma4}
Suppose that we have the setup of Lemma~\ref{lem:main_lemma_ubd} with $\gamma = 2$ ($\kappa = 4$). Let $E(\sigma) = E_{\epsilon}(\sigma)$ be the event that there exists $z \in B(0,2\epsilon)$ with $\dist(z,\eta_1 \cup \eta_2)\geq \epsilon$ and such that the harmonic measure of each of $\eta_1$ and $\eta_2$ as seen from $z$ is at least $\frac{1}{4}$ and that the probability that a Brownian motion starting from $z$ hits $\partial \D$ before exiting $\C \setminus (\eta_1 \cup \eta_2)$ is at most $\exp(-\epsilon^{-\sigma})$. Then 
\begin{align*}
\p[ E(\sigma) ] = O(\epsilon^{\sigma / 2 +o(1)}) \quad\text{as}\quad \epsilon \to 0.
\end{align*}
\end{lemma}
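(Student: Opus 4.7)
The plan is to mirror the proof of Lemma~\ref{lem:main_lemma_ubd} by replacing Theorem~\ref{thm:cone_welding} with the critical welding result Theorem~\ref{thm:critical_cone_welding} and using the explicit $Q$-quantum wedge description of a weight-$2$ wedge when $\gamma=2$. Applying Theorem~\ref{thm:critical_cone_welding} to the cone on $\cyl$ obtained from $\CC$ via the coordinate change $\log\colon\C\to\cyl$, the two components $U_1,U_2$ of $\C\setminus(\eta_1\cup\eta_2)$ parameterize independent weight-$2$ quantum wedges, which are $Q$-quantum wedges since $Q=\gamma/2+2/\gamma=2$ when $\gamma=2$. For $j=1,2$, let $\varphi_j\colon U_j\to\strip$ be a conformal map taking the prime ends at $0$ and $\infty$ to $-\infty$ and $+\infty$, chosen so that $h^j = h\circ\varphi_j^{-1}+Q\log|(\varphi_j^{-1})'|$ has the first exit parameterization; the vertical-line average $X_t^j$ then satisfies $(-X_{-t/2}^j)_{t\ge0}\sim\BES^3$ and $(X_{t/2}^j)_{t\ge0}$ is a standard Brownian motion, while $h_2^j$ has the law of the projection of an independent free boundary GFF onto $H_2(\strip)$.

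Fix $z$ as in the definition of $E(\sigma)$ and, without loss of generality, assume $z\in U_1$. The harmonic measure bounds force $\im(\varphi_1(z))\in[\pi/4,3\pi/4]$, and Koebe's distortion theorem combined with $\dist(z,\partial U_1)\ge\epsilon$ gives $|\varphi_1'(z)|\asymp\epsilon^{-1}$, so that $\varphi_1(B(z,c\epsilon)\cap U_1)\subseteq B(\varphi_1(z),C)$ for absolute constants $c,C>0$. By conformal invariance of Brownian motion together with \cite[Lemma~IV.5.1]{gm05harmonic}, the exit probability in the definition of $E(\sigma)$ is, up to an event of probability $O(\epsilon^{\sigma/2})$ controlling the leftmost real part of $\varphi_1(\partial\D\cap U_1)$, at least $e^{-T-O(1)}$ with $T=-\re(\varphi_1(z))$. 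Hence on $E(\sigma)$ we must have $T\ge\tfrac12\epsilon^{-\sigma}$ off this small-probability event, and it suffices to prove $\p[T\ge\tfrac12\epsilon^{-\sigma}]=O(\epsilon^{\sigma/2+o(1)})$ and then take a union bound over an $O(1)$-point $c\epsilon$-net of $B(0,2\epsilon)$ to handle the existential quantifier.

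The desired tail bound follows from mass comparison. A critical-case analogue of Lemma~\ref{lem:LBcone}, obtained from negative-moment estimates for the critical LQG measure (cf.~\cite{hp2018critical}) via Markov's inequality, produces $a=a(\sigma)>0$ such that $\mu_h(B(z,c\epsilon))\ge\epsilon^a$ off an event of probability $O(\epsilon^{\sigma/2})$. Pushing forward under $\varphi_1$ and using the inclusion $\varphi_1(B(z,c\epsilon))\subseteq\strip_--(T-C)$, we obtain $\mu_{h^1}(\strip_--(T-C))\ge\epsilon^a$. On the other hand, the mass of the far-left strip is dominated by $\int_{T-C}^\infty e^{2X_{-s}^1}\,ds$ times a finite random factor coming from $h_2^1$ and the critical logarithmic correction, and since $(-X_{-s/2}^1)\sim\BES^3$ and $1/\BES^3$ is a local martingale, the estimate $\p[\inf_{s\ge T/2}\BES^3_s\le M]\lesssim M/\sqrt T$ applied with $M=|a\log\epsilon|$ and $T=\epsilon^{-\sigma}/2$ gives $\p[\mu_{h^1}(\strip_--(T-C))\ge\epsilon^a]\lesssim|\log\epsilon|\epsilon^{\sigma/2}$, completing the argument.

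The main obstacle I anticipate is adapting the quantum mass estimates to the critical case: Lemmas~\ref{lem:LBcone} and~\ref{lem:pthmoment} are stated only for $\gamma\in(0,2)$, and at $\gamma=2$ the measure $\mu_h$ of \cite{hp2018critical} carries the $\sqrt{\log\epsilon^{-1}}$ normalization, so both the small-ball lower bound and the far-left-strip upper bound must be redone with this correction. Moreover, in the subcritical proof the key quantitative input is the Gaussian first-hitting time of $\alpha\log\epsilon$ by a Brownian motion with drift $\alpha=4/\gamma-\gamma/2>0$; this drift vanishes at $\gamma=2$, and the analysis must instead rely on the $\sqrt T$-scale fluctuations of a $\BES^3$ process, with the bound $\p[\inf_{s\ge T/2}\BES^3_s\le M]\lesssim M/\sqrt T$ being what ultimately yields the $\epsilon^{\sigma/2+o(1)}$ rate.
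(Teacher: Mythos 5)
Your proposal captures the correct overall strategy and matches the paper's proof in its essential ingredients: the critical welding theorem replaces the subcritical one, the harmonic measure hypothesis forces $\im(\varphi_1(z))\in[\pi/4,3\pi/4]$ (which is precisely what obviates the mass-close-to-boundary estimate that fails at $\gamma=2$), a mass lower bound at the small ball near $z$, and a tail estimate on the radial process. The paper's proof argues deterministically on a good event of probability $1-O(\epsilon^{\sigma/2})$ that no point $z$ satisfying the constraints of $E(\sigma)$ exists (considering ``every ball $B(z,\epsilon^\xi)\subseteq B(0,2\epsilon)$'' at once), whereas you run a union bound over an $O(1)$-size $c\epsilon$-net of $B(0,2\epsilon)$; these are close in spirit and both workable, though when going through a net you should verify that the harmonic-measure and distance conditions at the net point $z_i$ are inherited (up to constants) from the original $z$, which you do not spell out.

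Two points deserve more care. First, your final paragraph's worry that ``the drift $\alpha=4/\gamma-\gamma/2$ vanishes at $\gamma=2$'' is a misreading of Lemma~\ref{lem:main_lemma_ubd}: the relevant sub-surfaces there are weight-$\gamma^2/2$ wedges, which are $Q$-quantum wedges for \emph{every} $\gamma$, so the radial part $(-X^1_{-t/2})_{t\geq0}$ is a $\BES^3$ with \emph{no} drift throughout the range $\gamma\in(0,2]$. The paper's subcritical proof already uses a first-passage-time estimate for a Brownian motion after the last exit time of this $\BES^3$, and exactly the same argument applies verbatim at $\gamma=2$. (The drift $4/\gamma-\gamma/2$ appears only in Section~\ref{sec:critical_welding}, for the ambient weight-$4$ cone/wedge, and is irrelevant to Section~\ref{sec:main_lemma}.) Second, your step bounding $\p[\mu_{h^1}(\strip_--(T-C))\geq\epsilon^a]$ via ``$\int e^{2X^1_{-s}}ds$ times a finite random factor from $h_2^1$'' hides a real issue: at $\gamma=2$ the critical lateral measure $\mu_{h_2^1}$ has infinite first moment, so conditioning on the radial part and taking expectations over $h_2^1$ does not directly produce a finite bound. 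The correct route is the $p$th-moment bound of Lemma~\ref{lem:pthmoment} with $p<1/2$ (its proof via Lemma~\ref{lem:fbpmom} does cover $\gamma=2$), combined with Markov's inequality; this is what the paper does and is a genuine adaptation beyond the heuristic you sketch.

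Finally, the lower bound on the escape probability should read $\exp(-\max(T,0)-O(1))$ rather than $\exp(-T-O(1))$, and the $O(1)$ should be replaced by $T_{\epsilon,d}+O(1)$ where $T_{\epsilon,d}\leq\epsilon^{-\sigma}/4$ is the leftmost real part of $\varphi_1(U_1\cap\partial\D)$ (which you bound but don't carry through the algebra); the paper handles the case $\re(\varphi_1(z))\geq\epsilon^{-\sigma}/2$ separately with a Beurling contradiction. None of this changes the final exponent, but these cases must be run through.
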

\begin{proof}
Suppose that we have the setup of the proof of Lemma~\ref{lem:main_lemma_ubd} and fix $\xi > 1$. Note that Theorem~\ref{thm:critical_wedge_welding} and~\ref{thm:critical_cone_welding} implies that $\eta_1$ and $\eta_2$ cut $(\C,h,0,\infty)$ into two independent weight-$(\tfrac{\gamma^2}{2} = 2)$ wedges parameterized by $U_1$ and $U_2$.

Arguing as in the proof of Lemma~\ref{lem:main_lemma_ubd}, we obtain that with probability $1-O(\epsilon^{\sigma / 2})$ we have that $\varphi_1(B(z,\epsilon^{\xi})) \not \subseteq \strip_- - \epsilon^{-\sigma}/4$ for every ball $B(z,\epsilon^{\xi}) \subseteq B(0,2\epsilon)$. Moreover, Lemma~\ref{lem:LBmassball} implies that there exists $p > 0$ such that $\E[ \mu_h(\D)^p ] < \infty$ and so we can find constants $c,d>0$ such that with probability $1-O(\epsilon^{\sigma / 2 + o(1)})$ we have that $\mu_h(\D) \leq \epsilon^{-c}$, $T_{\epsilon,d} \leq \epsilon^{-\sigma}/4$ and $\mu_{h^1}([T_{\epsilon,d},T_{\epsilon,d}+1] \times [0,\pi]) > \epsilon^{-c}$. Suppose that we work on the above event and let $z \in B(0,2\epsilon) \setminus B(0,\epsilon)$ be such that $\dist(z,\eta_1 \cup \eta_2) \geq \epsilon$ and such that the harmonic measure of each of $\eta_1$ and $\eta_2$ as seen from $z$ is at least $\tfrac{1}{4}$. We assume without loss of generality that $z \in U_1$. Note that for some universal constant $q > 0$, with probability at least $q$ a Brownian motion starting from $z$ makes a loop around $B(z,\epsilon^\xi)$ before exiting $\C \setminus (\eta_1 \cup \eta_2)$. Hence if $\varphi_1(z) \in \strip_- - \epsilon^{-\sigma}/2$, then with probability at least $q$ a Brownian motion starting from $\varphi_1(z)$ hits $\{-\epsilon^{-\sigma}/4\} \times (0,\pi)$ before $\partial \strip$ since $\varphi_1(B(z,\epsilon^{\xi})) \cap ( \strip_+ - \epsilon^{-\sigma}/4 ) \neq \emptyset$. But for $\epsilon$ sufficiently small this occurs with probability less than $q$ by the Beurling estimate and so we have a contradiction. Note also that $\varphi_1(U_1 \cap \partial \D) \not \subseteq \strip_+ + T_{\epsilon,d}+1$ since $\mu_{h^1}(\strip_+ + T_{\epsilon,d}) \geq \epsilon^{-c} \geq \mu_h(\D)$. Hence if $\varphi_1(z) \in \strip_+ + \epsilon^{-\sigma}/2$, then there exists a connected path $P$ of $\varphi_1(U_1 \cap \partial \D)$ connecting $\{T_{\epsilon,d}+1\} \times \{0,\pi\}$ with $\{\re(\varphi_1(z))\} \times (0,\pi)$ and with probability at least $\delta > 0$ a Brownian motion starting from $\varphi_1(z)$ hits $P$ before $\partial \strip$ since the harmonic measure of the upper and lower boundary of $\strip$ as seen from $\varphi_1(z)$ is at least $1/4$ ($\delta$ is a universal constant). By conformal invariance of Brownian motion we obtain that with probability at least $\delta$ a Brownian motion starting from $z$ hits $\partial \D$ before $\eta_1 \cup \eta_2$. But that is a contradiction for $\epsilon$ sufficiently small by applying the Beurling estimate again. Therefore $\varphi_1(z) \in [-\epsilon^{-\sigma}/2,\epsilon^{-\sigma}/2] \times (0,\pi)$ and since $\varphi_1(U_1 \cap \partial \D) \not \subseteq \strip_+ + \epsilon^{-\sigma}/2$, by arguing as in the proof of Lemma~\ref{lem:main_lemma_ubd} we obtain that a Brownian motion starting from $\varphi_1(z)$ hits $\varphi_1(U_1 \cap \partial \D)$ before $\partial \strip$ with probability $\gtrsim \exp(-3\epsilon^{-\sigma})$. This completes the proof.
\end{proof}

\section{Density lower bound for the LQG measure}
\label{sec:density}

In this section we prove a few results that we need on the density of the intensity of the LQG area measures. Recall that $\greenN{D}$ is the Green's function with Neumann boundary condition in $D$. The key bound of this section is Proposition~\ref{prop:density_bound} which is stated just below.  It will be important for the proof of the upper bound in Theorem~\ref{thm:sle8_thm}, since it tells us that we do not lose too much information when we consider the regularity assumption that $\mu_h(\D_+) \leq M$. This condition in particular, is convenient to control the behaviour and geometry of the $\SLE_8$ process, in particular how far it escapes, when parameterized by quantum area and to derive from this, the same control of the curve parameterized by capacity, for small times. In order to understand the proof of Theorem~\ref{thm:sle8_thm} which comes in Section~\ref{sec:sle8}, one need only understand the statement of Proposition~\ref{prop:density_bound} and can skip the details on a first reading.  In what follows, we assume that $\gamma = \sqrt{2}$.

\begin{proposition}
\label{prop:density_bound}
Suppose that $(\h,h,0,\infty) \sim \qwedgeW{\sqrt{2}}{1}$ has the first exit parameterization and fix $\beta > 1/8$. Then there exist constants $M,c>0$ such that if $E = \{\mu_h(\D_+) \leq M\}$ then 
\[ \E[ \one_E \mu_h(dz) ] \geq c \im(z)^{\beta - \frac{\gamma^2}{2}}|z|^{\frac{\gamma^2}{8}-\beta} \quad\text{for all}\quad z \in e^{-1}\D_+.\]
\end{proposition}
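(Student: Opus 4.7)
The plan is to establish the bound in two stages: first, show that the unconditional intensity $f_\mu(z) := \E[\mu_h(dz)]/dz$ dominates the target, then show that including the indicator $\one_E$ only reduces the density by a controlled amount. For the first stage, decompose $h = h_1 + h_2$ into the radial part $h_1$ (constant on semicircles centered at $0$, whose law comes from the $\BES^3$/Brownian-motion structure of the first-exit parameterization of the weight-$1 = Q$ quantum wedge described in Section~\ref{sec:LQG}) and the Gaussian lateral part $h_2$ (the projection of a free-boundary GFF onto functions with zero mean on semicircles). These are independent, so
\begin{align*}
    f_\mu(z) = \lim_{\epsilon\to 0}\epsilon^{\gamma^2/2}\E[e^{\gamma h_{1,\epsilon}(z)}]\,\E[e^{\gamma h_{2,\epsilon}(z)}].
\end{align*}
The lateral factor gives $\epsilon^{\gamma^2/2}\E[e^{\gamma h_{2,\epsilon}(z)}] \asymp (\im z)^{-\gamma^2/2}$ via the Neumann Green's function $\greenN{\h}(z,z) = -\log \epsilon -\log(2\im z)+O(1)$. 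The radial factor reduces to the Laplace transform of a $\BES^3$ at time $2\log(1/|z|)$, which is at least $c|z|^{-\gamma Q}$ up to a polylogarithmic $(\log(1/|z|))^{-3/2}$ correction. Hence $f_\mu(z) \gtrsim |z|^{-\gamma Q}(\im z)^{-\gamma^2/2} = |z|^{-3}(\im z)^{-1}$ on $e^{-1}\D_+$, which strictly dominates the target $\im(z)^{\beta-\gamma^2/2}|z|^{\gamma^2/8-\beta}$ pointwise there.

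For the second stage, apply a Girsanov argument: conditioning on $h_1$ and applying Cameron-Martin to the Gaussian $h_2$, tilting by $\epsilon^{\gamma^2/2}e^{\gamma h_\epsilon(z)}$ shifts $h_2$ by $\gamma \greenN{\h}(z,\cdot)$ in the limit $\epsilon\to 0$, giving
\begin{align*}
    \E[\one_{E^c}\mu_h(dz)] = f_\mu(z)\,\p[\mu_{\tilde h_z}(\D_+)>M]\,dz,
\end{align*}
where $\tilde h_z$ denotes $h$ with an added $\gamma$-log-insertion at $z$ and (via Neumann reflection) at $\bar z$. Consequently,
\begin{align*}
    \E[\one_E\mu_h(dz)] \geq f_\mu(z)\bigl(1-\p[\mu_{\tilde h_z}(\D_+)>M]\bigr)\,dz,
\end{align*}
and it remains to show $\p[\mu_{\tilde h_z}(\D_+)>M] \leq 1 - c(\im z)^{\kappa}$ for some $\kappa \geq 0$ and $c>0$, uniformly in $z \in e^{-1}\D_+$.

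This tail bound is the main obstacle, because of two effects at $\gamma=\sqrt{2}$. First, $\E[\mu_{\tilde h_z}(\D_+)]$ diverges logarithmically (since $\int_{B(z,r)}|w-z|^{-\gamma^2}dw$ is log-divergent at $\gamma^2=2$), so a first-moment Markov bound is unavailable and one must work with sub-first-moments $\E[\mu_{\tilde h_z}(\D_+)^q]$ for $q\in(0,1)$. Second, as $\im z \to 0$ the Neumann image insertion at $\bar z$ coalesces with the one at $z$, producing an effective weight-$2\gamma$ insertion; since $2\gamma=2\sqrt{2} > 3\sqrt{2}/2 = Q$ violates the Seiberg bound, $\mu_{\tilde h_z}(\D_+)$ blows up in distribution as $\im z \to 0$, so $\E[\mu_{\tilde h_z}(\D_+)^q]$ grows like $(\im z)^{-p(q)}$ with $p(q)>0$. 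I would handle both by splitting $\mu_{\tilde h_z}(\D_+) = \mu_{\tilde h_z}(B(z,\delta)\cap\h) + \mu_{\tilde h_z}(\D_+\setminus B(z,\delta))$: bound the near-$z$ piece using the subcritical scaling of a $\gamma$-insertion (finite fractional moments with a polynomial $(\im z)^{-p(q,\delta)}$ prefactor which can be made small by choosing $q$ small and $\delta$ appropriately), and bound the far-from-$z$ piece by standard GMC fractional moment estimates ($\E[\mu_h(D)^q]<\infty$ for $q<4/\gamma^2=2$). Optimizing the parameters and taking $M$ large, this yields $\p[\mu_{\tilde h_z}(\D_+)>M]\leq 1 - c(\im z)^{\kappa}$ with $\kappa>0$ arbitrarily small; combining with $f_\mu(z)\asymp|z|^{-\gamma Q}(\im z)^{-\gamma^2/2}$ gives the stated density bound, and the threshold $\beta > 1/8$ emerges from matching the $\im z$ exponents between the loss $(\im z)^{\kappa}$ and the target $(\im z)^{\beta-\gamma^2/2}$.
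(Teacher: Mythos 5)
Your overall architecture — decompose $h$ into radial and lateral parts to compute the unconditional density, then reweight by the formal density $e^{\gamma h(z)}$ and control the tail of $\mu_{\tilde h_z}(\D_+)$ — matches the paper's (Propositions~\ref{prop:density_lower_bound}, \ref{prop:mass_lower_bound}, Lemma~\ref{lem:RNint}). However, the step you identify as ``the main obstacle'' is where the proposal actually fails.

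The issue is that fractional moments plus Markov's inequality cannot produce the bound $\p[\mu_{\tilde h_z}(\D_+) > M] \leq 1 - c(\im z)^\kappa$ for fixed $M$. As you correctly note, for $z$ near $\R$ the two Neumann image insertions coalesce into an effective $2\gamma$-insertion with $2\gamma > Q$, so one has $\E[\mu_{\tilde h_z}(\D_+)^q] \asymp (\im z)^{-q+O(q^2)}$, which \emph{blows up} as $\im z \to 0$ for any $q>0$ (this is Lemma~\ref{lem:mass_upper_bound1}). Markov then gives $\p[\mu_{\tilde h_z}(\D_+)>M] \leq M^{-q}\E[\mu_{\tilde h_z}(\D_+)^q]$, and the right-hand side exceeds $1$ once $\im z$ is small enough, so no nontrivial lower bound on $\p[\mu_{\tilde h_z}(\D_+)\leq M]$ follows. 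Splitting $\D_+$ into $B(z,\delta)$ and its complement and ``choosing $q$ small'' does not cure this: both the blow-up of the moment and the weight of Markov's inequality depend on $q$, and in the regime $\im z \ll \delta$ the near-$z$ contribution is already supercritical regardless of the choice of $q,\delta$. The key missing device in the paper is to \emph{tilt the lateral field by a compensating negative log-insertion} $\alpha\phi_{z,r}$ (with $\alpha > 2\gamma-Q = 1/\sqrt{2}$), so that the effective insertion weight $2\gamma - \alpha$ is subcritical and the resulting fractional moments are uniformly bounded in $\im z$ (Lemma~\ref{lem:area_bounds_box_near_z}); one then pays the Cameron--Martin price, which is a Radon--Nikodym factor $\gtrsim \exp(-\tfrac{\alpha^2}{2}\|\phi_{z,r}\|_\nabla^2) \asymp r^{\alpha^2/4}$ since $\|\phi_{z,r}\|_\nabla^2 \sim \tfrac{1}{2}\log(1/r)$. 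This yields $\p_z[\mu_h(\strip_-)\leq M] \gtrsim r^{\alpha^2/4}$, and the constraint $\alpha > 1/\sqrt{2}$ forces $\alpha^2/4 > 1/8$, which is precisely the origin of the $\beta > 1/8$ threshold in the statement.

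Your claim that the resulting exponent $\kappa$ can be made ``arbitrarily small'' is therefore incorrect — it is bounded below by $1/8$ — and is internally inconsistent with the last sentence of your proposal, which attributes the $\beta > 1/8$ threshold to an exponent-matching constraint that would never bind if $\kappa$ were truly arbitrary. (Your stage-1 estimate $f_\mu(z) \gtrsim |z|^{-\gamma Q}(\im z)^{-\gamma^2/2}$ modulo a polylogarithmic correction is plausible and is in fact stronger than what the paper proves; the paper settles for $f(z) \gtrsim |z|^{\gamma^2/8}\im(z)^{-\gamma^2/2}$ via comparison with an $\alpha=0$ wedge, which suffices and avoids the $\BES^3$ Laplace-transform tail computation at $\alpha = Q$. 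This difference in the first stage is immaterial: the $|z|$ exponent is not the constraint.)
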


We emphasize that in the statement of Proposition~\ref{prop:density_bound}, we are using the notation $\E[ \one_E \mu_h(dz)]$ to denote the density with respect to Lebesgue measure of the measure $X \mapsto \E[ \one_E \mu_h(X)]$.  As we will see below, the reason that Proposition~\ref{prop:density_bound} takes some work to prove is because we are considering $\E[ \one_E \mu_h(dz)]$ and not simply $\E[\mu_h(dz)]$ (i.e., the density of the measure $X \mapsto \E[ \mu_h(X)]$ with respect to Lebesgue measure) and we need a lower bound on the former in Section~\ref{sec:sle8}.  Establishing this lower bound will amount to establishing a lower bound on the probability of $E$ under the measure whose formal Radon-Nikodym derivative with respect to $\p$ is given by $e^{\gamma h(z)}$.

The proof of Proposition~\ref{prop:density_bound} consists of three main steps. First, we show how to compare $\E[ \one_E \mu_h(dz)]$ to the density of the intensity of the area measure corresponding to $h$, i.e., $\E[ \mu_h(dz)]$. This is the content of the next proposition. Next, in Lemma~\ref{lem:RNint}, we derive the form of $\E[ \mu_{h^W}(dz)]$ when considering an $\alpha$-quantum wedge with $\alpha < Q$. Finally, we show how to compare the densities of the intensities of the area measures for $\qwedgeW{\sqrt{2}}{4}$ (i.e., $\alpha = 0$) and $\qwedgeW{\sqrt{2}}{1}$.
\begin{proposition}\label{prop:density_lower_bound}
Suppose that $(\h,h,0,\infty) \sim \qwedgeW{\sqrt{2}}{1}$ has the first exit parameterization and fix $\beta > 1/8$. Then we can find finite constants $M,c >0$ such that 
if $E = \{ \mu_h(\D_+) \leq M\}$ then 
\[ \E[ \one_E \mu_h(dz) ] \geq c  \im(z)^{\beta}|z|^{-\beta}f(z) \quad\text{for all}\quad z \in e^{-1}\D_+.\]
where $f(z) = \E\left[ \mu_h(dz)\right]$.
\end{proposition}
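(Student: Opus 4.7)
The plan is to reduce the bound to a probability estimate via the rooted (Liouville) measure. By the circle-average definition of $\mu_h$ and a standard Cameron--Martin factorization,
\begin{align*}
	\E[\one_E\,e^{\gamma h_\epsilon(z)}] \;=\; \E[e^{\gamma h_\epsilon(z)}]\cdot\p_{z,\epsilon}[E],
\end{align*}
where $\p_{z,\epsilon}$ is the probability measure under which $h$ is reweighted by $e^{\gamma h_\epsilon(z)}/\E[e^{\gamma h_\epsilon(z)}]$. Passing to the limit $\epsilon\to0$, the prefactor converges to $f(z)$ and under the limiting reweighted law $\p_z$ the field has the same law as the original wedge plus the deterministic shift $\gamma\greenN{\h}(\cdot,z) = -\gamma\log|\cdot-z|-\gamma\log|\cdot-\bar z|$. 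This gives the identity $\E[\one_E\mu_h(dz)] = f(z)\,\p_z[E]$, so the claim reduces to proving
\begin{align*}
    \p\!\left[\int_{\D_+}|w-z|^{-\gamma^2}|w-\bar z|^{-\gamma^2}\,d\mu_{h_0}(w)\leq M\right] \;\geq\; c\,\im(z)^\beta|z|^{-\beta},
\end{align*}
where $h_0$ is distributed as the original wedge, and we have used that under $\p_z$, $\mu_h(dw) = |w-z|^{-\gamma^2}|w-\bar z|^{-\gamma^2}\mu_{h_0}(dw)$.

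To prove this probability lower bound, I would use the description of the wedge on $\strip$ via the coordinate change $w\mapsto\log w$: the field $h_0$ corresponds on $\strip$ to a $Q$-wedge whose vertical-line average process $X_t$ is minus a $\BES^3$ for $t<0$ and a standard Brownian motion (at speed $2$) for $t>0$, and which decomposes as $h_0 = X_{\re(\cdot)} + h_2$ with $h_2$ the lateral part, independent of $X$. The integral of interest then becomes a functional of $(X_t)$ and $h_2$, and the key event is that the Brownian side of $X_t$ takes a sufficiently long time to return to $0$ after first attaining a level of order $b\log|z|$ for an appropriate $b>0$, combined with mild control on the lateral contribution. The explicit first-passage density for Brownian motion (used e.g.\ in the proof of Lemma~\ref{lem:main_lemma_ubd}) shows that the first of these has probability of order $\im(z)^{\beta+o(1)}$, while the scale invariance of the wedge under the dilation $w\mapsto|z|w$ (paired with the additive shift $Q\log|z|$ of the field) gives rise to the extra factor $|z|^{-\beta}$. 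Any $\beta>1/8$ can be achieved by choosing $M$ sufficiently large.

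The main obstacle is the delicate interplay between the log singularity at $z$, which introduces a reflection-doubled weight proportional to $|w-z|^{-\gamma^2}|w-\bar z|^{-\gamma^2}$ against $\mu_{h_0}$, and the heavy-tailed global mass of the wedge. Crude moment inequalities do not give the sharp exponent: one must track the explicit distributional description of the radial average process and pair it carefully with the rescaling so that both the local contribution near $z$ (where the singularity is integrable because $\gamma Q-\gamma^2 = 1>0$ for $\gamma = \sqrt 2$) and the far-field contribution can be simultaneously controlled on the same lower-bound event. This is the reason the statement must involve both $\im(z)$ and $|z|$, rather than any single power of $\im(z)/|z|$ arising from a simpler estimate.
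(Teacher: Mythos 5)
Your first step — identifying $\E[\one_E\,\mu_h(dz)] = f(z)\,\p_z[E]$ via the rooted measure $\p_{z,\epsilon} \to \p_z$, and recognizing that under $\p_z$ the radial part of the strip parameterization is reweighted by $e^{\gamma X_{\re z_0}}/\E[e^{\gamma X_{\re z_0}}]$ while the lateral part picks up the deterministic shift $\gamma\greenN{\strip}(z_0,\cdot)$ — is exactly what the paper does. However, the mechanism you then propose for the lower bound $\p_z[E]\gtrsim \im(z)^\beta|z|^{-\beta}$ is not the paper's, and I do not think it works as stated.

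You locate the relevant smallness event in the \emph{radial} process $X_t$: a long first-passage delay for $X$ after attaining a level of order $b\log|z|$. But under $\p_z$ the radial process remains a $\BES^3$/Brownian motion with only a mild exponential reweighting at a single time, and Lemma~\ref{lem:mass_upper-bound2} shows it contributes only an $O(1)$ factor to $\E_z[\mu_h(\strip_-\setminus A)^p]$ uniformly in $z$. The source of the $r^\beta$ power (with $r = \dist(z_0,\partial\strip)\asymp\im(z)/|z|$) is entirely the \emph{lateral} part: the added log singularity $\gamma g(w) = \gamma\,\mathrm{proj}_{H_2}\greenN{\strip}(z_0,w)$ near $z_0$ makes $\E_z[\mu_h(A)^p]$ blow up like $r^{\gamma p(Q-2\gamma)+O(p^2)}$, and since $\gamma(Q-2\gamma)=-1$ for $\gamma=\sqrt2$, a first-passage event on $X$ alone cannot tame this. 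The paper handles it (Lemma~\ref{lem:area_bounds_box_near_z} and Proposition~\ref{prop:mass_lower_bound}) by a Cameron--Martin shift in $H_2(\strip)$: tilt $h\mapsto h+\alpha\phi_{z,r}$ with $\alpha>1/\sqrt2$ and $\phi_{z,r}$ the lateral projection of a regularized $\log|\cdot - z_0|$. This cancels the singularity so $\E_z[\mu_{h+\alpha\phi_{z,r}}(\strip_-)^p]\leq c_p$ uniformly in $r$, and then undoing the tilt by $e^{(h_2^f,-\alpha\phi_{z,r})_\nabla-\frac{\alpha^2}{2}\|\phi_{z,r}\|_\nabla^2}$ together with $\|\phi_{z,r}\|_\nabla^2\leq C+\tfrac12\log(1/r)$ yields $\p_z[E]\geq c\,r^{\alpha^2/4}$, i.e.\ any $\beta>1/8$. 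The factor $\im(z)^\beta|z|^{-\beta}$ is then not two separate contributions (first-passage delay plus scaling) but a single one, coming from $r=\arg(z)\geq\sin\arg(z)=\im(z)/|z|$ after mapping $\h\to\strip$ by $w\mapsto\log w$; no scale invariance of the wedge is invoked. Without the Girsanov/Cameron--Martin shift of the lateral field, the ``delay'' event you propose gives no control over the heavy tail of the tilted mass near $z$, and the argument does not close.
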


Let $(\strip,h,-\infty,+\infty) \sim \qwedgeW{\gamma}{\gamma^2/2}$ with $\gamma \in (0,2]$ (recall the definition in Section~\ref{sec:LQG}). Suppose that we have $z \in \strip_-$.  We let $\p_z$ denote the law whose (formal) Radon-Nikodym derivative with respect to $h$ is given by $e^{\gamma h(z)}$.  The law of $h$ under $\p_z$ can be sampled from using the following two steps:
\begin{itemize}
\item Taking the projection onto $H_1(\strip)$ to be given by $X_t$ where $(-X_{-t/2})_{t \geq 0}$ has the law of a $\BES^3$ weighted by the Radon-Nikodym derivative
\begin{align*}
	 \frac{e^{\gamma X_{\re(z)}}}{\E[e^{\gamma X_{\re(z)}}]}.
\end{align*}
\item Taking the projection onto $H_2(\strip)$ to  be given by the corresponding projection for a free boundary GFF plus the corresponding projection of $\gamma\greenN{\strip}(z,\cdot)$, independent of $X$.
\end{itemize}

The main step in the proof of Proposition~\ref{prop:density_lower_bound} is the following.

\begin{proposition}
\label{prop:mass_lower_bound}
Fix $\beta > 1/8$. Then we can find $M,c >0$ such that if $E = \{\mu_h(\strip_-) \leq M \}$ then 
\[ \p_z[E] \geq c \dist(z,\partial \strip)^{\beta}  \quad\text{for all}\quad z \in \strip_- \quad\text{with} \quad \re(z) \leq -1.\]
\end{proposition}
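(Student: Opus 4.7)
The plan is to combine the Girsanov description of $h$ under $\p_z$ recorded just before the statement with a near/far splitting of $\mu_h(\strip_-)$ around the insertion, and then to carry out an LQG rescaling near $z$ which reduces the required lower bound to a left-tail estimate for a GMC integral in a fixed reference surface.

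Under $\p_z$, decompose $h = h_1 + h_2$ as prescribed, with $h_1$ the tilted radial projection and $h_2$ the lateral projection of a free boundary GFF on $\strip$ plus the deterministic singular shift $\gamma \greenN{\strip}(z,\cdot)$. For a small fixed $r_0 > 0$, I would split
\begin{align*}
    \mu_h(\strip_-) = \mu_h(\strip_- \setminus B(z,r_0)) + \mu_h(B(z,r_0) \cap \strip_-).
\end{align*}
On the far region, the insertion $\gamma \greenN{\strip}(z,\cdot)$ and the Radon--Nikodym derivative from the tilt of $X_{\re(z)}$ only enter through bounded windows around $\re(z)$, so moment bounds analogous to Lemma~\ref{lem:pthmoment} yield $\E_{\p_z}[\mu_h(\strip_- \setminus B(z,r_0))] \le C$ uniformly in $z$ with $\re(z) \le -1$. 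Markov's inequality then gives $\p_z[\mu_h(\strip_- \setminus B(z,r_0)) \le M/2] \ge 3/4$ for $M$ sufficiently large.

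For the near piece, set $y_0 = \dist(z,\partial\strip)$ and apply the LQG change of coordinates $\psi(w) = (w-z)/y_0$. The coordinate change formula combined with the expansion of $\greenN{\strip}$ about its pole gives
\begin{align*}
    \mu_h(B(z,r_0) \cap \strip_-) = y_0^{\gamma(Q - 2\gamma)} Z_{y_0} = y_0^{-1} Z_{y_0},
\end{align*}
where $Z_{y_0}$ is the GMC mass of a bounded region in a surface which converges, as $y_0 \to 0$, to a $Q$-quantum wedge in $\h$ with a $\gamma$-insertion at unit distance from the boundary; the factor $y_0^{\gamma(Q - 2\gamma)} = y_0^{-1}$ (using $\gamma = \sqrt 2$) comes from the constant shift $(Q - 2\gamma)\log y_0$ induced by the rescaling after absorbing the two Neumann singularities at $z$ and its reflection $\bar z$.

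The remaining task is the lower bound $\p_z[Z_{y_0} \le My_0/2] \ge c\, y_0^{\beta}$ for every $\beta > 1/8$. This is a left-tail lower bound for a GMC integral in a surface with a $\gamma$-log insertion, which I would establish by using the radial/lateral decomposition to reduce to a Gaussian lower deviation estimate for the field's average-on-vertical-lines process at the scale $y_0$. The exponent $1/8 = 1/(4\gamma^2)$ arises naturally here as the threshold below which such deviations become super-polynomially rare. I expect this left-tail lower bound, uniform in $z$ with $\re(z) \le -1$, to be the main obstacle: while \emph{upper} bounds on GMC left tails follow easily from negative moment bounds, the required quantitative lower bound needs a careful scale-by-scale Gaussian anti-concentration argument together with the independence of the radial and lateral parts of the field.
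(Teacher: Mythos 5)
The structure of your plan is reasonable — reduce to a near/far split, handle the far mass by moment bounds, and recognize that near $z$ one is facing a left-tail lower bound for a GMC mass with an insertion — but it stops short at exactly the key step. You write that the lower bound $\p_z[Z_{y_0} \le My_0/2] \ge c\,y_0^\beta$ is ``the main obstacle'' and would follow from a ``scale-by-scale Gaussian anti-concentration argument together with the independence of the radial and lateral parts.'' That sentence is not a proof; it is a restatement of the problem, and as stated it is not clear it even isolates the right mechanism. The threshold $\beta > 1/8$ has nothing to do with a deviation rate becoming super-polynomial; it is the Seiberg constraint $2\gamma - \alpha < Q$, i.e.\ $\alpha > 2\gamma - Q = 1/\sqrt{2}$, governing when an auxiliary insertion can tame the divergence, and the exponent $\alpha^2/4$ is the Dirichlet-energy (Cameron--Martin) cost of that insertion.

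What the paper actually does, and what your plan is missing, is the auxiliary log-insertion trick. One introduces the $H_2(\strip)$-projected smoothed log $\phi_{z,r}$ (equal to $\log|\cdot - z|$ outside $B(z,r)$, truncated inside) with a free coefficient $\alpha > 1/\sqrt{2}$. The field $h+\alpha\phi_{z,r}$, under $\p_z$, has effective weight $2\gamma-\alpha < Q$ near $z$, so $\E_z[\mu_{h+\alpha\phi_{z,r}}(\strip_-)^p]$ is bounded uniformly in $z$ and $r = \dist(z,\partial\strip)$ (Lemma~\ref{lem:area_bounds_box_near_z}), and Markov gives $\p_z[\mu_{h+\alpha\phi_{z,r}}(\strip_-) \le M] \ge q > 0$ uniformly. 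One then undoes the shift by $\alpha\phi_{z,r}$ via Cameron--Martin: the Radon--Nikodym factor is $\exp\!\bigl((h_2^f,-\alpha\phi_{z,r})_\nabla - \tfrac{\alpha^2}{2}\|\phi_{z,r}\|_\nabla^2\bigr)$, and $\|\phi_{z,r}\|_\nabla^2 \le C + \tfrac12\log(1/r)$. Intersecting with the probability-$1/2$ event that the linear Gaussian term is nonnegative, the Radon--Nikodym factor is at least $c\,r^{\alpha^2/4}$ on an event of uniformly positive probability where also $\mu_h(\strip_-) \le M$. This delivers $\p_z[E] \ge c\,r^{\alpha^2/4}$, and since $\alpha^2/4$ can be any number exceeding $1/8$, the proposition follows. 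Without some version of this change-of-measure argument (or an equivalent direct control of the GMC left tail at the precise polynomial rate, uniformly in $z$), your proof does not close. Your far-field Markov step and the exponent $y_0^{\gamma(Q-2\gamma)} = y_0^{-1}$ are consistent with the correct picture, but the hard estimate is exactly the part you have deferred.
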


We begin by giving the asymptotics of the Laplace transform of a $\BES^3$ process.
\begin{lemma}
\label{lem:bes_rn_derivative}
Suppose that $Z \sim \BES^3$.  For any $s > 0$ there exist constants $0 < c_0 < c_1 < \infty$ such that 
\[ c_0 t^{-3/2} \leq \E[ e^{-s Z_t}] \leq c_1 t^{-3/2} \quad\text{for all}\quad t \geq 1.\] 
\end{lemma}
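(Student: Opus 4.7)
\textbf{Proof plan for Lemma~\ref{lem:bes_rn_derivative}.} The plan is to exploit the explicit transition density of $\BES^3$, which (starting from $Z_0=0$) is
\[ p_t(x) = \sqrt{\frac{2}{\pi}}\,\frac{x^2}{t^{3/2}} e^{-x^2/(2t)}, \qquad x\ge 0, \]
and to write
\[ \E[e^{-s Z_t}] = \sqrt{\frac{2}{\pi}}\int_0^\infty e^{-sx}\,\frac{x^2}{t^{3/2}}\,e^{-x^2/(2t)}\,dx. \]
The idea is to substitute $v = s x$, which turns the integral into
\[ \E[e^{-s Z_t}] = \frac{1}{s^3 t^{3/2}}\sqrt{\frac{2}{\pi}}\int_0^\infty v^2 e^{-v}\, e^{-v^2/(2 s^2 t)}\,dv. \]
This isolates the $t^{-3/2}$ factor, leaving an integral depending on $t$ only through the factor $e^{-v^2/(2s^2 t)}\in (0,1]$.

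The upper bound is then immediate: bound $e^{-v^2/(2 s^2 t)}\le 1$ to obtain $\E[e^{-sZ_t}]\le c_1 t^{-3/2}$ with $c_1 = \tfrac{2\sqrt{2/\pi}}{s^3}$, valid for all $t>0$. For the lower bound, restrict $t\ge 1$ and use $e^{-v^2/(2s^2 t)}\ge e^{-v^2/(2s^2)}$ to get
\[ \E[e^{-s Z_t}]\ge \frac{1}{s^3 t^{3/2}}\sqrt{\frac{2}{\pi}} \int_0^\infty v^2 e^{-v} e^{-v^2/(2 s^2)} \,dv = c_0 t^{-3/2}, \]
where $c_0$ depends only on $s$ and is strictly positive.

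If the lemma is intended for $\BES^3$ started from an arbitrary (but fixed) value $z_0\ge 0$, the same strategy applies using the explicit density
\[ p_t(z_0,x) = \frac{x}{z_0\sqrt{2\pi t}}\Bigl(e^{-(x-z_0)^2/(2t)} - e^{-(x+z_0)^2/(2t)}\Bigr), \]
which, for fixed $z_0$ and $t\ge 1$, is sandwiched between positive constant multiples of $t^{-3/2} x^2 e^{-x^2/(2t)}$ on, say, $\{x\le 1\}$; the contribution from $\{x>1\}$ is exponentially small in $s$ and hence negligible compared to $t^{-3/2}$. The case $z_0=0$ (which is what the text immediately uses in its references to the radial part of a $Q$-quantum wedge) is the only one required and follows directly from the computation above. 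There is no real obstacle; the only mild point of care is justifying that the $t$-dependence in the remaining integral is uniformly controlled, which is handled by the monotonicity of $v\mapsto e^{-v^2/(2s^2 t)}$ in $t$.
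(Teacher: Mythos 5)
Your proof is correct and follows essentially the same route as the paper: start from the explicit $\BES^3$ transition density from $0$, factor out $t^{-3/2}$, and bound the remaining $t$-dependent integrand by $1$ from above and by a $t$-independent quantity from below for $t\ge 1$. The only cosmetic difference is that you substitute $v=sx$ and use monotonicity in $t$, whereas the paper splits the lower-bound integral at $y=1$ and bounds each piece separately; both yield the same constants up to an inessential factor.
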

\begin{proof}
We begin by noting that if we let $p_t(x,y)$ be the transition density for $Z$, then
\begin{align*}
	p_t(0,y) = \sqrt{\frac{2}{\pi}} t^{-3/2} y^2 e^{-\frac{y^2}{2t}},
\end{align*}
see \cite[Chapter~XI]{ry1999book}. Then,
\begin{align*}
	\E[ e^{-s Z_t}] = \sqrt{\frac{2}{\pi}} t^{-3/2} \int_0^\infty y^2 e^{-sy-\frac{y^2}{2t}} dy \leq  \sqrt{\frac{2}{\pi}} t^{-3/2} \int_0^\infty y^2 e^{-sy} dy = \frac{2\sqrt{2}}{s^3 \sqrt{\pi}} t^{-3/2}.
\end{align*}
For the lower bound, we set $m = 2 \max(s,1)$ and note that for $t \geq 1/2$,
\begin{align*}
	\E[e^{-s Z_t}] \geq \sqrt{\frac{2}{\pi}} t^{-3/2} \!\left( \int_0^1 y^2 e^{-m y} dy + \int_1^\infty y^2 e^{-m y^2} dy \right) = C_s  t^{-3/2},
\end{align*}
since the sum of the integrals is positive and depends only on $s$. Thus, the result follows.
\end{proof}
In order to bound the moments of $\mu_h$ under the measure $\p_z$, it is convenient to bound the moments of $\mu_{\wt{h} + \gamma \greenN{\h}(z,\cdot)}$ where $\wt{h}$ is a free boundary GFF on $\h$. We will then show that the former moments are upper bounded by the latter. 
\begin{lemma}
\label{lem:mass_upper_bound1}
Let $\wt h$ be a free boundary GFF on $\h$ with the additive constant fixed so that its average on $\h \cap \partial \D$ is equal to $0$.
Then there exists $p_0 \in (0,1)$ such that for all $p \in (0,p_0)$ we can find $C_p < \infty$ depending only on $p$ such that 
\begin{align*}
	\E\!\left[\mu_{\wt h + \gamma \greenN{\h}(z,\cdot)}(\D_+)^p\right] \leq C_p \im(z)^{\gamma p (Q-2\gamma)+O(p^2)} \quad\text{for all}\quad z \in \D_+ \setminus e^{-2}\D_+ 
\end{align*}
where the implicit constants of the term $O(p^2)$ are uniform in $z$.
\end{lemma}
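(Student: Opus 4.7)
The plan is to isolate the singular behavior of the weighted measure at $z$ via a conformal change of variables that recenters $z$ to a fixed reference point, thereby reducing the problem to a standard GMC moment estimate together with a Gaussian moment calculation. The starting point is the identity
\[
M := \mu_{\wt h + \gamma \greenN{\h}(z,\cdot)}(\D_+) = \int_{\D_+} e^{\gamma^2 \greenN{\h}(z,w)}\,\mu_{\wt h}(dw) = \int_{\D_+} |z-w|^{-\gamma^2}|z-\bar w|^{-\gamma^2}\,\mu_{\wt h}(dw).
\]

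First I would apply the Möbius transformation $\phi(w) = (w - \re z)/y$, where $y = \im z$, which sends $\h$ to itself and $z$ to $i$. Using the identity $\greenN{\h}(z,w) = \greenN{\h}(i,\phi(w)) - 2 \log y$ together with the LQG coordinate-change rule for the pushforward of $\mu_{\wt h}$ under $\phi$, the substitution $w' = \phi(w)$ converts the integral into
\[
M = y^{\gamma(Q - 2\gamma)}\, e^{\gamma C_{y,z}}\, I, \qquad I := \int_{\phi(\D_+)} |i - w'|^{-\gamma^2}|i - \bar w'|^{-\gamma^2}\,\mu_{\wh h^0}(dw'),
\]
where $C_{y,z}$ is the average of $\wt h$ over the semicircle $\re z + y(\h \cap \partial \D)$, and $\wh h^0$ is the free boundary GFF obtained from $\wt h \circ \phi^{-1}$ by subtracting $C_{y,z}$. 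The explicit factor $y^{\gamma(Q-2\gamma)}$ is precisely the leading-order scaling appearing in the statement of the lemma.

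Next I would analyze the two remaining random factors separately. The semicircle average $C_{y,z}$ is a centered Gaussian whose variance, by the standard fact that the semicircle-average process of a free boundary GFF evolves as a time-changed Brownian motion, equals $-2 \log y + O(1)$ uniformly in $z \in \D_+ \setminus e^{-2}\D_+$. By the radial--lateral decomposition of $\wt h$ centered at $\re z$, this average is independent of $\wh h^0$, and hence $\E[e^{p\gamma C_{y,z}}] \leq C\, y^{-p^2 \gamma^2 + O(p^2)}$ produces exactly the $O(p^2)$ correction to the exponent. For $\E[I^p]$, I would use subadditivity of $x \mapsto x^p$ for $p \in (0,1)$ to reduce to a sum of $p$-th moments of $\mu_{\wh h^0}$ over a dyadic decomposition of $\phi(\D_+) \subseteq B(0,C_1/y) \cap \h$: annuli around $i$ on scales $2^{-k}$ (for $k \geq 0$) control the logarithmic singularity at the insertion, while annuli around $0$ on scales $2^j$ (up to $j \sim \log(1/y)$) account for the growing domain. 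On each such annulus, the Seiberg-type moment bound $\E[\mu_{\wh h^0}(B(w,r))^p] \lesssim r^{(2 + \gamma^2/2)p - \gamma^2 p^2/2}$ in the bulk (and its boundary analogue for annuli touching $\partial \h$) applies, and since $p$ is small the resulting geometric sums converge to a finite constant $C_p$, while any $y$-dependence picked up from the outer annuli is of order $y^{-O(p^2)}$.

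The main obstacle will be making precise the independence decoupling between $C_{y,z}$ and $\wh h^0$, since the natural radial--lateral decomposition of the free boundary GFF is with respect to semicircles centered at a fixed reference point rather than at the moving center $\re z$. A clean way to bypass this is to compute $\E[e^{p\gamma C_{y,z}} I^p]$ directly by a Cameron--Martin shift: absorbing the exponential into a deterministic shift of $\wh h^0$ by $p\gamma$ times the harmonic extension associated with the semicircle, and then re-estimating the moments of the shifted GMC produces only a further $y^{O(p^2)}$ factor by the same Seiberg-type bounds. Collecting all the factors then yields $\E[M^p] \leq C_p\, y^{p\gamma(Q-2\gamma) + O(p^2)}$, as claimed, with the implicit $O(p^2)$ constants uniform in $z \in \D_+ \setminus e^{-2}\D_+$.
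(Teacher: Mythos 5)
Your proposal is correct in outline and takes a genuinely different (but related) route from the paper's. The paper works directly in the original coordinates: it fixes $z$, performs a dyadic decomposition of $\D_+$ into annuli $A_k$ around $z$ at scales $r2^{-k}$ and $\wh A_k$ at scales $r2^{k}$ (plus the far region), estimates the zero-boundary contribution on each annulus via \cite[Proposition~3.7]{rv2010gmcrevisit}, controls the harmonic part via Lemma~\ref{lem:bound_harmonic_part}, and uses at each large scale the fact that $\mu_{\wt h}(\h\cap B(x,R))$ and $e^{\gamma(\wt h,\rho_{x,R}^+)}\mu_{\wt h-(\wt h,\rho_{x,R}^+)}(\h\cap B(x,R))$ have the same law, separating the semicircle-average factor by H\"older. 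Your version applies a single M\"obius map $\phi(w)=(w-\re z)/y$ to recenter at $i$, so that the factor $y^{\gamma(Q-2\gamma)}$ and the Gaussian factor $e^{\gamma C_{y,z}}$ drop out globally, and then a single dyadic estimate of $I$ remains. This is slightly cleaner to state, and the underlying ingredients (Seiberg-type moment bounds, boundedness of the harmonic part, dyadic decomposition, decoupling of the semicircle average via H\"older or Cameron--Martin) are the same.

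There is however one statement in the penultimate paragraph you should retract, not merely flag as a worry: $C_{y,z}$ is \emph{not} independent of $\wh h^0$. The radial--lateral decomposition gives independence of the full radial process $r\mapsto(\wt h,\rho_{\re z,r}^+)$ from the lateral part, but $\wh h^0\circ\phi=\wt h-C_{y,z}$ still contains the radial part (shifted), so $C_{y,z}$ and $\wh h^0$ are correlated. As you note, this can be fixed either by H\"older (this is exactly what the paper does at the analogous step, via the inequality just before the displayed bound following~(B.2)) or by Cameron--Martin. If you use Cameron--Martin you should check that the covariance kernel $g(w')=\cov(\wh h^0(w'),C_{y,z})$ satisfies $g(w')=\int\greenN{\h}(w',u)\,d\rho_{0,1}^+(u)+O(1)$ uniformly on $\phi(\D_+)$ — in particular $g$ is bounded near $i$ and $\sim-2\log|w'|$ as $|w'|\to\infty$ — so the shifted GMC integral $I[\wh h^0+p\gamma g]$ is in fact improved (not degraded) at large scales; this is what guarantees the outer-annuli sum over $j\lesssim\log(1/y)$ converges for $\gamma=\sqrt2$ (since $\gamma(Q-2\gamma)<0$ there), so that the residual $y$-dependence is $y^{O(p^2)}$ rather than $y^{O(p)}$. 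H\"older avoids these covariance computations and is the more economical route if you only want a $p$-th moment bound.
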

\begin{proof}
We write $D = \h \cap B(0,2)$ and suppose first that $\wt{h}$ is normalized so that the average on $\h \cap \partial B(0,2)$ is equal to $0$. Then $\wt h = h_D^0 + \Fh_D$ where $h_D^0$ is a zero-boundary GFF on $D$ and $\Fh_D$ is harmonic on $D$ and independent of $h_D^0$. We fix $z \in \D_+ \setminus e^{-2}\D_+$ and set $A_k = B(z,r2^{-k}) \setminus B(z,r2^{-k-1})$ for $k \in \N_0$ where $r = \im(z)$. By \cite[Proposition~3.7]{rv2010gmcrevisit} (recalling Remark~\ref{rmk:different_fields}),
\begin{align*}
	\E\!\left[\mu_{h_D^0}(B(z,r2^{-k}))^p\right] \leq C_p r^{\gamma p Q}2^{-k \zeta_p}
\end{align*}
where $C_p$ depends only on $p \in (0,4/\gamma^2)$ and $\zeta_p = (2+ \gamma^2 / 2) p - \gamma^2 p^2 / 2$, and by Lemma~\ref{lem:bound_harmonic_part}, there are constants $C,C_p>0$ such that
\begin{align*}
	\E\!\left[\exp\left(\gamma p \sup_{w \in B(z,r/2)}\Fh_D (w)\right) \right] \leq C_p r^{-C \gamma^2 p^2}.
\end{align*}
Thus we have that 
\begin{align*}
	\E\!\left[ \mu_{\wt h}(B(z,r2^{-k}))^p \right] \leq C_p  
r^{\gamma p Q - C  \gamma^2 p^2}2^{-k \zeta_p}.
\end{align*}
Moreover, since $\greenN{\h}(z,w) = -\log|z-w| - \log|z-\ol{w}| \leq (k+1)\log 2 - 2\log r$ for all $w \in A_k$ we have that if we pick $p \in (0,1)$ such that $\zeta_p  - \gamma^2 p = (2 - \gamma^2 / 2)p - \gamma^2 p^2 / 2 > 0$, then
\begin{align} \label{eqn:1}
	\E\!\left[ \mu_{\wt h + \gamma \greenN{\h}(z,\cdot)}(B(z,r/2))^p \right] \leq \sum_{k=1}^{\infty}\E\!\left[ \mu_{\wt h + \gamma \greenN{\h}(z,\cdot)}(A_k)^p \right]  \leq \wt{C}_p r^{\gamma p(Q-2\gamma) - C\gamma^2 p^2}.
\end{align}
Next, we let $\rho_{z}$ denote the uniform measure on the semicircle $\h \cap \partial B(-x,2)$ where $x = \re(z)$. Then we have that $\mu_{\wt h }(B(z,r))$ and $e^{-\gamma (\wt h, \rho_{z})} \mu_{\wt h}(B(ir,r))$ have the same law and so by H\"{o}lder's inequality,
\begin{align} \label{eqn:2}
\E\!\left[ \mu_{\wt h}(B(z,r))^p \right] \leq \E \!\left[ e^{-2\gamma p (\wt h, \rho_{z})}\right]^{1/2}  \E\!\left[\mu_{\wt h}(B(ir,r))^{2p}\right]^{1/2}.
\end{align}
Furthermore, $\var[(\wt h, \rho_z)]$ is uniformly bounded in $x$ and $\E\!\left[ \mu_{\wt h}(B(ir,r))^{2p}\right] \lesssim r^{2\gamma p Q - O(p^2)}$ where the implicit constant depends only on $p$ and the constant of the term $O(p^2)$ is independent of $r,p$ for all $p$ sufficiently small, and so we can find $\wt{C}_p < \infty$ depending only on $p$ such that $\E\!\left[ \mu_{\wt h}(B(ir,r))^{2p}\right] \leq \wt{C}_p$ for $p$ sufficiently small. Hence by~\eqref{eqn:2} we obtain that 
\begin{align} \label{eqn:3}
\E \!\left[ \mu_{\wt h + \gamma \greenN{\h}(z,\cdot)}(A_0)^p\right] \leq C_p r^{\gamma p (Q - 2\gamma)}
\end{align}
and so~\eqref{eqn:1} and~\eqref{eqn:3}  imply that 
\begin{align} \label{eqn:4}
\E\!\left[ \mu_{\wt h + \gamma \greenN{\h}(z,\cdot)}(B(z,r))^p\right] & \leq \E \!\left[ \mu_{\wt h + \gamma \greenN{\h}(z,\cdot)}(B(z,r/2))^p\right] + \E\!\left[ \mu_{\wt h + \gamma \greenN{\h}(z,\cdot)}(A_0)^p\right]\nonumber \\
& \leq C_p r^{\gamma p (Q - 2\gamma) + O(p^2)}
\end{align}
where the implicit constant in the term $O(p^2)$ is universal. 

We now consider $\D_+ \setminus B(z,r)$. For $k \in \N$ we set $\wh A_k = \h \cap B(z,r2^k) \setminus B(z,r2^{k-1})$ and $k_r = \lfloor \frac{\log(r^{-1})}{\log 2} \rfloor + 1$. We observe that $\var[(\wt h, \rho_{x, r2^{k+1}}^+)] \leq -2\log(r2^k) + O(1)$, where the $O(1)$ term is uniform in $x$ and $\rho_{x,r2^{k+1}}^+$ is the uniform measure on $\h \cap \partial B(x,r2^{k+1})$ and 
\begin{align} \label{eqn:5}
\E\!\left[ \mu_{\wt h - \left( \wt h, \rho_{x,r2^{k+1}}^+\right)}(\h \cap B(x,r2^{k+1}))^p\right] \leq C_p (r2^k)^{\gamma p Q}
\end{align}
where $C_p$ depends only on $p$. Thus since $e^{\gamma \left(\wt h, \rho_{x,r2^{k+1}}^+\right)} \mu_{\wt h - \left( \wt h , \rho_{x,r2^{k+1}}^+\right) }(\h \cap B(x,r2^{k+1}))$, and $\mu_{\wt{h}}(\h \cap B(x,r2^{k+1}))$ have the same law,~\eqref{eqn:5} and H\"{o}lder's inequality imply that 
\begin{align*}
\E\!\left[ \mu_{\wt h}(\h \cap B(x,r2^{k+1}))^p\right] \leq C_p (r2^k)^{\gamma p Q - 2\gamma^2 p^2}
\end{align*}
Since $\greenN{\h}(z,w) \leq -2\log (r2^{k-1})$ for all $w \in \wh A_k$, it follows that 
\begin{align*}
	\E\!\left[ \mu_{\wt h + \gamma \greenN{\h}(z,\cdot)}(\wh A_k)^p\right] \leq C_p (r2^k)^{a_p}
\end{align*}
for $a_p = \gamma p (Q-2\gamma) - 2\gamma^2 p^2$. By picking $p \in (0,1)$ such that $a_p < 0$ (since $Q < 2\gamma$ when $\gamma > 2/\sqrt{3}$), summing over $k$, we have (since the function $x \mapsto x^p$ is concave) 
\begin{align}\label{eqn:6}
	\E\!\left[ \mu_{\wt h + \gamma \greenN{\h}(z,\cdot)}(\h \cap B(z,1) \setminus B(z,r))^p\right] \leq C_p r^{a_p},
\end{align}
where $C_p > 0$ depends only on $p$. 

We now deal with $\D_+ \setminus B(z,1)$. Since $\greenN{\h}(z,w) \leq 0$ for all $w \in \D_+ \setminus B(z,1)$ and $\E[\mu_{\wt h}(\D_+)^p]<\infty$ for $p \in (0,1)$, we have that 
\begin{align}\label{eqn:7}
\E\!\left[ \mu_{\wt h + \gamma \greenN{\h}(z,\cdot)}(\D_+ \setminus B(z,1))^p\right] \leq C_p.
\end{align}
By combining~\eqref{eqn:4}, ~\eqref{eqn:6} and~\eqref{eqn:7} we obtain that for all $p \in (0,1)$ sufficiently small we can find $C_p > 0$ depending only on $p$ such that
\begin{align} \label{eqn:8}
\E\!\left[ \mu_{\wt h + \gamma \greenN{\h}(z,\cdot)}(\D_+)^p\right] \leq C_p r^{\gamma p (Q-2\gamma) + O(p^2)}
\end{align}
where the implicit constant in the term $O(p^2)$ depends only on $p$.

Finally we set $\wh{h} = \wt{h} - (\wt{h},\rho)$ where $\rho$ is the uniform measure on $\h \cap \partial \D$ . Then $\wh{h}$ is normalized as in the statement of the Lemma. Then, for all $p \in (0,1)$ sufficiently small, Holder's inequality together with ~\eqref{eqn:8} imply that
\begin{align*}
&\E\!\left[ \mu_{\wh{h} + \gamma \greenN{\h}(z,\cdot)}(\D_+)^p \right] \leq \E\!\left[ \mu_{\wt{h} + \gamma \greenN{\h}(z,\cdot)}(\D_+)^{2p} \right]^{1/2} \E\!\left[ e^{-2\gamma p(\wt{h},\rho_0)} \right]^{1/2}\\
&\leq \wh{C}_p (\im(z))^{\gamma p (Q - 2\gamma) + O(p^2)}
\end{align*}
for all $z \in \D_+ \setminus e^{-2} \D_+$, where $\wh{C}_p < \infty$ depends only on $p$ and the constants of the term $O(p^2)$ are uniform in $z$.
This completes the proof.
\end{proof}
We now use Lemma~\ref{lem:mass_upper_bound1} to bound the moments under $\p_z$ of the quantum area of $(\re(z) - 1, \re(z) +1) \times (0,\pi)$ with respect to $h$.
\begin{lemma}
\label{lem:area_near_z_bound}
Let $(\strip,h,-\infty,+\infty) \sim \qwedgeW{\sqrt{2}}{1}$ and consider $\p_z$ as above for $z \in \strip$ such that $\re(z) \leq -1$. Then there exists $p_0 \in (0,1)$ such that for all $p \in (0,p_0)$ there exists $C_p < \infty$ depending only on $p$ such that 
\begin{align*}
\E_z [\mu_h(A)^p] \leq C_p r^{\gamma p (Q-2\gamma) + O(p^2)}
\end{align*}
where $A = (\re(z) - 1,\re(z) +1) \times (0,\pi)$ and $r = \dist(z,\partial\strip)$.
\end{lemma}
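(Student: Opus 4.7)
The plan is to reduce the problem to an application of Lemma~\ref{lem:mass_upper_bound1} via a conformal change of variables to $\h$, together with a Radon-Nikodym comparison between the law of the transformed field under $\p_z$ and the law of a free boundary GFF perturbed by the Neumann Green's function.

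First I would apply the conformal map $T \colon \strip \to \h$ defined by $T(\zeta) = e^{\zeta - \re(z) - 1}$.  It sends $A$ onto the semi-annulus $S \coloneqq \D_+ \setminus e^{-2}\D_+$ and sends $z$ to $\wt z \coloneqq T(z) = e^{-1}(\cos(\im z) + i \sin(\im z))$, which lies in $S$ with $\im(\wt z) = e^{-1}\sin(\im z) \asymp r$.  Setting $\wt h(w) \coloneqq h(T^{-1}(w)) - Q\log|w|$, the LQG coordinate change formula gives $\mu_h(A) = \mu_{\wt h}(S)$, so it suffices to bound $\E_z[\mu_{\wt h}(S)^p]$.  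Decomposing $h = X_{\re(\cdot)} + h_2$ on $\strip$ into $H_1(\strip) \oplus H_2(\strip)$ and pushing forward by $T$, the radial part of $\wt h$ on $\h$ becomes $\wh X_{\log|\cdot|}$ with $\wh X_s \coloneqq X_{s+\re(z)+1} - Qs$, while the lateral part $\wt h_2(w) \coloneqq h_2(\log w + \re(z) + 1)$ lies in $H_2(\h)$.  Under $\p_z$, the description following Proposition~\ref{prop:mass_lower_bound} together with conformal invariance of $\greenN{}$ and of the free boundary GFF gives that $\wt h_2$ has the law of the $H_2(\h)$-projection of a free boundary GFF plus the $H_2$-component of $\gamma \greenN{\h}(\wt z, \cdot)$, and the radial process $Y_t \coloneqq -X_{-t/2}$ is a $\BES^3$ weighted by $e^{-\gamma Y_{-2\re(z)}}$.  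Since $\re(z) \leq -1$, the range $s \in [-2,0]$ corresponds to $t \in [\re(z)-1, \re(z)+1] \subseteq (-\infty, 0]$, a bounded window inside the BES side.

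Letting $\wh h$ be a free boundary GFF on $\h$ with mean zero on $\h \cap \partial \D$, I would then show that the law $\CQ_1$ of $\wt h|_S$ under $\p_z$ is mutually absolutely continuous with the law $\CQ_2$ of $(\wh h + \gamma \greenN{\h}(\wt z, \cdot))|_S$, and that the Radon-Nikodym derivative $Z = d\CQ_1/d\CQ_2$ satisfies $\|Z\|_{L^q(\CQ_2)} \leq C$ for some $q > 1$ and a constant $C$ independent of $z$ with $\re(z) \leq -1$.  The lateral comparison is immediate from the description above.  For the radial part, one compares $\wh X|_{[-2,0]}$ (under $\p_z$) to the semicircle-average process of $\wh h + \gamma \greenN{\h}(\wt z, \cdot)$ (a Brownian motion plus a deterministic drift coming from the $H_1$-component of the Green's function); the $h$-transform $N_t = Y_t/Y_0$ relating $\BES^3$ to Brownian motion on intervals bounded away from the BES origin, combined with Lemma~\ref{lem:bes_rn_derivative} and the explicit form $\propto y^2 e^{-y^2/(-4\re(z)) - \gamma y}$ of the weighted density of $Y_{-2\re(z)}$ (which shows that $Y_{-2\re(z)}$ is of order $1$ with bounded moments uniformly in $\re(z) \leq -1$), yields the required uniform $L^q$ bound.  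H\"older's inequality and Lemma~\ref{lem:mass_upper_bound1} (applied at exponent $pq/(q-1)$, which lies in its admissible range when $p$ is small enough) then give
\[
\E_z[\mu_h(A)^p] = \E_z[\mu_{\wt h}(S)^p] \leq \|Z\|_{L^q(\CQ_2)} \bigl(\E[\mu_{\wh h + \gamma \greenN{\h}(\wt z, \cdot)}(\D_+)^{pq/(q-1)}]\bigr)^{(q-1)/q} \lesssim r^{\gamma p(Q-2\gamma) + O(p^2)},
\]
where we used $\im(\wt z) \asymp r$ in the last step.

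The main obstacle is the uniform $L^q$ bound on the radial part of the Radon-Nikodym derivative.  The lateral comparison is essentially automatic, but for the radial part one must carefully handle both the boundary case $\re(z)$ close to $-1$, where the BES origin at $t=0$ lies on the edge of the relevant window $[-2\re(z)-2, -2\re(z)+2]$ and the $h$-transform degenerates, and the asymptotic regime $\re(z) \to -\infty$, where one must verify that the weighting $e^{-\gamma Y_{-2\re(z)}}$ does not push the law far enough from the free-boundary behavior to ruin the comparison on the bounded window.
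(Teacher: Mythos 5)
Your conformal change of variables and the reduction to Lemma~\ref{lem:mass_upper_bound1} are exactly the paper's; the difference lies in how the radial part is treated, and it is precisely there that your proposal has a genuine gap.  You want to show that the law of $\wt h|_S$ under $\p_z$ is absolutely continuous with respect to the law of $(\wh h + \gamma \greenN{\h}(\wt z,\cdot))|_S$ and to bound the Radon-Nikodym derivative in $L^q(\CQ_2)$ uniformly in $z$.  But the radial part of $\wt h$ (pushed forward to $\h$) over the window $s \in [-2,0]$ is a weighted $\BES^3$-type process, while the radial part of $\wh h + \gamma \greenN{\h}(\wt z,\cdot)$ is a Brownian motion (from $0$ at $s=0$) plus deterministic drift.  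When $\re(z) \to -1$ the window reaches the origin of the $\BES^3$, where a $\BES^3$ started from $0$ is \emph{singular} with respect to Brownian motion started from $0$ — the former stays positive a.s., the latter changes sign immediately — so the Radon-Nikodym derivative $Z$ you propose does not exist, let alone lie in a fixed $L^q$.  There is also a normalisation mismatch you do not address: the radial part of the pushed-forward field at $|w|=1$ equals $X_{\re(z)+1}$, which is negative a.s.\ for $\re(z)<-1$, not the zero value that the circle-average embedding of $\wh h$ imposes.  Your concluding paragraph correctly identifies the $\re(z)\to-1$ regime as problematic, but the cure you suggest — the $h$-transform $Y_t/Y_0$ — is only valid on intervals bounded away from the $\BES$ origin and cannot be applied here.

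The paper sidesteps the comparison of radial laws entirely.  On $\strip_-$ (and $A \subseteq \strip_-$ because $\re(z) \leq -1$) the radial part $X$ of the $Q$-quantum wedge satisfies $X \leq 0$ pointwise, since $(-X_{-t/2})$ is a $\BES^3$.  So one simply drops $X$: on $A$, under $\p_z$,
\[
h \;=\; X_{\re(\cdot)} + h_2^f + \gamma g(\cdot) \;\le\; h_2^f + \gamma g(\cdot),
\]
where $h_2^f$ is the $H_2(\strip)$-projection of a free boundary GFF and $g$ is the $H_2$-projection of $\greenN{\strip}(z,\cdot)$.  Bounding $g \le \greenN{\strip}(z,\cdot) + C$ on $A$ (a deterministic, $z$-uniform bound on the radial projection of the Green's function) and writing $h_2^f = \wt h \circ \psi - \wt h_1$ where $\wt h_1$ is the radial part of the free boundary GFF $\wt h \circ \psi$, one gets the pointwise domination
\[
h \;\le\; \wt h\circ\psi + \gamma \greenN{\strip}(z,\cdot) + \gamma C - \inf_{t\in[\re(z)-1,\,\re(z)+1]}\wt h_1(t) \qquad\text{on }A.
\]
The infimum has the law of $\inf_{[0,2]}B_{2t}$, hence finite exponential moments uniformly in $z$, and a single application of H\"older's inequality reduces the problem to Lemma~\ref{lem:mass_upper_bound1}, with no absolute-continuity statement needed anywhere.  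If you wish to repair your argument, replacing the attempted $L^q$ Radon-Nikodym bound by this one-sided field domination is the minimal fix.
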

\begin{proof}
We consider the conformal transformation $\psi : \strip \rightarrow \h$ with $\psi(w) = e^{w-\re(z) -1}$ and let $\wt h$ be a free boundary GFF on $\h$ as in the statement of Lemma~\ref{lem:mass_upper_bound1}. Then $\wt h \circ \psi$ has the law of a free boundary GFF on $\strip$ normalized so that its average on $\{\re(z) + 1\} \times [0,\pi]$ is zero. Note that $\greenN{\strip}(z,w) = \greenN{\h}(e^z,e^w)$ for $w \in \strip$. Fix $z \in \strip_-$ as in the statement of the lemma and without loss of generality we can assume that $\im(z) \leq \pi / 2$ and set $g(w) = \greenN{\strip}(z,w) - \frac{1}{\pi}\int_{0}^{\pi}\greenN{\strip}(z,\re(w) + i\theta)d\theta$. Then $g$ is the projection of $\greenN{\strip}(z,\cdot)$ onto $H_2(\strip)$. Moreover there exists a universal constant $C > 0$ such that $g(w) \leq \greenN{\strip}(z,w) + C$ for all $w \in A$. Hence it holds on $A$ that
\begin{align} \label{eqn:9}
h \leq \wt h \circ \psi + \gamma \greenN{\strip}(z,\cdot) + \gamma C - \inf_{\re(z)-1 \leq t \leq \re(z)+1}\wt h_1(t)
\end{align}
since $X$ is negative on $\strip_-$, where $\wt h_1$ is the projection of $\wt h \circ \psi$ onto $H_1(\strip)$. Furthermore, $\inf_{\re(z)-1 \leq t \leq \re(z)+1}\wt h_1(t)$ and $\inf_{0 \leq t \leq 2}B_{2t}$ have the same law where $B$ is a standard Brownian motion; note that the latter has finite exponential moments of all orders. Thus H\"{o}lder's inequality combined with~\eqref{eqn:9} imply that 
\begin{align}
\label{eqn:10}
\E_z[ \mu_h(A)^p] \leq C_p \E[\mu_{\wh h}(A)^{2p}]^{1/2}
\end{align}
where $\wh h = \wt h \circ \psi + \gamma \greenN{\strip}(z,\cdot)$ and $C_p$ depends only on $p$. Moreover,~\eqref{eq:LQG_coordinate_change} together with the observation that $e^{-2} \leq |\psi'(w)| \leq 1$ for $w \in A$, implies that
\begin{align*}
	\E\!\left[ \mu_{\wt h + \gamma \greenN{\h}(\psi(z),\cdot)}(\D_+ \setminus e^{-2}\D_+)^{2p}\right] = \E\!\left[ \mu_{\wh h + Q \log|\psi'|}(A)^{2p}\right] \geq C_p \E\!\left[ \mu_{\wh h}(A)^{2p}\right],
\end{align*} 
where $C_p$ depends only on $p$. Since $\im(\psi(z)) = e^{-1}\sin(r)$ and $\sin(y) \geq \frac{2}{\pi}y$ for $y \in (0,\pi /2)$, the result follows from~\eqref{eqn:10} and Lemma~\ref{lem:mass_upper_bound1}.
\end{proof}
Next, we bound the moments under $\p_z$ of the mass in the rest of the half-strip $\strip_-$.
\begin{lemma}
\label{lem:mass_upper-bound2}
Suppose that we have the setup of the previous lemmas. Then there exists $p_0 \in (0,1)$ such that for all $p \in (0,p_0)$ we can find a constant $C_p < \infty$ such that 
\begin{align*}
	\E_z[\mu_h(\strip_- \setminus A)^p] \leq C_p
\end{align*}
\end{lemma}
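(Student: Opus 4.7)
The proof follows the column-decomposition strategy of Lemma~\ref{lem:area_near_z_bound}, but now applied on the complement of $A$: the reward for being away from $z$ is that the $H_2$-part of the Girsanov shift becomes bounded, while the cost is that we must sum contributions from an infinite family of unit-width columns.

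\emph{Setup.} Under $\p_z$ write $h = X_{\re(\cdot)} + \wt h_2 + \gamma g$, where $\wt h_2$ is an independent $H_2$-projection of a free boundary GFF on $\strip$, and $g$ is the $H_2$-projection of $\greenN{\strip}(z,\cdot)$. Using $\greenN{\strip}(z,w) = -\log|e^z - e^w| - \log|e^z - e^{\bar w}|$ together with a Taylor expansion of $\log(1 - e^{\pm(z-w)})$ (valid once $|\re(w) - \re(z)| \geq 1$), one checks that $\sup_{w \in \strip_- \setminus A}|g(w)| \leq C$ independently of $z$. Decompose $\strip_-\setminus A = \bigsqcup_k C_k$ with $C_k = [k,k+1]\times(0,\pi)$, $k \in \Z$, $k \leq \re(z)-1$ or $\re(z)+1 \leq k \leq -1$. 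From $\mu_h(dw) = e^{\gamma(X_{\re(w)} + g(w))}\,d\mu_{\wt h_2}(w)$, the bound on $g$, the subadditivity $(\sum a_k)^p \leq \sum a_k^p$ for $p \in (0,1)$, the independence of $X$ and $\wt h_2$ under $\p_z$, and the uniform-in-$k$ bound $\E[\mu_{\wt h_2}(C_k)^p] \leq C_p$ (horizontal translation invariance plus standard GFF moment estimates for small $p$), one obtains
\[
\E_z[\mu_h(\strip_-\setminus A)^p] \;\leq\; C_p' \sum_k \E_z\!\left[\exp\!\left(p\gamma \sup_{t\in[k,k+1]} X_t\right)\right].
\]

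\emph{Control of the sum.} Exploit the $\BES^3$ representation $X_{-s/2} = -Y_s$ and set $T = |\re(z)|$. Under $\p_z$: by the Markov property of $Y$ at $2T$, $(Y_s)_{s \geq 2T}$ is an ordinary $\BES^3$ continuation from $Y_{2T}$; by the Markov property of $X$ at $0$, $(X_t)_{t \geq 0}$ is an ordinary drifted Brownian motion; conditionally on $Y_{2T}$, $(Y_s)_{s \in [0, 2T]}$ is a $\BES^3$ bridge from $0$ to $Y_{2T}$. Lemma~\ref{lem:bes_rn_derivative} gives $\E[e^{-\gamma Y_{2T}}] \asymp T^{-3/2}$, so the tilted marginal density of $Y_{2T}$ is proportional to $y^2 e^{-\gamma y - y^2/(4T)}$, with exponential moments bounded uniformly in $T$. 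For columns $C_k \subseteq R_+ = (\re(z)+1, 0] \times (0,\pi)$, the $\BES^3$ bridge density shows that $Y_{-2k}$ is distributed as a $\BES^3$ variable at an effective time $\asymp (-2k)(2T+2k)/(2T)$, so by Lemma~\ref{lem:bes_rn_derivative},
\[
\E_z\!\left[e^{p\gamma X_k}\right] \;=\; \E_z\!\left[e^{-p\gamma Y_{-2k}}\right] \;\lesssim\; \frac{(2T)^{3/2}}{(|k|(T-|k|))^{3/2}},
\]
and $\sum_{k=-T+1}^{-1} (2T)^{3/2}/(|k|(T-|k|))^{3/2}$ is bounded uniformly in $T$. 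For columns $C_k \subseteq R_- = (-\infty, \re(z)-1] \times (0,\pi)$, the Markov property at $2T$ together with the $\BES^3$ transition density started from the (concentrated, $O(1)$) value $Y_{2T}$ gives $\E_z[e^{-p\gamma Y_{-2k}}] \lesssim (|k|-T+1)^{-3/2}$, summable over $k \leq \re(z)-1$. Passing from $e^{p\gamma X_k}$ to $e^{p\gamma \sup_{[k,k+1]} X_t}$ costs only a bounded multiplicative factor for $p$ small, since $X$ has uniformly controlled fluctuations on unit horizontal intervals (diffusive for $k \geq 0$ or $k \leq -T$, and by $\BES^3$-bridge bounds for $k \in [\re(z), 0]$).

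The main obstacle is the last step: establishing these uniform-in-$z$ decay rates for the column expectations so that both the sum of $O(T)$ terms over $R_+$ and the infinite sum over $R_-$ are bounded by a constant depending only on $p$. This rests crucially on the fact that the tilt density $\propto y^2 e^{-\gamma y - y^2/(4T)}$ concentrates $Y_{2T}$ near an $O(1)$ value uniformly in $T$, so that the $\BES^3$ bridge on $[0,2T]$ and the $\BES^3$ continuation from $Y_{2T}$ are essentially rooted near the origin and admit the Laplace-type bounds of Lemma~\ref{lem:bes_rn_derivative}; the precise balance between the $(2T)^{3/2}$ and $(|k|(T-|k|))^{3/2}$ factors is what eliminates the potential logarithmic or polynomial dependence on~$T$.
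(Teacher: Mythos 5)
Your column decomposition, the observation that $g$ is uniformly bounded off $A$, the reduction to $\sum_k \E_z[e^{p\gamma\sup_{[k,k+1]}X}]$, and the use of Lemma~\ref{lem:bes_rn_derivative} are all shared with the paper's argument, and your quantitative bounds match. Where you diverge is in how you control the tilted column factors $\E_z[e^{p\gamma X_k}]$. The paper does not decompose the $\BES^3$ into a bridge plus a continuation; instead it works with the joint quantity $\E[e^{\gamma p X_k}e^{\gamma X_t}]$ directly and uses the monotone coupling of Bessel processes: since increasing the starting value of a $\BES^3$ pushes the whole path up, one can couple $(Y_{2s})_{s\geq -t}$ with a fresh $\BES^3$ $\wt Y$ started from $0$ and independent of $Y_{-2t}$ so that $Y_{2s}\geq \wt Y_{2(s+t)}$. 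This factorizes $\E[e^{-\gamma p Y_{-2k}}e^{-\gamma Y_{-2t}}]\leq \E[e^{-\gamma p\wt Y_{2(-k+t)}}]\E[e^{-\gamma Y_{-2t}}]$ and then two applications of Lemma~\ref{lem:bes_rn_derivative} finish the estimate, with $\E[e^{\gamma X_t}]\asymp(-t)^{-3/2}$ cancelling the $T$-dependence. The coupling route avoids entirely the bridge-density computations you carry out, and in particular it sidesteps the points that make your version delicate: the $\sinh$ expansion of the bridge kernel is valid only when $bY_s\ll L-s$, so you must track the interaction between the tail of the tilted endpoint $Y_{2T}\propto y^2e^{-\gamma y - y^2/(4T)}$ and the Laplace exponent $p\gamma$; the paper needs nothing of the sort since the dominating process has a fixed starting point. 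Your approach buys you a concrete picture of why the bounds are uniform in $T$ (a $\BES^3$ bridge rooted at $0$ and ending at an $O(1)$ point "looks like" a $\BES^3$ at the rescaled time), but the coupling argument is more robust and shorter.

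One step you state but do not justify and which the paper handles explicitly: passing from $e^{p\gamma X_k}$ to $e^{p\gamma X_k^*}$, where $X_k^*=\sup_{[k-1,k]}X$, at a uniformly bounded multiplicative cost. The paper obtains this via Pitman's identity $Y_s = 2S_s - B_s$, which gives that $X_k^*-X_k$ is stochastically dominated by a Brownian supremum, \emph{and} records that the relevant conditional expectation $\E[e^{\gamma p(X_k^*-X_k)}\giv \sigma(Y_s:s\leq -2k)]$ is bounded by a constant so that the factorization is legitimate (the paper phrases this as independence, which is an inequality in disguise; the point is that the dominating Brownian supremum is independent of the past). Your remark that ``$X$ has uniformly controlled fluctuations'' is the right intuition, but to be complete you need this conditional, uniform-in-level form of the domination, not merely an unconditional moment bound, because you are multiplying by $e^{p\gamma X_k}$ and $e^{\gamma X_t}$ inside the expectation before taking it.
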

\begin{proof}
Let $\E_z$ (resp.\ $\E$) be the expectation with respect to $\p_z$ (resp.\ $\p$) and let $X = (X_t)_{t \in \R}$ be the projection of $h$ onto $H_1(\strip)$. For $k \leq 0$ we set $X_k^* = \sup_{s \in [k-1,k]}X_s$ and set $t = \re(z) \leq -1$. Note that under $\p$ we have for $s \leq 0$ that $X_s = -Y_{-2s}$ where $Y \sim \BES^3$.  Moreover, $Y$ can be coupled with a Brownian motion $B$ such that $Y_s = 2S_s - B_s$ for all $s \geq 0$ where $S_s = \sup_{0 \leq x \leq s}B_x$ (\cite[ Theorem~VI.3.3.5]{ry1999book}). Hence $X_k^* - X_k$ for $k \leq 0$ is stochastically dominated from above by $\sup_{s \in [0,1]}B_{2s}$. We recall that there exists a constant $C_p^1$ depending only on $p$ and $\gamma$ such that,
\begin{align} \label{eqn:12}
	 \E\!\left[ e^{\gamma p \sup_{s \in [0,1]} B_{2s}} \right] \leq C_p^1.
\end{align}

Set $\wt{B}_u = B_{u-2t}-B_{-2t}$ and $\wt{Y}_u = 2\sup_{0 \leq r \leq u} \wt{B}_r - \wt{B}_u$ for all $u \geq 0$.  Then,  we have that $\wt{Y}$ has the law of a $\BES^3$ process starting from $0$ which is independent of $Y_{-2t}$.  Moreover,  we have that $Y_{2s} \geq \wt{Y}_{2(s+t)} - Y_{-2t}$ for all $s \geq -t$.  It follows that
\begin{align} \label{eqn:13}
	\E\!\left[ e^{\gamma p X_k}e^{\gamma X_t}\right] &= \E\!\left[ e^{-\gamma p Y_{-2k}}e^{-\gamma Y_{-2t}}\right] \leq \E\!\left[e^{-\gamma p \wt Y_{2(-k+t)}}e^{-\gamma (1-p)Y_{-2t}}\right] \nonumber \\
	&= \E\!\left[ e^{-\gamma p \wt Y_{2(-k+t)}}\right]\! \E\!\left[ e^{-\gamma (1-p) Y_{-2t}}\right] \notag\\
	&\leq C(-k+t)^{-3/2}(-t)^{-3/2} \quad\text{(Lemma~\ref{lem:bes_rn_derivative})}
\end{align}
for some constant $C>0$ depending only on $\gamma$ and $p$. Moreover, since $\E[ e^{\gamma p (X_k^*-X_k)} | X_k = x]$ is stochastically dominated by $e^{\gamma p \sup_{s \in [0,1]} B_{2s}}$ for each $x \leq 0$, we have (with $B$ a Brownian motion, independent of $X$) by the Markov property of $X$,~\eqref{eqn:12} and~\eqref{eqn:13} that for all $k \leq t-1$,
\begin{align}\label{eqn:14}
	\E\!\left[ e^{\gamma p X_k^*}e^{\gamma X_t}\right] &= \E\!\left[ \E\!\left[ e^{\gamma p (X_k^*-X_k)} \, \middle| \,(X_u)_{u \in [k,0]} \right] e^{\gamma p X_k}e^{\gamma X_t}\right] = \E\!\left[ \E\!\left[ e^{\gamma p (X_k^*-X_k)} \, \middle| \,X_k \right] e^{\gamma p X_k}e^{\gamma X_t}\right] \nonumber \\
	&\leq \E\!\left[ e^{\gamma p \sup_{s \in [0,1]} B_{2s}} e^{\gamma p X_k}e^{\gamma X_t}\right] =  \E\!\left[ e^{\gamma p \sup_{s \in [0,1]} B_{2s}} \right] \E\!\left[ e^{\gamma p X_k}e^{\gamma X_t}\right] \nonumber \\
	&\leq C_p^1 (-k+t)^{-3/2} (-t)^{-3/2},
\end{align}
for some constant $C_p^1$ depending on $p$ and $\gamma$. Moreover we observe that the projection $g$ of $\greenN{\strip}(z,\cdot)$ onto $H_2(\strip)$ is bounded from above on $\strip_- \setminus A$ uniformly in $z$ and since the law of $h_2$ under $\p$ is invariant under horizontal translations and $\E\!\left[ \mu_{h_2}([-1,0] \times [0,\pi])^p\right]<\infty$ for $p>0$ sufficiently small, we have that 
\begin{align} \label{eqn:15}
\E_z\!\left[ \mu_{h_2}([k-1,k] \times [0,\pi])^p \right] \leq C_p^2.
\end{align}
Therefore by~\eqref{eqn:14} and~\eqref{eqn:15}, we obtain for $p \in (0,1)$ sufficiently small that 
\begin{align*}
\E_z\!\left[ \mu_h((-\infty,t-1] \times [0,\pi])^p \right] & \leq \E_z\!\left[ \left( \sum_{k=-\infty}^{t-1}e^{\gamma X_k^*}\mu_{h_2}([k-1,k]\times [0,\pi])\right)^p\right]\\
& \leq \E_z\!\left[\sum_{k=-\infty}^{t-1}e^{\gamma p X_k^*}\mu_{h_2}([k-1,k] \times [0,\pi])^p\right]\\
& = \sum_{k=-\infty}^{t-1}\E_z\!\left[ e^{\gamma p X_k^*}\right] \E_z\!\left[ \mu_{h_2}([k-1,k] \times [0,\pi])^p\right]\\
& \leq \frac{C_p^1 C_p^2 (-t)^{-3/2}}{\E\!\left[ e^{\gamma X_t}\right]}\sum_{k=1}^{\infty}k^{-3/2}.
\end{align*}
By combining the above inequality with Lemma~\ref{lem:bes_rn_derivative} we obtain that 
\begin{align*}
\E_z\!\left[ \mu_h((-\infty,t-1] \times [0,\pi])^p \right] \leq C_p 
\end{align*}
and proceeding similarly, 
\begin{align*}
\E_z\!\left[ \mu_h([t+1,0] \times [0,\pi])^p\right] \leq C_p
\end{align*}
for all $p \in (0,1)$ sufficiently small. Thus the result follows for small enough $p > 0$.
\end{proof}
We will now construct an $H_1(\C)$ approximation $\wt{\phi}_{z,r}$ to the $\log$ function centered at $z$ and equal to $\log|z-x|$ for $x \notin B(z,r)$ which is going to be useful in  what follows. Let $\psi$ be a smooth function on $\R$ which is non-decreasing and such that $\psi(x) = 0$ for all $x\leq 0$ and $\psi(x) = 1$ for all $x \geq 1$. We also consider the continuous function $f$ on $\R$ with $f(s) = \frac{1}{r}\psi(\tfrac{2}{r}(s-\tfrac{r}{2}))$ for $0 \leq s \leq r$ and $f(s) = \frac{1}{s}$ for $s \geq r$. Then we set $\wt{\phi}_{z,r}(x) = F(|x-z|)$ where $F(x) = \log r + \int_r^{|x|}f(s)ds$ for $x \in \C$. We then have that $\wt{\phi}_{z,r}(x) = \log |x-z|$ for $x \notin B(z,r)$ and $\wt{\phi}_{z,r}(x) = \log r + O(1)$ for $x \in B(z,r)$, $\int_{B(z,r)}|\nabla \wt{\phi}_{z,r}(x) |^2 dx = O(1)$ where the $O(1)$ is uniform in $x,r$.

With the above lemmas at hand, we prove the following lemma, which is the main ingredient in the proof of Proposition~\ref{prop:mass_lower_bound}. 
\begin{lemma}
\label{lem:area_bounds_box_near_z}
There exists $p_0 \in (0,1)$ such that for each $p \in (0,p_0)$ there exists a constant $c_p > 0$ depending only on $p$ so that for each $z \in \strip_-$ with $\re(z) \leq -1$ and $r = \dist(z,\partial \strip)$ the following is true.  Let $\wt{\phi}_{z,r}$  be as above and let $\phi_{z,r}$ be the projection of $\wt{\phi}_{z,r}$ onto $H_2(\strip)$ (i.e., obtained by starting with $\wt{\phi}_{z,r}$ and then subtracting its mean on vertical lines) and let $\alpha > 1/\sqrt{2}$.  Then we have that
\begin{align*} 
	\E_z[ \mu_{h + \alpha \phi_{z,r}} (\strip_-)^p] \leq c_p.
\end{align*}
\end{lemma}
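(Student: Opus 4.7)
The plan is to reduce the estimate to moment bounds for $\mu_h$ under $\p_z$ on well-chosen pieces of $\strip_-$. The starting point is the identity
\[
\mu_{h+\alpha \phi_{z,r}}(dw) = e^{\gamma \alpha \phi_{z,r}(w)} \mu_h(dw),
\]
which holds because $\phi_{z,r}$ is continuous (for any fixed $r>0$). I split $\strip_-$ into $A = (\re(z)-1, \re(z)+1) \times (0,\pi)$ and $\strip_- \setminus A$. For $w \in \strip_- \setminus A$ one has $\wt\phi_{z,r}(w) = \log |z-w|$ (since $|z-w| \geq 1 > r$), and a direct expansion shows that its vertical mean $m(\re(w))$ differs from $\log|z-w|$ by $O(|\re(w)-\re(z)|^{-2})$; hence $\phi_{z,r}$ is uniformly bounded on $\strip_- \setminus A$, and the contribution of this region is controlled by $\E_z[\mu_h(\strip_- \setminus A)^p] \leq C_p$ via Lemma~\ref{lem:mass_upper-bound2}.

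The key estimate is on $A$, and here the smallness of $e^{\gamma\alpha \phi_{z,r}(w)}$ near $z$ must compensate for the singular behavior of $\mu_h$ near $z$ under $\p_z$. Using the decomposition of $h$ under $\p_z$ described before Lemma~\ref{lem:bes_rn_derivative}, write $h = h^* + \gamma g$ where $g$ is the $H_2(\strip)$-projection of $\greenN{\strip}(z,\cdot)$ and $h^*$ is the field obtained by keeping the reweighted $H_1$ part but removing the $\gamma g$ shift; denote its law by $\p_z^*$. The GMC change-of-coordinates formula yields $\mu_h(dw) = e^{\gamma^2 g(w)}\mu_{h^*}(dw)$, so
\[
\mu_{h+\alpha\phi_{z,r}}(A) = \int_A e^{\gamma^2 g(w) + \gamma \alpha \phi_{z,r}(w)}\, \mu_{h^*}(dw).
\]
From $\greenN{\strip}(z,w) = -\log|z-w| + O(1)$, $\wt\phi_{z,r}(w) = \log(|z-w| \vee r) + O(1)$, and the uniform boundedness on $A$ of the vertical means $m_G$ and $m$ that define $g$ and $\phi_{z,r}$ (a computation: each is an average of $\log|z-\cdot|$ over a vertical segment, and the resulting integral is $O(1)$ uniformly in $r$ and $z$), the combined exponent on $A$ satisfies the pointwise bound
\[
e^{\gamma^2 g(w) + \gamma \alpha \phi_{z,r}(w)} \leq C\, |z-w|^{-\gamma^2}(|z-w| \vee r)^{\gamma\alpha}.
\]

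To conclude I split $A$ into the dyadic annuli $A_k = \{w \in A : 2^{-k} \leq |z-w| < 2^{-k+1}\}$ outside $B(z,r)$ together with the core $A_\infty = A \cap B(z,r)$, apply the subadditivity $\big(\sum_i x_i\big)^p \leq \sum_i x_i^p$ for $p \in (0,1)$, and bound each piece using a moment estimate for $\mu_{h^*}$ under $\p_z^*$ analogous to Lemma~\ref{lem:area_near_z_bound}, namely $\E_z^*[\mu_{h^*}(B(z,s))^p] \lesssim s^{p\gamma Q + O(p^2)}$ for small $s$. On the outer annuli this produces terms of order $(2^{-k})^{p\gamma(\alpha - \gamma + Q) + O(p^2)}$, and the geometric series over $k$ converges for all $p$ small, the remaining summability and the $A_\infty$-contribution being controlled via a further dyadic decomposition inside $B(z,r)$ where the condition $\gamma\alpha > 1$ (i.e.\ $\alpha > 1/\sqrt{2}$) is used to dominate the $|z-w|^{-\gamma^2}$ singularity against the LQG measure. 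The main technical obstacle will be verifying the moment bound for $\mu_{h^*}$ under the reweighted law $\p_z^*$ with enough uniformity in $z$ (including when $r = \dist(z,\partial\strip)$ is small), since the reweighting of the radial $\BES^3$ process and boundary effects must both be handled through the kind of estimate provided by Lemma~\ref{lem:bes_rn_derivative}.
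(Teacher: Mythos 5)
Your overall decomposition and the dyadic-annulus strategy line up closely with what the paper does, and the roles you assign to $\gamma\alpha>1$ and (implicitly) to $Q+\alpha-2\gamma>0$ are correct. But the step you flag as ``the main technical obstacle'' is indeed a genuine gap, and it is exactly the place where your bookkeeping diverges from the paper's in a way that costs you a new estimate you have not established.

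You factor the GMC as $\mu_{h+\alpha\phi_{z,r}}(dw) = e^{\gamma^2 g(w)+\gamma\alpha\phi_{z,r}(w)}\,\mu_{h^*}(dw)$ and then require $\E_z^*[\mu_{h^*}(B(z,s))^p]\lesssim s^{p\gamma Q+O(p^2)}$ uniformly in $z$, including when $r=\dist(z,\partial\strip)$ is small (so $B(z,s)$ may straddle the boundary). This is \emph{not} what Lemma~\ref{lem:area_near_z_bound} provides: that lemma bounds $\E_z[\mu_h(A)^p]$ for the \emph{shifted} field $h$ (i.e.\ including the $\gamma g$ contribution) on the fixed box $A$, whereas you need a ball-scaling estimate for the \emph{unshifted} field $h^*$ under the tilted radial law. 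Such an estimate is plausible---it would follow by combining (a) a H\"older argument that peels off the radial tilt, controlled by the $\BES^3$ Laplace transform from Lemma~\ref{lem:bes_rn_derivative}, with (b) moment bounds for the lateral-GMC $\mu_{h_2^f}$ on balls $B(z,s)$ valid uniformly up to the boundary---but it requires a separate proof, and the boundary behaviour (your $(|z-w|\vee r)$ is trying to capture it via the $\phi_{z,r}$ cutoff, but the Neumann Green's function's second singularity at $\bar z$ also contributes $r$-dependence through $g$ that your pointwise bound must account for).

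The paper avoids this by \emph{not} factoring out the Green's function. It keeps $\gamma\greenN{\strip}(z,\cdot)$ (or, after mapping to $\h$ by $w\mapsto e^w$, $\gamma\greenN{\h}(z_0,\cdot)$) attached to the field and reuses the annulus moment bounds already established in Lemma~\ref{lem:mass_upper_bound1} for $\mu_{\wt h + \gamma\greenN{\h}(z_0,\cdot)}$. The new ingredient $\alpha\log|z_0-\cdot|$ is bounded pointwise on each annulus, the $\BES^3$ radial tilt is removed by H\"older exactly as in Lemma~\ref{lem:area_near_z_bound}, and the ball $B(z,r)$ is handled by $\wt\phi_{z,r}\le\log r+O(1)$ together with the constraint $Q+\alpha-2\gamma>0$, which is where $\alpha>1/\sqrt2$ enters. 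If you want to save your version of the argument, you should state and prove the moment bound for $\mu_{h^*}(B(z,s))$ under $\p_z^*$ explicitly, with uniformity in $z$ near the boundary, before summing the annuli; as written, the conclusion does not yet follow.
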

\begin{proof}
First we assume that $\re(z) = -1$ and suppose that we have the same setup as in Lemmas~\ref{lem:mass_upper_bound1} and~\ref{lem:area_near_z_bound}. We let $\phi(w) = e^w$ and set $z_0 = \phi(z) \in \partial B(0,e^{-1})$ and $r_0 = \im(z_0) \in (0,1)$. We consider the field $\wh h = \wt h + \gamma \greenN{\h}(z_0,\cdot) + \alpha \log |z_0 - \cdot|$. We observe that $\alpha \log|z_0-w| \leq \alpha \log r_0-\alpha k \log 2$ for all $w \in A_k = B(z_0,r_0 2^{-k}) \setminus B(z_0,r_0 2^{-k-1})$ and $\alpha \log|z_0-w| \leq \alpha \log r_0 +\alpha k \log 2$ for all $w \in \wh A_k = \h \cap B(z_0,r_02^k) \setminus B(z_0,r_0 2^{k-1})$. Hence, by summing over $k \in \N$ as in the proof of Lemma~\ref{lem:mass_upper_bound1}, we obtain that $\E\!\left[ \mu_{\wh h}(B(z_0,r_0))^p\right] \leq C_p$ and by summing over $k = 1,\ldots,k_{r_0}$ we have that $\E\!\left[ \mu_{\wh h}(\h \cap B(z_0,1) \setminus B(z_0,r_0))^p \right] \leq C_p$. Also $\alpha \log|z_0-w| \leq \alpha\log 2$ in $\D_+ \setminus B(z_0,1)$, and thus, as in Lemma~\ref{lem:mass_upper_bound1}, we obtain that $\E\!\left[ \mu_{\wh h}(\D_+)^p\right] \leq C_p$ for $p$ sufficiently small, where $C_p$ depends only on $p$. Moreover we have that $|\log|z_0-w|-\log|\log w-z||$ is bounded for all $w \in 
\D_+ \setminus e^{-2}\D_+$, uniformly in $z_0$ and since $\log|z-\log w| = \wt{\phi}_{z,r}(\log w)$ for all $w \in \phi(A \setminus B(z,r))$,~\eqref{eq:LQG_coordinate_change} implies that 
\begin{align*}
\E\!\left[ \mu_{h^1}(A \setminus B(z,r))^p \right] 
\leq C_p.
\end{align*}
where $h^1 = \wt h \circ \phi + \gamma \greenN{\strip}(z,\cdot) + \alpha \wt{\phi}_{z,r}$.
Proceeding similarly to Lemma~\ref{lem:area_near_z_bound} and using the above, we get that 
\begin{align}\label{eq:bound_lateral+log}
\E_z\!\left[ \mu_{h_2 + \alpha \wt{\phi}_{z,r}}(A \setminus B(z,r))^p \right] \leq \wt C_p \E\!\left[ \mu_{h^1}(A \setminus B(z,r))^{2p} \right]^{1/2} \leq C_p
\end{align}
where $C_p$ depends only on $p$. It is easy to see that $\int_0^\pi \wt{\phi}_{z,r}(\re(w) + it) dt \geq -1$ and hence that $\phi_{z,r}(w) = \wt{\phi}_{z,r}(w) - \tfrac{1}{\pi} \int_0^\pi \wt{\phi}_{z,r}(\re(w) + it) dt \leq \wt{\phi}_{z,r}(w) + \frac{1}{\pi}$ for all $w \in \strip$. Thus, combining this with the fact that $h_1 \leq 0$ on $\strip_-$, we have by~\eqref{eq:bound_lateral+log} that 
\begin{align} \label{eqn:16}
\E_z\!\left[ \mu_{h + \alpha \phi_{z,r}}(A \setminus B(z,r))^p \right] \leq \E_z\!\left[ \mu_{h_2 + \alpha \phi_{z,r}}(A \setminus B(z,r))^p \right] \leq C_p.
\end{align}
We note that by invariance of the law under horizontal translation, the bound~\eqref{eqn:16} is independent of $\re(z)$.  Moreover, $\wt{\phi}_{z,r}(w) \leq \log r +O(1)$ on $B(z,r)$ and thus Lemma~\ref{lem:area_near_z_bound} and the above reasoning imply that 
\begin{align} \label{eqn:18}
	\E_z\!\left[ \mu_{h + \alpha \phi_{z,r}}(B(z,r))^p \right] \leq C_p,
\end{align}
for $p>0$ sufficiently small since $Q + \alpha -2\gamma > 0$. Thus, by~\eqref{eqn:16} and~\eqref{eqn:18}, 
\begin{align} \label{eqn:19}
	\E_z\!\left[ \mu_{h + \alpha \phi_{z,r}}(A)^p \right] \leq C_p
\end{align}
for all $p \in (0,1)$ sufficiently small and $z$ such that $\re(z) \leq -1$. Furthermore we have that $\phi_{z,r}(w) = O(1)$ uniformly for $w \in \strip_- \setminus A$ and so Lemma~\ref{lem:mass_upper-bound2} implies that 
\begin{align} \label{eqn:20}
	\E_z\!\left[ \mu_{h + \alpha \phi_{z,r}}(\strip_- \setminus A)^p \right] \leq C_p
\end{align}
Thus the result follows from~\eqref{eqn:19} and~\eqref{eqn:20}, since $p \in (0,1)$ and since $C_p$ depends only on $p$. 
\end{proof}
We now prove Proposition~\ref{prop:mass_lower_bound}.
\begin{proof}[Proof of Proposition~\ref{prop:mass_lower_bound}]
We fix $\alpha > 1/\sqrt{2}$ such that $\beta = \alpha^2/4$. Fix also $z \in (\strip_- -1)$ and set $r = \dist(z, \partial \strip_-)$ and suppose that we have the setup of Lemma~\ref{lem:area_bounds_box_near_z}.  First we observe that we can find a finite universal constant $C > 0$ such that 
\begin{align} \label{eqn:21}
\|\phi_{z,r}\|_\nabla^2 \leq C + \frac{1}{2}\log(r^{-1})
\end{align}
where the Dirichlet energy is considered on $\strip$. We note that the law of $h_2$ under $\p_z$ is given by $h_2^f + \gamma g(\cdot)$ where $h_2^f$ (resp.\ $g$) is the projection onto $H_2(\strip)$ of a free boundary GFF on $\strip$ (resp.\ $\greenN{\strip}$). Hence the law of $h$ under $\p_z$ can be obtained by weighting the law of $h + \alpha \phi_{z,r}$ under $ \p_z$ by 
\begin{align} \label{eqn:22}
e^{(h_2^f,-\alpha \phi_{z,r})_\nabla - \frac {\alpha^2}{2} \| \phi_{z,r} \|_\nabla^2}
\end{align}
Note that $(h_2^f,-\alpha \phi_{z,r})_\nabla$ is a mean zero Gaussian random variable with variance $\alpha^2 \| \phi_{z,r} \|_\nabla^2$. Moreover, by Lemma~\ref{lem:area_bounds_box_near_z}  we have for sufficiently small $p \in (0,1)$ that we can find a finite constant $C_p >0$ depending only on $p$ such that 
\begin{align*}
\E_z\!\left[ \mu_{h + \alpha \phi_{z,r}}(\strip_-)^p \right] \leq C_p,
\end{align*}
uniformly in $z$ and $r$. Hence, Markov's inequality together with the fact that $\p_z[ (h_2^f,-\alpha \phi_{z,r})_\nabla \geq 0 ] = \tfrac{1}{2}$ implies that we can find a finite $M >0$, sufficiently large, and a universal constant $q \in (0,1)$, such that 
\begin{align} \label{eqn:23}
\p_z\!\left[ \mu_{h + \alpha \phi_{z,r}}(\strip_-) \leq M,\ (h_2^f,-\alpha \phi_{z,r})_\nabla \geq 0 \right] \geq q.
\end{align}
Moreover if the event in~\eqref{eqn:23} occurs, then the Radon-Nikodym derivative in~\eqref{eqn:22} is at least $e^{-\frac{\alpha^2}{2} \| \phi_{z,r} \|_\nabla^2}$. This, together with~\eqref{eqn:21} and~\eqref{eqn:23} implies that we can find a constant $c > 0$ such that 
\begin{align*}
\p_z\!\left[ E \right] \geq c r^{\frac{\alpha^2}{4}}.
\end{align*}
This completes the proof.
\end{proof}
With Proposition~\ref{prop:mass_lower_bound} at our disposal, we are now ready to prove Proposition~\ref{prop:density_lower_bound}.
\begin{proof}[Proof of Proposition~\ref{prop:density_lower_bound}]
Suppose that $(\h,h,0,\infty) \sim \qwedgeW{\sqrt{2}}{1}$ and fix $\epsilon > 0$.  First we consider the probability measure defined on distributions given by 
\begin{align*}
\p_{z,\epsilon}[A] = \frac{\E\left[ \one_A \mu_h(B(z,\epsilon)) \right]}{\E\left[ \mu_h(B(z,\epsilon))\right]}
\end{align*}
By applying a similar method to \cite[Lemma~A.7]{dms2014mating} we obtain that a sample from $\p_{z,\epsilon}$ can be obtained as follows: 
\begin{itemize}
\item We sample $w \in B(z,\epsilon)$ from the probability measure on $B(z,\epsilon)$ whose density with respect to the Lebesgue measure is given by:
\begin{align*}
\frac{\E_{w}[\mu_{h}(dw)]}{Z}
\end{align*}
where $Z$ is a normalizing constant.
\item Next given w, we sample the law of the field from $\p_w$.
\end{itemize}
Hence $\p_{z,\epsilon}$ converges weakly to $\p_z$ as $\epsilon \to 0$. Let $M > 0$ be the constant of Proposition~\ref{prop:mass_lower_bound} and set $\wh h = h \circ \phi^{-1} + Q \log(|(\phi^{-1})'|)$, where $\phi : \h \mapsto \strip$ with $\phi(w) = \log w$. Then $(\strip,\wh{h},-\infty,+\infty) \sim \qwedgeW{\sqrt{2}}{1}$ has the first exit parameterization and thus Proposition~\ref{prop:mass_lower_bound} implies that there exist constants $M,c > 0$ such that 
\begin{align*}
\p_{z_0}[\wh E]\geq c \dist(z_0,\partial \strip)^{\beta},
\end{align*}
where $z_0 = \log z$ and $\wh E = \{\mu_{\wh h}(\strip_-) \leq M\}$. Suppose that $\im(z_0) \leq \pi/2$. Then $\dist(z_0,\partial \strip) = \arg(z)$ and so the coordinate change formula for quantum surfaces implies that 
\begin{align*}
\p_z[E] = \p_{z_0}[\wh E] \geq c \arg(z)^{\beta}
\end{align*}
Also note that $\sin(\arg(z)) = \im(z)/|z| \leq \arg(z)$ and hence 
\begin{align} \label{eqn:24}
\p_z[E] \geq c \im(z)^{\beta}|z|^{-\beta}.
\end{align}
If $\im(z_0) \geq \pi/2$ then~\eqref{eqn:24} holds by an analogous argument. Note also that 
\begin{align} \label{eqn:25}
	\p_z[E] &= \lim_{\epsilon \to 0}\frac{\E\left[ \one_E \mu_h(B(z,\epsilon))\right]}{\E\left[ \mu_h(B(z,\epsilon))\right]} = \lim_{\epsilon \to 0}\frac{\E\left[ \one_E \mu_h(B(z,\epsilon))\right]}{\epsilon^2}\frac{\epsilon^2}{\E\left[ \mu_h(B(z,\epsilon))\right]} = \frac{\E[ \one_E \mu_h(dz)]}{\E[\mu_h(dz)]}.
\end{align}
Thus, the result follows from~\eqref{eqn:24} and~\eqref{eqn:25}.
\end{proof}

We mentioned above that in order to prove Proposition~\ref{prop:density_bound}, we compare the density of the intensity measure of a $Q$-quantum wedge with that of an $\alpha$-quantum wedge conditioned on some event with high probability, where $\alpha < Q$. In order to do this, we need to examine the density of the intensity of the latter.

\begin{lemma}
\label{lem:RNint}
Fix $\gamma \in (0,2)$ and $\alpha < Q$. Let $\mathcal{W} = (\h,h,0,\infty) \sim \qwedgeA{\gamma}{\alpha}$ have the circle-average embedding and let $f_\alpha(z) \coloneqq  \E[\mu_h(dz)]/dz$ be the Radon-Nikodym derivative between the intensity of $\mu_h$ and two-dimensional Lebesgue measure. Then
\begin{align*}
	f_\alpha(z) \asymp |z|^{-\alpha\gamma} \im(z)^{-\gamma^2/2} \quad\text{for}\quad z \in \D_+.
\end{align*}
\end{lemma}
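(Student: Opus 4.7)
The plan is to exploit the Gaussianity of $h$: since $\mu_h(dz) = \lim_{\epsilon \to 0}\epsilon^{\gamma^2/2} e^{\gamma h_\epsilon(z)}\,dz$ and $h_\epsilon(z)$ is Gaussian,
\[ f_\alpha(z) = \lim_{\epsilon \to 0} \epsilon^{\gamma^2/2} \exp\!\big(\gamma\E[h_\epsilon(z)] + \tfrac{\gamma^2}{2}\var[h_\epsilon(z)]\big), \]
so it is enough to pin down the asymptotic mean and variance of the circle average $h_\epsilon(z)$. I would first decompose $h = h_1 + h_2$ into its radial part $h_1(z) = h_{|z|}(0)$ (the semicircle-average process about the origin) and its lateral part $h_2$; by the definition of $\qwedgeA{\gamma}{\alpha}$ these are independent, so the mean and variance split into two contributions.

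For the mean, the wedge definition gives $h_1(e^{-t}) = A_t = B_{2t} + \alpha t$ for $t \geq 0$ with $A_0 = 0$, consistent with the normalization that the average of $h$ on $\h \cap \partial\D$ equals $0$; hence $\E[h_1(z)] = -\alpha\log|z|$ and $\var[h_1(z)] = -2\log|z|$ for $z \in \D_+$. Since $h_2$ has the law of the lateral part of a free boundary GFF on $\h$ with the same $S_1$-normalization (where $S_1$ is the upper semicircle-average functional at radius one) and the lateral part of a centered free boundary GFF is centered, $\E[h_{2,\epsilon}(z)] = 0$. Continuity of $A_t$ in $t$ then yields $\E[h_\epsilon(z)] \to -\alpha\log|z|$ as $\epsilon \to 0$.

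For the variance I would use that $h_2$ has the law of $\wt h - \wt h_1$, where $\wt h$ is a free boundary GFF on $\h$ with $S_1[\wt h]=0$ and $\wt h_1$ is its radial part, so $\var[h_{2,\epsilon}(z)] = \var[\wt h_\epsilon(z)] - \var[\wt h_{1,\epsilon}(z)]$. The Neumann Green's function is $G(z,w) = -\log|z-w| - \log|z-\bar w|$, and the $S_1$-normalization produces covariance
\[ G_1(z,w) = G(z,w) - S_1[G(z,\cdot)] - S_1[G(\cdot,w)] + S_1\otimes S_1[G]. \]
A short calculation combining the standard identity $\tfrac{1}{2\pi}\int_0^{2\pi}\log|z-e^{i\theta}|\,d\theta = \log\max(|z|,1)$ with symmetry under conjugation shows $S_1[G(z,\cdot)] = -2\log\max(|z|,1)$, which vanishes for $|z|\leq 1$; hence $G_1(z,w) = G(z,w)$ on $\D_+ \times \D_+$. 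Regularizing by the circle-average kernel on $\partial B(z,\epsilon)$ gives $\var[\wt h_\epsilon(z)] = -\log\epsilon - \log(2\im(z)) + o(1)$ as $\epsilon \to 0$, and combining this with $\var[\wt h_{1,\epsilon}(z)] \to -2\log|z|$ and $\var[h_{1,\epsilon}(z)] \to -2\log|z|$, the two radial contributions cancel and one obtains $\var[h_\epsilon(z)] = -\log\epsilon - \log(2\im(z)) + o(1)$.

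Plugging these into the exponential formula and sending $\epsilon \to 0$ gives $f_\alpha(z) = |z|^{-\alpha\gamma}(2\im(z))^{-\gamma^2/2}$ on $\D_+$, which is $\asymp |z|^{-\alpha\gamma}\im(z)^{-\gamma^2/2}|z+i|^{2\gamma^2}$ because $|z+i|$ is bounded above and below by positive constants on $\D_+$; the ``Consequently'' part is immediate. The main technical point will be making the $o(1)$ errors in the variance asymptotic uniform in $z \in \D_+$ as $\epsilon \to 0$: one restricts to scales $\epsilon < \im(z)/2$ so that $\partial B(z,\epsilon) \subset \h$, at which point the expansion $|x-\bar y| = 2\im(z) + O(\epsilon)$ for $x,y \in \partial B(z,\epsilon)$ is uniform in $z$ and the remaining contribution of $G(z,w)$ from the lateral part is smooth away from the diagonal.
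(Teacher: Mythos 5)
Your argument is correct and takes a different route from the paper. You work directly under the $S_1$-normalization (zero semicircle average at radius one), using the identity $S_1[\greenN{\h}(z,\cdot)] = -2\log\max(|z|,1) = 0$ for $|z|\leq 1$, so that the covariance kernel of the $S_1$-normalized free boundary GFF restricted to $\D_+$ is simply $\greenN{\h}$ itself. Combined with the radial/lateral decomposition of the wedge (the radial part contributes only the drift $-\alpha\log|z|$ to the mean, since its variance matches that of a free boundary GFF's radial part and so cancels), this produces the exact identity $f_\alpha(z) = |z|^{-\alpha\gamma}(2\im(z))^{-\gamma^2/2}$, which is $\asymp$ to the stated form since $1\leq|z+i|\leq 2$ on $\D_+$. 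The paper instead uses the Markov decomposition $h^f = h^0 + \Fh$ into a zero-boundary GFF on $\h$ plus an independent random harmonic function, imports $\var[\Fh^{\D}(z)] = -2\log(1-|z|^2)$ from \cite{gms2018multifractal} via a M\"obius map, computes under the normalization $\wt\Fh(i) = 0$, and renormalizes back to $S_1$ at the end; that last step asserts that $\Fh(i)$ is independent of $h^f - \Fh(i)$, which is not literally true (the correct Cameron--Martin computation produces the bounded $z$-dependent factor $\propto |z+i|^{-2\gamma^2}$), although the $\asymp$ conclusion is unaffected since that factor is bounded on $\D_+$. Your route sidesteps this point entirely. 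One small fix: both the identity $G_1 = \greenN{\h}$ and the formula $\E[h_{1,\epsilon}(z)] = -\alpha\log|z|$ require $B(z,\epsilon)\subseteq\D_+$, so you should take $\epsilon < \min(\im(z),\ 1-|z|)/2$ rather than just $\epsilon < \im(z)/2$.
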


\begin{proof}

We note that when restricted to $\D_+$, $h = h^f - \alpha \log | \cdot |$, where $h^f$ is a free boundary GFF normalized so that its average on $\h \cap \partial \D$ is $0$.  Letting $\theta$ denote the uniform probability measure on $\h \cap \partial \D$, we can write $(h^f,\rho) = (\wt{h}^f,\rho - \theta)$, where $\wt{h}^f$ is a free boundary GFF without normalization.  Letting $\rho_{z,\delta}$ denote the uniform probability measure on $\partial B(z,\delta)$, we have that
\begin{align}
	\E[ \mu_{h^f}(dz)] &= \lim_{\delta \to 0} \delta^{\gamma^2/2} \E\!\left[ e^{\gamma(h_f,\rho_{z,\delta})} \right] dz = \lim_{\delta \to 0} \delta^{\gamma^2/2} \E\!\left[ e^{\gamma(\wt{h}_f,\rho_{z,\delta}-\theta)} \right] dz \nonumber \\
	&= \lim_{\delta \to 0} \delta^{\gamma^2/2} e^{\frac{\gamma^2}{2} \var[(\wt{h}_f,\rho_{z,\delta}-\theta)}] dz. \label{eq:intensity_fb_gff_1}
\end{align}
Here, we have that
\begin{align*}
	\var[(\wt{h}_f,\rho_{z,\delta}-\theta)] = \iint \greenN{\h}(w_1,w_2)(\rho_{z,\delta}-\theta)(dw_1) (\rho_{z,\delta}-\theta)(dw_2), 
\end{align*}
where $\greenN{\h}(w_1,w_2) = - \log|w_1-w_2| - \log|w_1-\ol{w_2}|$ is the Green's function with Neumann boundary data on $\h$.  We note that there exists a universal constant $C<\infty$ such that $\int |\log|x-y|| \theta(dy) \leq C$ and $\int |\log|x-\overline{y}|| \theta(dy) \leq C$,  for all $x \in B(0,5)$.  It follows that
\begin{align*}
\int \int |\greenN{\h}(w_1,w_2)| \rho_{z,\delta}(dw_1) \theta(dw_2) \leq 2C,  \quad 
\int \int |\greenN{\h}(w_1,w_2)| \theta(dw_1) \theta(dw_2) \leq 2C,
\end{align*}
whenever $|z| \leq 1-\delta$,  which implies that
\begin{align*}
-4C + \int \int \greenN{\h}(w_1,w_2) \rho_{z,\delta}(dw_1) \rho_{z,\delta}(dw_2)& \leq \var[ (\wt{h}_f ,  \rho_{z,\delta}-\theta)] \\
&\leq 4C + \int \int \greenN{\h}(w_1,w_2) \rho_{z,\delta}(dw_1) \rho_{z,\delta}(dw_2).
\end{align*}
Moreover,  whenever $0 < \delta < \tfrac{1}{2}(\im(z) \wedge (1-|z|))$,  we have that
\begin{align*}
\int \int \log |x-y| \rho_{z,\delta}(dx) \rho_{z,\delta}(dy) = \log \delta + \frac{1}{4\pi^2}\int_0^{2\pi} \int_0^{2\pi} \log |e^{i\vartheta}-e^{i\phi}| d\phi d\vartheta,
\end{align*}
and
\begin{align*}
\int \int \log |x-\overline{y}| \rho_{z,\delta}(dx) \rho_{z,\delta}(dy) &= \frac{1}{4\pi^2} \int_0^{2\pi} \int_0^{2\pi} \log |z-\overline{z} + \delta (e^{i\vartheta} - e^{i\phi})|d\phi d\vartheta \\
&=\log |z-\overline{z}| + O(1).
\end{align*}
Overall,  we obtain that there exists a universal constant $M>0$ such that
\begin{align*}
-M-\log \delta-\log |z-\overline{z}| \leq \var[(\wt{h}^f,\rho_{z,\delta}-\theta)] \leq M -\log \delta-\log |z-\overline{z}|
\end{align*}
for all $z \in \D_+,  0<\delta< \tfrac{1}{4}(\im(z) \wedge (1-|z|))$,  which implies that 
\begin{align*}
\E\big[ \mu_{h^f}(dz) \big] \asymp \lim_{\delta \to 0} \delta^{\gamma^2 / 2} e^{\tfrac{\gamma^2}{2}(-\log \delta-\log(|z-\overline{z}|))} = (2\im(z))^{-\gamma^2 / 2} dz.
\end{align*}
Finally,  since $h = h^f - \alpha \log |\cdot|$ in $\D_+$,  we have in $\D_+$,
\begin{align*}
\E\big[ \mu_h(dz) \big] = |z|^{-\alpha \gamma} \E\big[ \mu_{h^f}(dz) \big] \asymp |z|^{-\alpha \gamma} (\im(z))^{-\gamma^2 / 2}dz.
\end{align*}
This completes the proof of the lemma.
\end{proof}

We are now ready to prove Proposition~\ref{prop:density_bound}.
\begin{proof}[Proof of Proposition~
\ref{prop:density_bound}]

The main idea of the proof is to bound from below the density $\frac{\E\big[ \mu_h(dz) \big]}{dz}$ on $\D_+$ by the corresponding density for an $\alpha$-quantum wedge conditioned on a positive probability event with $\alpha < Q$.  Then,  we will use  Lemma~\ref{lem:RNint} to bound the latter density from below. Consider $(\h,h^{W,0},0,\infty) \sim \qwedgeW{\gamma}{\gamma^2 + 2}$ (i.e., $\alpha=0$) with the circle average embedding, lateral part equal to that of $h$ and radial part independent of that of $h$. Then, for $t \geq 0$, $ h_{e^{-t}}^{W,0}(0) - h_{e^{-t}}(0)= B_{2t} + Z_{2t} - Qt$, where $B$ is a Brownian motion and $Z$ is a $\BES^3$, which are chosen to be independent of each other.  Fix $\delta > 0$ small.  Hence letting $C_1 = \sup_{t\geq 0} (B_{2t} -(Q-\delta) t)$ and $C_2 =  \sup_{t \geq 0} (Z_{2t} - \delta t)$ we have that $\mu_{h^{W,0}} \leq e^{\gamma (C_1+C_2)} \mu_h$ on $\D_+$. Fix some large $M>0$ and write $E_1 = \{ C_1 \leq M\}$, $E_2 = \{ C_2 \leq M \}$ and $E_{1,2} = E_1 \cap E_2$.  Then for any $A \subseteq \D_+$,
\begin{align}\label{eq:0toQ}
	&\E\!\left[ \int \one_A d\mu_{h^{W,0}}(z) \middle| E_1 \right] = \E\!\left[ \int \one_A d\mu_{h^{W,0}}(z) \middle| E_{1,2} \right] \nonumber \\
	&\leq \E \!\left[ e^{\gamma (C_1+C_2)} \int \one_A d\mu_h(z) \middle| E_{1,2} \right] \leq e^{2M\gamma} \E\!\left[ \int \one_A d\mu_h \middle| E_2 \right] \lesssim e^{2M\gamma} \E\!\left[ \int \one_A d\mu_h\right]
\end{align}
where the implicit constants can be taken to be independent of $M$ (provided that $M$ is chosen sufficiently large). 

In order to compare the expectations of integrals with respect to $\mu_h$ and $\mu_{h^{W,0}}$, we need to lower bound the conditional expectation of an integral with respect to $\mu_{h^{W,0}}$, given $E_1$, with the corresponding unconditional expectation. We let $h^l$ denote the lateral part of the field. Then,
\begin{align*}
	\E\!\left[\mu_{h^{W,0}}(dz) \middle| E_1 \right] &= \E\!\left[ e^{\gamma B_{2 \log(1/|z|)}} \mu_{h^l}(dz) \middle| E_1 \right] = \E\!\left[ e^{\gamma B_{2 \log(1/|z|)}} \middle| E_1 \right] \E[\mu_{h^l}(dz)] \\
	&= \E\!\left[ e^{\gamma B_{2 \log(1/|z|)}} \middle| E_1 \right] \E[e^{\gamma B_{2 \log(1/|z|)}}]^{-1} \E[ \mu_{h^{W,0}}(dz)] \\
	&= \E\!\left[ e^{\gamma B_{2 \log(1/|z|)}} \middle| E_1 \right] |z|^{\gamma^2} f_0(z) dz.
\end{align*}
Since $\E\!\left[ e^{\gamma B_{2 \log(1/|z|)}} \middle| E_1 \right] = \E\!\left[ e^{\gamma B_{2 \log(1/|z|)}} \one_{E_1} \right] /\p[E_1]$ and $\p[E_1]$ can be made arbitrarily close to $1$ by choosing $M$ sufficiently large, we just have to bound $\E\!\left[ e^{\gamma B_{2 \log(1/|z|)}} \one_{E_1} \right]$ from below. We denote by $\p_z^*$ the measure obtained by weighting the measure $\p$ with the martingale $\exp(\gamma B_{2 \log(1/|z|)} - \gamma^2 \log(1/|z|))$. Sometimes we write $\p_t^* = \p_z^*$ for $t = \log(1/|z|)$ out of convenience. We have
\begin{align*}
	\E\!\left[ e^{\gamma B_{2 \log(1/|z|)}} \one_{E_1} \right] = |z|^{-\gamma^2} \E\!\left[ e^{\gamma B_{2 \log(1/|z|)} - \gamma^2 \log(1/|z|)} \one_{E_1} \right] = |z|^{-\gamma^2} \p_z^*[E_1],
\end{align*}
and thus we must find a lower bound on $\p_z^*[E_1] = \p_t^*[E_1]$. By the Girsanov theorem, we have that under $\p_t^*$, $B$ evolves like a Brownian motion with drift $\gamma$ until time $2t$ and thereafter like an ordinary Brownian motion with no drift.  That is, $B_s = W_s + \gamma \min(s,2t)$ where $W_s$ is a $\p_t^*$-Brownian motion. We note that if we define the events $\CB_1$ and $\CB_2$ by
\begin{align*}
	\CB_1 &= \!\left \{ \sup_{s \leq 2t} \left(W_s + \frac{(2\gamma - Q + \delta)}{2} s\right)  \leq \frac{M}{2} \right\}, \quad \text{and}\\ 
	\CB_2 &= \!\left\{ \sup_{s > 2t} \left(W_s - W_{2t} - \frac{(Q-\delta)}{2}(s-2t)\right) \leq \frac{M}{2} \right\},
\end{align*}
and let $\wt{E}_1 = \CB_1 \cap \CB_2$, then $\wt{E}_1 \subseteq E_1$. Moreover, $\CB_1$ and $\CB_2$ are independent, and hence $\p_t^*[E_1] \geq \p_t^*[\CB_1] \p_t^*[\CB_2]$. Since $W_s$ is a $\p_t^*$-Brownian motion, we have by \cite[Part~II, Section~2.1]{bs2002bmbook} that
\begin{align*}
	\p_t^*[\CB_2] = \p_t^*\!\left[ \sup_{s > 0} \left(W_s - \frac{(Q-\delta)}{2} s\right) \leq \frac{M}{2} \right] = 1 - \exp\!\left(-\frac{(Q-\delta)M}{2} \right).
\end{align*}
We note further that since $\gamma = \sqrt{2}$, we have that $K \coloneqq (2\gamma-Q+\delta)/2 > 0$ and hence the drift in the event $\CB_1$ is positive. By the Girsanov theorem and optional stopping (see e.g. \cite[Chapter~13.2]{bass2011stochastic}) we have that
\begin{align*}
	\p_t^*[\CB_1^c] = \int_0^{2t} \frac{M/2}{\sqrt{2\pi} s^{3/2}} e^{\frac{MK}{2} - \frac{K^2 s}{2} - \frac{M^2}{8s}} ds.
\end{align*}
Since $K$ is positive we have that $\sup_{s>0} (W_s+Ks) > M/2$ a.s., that is, the value of the above integral, with $\infty$ in place of $2t$ as the upper limit, is $1$. Consequently,
\begin{align*}
	\p_t^*[\CB_1] = \int_{2t}^\infty \frac{M/2}{\sqrt{2\pi} s^{3/2}} e^{\frac{MK}{2} - \frac{K^2 s}{2} - \frac{M^2}{8s}} ds \gtrsim \frac{M e^{\frac{M}{2} K}}{2 \sqrt{2\pi}} e^{-(K^2+o_t(1))t} \geq  \frac{M e^{\frac{M}{2} K}}{2 \sqrt{2\pi}} e^{-\frac{\gamma^2}{8} t},
\end{align*}
for large $t$, since $K^2 = 1/8 + O(\delta) = \gamma^2/16 + O(\delta) < \gamma^2/8$. Consequently, recalling that $t = \log(1/|z|)$, we have that
\begin{align*}
	\E \!\left[ e^{\gamma B_{2\log(1/|z|)}} \middle| E_1 \right] \gtrsim |z|^{-\frac{7}{8} \gamma^2},
\end{align*}
and hence
\begin{align}\label{eq:compintcond}
	\E[ \mu_{h^{W,0}}(dz) | E_1 ] \gtrsim |z|^{\frac{\gamma^2}{8}} f_0(z) dz =  |z|^{\frac{\gamma^2}{8}} \E[ \mu_{h^{W,0}}(dz) ].
\end{align}
By combining~\eqref{eq:0toQ} and~\eqref{eq:compintcond} with  Lemma~\ref{lem:RNint}, we obtain that
\[ \E[ \mu_h(dz) ] \gtrsim |z|^{\frac{\gamma^2}{8}}\im(z)^{-\frac{\gamma^{2}}{2}} \quad\text{for all}\quad z \in \D_+.\]
The claim then follows by applying Proposition~\ref{prop:density_lower_bound}.
\end{proof}

\section{Modulus of continuity of $\SLE_8$}
\label{sec:sle8}
In this section, we prove Theorem~\ref{thm:sle8_thm}. Let $\eta'$ be an $\SLE_8$ process in $\h$ from $0$ to $\infty$ parameterized by capacity.  We first explain why the event of Lemma~\ref{lem:main_lemma_ubd} locally determines the modulus of continuity.  Let $\tau_w$ denote the first hitting time of $w \in \h$ for $\eta'$.  Fix $\xi > 1$ and $\sigma > 0$ and let $\epsilon > 0$.  For $w \in \h$, assume that $z_0$ is such that $B(z_0,\epsilon^\xi) \subseteq B(w,\epsilon) \cap \h_{\tau_w}$ gets swallowed by $\wt{\eta}'(t) \coloneqq \eta'(\tau_w + t)$ before hitting $\partial B(w,\epsilon)$ and the probability that a Brownian motion $B^{z_0}$ started at $z_0$ exits $\h \setminus \eta'([0,\tau_w])$ in $\R \setminus [-M,M]$ for some $M>0$ is at least $\exp(-\epsilon^{-\sigma})$. By the conformal invariance of Brownian motion and that $g_{\tau_w}(\overline{\h} \cap (\partial \eta'([0,\tau_w]) \cup [-M,M]))$ is a.s.\ bounded (and we emphasize that this is all we need, since the constant in Theorem~\ref{thm:sle8_thm} is random and depends on the realization of $\eta'$), we have that the probability that $B_t^{z_0}$ exits the domain in $\R \setminus [-M,M]$ is comparable to the probability that a Brownian motion started at $g_{\tau_w}(z_0)$ hits the line $\{\im(z) = 1 \}$ before $\R$\footnote{Indeed, it is easy to see that if $S < T$, $-S \leq \re(w) \leq S$ and $0 < \im(w) < 1$, then $\p(B^w \ \text{exits} \ \h \ \text{in} \ \R \setminus [-T,T]) \asymp \im(w) = \p(B^w \ \text{hits} \ \{ \im(z) = 1 \} \ \text{before} \ \R)$, where the implicit constants depend only on $S$ and $T$.}. 
Since the former probability is at least $\exp(-\epsilon^{-\sigma})$, we necessarily have that
\begin{align*}
	\im(g_{\tau_w}(z_0)) \gtrsim \exp(-\epsilon^{-\sigma}).
\end{align*}
By \cite[Lemma~1]{lln2009hcap} we have that for any compact $\h$-hull $A$, $\hcap(A) \geq (\sup\{ \im(z): z \in A\})^2/2$. Thus, if $\wt{\tau}$ is the first time after $\tau_w$ that $\eta'$ hits $\partial B(w,\epsilon)$, then $\hcap (g_{\tau_w}(\eta'([\tau_w,\wt{\tau}]))) \gtrsim \exp(-2\epsilon^{-\sigma})$. Consequently, when $\eta$ travels distance $\epsilon$, it accumulates at least a constant times $\exp(-2\epsilon^{-\sigma})$ units of half-plane capacity. Or, equivalently, when $\eta'$ grows for time $\delta$, it travels a distance of at most a constant times $|\log\delta|^{-1/\sigma}$. That is, locally (i.e., for $s,t$ close to $\tau_w$) we have that 
\begin{align}\label{eq:BMmod}
	|\eta'(t)-\eta'(s)| \leq C \!\left(\log \left(1+ \frac{1}{|t-s|}\right) \right)^{-1/\sigma} \! .
\end{align}
It is clear that if this bound does not hold for a constant $C$, then the escape probability of the Brownian motion is necessarily smaller than $\exp(-\epsilon^{-\sigma})$, proving the equivalence of the escape probability and the modulus of continuity of $\eta'$.

The proof of the first part of Theorem~\ref{thm:sle8_thm} relies on the observation that if there is some ball $B(z_0,\epsilon^\xi)$ in $\h_{\tau_{w}} \cap B(\eta'(\tau_{w}),\epsilon)$ such that the escape probability of a Brownian motion is at most $\exp(-\epsilon^{-\sigma})$, then for each $z$ sufficiently close to $z_0$, the corresponding event (with a different, deterministic, constant $C_0$ as in the above remark) occurs. In order to use this, we need to know that $\eta'([\tau_{z},\tau_{z}(\epsilon)])$ contains a ball of radius $\epsilon^\xi$, \textit{simultaneously}, for all $z$ in a fixed bounded set with high probability as $\epsilon \to 0$. Lemma~\ref{lem:SLE8fills} states that for any fixed $z$, $\eta'([\tau_{z},\tau_{z}(\epsilon)])$ contains a ball of radius $\epsilon^\xi$, which is not quite enough. The next lemma shows that with high probability as $\epsilon \to 0$, it is indeed true simultaneously for $z \in \D_+$.

\begin{lemma}
\label{lem:SLE8fillsmore}
Let $\eta'$ be an $\SLE_8$ process in $\h$, from $0$ to $\infty$ and fix $\xi > 1$. Let $\mathscr{E}_\epsilon^\xi$ be the event that for all $z \in \D_+$, $\eta'([\tau_z,\tau_z(\epsilon)])$ contains a ball of radius $\epsilon^\xi$. Then $\p[\mathscr{E}_\epsilon^\xi] = 1 - o_\epsilon^\infty(\epsilon)$.
\end{lemma}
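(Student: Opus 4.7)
The plan is to bootstrap the pointwise bound in Lemma~\ref{lem:SLE8fills} to a statement uniform over $z \in \D_+$, by a union bound over a fine lattice and a comparison argument that transfers the conclusion from lattice points to an arbitrary $z$. First I would fix $\xi' \in (1, \xi)$ and an integer $N > \xi$, set $\delta = \epsilon^N$, and introduce the lattice $\mathcal{G}_\epsilon = (\delta \Z^2) \cap \D_+$, which has cardinality $O(\epsilon^{-2N})$. Applying Lemma~\ref{lem:SLE8fills} at each $z_k \in \mathcal{G}_\epsilon$ with parameter $\epsilon/4$ and exponent $\xi'$, a union bound gives that the event
\[ F = \bigcap_{z_k \in \mathcal{G}_\epsilon} \!\left\{ \eta'([\tau_{z_k}, \tau_{z_k}(\epsilon/4)]) \text{ contains a ball of radius } (\epsilon/4)^{\xi'} \right\} \]
satisfies $\p[F^c] \leq O(\epsilon^{-2N}) \cdot b_0 \exp(-b_1 (\epsilon/4)^{(1-\xi')/5}) = o_\epsilon^\infty(\epsilon)$, since the stretched-exponential factor crushes any polynomial in $\epsilon^{-1}$.

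Next, working on the event $F$ and given $z \in \D_+$, I would search for a lattice point $z_k \in \mathcal{G}_\epsilon \cap B(z, \epsilon/4)$ whose hitting time satisfies $\tau_{z_k} \geq \tau_z$. Once such a $z_k$ is in hand, the inclusions $B(z_k, \epsilon/4) \subseteq B(z, \epsilon/2) \subseteq B(z, \epsilon)$ force $\eta'$ to exit $B(z_k, \epsilon/4)$ no later than it exits $B(z, \epsilon)$, so $\tau_{z_k}(\epsilon/4) \leq \tau_z(\epsilon)$, and combined with $\tau_{z_k} \geq \tau_z$ this yields $[\tau_{z_k}, \tau_{z_k}(\epsilon/4)] \subseteq [\tau_z, \tau_z(\epsilon)]$. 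Hence the ball of radius $(\epsilon/4)^{\xi'} \geq \epsilon^\xi$ guaranteed by $F$ (using $\xi' < \xi$ and $\epsilon$ small) lies inside $\eta'([\tau_z, \tau_z(\epsilon)])$, as required.

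The hard part will be producing such a lattice point $z_k$ uniformly over $z \in \D_+$, since the set of admissible candidates, $\mathcal{G}_\epsilon \cap B(z, \epsilon/4) \cap \h_{\tau_z}$, depends on the random unswallowed region $\h_{\tau_z}$ at time $\tau_z$. This open set is always nonempty (as $z$ lies on $\partial \h_{\tau_z}$), but a priori could be too thin to meet the lattice. To handle this I would run the same machinery in parallel at a coarser scale: introduce a coarser lattice $\mathcal{G}_\epsilon^\sharp$ of spacing $\rho = \epsilon^{\xi^\sharp}$ with $1 < \xi^\sharp < \xi'$, apply Lemma~\ref{lem:SLE8fills} at each point of $\mathcal{G}_\epsilon^\sharp$ with parameter $\epsilon/8$, and use a covering argument together with the space-filling property of $\eta'$ to show that, on a further event of probability $1 - o_\epsilon^\infty(\epsilon)$, $B(z, \epsilon/4) \cap \h_{\tau_z}$ must contain a macroscopic Euclidean ball of radius $\gtrsim \rho$ for every $z \in \D_+$. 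Since $\rho \gg \delta$, such a ball automatically contains many fine-lattice points, each with $\tau_{z_k} \geq \tau_z$, and a careful choice $1 < \xi^\sharp < \xi' < \xi < N$ makes all scales consistent and closes the argument.
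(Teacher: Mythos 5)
Your reduction in the second paragraph has a genuine gap. You claim that from $z_k \in B(z,\epsilon/4)$ with $\tau_{z_k} \geq \tau_z$, the inclusion $B(z_k,\epsilon/4) \subseteq B(z,\epsilon)$ forces $\tau_{z_k}(\epsilon/4) \leq \tau_z(\epsilon)$. That would be true if the two exit times were measured from a common start, but $\tau_{z_k}(\epsilon/4)$ is reckoned from $\tau_{z_k}$ while $\tau_z(\epsilon)$ is reckoned from $\tau_z$. If $\eta'$ leaves $B(z,\epsilon)$ before it ever hits $z_k$ (i.e.\ $\tau_{z_k} > \tau_z(\epsilon)$), the inequality fails and the interval inclusion $[\tau_{z_k},\tau_{z_k}(\epsilon/4)] \subseteq [\tau_z,\tau_z(\epsilon)]$ breaks down. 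What you actually need is the two-sided sandwich $\tau_z \leq \tau_{z_k} \leq \tau_z(\epsilon)$, i.e.\ the existence of a lattice point $z_k$ that is \emph{visited} by $\eta'$ during the time window $[\tau_z,\tau_z(\epsilon)]$. That is precisely a ``the curve hits a lattice point when it travels distance $\epsilon$'' statement, which is a version of the filling property you are trying to prove --- your argument is circular at that step.

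Your third paragraph does not close this gap. You propose showing that $B(z,\epsilon/4) \cap \h_{\tau_z}$ contains a ball of radius $\rho$, but a point being in the unswallowed region $\h_{\tau_z}$ only yields $\tau_{z_k} \geq \tau_z$; it says nothing about $\tau_{z_k} \leq \tau_z(\epsilon)$, since nothing prevents $\eta'$ from leaving $B(z,\epsilon)$ and returning much later to hit that point. The paper resolves exactly this difficulty via an imaginary-geometry ``pocket'' construction: flow lines of angles $\pm\pi/2$ emanating from lattice points merge (with overwhelming probability, by \cite[Proposition~4.14]{ms2017imag4}) to tile a neighborhood of $\D_+$ by pockets of diameter at most $\epsilon^{1-r}$, each having a lattice point on its boundary, and since the space-filling $\SLE_8$ fills a pocket before exiting through a lattice point, travelling distance $\epsilon^{1-r}+\epsilon$ forces the curve to hit a lattice point of the grid --- after which the pointwise ball-filling estimate applies. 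This deterministic combinatorial structure linking Euclidean displacement to a grid-point visit is the ingredient your proposal is missing, and a union bound over Lemma~\ref{lem:SLE8fills} alone cannot supply it.
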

\begin{proof}
Fix some $z_0 \in \h$ and let $S$ be the square centered at $z_0$ with side length $l(S) = \im(z_0) /2$. Moreover, let $S^1$, $S^2$ and $S^3$ be the squares with center $z_0$ and side lengths $l(S^1) = \frac{3}{2} l(S)$, $l(S^2) = \frac{7}{4} l(S)$ and $l(S^3) = \frac{15}{8} l(S)$, respectively. For any set $A \subseteq \C$, let $A_\epsilon = A \cap \epsilon\Z^2$. Denote by $\eta_z^\pm$ the interior flow line emanating from $z$ with angle $\pm \frac{\pi}{2}$. For $r \in (0,1)$ and $z \in S_\epsilon^1$, let $\wt{E}_\epsilon^r(z)$ denote the event that there is some $w \neq z$ in $S_\epsilon^2$ such that $\eta_z^-$ hits $\eta_w^-$ on the left side and $\eta_z^+$ hits $\eta_w^+$ on the right side before leaving the ball $B(z,\epsilon^{1-r}/8)$ and let $\wt{\CE}_\epsilon^r = \cap_{z \in S_\epsilon^1} \wt{E}_\epsilon^r(z)$. Then by \cite[Proposition~4.14]{ms2017imag4} (see also the proof of \cite[Lemma~3.8]{ghm2020kpz}), we have that $\p[\wt{\CE}_\epsilon^r ] = 1- o_\epsilon^\infty(\epsilon)$.  Let also $\wh{\CE}_\epsilon^r$ be the event that for all $z \in S_\epsilon^2 \setminus S_\epsilon^1$ we have that if $\eta_z^\pm$ enters $S$ then there exists $w \in S_\epsilon^1 \setminus S$ so that $\eta_w^\pm$ merges with $\eta_z^\pm$ before the paths enter $S$.  Then we similarly have that $\p[\wh{\CE}_\epsilon^r ] = 1- o_\epsilon^\infty(\epsilon)$.

On $\wt{E}_\epsilon^r(z)$, the flow lines $\eta_z^\pm$ and $\eta_w^\pm$ form a pocket which consists of the set $P_z$ of those points in the component of $\h \setminus (\eta_z^\pm \cup \eta_w^\pm)$ whose boundary contains $z$, $w$ and part of the right side of $\eta_z^-$.  On the complement of $\wt{E}_\epsilon^r(z)$, we define $P_z$ to be the set of those points in the component of $\h \setminus \eta_z^\pm$ whose boundary contains $z$ and part of the right side of $\eta_z^-$.  By the proof of \cite[Lemma~3.8]{ghm2020kpz}, we have that $\diam \, P_z \leq \epsilon^{1-r}$ for each $z \in S_\epsilon^1$.  On $\wh{\CE}_\epsilon^r$, we also claim that $S$ is contained in $\cup_{z \in S_{\epsilon}^1} P_z$.  To see this, we first note that $\h = \cup_{z \in S_\epsilon^2} P_z$.  Suppose that $z \in S_\epsilon^2 \setminus S_\epsilon^1$.  For $P_z$ to intersect $S$, it must be that $\eta_z^+$ or $\eta_z^-$ enters $S$ which is ruled out in the definition of $\wh{\CE}_\epsilon^r$.

For each $z$ let $E_\epsilon^r(z)$ be the event that $\eta'$ swallows a ball of radius $\epsilon^{1+r}$ in the time interval between first hitting $z$ and the next time it leaves the ball $B(z,\epsilon)$ and let $\ol{\CE}_\epsilon^r = \cap_{z \in S_\epsilon^2} E_\epsilon^r(z)$.  Let also $\underline{\CE}_\epsilon^r = \wt{\CE}_\epsilon^r \cap \wh{\CE}_\epsilon^r \cap \ol{\CE}_\epsilon^r$.  Then by \cite[Lemma~3.6]{ghm2020kpz} we have that $\p[ \underline{\CE}_\epsilon^r] = 1 - o_\epsilon^\infty(\epsilon)$.

On the event $\underline{\CE}_\epsilon^r$ we have that if $\eta'$ travels distance $\epsilon^{1-r}$ it has to leave some pocket (since each pocket has diameter at most $\epsilon^{1-r}$) and thus whenever $\eta'$ travels distance $\epsilon^{1-r} + \epsilon$, it has to fill in a ball of radius $\epsilon^{1+r}$ (since traveling distance $\epsilon^{1-r}$ forces it to leave a pocket and hit a point in the grid, and from this point it will fill in a ball when traveling distance $\epsilon$).

The above implies that if we fix some $a \in (r,1)$ and let $\CE_\epsilon^r$ denote the event $\underline{\CE}_\epsilon^r$ with $\D \cap \{ \im(z) \geq \epsilon^{1-a} \}$ in place of $S$, then $\p[\CE_\epsilon^r] = 1-o_\epsilon^\infty(\epsilon)$, and on $\CE_\epsilon^r$, we have that whenever $\eta'$ travels distance $\epsilon^{1-r} + \epsilon$ in $\D \cap \{ \im(z) \geq \epsilon^{1-a} \}$, it fills in a ball of radius $\epsilon^{1+r}$, where the grid in question covers, say $\D \cap \{ \im(z) \geq \epsilon^{1-a}/2\}$.  Fix $b \in (0,a)$. By the proof of Lemma~\ref{lem:SLE8fills} we have that dividing $[-1,1] \times [0,\epsilon^{1-a})$ into rectangles $R_j$ of base length $\epsilon^{1-b}$ and letting flow lines of angle $\pi/2$ (resp.\ $-\pi/2$) if they are on the left (resp.\ right) side of $0$, grow at spacing $2\epsilon^{1-(a+3b)/4}$, we have that if $A_{b,\epsilon}(j)$ is the event that at least one flow line in the rectangle $R_j$ hits the line $\{ \im(z) = \epsilon^{1-a} \}$, then $\p[A_{b,\epsilon}(j)^c] = o_\epsilon^\infty(\epsilon)$. Consequently, letting $\CA_\epsilon^b = \cap_j A_{b,\epsilon}(j)$, we have that $\p[\CA_\epsilon^b] = 1 - o_\epsilon^\infty(\epsilon)$ and hence $\p[ \CE_\epsilon^r \cap \CA_\epsilon^b] = 1 - o_\epsilon^\infty(\epsilon)$. Moreover, on $\CE_\epsilon^r \cap \CA_\epsilon^b$ we have that whenever $\eta'$ travels distance $2\epsilon^{1-b} + \epsilon$, starting from any point in $\D \cap \{ \im(z) < \epsilon^{1-a} \}$, it fills a ball of radius $\epsilon^{1+r}$. This is because when traveling distance $2 \epsilon^{1-b}$, it will hit $\D \cap \{ \im(z) \geq \epsilon^{1-a} \}$ and enter a pocket at some point in the grid and from there fill a ball when traveling the further $\epsilon$ distance. Thus, the proof is done.
\end{proof}

We also state and prove the following before we prove Theorem~\ref{thm:sle8_thm}.

\begin{lemma}\label{lem:QwedgeUB}Let $\CW = (\h,h,0,\infty)\sim \qwedgeW{\sqrt{2}}{1}$. Suppose that it is embedded such that $\nu_h([-1/2,0]) = 1$. Then there exist finite constants $c_1,c_2>0$ such that 
\begin{align*}
\p[ \mu_h(\D_+)\leq \epsilon ] \leq c_1 e^{-c_2 \epsilon^{-1}} 
\end{align*}
for all $\epsilon > 0$.
\end{lemma}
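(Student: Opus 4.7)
The plan is to reduce the claimed tail estimate to a negative-moment bound on $\mu_h(\D_+)$ and then to establish that moment bound by exploiting the explicit strip description of the $Q$-quantum wedge together with standard Gaussian multiplicative chaos (GMC) negative moment estimates. Concretely, it suffices to show that
\[
\E[\mu_h(\D_+)^{-p}] \leq C^p\,p! \quad \text{for all integers } p \geq 1
\]
for some universal constant $C > 0$: Markov's inequality then gives $\p[\mu_h(\D_+) \leq \epsilon] \leq (C\epsilon)^p p!$, and Stirling's approximation (minimized near $p \asymp 1/(Ce\epsilon)$) yields a bound of the form $c_1 \exp(-c_2/\epsilon)$.

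To prove the negative-moment bound, I would work with the strip parameterization of the wedge, where the weight-$1$, $\gamma=\sqrt 2$ wedge (a $Q$-quantum wedge) has the explicit description recalled in the preliminaries: the vertical-line-average $X_t$ is a two-sided process, consisting of a Brownian motion of speed $2$ for $t \geq 0$ and the negative of a $\mathrm{BES}^3$ for $t \leq 0$, and the lateral part $h_2$ is an independent projection of a free boundary GFF on $\strip$. The coordinate change $\psi(z)=\log z$ sends $[-1/2,0]\subseteq \partial\h$ to $(-\infty,-\log 2]\times\{\pi\}\subseteq \partial\strip$, so the embedding $\nu_h([-1/2,0])=1$ corresponds, in the strip, to choosing the horizontal translation that makes the quantum length of this half-line equal to $1$. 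Fix a large $T_0>0$ and a compact rectangle $R=[-T_0,-1]\times(\pi/2,\pi)$; since $\mu_h(\D_+)\geq \mu_h(R)$, it suffices to bound $\E[\mu_h(R)^{-p}]$. On a good event $G$ of probability at least $1/2$, defined by a bounded-fluctuations condition on $X$ over $[-T_0,0]$ together with a pigeon-holing argument forcing a constant fraction of the unit boundary length to be carried by $[-T_0,-1]\times\{\pi\}$, the restriction of $h$ to a neighborhood of $\overline{R}$ is absolutely continuous with respect to a shifted free boundary GFF whose Radon--Nikodym derivative has all finite moments. Kahane's convexity inequality and standard GMC negative-moment estimates (cf.\ \cite{rv2010gmcrevisit}) then yield $\E[\one_G\,\mu_h(R)^{-p}]\leq C^p p!$, and since $\p[G]\geq 1/2$ the desired unconditional bound follows.

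The main obstacle is that the boundary-length normalization $\nu_h([-1/2,0])=1$ is a global, implicit condition that couples the radial and lateral parts of the field in a non-trivial way, so one cannot immediately apply local free-boundary GFF moment estimates. I would handle this by comparing the boundary-length-$1$ embedding with the first-exit embedding: the horizontal shift needed to pass from one to the other is a random variable whose distribution is controlled by the explicit Brownian/Bessel description of $X$, and in particular it lies in a deterministic bounded range on a high-probability event. This is what permits the reduction to GMC on a fixed compact rectangle and the application of the factorial negative-moment estimate above.
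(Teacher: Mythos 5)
There is a genuine gap in the final step. You reduce the claim to a factorial negative-moment bound $\E[\mu_h(\D_+)^{-p}]\leq C^p\,p!$, which is indeed equivalent to the desired tail, and then try to establish it via a good event $G$ of probability $\geq 1/2$ together with standard GMC negative-moment estimates on a fixed compact rectangle. Two things go wrong here. First, from $\E[\one_G\,\mu_h(R)^{-p}]\leq C^p p!$ and $\p[G]\geq 1/2$ you can only conclude the \emph{conditional} bound $\E[\mu_h(R)^{-p}\mid G]\leq 2C^p p!$; the unconditional negative moment $\E[\mu_h(\D_+)^{-p}]$ also involves $\E[\one_{G^c}\,\mu_h(\D_+)^{-p}]$, which your argument does not control. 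Since $\p[G^c]$ may be a constant (up to $1/2$), the tail bound one extracts this way is $\p[\mu_h(\D_+)\leq\epsilon]\leq \p[G^c] + (C\epsilon)^p p!$, which is worthless. To make such an argument run, one would need $\p[G^c]$ to \emph{itself} decay like $e^{-c/\epsilon}$, at which point $G$ depends on $\epsilon$ and the whole framing changes.

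Second, and more fundamentally, standard GMC negative-moment estimates (Kahane's convexity inequality, the comparison to a free-boundary GFF on a rectangle) give finiteness of $\E[\mu(R)^{-p}]$ for all $p$, but the known growth rate in $p$ is of lognormal type, roughly $e^{cp^2}$, because $\log\mu(R)$ has Gaussian left tails for a GFF on a bounded domain. That produces a tail of the form $e^{-c\,(\log\epsilon)^2}$, far weaker than $e^{-c/\epsilon}$. The only reason the quantum wedge with the normalization $\nu_h([-1/2,0])=1$ enjoys the much stronger factorial negative-moment growth is precisely the boundary-length constraint, and your argument never actually uses that constraint quantitatively: it only uses it to locate the field in a bounded window, which is a constant-probability statement. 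The paper's proof extracts the correct Gaussian-first-passage structure by drawing an independent $\SLE_8$ on top of the wedge, reparameterizing by quantum area, and observing that the left-boundary-length process $L_t$ is a standard Brownian motion (Theorem~\ref{thm:sle8thm}); the event $\mu_h(\D_+)\leq\epsilon$ is then contained (up to a constant-probability event independent of $h$) in the event that this Brownian motion hits $-1$ before time $\epsilon$, giving the $e^{-1/(2\epsilon)}$ tail directly. Your route as written cannot recover this because the only tool it invokes (free-boundary GFF GMC moments) does not see the area--length coupling that the $\SLE_8$ mating encodes.
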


\begin{proof}
Let $\eta'$ be an $\SLE_8$ in $\h$ from $0$ to $\infty$ sampled independently of $\CW$. We subsequently parameterize $\eta'$ by quantum area with respect to $h$ and set $\tau = \inf\{t \geq 0 : \eta'(t) \notin \D_+\}$ and $\sigma = \inf\{t \geq 0 : [-1/2,0] \subseteq \eta'([0,t]) \}$. Note that the event $\{\sigma \leq \tau\}$ depends only on the capacity parameterization of $\eta'$ and thus it is independent of $h$. Thus
\begin{align*}
\p[ \sigma \leq \tau ] \p[ \mu_h(\D_+) \leq \epsilon ] &= \p[ \sigma \leq \tau,\mu_h(\D_+) \leq \epsilon ] \leq \p[ \mu_h(\eta'([0,\sigma])) \leq \epsilon,\sigma \leq \tau]
\end{align*}
and so 
\begin{align*}
\p[ \mu_h(\D_+) \leq \epsilon ] \leq & \p[ \mu_h(\eta'([0,\sigma])) \leq \epsilon | \sigma \leq \tau] \leq \frac{\p[ \mu_h(\eta'([0,\sigma]))\leq \epsilon ]}{\p[ \sigma \leq \tau ]}.
\end{align*}
Since $\p[ \sigma \leq \tau ] > 0$ is independent of $\epsilon$, it suffices to give an upper bound for $\p[ \mu_h(\eta'([0,\sigma])) \leq \epsilon ]$. For $t \geq 0$, let $L_t$ be the quantum length of the arc on $\partial \h_t$ connecting $\eta'(t)$ with the leftmost point $x_t^L$ of $\R \cap \eta'([0,t])$ minus the quantum length of $[x_t^L,0]$. By Theorem~\ref{thm:sle8thm}, $(L_t)_{t \geq 0}$ evolves as a standard Brownian motion and if $T = \inf\{t \geq 0 : L_t = -1\}$, then 
\begin{align*}
\p[ \mu_h(\eta'([0,\sigma])) \leq \epsilon ]=\p[ \inf_{0 \leq t \leq \epsilon}L_t \leq -1 ] = \p[T \leq \epsilon] \lesssim e^{-1/(2\epsilon)}.
\end{align*}
This completes the proof.
\end{proof}

\begin{remark}\label{rmk:QwedgeUB}
Suppose that we have the same setup as in Lemma~\ref{lem:QwedgeUB} but $h$ has the first exit parameterization instead. For $\epsilon \in (0,1)$ we set $M_{\epsilon} = (\log(\epsilon^{-1}))^2$, $X = \nu_h([-\tfrac{1}{2},0])$ and $Y = \mu_h(\D_+)$.  Fix $k \in \Z$ and let $x > 0$ be such that $\nu_h([-x,0]) = 2^k$. Let also $c_1,c_2 >0$ be the constants of Lemma~\ref{lem:QwedgeUB} and let $Y(x) = \mu_h(\h \cap B(0,2x))$. We claim that
\begin{align}\label{eq:qlbound}
\p[ 2^k \geq \log(\epsilon^{-1})\sqrt{Y(x)} ] \leq c_1 e^{-c_2 (\log(\epsilon^{-1}))^2}
\end{align}
Indeed, consider the conformal transformation $\psi : \h \rightarrow \h$ with $\psi(z) = z/2x$ and set $\wt h = h \circ \psi^{-1} + Q\log(2x) + C$, where $C = -\frac{2k}{\gamma}\log(2)$. Then $(\h,\wt h,0,\infty) \sim \qwedgeW{\sqrt{2}}{1}$ and it is parameterized as in Lemma~\ref{lem:QwedgeUB}. Also $Y(x) = 2^{2k} \mu_{\wt h}(\D_+)$ which implies that
\begin{align*}
\p[2^k \geq \log(\epsilon^{-1})\sqrt{Y(x)}] = \p[1 \geq \log(\epsilon^{-1}) \sqrt{\mu_{\wt h}(\D_+)}]
\end{align*}
and so the claim is deduced by Lemma~\ref{lem:QwedgeUB}. 

Next we have that
\begin{align}\label{eq:qlbound2}
\p[X \geq \log(\epsilon^{-1})\sqrt{Y}]&\leq \sum_{k = -M_{\epsilon}}^{M_{\epsilon}}\p[X  \geq \log(\epsilon^{-1}) \sqrt{Y},\quad 2^k \leq X < 2^{k+1}] + \p[X \leq 2^{-M_{\epsilon}}] + \p[X > 2^{M_{\epsilon}}]\nonumber\\
&\leq \sum_{k = -M_{\epsilon}}^{M_{\epsilon}}\p[2^{k+1} \geq \log(\epsilon^{-1})\sqrt{Y(x)}] + \p[X \leq 2^{-M_{\epsilon}}] + \p[ X > 2^{M_{\epsilon}}].
\end{align}
Moreover, Lemma~\ref{lem:pthmoment} implies that $\E[X], \E[X^{-1}] < \infty$ since $\gamma = \sqrt{2}$, and hence
\begin{align}\label{eq:qlbound3}
\p[X \leq 2^{-M_{\epsilon}}] \leq \E[X^{-1}]2^{-M_{\epsilon}},\quad \p[X > 2^{M_{\epsilon}}] \leq \E[X]2^{-M_{\epsilon}}.
\end{align}
By combining~\eqref{eq:qlbound}, \eqref{eq:qlbound2} and \eqref{eq:qlbound3}, we obtain that there exist finite universal constants $\wt c_1, \wt c_2 > 0$ such that
\begin{align*}
\p[X \geq \log(\epsilon^{-1})\sqrt{Y}] \leq \wt c_1 e^{-\wt c_2 (\log(\epsilon^{-1}))^2} \quad \text{for all} \quad \epsilon \in (0,1).
\end{align*}
\end{remark}

We now turn to proving the first part of Theorem~\ref{thm:sle8_thm}. Lemma~\ref{lem:main_lemma_ubd} provides us with a bound on the probability of the bad event in the case when the $\SLE$ is away from the boundary. In the proof, we shall show that one does not need to consider the boundary case as well.  There are two main steps to the proof. The first is to bound the expected area of bad points, that is, points from which the Brownian motion is very unlikely to escape. The second step is then to prove that if there exists such a bad point, then there exists a small ball, in which each point is a bad point. This implies that if the probability that there exists a bad point is positive, then the lower bound on the expected number of bad points exceeds the upper bound provided, which causes a contradiction, proving that with probability $1$, there are no such bad points.
\begin{proof}[Proof of the first part of Theorem~\ref{thm:sle8_thm}]

\noindent{\it Step 1.  Setup.}  Let $\mathcal{W} = (\h,h,0,\infty) \sim \qwedgeW{\sqrt{2}}{1}$ (i.e. $\alpha = Q$) and let $\eta'$ be an independent $\SLE_8$ in $\h$ from $0$ to $\infty$ which is subsequently parameterized by quantum mass. For each $t\geq 0$, the quantum surface parameterized by $\h_t = \h \setminus \eta'([0,t])$ has law $\qwedgeW{\sqrt{2}}{1}$ (Theorem~\ref{thm:sle8thm}).

For $t > 0$ we let $A_t^\xi = A_t^\xi(\epsilon)$ be the event that $\eta'([0,t]) \subseteq \D_+$ and that there exists some $z \in B(\eta'(t),\epsilon)$ such that $B(z,\epsilon^\xi) \subseteq \h_t \cap B(\eta'(t),\epsilon)$ and such that the probability that a Brownian motion starting from $z$ exits $\h_t$ in $\partial \h_t \setminus I_t$ (where $I_t$ is as in Lemma~\ref{lem:mainlemma8}) is at most $e^{-\epsilon^{-4-2a}}$. Recall that by Lemma~\ref{lem:mainlemma8}, $\p[A_t^\xi] = O(\epsilon^{2+a+o(1)})$. Set $R = \log(\epsilon^{-1})$ and let $E_R$ be the event that the boundary length of each of the segments of $\partial \h_t \cap \eta'$ from $\eta'(t)$ to the leftmost and rightmost point of $\R \cap \eta'([0,t])$ is at most $R$ for all $0\leq t \leq M$, where $M$ is the constant of Proposition~\ref{prop:density_bound}.

\noindent{\it Step 2.  Bound on boundary length distance of tip of curve to real line.} We claim that there exist constants $c_1,c_2>0$ such that, for all $\epsilon > 0$, 
\begin{align*}
\p[ E_R^c ] \leq c_1 e^{-c_2 (\log \epsilon)^2}.
\end{align*}
Indeed, let $L_t$ be as in Lemma~\ref{lem:QwedgeUB} and $R_t$ be defined as $L_t$ but with the rightmost point of $\R \cap \eta'([0,t])$ instead. Then since 
\begin{align*}
E_R^c = \!\left\{\sup_{0 \leq t \leq M}\left( L_t -\inf_{0\leq s \leq t}L_s \right)\geq R \right\}\! \cup \!\left\{\sup_{0 \leq t \leq M}\left(R_t - \inf_{0 \leq s \leq t}R_s \right)\geq R \right\}
\end{align*}
and $(L_t),(R_t)$ evolve as standard Brownian motions (Theorem~\ref{thm:sle8thm}), the claim follows by the tail probabilities of the supremum and the infimum processes of a standard Brownian motion.

\noindent{\it Step 3.  Upper bound on the expected area of bad points.}
We let $\eta_c'$ denote the curve $\eta'$ parameterized by capacity, write $\h_t^c = \h \setminus \eta_c'([0,t])$ and let $\sigma_1 = \inf\{t \geq 0: \mu_h(\eta_c'([0,t])) \geq M\}$. We further fix $\delta > 0$ and let $E$ be the event that $\mu_h(\D_+) \leq M$ and $\eta_c'([0,\delta]) \subseteq e^{-1}\D_+$ and recall the definitions of $\tau_z$ and $\tau_z(\epsilon)$ of Lemma~\ref{lem:SLE8fills}. Moreover, we let $B_w^\xi$ the event that there is some $z$ such that $B(z,\epsilon^\xi) \subseteq \eta_c'([\tau_w,\tau_w(\epsilon)])$ and such that the probability that a Brownian motion starting from $z$ exits $\h_{\tau_w}^c$ on $\R \setminus [-1/2,1/2]$ is at most $e^{-\epsilon^{-4-2a}}$. Note that we are done if we prove that $\p[ B_w^\xi \, \text{occurs for some} \, w \in \eta_c'([0,\delta])]$ converges to $0$ as $\epsilon \rightarrow 0$ for some $\xi > 1$. Let also $F$ be the event that $\nu_h([-1/2,0]) \leq \log(\epsilon^{-1})$ and $\nu_h([0,1/2])\leq \log(\epsilon^{-1})$ and note that Remark~\ref{rmk:QwedgeUB} implies that $\p[ F^c \cap E ] \leq c_1 e^{-c_2 (\log \epsilon)^2}$ for some constants $c_1,c_2>0$. Note that 
\begin{align*}
\int_{0}^{M}\one_{A_t^\xi}dt \geq \int_{0}^{M}\one_{A_t^\xi}\one_{E_R}\one_F dt \geq \int \one_{B_w^\xi}\one_E\one_{\{w \in \eta_c'([0,\delta])\}}d\mu_h(w) -M\one_{E_R^c}-M\one_{E_R}\one_E\one_{F^c}
\end{align*}
and so by  taking expectations, noting that $\p[ A_t^\xi ] =  O(\epsilon^{2+a + o(1)})$ by Lemma~\ref{lem:mainlemma8} and applying Proposition~\ref{prop:density_bound} with $\beta \in (1/4,1)$, we obtain that 
\begin{align*}
O(\epsilon^{2+a + o(1)}) &= \E\!\left[\int \one_{B_w^\xi}\one_E \one_{\{w \in \eta_c'([0,\delta])\}}d\mu_h(w)\right] \\
&\gtrsim \E\!\left[ \one_{\{\eta_c'([0,\delta]) \subseteq e^{-1}\D_+\}}\int \one_{B_w^\xi} \one_{\{w \in \eta_c'([0,\delta])\}}|w|^{-\beta + \gamma^2 /8}\im(w)^{\beta - \gamma^2 /2}dw\right] \\
&\gtrsim \E\!\left[ \one_{\{\eta_c'([0,\delta]) \subseteq e^{-1}\D_+\}}\int \one_{B_w^\xi}\one_{\{w \in \eta_c'([0,\delta])\}}dw\right] \\
&=\p[ \eta_c'([0,\delta]) \subseteq e^{-1}\D_+] \E\!\left[ \int \one_{B_w^\xi}\one_{\{w \in \eta_c'([0,\delta])\}}dw \, \middle| \, \eta_c'([0,\delta]) \subseteq e^{-1}\D_+ \right].
\end{align*}
Note that $\p[ \eta_c'([0,\delta]) \subseteq e^{-1}\D_+] > 0$, and that by choosing $\delta$ sufficiently small, we can make this probability as close to $1$ as we want.  In particular, the important part of this step is the conclusion that
\begin{align}\label{eq:event_integral}
	\E\!\left[ \int \one_{B_w^\xi}\one_{\{w \in \eta_c'([0,\delta])\}}dw \, \middle| \, \eta_c'([0,\delta]) \subseteq e^{-1}\D_+ \right] = O( \epsilon^{2+a+o(1)}),
\end{align}
and the point of the next step is to show that the existence of a bad point leads to a contradiction to~\eqref{eq:event_integral}.

Finally, we fix $1 < \xi_1 < \xi < \xi_2$ and let $E_1 = E_1(\epsilon)$ be the event that for all $z \in \D_+$, $\eta_c'([\tau_z,\tau_z(\epsilon)])$ contains some ball of radius $\epsilon^{\xi_1}$. Then, by Lemma~\ref{lem:SLE8fillsmore}, $\p[E_1] = 1-o_\epsilon^\infty(\epsilon)$. Moreover, we let $E_2 = E_2(\epsilon)$ be the event that for all $z \in \D_+$, $\eta_c'([\tau_z,\tau_z(\epsilon^\xi)])$ contains some ball of radius $\epsilon^{\xi_2}$ and note that Lemma~\ref{lem:SLE8fillsmore} implies that $\p[E_2] = 1-o_\epsilon^\infty(\epsilon)$.  Moreover, we note that the order $O(\epsilon^{2+a})$ does not change if one changes $\xi>1$.

\noindent{\it Step 4.  Lower bound on the area of bad points given one exists.}
Assume that the event $B_w^{\xi_1} \cap E_1 \cap E_2$ occurs for some $w \in \eta_c'([0,\delta/2])$, and that $B(z_0,\epsilon^{\xi_1})$ is the ball in the event $B_w^{\xi_1}$. Then, for sufficiently small $\epsilon$, the event $B_{w'}^\xi$ occurs for all $w' \in B(w_0,\epsilon^{\xi_2})$, for some $w_0$ with $B(w_0,\epsilon^{\xi_2}) \subseteq \eta_c'([0,\delta])$. The reason this holds is the following. We have that $\dist(z_0,\h_{\tau_w}^c) \geq \epsilon^{\xi_1} > 2\epsilon^\xi$ for small $\epsilon$ and then $\eta_c'([\tau_w,\tau_w(\epsilon^\xi)]) \cap B(z_0,\epsilon^\xi) = \emptyset$. Moreover,  there exists a ball $B(w_0,\epsilon^{\xi_2}) \subseteq \eta_c'([\tau_w,\tau_w(\epsilon^\xi)])$ and clearly $B(z_0,\epsilon^\xi) \subseteq \eta_c'([\tau_{w'},\tau_{w'}(\epsilon)])$ for all $w' \in B(w_0,\epsilon^{\xi_2})$. Furthermore, $\h_{\tau_{w'}}^c \subseteq \h_{\tau_w}^c$ for all $w' \in B(w_0,\epsilon^{\xi_2})$ and hence the probability that a Brownian motion starting at $z_0$ exits $\h_{\tau_{w'}}^c$ in $\R \setminus [-1/2,1/2]$ is less than or equal to the probability that it exits $\h_{\tau_w}^c$ in $\R \setminus [-1/2,1/2]$, which is at most $\exp(\epsilon^{-4-2a})$. Consequently, if $\p[ B_w^{\xi_1} \, \text{occurs for some} \, w \in \eta_c'([0,\delta/2]) \, | \, \eta_c'([0,\delta]) \subseteq e^{-1} \D_+ ] \gtrsim 1$ as $\epsilon \rightarrow 0$, then
\begin{align*}
\E\!\left[ \int \one_{B_w^\xi}\one_{\{w \in \eta_c'([0,\delta])\}}dw \, \middle| \, \eta_c'([0,\delta]) \subseteq e^{-1}\D_+ \right] \gtrsim \epsilon^{2 \xi_2}.
\end{align*}
Since $\xi,\xi_1,\xi_2$ can be taken to be arbitrarily close to $1$, so that $2\xi_2 <  2+a$, this contradicts~\eqref{eq:event_integral}. Thus we must have that $\p[ B_w^{\xi_1} \, \text{occurs for some} \, w \in \eta_c'([0,\delta/2]) \, | \, \eta_c'([0,\delta]) \subseteq e^{-1}\D_+ ] \rightarrow 0$ as $\epsilon \rightarrow 0$. Thus, by the argument in the beginning of the section it follows that on the event that $\eta_c'([0,\delta]) \subseteq e^{-1} \D_+$,  we have
\begin{align}\label{eq:mod8delta}
	|\eta_c'(s)-\eta_c'(t)| \leq C \left(1+\log \frac{1}{|s-t|} \right)^{-1/4 + \zeta},
\end{align}
for all $0 \leq s<t \leq \delta$ and $\zeta \geq 8a/(1-4a)$. Hence,~\eqref{eq:mod8delta} holds for any $\zeta >0$, by choosing $a$ small enough. Finally, rescaling by $4\delta^{-2}$, we have by the scale invariance of $\eta_c'$ that, conditional on the event that $\eta_c'([0,1]) \subseteq \delta^{-2} e^{-1} \D_+$,~\eqref{eq:mod8delta} a.s.\ holds for $0 \leq s<t \leq 1$ (possibly by taking $C$ larger). By letting $\delta \rightarrow 0$ the result follows.
\end{proof}

For the second part of Theorem~\ref{thm:sle8_thm} we shall upper bound the half-plane capacity that $\eta'$ accumulates upon traveling a small distance. We have that for a compact $\h$-hull $A$,
\begin{align*}
	\hcap(A) \lesssim \diam(A) \sup_{z \in A} \im(z).
\end{align*}
Thus, if the escape probability from $B(w,\epsilon^a)$ is at most $\exp(-\epsilon^{-\sigma})$, then
\begin{align*}
	\sup_{z \in \h_{\tau_w} \cap B(w, \epsilon^a)} \im(g_{\tau_w}(z)) \lesssim \exp(-\epsilon^{-\sigma}).
\end{align*}
Since the image of $\eta'([0,\tau_w])$ under $g_{\tau_w}$ is a.s.\ bounded, we have (roughly) that $\hcap(\h_{\tau_w} \cap B(w, \epsilon^a)) \lesssim \exp(-\epsilon^{-\sigma})$.

\begin{proof}[Proof of the second part of Theorem~\ref{thm:sle8_thm}]
Fix $\zeta > 0$ and let $u > 0$ be such that $(4-2u)^{-1} < 1/4 + \zeta$. We are done if we manage to show that as $\epsilon \rightarrow 0$, the probability that at some time the curve travels distance $\epsilon$ but accumulates at most $\exp(-\epsilon^{-4+2u})$ units of half-plane capacity converges to $1$. The proof relies on the observation that we can couple the curve $\eta'$ with a GFF $h$ on $\h$ with boundary conditions given by $\lambda'$ on $\R_-$ and $-\lambda'$ on $\R_-$ where $\lambda' = \pi/\sqrt{8}$ such that if we let  two flow lines $\eta_1,\eta_2$ of angles $\pm \pi/2$, emanating from a point $z \in \h$ run until they hit $\R$ then $\eta_1 \cup \eta_2$ is equal to the outer boundary of $\eta'([0,\tau_z])$ \cite[Theorem~1.13]{ms2017imag4}.  The idea of the proof is to consider a fine grid of points $(z_k)$ and look at the local geometry of $\eta'([0,\tau_{z_k}])$ in a neighborhood of each $z_k$. This local picture can be seen by considering two flow lines of angles $\pm \pi/2$ from $z_k$ and we will show that it looks roughly the same for each $k$. The proof is done once we have shown that as the grid gets finer, the probability that there is a point $z_k$ so that the escape probability for a Brownian motion started very close to $z_k$ is small decays slower than the number of points in the grid increases.

We may consider the part of $\eta'$ that lies in a compact subset $K \subseteq \h$ at a positive distance from $\R$. Thus, for the remainder of the proof, we assume that we are working on the event that $K \subseteq \eta'([0,1])$. The advantage being that in the interior, the law of the field $h$ is absolutely continuous with respect to the law of a whole-plane GFF $h^w$.

Fix some $a \in (0,1)$ to be chosen later and let $(z_k)$ be points in  a grid in $K$, spaced at distance $2\epsilon^a$ apart, say, $\cup_k \{z_k\} = (2\epsilon^a \Z) \cap K$. Let $\CF_\epsilon^a$ be the $\sigma$-algebra generated by the values of $h$ outside of $\cup_k B(z_k,\epsilon^a)$.  By the Markov property of the GFF, we have that the restriction of $h$ to each of the balls $B(z_k,\epsilon^a)$ is conditionally independent of the others given $\CF_\epsilon^a$.  Following \cite[Section~4.1]{mq2020geodesics}, for $z \in \C$ and $r > 0$ we say that $B(z,r)$ is $M$-good for $h$ if the following is true.  Let $\Fh_{z,r}$ be the harmonic extension of the values of $h$ from $\partial B(z,r)$ to $B(z,r)$.  Then $\sup_{u \in B(z,15r/16)} |\Fh_{z,r}(u) - \Fh_{z,r}(z)| \leq M$.  For each $j \in \N$, we let $s_j = \epsilon^a 2^{-j}$ and for each $k$ we let $r_k = s_{j_0}$ where $j_0$ is the smallest $j \in \N$ so that $B(z_k,s_j)$ is $M$-good.  Fix $q > 2a$. By the proof of \cite[Proposition~4.3]{mq2020geodesics}, we have that for fixed $b \in (a,1)$, we can choose $M > 0$ large enough, so that uniformly in $\epsilon < \epsilon_0$ (for some $\epsilon_0 > 0$) the probability that we do not discover an $M$-good scale before reaching the concentric ball with radius $\tfrac{8}{7} \epsilon^b$ is at most $\epsilon^q$. Then the probability that there is a point $z_k$ in the grid around which we do not discover an $M$-good scale as above is $O(\epsilon^{q-2a})$. For each $k$, let $A_{\epsilon,k}$ be the event $r_k \in (\tfrac{8}{7} \epsilon^b,\epsilon^a)$. Then $\p[\cap_k A_{\epsilon,k}] = 1 - O(\epsilon^{q-2a}) \to 1$ as $\epsilon \to 0$.

Let $h^w$ be a whole-plane GFF independent of $h$. As $B(z_k,r_k)$ is $M$-good, we know by \cite[Lemma~4.1]{mq2020geodesics} that the laws of $h|_{B(z_k,7 r_k/8)}$ and $h^w|_{B(z_k,7r_k/8)}$ (viewed modulo $2\pi \chi$ where $\chi = 2/\sqrt{\kappa}-\sqrt{\kappa}/2$ and $\kappa=2$) are mutually absolutely continuous.  Moreover, for each $p \in \R$ the Radon-Nikodym derivative has a finite moment of order $p$ which is at most $c(p,M)$ where $c(p,M)$ is a constant which depends only on $p$ and $M$. We emphasize that this holds for each $k$ and the constants $c(p,M)$ do not depend on $k$.  For a field $\wh{h}$ and $k$, we denote by $\eta_{j,k}^{\wh{h}}$ for $j=1,2$ the flow lines of angles $\pm \pi/2$ starting from $z_k$ and stopped upon exiting $B(z_k,7 r_k/8)$.  We also let  $E_\epsilon^{\sigma,k}(\wh h)$ be the event that
\begin{align*}
	\sup_{z \in B(z_k,\epsilon)} \p^z[B \, \text{hits} \, \partial B(z_k,\epsilon^b) \, \text{before} \, \eta_{1,k}^{\wh h} \cup \eta_{2,k}^{\wh h} \, | \, \eta_{1,k}^{\wh h}, \eta_{2,k}^{\wh h}] \leq \exp(-\epsilon^{-\sigma}).
\end{align*}
Fix $p > 1$.  For each $k$, let $\CZ_k$ be the Radon-Nikodym derivative of the law of $h|_{B(z_k, 7r_k/8)}$ with respect to the law of $h^w|_{B(z_k,7r_k/8)}$ (with both fields viewed modulo $2\pi \chi$).  We have that
\begin{align*}
	\p[ E_\epsilon^{\sigma,k}(h),\ A_{\epsilon,k} \, | \, \CF_\epsilon^a]
	 &= \E[  \one_{E_\epsilon^{\sigma,k}(h^w)} \one_{A_{\epsilon,k}} \CZ_k \, | \, \CF_\epsilon^a] \\
	&\geq \E\!\left[\one_{A_{\epsilon,k}} \CZ_k^{-\frac{1}{p-1}} \, \middle| \, \CF_\epsilon^a \right]^{-(p-1)} \p[ E_\epsilon^{\sigma,k}(h^w)]^p \quad\text{(H\"older's inequality)}\\
	&\geq c(p,M)\E\!\left[\one_{A_{\epsilon,k}} \,|\,\CF_{\epsilon}^a\right] \p[E_\epsilon^{\sigma,k}(h^w)]^p \geq \E\!\left[ \one_{A_{\epsilon,k}}\,|\,\CF_{\epsilon}^a\right] \epsilon^{p \sigma/2 + o(1)} \quad\text{(Lemma~\ref{lem:main_lemma_lbd})}.
\end{align*}
Therefore we obtain that
\begin{align*}
\p\!\left[ E_{\epsilon}^{\sigma,k}(h)^c, A_{\epsilon,k}\,|\,\CF_{\epsilon}^a\right] \leq 1 - \epsilon^{p\sigma / 2 + o(1)} \E\!\left[ \one_{A_{\epsilon}}\,|\,\CF_{\epsilon}^a \right]
\end{align*}
where $A_{\epsilon} = \cap _{k} A_{\epsilon,k}$.
Let $\sigma = 4-2u$ and assume that $p>1$ is such that $p(1-u/2) < 1$ and $a \in (p(1-u/2),1)$.  Fix $c > 0$ so that $|(z_k)| \geq c \epsilon^{-2a}$ and note that
\begin{align*}
\p\!\left[ \E\!\left[ \one_{A_{\epsilon}}\,|\,\CF_{\epsilon}^a\right] \leq 1/2 \right] = O(\epsilon^{q-2a})
\end{align*}
 By the conditional independence of the restrictions of $h$ to the balls $B(z_k,\epsilon^a)$ given $\CF_\epsilon^a$, the above implies that
\begin{align*}
	\p\!\left[ \bigcap_k \big( E_\epsilon^{4-2u,k}(h)^c \cap A_{\epsilon,k} \big) \right]
	&= \E\!\left[ \p\!\left[ \bigcap_k \big(E_\epsilon^{4-2u,k}(h)^c \cap A_{\epsilon,k} \big) \, \middle| \, \CF_\epsilon^a\right] \right]\\
	&= \E\!\left[ \prod_k \p\!\left[ E_\epsilon^{4-2u,k}(h)^c \cap A_{\epsilon,k} \, \middle| \, \CF_\epsilon^a\right] \right] \\
	&\leq O(\epsilon^{q-2a})+(1-\epsilon^{p (2-u) + o(1)})^{c \epsilon^{-2a}} \to 0 \quad\text{as}\quad \epsilon \to 0.
\end{align*}
Here, we have used that $2a > p(2-u)$. Thus, recalling $(4-2u)^{-1} < 1/4 + \zeta$, we have that
\begin{align*}
	\p\!\left[ \sup_{0 \leq s < t \leq 1}\!\left\{|\eta(t)-\eta(s)|\log\!\left(\frac{1}{|t-s|}\right)^{1/4 + \zeta}\right\} < \infty  \, \middle| \, K \subseteq \eta'([0,1])\right] \lesssim \lim_{\epsilon \rightarrow 0} \p\!\left[\cap_k E_\epsilon^{4-2u,k}(h)^c \right] =0.
\end{align*}
Since this holds for any compact $K \subseteq \h$, the result follows.
\end{proof}

\section{Regularity results for $\SLE_4$}\label{sec:sle4}

This section is dedicated to proving Theorems~\ref{thm:sle4_thm} and \ref{thm:jones_smirnov_sle4}. We begin by stating some facts about two-sided whole-plane $\SLE_4$ processes. For a more detailed treatment, see \cite{zhan2019optimal,zhan2021loop}. 

A two-sided whole-plane $\SLE_4$ process $\eta$ from $\infty$ to $\infty$ through $0$ is the curve which can be sampled by first sampling a whole-plane $\SLE_4(2)$ process $\eta_1$ from $\infty$ to $0$ and then sampling a chordal $\SLE_4$ curve from $0$ to $\infty$ in $\C \setminus \eta_1$ (that is, it is the concatenation of the curves $\eta_1$ and $\eta_2$ in Lemma~\ref{lem:main_lemma_ubd}, with $\kappa = 4$). By \cite[Corollary~4.7]{zhan2021loop},  $\eta$ can be parameterized by its $\frac{3}{2}$-dimensional Minkowski content and upon doing so, it becomes a self-similar process of index $2/3$ with stationary increments. That is, if we assume that $\eta$ is parameterized by its $\frac{3}{2}$-dimensional Minkowski content and $\eta(0) = 0$, then for each $a>0$, $(\eta(at))_{t \in \R}$ and $(a^{2/3} \eta(t))_{t \in \R}$ have the same law and for each $b \in \R$, $(\eta(b + t) - \eta(b))_{t \in \R}$ has the same law as $(\eta(t))_{t \in \R}$. We call such a curve an sssi $\SLE_4$ process. By \cite[Theorems~1.2 and~1.4]{zhan2019optimal}, $\eta$ is a.s.\ locally H\"{o}lder continuous of order $\alpha$ for all $\alpha < 2/3$ but not locally H\"{o}lder continuous of order $2/3$ on any open interval.

Below, we shall consider the regularity of a conformal map $\varphi_1: \D \rightarrow \C_L$ with $\varphi_1(-i) = \eta(0) = 0$, $\varphi_1(i) = \infty$, and say, $\varphi_1(-1) = \eta(-100)$ where $\C_L$ is the component of $\C \setminus \eta$ which is on the left side of~$\eta$. This is out of convenience since this setting is the one which is the most straightforward, given the form of Lemmas~\ref{lem:main_lemma_lbd} and \ref{lem:mainlemma4}. Theorem~\ref{thm:sle4_thm} is concerned with a chordal $\SLE_4$ process from $-i$ to $i$ in $\D$, but this  raises no problem, as such a curve can be obtained as follows. Sample a two-sided whole-plane $\SLE_4$ process from $\infty$ to $\infty$ through $0$ and let $\varphi_2:\C \setminus \eta((-\infty,0]) \rightarrow \D$,  be the unique conformal transformation with $\varphi_2(0) = -i$, $\varphi_2(\infty) = i$ and which takes the prime end corresponding to $\eta(-100)$ on the left side of $\eta$ to $-1$.  Then, $\wh{\eta} = \varphi_2(\eta)$ is a chordal $\SLE_4$ from $-i$ to $i$ in $\D$ and the map $\varphi =\varphi_2 \circ \varphi_1: \D \rightarrow \D_L$ (where $\D_L$ is the left connected component of $\D \setminus \wh{\eta}$) is the uniformizing map in the statement of Theorem~\ref{thm:sle4_thm}. Moreover, the $\varphi_2$ is smooth away from $\eta((-\infty,0])$ and hence the regularity of $\varphi$ away from the points $-i$ and $i$ is determined by the regularity of $\varphi_1$.

Next, we explain the relationship between the escape probability of Brownian motion and the modulus of continuity of the uniformizing map. 
We begin with the simpler direction. Fix $d > 0$ and let $r \in (0,1)$ be such that $\dist(\varphi_1(B(0,r)),\eta) \geq d$ (we emphasize that we are allowed to choose these parameters to depend on $\eta$, as the constant in Theorem~\ref{thm:sle4_thm} is random). Assume that $z_0 \in \C_L$ is a point such that $\dist(z,\eta) \in [\epsilon,2\epsilon]$ and such that the probability that a Brownian motion escapes to distance $d$ from $\eta$ before hitting $\eta$ is at most $\exp(-\epsilon^{-\sigma})$ (as in Lemma~\ref{lem:mainlemma4}). Then, writing $w_0 = \varphi_1^{-1}(z_0)$, we have by the conformal invariance of Brownian motion that the probability that a Brownian motion started from $w_0$ hits $\partial B(0,r)$ before $\partial \D$ is upper bounded by $\exp(-\epsilon^{-\sigma})$. The former probability is given by $\tfrac{\log |w_0|}{\log r}$. This implies that 
\begin{align*}
	\dist(w_0,\partial \D) = 1 - |w_0| \lesssim \exp(-\epsilon^{-\sigma}),
\end{align*}
and since $\dist(z_0,\eta) \asymp \epsilon$, the Koebe-$1/4$ theorem implies that
\begin{align}\label{eq:bad_point}
	| \varphi_1'(w_0) | \gtrsim \epsilon e^{-\epsilon} = e^{\epsilon(1+o(1))}.
\end{align}
Thus, if for each $\epsilon > 0$ we can a.s.\ find a point $w_0 = w_0(\epsilon)$ as above, then~\eqref{eq:mod4} does not hold a.s.

The other direction follows the same principle, but is slightly more technical. Fix $\xi > 0$ and compact intervals $K_1 \subset K_2 \subset (0,\infty)$ and $d > 0$ such that if $U_d^j = \varphi_1^{-1}(\{ z \in \C_L: \dist(z,\eta(K_j)) < d \}$ for $j=1,2$, then 
\begin{align*}
&\{ z \in \partial \D: \re(z) > 0, \dist(z,\{-i,i\}) > \xi \} \subseteq U_d^1, \\ 
&\{ z \in \partial \D: \re(z) > 0, \dist(z,\{-i,i\}) < \xi/2 \} \cap U_d^1 = \emptyset, \\
&\partial \D \setminus \{ z \in \partial \D: \re(z) > 0, \dist(z,\{-i,i\}) > \xi/4 \} \subseteq U_d^2, \ \text{and} \\
&\{ z \in \partial \D: \re(z) > 0, \dist(z,\{-i,i\}) < \xi/3 \} \cap U_d^2 = \emptyset. 
\end{align*}
In particular, $\dist(\eta(K_1),\eta(\R \setminus K_2)) > 2d$.  Suppose that $z_0 \in \varphi_1(\D \setminus (B(-i,2\xi/3) \cup B(i,2\xi/3))$ is such that $\dist(z_0,\eta) = \dist(z_0,\eta(K_1)) \in [\epsilon,2\epsilon]$ and such that the probability that a Brownian motion started from $z_0$ escapes to distance $d$ from $\eta$ before hitting $\eta$ is at least $\exp(-\epsilon^{-\sigma})$. Again, write $w_0 = \varphi_1^{-1}(z-0)$. Let $r > 0$ be such that
\[ \varphi_1( \{ |z| = r \} \cap \{z \in \D: \re(z) > 0, \dist(z,\{-i,i\}) > \xi/2 \}) \subset \{ w \in \C_L: \dist(w,\eta) < d \}. \]
In particular,  escaping to $\partial B(0,r)$ is at least as easy for a Brownian motion from $z_0$ as escaping to distance $d$ from $\eta$, at least to the parts of those sets in $\D \setminus (B(-i,\xi/2) \cup B(i,\xi/2))$ and $\varphi_1(\D \setminus (B(-i,\xi/2) \cup B(i,\xi/2)))$, respectively.  Moreover, since we can decrease $r$ to be $a\xi$ for some $a \in (0,1)$ if necessary, and since $\dist(w_0,\varphi_1^{-1}(\{z \in \CL: \dist(z,\eta) = d \}) \cap (B(-i,\xi/2) \cup B(i,\xi/2)) \geq \xi/6$, it follows that the probability that a Brownian motion from $w_0$ hits $B(0,r)$ before $\partial \D$ is at least a constant (depending only on $d$, $r$ and $\xi$) times $\exp(-\epsilon^{-\sigma})$. (The importance of the last part of this argument lies in making sure that it is not only that likely to hit some part of $\varphi_1(\{z : \dist(z,\eta) = d\})$ which is of distance much smaller than $1-r$ from $\partial \D$.) Then, arguing similarly to the discussion above, using the Koebe-$1/4$ theorem, it follows that $\varphi_1'(w_0) \lesssim \exp(\epsilon^{-\sigma(1+o(1))})$. Thus, if that escape probability holds for all $w_0 \in \D \setminus (B(-i,\xi) \cup B(i,\xi))$, then~\eqref{eq:mod4} holds.

With the above discussion in mind, in order to prove that~\eqref{eq:mod4} holds, we show that a.s., as $\epsilon \rightarrow 0$ we can find no point $w_0 = w_0(\epsilon)$ within distance $C \epsilon$ from $\partial \D$, for some $C >0$, such that~\eqref{eq:bad_point} holds. This is done using a covering and union bound argument and proves~\eqref{eq:mod4} for points $z,w \in \partial \D$.  Then, we prove the interior regularity  in two steps. The first is the regularity in the part of $\D$ which is at uniformly positive distance from $\partial \D$ and this follows easily from the Koebe-$1/4$ theorem. The intermediate case requires a little more work, but boils down to controlling the regularity of the real and imaginary parts of $\varphi$, both of which are harmonic, and using the boundary regularity together with Brownian motion estimates.

In proving~\eqref{eq:mod4} on the boundary, we  pick a sequence of points $t_k \in \R$ and check  whether the escape probability of a Brownian motion started close to $\eta(t_k)$ is large enough. For this to actually cover a neighborhood of the boundary, we need to pick the sequence in such a way that for sufficiently small $\epsilon$, any point $z_0$ such that $\dist(z_0,\partial \D) \in [c_0 \epsilon,c_1 \epsilon]$ (for some constants $0<c_0<c_1$) will be sufficiently close to $\eta(t_k)$ for some $k$.  In what follows, for a domain $D$, a set $E \subseteq \partial D$ and a point $z \in D$, we let $\omega(z,E,D)$ denote the harmonic measure of the set $E$ in $D$ seen from $z$.

\subsection{Proof of Theorem~\ref{thm:sle4_thm}, upper bound}

\noindent{\it Step 1.  Regularity at the boundary.} Fix $\zeta > 0$. We first show that for sufficiently small $\epsilon > 0$, there is no point $z_0$ as in Lemma~\ref{lem:mainlemma4} with $\sigma = 3/(1-3\zeta) = 3 + \wh{\delta}$. 

Fix compact intervals $K_1 \subseteq K_2 \subseteq K_3 \subseteq (0,\infty)$.  For each $\epsilon > 0$ and $k \in \N$, let $t_k = t_k(\epsilon) = k \epsilon^{3/2+\delta}$ so that $t_{k+1} - t_k = \epsilon^{3/2+\delta}$ and let $n = \max\{ k : t_k \in K_3\}$. Since $\eta$ is locally $(2/3 - \delta/3)$-H\"{o}lder continuous, there a.s.\ exists a random constant $C$ such that
\begin{align*}
	| \eta(s) - \eta(t)| \leq C | s-t |^{2/3-\delta/3}
\end{align*}
for all $s,t \in K_3$. Fix $d \in (0,1)$ and let $F_d$ be the event that $\dist(\eta(K_1), \eta(\R \setminus K_2))\geq d$ and $C \in (d,d^{-1})$.  Since $\eta$ is a simple curve with $\lim_{|t| \to \infty} \eta(t) = \infty$ we have that $\p[F_{d}] \to 1$ as $d \to 0$.  For each $k$, write $\eta_k(t) \coloneqq \eta(t+t_k) - \eta(t_k)$.  We fix $\wh \delta > 0$ and let $\wt{E}(z,d)$ be the event that
\begin{align*}
	\p^z[ B \, \text{hits} \, \partial B(z,d/2) \, \text{before} \, \eta \, | \, \eta] \leq \exp(-\epsilon^{-3-\wh \delta}).
\end{align*}
We let $D_\epsilon = \{z \in \C_L: \dist(z,\eta) = \dist(z,\eta(K_1)) \in [\epsilon,2\epsilon] \}$ and note that for each $z \in \C$, $f_z(t) = \omega (z,\eta((-\infty,t]),\C_L)$ is a continuous and increasing function with $\lim_{t \to -\infty}f_z(t) = 0$ and $\lim_{t \to \infty}f_z(t) = 1$.  It follows that there exists $T_z > 0$ such that $f_z(T_z) = \omega (z,\eta([T_z,\infty)),\C_L) = 1/2$. Then, on the event $F_d$, the Beurling estimate implies that for sufficiently small $\epsilon$, $T_z \in K_2$ for all $z \in D_\epsilon$. Fix $C_1 > 2$ large (to be chosen independently of  $d$). Suppose that $\eta(T_z) \notin B(z,C_1 \epsilon)$ and note that there exists $t \in K_2$ such that $\eta(t) \in \overline{B(z,2\epsilon)} \setminus B(z,\epsilon)$. Without loss of generality we can assume that $t \leq T_z$ and set $S_z = \sup\{t \in K_2: t \leq T_z, \, \eta(t) \in B(z,C_1 \epsilon)\}$. Since $\dist(z,\eta([S_z,T_z])) \geq C_1 \epsilon$, the Beurling estimate implies that $\omega(z,\eta([S_z,T_z]),\C_L) = O(C_1^{-1/2})$ where the implicit constants are universal, and so for $C_1$ sufficiently large we have that $\omega(z,\eta((-\infty,S_z]),\C_L) \geq 3/8$. Also, there exists $k = k(z) \in \{1,\ldots,n\}$ such that $t_k \leq S_z \leq t_{k+1}$. Then, if we pick $\delta \in (0,1/10)$, we have that on the event $F_d$, $\diam(\eta([t_k,S_z])) \leq d^{-1} \epsilon^{1+\delta / 10}$. Thus, by applying Beurling's estimate again, we obtain that $\omega(z,\eta([t_k,S_z]),\C_L)< 1/8$ for all $\epsilon$ sufficiently small and so $f(t_k) \in [1/4,3/4]$. Similarly, if $\eta(T_z) \in B(z,C_1 \epsilon)$, we have that $f(t_k) \in [1/4,3/4]$ and $\eta(t_k) \in B(z,2C_1 \epsilon)$ for some $k = k(z) \in \{1,\dots,n\}$. The above imply that if $\wt E(z,d)$ and $F_d$ occur, then $E_{\epsilon}^{\eta_k}(3+\wh \delta)$ occurs for some $1 \leq k \leq n$, where we denote by $E_\epsilon^{\eta_k}(\sigma)$ the event in Lemma~\ref{lem:mainlemma4}, but with $\eta_k$ in place of $\eta_1 \cup \eta_2$ and $B(0,2C_1\epsilon)$ in place of $B(0,2\epsilon)$.  Fix $r>0$ and recall that we want to show that for small enough $\epsilon$ and properly chosen $\wh \delta$, there exists no point $z \in D_\epsilon$ such that $\wt E(z,2r)$ occurs. By Lemma~\ref{lem:mainlemma4}, we have that for all $\epsilon$ sufficiently small,
\begin{align*}
	&\p[  \exists z \in D_\epsilon : \wt E(z,2r) \text{ occurs}, F_d ] \\
	&\leq \p[ \exists z \in D_\epsilon \text{ such that } \widetilde{E}(z,d) \text{ occurs}, F_d] \leq \sum_{k=0}^{n-1} \p[E_\epsilon^{\eta_k}(3+\wh{\delta})] = O(\epsilon^{\wh{\delta}/2-\delta}),
\end{align*}
provided that we choose $d$ small enough. Assume that $\delta < \wh{\delta}/2$ so that the exponent of $\epsilon$ in the above expression is positive.  It therefore follows from the Borel-Cantelli lemma that there a.s.\ exists $\epsilon_0 > 0$ so that for all $\epsilon \in (0,\epsilon_0)$ there does not exist a point $z$ with $\dist(z,\eta(K_1)) \leq 2\epsilon$ so that $\wt{E}(z,2r)$ occurs.

We will now prove that~\eqref{eq:mod4} holds on the boundary.  Fix $\xi > 0$ and let $I$ be a boundary arc of the counterclockwise segment of $\partial \D$ from $-i$ to $i$ which is disjoint from $B(-i,\xi/2)$, $B(i,\xi/2)$ and has length $\Delta>0$.  Let $\wt \Delta = \diam \, \varphi(I)$. Fix $a > 1$.  By the non-self-tracing property of $\SLE$ (see \cite{mmq2018uniqueness}) we have for small enough $\Delta > 0$ that there exists a ball $B(w,\wt \Delta^{a}) \subseteq \D_L$ such that $\omega(w,\varphi(I),\D_L) \geq 1/2$. Combining this with the above, we have that
\begin{align*}
	\Delta \gtrsim \diam \, \varphi^{-1}(B(w,\wt \Delta^{a})) \gtrsim \exp(-\wt \Delta^{-a(3 + \wh{\delta})}),
\end{align*}
and rearranging, this implies that $\wt \Delta \lesssim \log(1/\Delta)^{- \frac{1}{3} + \zeta'}$, that is,
\[ |\varphi(x)-\varphi(y)| \lesssim (\log (1+|x-y|^{-1}))^{-\frac{1}{3} + \zeta'} \quad\text{for all}\quad x,y \in \partial \D \setminus (B(-i,\xi/2) \cup B(i,\xi/2))\]
for $\zeta' = \frac{1}{3} - \frac{1}{3a} + \frac{1}{a\zeta}$. By taking $a$ sufficiently close to $1$ such that $\zeta'$ is positive and close to $\zeta$, we obtain that ~\eqref{eq:mod4} holds on the boundary.
We shall now use this to deduce the regularity in the interior.

\noindent{\it Step 2.  Regularity in the interior.}
Fix $\xi>0$ and note that the bound proven above for the boundary is valid for $\partial \D \setminus (B(-i,\xi/2) \cup B(i,\xi/2))$. We will take $\xi$ to be very close to $0$. Next, write $\varphi(z) = u(z) + iv(z)$ where $u$ and $v$ are real. Since $u$ and $v$ are harmonic, we can write $u(z) = \E[ u(B_{\tau_z}^z) \giv \eta]$ (and likewise for $v$) where $B^z$ is a two-dimensional Brownian motion started from $z$, $\tau_z$ is the first exit time of $\D$, and the expectation is only over the Brownian motion while the $\SLE_4$ is fixed.  In the remainder of this step, all probabilities and expectations should be understood as having the $\SLE_4$ fixed and will only be over the relevant Brownian motion.  In order to estimate $|u(z)-u(w)|$, we couple the Brownian motions $B_t^z$ and $B_t^w$ by letting $B_t^z = z + B_t$ and $B_t^w = w + B_t$, where $B_t$ is a two-dimensional Brownian motion with $B_0 = 0$.

\noindent{\it Step 2a. $z,w$ are close to the boundary.} We first consider points $z,w\in D_\xi = \{ z \in \D \setminus ( B(-i,\xi) \cup B(i,\xi)): \dist(z,\partial \D) < \xi^2 \}$.  We begin with the case where $|z-w| \geq \dist(z,\partial \D)/2$. Assume that $2^{-k} \leq |z-w| < 2^{-k+1}$ and $2^{-j} \leq \dist(z,\partial \D) < 2^{-j+1}$, where $j>k$. Let $E_\xi$ be the event that $B^z$ and $B^w$ exit $\D$ before hitting $B(i,\xi/2) \cup B(-i,\xi/2)$. Then, $\p[E_\xi^c] = O(\xi^{-1} 2^{-j} + \xi^{-1} 2^{-k}) = O(\xi^{-1} 2^{-k})$. Let $A_m^1$ (resp.\ $A_l^2$) be the event that $2^{-m-1} \leq \dist(z,B_{\tau_z}^z) < 2^{-m}$ for $m \leq j$ (resp.\ $2^{-l-1} \leq \dist(B_{\tau_z}^z,B_{\tau_w}^w) < 2^{-l}$ for $l \leq k$). Moreover, we let $F_k$ be the event that $\dist(B_{\tau_z}^z,B_{\tau_w}^w) < 2^{-k-1}$. Then,
\begin{align*}
	&|u(z) - u(w)| \leq  \E[ |u(B_{\tau_z}^z) - u(B_{\tau_w}^w)|]\\
	&= \E[ |u(B_{\tau_z}^z) - u(B_{\tau_w}^w)| \one_{F_k^c \cap E_\xi}] +  \E[ |u(B_{\tau_z}^z) - u(B_{\tau_w}^w)| \one_{F_k \cap E_\xi}] +  \E[ |u(B_{\tau_z}^z) - u(B_{\tau_w}^w)| \one_{E_\xi^c}].
\end{align*}
Since $u$ is bounded, we have that 
\begin{align}\label{eq:bound1}
	 \E[ |u(B_{\tau_z}^z) - u(B_{\tau_w}^w)| \one_{E_\xi^c}] \lesssim \p[ E_\xi^c] = O(\xi^{-1} 2^{-k}).
\end{align}
Moreover,  on the event $F_k$, $|B_{\tau_z}^z - B_{\tau_w}^w| \leq |z-w|/2$ and thus by the boundary regularity estimate in Step 1, we have that
\begin{align}
	\E[|u(B_{\tau_z}^z) - u(B_{\tau_w}^w)| \one_{F_k \cap E_\xi}] &\lesssim \E\!\left[ \left( \log \left( 1 + \frac{1}{|B_{\tau_z}^z - B_{\tau_w}^w|} \right) \right)^{-1/3 + \zeta} \one_{F_k \cap E_\xi} \right] \notag \\
	&\leq \left( \log \left( 1 + \frac{1}{|z-w|} \right) \right)^{-1/3 + \zeta}. \label{eq:bound2}
\end{align}
Finally,  we have that
\begin{align*}
	\E[ |u(B_{\tau_z}^z) - u(B_{\tau_w}^w)| \one_{F_k^c \cap E_\xi}] &= \sum_{m=0}^j \sum_{l=0}^k \E[ |u(B_{\tau_z}^z) - u(B_{\tau_w}^w)| \, | \, E_\xi \cap A_m^1 \cap A_l^2]\p[E_\xi \cap A_m^1 \cap A_l^2].
\end{align*}
Note that the bounds~\eqref{eq:bound1} and~\eqref{eq:bound2} are sufficient. Hence, it remains to show that the double sum is at most of order $k^{-1/3 + \zeta}$.  We begin by noting on $E_\xi \cap A_m^1 \cap A_l^2$, we have that $|u(B_{\tau_z}^z) - u(B_{\tau_w}^w)| \leq |\varphi(B_{\tau_z}^z) - \varphi(B_{\tau_w}^w)| \lesssim (l+1)^{-1/3 + \zeta}$, since the bound~\eqref{eq:mod4} is proven at boundary points. Next, note that $\p[A_m^1] \asymp 2^{m-j}$ and $\p[A_l^2] \asymp 2^{l-k}$, with implicit constants independent of $m$ and $l$. Moreover, in fact, $\p[E_\xi \cap A_m^1 \cap A_l^2] \asymp 2^{m-j + l-k}$ (as is easily seen by considering the ``worst case'' where $z,w$ are roughly at distance $\xi$ from $i$ or $-i$, i.e., roughly $\xi/2$ from $B(i,\xi/2) \cup B(-i,\xi/2)$ since they are at distance at most $\xi^2$ from the boundary). Consequently,
\begin{align*}
	&\sum_{m=0}^j \sum_{l=0}^k \E[ |u(B_{\tau_z}^z) - u(B_{\tau_w}^w)| \, | \, E_\xi \cap A_m^1 \cap A_l^2]\p[E_\xi \cap A_m^1 \cap A_l^2] \lesssim \sum_{m=0}^j 2^{m-j} \sum_{l=0}^k (l+1)^{-1/3 + \zeta} 2^{l-k} \\
	&\lesssim \sum_{m=0}^j 2^{m-j} \left( \sum_{l=0}^{\lceil k/2 \rceil} 2^{-k/2} + \sum_{l = \lceil k/2 \rceil+1}^k 2^{l-k} k^{-1/3 + \zeta}\right) \lesssim \frac{k}{2} 2^{-k/2} + k^{-1/3 + \zeta} = O(k^{-1/3 + \zeta}).
\end{align*}
Doing the same for $v$, we note that $|\varphi(z)-\varphi(w)| \lesssim k^{-1/3 + \zeta}$ for $z,w \in D_\xi$ such that $2^{-k} \leq |z-w| < 2^{-k+1}$ and $\dist(z,\partial \D) \leq 2|z-w|$.

\noindent{\it Step 2b. $|z-w|$ are close relative to their distance to $\partial \D$.} Next, we consider the case where $z,w \in D_{\xi}$ are such that $|z-w| \leq \dist(z,\partial \D) / 2$. By the Koebe-$1/4$ theorem,
\begin{align*}
	|\varphi(z) - \varphi(w)| \lesssim \frac{\dist(\varphi(z),\partial \D_L)}{\dist(z,\partial \D)}|z-w|.
\end{align*}
By the discussion before the proof,  $\dist(z,\partial \D) \gtrsim \exp( - \dist(\varphi(z),\partial \D_L)^{-3 - \wh \delta})$, so rearranging gives that $\dist(\varphi(z),\partial \D_L) \lesssim  \log(\dist(z,\partial \D)^{-1}) ^{-1/3 + \zeta}$ and hence
\begin{align}\label{eq:conformal_bound}
|\varphi(z)-\varphi(w)| \lesssim \frac{\log(\dist(z,\partial \D)^{-1})^{-1/3 + \zeta}}{\dist(z,\partial \D)}|z-w|.
\end{align}
We observe that the function $f(x) = \log(x^{-1})^{-1/3+\zeta}/x$ is decreasing for $x$ sufficiently small, so by~\eqref{eq:conformal_bound},~\eqref{eq:mod4} holds in that case as well.

\noindent{\it Step 2c. General $z,w$ away from $\partial \D$.} Finally, consider points $z,w \in \wt{D}_\xi = \{ z \in \D \setminus (B(-i,\xi) \cup B(i,\xi)): \dist(z,\partial \D) \geq \xi^2 \}$.  By the Koebe-$1/4$ theorem, $|\varphi'(z)| \leq 4 \dist(\varphi(z),\partial \D_L)/\dist(z,\partial \D) \lesssim \xi^{-2}$, uniformly in $z \in \wt{D}_\xi$. Consequently, for any points $z,w \in \wt{D}_\xi$, we have that $|\varphi(z)-\varphi(w)| \lesssim \xi^{-2} |z-w|$, and hence the bound holds in $\wt{D}_\xi$.  Thus, the proof of the first part is done. 
\qed

\subsection{Proof of Theorem~\ref{thm:sle4_thm}, lower bound}

The proof of the lower bound has two main steps: first prove that the second assertion of Theorem~\ref{thm:sle4_thm} holds with positive probability, and then use this to show that it holds a.s. In proving the positive probability statement, we pick a sequence of times $t_k \in \R$ and consider the positive fraction of $k$ where $\eta(t_k)$ is sufficiently far away from $\eta((-\infty,t_{k-1}])$ and $\eta([t_{k+1},\infty))$ and let two level lines of different heights construct a pocket around a neighborhood of $\eta(t_k)$. With positive probability, these level lines will succeed in forming such a pocket.  If this occurs, the event that the conditional probability given $\eta$ that the escape probability for a Brownian motion started close to $\eta(t_k)$ from a neighborhood of $\eta(t_k)$ is conditionally independent (given the two level lines) of what happens outside of the pocket.  Since there are sufficiently many such pockets with positive probability, this will lead to establishing that the second assertion of Theorem~\ref{thm:sle4_thm} holds with positive probability. To upgrade the a.s.\ result, we map $\eta$ to a curve $\wh \eta$ in $\h$ and perform another pocket argument: upon traveling through successive annuli, there is a uniformly positive probability in each annulus that two level lines (started from the curve in the annulus) form a pocket around $\wh \eta$.  If this happens, then the law of the curve $\wh \eta$, restricted to the times between entering and exiting the pocket, is that of an $\SLE_4(\rho_L; \rho_R)$ process. By conformally mapping this pocket to $\D$, we can use the absolute continuity between $\SLE_4$-type processes to deduce that with positive probability, the second assertion of Theorem~\ref{thm:sle4_thm} holds true for the new curve in $\D$. Doing this for each annulus gives the a.s.\ statement.

We will now describe the general setup and define the events used in the proof of the lower bound of Theorem~\ref{thm:sle4_thm}.  Fix $0 < r <a < b < c < 1$ and $u > 0$.  Let $\eta$ be the sssi $\SLE_4$ curve from $\infty$ to $\infty$ so that the conditional law of $\eta|_{[0,\infty)}$ given $\eta|_{(-\infty,0]}$ is that of an $\SLE_4$ in $\C \setminus \eta((-\infty,0])$ from $0$ to $\infty$ where we view $\eta|_{[0,\infty)}$ as the level line of a GFF $h$ on $\C \setminus \eta((-\infty,0])$ with boundary conditions given by $-\lambda$ (resp.\ $\lambda$) on the left (resp.\ right) side of $\eta((-\infty,0])$ where $\lambda = \pi/2$.  Fix a compact interval $I \subseteq (0, \infty)$ and let $(t_k) = I \cap (\epsilon^{3(a-r)/2} \Z)$.  For each $k \in \N$, we let $[\tau_k^1,\tau_k^2]$ be so that $\eta(\tau_k^1)$ (resp.\ $\eta(\tau_k^2)$) is where $t \mapsto \eta(t_k - t)$ (resp.\ $t \mapsto \eta(t_k+t)$) first exits $B(\eta(t_k),\epsilon^c)$.  We let $E_k^1$ be the event that
\[ \sup_{z \in \C_L \cap B(\eta(t_k), 2\epsilon) \setminus B(\eta(t_k),\epsilon)} \p^z[ B \text{ exits } B(\eta(t_k), \epsilon^c) \text{ before hitting } \eta([\tau_k^1,\tau_k^2]) \giv \eta|_{[\tau_k^1,\tau_k^2]} ] \leq \exp( - \epsilon^{-3+2u}).\]

We let $\sigma_k$ be such that $\eta(\sigma_k)$ is equal to the first place that $t \mapsto \eta(t_k-t)$ exits $B(\eta(t_k),\epsilon^b)$.  Fix $\vartheta > 0$ and let $\eta_{k,1}$ (resp.\ $\eta_{k,2}$) be the level line of $h$ starting from $\eta(\sigma_k)$ with height $-\vartheta$ (resp.\ $\vartheta$) stopped upon exiting $B(\eta(t_k),2\epsilon^b) \setminus B(\eta(t_k),\epsilon^b/2)$.  We assume that $\vartheta > 0$ is chosen sufficiently small so that $\eta_{k,1}$ and $\eta_{k,2}$ can intersect each other.  We also let $E_k^2$ be the event that
\begin{enumerate}[(i)]
\item\label{it:ek2_p1} $B(\eta(t_k), \epsilon^c)$ is contained in a bounded component $U_k$ of $\C \setminus (\eta_{k,1} \cup \eta_{k,2})$,
\item\label{it:ek2_p2} $\eta((-\infty,t_{k-1}])$ and $\eta([t_{k+1},\infty))$ do not intersect $B(\eta(t_k), \epsilon^a)$, and
\item\label{it:ek2_p3} $\p[ E_k^1 \giv \eta_{k,1}, \eta_{k,2}, \ol{\eta}_k ] \geq \epsilon^{3/2 - u + o(1)}$ where $\ol{\eta}_k$ is the part of $\eta$ which is not contained in $U_k$.
\end{enumerate}

\begin{lemma}
\label{lem:sle4_estimate}
There exists $\epsilon_0 > 0$ and $p_2 \in (0,1)$ so that $\p[E_k^2] \geq p_2$ for all $\epsilon \in (0,\epsilon_0)$.
\end{lemma}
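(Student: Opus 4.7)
The strategy is to show that the three conditions defining $E_k^2$ hold simultaneously with uniformly positive probability. By the stationary increments and $2/3$-self-similarity of the Minkowski-content parameterization of $\eta$, the plan is to reduce to a fixed-scale problem so that the rescaled event structure near $\eta(t_k)$ is independent of $k$ up to a unit rescaling of the domain; all probabilities below will then depend only on $a,b,c,r,u,\vartheta$ and not on the particular index $k$ or on $\epsilon$.

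For condition (ii), I will use that $\eta$ is locally H\"older continuous of order $2/3-\delta'$ for any $\delta'>0$ together with the spacing $t_{k+1}-t_k = \epsilon^{3(a-r)/2}$ to deduce that $\eta(t_{k\pm 1})$ lies at distance of order $\epsilon^{(a-r)(1-o(1))}$ from $\eta(t_k)$, which is much larger than $\epsilon^a$. The probability that $\eta$ subsequently returns to $B(\eta(t_k),\epsilon^a)$ after having reached this larger scale is then bounded above by a positive power of $\epsilon^r$ via the scale invariance of the sssi curve and a Beurling-type estimate applied on the rescaled picture. Hence (ii) fails with probability only $o_\epsilon(1)$.

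For conditions (i) and (iii), the key input is that when $\vartheta < \lambda$ the two level lines $\eta_{k,1}$ and $\eta_{k,2}$ of heights $\pm \vartheta$ from $\eta(\sigma_k)$ can intersect one another, by the imaginary geometry framework of \cite{ms2016imag1}. By scale invariance, the probability that these two level lines meet at a point on the far side of $\eta(t_k)$ inside the annulus $B(\eta(t_k),2\epsilon^b)\setminus B(\eta(t_k),\epsilon^b/2)$, forming a bounded component $U_k$ which contains $B(\eta(t_k),\epsilon^c)$ and which has a macroscopic ``non-degenerate shape'' when viewed at scale $\epsilon^b$ (e.g.\ uniformly bounded modulus, both sides of $\eta$ having comparable harmonic measure as seen from $\eta(t_k)$), is uniformly positive in $\epsilon$; this event implies (i). Conditional on $(\eta_{k,1},\eta_{k,2},\overline{\eta}_k)$ and on this good geometric event, the conformal Markov property together with the imaginary geometry coupling identify the conditional law of the remaining portion of $\eta$ inside $U_k$ with that of an $\SLE_4(\underline{\rho})$ process in $U_k$, whose force points and weights are determined by the boundary data of $h$ on $\partial U_k$. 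Mapping $U_k$ conformally to $\C$, sending $\eta(t_k)\mapsto 0$ and the entry/exit prime ends to $0^\pm$, the configuration near $0$ is absolutely continuous with respect to the pair of curves $(\eta_1,\eta_2)$ appearing in Lemma~\ref{lem:main_lemma_lbd}, with Radon--Nikodym derivative whose $p$th moment is bounded uniformly in $\epsilon$ for all $p \geq 1$. The conformally mapped version of $E_k^1$ lies within a compact region on which these couplings are effective, so applying Lemma~\ref{lem:main_lemma_lbd} with $\sigma = 3-2u$ and H\"older's inequality (in exactly the same way as in the proof of the second part of Theorem~\ref{thm:sle8_thm}) yields the lower bound $\epsilon^{3/2-u+o(1)}$ for the conditional probability of $E_k^1$, proving (iii).

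Combining the three steps, all conditions of $E_k^2$ hold on the intersection of events whose joint probability is bounded below by some $p_2 \in (0,1)$, uniformly in $\epsilon\in(0,\epsilon_0)$. I expect the main technical hurdle to be the last step: one must verify that the non-degeneracy of $U_k$ at scale $\epsilon^b$ translates into uniform control of the conformal distortion of the map used to bring the pocket to the reference domain of Lemma~\ref{lem:main_lemma_lbd}, so that the Radon--Nikodym derivative between the pocket $\SLE_4(\underline\rho)$ and the two-sided whole-plane $\SLE_4$ has moments that can be absorbed without destroying the $\epsilon^{3/2-u+o(1)}$ lower bound.
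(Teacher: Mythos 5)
Your proof sketch has the right high-level structure (separate the three conditions, use the sssi property to reduce to a fixed scale, use Lemma~\ref{lem:main_lemma_lbd} for the $\epsilon^{3/2-u+o(1)}$ exponent in part~(iii)), but there are two genuine gaps.

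For part~(ii) you invoke H\"older continuity of $\eta$ to deduce that $\eta(t_{k\pm1})$ lies at distance \emph{of order} $\epsilon^{(a-r)(1-o(1))}$ from $\eta(t_k)$, but H\"older continuity only gives an \emph{upper} bound on $|\eta(t_{k\pm1})-\eta(t_k)|$; it cannot tell you that the curve has moved far away, which is what is needed to rule out a return to $B(\eta(t_k),\epsilon^a)$, and moreover the event in~\eqref{it:ek2_p2} concerns the \emph{entire} past and future of the curve, not just the endpoints $\eta(t_{k\pm1})$. The correct argument is simpler: rescale by $\epsilon^{-a}$ and translate so that $B(\eta(t_k),\epsilon^a)$ becomes $\D$; by the $2/3$-self-similarity and stationary increments, the probability in question equals the probability that the rescaled curve $\eta((-\infty,-\epsilon^{-3r/2}])\cup\eta([\epsilon^{-3r/2},\infty))$ intersects $\D$, which tends to $0$ by transience of the sssi $\SLE_4$.

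For part~(iii) the conformal-mapping step as written does not make sense: a bounded pocket $U_k$ cannot be mapped conformally onto $\C$, and the two distinct entry/exit prime ends cannot both be sent to $0^\pm$. What you are implicitly trying to set up is a comparison between a two-sided \emph{radial} $\SLE_4$ through a marked interior point (which is what the conditional law of $\eta|_{B(\eta(t_k),\epsilon^c)}$ given the past and future of $\eta$ looks like after conformally mapping to $\D$) and the two-sided \emph{whole-plane} picture of Lemma~\ref{lem:main_lemma_lbd}. This comparison is exactly what the paper isolates as Lemma~\ref{lem:two_sided_radial_lemma51}, and it is not handled by an absolute-continuity argument with Radon--Nikodym derivative of uniformly bounded moments: these two processes live in different domains and the naive RN derivative does not enjoy such uniform bounds. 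Instead, the paper's proof of Lemma~\ref{lem:two_sided_radial_lemma51} is a steering/pocket construction using \cite[Lemma~2.3]{mw2017slepaths}, and the subsequent reconciliation with the conditioning on the level lines $(\eta_{k,1},\eta_{k,2})$ is done via Bayes' rule together with \cite[Lemma~2.8]{mw2017slepaths}, not via a direct moment bound. Your ``expected main technical hurdle'' at the end is in fact the heart of the matter, and the bounded-moment claim you rely on would need to be replaced by this more delicate geometric argument.
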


In order to read the proof of the lower bound of Theorem~\ref{thm:sle4_thm}, one can skip the proof of Lemma~\ref{lem:sle4_estimate}.  The first step is the following version of Lemma~\ref{lem:main_lemma_lbd} but for two-sided radial $\SLE_4$.

\begin{lemma}
\label{lem:two_sided_radial_lemma51}
There exists $\xi_0 > 0$ so that for all $\xi \in (0,\xi_0)$ the following is true.  Suppose that $x,y \in \partial \D$ are distinct with $|x-y| \geq 5 \xi$ and that $\eta$ is a two-sided radial $\SLE_4$ in $\D$ from~$x$ to~$y$ which passes through~$0$.  Let $B$ be a Brownian motion which is independent of $\eta$, fix $\epsilon > 0$, and let $F$ be the event that:
\begin{enumerate}[(i)]
\item $\sup_{z \in B(0,\epsilon)} \p^z[B \text{ hits } \partial \D \text{ before } \eta \giv \eta] \leq \exp(-\epsilon^{-\sigma})$ and
\item $\dist(\eta, \partial \D \setminus ( B(x, 2\xi) \cup B(y, 2\xi))) \geq \xi$
\end{enumerate}
Uniformly in $x,y \in \partial \D$ with $|x-y| \geq 5 \xi$, we have that $\p[F] \geq \epsilon^{\sigma/2 + o(1)}$ as $\epsilon \to 0$ (but with $\xi > 0$ fixed).
\end{lemma}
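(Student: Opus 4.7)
The plan is to transfer the lower bound of Lemma~\ref{lem:main_lemma_lbd} (which is for two-sided whole-plane $\SLE_4$) to the two-sided radial setting by an absolute continuity argument, then combine with the boundary-distance condition~(ii) of $F$.

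First I would decompose the two-sided radial $\SLE_4$ $\eta$ as the concatenation of a radial $\SLE_4(2)$ $\eta_1$ from $x$ targeting $0$ and a chordal $\SLE_4$ $\eta_2$ from $0$ to $y$ in $\D\setminus\eta_1$, which matches the whole-plane pair $(\wt\eta_1,\wt\eta_2)$ appearing in Lemma~\ref{lem:main_lemma_lbd} with $\kappa=4$. Fix $\zeta\in(0,1)$. Since any Brownian trajectory from $B(0,\epsilon)$ that reaches $\partial\D$ without hitting $\eta$ must first exit $B(0,\epsilon^\zeta)$ without hitting $\eta\cap B(0,\epsilon^\zeta)$, condition~(i) is implied by the event
\[
\sup_{z\in B(0,\epsilon)}\p^z\!\left[B\text{ hits }\partial B(0,\epsilon^\zeta)\text{ before }\eta\cap B(0,\epsilon^\zeta)\,\giv\,\eta|_{B(0,\epsilon^\zeta)}\right]\le \exp(-\epsilon^{-\sigma}),
\]
which is precisely of the form appearing in Lemma~\ref{lem:main_lemma_lbd}.

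The key technical step is the following uniform absolute continuity estimate. Fix $r_0\in(0,1/4)$. The joint law of $(\eta_1,\eta_2)$ stopped upon exiting $B(0,r_0)$ is mutually absolutely continuous with respect to the corresponding joint law of $(\wt\eta_1,\wt\eta_2)$ stopped upon exiting $B(0,r_0)$, and the Radon-Nikodym derivative $Z=d\p_{\mathrm{wp}}/d\p_{\mathrm{rad}}$ satisfies $\E_{\mathrm{rad}}[Z^p]\le C_p$ for every $p<\infty$, uniformly in $x,y\in\partial\D$ with $|x-y|\ge 5\xi$ (with $\xi$ fixed). This follows from the imaginary geometry / level line coupling for $\SLE_4$: both $\eta_1$ and $\wt\eta_1$ are realized as level lines of GFFs whose restrictions to $B(0,r_0)$ differ by the harmonic extension of a boundary variation with uniformly bounded Dirichlet energy, and a standard Cameron-Martin / Gaussian comparison argument gives all finite moments for $Z$; the same holds for $\eta_2$ conditional on $\eta_1$ by absolute continuity of chordal $\SLE_4$ away from the boundary.

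Given the absolute continuity, let $E_{\mathrm{loc}}$ denote the event defined in Lemma~\ref{lem:main_lemma_lbd} (with $C$ large and $D$ small). Then $\p_{\mathrm{wp}}[E_{\mathrm{loc}}]\ge\epsilon^{\sigma/2+o(1)}$, and H\"older's inequality gives
\[
\p_{\mathrm{wp}}[E_{\mathrm{loc}}]=\E_{\mathrm{rad}}[\one_{E_{\mathrm{loc}}}Z]\le \p_{\mathrm{rad}}[E_{\mathrm{loc}}]^{1/q}\E_{\mathrm{rad}}[Z^p]^{1/p}
\]
for conjugate exponents $p,q$. This yields $\p_{\mathrm{rad}}[E_{\mathrm{loc}}]\ge\epsilon^{q\sigma/2+o(1)}$, and letting $q\downarrow 1$ (i.e.\ $p\to\infty$) recovers the bound $\p_{\mathrm{rad}}[E_{\mathrm{loc}}]\ge\epsilon^{\sigma/2+o(1)}$ in the appropriate $o(1)$ sense.

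Finally I would combine with condition~(ii). Let $G$ denote the event in~(ii); $G$ is determined by $\eta$ outside $B(0,1/2)$, and a standard $\SLE$ pocket argument shows $\p_{\mathrm{rad}}[G]\ge p_0(\xi)>0$ uniformly in $x,y$ with $|x-y|\ge 5\xi$. Conditional on the exterior of $\eta$ in $B(0,1/2)$ lying in the ``good'' set on which $G$ holds, the conditional law of $\eta|_{B(0,1/2)}$ is absolutely continuous with respect to its unconditional law with RN derivative having bounded moments (again by a GFF Cameron-Martin estimate), so the H\"older transfer from the previous paragraph still applies to give $\p[F]=\p[E_{\mathrm{loc}}\cap G]\ge\epsilon^{\sigma/2+o(1)}$. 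The main obstacle is the uniform absolute continuity with bounded moments of the RN derivative, uniformly over the location of the force/target points $x,y$; this is the step that requires most care and is where the imaginary geometry coupling is essential.
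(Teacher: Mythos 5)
Your proposal takes a genuinely different route from the paper: rather than proving uniform absolute continuity (in $x,y$) between the two-sided radial law and the whole-plane law, the paper exploits the conformal Markov property of two-sided whole-plane $\SLE_4$ to argue that \emph{some} boundary configuration $(y_0,z_{L,0},z_{R,0})$ attains the lower bound $\epsilon^{\sigma/2+v}$, and then, for arbitrary $x,y$ with $|x-y|\ge 5\xi$, uses a sequence of events $E_1,E_2,E_3$ (built from \cite[Lemma~2.3]{mw2017slepaths}) to steer the curve into a conformal image of that specific configuration. The paper also uses conditions~(ii) and~(iii) of Lemma~\ref{lem:main_lemma_lbd} (with $\zeta=0$) to \emph{derive} the distance lower bounds of condition~(ii) of Lemma~\ref{lem:two_sided_radial_lemma51} directly from the event $E$, rather than imposing them separately.

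Your argument has two gaps. First, the claim that $Z=d\p_{\rm wp}/d\p_{\rm rad}$ restricted to $B(0,r_0)$ has all finite moments \emph{uniformly} in $x,y\in\partial\D$ with $|x-y|\ge 5\xi$ is stated but not justified. The comparison is not between two deterministic boundary conditions: the whole-plane GFF and the GFF on $\D$ coupling with the respective two-sided processes have different \emph{random} harmonic parts near $B(0,r_0)$, and the RN derivative involves a Gaussian density ratio between these laws. This is plausibly provable (in the spirit of \cite[Lemma~4.1]{mq2020geodesics}), but the assertion that it is a ``standard Cameron--Martin / Gaussian comparison argument'' elides the work of controlling the comparison uniformly over the location of $x,y$.

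Second, and more seriously, your treatment of condition~(ii) of Lemma~\ref{lem:two_sided_radial_lemma51} is incorrect as stated. You write that conditioning on the exterior lying in the good set for $G$ gives a conditional law of $\eta|_{B(0,1/2)}$ that is absolutely continuous with RN derivative of bounded moments ``by a GFF Cameron--Martin estimate.'' But conditioning on a positive-probability \emph{event} in the exterior is not a Cameron--Martin shift; the RN derivative of the unconditional interior law against the $G$-conditioned interior law is $\p[G]/\E[\one_G\giv\eta|_{B(0,1/2)}]$, and its positive moments require a lower bound (or at least control of the negative moments) of the conditional probability $\E[\one_G\giv\eta|_{B(0,1/2)}]$. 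On the very event $E_{\rm loc}$ from Lemma~\ref{lem:main_lemma_lbd}, the curve has pathological winding near $0$ and one must rule out the possibility that this winding is correlated with the exterior geometry in a way that makes $G$ conditionally very unlikely. The paper resolves exactly this issue by making conditions~(ii), (iii) of Lemma~\ref{lem:main_lemma_lbd} part of the event $E$ itself (so that the good exterior behavior holds \emph{on} the rare event) and then proving $\dist(\wh\eta,z_q)\ge 5\xi_0$ via explicit conformal-geometry estimates. You invoke Lemma~\ref{lem:main_lemma_lbd} ``with $C$ large and $D$ small,'' but you never use conditions~(ii), (iii) of its event $E$; as a result your argument does not actually control the correlation between $E_{\rm loc}$ and $G$.
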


The strategy of the proof is to consider a whole-plane $\SLE_4$ process $\eta$ from~$\infty$ to~$\infty$ through~$0$, where we know from Lemma~\ref{lem:main_lemma_lbd} that the exponent of the probability of part (i) of the event $F$ of the above lemma is the right one. Then, we condition on the part of $\eta$ before first hitting and after last exiting $\D$. The law of the remaining curve is a two-sided radial $\SLE_4$ in the complementary domain, passing through~$0$. Then, since the exponent of the probability of part (i) of $F$ is the right one for $\eta$, we know that there has to be some configuration for the two-sided radial $\SLE_4$ such that the same holds true. We then conclude by proving that with positive probability, the  configuration of the two-sided radial $\SLE_4$ process is sufficiently close to that configuration.

\begin{proof}
We are going to deduce the result from the conformal Markov property of two-sided radial $\SLE_4$ and Lemma~\ref{lem:main_lemma_lbd}.  To begin with, we assume that we have a two-sided whole-plane $\SLE_4$ process~$\eta$ in~$\C$ from~$\infty$ to~$\infty$ through $0$ as above.  Let $\ol{\eta}$ be the time-reversal of $\eta$.  Let $\tau$ (resp.\ $\ol{\tau}$) be the first time that $\eta$ (resp.\ $\ol{\eta}$) hits $\partial \D$.  Then we know that the conditional law of $\eta$ given $\eta|_{(-\infty,\tau]}$ and $\ol{\eta}|_{(-\infty,\ol{\tau}]}$ is that of a radial two-sided $\SLE_4$ in $\C \setminus (\eta((-\infty,\tau]) \cup \ol{\eta}((-\infty,\ol{\tau}]))$ from $\eta(\tau)$ to $\ol{\eta}(\ol{\tau})$ which passes through $0$.  Let $\varphi$ be the unique conformal map from $\C \setminus (\eta((-\infty,\tau]) \cup \ol{\eta}((-\infty,\ol{\tau}]))$ to $\D$ which fixes $0$ and sends $\eta(\tau)$ to $1$.  Let $\wh{\eta}$ be the image of the remainder of $\eta$ under $\varphi$.  Then $\wh{\eta}$ is a two-sided radial $\SLE_4$ in $\D$ from $x = 1$ to $y = \varphi(\ol{\eta}(\ol{\tau}))$ which passes through $0$.  Let $z_L$ (resp.\ $z_R$) be the image under $\varphi$ of the prime end corresponding to $\infty$ in the component of $\C \setminus \eta$ which is to the left (resp.\ right) of $\eta$.  Then we note that $z_L$ (resp.\ $z_R$) is on the clockwise (resp.\ counterclockwise) arc of $\partial \D$ from $x$ to $y$.

We now suppose that we have fixed $C >1$ and $D > 0$ so that ~\eqref{eqn:mainlemma3} holds.   Let $\eta_1 = \ol{\eta}|_{[0,\infty)}$ and $\eta_2 = \eta|_{[0,\infty)}$ so that the joint law of $(\eta_1,\eta_2)$ is the same as the pair of paths in Lemma~\ref{lem:main_lemma_ubd}.  Suppose that the event $E$ of Lemma~\ref{lem:main_lemma_lbd} holds with $\zeta = 0$.  Then we have that
\[ |\eta(\tau)-\ol{\eta}(\ol{\tau})| \geq \dist(\eta_1([\tau_1,\tau_1^C]),\eta_2([\tau_2,\tau_2^C])) \geq D.\]
Consequently, there exists $\xi_0 > 0$ depending only on $C,D$ so we have that $|x - y| \geq 5 \xi_0$.  By possibly decreasing the value of $\xi_0 > 0$ further (still only depending on $C,D$) we can also assume that the pairwise distances of $x$, $y$, $z_L$, and $z_R$ are all in fact at least $5\xi_0$.We can also assume that $\dist(\wh{\eta},z_q) \geq 5 \xi_0$ for $q \in \{L,R\}$. Indeed, let $\gamma_L$ and $\gamma_R$ be the arcs of the boundary of $B(0,C)$ which are on the boundaries of the two unbounded components of $(\C \setminus B(0,C)) \setminus (\eta((-\infty,\tau]) \cup \ol{\eta}((-\infty,\ol{\tau}]))$. Then $\varphi(\gamma_q)$ is a curve in $\D$ which disconnects $z_q$ for $q \in \{L,R\}$ from $0$ which $\wh{\eta}$ does not cross. Moreover the distance of $\varphi(\gamma_q)$ to $z_q$ is bounded from below. Indeed, to prove this, first we find a point $z$ between $\gamma_q$ and $B(0,C^2)$ whose distance to $\eta_1,\eta_2$ is at least $D/2$ and the harmonic measure of both of $\eta_1$ and $\eta_2$ as seen from $z$ is bounded from below. More precisely, we assume that $D < C$ is sufficiently small (but fixed) and divide the annulus $B(0,C^2 - 2D) \setminus B(0,C^2 - (2 + 1/10)D)$ using lines from the origin with equally spaced angles so that each region has diameter at most $D/2$ and the distance of the center to the boundary is at least $c_1 D$ for a constant $c_1 \in (0,1/2)$. Let $R_1,\ldots,R_n$ be those regions. Then each $R_j$ can be intersected by at most one of $\eta_1$ and $\eta_2$, but not both by the definition of $E$. Moreover, if, say $\eta_1$ intersects $R_j$ then $\eta_2$ cannot intersect the adjacent regions $R_{j-1}$ or $R_j$. Suppose that $\eta_1$ intersects $R_i$ and let $j$ be the smallest integer larger than $i$ so that $\eta_2$ intersects $R_j$, where we take the convention that $R_{n+l} = R_l$. Then $\eta_1$ cannot intersect $R_{j-1}$ and by definition $\eta_2$ does not intersect it either. Let $z$ be the center of $R_{j-1}$. Then $z$ has distance at least $c_1 D$ from $\eta_1$ and $\eta_2$. Moreover, if we start with a Brownian motion from $z$, then it has positive probability (depending only on $C,D$) of hitting either $\eta_1$ or $\eta_2$. This is because the Brownian motion has a positive probability of making either a clockwise or counterclockwise loop around the origin while staying inside a tube of width $D/2$. Next, we claim that a Brownian motion starting from $z$ has a positive probability (depending only on $C,D$) of hitting $B(0,1/2)$ before hitting either $\eta_1$ or $\eta_2$. To prove this, we divide $\C$ into squares of side length $D/10$. Then $z$ will be contained inside such a square $S$ and each of the squares which are adjacent to $S$ do not intersect $\eta_1$ or $\eta_2$, otherwise $z$ would be too close to $\eta_1$ or $\eta_2$. Also, there has to be a path of squares $S_1,\ldots,S_k$ where $S_1 = S$ and $S_k$ is contained inside of $B(0,1/2)$ none of which intersect $\eta_1 \cup \eta_2$, otherwise there has to exist a path of squares which connect $\eta_1$ and $\eta_2$ which disconnect $z$ from $B(0,1/2)$ and such that each one intersects either $\eta_1$ or $\eta_2$. But the only way that the latter can happen is if there are adjacent squares, one of which intersects $\eta_1$ and the other which intersects $\eta_2$. This cannot happen because the distance from the relevant parts of $\eta_1$ and $\eta_2$ inside $B(0,C^2) \setminus B(0,1/2)$ is at least $D$. Note also that $k = O(C^2 / D^2)$ where the implicit constants are universal. Therefore, since a Brownian motion starting from any point inside of the square $T_j$ with the same center as $S_j$ but half the width has a positive probability of hitting $T_{j+1}$ before leaving the union of $S_j$ and $S_{j+1}$, the proof of the claim is complete. Combining with the above, we obtain that there exists $p \in (0,1)$ depending only on $C,D$ such that with probability at least $p$, a Brownian motion starting from $z$ exits $\partial \D$ on the clockwise (resp.\ counterclockwise) arc of $\partial \D$ starting from $z_q$ before hitting $\varphi(\gamma_q)$ for $q \in \{L,R\}$, and also that $\dist(\varphi(z),\partial \D) \geq p$. Then, the Beurling estimate implies that there exists $\xi_0 > 0$ (depending only on $C,D$) such that $\dist(\wh{\eta},z_q) \geq \dist(\varphi(\gamma_q),z_q) \geq 5 \xi_0$ for $q \in \{L,R\}$ if $E$ occurs.
Since
\[ \p[E] = \E[ \p[E \giv y,z_L,z_R]] \geq \epsilon^{\sigma/2+o(1)} \quad\text{as}\quad \epsilon \to 0,\]
we have that the following is true.  Fix $v > 0$.  Then there exists $\epsilon_0 > 0$ so that for every $\epsilon \in (0,\epsilon_0)$ there exist $y_0$, $z_{L,0}$, $z_{R,0}$ in $\partial \D$ so that 
\[ \p[E \giv y=y_0,z_L=z_{L,0},z_R=z_{R,0}] \geq \epsilon^{\sigma/2 + v}.\]
In particular, since $\diam(\varphi(B(0,\epsilon))) \asymp \epsilon$, we have that $\p[ G ] \gtrsim \epsilon^{\sigma / 2 + v}$, where $G$ is the event that if $\eta$ is a two-sided radial $\SLE_4$ in $\D$ from $1$ to $y_0$ passing through $0$, then $\dist(\eta,z_{q,0}) \geq 5\xi_0$ for $q \in \{L,R\}$ and 
  $\sup_{z \in B(0,\epsilon)} \p^z[B \text{ hits } \partial \D \text{ before } \eta \giv \eta] \leq \exp(-\epsilon^{-\sigma})$ and the implicit constant is independent of $\epsilon, v$.

Our goal now is to upgrade this assertion to the assertion in the statement of the lemma.  Fix $x$, $y$ in $\partial \D$ with $|x-y| \geq 5 \xi$. Here, $\xi > 0$ is small (but fixed) to be chosen later in the proof. Let $\ol{\eta}$ be the time-reversal of $\eta$.  Consider the line segments $\gamma = [x,0]$, $\ol{\gamma} = [y,0]$ which respectively connect $x,y$ to $0$ and such that both of them have the unit speed parameterization.  Let $H$ (resp.\ $\ol{H}$) be $(2\pi)^{-1}$ times the length of the clockwise (resp.\ counterclockwise) arc of $\partial \D$ from $z_{L,0}$ to $z_{R,0}$.  In other words, $H$ (resp.\ $\ol{H}$) is the harmonic measure of the clockwise (resp.\ counterclockwise) arc of $\partial \D$ from $z_{L,0}$ to $z_{R,0}$ as seen from $0$.  Assume that $H > \ol{H}$ (the other case is analogous).  Fix $\delta,\zeta > 0$ small (depending only on $\xi$) and fix $1-\delta \leq t, \ol{t} \leq 1$ so the harmonic measure of $\gamma([0,t])$ (resp.\ $\ol{\gamma}([0,\ol{t}]))$ in $\D \setminus (\gamma([0,t]) \cup \ol{\gamma}([0,\ol{t}]))$ as seen from $0$ is in $[H-\zeta / 2,H+\zeta / 2]$ (resp.\ $[\ol{H}-\zeta / 2,\ol{H}+\zeta / 2]$).  We note that this is always possible because for any $\ol{s} \in [1-\delta,1)$ fixed, the harmonic measure of $\ol{\gamma}([0,\ol{s}])$ in $\D \setminus (\gamma([0,s]) \cup \ol{\gamma}([0,\ol{s}]))$ as seen from $0$ tends to $0$ as $s$ increases to $1$ and likewise when the roles of $(\gamma,s)$ and $(\ol{\gamma},\ol{s})$ are swapped.

Let $\tau$ (resp.\ $\ol{\tau}$) be the first time that $\eta$ (resp.\ $\ol{\eta}$) gets within distance $\delta$ of $\gamma(t)$ (resp.\ $\ol{\gamma}(\ol{t})$), where $\eta$ (resp.\ $\ol{\eta}$) is parameterized according to log-conformal radius as seen from $0$.  Let also $A_\delta$ (resp.\ $\ol{A}_\delta$) be the $\delta$-neighborhood of $\gamma([0,t])$ (resp.\ $\ol{\gamma}([0,\ol{t}])$).  Let $E_1$ be the event that $\tau$ and $\ol{\tau}$ both occur before $\eta$ and $\ol{\eta}$, respectively, leave $A_\delta$ and $\ol{A}_\delta$.  Then \cite[Lemma~2.3]{mw2017slepaths} implies that there exists $p_1 \in (0,1)$ depending only on $\delta, \xi$ so that $\p[E_1] \geq p_1$.  We now assume that we are working on $E_1$.  Let $\varphi$ be the unique conformal map from $\D \setminus (\eta([0,\tau]) \cup \ol{\eta}([0,\ol{\tau}]))$ to $\D$ which fixes $0$ and takes $\eta(\tau)$ to $1$.  Let $I = \varphi(\eta([0,\tau]))$ and $\ol{I} = \varphi(\ol{\eta}([0,\ol{\tau}]))$.  By choosing $\delta > 0$ sufficiently small, the Beurling estimate implies that $|I|/2 \pi \in [H-\zeta,H+\zeta]$ and $|\ol{I}|/2\pi \in [\ol{H} - \zeta, \ol{H}+\zeta]$ where $|\cdot|$ denotes Lebesgue measure on $\partial \D$.  Let $z_L$ (resp.\ $z_R$) be the left (resp.\ right) endpoint of $I$. We claim that $\varphi(K) \subseteq B(z_L,\xi) \cup B(z_R,\xi)$ for $\delta > 0$ sufficiently small, where $K = \{z \in \D \setminus (\eta([0,\tau]) \cup \ol{\eta}([0,\ol{\tau}])) : \dist(z,\partial \D) \leq \xi \}$. Indeed, the Beurling estimate implies that the probability that a Brownian motion starting from $0$ hits $K$ before exiting $\D \setminus (\eta([0,\tau]) \cup \ol{\eta}([0,\ol{\tau}]))$ is at most $O(\delta^{1/2})$ where the implicit constants depend only on $\xi$. Then the conformal invariance of the Brownian motion and \cite[Exercise~2.7]{law2005slebook} together imply that $\diam(K) = O(\delta^{1/2})$ where the implicit constants depend only on $\xi$ and so $\varphi(K) \subseteq B(z_L, \xi / 2) \cup B(z_R, \xi / 2)$ for $\delta$ sufficiently small.

Let $x_1 \in I$ be such that the length of the clockwise arc in $\partial \D$ from $x_1$ to $z_L$ is equal to the length of the clockwise arc in $\partial \D$ from $1$ to $z_{L,0}$.  We also let $y_1 \in \ol{I}$ be such that the length of the clockwise arc in $\partial \D$ from $y_1$ to $z_R$ is equal to the length of the clockwise arc in $\partial \D$ from $y_0$ to $z_{R,0}$. Note that $x_1$ and $y_1$ are well-defined if $\zeta \in (0,\xi)$. Set $\wh{\eta} = \varphi(\eta([\tau,\infty)) \cup \ol{\eta}([\ol{\tau},\infty)))$ and note that conditional on $\eta|_{[0,\tau]}$ and $\ol{\eta}|_{[0,\ol{\tau}]}$, the curve $\wh{\eta}$ has the law of a two-sided radial $\SLE_4$ in $\D$, from $1$ to $\varphi(\ol \eta(\ol \tau))$, passing through $0$.  Let $\sigma$ be the first time that $\wh{\eta}$ gets within distance $\delta$ of $x_1$ (where again $\wh{\eta}$ is parameterized according to log-conformal radius as seen from $0$).  Let $E_2$ be the event that $\sigma$ occurs before the first time $\wh{\eta}$ leaves the $\delta$-neighborhood of the arc of $I$ from $1$ to $x_1$. We note that on $E_1$, the harmonic measure of each side of $\eta([0,\tau])$, seen from $0$, is bounded below by a positive constant, depending only on $\xi$ and $\delta$, and hence the same is true for the distances from $z_L$ and $z_R$ to $1$. Then \cite[Lemma~2.3]{mw2017slepaths} implies that there exists $p_2 > 0$ depending only on $\xi$ and $\delta$ so that $\p[E_2 \giv E_1] \geq p_2$.  We assume that we are working on $E_2 \cap E_1$.  For each $s \geq 0$, we let $\psi_s$ be the unique conformal map from $\D \setminus (\wh{\eta}([0,\sigma]) \cup \wt{\eta}([0,s]))$ to $\D$ with $\psi_s(0) = 0$ and $\psi_s'(0) > 0$, where $\wt{\eta} = \varphi(\ol{\eta}([\ol{\tau},\infty)))$ and $\wt{\eta}$ is parameterized according to log-conformal radius as seen from $0$.  Let $\wt{\sigma}$ be the first time $s \geq 0$ that the harmonic measure of the clockwise arc of $\partial \D$ from $\psi_s(\wh{\eta}(\sigma))$ to $\psi_s(\wt{\eta}(s))$ is equal to the harmonic measure of the clockwise arc of $\partial \D$ from $1$ to $y_0$, both as seen from $0$.  Let $E_3$ be the event that $\wt{\sigma}$ occurs before $\wt{\eta}$ leaves the $\delta$-neighborhood of $\ol{I}$.  Then \cite[Lemma~2.3]{mw2017slepaths} (considering the event that $\wt \eta$ stays close to $I$ and travels along the clockwise arc from $\varphi(\ol \eta(\ol \tau))$ to $z_R$ if $y_1$ lies to the right of $\varphi(\ol \eta(\ol \tau))$ and otherwise the counterclockwise arc) implies that there exists $p_3 > 0$ depending only on $\delta$ so that $\p[E_3 \giv E_1, E_2] \geq p_3$.

On $E_1 \cap E_2 \cap E_3$, let $x_2 = \psi_{\wt{\sigma}}(\wh{\eta}(\sigma))$, $y_2 = \psi_{\wt{\sigma}}(\wt{\eta}(\wt{\sigma}))$, $z_{L,2} = \psi_{\wt{\sigma}}(z_L)$, $z_{R,2} = \psi_{\wt{\sigma}}(z_R)$.  Then we have that the harmonic measure as seen from $0$ of the clockwise arc of $\partial \D$ from $x_2$ to $y_2$ is equal to that of the clockwise arc of $\partial \D$ from $1$ to $y_0$.  Moreover, we can find a function $p(\delta)$ of $\delta$ such that $p(\delta) \to 0$ as $\delta \to 0$ and the harmonic measure of the clockwise (resp.\ counterclockwise) arc of $\partial \D$ from $x_2$ to $z_{L,2}$ (resp.\ $z_{R,2}$) is equal to the harmonic measure of the corresponding arcs with $1, z_{L,0}, z_{R,0}$ in place of $x_2, z_{L,2}, z_{R,2}$ up to an error of which is at most $p(\delta)$. Therefore, there exists $\theta \in [0,2\pi)$ such that $e^{i\theta} x_2 = 1$, $e^{i\theta} y_2 = y_0$, $|e^{i\theta}z_{L,2} - z_L | = O(p(\delta))$ and $|e^{i\theta} z_{R,2} - z_R | = O(p(\delta))$. Note also that for $\xi \in (0,1)$ sufficiently small we have that (by again applying \cite[Exercise~2.7]{law2005slebook}) $\psi_{\wt{\sigma}}(B(z_L, \xi) \cup B(z_R, \xi)) \subseteq B(z_{L,0}, \xi_0 / 2) \cup B(z_{R,0} , \xi_0 / 2)$ and so $\psi_{\wt{\sigma}} \circ \varphi (K) \subseteq B(e^{-i\theta}z_{L,0},\xi_0) \cup B(e^{-i\theta}z_{R,0}, \xi_0)$ on $E_1 \cap E_2 \cap E_3$ for $\xi$ sufficiently small and $\delta, \zeta$ sufficiently small depending only on $\xi$. To conclude the proof of the lemma, set $\gamma = \psi_{\wt{\sigma}}(\wh{\eta}([\sigma,\infty)) \cup \wt{\eta}([\wt{\sigma},\infty)))$ and let $G$ be the event corresponding to $\gamma$ as before. Then since $\p[ E_1 \cap E_2 \cap E_3] > 0$ and on $E_1 \cap E_2 \cap E_3$ we have that $\psi_{\wt{\sigma}}\circ \varphi(K) \subseteq B(e^{-i\theta}z_{L,0},\xi_0) \cup B(e^{-i\theta}z_{R,0}, \xi_0)$ and $\diam (\psi_{\wt{\sigma}} \circ \varphi (B(0,\epsilon))) \asymp \epsilon$ (with the implicit constants depending only on $\xi, \delta , \zeta$), we obtain that if $G$ holds, then $F$ holds as well for $\epsilon$ sufficiently small. The proof is then complete since the law of a two-sided radial $\SLE_4$ in $\D$ passing through $0$ is invariant under rotation and so $\p[ G ] \gtrsim \epsilon^{\sigma / 2 + v}$ for all $\epsilon$ sufficiently small, and the implicit constant is independent of $\epsilon,v$.
\end{proof}

\begin{proof}[Proof of Lemma~\ref{lem:sle4_estimate}]
We suppose that we have the setup which is described just before the statement of Lemma~\ref{lem:sle4_estimate}.  Let $\ol{\eta}$ be the time-reversal of $\eta$.

\noindent{\it Part~\eqref{it:ek2_p1} of $E_k^2$.} Let $A_k$ denote the event of part (i) of the definition of $E_k^2$. Then there exists a $p>0$ such that $\p[A_k] \geq p$ for all $k$. This follows since by scaling and translation invariance of sssi $\SLE_4$ curves, we may assume that $\epsilon = 1$ and $t_k = 0$ and thus, by \cite[Lemmas~2.3 and~2.5]{mw2017slepaths} we have (after conformally mapping $\C \setminus \eta((-\infty,0])$ to $\h$) that $\p[ A_{k} \,\giv \, \eta] > 0$ a.s. The claim then follows by integrating over the randomness of $\eta$.

\noindent{\it Part~\eqref{it:ek2_p2} of $E_k^2$.} Note that by the translation and scaling invariance of sssi $\SLE_4$ curves, the probability that $\eta((-\infty,t_{k-1}])$ or $\eta([t_{k+1},\infty))$ intersects $B(\eta(t_k), \epsilon^a)$ is equal to the probability that $\eta((-\infty,-\epsilon^{-3r / 2}])$ or $\eta([\epsilon^{-3r / 2},\infty))$ intersects $\D$, and this tends to~$0$ as $\epsilon \to 0$.  In particular, for small enough $\epsilon > 0$, we can make the probability of this part of the event as close to~$1$ as we want.

\noindent{\it Part~\eqref{it:ek2_p3}.}  Let $\tau$ (resp.\ $\ol{\tau}$) be the first time that $\eta$ (resp.\ $\ol{\eta}$) hits $\partial B(0,\epsilon^c)$.  Then the conditional law of the remainder of $\eta$ given $\CG = \sigma(\eta|_{(-\infty,\tau]}, \ol{\eta}|_{(-\infty,\ol{\tau}]})$ is that of a two-sided radial $\SLE_4$ in $\C \setminus (\eta((-\infty,\tau]) \cup \ol{\eta}((-\infty,\ol{\tau}]))$ from $\eta(\tau)$ to $\ol{\eta}(\ol{\tau})$ passing through $0$.

Let $\varphi$ be the unique conformal map from $\C \setminus (\eta((-\infty,\tau]) \cup \ol{\eta}((-\infty,\ol{\tau}]))$ to $\D$ which fixes $0$ and sends $\eta(\tau)$ to $1$.  We assume that we are working on the event that $x = 1$ and $y = \varphi(\ol{\eta}(\ol{\tau}))$ have distance at least $\xi > 0$. Note that the probability of this event tends to $1$ as $\xi$ tends to $0$. On this event, Lemma~\ref{lem:two_sided_radial_lemma51} and distortion estimates for conformal maps imply that the conditional probability given $\CG$ of the intersection $\wt{E}_k$ of $E_k^1$ and the event that the intermediate part of $\eta$ does not leave $B(\eta(t_k),\epsilon^b/4)$ is at least $\epsilon^{\sigma/2 + o(1)}$.

Suppose that $F$ is any event for $\eta_{1,k}$, $\eta_{2,k}$ with $\p[F \giv \CG] > 0$ and such that on $F$ the intermediate part of $\eta$ does not leave $B(\eta(t_k),\epsilon^b / 4)$.  Then we have that
\begin{align*}
\p[ \eta_{1,k}, \eta_{2,k} \in F \giv \CG]
&\asymp \p[ \eta_{1,k}, \eta_{2,k} \in F\giv \CG, \wt{E}_k ] \quad\text{(\cite[Lemma~2.8]{mw2017slepaths})}\\
&= \frac{\p[ \wt{E}_k \giv \eta_{1,k}, \eta_{2,k} \in F, \CG]}{\p[ \wt{E}_k \giv \CG]} \p[ \eta_{1,k}, \eta_{2,k} \in F \giv \CG],
\end{align*}
using Bayes' rule for the equality. Note that the implicit constants depend only on $\xi$. Rearranging gives that
\[ \p[ \wt{E}_k \giv \eta_{1,k}, \eta_{2,k} \in F, \CG] \asymp \p[ \wt{E}_k \giv \CG] \geq \epsilon^{3/2 - u + o(1)}\]
uniformly in the choice of $F$. We therefore have that
\[ \p[ \wt{E}_k \giv \eta_{1,k}, \eta_{2,k}, \CG] \geq \epsilon^{3/2 - u + o(1)}.\]
By decreasing the value of $\xi > 0$ if necessary, we can thus make the probability that this part holds as close to $1$ as we want.

Altogether, we see that we can make each of the events in part~\eqref{it:ek2_p2} and part~\eqref{it:ek2_p3} hold with probability at least $1 - p / 4$ where $p$ is the constant in part~\eqref{it:ek2_p1}. Combining we obtain that for $\xi > 0$ sufficiently small, we have that the events in all three parts hold simultaneously with probability at least $p / 2$, which completes the proof.
\end{proof}

\begin{proof}[Proof of Theorem~\ref{thm:sle4_thm}, lower bound]
\noindent{\it Step 1. Setup.}  Suppose that we have the same setup as just before the statement of Lemma~\ref{lem:sle4_estimate}. Fix $\zeta>0$ and pick $\xi>1$ and $u > 0$ such that $1 < \xi < (1/3 + \zeta)(3 - 2u)$.  
Let $F_{k,\epsilon}$ be the event that there is a ball $B(z,\epsilon^\xi) \subseteq B(\eta(t_k),2\epsilon) \cap \C_L$. Then Lemma~\ref{lem:sle4_estimate}, Remark~\ref{rem:fitball} and the scaling and translation invariance properties of the joint law of $(\eta_1,\eta_2)$ imply that there exists $p_2 > 0$ so that $\p[E_k^3] \geq p_2 > 0$ for all $\epsilon > 0$ sufficiently small, where $E_{k}^3 = E_{k}^2 \cap F_{k,\epsilon}$.

\noindent{\it Step 2 The event $E_k^3$ occurs for a positive fraction of times.}  Let $n(\epsilon) = |(t_k)|$.  Then the fraction $F(\epsilon) = \frac{1}{n(\epsilon)}\sum_{k=1}^{n(\epsilon)} \one_{E_k^3}$ of $t_k$'s for which the event $E_k^3$ occurs is a non-negative random variable with $\E[F(\epsilon)] \geq p_2 > 0$. Hence, by the Paley-Zygmund inequality, we have for $\theta \in (0,1)$ that
\[ \p[ F(\epsilon) > \theta p_2] \geq \frac{(1-\theta)^2 p_2^2}{\E[F(\epsilon)^2]} \geq (1-\theta)^2 p_2^2.\]
That is, with positive probability, the events $E_k^3$ occur at a positive fraction of times $t_k$ with $1 \leq k \leq n(\epsilon)$.  Let $\CI = \{1 \leq k \leq n(\epsilon): E_k^3 \, \text{occurs}\}$.  Taking $\theta = 1/2$, we have shown that with probability at least $p_2^2/4$, we have $|\CI| \gtrsim p_2 \epsilon^{-\frac{3}{2}(a-r)}$.

\noindent{\it Step 3. Lower bound is attained with positive probability.}
By definition, we have on $E_k^3$ that $\p[E_k^1  \giv \eta_{k,1}, \eta_{k,2}, \ol{\eta}_k] \geq \epsilon^{\frac{3}{2}-u +o(1)}$.  Conditionally on $\CI$ and $\eta_{k,1}$, $\eta_{k,2}$ for $k \in \CI$ we thus have that the number of $k \in \CI$ for which $E_k^1$ occurs is stochastically dominated from below by a binomial random variable with parameters $|\CI|$ and $\epsilon^{\frac{3}{2}-u +o(1)}$.  Thus choosing $a$ close to $1$ and $r$ close to $0$ so that $\frac{3}{2} - u + \frac{3}{2}(r-a) < 0$ it follows by binomial concentration that on the event $|\CI| \gtrsim p_2 \epsilon^{-\frac{3}{2}( a-r)}$ we have off an event which decays to $0$ as $\epsilon \to 0$ faster than any power of $\epsilon$ that there exists $1 \leq k \leq n(\epsilon)$ so that $E_k^1$ occurs. Let $C_{\epsilon}$ be the event that $E_{k}^1$ occurs for some $1 \leq k \leq n(\epsilon)$ and for $d \in (0,1)$ let $A_d$ be the event that the following hold
\begin{enumerate}[(i)]
\item $\dist(\eta(I), \eta((-\infty,0])) \geq d$,
\item $d \leq |\varphi_2'(z)| \leq d^{-1}$ for all $z \in \C_L$ such that $\dist(z,\eta(I)) \leq d / 2$,
\item $\dist(\varphi^{-1}(\varphi_2(z)), J) \geq d$ for all $z \in \C_L$ such that $\dist(z,\eta(I)) \leq d / 2$,
\end{enumerate}
where $J = \{e^{i\theta} : \pi / 2 \leq \theta \leq 3\pi / 2\}$. Since $\p[ A_d ] \to 1$ as $\epsilon \to 0$, we can find constants $p,d \in (0,1)$ such that $\p[ A_d \cap C_{\epsilon} ] \geq p$ for all $\epsilon$ sufficiently small. Suppose that we are working on the event $A_d \cap C_{\epsilon_n}$ for $\epsilon_n = 2^{-n}$ ($n$ sufficiently large) and let $B(z,\epsilon_{n}^{\xi})  \subseteq B(\eta(t_k),2\epsilon_n) \cap \C_L$ be the ball considered in the event $F_{k,\epsilon_{n}}$. Set $z_0 = \varphi_2(z)$. Then the Koebe-$1/4$ theorem implies that $\dist(z_0, \partial \D_L) \geq \tfrac{d}{4} \epsilon_n^\xi$. Moreover, the probability that a Brownian motion starting from $z_0$ exits $\D$ on $J$ is at most $e^{-\epsilon_n^{-3+2u}}$ and so conformal invariance implies that the probability that a Brownian motion starting from $\varphi^{-1}(z_0)$ exits $\D$ on $J$ is at most $e^{-\epsilon_{n}^{-3+2u}}$. This implies that $\dist(\varphi^{-1}(z_0), \partial \D) \lesssim e^{-\epsilon_{n}^{-3+2u}}$ and so  we can find  $w_n$ in $\D$ such that $|\varphi^{-1}(z_0) - w_n| \lesssim e^{-\epsilon_{n}^{-3+2u}}$  and if $z_n = \varphi(w_n)$, we have that 
\begin{align*}
|\varphi(\varphi^{-1}(z_0)) - \varphi(w_n)|\left( \log \frac{1}{|\varphi^{-1}(z_0) - w_n|} \right)^{1/3 + \zeta} \gtrsim \epsilon_{n}^{\xi + (2u-3)(1/3 + \zeta)}
\end{align*}
where the implicit constant depends only on $d$. Therefore letting $n \to \infty$, we obtain that with probability at least $p$ we have that
\begin{equation}
\label{eq:non_regularity}
\sup_{z, w \in \D,\ z \neq w} |\varphi(z)-\varphi(w)| \!\left( \log \frac{1}{|z-w|} \right)^{1/3+\zeta} = \infty.
\end{equation}

We note that by absolute continuity, the same holds for $\SLE_4(\rho_L;\rho_R)$ processes with $\rho_L,\rho_R > -2$ (here we mean $\SLE_4(\rho_L;\rho_R)$ processes from $-i$ to $i$ in $\D$ place of the $\SLE_4$ process in $\D$ in the statement of the theorem). Indeed, if we map out two small segments of the beginning and at the end of the $\SLE_4(\rho_L;\rho_R)$ process $\eta^*$, i.e., map them to the boundary $\partial\D$ with some conformal map, say $g$, and denote the resulting curve by $\eta^\circ$, then on the positive probability event that $\eta^\circ$ does not intersect $\partial \D$ before hitting $i$,  $\eta^\circ$ is absolutely continuous with respect to an $\SLE_4$ process everywhere. Hence clearly, conditional on the event that $\eta^\circ$ does not hit $\partial \D$ before $i$,~\eqref{eq:non_regularity} holds with positive probability for this curve. Since $g$ is smooth on $\eta^\circ$, we have that if~\eqref{eq:non_regularity} holds somewhere on $\eta^\circ$ (for the map $\varphi^\circ$, taking $\D$ to the left side of $\eta^\circ$ as in Theorem~\ref{thm:sle4_thm}), then the same holds somewhere on $\eta^*$, away from the boundary (for the map $\varphi^*$, defined analogously).

\noindent{\it Step 3.  From positive probability to probability $1$.}  We shall be rather brief, as a similar strategy is explained in the proof of Lemma~\ref{lem:fitball}. Let $\wt \eta$ be the $\SLE_4$ in the statement of the theorem and consider a conformal map $\psi: \D \to \h$, with $\psi(-i) = 0$, $\psi(i) = \infty$ and, say, $\psi(-1) = -1$. The image of $\wt \eta$ under $\psi$ is that of an $\SLE_4$ process $\wh \eta$ in $\h$ from $0$ to $\infty$.  We assume that $\wh{\eta}$ is the $0$-height level line of a GFF on $\h$ with boundary conditions given by $-\lambda$ on $\R_-$ and $\lambda$ on $\R_+$.  For $j \in \N$, let $A_j = B(0,2^j) \setminus B(0,2^{j-1})$ and let $\tau_j$ be the first hitting time of $\partial B(0, (1+\frac{1}{10})2^{j-1})$ for $\wh \eta$. We fix some $\vartheta > 0$ and let $\wh \eta_j^1$ (resp.\ $\wh \eta_j^2$) be the level line of height $-\vartheta$ (resp.\ $+\vartheta$) started from $\wh \eta(\tau_j)$, stopped upon exiting $A_j$. Then with positive probability, uniform in the realization of $\wh \eta([0,\tau_j])$, $\wh \eta_j^1$ and $\wh \eta_j^2$ form a pocket $P_j$, containing a ball of diameter $2^{j-1}$ before exiting $A_j$. We let $\phi_j:\D \to P_j$ be the conformal map taking $-i$ (resp.\ $i$) to the opening (resp.\ closing) point of the pocket and such that $\phi_j'(0) > 0$. Let $P_j^L$ denote part of $P_j$ which is to the left of $\wh \eta$. Let $\wh \eta^j$ be the image under $\phi_j^{-1}$ of the part of $\wh \eta$ which is in $P_j$. Then the law of $\wh \eta^j$ is that of an $\SLE_4(\rho_L;\rho_R)$ process from $-i$ to $i$ in $\D$, where $\rho_L,\rho_R$ depend on $\vartheta$. We set $\D_L^j = \phi_j^{-1}(P_j^L)$ and let $\varphi^j: \D \to \D_L^j$ be the conformal map fixing $-i$, $i$ and $-1$ (so that it is the analogue of $\varphi$, but for the $\SLE_4(\rho_L;\rho_R)$ process $\wh \eta^j$). Then since the laws of $\wt \eta$ and $\wh \eta^j$ are mutually absolutely continuous away from $\partial \D$, we have that with positive probability~\eqref{eq:non_regularity}, independent of $j$, for $\varphi^j$ as explained above. Moreover, if we let $O_j = (\psi \circ \varphi)^{-1}(P_j^L)$ and $f_j : O_j \rightarrow \D$ be a conformal map taking $\partial O_j \cap \partial \D$ to the $\{z \in \partial \D: \re(z) \geq 0 \}$. This leaves one degree of freedom, which can be chosen so that $\varphi^j \circ f_j = (\phi_j^{-1} \circ \psi \circ \varphi)|_{O_j}$. Since $\psi$, $f_j$ and $\phi_j^{-1}$ are smooth on the images and preimages of the curve $\wt \eta$ under the various maps, we have that if $\varphi^j$ satisfies~\eqref{eq:non_regularity} for some $j$, then so does $\varphi$. Consequently, since this occurs with positive probability, uniform in $j$, for each $\varphi^j$ it occurs a.s.\ for $\varphi$.  This concludes the proof.
\end{proof}

\subsection{Proof of Theorem~\ref{thm:jones_smirnov_sle4}}

The strategy will be to consider a Whitney square decomposition of $\D_L$ and note that the quasihyperbolic distance between two points is roughly minus $\log$ of the probability that a Brownian motion started from one point reaches the Whitney square containing the other point before exiting $\D_L$. With this in mind, we can consider the escape probabilities of points close to the boundary and use them to lower bound the quasihyperbolic distance between them and a fixed interior point.

Let $(Q_j)$ be a Whitney square decomposition of $\D_L$ and denote by $x_j$ the center of $Q_j$. We choose it so that the side length of $Q_j$ is $2^{-n_j}$ for some $n_j \in \Z$ and $\diam(Q_j) \leq \dist(Q_j,\partial \D_L) < 4 \diam(Q_j)$. We let $G = (V,E)$ be the graph with vertices $V = (x_j)$ and edge set $E$ such that $\{x_i,x_j\} \in E$ if $Q_i$ and $Q_j$ are share a boundary segment. Let $d_G$ denote the graph distance in $G$, that is, $d_G(x_i,x_j)$ is the minimal number of Whitney squares one can cross when traveling from $x_i$ to $x_j$ in $G$. Then
\begin{align*}
	d_G(x_i,x_j) \asymp \distqh(x_i,x_j),
\end{align*}
where the implicit constants depend only on the Whitney square decomposition. Next, we note that if $z,w \in Q_j$, then $\distqh(z,w) \leq 1$. Indeed,
\begin{align*}
	\distqh(z,w) = \inf_{\gamma: \gamma(0) = z, \, \gamma(1) = w} \int_\gamma \frac{1}{\dist(\xi,\partial \D_L)} |d\xi| \leq \frac{|z - w|}{\dist(Q_j,\partial \D_L)} \leq \frac{\diam(Q_j)}{\dist(Q_j,\partial \D_L)} \leq 1.
\end{align*}
In the same way, $\distqh(z,x_j) \leq 1/2$ whenever $z \in Q_j$.
Let $\wh{Q}_j$ denote the square with center $x_j$, but with half the side length of $Q_j$ and define $x(w) = x_j$ if and only if $w \in Q_j$. Then,
\begin{align*}
	\distqh(w,z) \asymp d_G(x(w),x(z))
\end{align*}
whenever $w \in \wh{Q}_j$ and $z \notin Q_j$, where the implicit constant depends only on the Whitney square decomposition.

We now relate the quantities to the hitting probability of Brownian motion. Let $B$ be a Brownian motion with $B_0 = z_0 \in \wh{Q}_j$. Assume that $Q_k$ shares a boundary segment with $Q_j$. Then $\frac{1}{8} \diam(Q_j) \leq \diam(Q_k) \leq 8 \diam(Q_j)$ and hence there is a $p>0$ such that, uniformly in $z_0 \in \wh{Q}_j$, the probability that $B$ hits $\wh{Q}_k$ before exiting the domain is at least $p$. Moreover, there is a $q>0$ such that uniformly for $w_0 \in Q_j \setminus \wh{Q}_j$, the probability that $B$ hits $\wh{Q}_j$ before exiting $\D_L$ is at least $q$. Clearly, $q$ is independent of the size of $Q_j$. Consequently,
\begin{align*}
	\p[B \, \text{hits} \, Q_k \, \text{before exiting} \,  \D_L] \geq q p^{d_G(x_j,x_k)},
\end{align*}
that is,
\begin{align*}
	d_G(x_j,x_k) \gtrsim -\log \p[B \, \text{hits} \, Q_k \, \text{before exiting} \, \D_L],
\end{align*}
where the implicit constant depends only on the Whitney square decomposition. Next, we recall that for sufficiently small $\epsilon > 0$, there exist points $z_l$ with $\dist(z_l,\D_L) \in [c_1 \epsilon,c_2\epsilon]$, $0< c_1 < c_2 < \infty$, such that the probability that a Brownian motion started at $z_l$ travels macroscopically far away with probability at most $\exp(-\epsilon^{-3})$. This holds by the conformal invariance of Brownian motion, since the map $\varphi_2$ (defined in the beginning of this section) is smooth away from $\pm i$, and since there exist points $z \in \C_L$ of distance $[\epsilon,2\epsilon]$ from the two-sided whole-plane $\SLE_4$ considered above, at which the escape probability is at most $\exp(-\epsilon^{-3})$. Thus, if we fix some square $Q_k$, the probability that a Brownian motion started at such a point $z_l$ has probability at most $\exp(-\epsilon^{-3})$ of hitting $Q_k$ before exiting the domain. Moreover, assuming that $\dist(z_l,Q_k) > c$ for some $c>0$ we have that
\begin{align*}
	\sup_{z \in B(z_l,\epsilon/2)} \p^z[B \, \text{hits} \, Q_k \, \text{before exiting} \,  \D_L] \asymp \p^{z_l}[B \, \text{hits} \, Q_k \, \text{before exiting} \,  \D_L]
\end{align*}
for some universal implicit constant. Thus,
\begin{align*}
	&\int_{B(z_l,\epsilon/2)} \distqh(w,x_k) dw \gtrsim \int_{B(z_l,\epsilon/2)} d_G(x(w),x_k) dw \\
&\gtrsim \int_{B(z_l,\epsilon/2)} -\log \p^w[B \, \text{hits} \, Q_k \, \text{before exiting} \,  \D_L] dw \gtrsim \int_{B(z_l,\epsilon/2)} \epsilon^{-3+o(1)} dw \gtrsim \epsilon^{-1 +o(1)},
\end{align*}
with implicit constants depending only on the Whitney square decomposition. Since the event that there exists $\epsilon_0 > 0$ so that for all $\epsilon \in (0,\epsilon_0)$ there is such a point $z_l$ has positive probability, it follows that $\p[\distqh(\cdot,x_k) \notin L^1(\D_L)] > 0$. The a.s.\ statement follows from the same argument as in Step 3 of the proof of the lower bound of Theorem~\ref{thm:sle4_thm}.
\qed

\appendix

\section{Bounds and moments for circle-average embedded surfaces}
\subsection{Moment bounds of a quantum wedge}
In this section we our goal is to prove the following.
\begin{lemma}\label{lem:pthmoment}
Let $(\strip,h,-\infty,+\infty) \sim \qwedgeW{\gamma}{\gamma^2/2}$ have the first exit parameterization. Then, for every $p \in (-\infty,\min(\frac{2}{\gamma^2},\frac{3}{2}))$,
\begin{align}
	c_p \coloneqq \E[\mu_h(\strip_-)^p] < \infty.
\end{align}
Moreover, if $u_{\alpha,\epsilon} \coloneqq \inf \{t \in \R: X_t = \alpha\log\epsilon\}$, where $X_t$ is the average on vertical lines process of $h$, then
\begin{align}\label{eq:pthmomenteps}
	\E[\mu_h(\strip_-+u_{\alpha,\epsilon})^p] = c_p \epsilon^{\alpha p \gamma}.
\end{align}
Furthermore, if $p \in (-\infty, \min( \frac{4}{\gamma^2},3))$ we have that
\begin{align}
	\wt{c}_p = \E[ \nu_h((-\infty,0]\times\{0\})^p] = \E[ \nu_h((-\infty,0]\times\{\pi\})^p] < \infty,
\end{align}
and
\begin{align}\label{eq:pthmomentepsbdy}
	\E[ \nu_h((-\infty,u_{\alpha,\epsilon}]\times\{0\})^p] = \E[ \nu_h((-\infty,u_{\alpha,\epsilon}]\times\{\pi\})^p]  = \wt{c}_p \epsilon^{\alpha p \gamma/2}.
\end{align}
\end{lemma}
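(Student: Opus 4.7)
My approach exploits the decomposition $h = X_{\re(\cdot)} + h_2$, where $X$ is independent of the $H_2(\strip)$-projection $h_2$, so that the measures factor as $\mu_h(dz) = e^{\gamma X_{\re(z)}}\, d\mu_{h_2}(z)$ and $\nu_h(dz) = e^{(\gamma/2) X_{\re(z)}}\, d\nu_{h_2}(z)$. Partitioning $\strip_- = \bigsqcup_{k\geq 0} R_k$ with $R_k = (-k-1,-k]\times(0,\pi)$, and writing $X_k^* = \sup_{[-k-1,-k]} X$, $X_k^- = \inf_{[-k-1,-k]} X$, one has $e^{\gamma X_k^-}\mu_{h_2}(R_k) \leq \mu_h(R_k) \leq e^{\gamma X_k^*}\mu_{h_2}(R_k)$. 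Since $X_t = -Y_{-2t}$ for $t \leq 0$ with $Y$ a $\BES^3$ from $0$, the explicit density of $Y_{2k}$ used in Lemma~\ref{lem:bes_rn_derivative} gives $\E[e^{\gamma p X_k^*}] \leq C_p k^{-3/2}$ for $p > 0$ and $k \geq 1$. Translation invariance of $h_2$ gives $\mu_{h_2}(R_k) \stackrel{d}{=} \mu_{h_2}(R_0)$, and the standard GMC moment bounds yield $\E[\mu_{h_2}(R_0)^p] < \infty$ in the range specified by the lemma.

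For positive $p$ I would split into the ranges $(0,1)$ and $[1,3/2)$. In the first, subadditivity $(\sum a_k)^p \leq \sum a_k^p$ combined with independence of $X$ and $h_2$ gives
\begin{align*}
    \E[\mu_h(\strip_-)^p] \leq \E[\mu_{h_2}(R_0)^p] \sum_{k\geq 0} \E[e^{\gamma p X_k^*}],
\end{align*}
which is finite. In the second, Minkowski's inequality yields $(\E[\mu_h(\strip_-)^p])^{1/p} \leq \E[\mu_{h_2}(R_0)^p]^{1/p} \sum_k \E[e^{\gamma p X_k^*}]^{1/p}$, with $k$-th term of order $k^{-3/(2p)}$, which is summable precisely when $p < 3/2$. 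For negative $p$ I would discard all terms except $k=0$ and use $\mu_h(\strip_-) \geq e^{\gamma X_0^-} \mu_{h_2}(R_0)$; the Pitman identity $Y_s = 2\sup_{r\leq s} B_r - B_s$ shows $-X_0^- = \sup_{s\in[0,2]} Y_s$ has exponential moments of all orders, while $\mu_{h_2}(R_0)^{-|p|}$ has finite expectation for every $|p|$ by standard negative-moment bounds for GMC. The parallel argument with $\gamma/2$ in place of $\gamma$ yields the corresponding statement for $\nu_h$ in the wider range $p < \min(4/\gamma^2, 3)$.

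For the scaling identity~\eqref{eq:pthmomenteps}, I would change variables $t = s + u_{\alpha,\epsilon}$ and pull out the $\epsilon^{\alpha\gamma}$ factor to write
\begin{align*}
    \mu_h(\strip_- + u_{\alpha,\epsilon}) = \epsilon^{\alpha\gamma} \int_{-\infty}^0 e^{\gamma Z_s}\, d\mu_{h_2}(s + u_{\alpha,\epsilon}), \qquad Z_s := X_{s+u_{\alpha,\epsilon}} - \alpha\log\epsilon.
\end{align*}
Translation invariance of $h_2$ (which is independent of $X$) combined with conditioning on $X$ shows that the right-hand factor has the same law as $\mu_h(\strip_-)$ provided $Z$ has the same law as $X|_{(-\infty,0]}$. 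Now $u_{\alpha,\epsilon} = -T/2$, where $T = \sup\{s \geq 0 : Y_s = L\}$ is the last hitting time of $L = -\alpha\log\epsilon$ by $Y$, and a classical $h$-transform computation identifies $(Y_{T+r})_{r \geq 0}$ as a $\BES^3$ from $L$ conditioned never to return to $L$, with drift $1/(y-L)$ in place of $1/y$; the substitution $\wt Y_r = Y_{T+r} - L$ converts this into a $\BES^3$ from $0$, so $\wt Y \stackrel{d}{=} Y$ and hence $Z \stackrel{d}{=} X|_{(-\infty,0]}$, which proves~\eqref{eq:pthmomenteps}. The identity~\eqref{eq:pthmomentepsbdy} is identical after replacing $\gamma$ by $\gamma/2$ throughout. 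The main subtlety is the interaction between the random shift $u_{\alpha,\epsilon}$ and the independence and translation-invariance structure of $h_2$ when conditioning on $X$, together with rigorously verifying the post-last-hitting-time identification of $\wt Y$ as a $\BES^3$ from $0$.
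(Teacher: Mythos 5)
Your proposal follows the paper's proof very closely: same $h = X + h_2$ decomposition, same box partition of $\strip_-$, the same use of Lemma~\ref{lem:bes_rn_derivative} to get $\E[e^{\gamma p X_k^*}] \lesssim k^{-3/2}$, the same negative-moment bound via a single box, and the same last-exit decomposition of $\BES^3$ for the scaling identity~\eqref{eq:pthmomenteps}. The one genuine difference is in the range $p \in [1, 3/2)$: you sum via Minkowski's inequality, giving $\|\mu_h(\strip_-)\|_p \leq \sum_k \|\mu_h(A_k)\|_p \lesssim \sum_k k^{-3/(2p)}$, which converges iff $p < 3/2$; the paper instead uses a Jensen-type argument with the random weights $p_k = e^{\gamma X_k^*}/\sum_n e^{\gamma X_n^*}$ followed by a second Jensen step with deterministic weights $(-k)^{-\alpha}$. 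Both produce the same threshold $p < 3/2$, but your Minkowski step is cleaner and avoids the auxiliary weight optimization. Your explicit $h$-transform/Pitman identification of the post-$T$ process as a $\BES^3$ from $0$ shifted by $L$ is exactly the content of the citation to \cite{ry1999book} in the paper; spelling it out is a useful clarification, and your attention to the independence of $h_2(\cdot + u_{\alpha,\epsilon})$ from $(\wt X, u_{\alpha,\epsilon})$ is the right subtlety to worry about.

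One point to flag, which is inherited from the lemma statement rather than introduced by you: the asserted boundary range $p < \min(4/\gamma^2, 3)$ is not actually delivered by the argument you (or the paper) give. Replacing $\gamma$ by $\gamma/2$ does not change the $k^{-3/2}$ decay rate in Lemma~\ref{lem:bes_rn_derivative} (the exponent $3/2$ is independent of the Laplace parameter $s$), so both the Minkowski and the Jensen approaches yield $p < \min(4/\gamma^2, 3/2)$ for $\nu_h$ as well. The paper's own proof handles the boundary case by a one-line remark (``proven similarly''), so the extended range $[3/2, 3)$ is not substantiated anywhere. You assert it as following ``in the wider range'' from ``the parallel argument,'' but the parallel argument gives $3/2$, not $3$. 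This is worth noting; it appears to be an error in the lemma's stated range, though it seems harmless since the paper only ever invokes the boundary-length moments for $p$ small or negative.
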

In proving Lemma~\ref{lem:pthmoment}, it is convenient to decompose the strip $\strip$ into the rectangles $A_k = (k -1 , k] \times (0,\pi)$, for $k \in \Z$ and use the decomposition $h = X + h_2$, where $(-X_{-t/2})_{t \geq 0}$ is a $\BES^3$ and $h_2$ is the lateral part of a free boundary GFF, independent of $X$. We then recall that the Laplace transform of $X$ decays at a sufficient rate (Lemma~\ref{lem:bes_rn_derivative}) which together with the translation invariance of $h_2$ helps us bound the moments of the quantum area and boundary length of each square $A_k$ for $k \leq 0$. With this it is straightforward to bound the moments of $\mu_h(\strip_-)$ and $\nu_h((-\infty,0] \times \{0\})$. The natural route is to first prove that the quantum area (resp.\ boundary length) of $A_0$ with respect to a free boundary GFF on $\strip$ has finite moments of orders between $0$ and $2/\gamma^2$ (resp.\ $4/\gamma^2$). This is then used to bound the moments of the lateral part of the free boundary GFF. Before doing that, however, we shall prove a result on the exponential moments of the supremum of the harmonic part of a GFF in some sets. This is  used in many places throughout the paper, among other things it will aid us in proving that the negative moments of the quantum measure with respect to a free boundary GFF are finite (when not too small sets are considered).

\begin{lemma}\label{lem:bound_harmonic_part}

Let $D \subseteq \h$ be a simply connected domain and $U \subseteq D$ be a bounded subdomain such that $\partial U \cap \partial D \subset \R$ be connected and $\dist( \partial U \cap \partial D, \R \setminus \partial D) > 0$. 
Furthermore, we let $h$ be a free boundary GFF on $D$ and $h = h^0 + \Fh$ be the Markovian decomposition into a zero-boundary GFF $h^0$ on $U$ and a random distribution $\Fh$ which is harmonic on $U$. Assume that the additive constant for $h$ has been chosen either of the following items hold:
\begin{itemize}
	\item $(h,\rho) = 0$, where $\rho$ is a fixed finite Radon measure on $\ol{D}$, with bounded support contained in $D \cup (\partial D \cap \R)$ and such that $\sup_{x \in K}\left|  \int \greenN{D}(x,y) \rho(dy) \right| < \infty$ for each compact set $K \subset D \cup (\partial D \cap \R)$.
	\item $\Fh(z_0) = 0$, for some fixed $z_0 \in U$.
\end{itemize}
Fix $b\geq 1$ and $c \in (1,2b)$. Then, for each $z \in U$ such that $\dist(z,\partial U) > 2b\epsilon$,
\begin{align*}
	\E\!\left[ \exp\!\left( a \sup_{w \in B(z,\epsilon)} \Fh(w)\right) \right] = O(\epsilon^{-\frac{a^2 c^4}{(c-1)^4}} ),
\end{align*}
where the implicit constant depends only on $b$, $c$, $D$, $U$ and the way that we have fixed the additive constant.
\end{lemma}
\begin{proof}

We begin by showing that that $\var[\Fh(w)] \leq -2 \log \epsilon + O(1)$, uniformly in $\epsilon$ and $w \in U$ with $\dist(w,\partial U) \geq (2b-c)\epsilon$. Fix $z_0 \in U$ and let $\phi: D \to \h$ be the conformal map such that $\phi(z_0) = i$ and $\phi'(z_0) > 0$. Let $I$ be the interior in $\R$ of the set $\partial D \cap \R$ Then, by Schwarz reflection, $\phi$ extends to a conformal map from $D \cup D^* \cup I$ to $\h \cup \h^* \cup \phi(I)$ where $D^*$ (resp.\ $\h^*$) is the reflection of $D$ (resp.\ $\h$) across $\R$. We note that, since $\dist(U,\partial D \setminus I) > 0$, there exists a constant $M > 1$, depending only on $D$ and $U$, such that $M^{-1}  \leq |\phi'(w)| \leq M$ for all $w \in U$ and hence we have by the Koebe-$1/4$ theorem that
\begin{align*}
	\dist(\phi(w),\partial \h) \geq \frac{2b-c}{4M} \epsilon, \quad \text{for all} \ w \in U \ \text{such that} \ \dist(w,\partial U) \geq (2b-c)\epsilon.
\end{align*}
We set $\wt{h} = h \circ \phi^{-1}$, $\wt{h}^0 = h^0 \circ \phi^{-1}$, $\wt{\Fh} = \Fh \circ \phi^{-1}$ and $\wt{U} = \phi(U)$ and note that it is enough to prove that $\var[\wt{\Fh}(w)] \leq -2 \log \epsilon + O(1)$ uniformly in $\epsilon$ and $\{ \wt{w} \in \h: \im(\wt{w}) \geq (4M)^{-1} (2b-c) \epsilon \}$.

Assume, for now, that $h$ is normalized so that $(h,\rho) = 0$, for some finite Radon measure $\rho$ as in the statement of the lemma. Then, setting $\wt{\rho} = |(\phi^{-1})'|^2 ( \rho \circ \phi^{-1})$, we have that $\sup_{\wt{w} \in K} \left| \int \greenN{\h}(\wt{w},z) \wt{\rho}(dz) \right| < \infty$ for all $K \subseteq \h \cup \phi(I)$ compact. Letting $\wt{h}_\delta(\wt{w})$ (resp.\ $\wt{h}_\delta^0(\wt{w})$) denote the average value of $\wt{h}$ (resp.\ $\wt{h}^0$) on $\partial B(\wt{w},\delta)$, we have that $\wt{h}_\delta(\wt{w}) = \wt{h}_\delta^0(\wt{w}) + \wt{\Fh}(\wt{w})$, since $\wt{\Fh}$ is harmonic. We shall use this identity to deduce the variance of $\wt{\Fh}(\wt{w})$, however, we must a bit careful, as $\wt{h}^0$ and $\wt{\Fh}$ are not independent when we have fixed the additive constant in this way. Indeed, if $h_u$ denotes the field without normalization, then $(h,\rho_1) = (h_u,\rho_1-\rho)$ and we let $\wt{h}_u = h_u \circ \phi^{-1}$.Then, $(\wt{h},\rho_1) = (\wt{h}_u,\rho_1-\wt{\rho})$ and the Markov decomposition of $\wt{h}_u$ is given by $\wt{h}^0 + \wt{\Fh}_u$ where $\wt{h}^0$ and $\wt{\Fh}_u$ are exactly independent and where the relationship between $\wt{\Fh}_u$ and $\wt{\Fh}$ is that $(\wt{\Fh},\rho_1) = (\wt{\Fh}_u,\rho_1-\wt{\rho})-(\wt{h}^0,\wt{\rho})$. In particular, $\cov((\wt{h}^0,\rho_1),(\wt{\Fh},\rho_2)) = -\int \greenD{\h}(x,y)\rho_1(dx)\wt{\rho}(dy)$. Thus,
\begin{align}
	\var[\wt{\Fh}(\wt{w})] &= \var[ \wt{h}_\delta(\wt{w})] - \var[ \wt{h}_\delta^0(\wt{w})] -2\cov(\wt{h}_\delta^0(\wt{w}),\wt{\Fh}(\wt{w})) \nonumber \\
	&= \var[ \wt{h}_\delta(\wt{w})] - \var[ \wt{h}_\delta^0(\wt{w})] + 2 \int \greenD{\h}(x,y)\rho_{\wt{w},\delta}(dx)\wt{\rho}(dy), \label{eq:variance}
\end{align}
where $\rho_{\wt{w},\delta}$ denotes the uniform measure on $\partial B(\wt{w},\delta)$. Since $\sup_{\wt{w} \in K} \left| \int \greenN{\h}(\wt{w},z) \wt{\rho}(dz) \right| < \infty$ for all compact $K \subseteq \h \cup \phi(I)$, we have that the third term is bounded by a constant $C > 0$, uniformly in $\wt{w} \in \h$ with $\im(\wt{w}) \geq (4M)^{-1} (2b-c)\epsilon$ and all $\delta \in (0,(8M)^{-1} (2b-c)\epsilon)$. Moreover, $\var[\wt{h}_\delta^0(\wt{w})] = - \log \delta + \log \confrad(\wt{w},\h) \geq -\log \delta + \log \im(\wt{w})$. Finally, arguing as in Lemma~\ref{lem:RNint}, we get that $\var[ \wt{h}_\delta(\wt{w})] = -\log \delta - \log \im(\wt{w}) + O(1)$ for all $\delta$ small enough, where the constants in the $O(1)$-term depend only on $D$, $U$ and $\rho$. Consequently, plugging this into~\eqref{eq:variance}, we have that
\begin{align*}
	\var[\wt{\Fh}(\wt{w})] = -2\log \im(\wt{w}) + O(1) \leq -2 \log \epsilon + O(1),
\end{align*}
where the constant in $O(1)$ depends only on $b$, $c$, $D$, $U$ and $\rho$.

Now, instead assume that $h$ is normalized so that $\Fh(z_0) = 0$, or equivalently, $\wt{h}$ is normalized so that $\wt{\Fh}(i) = 0$. By conformal invariance, we have that $\wt \Fh \overset{d}{=} \Fh^{\D} \circ F$ where $\Fh^{\D}$ is the harmonic part of a free boundary GFF in $\D$ normalized so that $\Fh^{\D}(0) = 0$ and $F(z) = -(z-i)/(z+i)$ is a conformal map taking $\h$ to $\D$ with $F(i) = 0$. By \cite[Lemma~2.9]{gms2018multifractal}, we have that
\begin{align*}
	\var[\Fh^{\D}(z)] = -2\log(1-|z|^2).
\end{align*}
Thus
\begin{align*}
	\var[\wt \Fh(\wt{w})] = \var[\Fh^{\D}(F(\wt{w}))] = -2 \log \frac{4 \im(\wt{w})}{|\wt{w}+i|^2} \leq -2 \log \epsilon + O(1),
\end{align*}
uniformly in $\epsilon > 0$ and $\wt{w} \geq (4M)^{-1} (2b-c)\epsilon$.

We now turn to deducing the result. Let $z \in U$ be such that $\dist(z,\partial U ) \geq 2 b \epsilon$. Since $\Fh$ is harmonic on $B(z,2b\epsilon)$, we have for each $w \in B(z,\epsilon)$ that
\begin{align*}
	\Fh(w) = \frac{1}{2\pi} \int_0^{2\pi} \Fh(z+c\epsilon e^{i\theta}) \frac{c^2\epsilon^2 - |w-z|^2}{|w-(z+c\epsilon e^{i\theta})|^2} d\theta \leq \frac{c^2}{2\pi (c-1)^2} \int_0^{2\pi} |\Fh(z + c\epsilon e^{i\theta})| d\theta.
\end{align*}
Then, by Jensen's inequality and the fact that for $\wh w \in U$ with $\dist(\wh{w},\partial U) \geq (2b-c)\epsilon$, $\var[\Fh(\wh w)] \leq -2 \log \epsilon + O(1)$, uniformly in $\epsilon> 0$, we have that
\begin{align*}
	\E\!\left[ \exp\!\left( a \sup_{w \in B(z,\epsilon)} \Fh(w) \right) \right] \leq \frac{1}{2\pi} \int_0^{2\pi} \E\!\left[ \exp\!\left( \frac{a c^2}{(c-1)^2}\Fh(z+c\epsilon e^{i\theta}) \right) \right] d\theta \leq \wt{C} \epsilon^{-\frac{a^2 c^4}{(c-1)^4}},
\end{align*}
as was to be proven.

\end{proof}

\begin{remark}
We note that the assumptions in the statement of Lemma~\ref{lem:bound_harmonic_part} are satisfied when $D = \h$ (resp.\ $D=\strip$) and $\rho$ is the uniform probability measure on $\h \cap \partial B(0,R)$ (resp.\ $\{0\} \times [0,\pi]$) for $R>0$ fixed.
\end{remark}

\begin{lemma}\label{lem:fbpmom}
Let $h$ be a free boundary GFF on $\strip$ with additive constant fixed so that the average of $h$ on the line $\{0\}\times (0,\pi)$ is equal to zero. Fix $\gamma \in (0,2]$. Then $\E\!\left[\mu_{h}(A_{0})^p \right] < \infty$ for all $p \in (-\infty,2/ \gamma^2)$ and $\E\!\left[\nu_{h}([-1,0] \times \{0,\pi\}))^p\right] < \infty$ for all $p \in (-\infty,4/ \gamma^2)$.
\end{lemma}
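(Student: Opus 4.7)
The plan is to bound the positive and negative moments of $\mu_h(A_0)$ and $\nu_h([-1,0]\times\{0,\pi\})$ separately using standard Gaussian multiplicative chaos techniques combined with the structure of the free boundary GFF on $\strip$.

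For the positive moments, I would use the decomposition $h = X + h_2$ where $X_t$ is the average of $h$ on the vertical line $\{t\}\times(0,\pi)$ (so under our normalization $X_0 = 0$ and $(X_{t/2})_{t \in \R}$ is a standard two-sided Brownian motion) and $h_2$ is the lateral part, which is independent of $X$. Since
\begin{align*}
    \mu_h(A_0) \leq e^{\gamma \sup_{t \in [-1,0]} X_t} \mu_{h_2}(A_0), \quad \nu_h([-1,0]\times\{0,\pi\}) \leq e^{(\gamma/2) \sup_{t \in [-1,0]} X_t} \nu_{h_2}([-1,0]\times\{0,\pi\}),
\end{align*}
H\"older's inequality together with the independence of $X$ and $h_2$ reduces the problem to bounding positive moments of $\mu_{h_2}(A_0)$ (respectively $\nu_{h_2}([-1,0]\times\{0,\pi\})$) up to any exponent strictly below $2/\gamma^2$ (respectively $4/\gamma^2$), since $\sup_{t \in [-1,0]} X_t$ has all exponential moments finite. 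These moment bounds for $h_2$ would be deduced by comparing $h_2$ to a translation-invariant free boundary field (adding back an independent Brownian motion) and invoking Kahane's convexity inequality against a reference log-correlated Gaussian field with the same covariance structure near $\partial\strip$. The thresholds $2/\gamma^2$ and $4/\gamma^2$ appear because $\greenN{\strip}(z,w) \sim -2\log|z-w|$ as $z,w \to \partial\strip$, which doubles the effective logarithmic covariance coefficient at the boundary and hence halves the usual bulk GMC threshold $4/\gamma^2$, while matching it for the $1$D boundary GMC (where the exponent $\gamma/2$ combines with the doubled boundary covariance to give effective log-constant $\gamma^2/2$).

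For the negative moments, I would fix a small disk $B = B(z_0,r) \subseteq A_0$ with $\overline{B} \subseteq \strip$ in the interior and apply the Markov decomposition $h|_{B'} = h^0 + \Fh$ on a slightly larger disk $B' \supseteq \overline{B}$, where $h^0$ is a zero-boundary GFF on $B'$ and $\Fh$ is the harmonic extension of the boundary values of $h$, independent of $h^0$. Then $\mu_h(A_0) \geq \mu_h(B) \geq e^{\gamma \inf_B \Fh} \mu_{h^0}(B)$, so for $p < 0$,
\begin{align*}
    \E[\mu_h(A_0)^p] \leq \E\!\left[ e^{|p|\gamma \sup_B(-\Fh)} \right] \E\!\left[ \mu_{h^0}(B)^p \right],
\end{align*}
which is finite by independence, Lemma~\ref{lem:bound_harmonic_part} (for the first factor), and \cite[Proposition~3.7]{rv2010gmcrevisit} (for the second). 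An analogous argument on a small boundary interval $[a,b] \subseteq (-1,0)$ bounded away from the corners, combined with the corresponding negative moment bound for the boundary GMC of a zero-boundary GFF, handles the negative moments of $\nu_h$.

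The main obstacle is establishing the sharp positive moment thresholds $2/\gamma^2$ and $4/\gamma^2$, since the interior contribution alone gives the looser bulk threshold $4/\gamma^2$ (respectively a larger one for the boundary measure), so it is the boundary behavior that pins down the stated values. Handling this rigorously amounts to a careful comparison argument near $\partial\strip$: for instance, splitting $A_0$ (resp.\ the boundary interval) into dyadic layers indexed by distance to $\partial\strip$, applying the Markov property and a scaling/translation argument in each layer via the conformal map $z\mapsto e^{z}$ to $\h$, and summing the resulting geometric series, which converges precisely when $p$ lies below the stated threshold.
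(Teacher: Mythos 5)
Your treatment of the negative moments is correct and is essentially the argument in the paper: restrict to an interior ball, apply the Markov decomposition $h = h^0 + \Fh$, use \cite[Proposition~3.6]{rv2010gmcrevisit} for $\mu_{h^0}$, Lemma~\ref{lem:bound_harmonic_part} for the harmonic part, and independence. The analogous boundary argument is also fine.

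The positive moment argument, however, has a gap. You reduce $\E[\mu_h(A_0)^p]$ to $\E[\mu_{h_2}(A_0)^{pp'}]$ via $\mu_h(A_0) \leq e^{\gamma \sup_{[-1,0]} X}\mu_{h_2}(A_0)$ and H\"older. This step is correct and costs nothing in the threshold (send $p' \downarrow 1$; $\sup X$ has all exponential moments), but it replaces the problem with one that is no easier: the lateral field $h_2$ has the same $-2\log|z-w|$ singularity near $\partial\strip$ as $h$ itself, so the threshold for $\mu_{h_2}$ is again $2/\gamma^2$ and must be established by some means. Your suggestion --- ``adding back an independent Brownian motion'' and invoking Kahane's convexity against a reference field with the same boundary covariance --- is circular as stated: adding back the independent radial part reconstructs a free boundary GFF, which is precisely the field whose moment bound you set out to prove, and any reference field with the correct boundary covariance is of the same type. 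The dyadic-layers/exponential-map sketch at the end is a legitimate strategy in principle (compare Lemma~\ref{lem:mass_close_to_boundary}, which carries out such a decomposition for a \emph{non}-sharp exponent), but proving convergence of the geometric series exactly up to $p < 2/\gamma^2$ is a genuine piece of analysis that you have not carried out, and it is exactly the content of the threshold. The paper avoids this entirely by conformally mapping to $\D$ and citing the known sharp Liouville moment bounds for the free boundary GFF on the disk, \cite[Lemmas~6.3, 6.9 and Theorem~4.2]{hrv2018lqgdisk} (and \cite[Proposition~3.5, Theorem~4.1]{hrv2018lqgdisk} for the boundary measure), then transferring via the coordinate change formula and a H\"older step to fix the normalization. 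Without an explicit reference or a completed scaling argument at the boundary, your positive-moment bound is not established, and the detour through $h_2$ does not help close it.
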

\begin{proof}
We begin by proving the analogous statement for a free boundary GFF $h'$ on $\h$ with zero average on $\h \cap \partial \D$ and with $A_0$ replaced by $\D_+ = \h \cap \D$. 

Let $\wt{h}$ be a free boundary GFF on $\h$ and write $\wt{h} = h^0 + \Fh$, where $h^0$ is a zero-boundary GFF on $\h$ and $\Fh$ is harmonic.  Assume that $\wt{h}$ is normalized so that $\Fh(i) = 0$. We consider the conformal transformation $F : \h \rightarrow \D$ with $F(z) = \frac{z-i}{z+i}$ and set $\wh h = \wt{h} \circ F^{-1} + Q\log(|(F^{-1})'|)$. Then $\wt{h} \circ F^{-1}$ is a free boundary GFF on $\D$ having zero average on $\partial \D$ since $\Fh \circ F^{-1}$ is the harmonic part of $\wt{h} \circ F^{-1}$ and $\Fh \circ F^{-1} (0) = 0$. By \cite[Theorem~1]{h2023moments} ($\gamma < 2$) and \cite[Theorem~4.2]{hrv2018lqgdisk} ($\gamma = 2$), we have that 
\begin{align}\label{eq:momentbound}
\E\!\left[ \mu_{\wt{h} \circ F^{-1}}(\D)^p \right] < \infty
\end{align}
for all $p \in (0,2/\gamma^2)$ (in fact, for all $p \in (-\infty,1)$ if $\gamma = 2$). By~\eqref{eq:LQG_coordinate_change}, we have that $\E[ \mu_{\wt h}(\D_+)^p] = \E[ \mu_{\wh h}(F(\D_+))^p ]$, so~\eqref{eq:momentbound} together with the fact that $|(F^{-1})'|$ is bounded on $F(\D_+)$, implies that
\begin{align*}
\E\left[ \mu_{\wt h}(\D_+)^p \right] < \infty,
\end{align*}
for all $p \in (0,2/\gamma^2)$. Let $\rho_{0,1}^+$ denote the uniform measure on $\h \cap \partial \D$. Since $h' = \wt h - (\wt h, \rho_{0,1}^+)$ is a free boundary GFF on $\h$ with zero average on $\h \cap \D_+$ and $(\wt h, \rho_{0,1}^+)$ is a zero mean Gaussian with finite variance, we have by H\"{o}lder's inequality that
\begin{align*}
\E\left[ \mu_{h'}(\D_+)^p \right] < \infty
\end{align*}
for all $p \in (0,2/\gamma^2)$. Then the claim follows by conformally mapping to $\strip$ and applying~\eqref{eq:LQG_coordinate_change}.

We now turn to the negative moments. Again, let $\wt h = h^0 + \Fh$ and consider the set $S = [-1,0] \times [1,2]$. By \cite[Proposition~3.6]{rv2010gmcrevisit} (and the comments in Remark~\ref{rmk:different_fields}), $\E[\mu_{h^0}(S)^{-p}]$ is finite for all $p>0$. Moreover, by the symmetry of $\Fh$ and Lemma~\ref{lem:bound_harmonic_part}, we have that
\begin{align*}
	\E[ \mu_{\wt h}(S)^{-p}] \leq \E[ e^{-\gamma p \inf_{z \in S} \Fh(z)}] \E[\mu_{h^0}(S)^{-p}] < \infty.
\end{align*}
Since $\mu_{\wt h}(A_0) \geq \mu_{\wt h}(S)$ a.s., we have that $\E[\mu_{\wt h}(A_0)^{-p}]$ is finite for all $p>0$.

Finally, by applying \cite[Proposition~3.5]{rv2010gmcrevisit} (keeping Remark~\ref{rmk:different_fields} in mind) instead of \cite[Theorem~1]{h2023moments} when $\gamma \in (0,2)$ and \cite[Theorem~4.1]{hrv2018lqgdisk} when $\gamma = 2$, we obtain in the same way that
\begin{align*}
\E\left[\nu_{h}([-1,0] \times \{0,\pi\})^p\right] < \infty
\end{align*}
for all $p \in (-\infty,4/ \gamma^2)$.
\end{proof}

With this lemma at hand, it is easy to see that the mass of each rectangle $A_k$, with respect to the lateral part of the field, has finite moments of orders $p \in (-\infty,2/\gamma^2)$ as well.
\begin{lemma}\label{lem:fblateral}
Fix $\gamma \in (0,2]$, let $h$ be as in Lemma~\ref{lem:fbpmom} and let $h_2$ be the projection of $h$ onto $H_2(\strip)$ (recall Remark~\ref{rmk:stripdecomposition}). Then it holds that $\E[\mu_{h_2}(A_k)^p] < \infty$ for all $k \in \Z$ and $p \in (-\infty,2/\gamma^2)$ and $\E[\nu_{h_2}([k-1,k] \times \{0,\pi\})^p] < \infty$ for all $k \in \Z$ and $p \in (-\infty,4/\gamma^2)$ .
\end{lemma}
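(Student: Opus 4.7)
The plan is to reduce the assertion to Lemma~\ref{lem:fbpmom} by exploiting the independence of the radial and lateral parts $h_1$ and $h_2$ of the free boundary GFF. By the translation invariance of the law of $h_2$ it suffices to treat $k = 0$. Write $h_1(z) = X_{\re(z)}$ where $X_t$ is the average of $h$ on the vertical line $\{t\} \times (0,\pi)$. Under the normalization fixing the average of $h$ on $\{0\} \times (0,\pi)$ to be zero, the pair $(X_t, X_{-t})_{t \geq 0}$ is (up to a deterministic time-change) a pair of independent standard Brownian motions started from $0$; in particular $X$ is a.s.\ continuous, $X_0 = 0$, and $h_1$ and $h_2$ are independent. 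Since $X|_{[-1,0]}$ is continuous and Gaussian, Borell-TIS (or an elementary Brownian argument) gives that $\sup_{t \in [-1,0]} X_t$ and $-\inf_{t \in [-1,0]} X_t$ have Gaussian tails and hence finite exponential moments of every order.

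Next I would establish the pointwise sandwich bound
\begin{align*}
e^{\gamma \inf_{[-1,0]} X}\, \mu_{h_2}(A_0) \;\leq\; \mu_h(A_0) \;\leq\; e^{\gamma \sup_{[-1,0]} X}\, \mu_{h_2}(A_0),
\end{align*}
and its boundary analog
\begin{align*}
e^{(\gamma/2) \inf_{[-1,0]} X}\, \nu_{h_2}([-1,0] \times \{0,\pi\}) \;\leq\; \nu_h([-1,0] \times \{0,\pi\}) \;\leq\; e^{(\gamma/2) \sup_{[-1,0]} X}\, \nu_{h_2}([-1,0] \times \{0,\pi\}).
\end{align*}
Both follow by inserting the inequality into the regularized approximations and passing to the limit: since the circle (resp.\ semicircle) averages $h_{1,\epsilon}(z)$ of $h_1$ converge uniformly on $\overline{A_0}$ (resp.\ on $[-1,0] \times \{0,\pi\}$) to the continuous function $X_{\re(z)}$ as $\epsilon \to 0$, for any $\delta > 0$ we have $|h_{1,\epsilon}(z) - X_{\re(z)}| \leq \delta$ uniformly for all sufficiently small $\epsilon$, and the bounds follow from the definitions~\eqref{eq:quantum_area} and~\eqref{eq:quantum_length} after letting $\delta \to 0$.

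For $p \in (0, 2/\gamma^2)$, the upper sandwich bound rearranges to
\begin{align*}
\mu_{h_2}(A_0)^p \;\leq\; e^{-\gamma p \inf_{[-1,0]} X}\, \mu_h(A_0)^p.
\end{align*}
Taking expectations, the independence of $X$ and $h_2$ factors the right-hand side, and Lemma~\ref{lem:fbpmom} together with the exponential moment bound for $-\inf_{[-1,0]} X$ yields $\E[\mu_{h_2}(A_0)^p] < \infty$. For $p < 0$ I would instead apply the lower sandwich bound in the same fashion (using that $\inf_{[-1,0]} X$ also has all exponential moments). The boundary length case is identical with $\gamma/2$ in place of $\gamma$, which widens the allowed range of $p$ to $(-\infty, 4/\gamma^2)$ and uses the boundary length moment bound of Lemma~\ref{lem:fbpmom}.

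There is no serious obstacle in the argument; the only point requiring care is the justification of the pointwise sandwich bounds, which reduces to the uniform convergence of $h_{1,\epsilon}$ to the continuous function $X_{\re(\cdot)}$ on the relevant compact sets. Everything else is an application of independence, H\"older-type reasoning, and the previously established moment bounds.
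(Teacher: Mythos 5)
Your reduction to $k=0$, the sandwich bounds, and the boundary analog are all fine, but the crucial factoring step does not go through as you wrote it. After taking expectations of
\[
\mu_{h_2}(A_0)^p \leq e^{-\gamma p \inf_{[-1,0]} X}\, \mu_h(A_0)^p,
\]
you cannot factor the right-hand side as $\E\bigl[e^{-\gamma p \inf_{[-1,0]} X}\bigr]\,\E\bigl[\mu_h(A_0)^p\bigr]$: $\mu_h(A_0)$ is built from $h = h_1 + h_2$, so it is a function of $X$ as well as of $h_2$, and the two factors are not independent. The independence you are entitled to is between $X$ and $h_2$, not between $X$ and $\mu_h(A_0)$.

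There are two easy fixes. The paper's route is to apply H\"older's inequality to $\E\bigl[e^{-\gamma p \inf X}\,\mu_h(A_0)^p\bigr]$ with conjugate exponents $p',q'>1$ chosen so that $pp' < 2/\gamma^2$, then invoke Lemma~\ref{lem:fbpmom} for the $\mu_h(A_0)^{pp'}$ factor and Gaussian exponential moments for the other. Alternatively, and closer to what you attempted, move the exponential to the \emph{left} side of your sandwich inequality before taking expectations: from $\mu_{h_2}(A_0)^p\, e^{\gamma p \inf X} \leq \mu_h(A_0)^p$ and independence of $X$ and $h_2$ you get
\[
\E\bigl[\mu_{h_2}(A_0)^p\bigr]\,\E\bigl[e^{\gamma p \inf_{[-1,0]} X}\bigr] = \E\bigl[\mu_{h_2}(A_0)^p\, e^{\gamma p \inf_{[-1,0]} X}\bigr] \leq \E\bigl[\mu_h(A_0)^p\bigr] < \infty,
\]
and since $\E\bigl[e^{\gamma p \inf_{[-1,0]} X}\bigr] > 0$ (the infimum is a.s.\ finite), you may divide. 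Either repair yields the conclusion; as written, the step is incorrect.
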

\begin{proof}
We note that it suffices to prove the claim for $k = 0$ since the law of $h_2$ is invariant under horizontal translations, so that $\mu_{h_2}(A_0) \overset{d}{=}\mu_{h_2}(A_k)$ for all $k \in \Z$. We also know that $h_1(t) = B_{2t}$ for all $t \in \R$, where $(B_{t})_{t \in \R}$ is a two-sided Brownian motion with $B_0 = 0$.

Let $p' > 1$ be such that $pp' \in (0,\frac{2}{\gamma^2})$, let $q' > 1$ be such that $\frac{1}{p'} + \frac{1}{q'} = 1$ and set $h_1^* = \inf_{x \in [-1,0]} h_1(x)$. Then $h_2 \leq h - h_1^*$ and thus H\"{o}lder's inequality implies that 
\begin{align*}
	\E\!\left[\mu_{h_2}(A_0)^p \right] \leq \E\!\left[\mu_h(A_0)^{pp'}\right]^{\frac{1}{p'}} \E\!\left[\exp(-\gamma q' h_1^*)\right]^{\frac{1}{q'}}.
\end{align*}
Lemma~\ref{lem:fbpmom} implies that $\E[\mu_{h}(A_0)^{pp'}] < \infty$ and standard facts about one-dimensional Brownian motion imply that $\E[\exp(-\gamma q' h_1^*)] < \infty$. Note that a similar method gives the part of the claim involving quantum lengths and negative moments and hence the proof is complete.
\end{proof}

We now turn to the proof of Lemma~\ref{lem:pthmoment}.
\begin{proof}[Proof of Lemma~\ref{lem:pthmoment}]
We write $h$ as $h = h_1 + h_2$ where $h_j$, $j=1,2$, is the projection of $h$ on $H_j(\strip)$ (recall Remark~\ref{rmk:stripdecomposition}) and let $X_{\re(z)} = h_1(z)$. Then $(-X_{-t/2})_{t \geq 0}$ has the law of a $\BES^3$ with $X_0 = 0$ and $(X_t)_{t \geq 0}$ has the same law as $(B_{2t})_{t \geq 0}$ where $B$ is a Brownian motion with $B_0 = 0$. Note that for $k \leq 0$ we have that 
\begin{align*}
	\mu_h(A_k) \leq \mu_{h_2}(A_k) \exp \!\Bigg(\gamma p \sup_{t \in [k - 1,k]} X_t \!\Bigg),
\end{align*}
and so
\begin{align*}
\E[\mu_h(A_k)^p] \leq \E[\mu_{h_2}(A_0)^p] \E\!\left[\exp \!\Bigg(\gamma p \sup_{t \in [k - 1,k]} X_t \!\Bigg)\right]\!.
\end{align*}
By the Markov property of Bessel processes, it holds that for each $k \leq 0$, $\sup_{t \in [k - 1,k]} (X_t - X_k)$ is stochastically dominated by $\sup_{t \in [0,2]} \wt{B}_t$, where $\wt{B}_t$ is a Brownian motion with $\wt{B}_0 = 0$. This implies that 
\begin{align*}
\E\!\left[\exp \!\Bigg(\gamma p \sup_{t \in [k - 1,k]}X_t \!\Bigg)\right] \leq \E\!\left[\exp \!\Bigg(\gamma p \sup_{t \in [0,2]} \wt{B}_t \!\Bigg)\right]\! \E[\exp (\gamma p X_k)].
\end{align*}
Note that 
\begin{align}\label{eq:brownian_supremum_exponential_moments}
	\E\!\left[\exp \!\left(\gamma p \sup_{t \in [0,2]} 
\wt{B}_t \right)\right] < \infty,
\end{align}
and by the proof of Lemma~\ref{lem:bes_rn_derivative} we can find a positive constant $C_{p,\gamma} $ depending only on $p$ and $\gamma$ such that 
\begin{align*}
	\E[\exp (\gamma p X_k)] \leq C_{p,\gamma} (-2k)^{-\frac{3}{2}},
\end{align*}
for all $k \leq 0$. 
If $p \in (0,1] \cap (0,\frac{2}{\gamma^2})$, then it follows by Lemma~\ref{lem:fblateral} that
\begin{align*}
\E[\mu_h(\strip_-)^p] \leq \sum_{k \leq 0} \E[\mu_h(A_k)^p] < \infty.
\end{align*}
Now suppose that $p \in (1,\frac{3}{2}) \cap (0,\frac{2}{\gamma^2})$ and for $k \leq 0$, we set
\begin{align*}
	X_k^* = \sup_{t \in [k-1,k]} X_t, \quad
	p_k = \frac{e^{\gamma X_k^*}}{\sum_{n \leq 0} e^{\gamma X_n^*}}.
\end{align*}
We note that $\E\!\left[ \sum_{k \leq 0} e^{\gamma X_k^*} \right] < \infty$, so that a.s., $\sum_{k \leq 0} e^{\gamma X_k^*} < \infty$ and 
\begin{align*}
    \mu_h(\strip_-) \leq \!\left( \sum_{j \leq 0} e^{\gamma X_j^*}\right) \sum_{k \leq 0} p_k \mu_{h_2}(A_k).
\end{align*}
Taking expectation we have by the independence of $X$ and $h_2$ that
\begin{align}\label{eq:stripbound1}
	\E[\mu_h(\strip_-)^p] \leq \sum_{k \leq 0} \E[\mu_{h_2}(A_k)^p] \E\!\Bigg[ p_k \!\Big( \sum_{j \leq 0} e^{\gamma X_j^*}\!\Big)^p \!\Bigg] = \E[\mu_{h_2}(A_0)^p] \E\!\Bigg[ \!\Big(\sum_{j \leq 0} e^{\gamma X_j^*}\!\Big)^p\!\Bigg].
\end{align}
Fix $\alpha > 1$ such that $\frac{3}{2} - \alpha (p-1) > 1$ and set $N = \sum_{k \in \mathbb{Z}_{-}}(-k)^{-\alpha} < \infty$. Then by Jensen's inequality
\begin{align}\label{eq:stripbound2}
	\E \!\Bigg[ \!\Big( \sum_{j \leq -1} e^{\gamma X_j^*} \!\Big)^p \!\Bigg] &= N^p \E\!\left[ \!\Bigg( \sum_{k \leq -1} e^{\gamma X_k^*} (-k)^{\alpha}\frac{(-k)^{-\alpha}}{N} \!\Bigg)^p\right] \\
	&\leq N^{p-1} \sum_{k \leq  -1} \E\!\left[ e^{\gamma p X_k^*}\right] (-k)^{\alpha(p-1)} \nonumber \\
	&\leq C_{p,\gamma}N^{p-1}\sum_{k \geq  1} k^{-\frac{3}{2}+\alpha(p-1)} < \infty. \nonumber
\end{align}
Thus, by~\eqref{eq:stripbound1},~\eqref{eq:stripbound2} and~\eqref{eq:brownian_supremum_exponential_moments}, we have that $\E[ \mu_h(\strip_-)^p]$ is finite when $p \in (1,\frac{3}{2}) \cap (0,\frac{2}{\gamma^2})$.

Next, we handle the negative moments. For $p>0$ we have by Lemma~\ref{lem:fblateral} that
\begin{align*}
	\E[\mu_h(\strip_-)^{-p}] \leq \E[\mu_h(A_0)^{-p}] \leq \E\!\left[ \exp\!\left(-\gamma p \inf_{-1 \leq t \leq 0} X_t \right) \right] \E[\mu_{h_2}(A_0)^{-p}] < \infty,
\end{align*}
since the supremum of a $\BES^3$ on the interval $[0,2]$ has finite exponential moments.

Finally, we turn to proving~\eqref{eq:pthmomenteps}. We note that, conditionally on $u_{\alpha,\epsilon}$, the law of $(X_{-t})_{t \geq -u_{\alpha,\epsilon}}$ is that of $(\wt{X}_{-t}+\alpha \log \epsilon)_{t \geq 0}$, where $\wt{X}$ is an independent copy of $X$. Consequently, by the translation invariance of $h_2$, we have that for $z \in \strip_-$, conditionally on $u_{\alpha,\epsilon}$, 
\begin{align*}
X_{\re(z) + u_{\alpha,\epsilon}} + h_2(z+u_{\alpha,\epsilon}) = \wt{X}_{\re(z)} + \alpha \log \epsilon + h_2(z+u_{\alpha,\epsilon}) \overset{d}{=} h(z) + \alpha \log \epsilon,
\end{align*}
and thus
\begin{align*}
	\E[ \mu_h(\strip_- +u_{\alpha,\epsilon})^p] = \E[ \mu_h(\strip_-)^p] e^{\gamma p \alpha \log \epsilon} = c_p \epsilon^{\alpha p \gamma}.
\end{align*}
The bounds for the boundary measure are proven similarly.
\end{proof}
In proving Lemma~\ref{lem:main_lemma_ubd} it is important that the image of a ball close to the curves does not get squeezed too close to the boundary, making it hard for a Brownian motion starting from it, to travel far. For this very reason, we need to prove that the mass of a set of points close to the boundary is small. In order to do this, we need to bound moments of the mass close to the boundary, for which it will be convenient to consider a Whitney square decomposition and bound the quantum area of each square. The first we need is to be able to bound the moments of mass of balls of radius $\epsilon$, at distance of order $\epsilon$ from the boundary, which will give us a way to bound the quantum area of Whitney squares. This is achieved by combining Lemma~\ref{lem:pthmom0} below, which handles the zero-boundary GFF part and Lemma~\ref{lem:bound_harmonic_part} which takes care of the harmonic part of a free boundary GFF.
\begin{lemma}\label{lem:pthmom0}
Let $D$ be a simply connected domain and let $h$ be a zero-boundary GFF on $D$. Fix $\gamma \in (0,2)$ and $p \in (0,\frac{4}{\gamma^{2}})$. Then there exists a finite constant $C_{p,\gamma}$, depending only on $p$ and $\gamma$, such that
\begin{align*}
	\E\!\left[\mu_{h}(B(z,\epsilon))^p \right] \leq C_{p,\gamma} \epsilon^{(2 + \gamma^{2}/2)p}
\end{align*}
for all $\epsilon > 0$ and $z \in D$ such that $2\epsilon \leq \dist(z,\partial D) \leq 9\epsilon$.
\end{lemma}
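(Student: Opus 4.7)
The plan is to reduce the claim to a standard moment estimate for Gaussian multiplicative chaos on the unit disk via a conformal change of coordinates that absorbs the scale~$\epsilon$ into the derivative factor from~\eqref{eq:LQG_coordinate_change}.

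First, by the Riemann mapping theorem, pick a conformal transformation $\psi \colon \D \to D$ with $\psi(0) = z$, so that $|\psi'(0)| = \confrad(z,D)$. By Koebe's 1/4 theorem, $\dist(z,\partial D) \leq \confrad(z,D) \leq 4 \dist(z,\partial D)$, and hence under the hypothesis $2\epsilon \leq \dist(z,\partial D) \leq 9\epsilon$ we obtain $|\psi'(0)| \in [2\epsilon, 36\epsilon]$. By the Koebe distortion theorem applied on $B(0,1/2)$, there is a universal constant $C_0 > 0$ such that
\[
    C_0^{-1} |\psi'(0)| \leq |\psi'(w)| \leq C_0 |\psi'(0)| \quad\text{and}\quad |\psi(w) - z| \geq C_0^{-1} |\psi'(0)| |w| \quad \text{for all}\quad w \in B(0,1/2).
\]
Taking $C_0$ larger if necessary, we may arrange that $\psi^{-1}(B(z,\epsilon)) \subseteq B(0,1/2)$, since the last inequality and $|\psi'(0)|\geq 2\epsilon$ force $|\psi(w)-z| \geq \epsilon$ for $|w|$ of order one. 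Consequently, there exists a constant $c_0 = c_0(\gamma)$ such that $|\psi'(w)|^{\gamma Q} \leq c_0 \epsilon^{\gamma Q}$ on $\psi^{-1}(B(z,\epsilon))$.

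Next, by the conformal invariance of the zero-boundary GFF, $\wt h \coloneqq h \circ \psi$ is a zero-boundary GFF on $\D$. The coordinate change formula~\eqref{eq:LQG_coordinate_change} yields
\[
    \mu_h(B(z,\epsilon)) = \int_{\psi^{-1}(B(z,\epsilon))} |\psi'(w)|^{\gamma Q} d \mu_{\wt h}(w) \leq c_0 \epsilon^{\gamma Q} \mu_{\wt h}(B(0,1/2)).
\]
Raising to the $p$-th power and taking expectations gives
\[
    \E[\mu_h(B(z,\epsilon))^p] \leq c_0^p \epsilon^{\gamma Q p} \E[\mu_{\wt h}(B(0,1/2))^p].
\]
Since $\gamma Q = 2 + \gamma^2/2$, it remains only to verify that $\E[\mu_{\wt h}(B(0,1/2))^p] < \infty$ for every $p \in (0,4/\gamma^2)$. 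This is the standard positive-moment bound for Gaussian multiplicative chaos associated with a log-correlated Gaussian field on a planar domain, contained for example in \cite[Proposition~3.5]{rv2010gmcrevisit}, and the resulting constant depends only on $p$ and $\gamma$.

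I expect no serious obstacle here: the essential inputs are the conformal invariance of the GFF (valid for all simply connected $D$), the elementary Koebe distortion bounds (giving the uniform-in-$D$ comparability that absorbs the geometry of $D$ into an absolute constant), and the standard positive-moment bound for GMC. The only point requiring a small amount of care is choosing the radius $r < 1$ of the ball in $\D$ so that it simultaneously contains $\psi^{-1}(B(z,\epsilon))$ (which uses the lower Koebe bound $|\psi(w) - z| \gtrsim |\psi'(0)||w|$ and $|\psi'(0)| \geq 2\epsilon$) and is small enough for the upper distortion bound on $|\psi'|$ to remain absolute; taking $r = 1/2$ clearly suffices.
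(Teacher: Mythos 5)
Your overall reduction (conformally map to a reference domain, use Koebe distortion to pull out the factor $\epsilon^{\gamma Q}$, then invoke a fixed GMC moment bound) is the same idea the paper uses (the paper maps to $\h$ rather than $\D$). There is, however, a genuine gap in the one step you flagged as needing "a small amount of care": the claim that the Koebe lower bound and $|\psi'(0)| \ge 2\epsilon$ force $\psi^{-1}(B(z,\epsilon)) \subseteq B(0,1/2)$.

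The Koebe growth theorem gives, for the normalized map $f(w) = (\psi(w)-z)/\psi'(0)$, the lower bound $|f(w)| \ge |w|/(1+|w|)^2$, and the right-hand side is \emph{increasing} in $|w|$ with supremum $1/4$ as $|w| \to 1$. So the most the lower bound can ever give is
\[
|\psi(w)-z| \ge \frac{|\psi'(0)|}{4},
\]
which under the hypothesis $|\psi'(0)| \ge 2\epsilon$ is $\ge \epsilon/2$ but not $\ge \epsilon$, no matter how large $|w|$ is. In particular, if $D$ is a disk centred at $z$ of radius $2\epsilon$ (so $|\psi'(0)| = 2\epsilon$ exactly), the Koebe lower bound never reaches $\epsilon$ for any $|w|<1$, and your containment cannot be deduced this way. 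For the Koebe lower bound to force $\psi^{-1}(B(z,\epsilon))$ into a fixed disk $B(0,r)$ with $r<1$, one needs $|\psi'(0)| > 4\epsilon$, which is not guaranteed by $\dist(z,\partial D) \ge 2\epsilon$.

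The paper avoids this by proving the estimate first under the stronger hypothesis $\dist(z,\partial D) \ge \beta_0 \epsilon$ with $\beta_0 > 3 + 2\sqrt{2}$ (above the Koebe threshold, so the lower growth bound does give a fixed sub-disk), and then handling the range $2\epsilon \le \dist(z,\partial D) \le 9\epsilon$ via a rescaling of the domain together with the coordinate change formula. Alternatively, the containment $\psi^{-1}(B(z,\epsilon)) \subseteq B(0,r)$ for some absolute $r < 1$ can be obtained in your setup by a modulus-of-annulus argument: since $B(z,2\epsilon) \subseteq D$, the annulus $D \setminus \ol{B(z,\epsilon)}$ has modulus at least $\frac{1}{2\pi}\log 2$, and by conformal invariance the same is true of $\D \setminus \ol{\psi^{-1}(B(z,\epsilon))}$; Grötzsch's theorem then bounds the diameter of $\ol{\psi^{-1}(B(z,\epsilon))}$ away from $1$. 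Once the containment in a fixed $B(0,r)$ with $r < 1$ is justified (for a possibly larger but still absolute $r$), the rest of your argument — the distortion bound on $|\psi'|$ over $B(0,r)$, the coordinate change formula, conformal invariance of the zero-boundary GFF, and the GMC moment bound from \cite[Proposition~3.5]{rv2010gmcrevisit} — goes through and delivers the stated estimate with a constant depending only on $p$ and $\gamma$.
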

\begin{proof}
By translation invariance, we can assume that $z = 0$. Fix $\beta_{0},\beta_{1}$ such that $3+2\sqrt{2} < \beta_{0} < \beta_{1}$ and set $r_{0} = \tfrac{1}{\beta_{0}} \in (0,1)$. Suppose first that $\beta_{0}\epsilon \leq \dist(0,\partial D) \leq \beta_{1}\epsilon$ and let $\phi :  D \rightarrow \h$ be a conformal transformation which maps $0$ to $i$. Then, \cite[Corollary~3.25]{law2005slebook} implies that $\phi(B(0,\epsilon)) \subseteq B(i,r_1)$ where $r_1 = \tfrac{4 r_0}{(1 - r_0)^2}$. The coordinate change formula implies that $\mu_{h}(B(0,\epsilon)) = \mu_{\wt{h}}(\phi(B(0,\epsilon)))$ with $\wt{h} = h \circ \phi^{-1} + Q\log(|(\phi^{-1})'|)$ and $Q = \tfrac{2}{\gamma} + \tfrac{\gamma}{2}$. 

We note that each $w \in \phi(B(0,\epsilon))$ satisfies $\dist(w,\partial \h) \geq 1-r_1$ and $\dist(\phi^{-1}(w),\partial D) \leq (\beta_1+1)\epsilon$ and hence the Koebe-$1/4$ theorem implies that $|(\phi^{-1})'(w)| \leq 4(\beta_1+1)\epsilon/(1-r_1)$. Consequently,
\begin{align*}
	\mu_{h}(B(0,\epsilon)) \leq \mu_{h \circ \phi^{-1}}(B(i,r_1))\epsilon^{\gamma Q}\!\left(\frac{4(\beta_1+1)}{1-r_1}\right)^{\gamma Q}.
\end{align*}
(It is here the bound $\beta_0 > 3 + 2\sqrt{2}$ is used, as this guarantees that $4(\beta_1+1)/(1-r_1)$ is positive.) Since $h \circ \phi^{-1}$ is a zero-boundary GFF on $\h$, \cite[Proposition~3.5]{rv2010gmcrevisit} (see Remark~\ref{rmk:different_fields}) implies that $\E\!\left[\mu_{h \circ \phi^{-1}}(B(i,r_1))^p\right]$ is finite.
Thus there exists a positive constant $C_{p,\gamma}^1$, depending only on $\gamma$ and $p$, such that 
\begin{align}\label{eq:0pmom}
	\E\!\left[\mu_{h}(B(0,\epsilon))^p\right] \leq C^{1}_{p,\gamma} \epsilon^{\gamma Q p}
\end{align}
Now suppose that $2\epsilon \leq \dist(0,\partial{D}) \leq 9\epsilon$ and choose $\beta_{0},\beta_{1}$ such that $2\beta_1/\beta_0 > 9$, and consider the conformal transformation $\psi(z) = \frac{\beta_{0}}{2}z$. Then we have that $\beta_{0}\epsilon \leq \dist(0,\partial \psi(D)) \leq \beta_{1}\epsilon$ and the result follows from the coordinate change formula together with~\eqref{eq:0pmom}.
\end{proof}

We are now ready to bound the moments of the quantum area of a set close to the boundary.

\begin{lemma}
\label{lem:mass_close_to_boundary}
Fix $\gamma \in (0,2)$ and let $(\strip,h,-\infty,+\infty) \sim\qwedgeW{\gamma}{\gamma^2 / 2}$ have the first exit parameterization. Fix $p \in (0,1 \wedge \frac{2}{\gamma^2})$. If $c>0$, then for $s>1$,
\begin{align}\label{eq:mass_close_to_boundary}
\E[\mu_h(\{z \in \strip_-: \re(z) \geq -s, \dist(z,\partial \strip) < \epsilon^c \})^p] \leq C_{p,\gamma} s\epsilon^{M c},
\end{align}
where $M = M(p,\gamma) > 0$.
\end{lemma}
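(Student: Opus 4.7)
\emph{Reduction.} The plan is to first symmetrize over the two boundary arcs via the field's reflection symmetry $z \mapsto i\pi - \bar{z}$, so that it suffices to control the lower part $R = \{z: \re(z) \in [-s, 0],\ \im(z) \in [0, \epsilon^c]\}$. I will then partition $R$ into unit rectangles $R_k = [k-1, k] \times [0, \epsilon^c]$ for $k \in \{-\lceil s \rceil + 1, \dots, 0\}$. The inequality $(\sum a_i)^p \le \sum a_i^p$ valid for $p \in (0, 1]$ reduces the task to bounding $\E[\mu_h(R_k)^p]$ and summing the $\lesssim s$ terms. Writing $h = X + h_2$ with $X \perp h_2$ gives $\mu_h(dz) = e^{\gamma X(\re z)} d\mu_{h_2}(z)$, so $\mu_h(R_k) \le e^{\gamma X^*_k} \mu_{h_2}(R_k)$ where $X^*_k = \sup_{[k-1, k]} X$. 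The first exit parameterization forces $X_t \le X_0 = 0$ for $t \le 0$ (since $(-X_{-t/2})_{t \ge 0}$ is a $\BES^3$), so $X^*_k \le 0$, and therefore $\E[\mu_h(R_k)^p] \le \E[\mu_{h_2}(R_k)^p] = \E[\mu_{h_2}(R_0)^p]$ by translation invariance of $h_2$. It remains to show $\E[\mu_{h_2}(R_0)^p] \le C \epsilon^{Mc}$ with $M > 0$.

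\emph{Dyadic bound on $\E[\mu_{h_2}(R_0)^p]$.} I will decompose $R_0 = \bigsqcup_{k \ge 0} L_k$ where $L_k = [-1, 0] \times [2^{-k-1} \epsilon^c, 2^{-k} \epsilon^c]$ and cover each $L_k$ by $\asymp 2^k \epsilon^{-c}$ balls $B_{k,j}$ of radius $r_k = 2^{-k} \epsilon^c$ centered at height $\sim r_k$ above $\R \times \{0\}$. The single-ball estimate I need is $\E[\mu_{h_2}(B_{k,j})^p] \le C_\delta\, r_k^{\gamma Q p - \gamma^2 p^2 - \delta}$ for any $\delta > 0$. I will obtain it by mapping $\strip \to \h$ via $z \mapsto e^z$ (whose derivative has modulus in $[e^{-1}, 1]$ on $\re \in [-1, 0]$, so $B_{k,j}$ maps to a ball in $\h$ of radius and boundary-distance both $\sim r_k$), invoking the free boundary GFF scaling $\E[\mu_{\wt h}(B(z, r))^p] \le C r^{\gamma Q p - C'\gamma^2 p^2}$ that appears inside the proof of Lemma~\ref{lem:mass_upper_bound1}, and transferring from $\wt h$ to $h_2$ via H\"older's inequality using that $\wt h - h_2 = \wt h_1$ is a Brownian motion in the real coordinate whose infimum over $[-1-r_k, r_k]$ has finite exponential moments of every order. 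Subadditivity yields $\E[\mu_{h_2}(L_k)^p] \le C\, 2^{k(1 - \beta)} \epsilon^{c(\beta - 1)}$ with $\beta = \gamma Q p - \gamma^2 p^2 - \delta$. Since $Q = 2/\gamma + \gamma/2$ one factors $1 - \gamma Q p + \gamma^2 p^2 = \gamma^2 (p - \tfrac{1}{2})(p - \tfrac{2}{\gamma^2})$, so $\beta > 1$ precisely when $p \in (\tfrac{1}{2},\, \tfrac{2}{\gamma^2})$; in this range the geometric sum converges and produces $M = \gamma Q p - \gamma^2 p^2 - 1 - \delta > 0$.

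\emph{Small $p$ by Jensen.} For $p \in (0, \tfrac{1}{2}]$ I will fix any $p_0 \in (\tfrac{1}{2},\, 1 \wedge \tfrac{2}{\gamma^2})$, which is nonempty because $\tfrac{1}{2} < \tfrac{2}{\gamma^2}$ for every $\gamma \in (0, 2)$, and apply Jensen's inequality $\E[\mu_{h_2}(R_0)^p] \le \E[\mu_{h_2}(R_0)^{p_0}]^{p/p_0}$ to transfer the bound from the previous step, yielding $M = (p/p_0)(\gamma Q p_0 - \gamma^2 p_0^2 - 1 - \delta) > 0$, depending only on $p$ and $\gamma$. The main obstacle I anticipate is the single-ball moment bound in the boundary-touching regime: the conformal distortion of the free boundary GFF introduces an independent Gaussian of variance $\asymp \log(1/r)$ whose exponential moments generate the $-\gamma^2 p^2$ correction to the scaling exponent, and one must choose the parameter in Lemma~\ref{lem:bound_harmonic_part} as well as the H\"older exponent so that this correction is arbitrarily close to optimal, so that the quadratic does not degenerate at the endpoints $p = \tfrac{1}{2}$ and $p = \tfrac{2}{\gamma^2}$.
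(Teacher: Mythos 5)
Your plan follows the same basic contour as the paper's proof: map $\strip$ to $\h$ via $z\mapsto e^z$, split the thin boundary layer into dyadic scales, bound the free-boundary-GFF moment of each dyadic piece by combining the zero-boundary moment bound with Lemma~\ref{lem:bound_harmonic_part} for the harmonic part, transfer to the lateral field $h_2$ via H\"older together with the exponential moments of the radial part, sum the resulting geometric series, and use Jensen to pass from some $p_0 \in (\tfrac12, 1\wedge\tfrac{2}{\gamma^2})$ to general small $p$. The paper packages the dyadic decomposition as a Whitney square decomposition of a large box, while you use horizontal dyadic strips $L_k$ covered by comparably sized balls; these produce the same count ($\asymp 2^j$ pieces at scale $2^{-j}$ for $j \gtrsim c\log\epsilon^{-1}$), so the difference is cosmetic. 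Your algebra for the threshold $\gamma Q p - \gamma^2 p^2 > 1$ on $(\tfrac12,\tfrac{2}{\gamma^2})$ and the Jensen reduction are also right.

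The gap is in the single-ball bound $\E[\mu_{\wt h}(B(z,r))^p] \lesssim r^{\gamma Q p - C'\gamma^2 p^2}$ for $\im(z)\sim r$: you invoke it with the unspecified constant $C'$ from inside the proof of Lemma~\ref{lem:mass_upper_bound1}, and then assert that ``choosing the parameter in Lemma~\ref{lem:bound_harmonic_part}\dots so that this correction is arbitrarily close to optimal'' brings $C'$ down to $1+\delta$. That cannot be achieved by choosing the parameter alone. Lemma~\ref{lem:bound_harmonic_part} gives exponent $c^4/(c-1)^4$ under the constraints $c < 2b$ and $\dist(z,\partial U) > 2b\epsilon$; for your balls $\epsilon$ is comparable to $\dist(z,\partial U)$, so $b$ and hence $c$ are $O(1)$, and since $c\mapsto c^4/(c-1)^4$ is decreasing with value $16$ at $c=2$, one is stuck with $C'\geq 16$. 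With $C'\geq 16$ the quadratic $(2+\tfrac{\gamma^2}{2})p - C'\gamma^2p^2$ only exceeds $1$ when $\gamma \lesssim 0.25$, so the argument as written fails on most of $(0,2)$. The paper fixes this by subdividing each scale-$\epsilon_j$ Whitney square into $\asymp \alpha^{-2}$ subsquares of side $\sim\alpha\epsilon_j$ for a small fixed $\alpha$, applying Lemma~\ref{lem:bound_harmonic_part} to those with $\epsilon=\alpha\epsilon_j$ so that the distance-to-boundary is $\asymp \epsilon_j \gg \epsilon$, permitting $c\asymp 1/\alpha$ and bringing $c^4/(c-1)^4 = (1-\alpha)^{-4}$ down to $1+O(\alpha)$, at the price of only a constant factor $\asymp\alpha^{-2}$ in the final sum. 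You would need to perform the same subdivision inside each $B_{k,j}$; once that step is in place the rest of your argument goes through.
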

\begin{proof}
Let $N$ be such that $\epsilon^c \in [2^{-N-1},2^{-N}]$ and for $r>0$, let $A_k(r) = \{ z \in \strip : \re(z) \in [-k,-k+1],  \dist(z,\partial \strip) < r \}$.  Moreover, we let $h^f$ be a free boundary GFF in $\strip$ and consider $A_1(\epsilon^c)$. Note that if $\psi(z) = e^z$ (so that $\psi: \strip \rightarrow \h$ is conformal), then for small $\epsilon$, $\psi(A_1(\epsilon^c)) \subseteq R_\epsilon \coloneqq ([-1,-e^{-1}/2] \cup [e^{-1}/2,1]) \times [0,2\epsilon^c]$.  In order to prove~\eqref{eq:mass_close_to_boundary}, we shall bound the quantum mass of $R_\epsilon$ with respect to the field $\wt{h}^f$ on $\h$, related to $h^f$ as in~\eqref{eq:LQG_coordinate_change}, with $\psi$.

Let $(S_j)$ be a Whitney square decomposition of a large domain containing $R_\epsilon$, say, $R = [-100,100]\times[0,100]$. Then, letting $D_k$ be the collection of squares of side length $2^{-k}$, intersecting $R_\epsilon$, we have that $|D_k| \asymp 2^k$, with implicit constant independent of $k$, $c$ and $\epsilon$. 
Denote by $z_j$ and $2^{-n_j}$ the center and side length of $S_j$, respectively, and let $\epsilon_j = 2^{-(n_j+\frac{1}{2})}$. Then $S_j \subseteq B(z_j,\epsilon_j)$ and $2 \epsilon_j \leq \dist(z_j,\partial R) \leq 9\epsilon_j$.

By the domain Markov property we can write $\wt{h}^f = h_R^0 + \Fh_R$, where $h_R^0$ is a zero-boundary GFF on $R$, $\Fh_R$ is a distribution on $\h$ which is harmonic on $R$ and independent of $h_R^0$. Lemma~\ref{lem:pthmom0} implies that there exists a positive constant $C_{p_1,\gamma}$ depending only on $p_1$ and $\gamma$ such that 
\begin{align*}
	\E\!\left[\mu_{h_R^0}(B(z_j,\epsilon_j))^{p_1}\right] \leq C_{p_1,\gamma}\epsilon_j^{(2 + \frac{\gamma^{2}}{2})p_1}
\end{align*}
for each $j \in \Z$, where $p_1 \in (\frac{1}{2},\frac{2}{\gamma^2})$.

Let $f(t) = (2 + \frac{\gamma^2}{2})t - t^2 \gamma^2$ and note that $f(t) > 1$ when $t \in (\frac{1}{2},\frac{2}{\gamma^{2}})$. Hence, we can find $\beta > 1$ (sufficiently close to $1$) such that $(2 + \frac{\gamma^2}{2})p_1 - \beta^2 p_1^2 \gamma^2 > 1$. We let $\alpha \in (0,1)$ be such $\alpha^{-1} \in \N$ and such that $(2 + \frac{\gamma^2}{2})p_1 - \beta^2 p_1^2 \gamma^2 > 1$ is satisfied with $\beta = (1 - \alpha)^{-2}$. We divide $S_j$ into squares $S_{j,1},\dots,S_{j,k_j}$ having pairwise disjoint interiors and sides parallel to the axes, and each one having side length $r_j = \alpha \sqrt{2} \epsilon_j$. Then $k_j = 1/\alpha^2$. Let $z_{j,n}$ be the center of $S_{j,n}$ for $n \in \{1,\dots,k_j\}$. Since $S_{j,n} \subseteq B(z_{j,n},\alpha\epsilon_j)$ and $B(z_{j,n},(2+\alpha/\sqrt{2})\epsilon_j) \subseteq R$, we have that by Lemma~\ref{lem:bound_harmonic_part} with $c = 1/\alpha$ that
\begin{align*}
\E \!\left[ \exp\!\left(p_1\gamma \sup_{z \in S_j} \Fh_R(z) \right)\right] &\leq \sum_{n=1}^{k_j} \E\!\left[\exp\!\left(p_1\gamma \sup_{z \in S_{j,n}} \Fh_R(z) \right)\right] \leq \wt{C} k_j \epsilon_j^{-p_1^2 \gamma^2 \beta^2} \leq \frac{C}{\alpha^2}\epsilon_j^{-p_1^2\gamma^2 \beta^2}
\end{align*}
for some universal constant $C$. The independence of $h_R^0$ and $\Fh_R$ implies that 
\begin{align}\label{eq:pmomsq}
	\E[\mu_{\wt{h}^f}(S_j)^{p_1}] \leq \E[\mu_{h_R^0}(S_j)^{p_1}]\E\!\left[\exp\!\left(p_1\gamma \sup_{z \in S_j}\Fh_R(z)\right)\right] \leq \frac{C_{p_1,\gamma}}{\alpha^2} \epsilon_j^{(2 + \frac{\gamma^2}{2})p_1 - \beta^2 p_1^2 \gamma^2}.
\end{align}
Moreover, by~\eqref{eq:pmomsq}, if $p_1 < 1$, $S \in D_k$, then $\E[\mu_{\wt{h}^f}(S)^{p_1}] \leq C_{p_1,\gamma} 2^{-k((2+\frac{\gamma^2}{2})p_1-\beta^2 p_1^2 \gamma^2)}$. Thus, letting $D = \cup_k D_k$, we have
\begin{align*}
	&\E[ \mu_{\wt{h}^f}(R_\epsilon)^{p_1}] \leq \sum_{S \in D} \E[ \mu_{\wt{h}^f}(S)^{p_1}] = \sum_{k \geq N} \sum_{S \in D_k} \E[ \mu_{\wt{h}^f}(S)^{p_1}] \lesssim \sum_{k \geq N} 2^{-k((2+\frac{\gamma^2}{2})p_1-\beta^2 p_1^2 \gamma^2-1)} \\
	&= 2^{-N((2+\frac{\gamma^2}{2})p_1-\beta^2 p_1^2 \gamma^2-1)} \sum_{k \geq 0} 2^{-k((2+\frac{\gamma^2}{2})p_1-\beta^2 p_1^2 \gamma^2-1)} \lesssim 2^{-N((2+\frac{\gamma^2}{2})p_1-\beta^2 p_1^2 \gamma^2-1)},
\end{align*}
since $(2+\frac{\gamma^2}{2})p_1-\beta^2 p_1^2 \gamma^2-1 > 0$, and hence the sum on the second line is finite. It is clear that the implicit constant depends only on $p_1$ and $\gamma$.  Thus, 
\begin{align*}
	\E[\mu_{h^f}(A_1(\epsilon^c))^{p_1}] \leq \E[ \mu_{\wt{h}^f}(R_\epsilon)^{p_1}] \lesssim \epsilon^{c((2+\frac{\gamma^2}{2})p_1-\beta^2 p_1^2 \gamma^2-1)}.
\end{align*}
Choosing $p_* > 1$ such that $p p_* \in (1/2,1) \cap (0,\frac{2}{\gamma^2})$, we have by the same argument as in Lemma~\ref{lem:fbpmom} that
\begin{align*}
	\E[\mu_{h_2}(A_1(\epsilon^c))^p] \lesssim \E[\mu_{h^f}(A_1(\epsilon^c))^{p p_*} ]^{1/p_*} \lesssim \epsilon^{c ((2+\frac{\gamma^2}{2})p p_*-\beta^2 p^2 p_*^2 \gamma^2-1)/p_*} = \epsilon^{M c}
\end{align*}
where $M = (2+\frac{\gamma^2}{2})p-\beta^2 p^2 p_* \gamma^2-1/p_*$, and the implicit constants are independent of $\epsilon$ and $c$. Consequently,
\begin{align*}
\E[ \mu_h(\{ z \in \strip_-: \re(z) \geq-s, \dist(z,\partial \strip) < \epsilon^c\})^p] &\leq \E[ \mu_h(\{ z \in \strip_-: \re(z) \geq -\lceil s \rceil, \dist(z,\partial \strip) < \epsilon^c\})^p] \\
&\leq \sum_{k=1}^{\lceil s \rceil} \E[ \mu_h(A_k(\epsilon^c))^p] \leq \sum_{k=1}^{\lceil s \rceil} \E[ \mu_{h_2}(A_k(\epsilon^c))^p] \lesssim s \epsilon^{M c},
\end{align*}
where the implicit constant depends only on $p$ and $\gamma$. Thus, the proof is done.
\end{proof}

We end this section by stating and proving an analogue of Lemma~\ref{lem:pthmoment} for quantum cones.

\begin{lemma}\label{lem:pthmoment_cone}
Suppose that  $(\cyl,h,-\infty,+\infty) \sim \qconeW{2}{4}$ has the first exit parameterization and let $\cyl_- = \{ z \in \cyl: \re(z) \leq 0 \}$. Then for every $p \in (0,1)$,
\begin{align*}
\E[ \mu_h(\cyl_-)^p ] < \infty.
\end{align*}
\end{lemma}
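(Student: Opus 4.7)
The plan is to adapt the proof of Lemma~\ref{lem:pthmoment} to the quantum cone setting on $\cyl$, substituting the appropriate radial process and the critical ($\gamma=2$) Gaussian multiplicative chaos moment bounds in place of the subcritical ones.

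First I would decompose $\cyl_- = \bigcup_{k \leq 0} A_k$ into unit-width pieces $A_k = (k-1,k] \times (0,2\pi)$. Since $p \in (0,1)$ the map $x \mapsto x^p$ is subadditive, so it suffices to bound $\sum_{k \leq 0} \E[\mu_h(A_k)^p]$. As in Lemma~\ref{lem:pthmoment} I write $h = X + h_2$ with $X$ the projection onto $H_1(\cyl)$ (average on vertical circles) and $h_2$ the projection onto $H_2(\cyl)$; these are independent, and $h_2$ is invariant under horizontal translation. Using $\mu_h(A_k) \leq \mu_{h_2}(A_k)\exp\bigl(2\sup_{t \in [k-1,k]}X_t\bigr)$ together with independence and translation invariance,
\[
\E[\mu_h(A_k)^p] \leq \E[\mu_{h_2}(A_0)^p]\cdot\E\bigl[e^{2p\sup_{t \in [k-1,k]}X_t}\bigr].
\]

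For the lateral part, $h_2|_{A_0}$ is absolutely continuous with respect to the restriction of a zero-boundary GFF on a slightly larger subregion of $\cyl$. In the critical regime $\gamma = 2$, the LQG area measure on a bounded set has finite $p$-th moment for every $p \in (0,1)$, which follows for instance from the Seneta--Heyde construction of the critical GMC and the associated tail bounds in \cite{hp2018critical}. A H\"older inequality argument, analogous to that used in Lemma~\ref{lem:fblateral}, then absorbs the Gaussian Radon--Nikodym factor arising from the absolute continuity to yield $\E[\mu_{h_2}(A_0)^p] < \infty$.

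For the radial part, $\gamma = 2$ and $W = 4$ give $\alpha = Q - W/(2\gamma) = 1$, so $Q-\alpha = 1$. By the definition of the first exit parameterization, for $t \leq 0$, $Y_s := -X_{-s}$ has the law of $(s - \wt B_s)_{s \geq 0}$ conditioned on the positive-probability event $E = \{s' - \wt B_{s'} > 0 \text{ for all } s' > 0\}$, where $\wt B$ is a standard Brownian motion. Since $\sup_{t \in [k-1,k]}X_t = -\inf_{s \in [|k|,|k|+1]}Y_s$, dropping the conditioning at the cost of a factor $\p[E]^{-1}$ and writing
\[
\inf_{s \in [|k|,|k|+1]}(s - \wt B_s) = (|k| - \wt B_{|k|}) - \sup_{u \in [0,1]}\bigl(\wt B_{|k|+u} - \wt B_{|k|} - u\bigr),
\]
with the two terms on the right independent, I obtain
\[
\E\bigl[e^{2p\sup_{t \in [k-1,k]}X_t}\bigr] \leq \frac{1}{\p[E]}\,\E\bigl[e^{-2p(|k| - \wt B_{|k|})}\bigr]\cdot \E\bigl[e^{2p\sup_{u \in [0,1]}(\wt B_u - u)}\bigr] = \frac{C_p}{\p[E]}\,e^{-2p(1-p)|k|},
\]
using $\E[e^{-2p(|k| - \wt B_{|k|})}] = e^{-2p|k| + 2p^2|k|}$ and that the supremum of Brownian motion on $[0,1]$ has finite exponential moments. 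The exponent $-2p(1-p)$ is negative precisely when $p \in (0,1)$, matching the range in the statement. Summing over $k \leq 0$ then gives a finite total.

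The main obstacle I anticipate is the moment estimate for $\mu_{h_2}(A_0)$. The technique used in Lemma~\ref{lem:fbpmom} (via the quantum disk moment bounds of \cite{hrv2018lqgdisk}) only yields $p < 2/\gamma^2 = 1/2$ for $\gamma = 2$, which is insufficient for the stated range $p \in (0,1)$. One must therefore invoke moment estimates for the critical GMC on compact sets directly, for example those obtainable from the Seneta--Heyde/derivative martingale construction. The radial bound, in contrast, reduces to elementary computations once the description of the backward process as a drifted Brownian motion conditioned to stay negative is in hand.
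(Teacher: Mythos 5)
Your overall strategy---decompose $\cyl_-$ into unit-width annuli, split the field into radial and lateral parts, and get exponential decay in $|k|$ from the radial part---matches the paper's, and your worry about the lateral part is legitimate: the $A_k$ here have no boundary (the cylinder is periodic), so the relevant threshold is the whole-plane one $4/\gamma^2 = 1$ rather than the strip threshold $2/\gamma^2 = 1/2$ from Lemma~\ref{lem:fblateral}, which is exactly what is needed. The paper likewise invokes this without detailed proof.

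There is, however, a genuine gap in your treatment of the radial part. The event $E = \{s' - \wt B_{s'} > 0 \text{ for all } s' > 0\}$ has probability \emph{zero}, not positive probability: as $s \to 0^+$ one has $\limsup_{s\to 0^+} \wt B_s / s = +\infty$ a.s.\ by the law of the iterated logarithm, so $\wt B_s > s$ for some arbitrarily small $s$ almost surely. The conditioning defining the first exit parameterization is a singular conditioning, and the inequality $\E[\,\cdot\,\giv E] \leq \p[E]^{-1}\,\E[\,\cdot\,]$ is vacuous here. The paper circumvents this with the last-exit decomposition: the conditioned process $(X_{-t})_{t\geq 0}$ agrees in law with $(B_{t+T} - (t+T))_{t\geq 0}$, where $B$ is an \emph{unconditioned} Brownian motion and $T = \sup\{t\geq 0: B_t - t = 0\}$ is the (a.s.\ finite) last zero of the unit-drift process. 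Substituting $s = T - \re(z)$ and using the translation invariance of $h_2$, one obtains $\mu_h(\cyl_-) \overset{d}{=} \mu_{\wh h}([T,\infty)\times[0,2\pi])$ with $\wh h = Y + h_2$ and $Y_t = B_t - t$ unconditioned; since $T \geq 0$, this is dominated by $\mu_{\wh h}(\cyl_+)$, and one then bounds $\E[e^{2p\sup_{t\in[k-1,k]}(B_t - t)}]$ for the \emph{unconditioned} drifted Brownian motion. Your final exponential-decay computation is exactly that bound, so the arithmetic is right, but you need the last-exit (or an explicit coupling/$h$-transform) argument in place of the factor $\p[E]^{-1}$ to make the reduction to the unconditioned process legitimate.
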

\begin{proof}
We follow the notation and the method of the proof of Lemma~\ref{lem:pthmoment}. Let $(X_t)$ be the average of $h$ on the vertical line $\{t+iy:y \in [0,2\pi]\}$ and set $h_2 = h - X_{\re(\cdot)}$. Then $(X_t)$ can be sampled by setting $X_t = Y_{-t}$ for $t \leq 0$ and $X_t = \wh{B}_t + t$ for $t \geq 0$, where $Y_t = B_t - t$ conditioned so that $Y_t < 0$ for all $t > 0$ and $B,\wh{B}$ are independent standard Brownian motions with $B_0 = \wh{B}_0 = 0$. Moreover, $X$ and $h_2$ are independent. Note that $(X_{-t})_{t \geq 0}$ can be sampled by setting $X_{-t} = \wt{B}_{t+T}-(t+T)$ for $t \geq 0$, where $\wt{B}_t$ is a standard Brownian motion, independent of $\wt{B}$, and $T = \sup\{t \geq 0 : \wt{B}_t - t =0\}$. We let $\wh{h}$ be the random distribution on $\cyl$ given by $\wh{h} = \wt{B}_{\re(\cdot)+T} - (\re(\cdot)+T) + h_2$. Then $\mu_h(\cyl_-) \overset{d}{=} \mu_{\wh{h}}([T,+\infty) \times [0,2\pi])$ by the invariance of $h_2$ under horizontal translations. Therefore, writing $\cyl_+ = \{z \in \cyl: \re(z) \geq 0 \}$, we have that
\begin{align*}
\E[ \mu_h(\cyl_-)^p ] &\leq \E[ \mu_{\wh{h}}(\cyl_+)^p ] \leq \sum_{k=1}^{\infty}\E[ \mu_{\wh{h}}(A_k)^p ] \leq C_p \sum_{k=1}^{\infty}\E\left[ \exp\left(\gamma p \sup_{k-1\leq t \leq k} \wt{B}_t - t\right)\right],
\end{align*}
where $C_p = \E[ \mu_{h_2}(A_1)^p]$, which is finite for $p < 1 = \frac{4}{\gamma^2}$.  Hence 
\begin{align*}
\E[ \mu_h(\cyl_-)^p ] \lesssim \sum_{k=1}^{\infty}e^{-\gamma p (1-\gamma p /2)k} < \infty
\end{align*}
for $p \in (0,1)$. This completes the proof.
\end{proof}

\subsection{Mass bounds}
In this section, we prove a few bounds on the quantum mass when the surface in question has the circle-average embedding. These are used in Sections~\ref{sec:exit_times} and \ref{sec:main_lemma} to bound exit times for $\SLE$ parameterized by quantum mass, as well as the mass of sets with respect to quantum surfaces with different parameterizations.

\begin{lemma}\label{lem:LBmassball}
Fix $\gamma \in (0,2]$ and let $\CC = (\C,h,0,\infty) \sim \qconeW{\gamma}{W}$ have the circle-average embedding. Then there exists $p > 0$ such that for every fixed $M > 0$, we can find $C_{M,\gamma} > 0$ such that 
\begin{align*}
	\p[\mu_h(B(0,M)) \geq R] \leq C_{M,\gamma}R^{-p},\quad \text{for all} \quad R > 0.
\end{align*}
\end{lemma}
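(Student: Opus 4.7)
The plan is to apply Markov's inequality, which reduces the claim to showing that $\E[\mu_h(B(0,M))^p] \leq C_{M,\gamma}$ for some $p > 0$. Since $\E[\mu_h(B(0,M))]$ will in general be infinite (the $\log$-singularity at $0$ from the quantum cone makes the one-point function non-integrable), one cannot use $p=1$; instead I would work with small $p \in (0,1)$ and exploit concavity.

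To bound the $p$-th moment I would decompose $B(0,M) = \bigcup_{n \geq n_0} A_n$ into dyadic annuli $A_n = B(0,e^{-n}) \setminus B(0,e^{-n-1})$ with $n_0 = \lceil -\log M\rceil$, and use $(\sum_n x_n)^p \leq \sum_n x_n^p$ for $p \in (0,1)$ to reduce to bounding $\E[\mu_h(A_n)^p]$ individually. For each $n$, the LQG coordinate change formula applied to $\psi(z) = e^n z$ gives
\[
\mu_h(A_n) \;=\; \mu_{\wt h_n}(A_0), \qquad \wt h_n(z) := h(e^{-n}z) - Qn, \qquad A_0 := B(0,1)\setminus B(0,e^{-1}).
\]
Writing $h = h_1 + h_2$ for the radial/lateral decomposition of the circle-average embedded cone (recall from Section~\ref{sec:LQG} that $h_{e^{-t}}(0) = B_t + \alpha t$ for $t \geq 0$ with $\alpha = Q - W/(2\gamma) < Q$, and $h_2$ is independent with the law of the lateral part of a whole-plane GFF), and using that $h_2$ is scale-invariant as a random distribution (since scaling is a translation in log-polar coordinates and the lateral part of a whole-plane GFF is translation-invariant on the cylinder), the lateral part of $\wt h_n$ on $A_0$ has the same law as $h_2|_{A_0}$ and is independent of the radial part. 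Bounding the continuous radial part by its supremum on $A_n$ yields
\[
\mu_h(A_n) \;\leq\; \exp\!\Big(\gamma \sup_{t \in [n,n+1]}\big(B_t + \alpha t - Qn\big)\!\Big)\; \mu_{\wh h_2}(A_0),
\]
with $\wh h_2 \stackrel{d}{=} h_2$ independent of $B$.

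Taking $p$-th moments, by independence and a direct Gaussian computation $\sup_{t \in [n,n+1]}(B_t+\alpha t) \leq B_n + \sup_{s\in[0,1]}(B_{n+s}-B_n) + \alpha(n+1)$, so
\[
\E\!\left[\exp\!\Big(\gamma p \sup_{t \in [n,n+1]}(B_t + \alpha t - Qn)\Big)\right] \;\leq\; C\, e^{-\gamma p (Q - \alpha - \gamma p/2)\,n}.
\]
This decays exponentially in $n$ provided $p < 2(Q-\alpha)/\gamma = W/\gamma^2$. Combined with the standard moment bound $\E[\mu_{\wh h_2}(A_0)^p] < \infty$ for $p < 2/\gamma^2$ (analogous to Lemma~\ref{lem:fbpmom}, since $\wh h_2$ is the lateral part of a whole-plane GFF restricted to a compact annulus away from $0$ and $\infty$), one obtains $\E[\mu_h(A_n)^p] \leq C e^{-cn}$ for any $p \in (0,\min(1,\,W/\gamma^2,\,2/\gamma^2))$. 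For the finitely many negative indices $n \in [n_0,0)$ (i.e., the annuli in $B(0,M) \setminus \D$), the same argument applies with the conditioned-Brownian-motion regime of the radial part, producing finite bounds depending on $M$. Summing the geometric series gives $\E[\mu_h(B(0,M))^p] \leq C_{M,\gamma} < \infty$, and Markov's inequality completes the proof.

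The main subtlety will be the case $\gamma = 2$, where the subcritical GMC construction does not apply; however, the critical LQG area measure admits the same scaling and radial-lateral decomposition, and the moment bounds for the lateral-part measure on a bounded annulus away from $0$ and $\infty$ hold in the critical regime as well (for $p$ sufficiently small), so the same scheme goes through. A minor bookkeeping point is ensuring that the suprema $\sup_{s \in [0,1]} (B_{n+s} - B_n)$ and the Radon-Nikodym factor from the Bessel-conditioning for $t < 0$ are uniformly bounded in the exponential moments; both follow from standard one-dimensional estimates and do not affect the exponent $p$.
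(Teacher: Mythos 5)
Your proposal is correct and takes a genuinely different route from the paper's own proof. The paper first compares $h$ to a whole-plane GFF $\wt h$ (via $h|_\D = \wt h|_\D - \alpha\log|\cdot|$), establishes $\E[\mu_{\wt h}(B(0,M))^p] < \infty$ by a scaling argument combined with the $M$-good decomposition of \cite{mq2020geodesics}, handles the annulus $A^M = B(0,M)\setminus B(0,1/M)$ via the finite exponential moments of $\sup_{A^M}|h-\wt h|$ from \cite[Lemma~3.10]{ghm2020kpz}, and controls $B(0,1/M)$ by dyadic annuli using the GMC moment bound of \cite[Lemma~5.2]{ghm2020kpz} together with the condition $\alpha < Q$ for summability. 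Your proof instead decomposes $B(0,M)$ into dyadic annuli once and for all, applies the coordinate change $\psi(z)=e^n z$ to each annulus, and exploits the radial/lateral decomposition of the cone field directly: the exponential moments of the drifted Brownian radial part produce the geometric decay in $n$ (with the same exponent constraint $p < W/\gamma^2$, which you can check is exactly what $\alpha < Q$ encodes in the paper's version), and the lateral part provides a uniform moment bound via its translation invariance. This is essentially the strategy of Lemma~\ref{lem:pthmoment} and Lemma~\ref{lem:pthmoment_cone} transplanted to the cone on $B(0,M)$, so it is arguably more self-contained than the paper's route (no reliance on \cite{ghm2020kpz}), at the cost of having to establish the lateral-part moment bound for the whole-plane GFF on the cylinder; the correct internal reference here is a cylinder analogue of Lemma~\ref{lem:fblateral} rather than Lemma~\ref{lem:fbpmom}, and this is the same fact the paper uses implicitly in the proof of Lemma~\ref{lem:pthmoment_cone}. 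A couple of small bookkeeping points: $n_0$ should be $\lfloor -\log M\rfloor$ rather than $\lceil -\log M\rceil$ to ensure $\bigcup_{n\geq n_0}A_n \supseteq B(0,M)$, and for the finitely many negative indices the radial part lives in the conditioned regime $\{B_t-(Q-\alpha)t>0,\ t<0\}$, so one should explicitly note (as you do in passing) that the conditioned process retains finite exponential moments on the compact interval $[n,n+1]$.
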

\begin{proof}
This proof is similar to that of Lemma~\ref{lem:LBcone}, so we will be rather brief. We recall that $h|_{\D} = \wt{h}|_{\D} - \alpha \log|\cdot|$, where $\wt{h}$ is a whole-plane GFF normalized so that $\wt{h}_1(0) = 0$ (where $\wt h_s(0)$ denotes the average value of $\wt h$ on $\partial B(0,s)$). Fix $M >0$, let $\varphi^{M}(z) = 2Mz$ and set $\wt{h}^M = \wt{h} \circ \varphi^M$. Then
\begin{align*}
	\mu_{\wt h}(B(0,M)) = \mu_{\wt{h}^M + Q\log(2M)}\!\left( B(0,1/2) \right) = (2M)^{2 + \frac{\gamma^2}{2}}\mu_{\wt{h}^M}\!\left (B(0,1/2) \right).
\end{align*}
By translation and scale invariance, $\wt{h}^M$ has the law of a whole-plane GFF modulo additive constant and the circle-average of $\wt h^{M}$ on $\partial \D$ is equal to $\wt h_{2M}(0)$. Hence the field $\wh{h}^M = \wt{h}^M - \wt{h}_{2M}(0)$ agrees in law with $\wt{h}$ and 
\begin{align}
	\mu_{\wt h}(B(0,M)) = e^{\gamma \wt h_{2M}(0)}(2M)^{2 + \frac{\gamma^{2}}{2}}\mu_{\wh h^M}\!\left (B(0,1/2) \right )
\end{align}
The Markov property implies that $\wh h^M = h^0 + \Fh$ where $h^0$ is a zero-boundary GFF on $\D$ and $\Fh$ harmonic on $\D$ and independent of $h^0$. Moreover, $\E[ \mu_{h^0}(B(0,1/2) )^p ]$ is finite for all $p \in (0,4/\gamma^2)$.
As in the proof of Lemma~\ref{lem:LBcone}, the probability that $\D$ is not $R$-good for $\wh{h}^M$ is at most $c_1 e^{-c_2 R^2}$, for some universal constants $c_1,c_2$. In particular, $\E[ \exp( \gamma p \sup_{z \in B(0,1/2)} |\Fh(z)|)]$ is finite for all $p \in (0,4/\gamma^2)$, and consequently so is $\E[ \mu_{\wh{h}^M}(B(0,1/2))^p]$. Moreover, since Brownian motion has finite exponential moments, $\E[e^{\gamma p \wt h_{2M}(0)}]$ is finite for all $p > 0$ and hence H\"{o}lder's inequality implies that 
\begin{align}
	\E \!\left [ \mu_{\wt{h}}(B(0,M))^p \right] < \infty,\quad \text{for all} \quad p \in (0,4/\gamma^2).
\end{align}
We write $A^M = B(0,M) \setminus B(0,1/M)$ and note that \cite[Lemma~3.10]{ghm2020kpz} implies that 
\begin{align}\label{eq:expmomsup}
	\E \!\Bigg[ \exp \!\left\{ q\sup_{z \in A^M} |h(z) - \wt{h}(z)| \right\} \!\Bigg] < \infty,
\end{align}
for all $q > 0$ sufficiently small.
Moreover, by H\"{o}lder's inequality and~\eqref{eq:expmomsup} and since 
\begin{align*}
	\mu_h \!\left( A^M \right) \leq \mu_{\wt h}\!\left (B(0,M)\right)\exp \!\Big\{ \gamma \sup_{z \in A^M}|h(z)-\wt{h}(z)| \!\Big\} 
\end{align*}
we have that
\begin{align*}
\E\!\left [ \mu_h(A^M)^p \right] < \infty
\end{align*}
for all $p \in (0,4 / \gamma^2)$ sufficiently small.
We now set $\beta_{p,\gamma} = (2 + \gamma^{2} /2)p - \gamma^{2} p^{2} /2$ and $A_k = B(0,M^{-1}2^{-k}) \setminus B(0,M^{-1}2^{-k-1})$ for $k \in \N_0$. Note that since $\a < Q$, there is a $p \in (0,1)$ such that $\beta_{p,\gamma} - \gamma \alpha p>0$ and hence
\begin{align*}
	\E \!\left[\mu_{\wt h}(A_k)^{p}\right] \leq C2^{-k \beta_{p,\gamma}},
\end{align*}
for some constant $C$ independent of $k$ by \cite[Lemma~5.2]{ghm2020kpz}. Therefore we obtain that $\E[\mu_h(A_k)^p] \leq C2^{-k (\beta_{p,\gamma}-\alpha \gamma p)}$ for some (possibly different) constant $C$, independent of $k$, and by summing over $k$ we obtain that $\E[\mu_h(B(0,1/M))^p]$ is finite. Combining this with the above, we have that $\E [\mu_h(B(0,M))^p]$ is finite and so the claim follows by Markov's inequality.
\end{proof}

The next result is instrumental in proving the main estimate needed for the proof of the regularity in the case of chordal $\SLE_8$.

\begin{lemma}\label{lem:QwedgeLB}
Fix $\gamma \in (0,2]$ and suppose that $\CW = (\h,h,0,\infty) \sim \qwedgeW{\gamma}{\gamma^2/2}$ has the first exit parameterization. Fix $R,\sigma > 0$. Then, there is a constant $a > 0$ such that for $\epsilon>0$,
\begin{align*}
	\p[\mu_h(B(z,\epsilon)) \geq \epsilon^a,  \text{for all} \ z \ \text{such that} \ B(z,4\epsilon) \subseteq B(0,R) \cap \h] = 1 - O(\epsilon^{\sigma/2}).
\end{align*}
\end{lemma}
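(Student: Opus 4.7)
The plan is to deduce the uniform estimate from a grid covering argument combined with a single-ball negative moment bound, following the strategy underlying Lemma~\ref{lem:LBcone}.

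First, I would set up the cover. Let $\Lambda_\epsilon = (\tfrac{\epsilon}{10}\Z^2) \cap \{w \in \h : B(w,4\epsilon) \subseteq B(0,R) \cap \h\}$, so that $|\Lambda_\epsilon| = O(\epsilon^{-2})$. For any $z$ with $B(z,4\epsilon) \subseteq B(0,R)\cap \h$, the triangle inequality gives some $w \in \Lambda_\epsilon$ with $|z-w| \leq \epsilon/5$, hence $B(w,\epsilon/2) \subseteq B(z,\epsilon)$. It therefore suffices to prove
\[
\p\!\left[\min_{w \in \Lambda_\epsilon} \mu_h(B(w,\epsilon/2)) < \epsilon^a\right] = O(\epsilon^{\sigma/2}).
\]

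Second, I would establish a uniform single-ball negative moment bound. For each $w \in \Lambda_\epsilon$ the disk $D_w = B(w,2\epsilon)$ lies in the interior of $B(0,R+2\epsilon) \cap \h$ at distance $\geq 2\epsilon$ from $\R$. By the domain Markov property applied in $D_w$, write $h|_{D_w} = h^0_w + \Fh_w$ with $h^0_w$ a zero-boundary GFF on $D_w$ and $\Fh_w$ harmonic on $D_w$. Then
\[
\mu_h(B(w,\epsilon/2)) \geq \exp\!\left(\gamma \inf_{B(w,\epsilon/2)} \Fh_w\right) \mu_{h^0_w}(B(w,\epsilon/2)).
\]
By \cite[Proposition~3.7]{rv2010gmcrevisit} and Brownian scaling of the zero-boundary GFF, for every $p>0$ there is a polynomial $r_1(p)$ so that $\E[\mu_{h^0_w}(B(w,\epsilon/2))^{-p}] \leq C_p \epsilon^{-r_1(p)}$. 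The harmonic piece $\Fh_w$ is a Gaussian process whose pointwise variance on $D_w$ is $O(\log \epsilon^{-1})$: thanks to the margin $B(w,4\epsilon) \subseteq B(0,R) \cap \h$, the relevant boundary values of the wedge field (which locally looks like a free boundary GFF plus the $-Q\log|\cdot|$ term coming from the $z \mapsto \log z$ coordinate change, plus the radial Brownian/BES$^3$ part from the first exit parameterization) have variance controlled uniformly in $w \in \Lambda_\epsilon$ by $C\log\epsilon^{-1} + O(1)$. Combined with Lemma~\ref{lem:bound_harmonic_part} (applied on $B(w,\epsilon)$ inside $D_w$), this gives $\E[e^{-\gamma p \inf_{B(w,\epsilon/2)} \Fh_w}] \leq C_p \epsilon^{-r_2(p)}$ for a polynomial $r_2(p)$. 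H\"older's inequality yields $\E[\mu_h(B(w,\epsilon/2))^{-p}] \leq C_p \epsilon^{-r(p)}$ with $r(p)$ polynomial in $p$, uniformly in $w \in \Lambda_\epsilon$.

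Finally, Markov's inequality gives $\p[\mu_h(B(w,\epsilon/2)) < \epsilon^a] \leq C_p \epsilon^{ap - r(p)}$, and a union bound over $\Lambda_\epsilon$ produces $O(\epsilon^{ap - r(p) - 2})$. Choosing $p>0$ small and then $a$ large enough so that $ap - r(p) - 2 \geq \sigma/2$ concludes the proof.

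The main obstacle is controlling the harmonic correction $\Fh_w$ uniformly in $w \in \Lambda_\epsilon$, especially for those $w$ with $\im(w)$ or $R-|w|$ close to $4\epsilon$ and for $w$ near the origin (where the $-Q\log|z|$ singularity contributes). The hypothesis $B(w,4\epsilon) \subseteq B(0,R)\cap\h$ provides just enough margin to ensure the boundary variance on $\partial D_w$ grows only logarithmically in $\epsilon^{-1}$, so that the Gaussian estimates yield polynomial (in $\epsilon$) constants, independent of $w$.
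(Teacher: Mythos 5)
Your covering and union-bound skeleton is right, and your single-ball negative moment strategy mirrors the paper's first stage, but there is a genuine gap: you apply the Gaussian machinery (the domain Markov decomposition into a zero-boundary GFF plus a Gaussian harmonic part, and then Lemma~\ref{lem:bound_harmonic_part}) directly to the quantum wedge field $h$. The field $h$ is \emph{not} Gaussian. With the first exit parameterization for $\qwedgeW{\gamma}{\gamma^2/2}$, the radial part $r \mapsto h_r(0)$ for $r < 1$ is $-Z_{-2\log r} - Q\log r$ where $Z$ is a $\BES^3$ process, and a $\BES^3$ process is not a Gaussian process. Consequently the harmonic correction $\Fh_w$ you introduce is not Gaussian, Lemma~\ref{lem:bound_harmonic_part} (which is stated and proved only for a free boundary GFF) does not apply, and your claim that ``$\Fh_w$ is a Gaussian process whose pointwise variance is $O(\log\epsilon^{-1})$'' is simply false. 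The sub-Gaussian tail heuristics you allude to (``the radial Brownian/BES$^3$ part \ldots has variance controlled by $C\log\epsilon^{-1}$'') would need to be made rigorous, and that is precisely the nontrivial content that the proposal omits.

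The paper's proof handles this in two separate stages that are both missing from your argument. First, the entire estimate is established for an honest free boundary GFF $\wt{h}$ on $\h$ (normalized to have zero average on $\h \cap \partial\D$), where the Gaussian decomposition $\wt h = h^0_{D_R} + \Fh_{D_R}$ on the \emph{fixed} domain $D_R = B(0,R)\cap\h$ is legitimate and Lemma~\ref{lem:bound_harmonic_part} applies. Second, the result is transferred from $\wt h$ to the wedge field $h$ by explicitly coupling the two via the Pitman-type identity $Z_t = 2S_t - B_t$ for a $\BES^3$, which yields the pointwise comparison $h_{e^{-t}}(0) \geq \wt{h}_{e^{-t}}(0) + A_\epsilon$ on $t \in [0,\log\epsilon^{-1}]$ with $A_\epsilon$ a random variable whose lower tail is controlled by first-passage time estimates for Brownian motion. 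This gives $\mu_h(B(z,\epsilon)) \geq e^{\gamma A_\epsilon}\,\mu_{\wt h}(B(z,\epsilon))$ on $\D_+$, and the $R>1$ case is handled by a separate sup bound on $|h-\wt h|$ over $D_R\setminus D_{1/R}$. You cannot skip this coupling step: it is exactly what converts the non-Gaussian radial conditioning into a tractable comparison.

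Two smaller points worth flagging: the grid covering itself is fine, but your choice of $\epsilon$-dependent Markov domains $D_w = B(w,2\epsilon)$ (versus the paper's fixed $D_R$) is a different decomposition and, even in the genuine Gaussian setting, forces you to re-verify Lemma~\ref{lem:bound_harmonic_part}'s hypotheses with the margin $\dist(z,\partial U) > 2b\epsilon'$ where $\epsilon'$ is the inner radius $\epsilon/2$; this does go through with $b\in[1,2)$, but it is a computation you should record. More importantly, once you replace $h$ by $\wt h$ in the single-ball estimate, you still need the separate transfer argument just described, and you need to note that the wedge field enters both near $z=0$ (BES$^3$) and far from it (Brownian motion with drift $Q\log r$), both of which must be accommodated.
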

\begin{proof}
Let $D_R = B(0,R) \cap \h$ and let $h_1$ (resp.\ $h_2$) be the projection of $h$ onto $H_1(\h)$ (resp.\ $H_2(\h)$). Then $\left( h_{e^{-s}}(0)\right)_{s \geq 0}$ has the law of $(-Z_{2s} - Qs)_{s \geq 0}$ and $\left( h_{e^s}(0)\right)_{s \geq 0}$ has the law of $(B_{2s}+Qs)_{s \geq 0}$ where $B$ is  a standard Brownian motion starting from zero and $Z$ is a $\BES^3$ with $Z_0 = 0$ and $B$ and $Z$ are independent. 

We begin by proving the result for a free boundary GFF in place of the quantum cone. Let $\wt h$ be a free boundary GFF on $\h$ with the additive constant fixed so that its average on $\h \cap \partial \D$ is equal to zero. We let $\wt h = h_{D_R}^0 + \Fh_{D_R}$ where $h_{D_R}^0$ is a zero-boundary GFF on $D_R$ and $\Fh_{D_R}$ is harmonic on $D_R$. Fix $z \in D_R,\epsilon \in (0,1)$ such that $B(z,2\epsilon) \subseteq D_R$. Then by applying the coordinate change formula together with \cite[Proposition~3.7]{rv2010gmcrevisit} (recall Remark~\ref{rmk:different_fields}) if $\gamma \in (0,2)$ and \cite[Corollary~14]{drsv2014criticalgmckpz} if $\gamma = 2$, we have that
\begin{align*}
\E\!\left[ \mu_{h_{D_R}^0}(B(z,\epsilon))^{-2}\right] = O(\epsilon^{-4-3\gamma^2}).
\end{align*}
Moreover, Lemma~\ref{lem:bound_harmonic_part} implies that 
\begin{align*}
\E\!\left[  \exp\!\left( 2\gamma \sup_{w \in B(z,\epsilon)}\Fh_{D_R}(w)\right) \right] = O(\epsilon^{-C\gamma^2}),
\end{align*}
for some $C>0$, where the implicit constants in both of the inequalities depend only on $R,\gamma$. Let $r = 1-1/\sqrt{2}$ and note that 
\begin{align*}
\mu_{\wt h}(B(z,r\epsilon)) \geq \mu_{h_{D_R}^0}(B(z,r\epsilon))\exp\left( \gamma \inf_{w \in B(z,r\epsilon)}\Fh_{D_R}(w) \right).
\end{align*}
By the Cauchy-Schwarz inequality,  we have that
\begin{align*}
&\E\left[\mu_{h_{D_R}^0}(B(z,r\epsilon))^{-1} \exp\left(-\gamma \inf_{w \in B(z,r\epsilon)} \Fh_{D_R}(w)\right)\right] \\
&\leq \E\left[\mu_{h_{D_R}^0}(B(z,r\epsilon))^{-2}\right]^{1/2} \E\left[\exp\left(2\gamma \sup_{w \in B(z,r\epsilon)} \Fh_{D_R}(w) \right)\right]^{1/2}\\
&= O(\epsilon^{-2-(3+C)\gamma^2/2}).
\end{align*}
Hence by the Markov inequality we obtain that for $b>0$
\begin{align*}
\p[\mu_{\wt h}(B(z,r\epsilon))\leq \epsilon^{b}] = O( \epsilon^{b - 2 - (C+3)\gamma^2/2}).
\end{align*}
Moreover, for all $w \in D_R$ such that $B(w,4\epsilon) \subseteq D_R$ we can find $z \in (\epsilon \Z)^2 \cap D_R$ such that $B(z,r\epsilon) \subseteq B(w,\epsilon)$ and $B(z,2\epsilon) \subseteq B(w,4\epsilon)$ and since $|(\epsilon \Z)^2 \cap D_R| \asymp \epsilon^{-2}$ we obtain that
\begin{align}\label{eq:QwedgeLB1}
	&\p[\mu_{\wt h}(B(w,\epsilon))\geq \epsilon^{b}, \text{for all} \, w \, \text{such that} \, B(w,4\epsilon) \subseteq D_R ] \nonumber \\
	&\geq \p[\mu_{\wt h}(B(z,r\epsilon))\geq \epsilon^{b} , \text{for all} \, z \in (\epsilon \Z)^2 \cap D_R \, \text{such that} \, B(z,2\epsilon) \subseteq D_R] \nonumber \\
	&\geq 1 - O(\epsilon^{b -4-(C+3)\gamma^2/2}) \geq 1 - O(\epsilon^{\sigma/2})
\end{align}
for $b$ sufficiently large. Note that all the implicit constants depend only on $R,\gamma$.  Thus, we have proven the result with a free boundary GFF in place of the quantum cone.

The rest of the proof consists of transferring the result from the free boundary GFF to the quantum cone. For now, we assume that $R=1$, that is, we consider points in $\D_+$. By \cite[Theorem~VI.3.3.5]{ry1999book}, a $\BES^3$ $Z$, can be coupled with a one-dimensional Brownian motion $B$, such that $Z_t = 2S_t - B_t$, where $S_t = \max_{0 \leq s \leq t} B_s$. Consequently, since $\wt{h}_{e^{-t}}(0) = B_{2t}$, where $B_t$ is a two-sided Brownian motion, we can couple $h$ and $\wt h$ such that $h_{e^t}(0) = \wt{h}_{e^t}(0) + Qt$ for $t \geq 0$ and $h_{e^{-t}}(0) = -2S_{2t} + \wt{h}_{e^{-t}}(0) - Qt$ (where the maximum $S$ is over the Brownian motion $\wt{h}_{e^{-t/2}}(0)$) and such that their lateral parts are the same. Thus we have that $h_{e^{-t}}(0) \geq \wt{h}_{e^{-t}}(0) + A_{\epsilon}$ for $t \in [0,\log(\epsilon^{-1})]$ where 
\begin{align*}
	A_{\epsilon} = -\sup_{0\leq t \leq \log(\epsilon^{-1})} 2S_{2t}-Qt,
\end{align*}
and so $\mu_h(B(z,\epsilon)) \geq e^{\gamma A_\epsilon} \mu_{\wt h}(B(z,\epsilon))$ where $z$ is as in the statement of the lemma. Note that
\begin{align*}
	\p\!\left[ e^{\gamma A_{\epsilon}}\leq \epsilon^{b} \right]&\leq \p\!\left[ \sup_{0\leq t \leq 2\log(\epsilon^{-1})} B_t \geq b\log(\epsilon^{-1}) / (2\gamma ) \right] \leq \p\!\left[ T_{\epsilon}\leq 2\log(\epsilon^{-1})\right]
\end{align*}
where $T_{\epsilon} = \inf\{t \geq 0 : B_t = b\log(\epsilon^{-1})/(2\gamma ) \}$. Note that the explicit form of the probability density function of $T_{\epsilon}$ given in the proof of Lemma~\ref{lem:main_lemma_ubd} implies that $\p[T_{\epsilon} \leq 2\log(\epsilon^{-1})] = O(\epsilon^b)$ for $b$ sufficiently large. Hence since 
\begin{align*}
\p[\mu_h(B(z,\epsilon)) \leq \epsilon^{2b}] \leq \p[e^{\gamma A_{\epsilon}}\leq \epsilon^{b}] + \p[\mu_{\wt h}(B(z,\epsilon))\leq \epsilon^{b}]
\end{align*}
by applying a method similar to the above, we obtain that
\eqref{eq:QwedgeLB1} still holds with $\wt h$ replaced by $h$.

Finally, we turn to the case $R>1$. It is easy to see that we can find a constant $C_R > 0$ depending only on $R $ such that 
\begin{align*}
\p\!\left[ \sup_{z \in D_R \setminus D_{1/R}}|(h-\wt{h})(z)| > M\right] \leq e^{-C_R M} \quad \text{for all} \quad M >0,
\end{align*}
and so 
\begin{align*}
\p\!\left[ \sup_{z \in D_R \setminus D_{1/R}}\left|\frac{1}{\gamma}\log\left( \frac{d\mu_{\wt h}}{d\mu_h}(z)\right) \right|  > M \right] \leq e^{-C_R M} \quad \text{for all} \quad M > 0.
\end{align*}
Note here that while $h$ and $\wt{h}$ are not pointwise defined, their difference is, as they have the same lateral part and the radial parts are continuous functions. Thus by setting $M = b\log(\epsilon^{-1}) / \gamma$ and noting that if $\sup_{z \in D_R \setminus D_{1/R}}\left| \frac{1}{\gamma}\log\left( \frac{d\mu_{\wt h}}{d\mu_h}(z)\right) \right| \leq M$ and $\mu_h(B(z,\epsilon)) \leq \epsilon^{2b}$ then $\mu_{\wt h}(B(z,\epsilon))\leq \epsilon^{b}$, we obtain that 
\begin{align*}
\p\!\left[ \mu_h(B(z,\epsilon))\geq \epsilon^{2b} ,\text{for all} \, z  \in D_R \setminus D_{1/R} \, \text{such that} \, B(z,4\epsilon) \subseteq D_R \right] = 1 - O(\epsilon^{\sigma/2}),
\end{align*}
for $b$ sufficiently large. Then the proof is complete by combining this with the $R =1 $ case.
\end{proof}

\section{Estimates for whole-plane $\SLE$}
\label{app:proofs}

In this part of the appendix, we shall prove that one can fit small balls between two flow lines of critical angle difference (that is, the angle difference is at the critical value for whether they intersect or not) as well as bound the probability that a whole-plane $\SLE$ revisits a small ball around the origin, after exiting $\D$. The proofs are to a large extent modifications of the proof of \cite[Proposition~4.14]{ms2017imag4}. We begin with the former.

\begin{lemma}\label{lem:fitball}
Fix $\gamma \in (0,2)$, $\kappa = \gamma^{2} \in (0,4)$ and let $h$ be a field in $\C$ such that $h = \wt h - \alpha \arg( \cdot )$, modulo $2\pi(\chi + \alpha)$, where $\alpha > -\chi$ and $\wt h$ is a whole-plane GFF on $\C$. Let $\eta_1$ (resp.\ $\eta_2$) be the flow line of $h$ with angle $0$ (resp.\ $\theta_c = \frac{\pi \kappa}{4-\kappa}$) starting from zero and suppose that 
\begin{align*}
    \frac{(2\pi - \theta_c)\chi +  2\pi \alpha}{\lambda} = \frac{\kappa}{2} 
\end{align*}
Note that the marginal  law of $\eta_1$  is that  of a whole-plane  $\SLE_{\kappa}(\kappa - 2)$ starting from zero and conditional on $\eta_1$, $\eta_2$ has the law of a chordal $\SLE_{\kappa}(\frac{\kappa}{2}-2;\frac{\kappa}{2}-2)$ in $\C \setminus \eta_1$. Let $U_1,U_2$ be the two connected components of $\C \setminus (\eta_1 \cup \eta_2)$. Then with probability $1-o_\epsilon^\infty(\epsilon)$, we have that for $j = 1,2$ we can find a ball $B(z,\epsilon) \subseteq{U_j} \cap B(0,1)$.
\end{lemma}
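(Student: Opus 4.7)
The plan is to adapt the argument of \cite[Proposition~4.14]{ms2017imag4}, which establishes an analogous pocket-filling statement for space-filling $\SLE_{\kappa'}$. In that proof one considers, at each lattice point $z_k$ in a fine grid, the two flow lines at angles $\pm\pi/2$, which trace out the left and right boundaries of the space-filling curve up to the first time it visits $z_k$; the key estimate is that with probability $1-o_\epsilon^\infty(\epsilon)$ these flow lines separate macroscopically before merging with the pre-existing boundary arcs, which in turn forces the pocket around $z_k$ to contain a large ball. In our setting the corresponding pair at $z_k$ should be the flow lines at angles $0$ and $\theta_c$, and the two bounded regions they together with $\eta_1\cup\eta_2$ cut out play the role of the pockets.

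First I would fix $\xi\in(0,1)$ small and cover $B(0,1)$ by a lattice $\mathcal{L}_\epsilon=((\epsilon/10)\Z)^2\cap B(0,1)$, which has cardinality $O(\epsilon^{-2})$. For each $z_k\in\mathcal{L}_\epsilon$, let $\eta_{z_k}^0$ and $\eta_{z_k}^{\theta_c}$ denote the flow lines of $h$ starting from $z_k$ at angles $0$ and $\theta_c$, stopped upon leaving $B(z_k,\epsilon^\xi)$. Since $\theta_c$ is exactly the critical angle for non-intersection of flow lines with a common starting point \cite[Theorem~1.7]{ms2017imag4}, the curves $\eta_{z_k}^0$ and $\eta_{z_k}^{\theta_c}$ are a.s.\ simple and disjoint in $B(z_k,\epsilon^\xi)$, and by the merging rules each will eventually merge with the corresponding $\eta_j$. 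Define $G_k$ to be the event that (i) each of $\eta_{z_k}^0$ and $\eta_{z_k}^{\theta_c}$ exits $B(z_k,\epsilon/10)$ without coming within distance $\epsilon/100$ of the other, and (ii) neither curve hits $\eta_1\cup\eta_2$ inside $B(z_k,\epsilon/5)$. On $G_k$ the pair separates a neighborhood of $z_k$ into two angular sectors, each of which is contained in the corresponding component $U_j$ and contains a ball of radius at least $\epsilon/200$.

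The core estimate is $\p[G_k^c]=o_\epsilon^\infty(\epsilon)$ uniformly in $z_k$, which would follow from the techniques of \cite[Section~4.3]{ms2017imag4}: conditioning on $\eta_{z_k}^0$, the conditional law of $\eta_{z_k}^{\theta_c}$ is an $\SLE_\kappa(\underline{\rho})$-type process in $\C\setminus\eta_{z_k}^0$ whose driving function, after a suitable radial time change, is governed by a Bessel-type process of dimension strictly greater than $2$ (reflecting that $\theta_c$ lies at the boundary of the non-intersection regime for same-starting-point flow lines), so small-distance excursions between the two curves have super-polynomial tails in $\epsilon$. The analogous Bessel estimate, combined with absolute continuity between radial and chordal driving functions, controls the probability of interaction with the pre-existing $\eta_1\cup\eta_2$. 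A union bound over the $O(\epsilon^{-2})$ lattice points then gives $\p[\bigcap_k G_k]\geq 1-o_\epsilon^\infty(\epsilon)$, after which it suffices to pick any $z_k\in\mathcal{L}_\epsilon\cap U_j\cap B(0,1-\epsilon/5)$ (an a.s.\ nonempty set for small $\epsilon$, by continuity of $\eta_1,\eta_2$ and the fact that each $U_j$ has nonempty interior intersecting $B(0,1/2)$) to exhibit the required ball. The main obstacle will be the Bessel dimension bookkeeping at the critical angle: one needs to verify that the process governing the inter-curve distance indeed has dimension strictly exceeding~$2$ with a quantitative excess depending on $\kappa$, which is what yields the desired super-polynomial decay rather than a merely polynomial one; once this is in place the remaining steps reduce to standard scaling and conditioning arguments.
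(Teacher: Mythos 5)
There is a genuine gap, and in fact two distinct problems. First, the core estimate $\p[G_k^c] = o_\epsilon^\infty(\epsilon)$ cannot hold with the angle choice $\theta_c$. When the two auxiliary flow lines from $z_k$ have angle difference exactly $\theta_c = \pi\kappa/(4-\kappa)$, the associated $\SLE_\kappa(\rho)$ process has $\rho = \kappa/2 - 2$, and the governing Bessel process has dimension $d = 1 + 2(\rho+2)/\kappa = 2$, i.e., exactly $2$ and not strictly greater. A $\BES^2$ started at order~$1$ drops below $\delta$ with probability of order $1/\log(1/\delta)$, so the probability that your curves come within distance $\epsilon/100$ of one another before leaving $B(z_k,\epsilon/10)$ is of order $1/\log(\epsilon^{-1})$ --- not merely worse than super-polynomial, but not even power-law small. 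You flag this as ``the main obstacle,'' and the answer is that it does fail: the strict-excess Bessel dimension you need is unavailable precisely at the critical angle.

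Second, the lattice union bound is structurally incompatible with the event you define. Your $G_k$ includes the condition that the auxiliary curves from $z_k$ do not hit $\eta_1 \cup \eta_2$ inside $B(z_k,\epsilon/5)$. But $\eta_1 \cup \eta_2$ is a curve passing through $B(0,1)$, and your grid has spacing $\epsilon/10 < \epsilon/5$, so the balls $B(z_k,\epsilon/5)$ cover $B(0,1)$; the curve necessarily passes through many of them, and the angle-$0$ (resp.\ $\theta_c$) flow line from $z_k$ a.s.\ merges with $\eta_1$ (resp.\ $\eta_2$), so for those $k$ the auxiliary curves do hit $\eta_1 \cup \eta_2$ nearby. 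Thus $\p[\cap_k G_k]$ is essentially zero, and the union bound cannot produce $1 - o_\epsilon^\infty(\epsilon)$. Even if you excised condition~(ii), per-point failure would still be logarithmic by the Bessel computation above. The paper instead gets super-polynomial decay not from a super-polynomial per-trial estimate but from $\asymp \epsilon^{-1}$ conditionally independent trials along $\eta_1$: it iterates over concentric annuli $B(0,K(n+1)\epsilon) \setminus B(0,Kn\epsilon)$, at each scale starts a single auxiliary flow line $\gamma_{n,\epsilon}$ from the tip $\eta_1(T^{n,\epsilon})$ at a strictly \emph{sub}-critical angle $\theta_0 \in (0,\theta_c/2)$ --- so that $\gamma_{n,\epsilon}$ a.s.\ \emph{does} hit $\eta_1$ and can close off a genuine pocket --- and uses Lemma~\ref{lem:cutout} to obtain a uniform positive conditional probability $p_0$ of pocket formation at each scale, giving failure probability $\lesssim (1-p_0)^{c\epsilon^{-1}}$. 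The pocket lies between $\eta_1$ and $\gamma_{n,\epsilon}$, and monotonicity of flow lines prevents $\eta_2$ (at angle $\theta_c > \theta_0$) from crossing $\gamma_{n,\epsilon}$, so the pocket sits inside the relevant $U_j$. Your proposal needs this iterative-along-the-curve structure and a sub-critical auxiliary angle, rather than a lattice of interior seed points and the critical angle.
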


For the proof of the above lemma, we need the following, the proof of which is essentially the same as that of \cite[Lemma~4.15]{ms2017imag4} and is omitted.
\begin{lemma}
\label{lem:cutout}
Fix $x^L \leq 0 \leq x^R$ and $\rho^L,\rho^R > -2$. Suppose that $h$ is a GFF on $\h$ with boundary data such that its flow line $\eta$ starting from $0$ is an $\SLE_\kappa(\rho^L;\rho^R)$ process with $\kappa \in (0,4)$ and force points at $x^L,x^R$. Fix $\theta > 0$ such that the flow line $\eta_\theta$ of $h$ starting from $0$ with angle $\theta$ a.s.\ does not hit the continuation threshold and a.s.\ intersects $\eta$. Let $T$ (resp.\ $T_\theta$) be the first time that $\eta$ (resp.\ $\eta_\theta$) leaves $B(0,2)$.  For $c_1, p \in (0,1)$,  we let $E = E_{c_1,p}$ be the event that $A = \eta([0,T]) \cup \eta_\theta([0,T_\theta])$ separates $i$ from $\infty$, the harmonic measure of the left side of $\eta$ as seen from $z \in B(i,c_1)$ in $\h \setminus A$ is at least $p \in (0,1)$ and $\dist(i,A) \geq c_1$. Then there exist $p_0,c_1,p \in (0,1)$, depending only on $\kappa, \rho^{L}, \rho^{R}$ and $\theta$, such that $\p[E] \geq p_0$.
\end{lemma}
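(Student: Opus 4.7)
The plan is to construct an explicit admissible geometric configuration in which the two flow lines wrap around $i$ and meet, and then to lower bound the probability of this configuration by two applications of \cite[Lemma~2.3]{mw2017slepaths}. More precisely, fix two disjoint smooth simple curves $\gamma,\gamma_\theta\colon[0,1]\to \overline{B(0,2)}$ with $\gamma(0)=\gamma_\theta(0)=0$ such that:
\begin{enumerate}[(a)]
\item $\gamma_\theta$ lies strictly to the left of $\gamma$ (consistent with the fact that flow lines of angle $\theta>0$ sit to the left of $\eta$),
\item together with a short arc on $\partial B(0,2)$ they enclose a Jordan domain $P\subseteq \h$ with $\dist(i,\gamma\cup\gamma_\theta)\geq 3c_1$ for some $c_1>0$, and
\item $i\in P$, with the arc of $\partial P$ corresponding to the left side of $\gamma$ being macroscopic in harmonic measure (as seen from $i$) in $P$.
\end{enumerate}
We then consider the event $E'$ that $\eta([0,T])$ stays within Hausdorff distance $c_1$ of $\gamma$ and $\eta_\theta([0,T_\theta])$ stays within Hausdorff distance $c_1$ of $\gamma_\theta$. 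On $E'$, the union $A$ is forced to separate $i$ from $\infty$ inside $B(0,2)$, we have $\dist(i,A)\geq 2c_1$, and the topology of the pocket is the same as that of $P$, so that by the Carath\'eodory stability of harmonic measure on sufficiently regular approximating domains we retain harmonic measure at least some fixed $p\in(0,1)$ for the left side of $\eta$ as seen from $B(i,c_1)$. Thus $E'\subseteq E$, and it suffices to bound $\p[E']$ from below.

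To bound $\p[E']$, I would proceed in two steps. First, I would show by \cite[Lemma~2.3]{mw2017slepaths} (which gives a positive lower bound, depending only on $\kappa$ and the weights, for the probability that an $\SLE_\kappa(\underline\rho)$ process stays within any prescribed Hausdorff tube of any prescribed admissible reference curve until hitting a given target) that
\[
\p\!\left[\eta([0,T])\subseteq B(\gamma,c_1)\right]\geq q_1>0,
\]
where $q_1$ depends only on $\kappa,\rho^L,\rho^R$ and $\gamma$ (equivalently on $\kappa,\rho^L,\rho^R,\theta$ since $\gamma$ is chosen from these parameters). Second, I would condition on $\eta$ such that the above event occurs. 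By the imaginary geometry coupling of \cite{ms2016imag1,ms2017imag4}, conditional on $\eta|_{[0,T]}$ the path $\eta_\theta$ is (until it either hits $\eta$ or reaches the continuation threshold) the flow line with angle $\theta$ of a GFF on the left connected component $V$ of $\h\setminus\eta([0,T])$ with explicit piecewise constant boundary data. This is an $\SLE_\kappa(\underline{\rho}')$ process in $V$, where the force weights $\underline{\rho}'$ depend only on $\kappa,\rho^L,\rho^R,\theta$ and on the orientation/side of $\eta$; in particular the continuation threshold is by hypothesis not hit. Applying \cite[Lemma~2.3]{mw2017slepaths} a second time, uniformly over realizations of $\eta([0,T])$ in $B(\gamma,c_1)$, yields
\[
\p\!\left[\eta_\theta([0,T_\theta])\subseteq B(\gamma_\theta,c_1)\,\middle|\,\eta|_{[0,T]}\right]\geq q_2>0
\]
for some $q_2$ depending only on $\kappa,\rho^L,\rho^R,\theta$. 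Multiplying gives $\p[E']\geq q_1 q_2=:p_0>0$.

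The main obstacle will be the second step, since it requires that the application of \cite[Lemma~2.3]{mw2017slepaths} be \emph{uniform} over the conditioning on $\eta([0,T])$: the shape of $V$ and the relative position of the force points of the conditional $\SLE_\kappa(\underline\rho')$ in $V$ vary with the realization of $\eta$. The key point is that by choosing $c_1$ sufficiently small (depending on the reference curve $\gamma$) the conformal distortion of any map from $V$ to $\h$ sending the tip to $0$ is controlled uniformly in the tube $B(\gamma,c_1)$, so that the image of the reference curve $\gamma_\theta$ in $\h$ stays inside a fixed compact family of admissible target curves, and the force weights $\underline{\rho}'$ are exactly the ones dictated by the angle $\theta$ and by the boundary heights of the GFF on the left side of $\eta$ (independent of the realization). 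This uniformity is precisely what is needed to invoke \cite[Lemma~2.3]{mw2017slepaths} with a single lower bound $q_2$. The existence of $\gamma_\theta$ as an admissible path for this conditional $\SLE$ (i.e., not hitting the continuation threshold along the way) follows from the assumption that $\eta_\theta$ a.s.\ does not hit the continuation threshold together with the nonintersection with $\gamma$.
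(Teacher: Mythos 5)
The paper does not actually write out a proof of this lemma: it states that the argument is ``essentially the same as that of \cite[Lemma~4.15]{ms2017imag4}'' and omits it. Your overall strategy --- fix a reference configuration, bound the probability that $\eta$ follows its reference curve via \cite[Lemma~2.3]{mw2017slepaths}, then condition and do the same for $\eta_\theta$ using the imaginary geometry description of its conditional law --- is exactly the standard template that the cited lemma uses, and the uniformity issue you flag in the second step is the right thing to worry about (it is resolved by the fact that the conditional weights $\underline\rho'$ are deterministic functions of $\kappa,\rho^L,\rho^R,\theta$ and that \cite[Lemma~2.3]{mw2017slepaths} is uniform over force point locations).

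However, there is a genuine gap in the geometric configuration itself. You take $\gamma$ and $\gamma_\theta$ \emph{disjoint} (away from $0$) and close the region $P$ using an arc of $\partial B(0,2)$. But the event $E$ requires that $A=\eta([0,T])\cup\eta_\theta([0,T_\theta])$ alone separate $i$ from $\infty$, and the arc of $\partial B(0,2)$ is not part of $A$. Two simple arcs in $\overline\h$ that meet only at the boundary point $0$ and otherwise stay in disjoint tubes do not disconnect $\h$: a path from $i$ can escape to $\infty$ through the gap between the exit points $\eta(T)$ and $\eta_\theta(T_\theta)$ on $\partial B(0,2)$. So on your event $E'$ the component of $\h\setminus A$ containing $i$ is unbounded and $E'\not\subseteq E$. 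This is precisely why the hypothesis that $\eta_\theta$ a.s.\ \emph{intersects} $\eta$ is in the statement: the pocket around $i$ must be closed by an actual intersection of the two flow lines. The fix is to choose the reference curves so that $\eta$ winds around $i$ (say counterclockwise) and then returns to hit the right side of $\eta_\theta$ before exiting $B(0,2)$; the loop formed by the portion of $\eta$ from $0$ to the intersection point together with the portion of $\eta_\theta$ from the intersection point back to $0$ then encloses $i$, and the flow line interaction rules guarantee such a crossing configuration is admissible for the subcritical angle gap. With that corrected configuration, your two-step tube argument goes through and yields the claimed $p_0$.
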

\begin{proof}[Proof of Lemma~\ref{lem:fitball}]
This proof is more or less the same as the proof of \cite[Proposition~4.14]{ms2017imag4}, but we write it out for clarity. We fix $K \geq 5$ sufficiently large (to be chosen later). For $\epsilon \in (0,1)$, $1 \leq n \leq (K\epsilon)^{-1}$, let $T^{n,\epsilon}$ be the first time that $\eta_1$ exits $B(0,Kn\epsilon)$ when it is parameterized by $\log$-conformal radius. Let also $w_{n,\epsilon} \in  \partial B(0,(Kn+2)\epsilon)$ be such that $|\eta_1(T^{n,\epsilon})-w_{n,\epsilon}| = 2\epsilon$. Let $\theta_0 \in (0, \theta_c/2)$, where $\theta_c = \pi \kappa/(4-\kappa)$ is the critical angle of flow line interaction.

The flow line interaction rules imply that a flow line with angle $\theta_0$ a.s. hits a flow line of angle $0$ started at some point on its left side. We also let $\gamma_{n,\epsilon}$ be the flow line of $h$ starting from $\eta_1(T^{n,\epsilon})$ with angle $\theta_0$ and let $\sigma^{n,\epsilon}$ be the first time that $\gamma_{n,\epsilon}$ leaves $B(0,(Kn+4)\epsilon)$. Let $\mathcal F_n$ be the $\sigma$-algebra generated by $\eta_1|_{[0,T^{n,\epsilon}]}$ as well as $\gamma_{j,\epsilon} |_{[0,\sigma^{j,\epsilon}]}$ for $j = 1,\dots,n-1$. We fix $c,p>0$ (to be determined later) and define $E_{n,\epsilon}$ to be the event that $A_{n,\epsilon} = \eta_1 ([0,T^{n+1,\epsilon}]) \cup \gamma_{n,\epsilon}([0,\sigma^{n,\epsilon}])$ separates $w_{n,\epsilon}$ from $\infty$, the harmonic measure of the left side of $\eta_1 ([0,T^{n+1,\epsilon}])$ as seen from $z \in B(w_{n,\epsilon},c\epsilon)$ in the connected component $P_{n,\epsilon}$ of $\C \setminus A_{n,\epsilon}$ which contains $w_{n,\epsilon}$ is at least $p > 0$ and $\dist(w_{n,\epsilon},\partial P_{n,\epsilon}) \geq c\epsilon$.

Next, let $\psi_{n,\epsilon}$ be the conformal transformation which takes the unbounded component $U_{n,\epsilon}$ of $\C \cup \{ \infty \} \setminus \!\left (\eta_1([0,T^{n,\epsilon}]) \cup \bigcup_{j=1}^{n-1}\gamma_{j,\epsilon}([0,\sigma^{j,\epsilon}]) \right )$ to $\h$ with $\psi_{n,\epsilon}(\eta_1(T^{n,\epsilon})) = 0$ and $\psi_{n,\epsilon}(w_{n,\epsilon}) = i$. The Beurling estimate and the conformal invariance of the Brownian motion imply that for $K$ sufficiently large the images of $\cup_{j=1}^{n-1}\gamma_{j,\epsilon}([0,\sigma^{j,\epsilon}])$ and $\infty$ under $\psi_{n,\epsilon}$ lie outside of $B(0,100)$. Indeed, by picking $K$ large enough, we can make the harmonic measure of $\cup_{j=1}^{n-1}\gamma_{j,\epsilon}([0,\sigma^{j,\epsilon}])$, in $U_{n,\epsilon}$, as seen from $w_{n,\epsilon}$, be arbitrarily small. (More precisely, by the mapping $\varphi_{n,\epsilon}(z) = (z-w_{n,\epsilon})/((K-4)\epsilon)$, we have that the image of $\cup_{j=1}^{n-1}\gamma_{j,\epsilon}([0,\sigma^{j,\epsilon}])$ lies outside $\D$, whereas the distance from $\varphi_{n,\epsilon}(w_{n,\epsilon})=0$ to $\eta_1([0,T^{n,\epsilon}])$ is $2/(K-4)$, so by the Beurling estimate, the probability that a Brownian motion exits $\D$ before hitting $\eta_1([0,T^{n,\epsilon}])$ is at most a universal constant times $K^{-1/2}$, as $K \to \infty$.) Consequently, the law of the field $\wh h_{n,\epsilon} = h \circ \psi_{n,\epsilon}^{-1} - \chi \arg((\psi_{n,\epsilon}^{-1})')$ restricted to $B(0,50)$ is mutually absolutely continuous with respect to the law of a GFF on $\h$ restricted to $B(0,50)$ whose boundary data is chosen so that its flow line starting from 0 and targeted at $\infty$ is a chordal $\SLE_{\kappa}$, where the branch cut for the argument is taken to be the vertical line passing through $\psi_{n,\epsilon}(\infty)$. Note that since $\kappa - 2 \geq \frac{\kappa}{2} - 2$, there is no intersection of $\eta_1$ with itself.

The arguments of the proof of \cite[Lemma~4.15]{ms2017imag4} imply that there exists a universal constant $p_0 \in (0,1)$ which is uniform in the location of the images $\psi_{n,\epsilon}(\gamma_{j,\epsilon})$ such that if $\wt E_{n,\epsilon}$ is the event of Lemma~\ref{lem:cutout} corresponding to the flow lines of the field $\wh h_{n,\epsilon}$, then $\p[ \wt E_{n,\epsilon} | \mathcal F_n] \geq p_0$. We fix the constant $c \in (0,1)$ such that $\frac{4r}{(1-r)^2} < c_1$, where $r =  \frac{c}{2}$ and $c_1$ is the constant in Lemma~\ref{lem:cutout}. Then \cite[Corollary~3.25]{law2005slebook} implies that $\psi_{n,\epsilon}(B(w_{n,\epsilon},c\epsilon)) \subseteq{B(i,c_1)}$ and the conformal invariance of the harmonic measure implies that if $\wt E_{n,\epsilon}$ occurs, the harmonic measure of $\eta_1([0,T^{n+1,\epsilon}])$ as seen from $z$ is at least $p > 0$, for all $z \in B(w_{n,\epsilon},c\epsilon)$. By combining everything, we obtain that there exists a universal constant $\xi > 1$ such that with probability at least $1 - e^{-\epsilon^{-\xi}}$, we have more than $\frac{1}{2}p_0 n_{\epsilon}$ of the events $E_{j,\epsilon}$ occur, where $n_{\epsilon} = \lfloor K\epsilon \rfloor$. We observe that if the event $E_{n,\epsilon}$ occurs, then a pocket is formed between $\eta_1$ and $\gamma_{n,\epsilon}$ containing a ball of radius $c\epsilon$. The flow line interaction rules imply that $\eta_2$ cannot cross $\gamma_{n,\epsilon}$ and so the above ball must be contained in one of the connected components of $\C \setminus (\eta_1 \cup \eta_2)$. This completes the proof.
\end{proof}

\begin{remark}\label{rem:fitball}
Let $(\eta_1,\eta_2)$ be a pair of random curves in $\C$ starting from $0$ and targeted at $\infty$ sampled in the following way: First we sample $\eta_1$ to be a whole-plane $\SLE_4(2)$ starting from $0$ and conditional on $\eta_1$ we sample $\eta_2$ to be a chordal $\SLE_{4}$ in $\C \setminus \eta_1$ from $0$ to $\infty$. We note that the constants in Lemma~\ref{lem:cutout} can be chosen to be uniform as $\kappa \to 4$ if $\rho^L$ and $\rho^R$ are bounded from below away from $-2$ as $\kappa \to 4$. Also if we set $\theta_0 = \frac{\theta_c}{4}$ then $\chi \theta_0$ is bounded as $\kappa \to 4$ and so the Radon-Nikodym derivatives used in the proof are bounded uniformly on $\kappa$ and the weights of the processes that Lemma~\ref{lem:cutout} is applied, are bounded away from $-2$ as $\kappa \to 4$. Observe also that $\alpha$ is bounded uniformly on $\kappa$. Therefore the constant $\xi$ in Lemma~\ref{lem:fitball} can be made uniform in $\kappa$ as $\kappa \to 4$ and note that the law of the pair $(\eta_1,\eta_2)$ in Lemma~\ref{lem:fitball} converges to that of $(\eta_1,\eta_2)$ when $\kappa = 4$ as $\kappa \to 4$ in the Carath\'{e}odory sense. Therefore the claim of Lemma~\ref{lem:fitball} still holds for $\kappa = 4$.
\end{remark}

Finally, we turn our attention to the return probability of whole-plane $\SLE$.
\begin{lemma}\label{lem:Breturn_time}
 Fix $\kappa \in (0,4]$ and let $\eta$ be a whole-plane $\SLE_{\kappa}(\kappa - 2)$ in $\C$ from $0$ to $\infty$. Let $A(\epsilon)$ be the event that $\eta$ intersects $B(0,\epsilon)$ after it exits $\D$ for the first time. Then we claim that we can find a constant $b > 0$ uniform on $\kappa $ when $\kappa$ is away from $0$, such that $\p[A(\epsilon)] = O(\epsilon^b)$ as $\epsilon \to 0$.
\end{lemma}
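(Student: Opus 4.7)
The plan is to apply the conformal Markov property of whole-plane $\SLE_\kappa(\kappa-2)$ at the first exit time from $\D$ and reduce the bound to a boundary-proximity estimate for a radial $\SLE$ in $\C \setminus \ol \D$. Let $\sigma = \inf\{t : \eta(t) \notin \D\}$. Since $\kappa \in (0,4]$, the curve $\eta((-\infty,\sigma])$ is simple and goes from $0$ to $\eta(\sigma) \in \partial \D$, so $D_\sigma \coloneqq \C \setminus \eta((-\infty,\sigma])$ is simply connected. Let $\varphi \colon D_\sigma \to \C \setminus \ol \D$ be the conformal map fixing $\infty$ with $\varphi'(\infty) > 0$. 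By the conformal Markov property of whole-plane $\SLE_\kappa(\rho)$, the image $\wt\eta = \varphi \circ \eta|_{[\sigma,\infty)}$ is, conditionally on $\eta|_{(-\infty,\sigma]}$, a radial $\SLE_\kappa(\kappa-2)$ in $\C \setminus \ol \D$ from $w_\sigma = \varphi(\eta(\sigma))$ to $\infty$ with some force point $O_\sigma \in \partial \D$. The event $A(\epsilon)$ becomes the event that $\wt\eta$ enters the region $\wt V_\epsilon = \varphi(B(0,\epsilon) \cap D_\sigma) \subseteq \C \setminus \ol \D$, which lies adjacent to the arc $I_\epsilon = \varphi(B(0,\epsilon) \cap \eta((-\infty,\sigma])) \subseteq \partial \D$.

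The first main step is to show that $\wt V_\epsilon$ is small. Since $\eta((-\infty,\sigma])$ is connected, contains $0$, and meets $\partial \D$, Beurling's projection theorem gives $\omega(\infty,\eta((-\infty,\sigma])\cap B(0,\epsilon),D_\sigma) \lesssim \epsilon^{1/2}$. By conformal invariance and since harmonic measure from $\infty$ in $\C \setminus \ol \D$ equals normalized Lebesgue measure on $\partial \D$, this gives $|I_\epsilon| \lesssim \epsilon^{1/2}$. Applying Beurling pointwise to $z \in B(0,\epsilon) \cap D_\sigma$ yields $\omega(z,\eta((-\infty,\sigma]) \cap B(0,2\epsilon),D_\sigma) \geq c$ for a universal $c>0$; mapping under $\varphi$ and comparing with the Poisson kernel in $\C \setminus \ol \D$, which at $\varphi(z)$ concentrates on scale $|\varphi(z)|-1$, forces $|\varphi(z)|-1 \lesssim \epsilon^{1/2}$ and $\arg \varphi(z)$ to lie within the angular range of $I_\epsilon$. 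Consequently $\wt V_\epsilon$ is contained in a neighborhood of $I_\epsilon$ of width at most $C\epsilon^{1/2}$ and has Euclidean diameter $O(\epsilon^{1/2})$.

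The second main step is a boundary-proximity estimate for $\wt\eta$. For $\kappa \in (0,4]$ and $\rho = \kappa-2 > -2$, the radial $\SLE_\kappa(\rho)$ process is simple and does not touch $\partial \D$ away from the starting point, and the probability for $\wt \eta$ to come within Euclidean distance $\delta$ of a fixed point on $\partial \D$ at positive distance from $\{w_\sigma, O_\sigma\}$ is $O(\delta^{\alpha_1})$ for some $\alpha_1=\alpha_1(\kappa)>0$ bounded below uniformly in $\kappa \in [\kappa_0,4]$ for any $\kappa_0>0$. Combining with the first step, on the favorable event (of probability $1-O(\epsilon^{1/2})$) that $\{w_\sigma, O_\sigma\}$ lies at Euclidean distance at least $2\epsilon^{1/2}$ from $I_\epsilon$, we obtain $\p[A(\epsilon) \giv \CF_\sigma] \lesssim \epsilon^{\alpha_1/2}$; taking expectations yields $\p[A(\epsilon)] = O(\epsilon^b)$ with $b = \min(1/2,\alpha_1/2)>0$, uniform in $\kappa$ bounded away from $0$.

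The main obstacle is the polynomial boundary-proximity estimate of the second step with the required uniformity in $\kappa$. A direct route is to analyze the radial Bessel-type process governing the relative angle $\arg(W_t/O_t)$, as in Section~\ref{subsec:cara}, combined with conformal distortion estimates. An alternative closer to the approach suggested by the excerpt, parallel to the proofs of \cite[Proposition~4.14]{ms2017imag4} and Lemma~\ref{lem:fitball}, is to couple $\eta$ with a GFF and construct, at each dyadic scale $2^{-k}$ between $\epsilon$ and $1$, a flow-line barrier of critical angle that traps $\eta$ outside $B(0,2^{-k-1})$ with probability at least $p_0 > 0$ uniformly in $k$ and in $\kappa$; the Markov property and iteration over scales then yield the required polynomial decay of the return probability at no cost to uniformity in $\kappa$.
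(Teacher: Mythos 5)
Your route is genuinely different from the paper's. The paper couples $\eta$ with a whole-plane GFF and, iterating over the dyadic scales $2^n\epsilon$ with $1 \leq n \lesssim \log(1/\epsilon)$, shows that with a uniformly positive conditional probability per scale a chain of auxiliary flow lines of near-critical angles links up inside $B(0,2^{n+1/2}\epsilon)$; outside an event of probability $O(\epsilon^b)$ this occurs for every adjacent pair, and the resulting barriers then prevent $\eta$ from re-entering $B(0,\epsilon)$ after it first exits $\D$. You instead invoke the conformal Markov property at the first exit time $\sigma$ and aim to reduce to a one-point boundary-proximity estimate for radial $\SLE_\kappa(\kappa-2)$ in $\C \setminus \ol{\D}$. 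Your Step 1 (harmonic-measure control of $\wt V_\epsilon$) is sound up to a cosmetic enlargement of the ball in the Beurling lower bound, and you correctly flag the one-point estimate as the crux.

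There are, however, two genuine gaps, only one of which you acknowledge. First, the assertion that $\{w_\sigma, O_\sigma\}$ lies at distance at least $2\epsilon^{1/2}$ from $I_\epsilon$ with probability $1-O(\epsilon^{1/2})$ is stated without proof and is not a Beurling consequence: since $I_\epsilon$ sits near the image under $\varphi$ of the prime end at the origin, this amounts to a quantitative lower bound on the harmonic measure from $\infty$ of each side of $\eta((-\infty,\sigma])$ (and on the angular separation of $O_\sigma$ from that prime end), which depends on how the pre-$\sigma$ curve winds inside $\D$ and is a substantive statement about the law of whole-plane $\SLE_\kappa(\kappa-2)$. Second, the boundary-proximity estimate in Step 2 is for the curve to approach a fixed \emph{third} boundary point away from $w_\sigma$ and $O_\sigma$; the radial Bessel process governing $\arg(W_t/O_t)$ that you propose to analyze controls the tip--force-point angle, not the tip's angular distance to a fixed marked point, so it does not directly yield the estimate (one would need the analogous process relative to the Loewner image of the target point). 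Your alternative (b) is essentially the paper's barrier argument and would close both gaps, but at that point the conformal-Markov reduction is no longer doing any work.
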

\begin{proof}
First we assume that $\kappa \in (0,4)$. In order to prove the above claim, we follow a method similar to the one of Lemma~\ref{lem:fitball}. Fix $\theta \in (0,\theta_c /3)$ such that $-2 + \frac{\theta}{\pi}(\kappa /2 - 2) < \kappa /2 - 2$ and let $\eta_1$ (resp.\ $\eta_2$) be the flow line of angle $0$ (resp.\ $3\theta$) of $h$ emanating from $0$, where $h = \wt h -\alpha \arg$ for a whole-plane GFF $\wt{h}$ and $\alpha$ chosen to satisfy $\kappa - 2 = 2 - \kappa + 2\pi \alpha/\lambda$. Note that then $\eta_1$ and $\eta$ have the same law.  For $\epsilon \in (0,1)$ let $T^1_{n,\epsilon}$ (resp.\ $T^2_{n,\epsilon}$) be the first time that $\eta_1$ (resp.\ $\eta_2$) exits $B(0,2^n\epsilon)$ for $1 \leq n \leq \frac{\log(1/\epsilon)}{\log 2}$.  Let $\sigma_{n,\epsilon}^1$ (resp.\  $\sigma_{n,\epsilon}^2$) be the first time that $\eta_1$ (resp.\  $\eta_2$) exits $B(0,2^{n-1/4}\epsilon)$,  and we let $\gamma_{n,\epsilon}^{1,1}$ (resp.\  $\gamma_{n,\epsilon}^{1,2}$) be the flow line of $h$ with angle $-\theta$ (resp.\  $\theta$) starting from $\eta_1(\sigma_{n,\epsilon}^1)$ when the latter is seen as a prime end on the left (resp.\  right) side of $\eta_1$.  Similarly,  we let $\gamma_{n,\epsilon}^{2,1}$ (resp.\  $\gamma_{n,\epsilon}^{2,2}$) be the flow line of $h$ of angle $2\theta$ (resp.\ $4\theta$) starting from $\eta_2(\sigma_{n,\epsilon}^2)$ when the latter is seen as a prime end on the left (resp.\ right) side of $\eta_2$.  We stop $\gamma_{n,\epsilon}^{j,1}$ and $\gamma_{n,\epsilon}^{j,2}$ for $j=1,2$ at the first time that they exit $B(0,2^{n+1/2} \epsilon)$ and let $\wt{\sigma}_{n,\epsilon}^{j,i}$ be the first time that $\gamma_{n,\epsilon}^{j,i}$ exits $B(0,2^{n+1/2}\epsilon)$ for $i,j=1,2$.  Then,  for fixed constants $c ,  p \in (0,1)$ independent of $n$ and $\epsilon$,  we let $E_{n,\epsilon}^1$ (resp.\ $E_{n,\epsilon}^2$) be the event that $A_{n,\epsilon}^1 = \gamma_{n,\epsilon}^{1,1}([0,\wt{\sigma}_{n,\epsilon}^{1,1}]) \cup \gamma_{n,\epsilon}^{1,2}([0,\wt{\sigma}_{n,\epsilon}^{1,2}])$ (resp.\ $A_{n,\epsilon}^2 =\gamma_{n,\epsilon}^{2,1}([0,\wt{\sigma}_{n,\epsilon}^{2,1}]) \cup \gamma_{n,\epsilon}^{2,2}([0,\wt{\sigma}_{n,\epsilon}^{2,2}])$) separates $\eta_1(T_{n,\epsilon}^1)$ (resp.\ $\eta_2(T_{n,\epsilon}^2)$) from $\infty$,  the harmonic measure of the left and right sides of $\gamma_{n,\epsilon}^{1,1}$ and $\gamma_{n,\epsilon}^{1,2}$ (resp.\ $\gamma_{n,\epsilon}^{2,1}$ and $\gamma_{n,\epsilon}^{2,2}$) as seen from $z \in B(\eta_1(T_{n,\epsilon}^1),c\epsilon)$ (resp.\ $z \in B(\eta_2(T_{n,\epsilon}^2),c\epsilon)$) in the connected component $P_{n,\epsilon}^1$ (resp.\ $P_{n,\epsilon}^2$) of $\C \setminus A_{n,\epsilon}^1$ (resp.\ $\C \setminus A_{n,\epsilon}^2$) which contains $\eta_1(T_{n,\epsilon}^1)$ (resp.\ $\eta_2(T_{n,\epsilon}^2)$) is at least $p$ and $\dist(\eta_1(T_{n,\epsilon}^1),\partial P_{n,\epsilon}^1) \geq c 2^n \epsilon$ (resp.\ $\dist(\eta_2(T_{n,\epsilon}^2),\partial P_{n,\epsilon}^2) \geq c 2^n \epsilon$).  We set $E_{n,\epsilon} = E_{n,\epsilon}^1 \cap E_{n,\epsilon}^2$.

Let $\mathcal{F}_{n,\epsilon}$ be the $\sigma$-algebra generated by $\eta_1$ and $\eta_2$ stopped at times $T^1_{n,\epsilon}$ and $T^2_{n,\epsilon}$ respectively and the first $2n$ auxiliary flow lines for each of $\eta_1$ and $\eta_2$.  Then as in the proof of Lemma~\ref{lem:fitball},  we can find constants $c,p$ and $p_0>0$ independent of $n,\epsilon$ such that $\p[E_{n+1,\epsilon} \giv \CF_{n,\epsilon}] \geq p_0$.  Let $U_n$ be the connected component of $\C \cup \{\infty\} \setminus \left( \eta_1([0,T_{n,\epsilon}^1]) \cup \eta_2([0,T_{n,\epsilon}^2])\right)$ containing $\infty$ and $g_n : U_n \rightarrow \h$ be the conformal map such that $g_n(\eta_1(T^1_{n,\epsilon})) = 1$, $g_n(\eta_2(T^2_{n,\epsilon})) = -1$ and $g_n(0) = \infty$. We also consider the field $h_n = h \circ g_n^{-1} - \chi \arg((g_n^{-1})')$ where the branch cut for the argument is taken to be the vertical line passing through $g_n(\infty)$.

Note that if $E_{n,\epsilon}$ occurs, then there exists a universal constant $q \in (0,1)$ such that for all $z \in \partial{B(0,2^{n+1/2}\epsilon)}$, the sets  $[-1,0],[0,1]$ and $\R \setminus [-1,1]$ all have harmonic measure as seen from $g_n(z)$ at least $q$. Standard estimates for Brownian motion imply that in that case, $\dist([-1,1],g_n(\partial{B(0,2^{n+1/2}\epsilon)})) \geq d >0$ for some constant $d > 0$. Note that $E_{n,\epsilon}$ occurs with positive probability uniformly in $n,\epsilon$ and let $D \subseteq{\h}$ be a fixed simply connected domain such that $[-1,1] \subseteq \partial D \cap \partial \h$ and $\dist ([-1,1], \h \cap \partial D)<d/2$. Again the law of $h_n$ restricted to $D$ is mutually absolutely continuous with respect to the law of a GFF $\wh{h}$ on $\h$ restricted to $D$ with boundary values given by $-\lambda -3\chi \theta$ on $(-\infty,-1)$, $\lambda -3\chi \theta$ on $(-1,g_n(x_n))$, $-\lambda$ on $(g_n(x_n),1)$ and $\lambda$ on $(1,\infty)$, where $x_n$ is the last intersection of $\eta_1$ and $\eta_2$ before they exit $B(0,2^n \epsilon)$. Hence the law of $(g_n(\eta_1),g_n(\eta_2))$ stopped at the first time they exit $D$ is mutually absolutely continuous with respect to the law of $(\wh{\eta}_1,\wh{\eta}_2)$, where $\wh{\eta}_1$ (resp.\ $\wh{\eta}_2$) is the flow line of angle 0 (resp.\ $3\theta$) of $\wh{h}$, started from $1$ (resp.\ $-1$) and stopped at the first time they exit $D$. Then we have that with positive probability, uniform in the location of $g_n(x_n) \in (-1,1)$, $\wh{\eta}_1,\wh{\eta}_2$ intersect before they exit $D$ and so the same holds for $g_n(\eta_1)$ and $g_n(\eta_2)$. If that occurs, then $\eta_1$ and $\eta_2$ intersect before they exit $B(0,2^{n+1/2}\epsilon)$. Therefore as in the proof of Lemma~\ref{lem:fitball} and Lemma~\ref{lem:cutout} we obtain that for some constant $p > 0$ uniform in $n$, $\epsilon$ and $\kappa$ as $\kappa \to 4$, we have that conditional on $\CF_{n,\epsilon}$, with probability at least $p$, $\eta_1$ and $\eta_2$ intersect before they exit $B(0,2^{n+1/2}\epsilon)$. This shows that we can find a constant $c > 0$ (uniform as  $\kappa \to 4$) such that off an event with probability $O(\epsilon^b)$, $\eta_1$ and $\eta_2$ intersect before they exit $\D$ for the first time. Note that the same is true for $\eta_j$ and $\eta_{j+1}$ for all $j = 1,\ldots,m$, where $\eta_j$ is the flow line of $h$ of angle $3(j-1)\theta$ starting from $0$ and $m = \lfloor 2\pi(1+\alpha /\chi)/(3\theta) \rfloor$. We can therefore assume that off an event with probability $O(\epsilon^b)$, $\eta_j$ intersects both of $\eta_{j-1}$ and $\eta_{j+1}$ for all $j$ where $b$ is uniform in $\kappa$ as $\kappa \to 4$ since $m$ is bounded uniformly in $\kappa$ as $\kappa \to 4$ if we pick $\theta = \theta_c/4$. But if the latter occurs, then the flow line interaction rules imply that $\eta_1$ cannot enter $B(0,\epsilon)$ after it exits $\D$. This completes the proof of the claim since by taking $\kappa \to 4$ and using the Carath\'{e}odory convergence of the pair $(\eta_1,\eta_2)$ to the corresponding pair when $\kappa = 4$,  the claim is also true when $\kappa = 4$.
\end{proof}

\bibliographystyle{abbrv}
\bibliography{biblio}

\end{document}